\documentclass[10pt,a4paper]{amsart}

\usepackage[left=3.3cm,right=3.3cm,top=4cm,bottom=4cm]{geometry}
\usepackage[utf8]{inputenc}
\usepackage{amsmath}
\usepackage{amsfonts}
\usepackage{amssymb}
\usepackage{amsthm}
\usepackage[all]{xy}
\usepackage{color}
\usepackage{hyperref}
\usepackage{xypic}
\usepackage{mathrsfs}
\usepackage{centernot}
\usepackage{cite}
\usepackage{tikz-cd}

\theoremstyle{plain}
\newtheorem{theorem}{Theorem}

\newtheorem{corollary}[theorem]{Corollary}
\newtheorem{definition}[theorem]{Definition}
\newtheorem{example}[theorem]{Example}

\newtheorem{lemma}[theorem]{Lemma}

\newtheorem{proposition}[theorem]{Proposition}
\newtheorem{remark}[theorem]{Remark}

\DeclareMathOperator{\Gal}{Gal}

\DeclareMathOperator{\val}{val}

\newcommand{\Gm}{\mathbb{G}_m}

\newcommand{\BQ}{{\mathbb {Q}}} \newcommand{\BR}{{\mathbb {R}}}

 \newcommand{\BZ}{{\mathbb {Z}}}

\newcommand{\can}{{\mathrm{can}}}

\theoremstyle{plain}
\newtheorem{thm}{Theorem}[section] \newtheorem{cor}[thm]{Corollary}
\newtheorem{lem}[thm]{Lemma}

\newtheorem{ques}[thm]{Question} \newtheorem{prop}[thm]{Proposition}
 \newtheorem{defn}[thm]{Definition}
 \newtheorem{lem-defn}[thm]{Lemma-Definition}

\numberwithin{equation}{section}

\makeatletter

\newcommand{\Rmnum}[1]{\expandafter\@slowromancap\romannumeral #1@}
\makeatother

\usepackage{enumitem}
\usepackage{booktabs}
\providecommand{\bR}{\mathbb{R}}

\providecommand{\ol}{\overline}
\providecommand{\can}{\operatorname{can}}

\providecommand{\widevol}{\widehat{\operatorname{vol}}_{\chi}}
\providecommand{\bD}{\overline{D}}
\providecommand{\bL}{\overline{L}}
\providecommand{\bN}{\overline{N}}
\providecommand{\bE}{\overline{E}}
\providecommand{\bM}{\overline{M}}
\providecommand{\muess}{\mu^{\mathrm{ess}}}
\providecommand{\muabs}{\mu^{\mathrm{abs}}}

\begin{document}
\title{Equidistribution for Semiabelian Varieties over Number Fields}
\author{Wei Xue}
\date{\today}

\begin{abstract}
	K\"uhne established equidistribution for canonical adelic line bundles on
	semiabelian varieties, a setting which need not lie in the quasi-canonical
	range of Yuan--Zhang's theorem.  We study adelic line bundles on semiabelian
	compactifications whose toric part is governed by a toric metrized divisor.
	The main unconditional result is generic equidistribution when the toric metric
	is monocritical and arithmetically \(T\)-effective.  The Bogomolov conjecture
	holds unconditionally for K\"uhne's standard \((\mathbb P^1)^t\)-compactification;
	for a general fan it holds conditionally on the independently established
	curvature--Haar slice formula, including the lattice-index normalization and
	the assertion that no additional boundary mass occurs.
	The proof isolates the asymptotic estimates in K\"uhne's argument and
	reinterprets them as estimates along explicit compression paths.  This gives a
	comparison mechanism between canonical, quasi-canonical, and more general
	toric metrics.  For the Bogomolov conjecture, the quasi-canonical case is
	handled by a K\"uhne local-trivialization transport package.  After fixing a
	single theta-factor convention for the local trivializations, the proof checks
	the Picard-zero theta factors under K\"uhne's operations and obtains the
	Bogomolov theorem for the metric class treated in this paper, namely the
	monocritical and arithmetically \(T\)-effective toric metrics, through the
	quasi-canonical replacement argument.
\end{abstract}
\maketitle

\section*{Theta-Factor Convention}

We use the following convention throughout the paper.  Let
\[
0\longrightarrow T_G\longrightarrow G\xrightarrow{\rho_G}A_G\longrightarrow0
\]
be a semiabelian variety, and let \(M_G=X^*(T_G)\).  For
\(m\in M_G\), the symbol
\[
\mathcal H_{G,m}\in\operatorname{Pic}^0(A_G)
\]
denotes the rigidified Picard-zero line bundle whose associated
\(\mathbb G_m\)-torsor is obtained by pushing out the semiabelian extension
through the character \(\chi^m:T_G\to\mathbb G_m\).  In local
trivializations we use the denominator convention: if \(g_m\) is the
semiabelian character section attached to \(m\), and if \(f_{m,v}\) is the
theta representative of a local rational section of \(\mathcal H_{G,m}\),
then the toric coordinate is written
\[
\psi_{j,m}(y)=\frac{g_m(y)}{f_{m,v}(\rho_G(y))}.
\]
Some sources, depending on the contravariant convention for line bundles and
invertible sheaves, attach the same theta expression to the dual line bundle.
With the convention above this is recorded by
\[
\mathcal H_{G,-m}\simeq\mathcal H_{G,m}^{\vee}.
\]
Thus a dual appears in the notation only when the pulled-back character is
actually \(-m\).  Since all such Picard-zero factors carry their canonical
flat metrics at archimedean places, this convention does not change any
Chern-current or Haar-measure statement; it only fixes the signs in the local
formulas.

\begingroup
\small
\tableofcontents
\endgroup
\clearpage
\section{Introduction}

Heights and equidistribution form one of the main bridges between arithmetic
positivity and the geometry of special subvarieties.  Weil's theory of local
heights \cite{We} shows that global height functions are assembled from local
data, while N\'eron's local height theory \cite{Ne} gives the canonical local
functions behind the N\'eron--Tate height on abelian varieties.  These ideas
enter both finiteness results, such as the Mordell--Weil theorem, and
density statements for algebraic points of small height.

The Bogomolov conjecture is a central example.  Its basic assertion is that a
non-torsion subvariety cannot contain a Zariski-dense set of points of
arbitrarily small N\'eron--Tate height.  For curves in their Jacobians it was
proved by Ullmo \cite{Ull} and Zhang \cite{Zha}; Zhang then proved the
corresponding theorem for subvarieties of abelian varieties.  Zhang's proof introduced adelic line
bundles and used equidistribution of small points.  Yuan and Zhang later
extended this adelic framework and the equidistribution theorem to
quasi-projective varieties over global fields \cite{YZ}.

For an integrable adelic line bundle \(\overline L\) on an irreducible
variety \(Z\), its essential minimum is
\[
\mu^{\mathrm{ess}}_{\overline L}(Z)
=\sup_{U\subseteq Z\ {\rm nonempty\ open}}
\inf_{x\in U(\overline K)}h_{\overline L}(x).
\]
It is the smallest height threshold below which algebraic points can be
Zariski dense.  The formal conventions for heights and small sequences are
recalled below.

More precisely, if \(A\) is an abelian variety equipped with a symmetric
ample line bundle and its canonical metric, and \(Z\subseteq A_{\overline K}\)
is irreducible, the classical Bogomolov theorem may be written
\[
\mu^{\mathrm{ess}}_{\overline L}(Z)=0
\quad\Longleftrightarrow\quad
Z\text{ is a torsion subvariety}.
\]
Here a torsion subvariety is a translate of an abelian subvariety by a torsion
point.  In this form the conjecture exhibits geometric rigidity directly:
vanishing of an arithmetic minimum forces the subvariety, up to translation
by a torsion point, to arise from an algebraic subgroup.

The present paper is motivated by the following relative form of the
Bogomolov conjecture.
\begin{ques}
	Let \(\pi:X\to Y\) be a projective flat morphism between quasi-projective
	varieties over a number field \(K\).  Suppose that \(L\) is relatively ample
	and satisfies suitable invariance conditions, for instance that \((X,f,L)\)
	forms a polarized dynamical system over \(Y\) in the sense of \cite{YZ}.  Thus
	one obtains a nef adelic line bundle \(\overline L\) on \(X/\mathbb Z\) with
	\(\mu^{\mathrm{ess}}_{\overline L}(X)=0\).  For a closed
	\(\overline K\)-subvariety \(X_1\subseteq X\), what geometric condition on
	\(X_1\) implies that, if \(X_1\) is not special, then
	\[
	\mu^{\mathrm{ess}}_{\overline L}(X_1)>0?
	\]
\end{ques}

We list four illustrative cases of this framework.
\begin{enumerate}[label=\textup{(\arabic*)},leftmargin=2em]
	\item The classical Bogomolov conjecture for abelian varieties: \(Y\) is a
	point, \(X\) is an abelian variety, and the special subvarieties are torsion
	subvarieties.
	\item K\"uhne's equidistribution theorem in families \cite{Ku4}: the morphism
	\(\pi:X=\mathscr U_g\to Y=\mathbb A_g\) is the universal abelian variety, the
	fibers \(X_{1,y}\) are irreducible curves, and the special condition is that
	each fiber \(X_{1,y}\) is a translate of an elliptic curve.
	\item The relative Bogomolov conjecture \cite{DGH2}: the morphism
	\(\pi:X=\mathcal A\to Y=S\) is a relative abelian scheme, one assumes
	\(\overline{\mathbb Z X_1}=\mathcal A\), and the special condition is governed
	by the expected codimension inequality
	\(\operatorname{codim}_{\mathcal A}X_1\leq\dim S\).
	\item The Bogomolov conjecture for semiabelian varieties, as treated by
	K\"uhne \cite{Ku1}: \(X=G\) is a semiabelian variety, \(Y=A\) is its abelian
	quotient, and the special subvarieties are translates of algebraic subgroups
	of \(G\) by torsion points.
\end{enumerate}

The full relative Bogomolov conjecture remains open.  Gao proved a weaker
relative Manin--Mumford statement \cite{Gao}: if \(X_1\) is not special, then
its torsion points cannot be Zariski dense in \(X_1\).  Equidistribution is the
main proof mechanism in Cases (1), (2), and (4), and it is the point of
departure for the present work on semiabelian varieties.

\begin{ques}
	Let \(\pi:X\to Y\) be a projective flat morphism between quasi-projective
	varieties over a number field \(K\).  Let \(L\) be big and nef, and let
	\(\overline L\) be a nef adelic line bundle on \(X\).  Assume that the
	Deligne pairing
	\[
	\bigl(\langle\overline L,\ldots,\overline L\rangle,Y\bigr)
	\]
	and every fiber pair \((\overline L|_{X_y},X_y)\) satisfy equidistribution.
	What additional hypotheses on \(\overline L\) imply equidistribution for
	\((\overline L,X)\)?
\end{ques}

Existing proofs for Cases (1) and (2) use Yuan--Zhang's equidistribution
theorem \cite{YZ}.  In Case (4), K\"uhne \cite{Ku1} treats a canonical
semiabelian metric: \(X=G\) is semiabelian, \(Y=A\) is its abelian quotient,
and
\[
\overline{L}
=
\overline{G}(\overline{D}^{\mathrm{can}})
\otimes \overline{\pi}^{*}\overline N .
\]
Here \(T\)-effectivity of \(D\) controls the Deligne pairing and leads to
equidistribution.  The key difficulty is that the metric defining
\(\overline L\) is generally not quasi-canonical in the sense required by
Yuan--Zhang; one may have
\[
h_{\overline{L}}(\overline G)<\mu^{\mathrm{ess}}_{\overline{L}}(\overline G),
\]
so Yuan and Zhang's theorem cannot be applied directly.

The present paper studies this gap for toric metrics on semiabelian
compactifications.  Its scope is the class of semipositive, monocritical and
arithmetically \(T\)-effective toric metrics specified below; no assertion is
made for arbitrary semipositive toric metrics.  The precise comparison with
earlier work is given immediately after the main theorems.

\subsection*{Setup}

Let \(G\) be a semiabelian variety over a number field \(K\), with split toric
part \(T\) of dimension \(t\) and maximal abelian quotient
\(\pi:G\to A\) of dimension \(g\).  Let \(X_{\Sigma}\) be a normal proper
toric variety whose open dense torus is \(T\), and let
\(\overline{D}=(D,\{\Vert\cdot\Vert_v\})\) be a toric metrized
\(\mathbb R\)-divisor on \(X_{\Sigma}\).  We write \(\overline G\) for the
toric compactification of \(G\) associated to \(X_{\Sigma}\), with projection
\(\overline\pi:\overline G\to A\).

In Section 4, we construct an adelic line bundle \(\overline G(\overline D)\)
on \(\overline G/\mathbb Z\) via local trivializations valid at every place of
\(K\), extending K\"uhne's archimedean local trivialization to all valuations.
Fix a symmetric ample line bundle \(\overline N\) on \(A\), equipped with its
canonical metric, and define
$\overline{L}:=\overline{G}(\overline{D})\otimes \overline{\pi}^*\overline{N}$.
\begin{thm}[Generic equidistribution]\label{thm:intro-generic-equidistribution}
	Suppose that \(D\) is big and \(T\)-effective, and that
	\(\overline D=(D,\{\Vert\cdot\Vert_v\})\) is a semipositive, monocritical and
	arithmetically \(T\)-effective toric metrized \(\mathbb R\)-divisor in the
	sense of Definitions~\ref{def 3.12} and~\ref{def:arith-T-effective}.  Then
	every generic \(\overline L\)-small sequence satisfies \(v\)-adic
	equidistribution at every place \(v\) of \(K\).  If \(\overline D\) is
	additionally quasi-canonical, the limiting measure is
	\[
	\mu_{\overline{L},v}
	=
	\frac{
		\hat{c}_1(\overline{G}(\overline{D})_v)^t
		\wedge
		\hat{c}_1(\overline{\pi}^*\overline{N}_v)^g
	}{
		\overline{G}(D)^t(\overline{\pi}^*N)^g
	}.
	\]
\end{thm}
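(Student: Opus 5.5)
The plan is the variational method of Szpiro--Ullmo--Zhang and Yuan--Zhang. The difficulty is that \(h_{\overline L}(\overline G)\) may be strictly smaller than \(\mu^{\mathrm{ess}}_{\overline L}(\overline G)\), so the quasi-canonical hypothesis of \cite{YZ} fails. I would bypass it by a comparison along explicit compression paths, as the abstract suggests. Fix a place \(v\), a test function \(\phi\) (a model function or a smooth compactly supported function on \(\overline G_v^{\mathrm{an}}\)), and a generic \(\overline L\)-small sequence \((x_n)\), so that \(h_{\overline L}(x_n)\to\mu^{\mathrm{ess}}_{\overline L}(\overline G)\). Perturb to \(\overline L(\epsilon\phi)\) and apply the generic lower bound
\[
\liminf_n h_{\overline L(\epsilon\phi)}(x_n)\ \ge\ \mu^{\mathrm{ess}}_{\overline L(\epsilon\phi)}(\overline G)
\ \ge\ \frac{\widehat{\mathrm{vol}}(\overline L(\epsilon\phi))}{(\dim\overline G+1)\,\mathrm{vol}(L)}.
\]
The goal is that the right-hand side equals \(\mu^{\mathrm{ess}}_{\overline L}(\overline G)+\epsilon\int\phi\,d\mu_{\overline L,v}+o(\epsilon)\) for some probability measure \(\mu_{\overline L,v}\). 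Replacing \(\phi\) by \(-\phi\) and using that \(h_{\overline L(\epsilon\phi)}(x_n)-h_{\overline L}(x_n)=\epsilon\int\phi\,d\delta_{O(x_n)}\) then gives \(\delta_{O(x_n)}\to\mu_{\overline L,v}\).

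To obtain this expansion, I would replace \(\overline D\) by a quasi-canonical comparison metric. Monocriticity says that the roof function \(\vartheta_{\overline D}\) on \(\Delta_D\) attains its maximum at a single point \(m_0\). Arithmetic \(T\)-effectivity should let me translate by \(m_0\), i.e.\ twist by the Picard-zero factor \(\mathcal H_{G,m_0}\), which carries flat metrics and so leaves heights of generic sequences unchanged up to the constant \(\vartheta(m_0)\). I would then compress \(\overline D\) along the path \(\overline D_s\) whose roof function is \(\min(\vartheta,\vartheta(m_0)-s\cdot\text{dist})\) or a similar construction. At \(s=\infty\) the path yields a metric whose essential minimum is still \(\vartheta(m_0)\)-governed, and which is quasi-canonical, meaning \(h=\mu^{\mathrm{ess}}\). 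The key estimates are two. First, \(\mu^{\mathrm{ess}}_{\overline L}=\mu^{\mathrm{ess}}_{\overline L_s}\) for all \(s\), with \(\overline L\ge\overline L_s\). Second, the arithmetic volumes, computed by the toric-fibration formula combining the Deligne pairing with \(\overline\pi^*\overline N\) and integrating the roof function against the Duistermaat--Heckman measure of \(\Delta_D\) tensored with \(N^g\), stay close under perturbation by \(\epsilon\phi\), uniformly as \(s\to\infty\). Here I would reuse K\"uhne's asymptotic estimates, recast as estimates along the path, together with the local trivializations of Section 4, which reduce all computations to toric ones at every place.

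Given these, the lower bound applied to \(\overline L_s(\epsilon\phi)\le\overline L(\epsilon\phi)\), together with the fact that \(\overline L_s\) is quasi-canonical, lets me invoke Yuan--Zhang's equidistribution theorem \cite{YZ} for \(\overline L_s\). Since \(x_n\) is small for \(\overline L_s\) too, \(x_n\) equidistributes to \(\mu_{\overline L_s,v}\). The compression leaves the measure unchanged near the critical fiber, so this limit is independent of \(s\), and I define \(\mu_{\overline L,v}\) to be it. In the quasi-canonical case one takes \(s=0\), and Yuan--Zhang's formula gives directly the stated measure \(\hat c_1(\overline G(\overline D)_v)^t\wedge\hat c_1(\overline\pi^*\overline N_v)^g\) normalized by \(\overline G(D)^t(\overline\pi^*N)^g\). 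Bigness of \(D\) ensures that the denominator is nonzero.

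I expect the main obstacle to be showing that the compressed metrics remain integrable adelic line bundles on \(\overline G/\mathbb Z\) at all places. They must be compatible with the theta-factor local trivializations, which involves the signs \(\mathcal H_{G,-m}\simeq\mathcal H_{G,m}^\vee\). One must also show that their essential minimum genuinely equals that of \(\overline L\): the inequality \(\le\) is formal, while \(\ge\) needs the successive-minima description via the roof function, where monocriticity is essential. For this I would first prove a fibered Zhang-type formula \(\mu^{\mathrm{ess}}_{\overline L}(\overline G)=\max\vartheta_{\overline D}\) using \(T\)-effectivity.
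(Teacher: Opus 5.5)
Your proposal has a genuine gap at its decisive step. You apply Yuan--Zhang to \(\overline L_s=\overline G(\overline D_s)\otimes\overline\pi^*\overline N\), and in the quasi-canonical case to \(\overline L\) itself, on the grounds that \(\overline D_s\) is quasi-canonical.

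Toric quasi-canonicity does not make \(\overline L_s\) quasi-canonical, or even nef, on \(\overline G\). When \(\eta_G\) is non-torsion, the boundary strata of \(\overline G\) carry the Picard-zero twists \(H_m\). A boundary point over \(y\in A(\overline K)\) has height of the shape \(\hat h_{\overline N}(y)-h_{\overline H_m}(y)\), which can be negative. Hence \(\mu^{\mathrm{abs}}_{\overline L}(\overline G)<0\) and \(h_{\overline L}(\overline G)<\mu^{\mathrm{ess}}_{\overline L}(\overline G)\) already for the canonical toric metric. For \(g=t=1\) the top self-intersection reduces to \(2\,\overline H^{2}\), which is negative by the arithmetic Hodge index theorem. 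This is exactly the obstruction named in the introduction, and the paper never applies Yuan--Zhang on \(\overline G\).

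The missing idea is K\"uhne's \(n\)-division tower \(\varphi_n:G_n\to G\) with extension class \(\eta_G/n\). Chart lower bounds plus completing the square for the N\'eron--Tate form give \(\mu^{\mathrm{abs}}_{\overline L_n}(\overline G_n)\ge -Cn^{-2}\) on the whole compactification. A constant correction \(\kappa_n\ll n^{-2}\) then restores horizontal semipositivity. Minkowski's theorem with the first-variation error \(O(|\lambda|^2n)\) and the quadratic volume defect, at \(\lambda=n^{-3/2}\), gives equidistribution as \(n\to\infty\). For quasi-canonical \(\overline D\) the paper couples this with its own compression \(\overline D_m=m^{-1}[m]^*\overline D\), which moves the center \(u_v\) to \(u_v/m\), along \(m(n)=\lfloor n^a\rfloor\) with \(\lambda=n^{-(3+a)/2}\). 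Using K\"uhne's estimates only to control volumes and then calling Yuan--Zhang does not replace this.

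The monocritical reduction also fails as stated.

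\emph{The definition.} Monocriticity is not uniqueness of the maximizer of \(\vartheta_{\overline D}\) on \(\Delta_D\); quasi-canonical metrics have constant global roof and are monocritical. It is uniqueness of the decomposition \(0=\sum_v n_vu_v\) in \(\sum_v n_v\partial\upsilon_{\overline D,v}(x)\). This produces a critical vector \((u_v)_v\in\bigoplus_vN_{\mathbb R}\). The quasi-canonical replacement is the translation \(\Psi_{\overline D',v}(z)=\Psi_D(z-u_v)\) on the valuation side, not a twist by \(\mathcal H_{G,m_0}\).

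\emph{Your path.} The roof \(\min(\vartheta,\vartheta(m_0)-s\,\mathrm{dist})\) only becomes more peaked, so its average stays below its maximum and it is never quasi-canonical. In fact no quasi-canonical \(\overline D_s\le\overline D\) with the same essential minimum can exist unless \(\overline D\) is already quasi-canonical. Metric domination gives roof domination, which would force \(\vartheta_{\overline D}\ge\vartheta_{\overline D_s}\equiv\max\vartheta_{\overline D}\).

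\emph{Direction of comparison.} The true replacement lies on the other side. By the supergradient inequality at \(0\in\Delta_{D,\max}\), \(\vartheta_{\overline D}\le\vartheta_{\overline D'}\) up to zero-sum constants, so height domination only bounds \(h_{\overline D'}\) from below and transfers no smallness.

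\emph{What the paper does.} It transfers smallness by BGPS rigidity. Arithmetic \(T\)-effectivity puts \(0\in\Delta_{D,\max}\), the centers of the valuation measures \(\boldsymbol\nu_{x_i}\) tend to \(0\), and the defects \(\Phi_v(\nu_{x_i,v})\) tend to \(0\). Hence \(\boldsymbol\nu_{x_i}\to\boldsymbol\delta_{\mathbf u}\) in the adelic Kantorovich--Rubinstein topology. Lipschitz continuity of \(\Psi_D(\cdot-u_v)\) then gives \(h_{\overline L'}(x_i)\to\mu^{\mathrm{ess}}_{\overline L'}(\overline G)\).
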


\begin{thm}[Bogomolov theorem for the paper's metric class]
	\label{thm:intro-bogomolov}
	Let \(X\subseteq G_{\overline K}\) be irreducible, and assume that \(D\) is ample
	and \(T\)-effective and that \(\overline D\) is semipositive, monocritical and
	arithmetically \(T\)-effective.  At every archimedean place, assume in addition
	either the standard \((\mathbb P^1)^t\)-compactification used by K\"uhne, or
	that the general-fan curvature--Haar slice formula
	\ref{input:general-fan-curvature-haar-slice}, including its lattice-index and
	boundary no-mass assertions, has been independently established for every
	subvariety occurring in the difference-morphism argument.  Put
	\[
	\overline L
	=\overline G(\overline D)\otimes\overline\pi^*\overline N .
	\]
	\begin{enumerate}[label=\textup{(\arabic*)},leftmargin=2em]
		\item If
		\[
		\mu^{\mathrm{ess}}_{\overline{L}}(X)
		=
		\mu^{\mathrm{ess}}_{\overline{L}}(\overline{G}),
		\]
		then \(X\) is a special subvariety.
		\item If \(G(\overline K)\) has \(\overline L\)-special points, then
		\[
		\mu^{\mathrm{ess}}_{\overline{L}}(X)
		=
		\mu^{\mathrm{ess}}_{\overline{L}}(\overline{G})
		\]
		if and only if \(X\) is \(\overline L\)-special.
	\end{enumerate}
\end{thm}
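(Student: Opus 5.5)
The plan follows the Zhang--K\"uhne strategy: equidistribution on a power of \(X\), pushed forward by a difference morphism, combined with the quasi-canonical replacement mechanism described in the abstract.

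\textbf{Step 1: reductions.} Translate \(X\) and pass to the quotient of \(G\) by the connected stabilizer of \(X\). This reduces to the case where \(X\) has finite stabilizer and is not itself a translate of an algebraic subgroup. Under this reduction, the equality \(\mu^{\mathrm{ess}}_{\overline L}(X)=\mu^{\mathrm{ess}}_{\overline L}(\overline G)\) is preserved. The reason is that \(\overline G(\overline D)\) is built from toric data together with canonically metrized Picard-zero theta factors, so it behaves functorially under isogenies and quotients; this has to be checked against the theta-factor convention \(\mathcal H_{G,-m}\simeq\mathcal H_{G,m}^\vee\).

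\textbf{Step 2: quasi-canonical replacement.} Replace \(\overline D\) by a quasi-canonical toric metric \(\overline D'\) with the same monocritical point. The compression-path estimates underlying Theorem~\ref{thm:intro-generic-equidistribution} should show two things:
\[
\mu^{\mathrm{ess}}_{\overline L}(X)-\mu^{\mathrm{ess}}_{\overline L}(\overline G)=\mu^{\mathrm{ess}}_{\overline L'}(X)-\mu^{\mathrm{ess}}_{\overline L'}(\overline G)\ \ (\text{up to sign control}),
\]
and that generic \(\overline L\)-small sequences on \(X\) are \(\overline L'\)-small. Monocriticality is used here to identify the common minimizing toric point. Arithmetic \(T\)-effectivity lets the Deligne-pairing argument control the fibre direction. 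In this way the hypothesis of (1) transfers to \(\overline L'\), and the problem becomes the quasi-canonical case.

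\textbf{Step 3: equidistribution and the difference morphism.} Take \(n\) large and consider \(\alpha:X^n\to G^{n-1}\), \((x_i)\mapsto(x_i-x_{i+1})\), which is generically finite onto its image because the stabilizer is finite. Then:
\begin{enumerate}[label=\textup{(\alph*)},leftmargin=2em]
\item A generic small sequence on \(X^n\) maps to a generic small sequence on \(\alpha(X^n)\) for the pulled-back bundle.
\item Applying Theorem~\ref{thm:intro-generic-equidistribution} with the explicit quasi-canonical limit measure on both \(X^n\) and \(\alpha(X^n)\) at an archimedean place gives
\[
\alpha_*\bigl(\mu_{X}^{\boxtimes n}\bigr)=\mu_{\alpha(X^n)}.
\]
\item Compare the supports. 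The right side is a curvature form of the bundle on the image, whose toric part is the Haar measure of the compact torus slice times the abelian curvature. Here one invokes the curvature--Haar slice formula: for \((\mathbb P^1)^t\) it holds by K\"uhne's computation, and for a general fan it is the assumed input~\ref{input:general-fan-curvature-haar-slice}. The lattice-index assertion is needed for the normalization, and the no-boundary-mass assertion rules out mass escaping to \(\overline G\setminus G\).
\item The resulting measure identity forces the differential of \(\alpha\) to degenerate on a dense open set. As in Zhang and K\"uhne, this contradicts the finite stabilizer unless \(\dim X=0\), i.e.\ unless \(X\) is a point.
\end{enumerate}
One then shows that \(X\) is a translate of a subgroup by a point of minimal height, which is torsion in the sense of the paper's special notion. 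This proves (1).

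\textbf{Step 4: part (2).} For the forward direction, (1) gives that \(X\) is special, and existence of \(\overline L\)-special points identifies special with \(\overline L\)-special. For the converse, translates of subgroups by \(\overline L\)-special points contain a dense set of points of height \(\mu^{\mathrm{ess}}_{\overline L}(\overline G)\); combined with the general inequality \(\mu^{\mathrm{ess}}(X)\ge\mu^{\mathrm{ess}}(\overline G)\), this yields equality.

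\textbf{Main obstacle.} I expect the hardest step to be the transport in Step 2. One must verify that the local trivializations, with the theta factors \(f_{m,v}\), are compatible under \(\alpha\), products, and quotients, and that replacing the metric does not change which sequences are generic small. This is where the K\"uhne local-trivialization transport package does the work. I would first prove it in the split case of trivial extension classes and then twist by the theta factors.
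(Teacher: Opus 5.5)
Your route to (1) is a legitimate alternative to the paper's, but Step~3 cites a theorem that does not apply to subvarieties, and Step~4 leaves the forward direction of (2) unproved.

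\textbf{How the routes differ.} You quotient by the connected stabilizer and run the difference morphism directly on \(X\). The paper instead passes to the minimal translate \(Y=a+B\supseteq X\) and restricts the quasi-canonical replacement to a quasi-canonical metric \(\overline L'_{a,B}\) on \(B\). It uses the quasi-canonical Bogomolov theorem, where the stabilizer quotient and difference morphism live, only to upgrade an \(X\)-generic sequence, which is strict in \(Y\), to equidistribution with Zariski-dense support in \(Y\). Your direct argument could replace this for (1), provided the points below are repaired.

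\textbf{Step~3: wrong equidistribution theorem.} Theorem~\ref{thm:intro-generic-equidistribution} concerns \(G\)-generic sequences on \(\overline G\). An \(X^n\)-generic sequence is not generic in \(G^n\), and \(X^n\), \(\alpha(X^n)\) are not semiabelian varieties. The limit measure \(c_1(\overline G(\overline D)_v)^t\wedge c_1(\overline\pi^*\overline N_v)^g\) does not even have the right degree on \(X\). What you need is the subvariety analogue of K\"uhne's Proposition~4.1, with limit \(c_1(\overline G(\overline D')_v|_{\overline X})^{d-d_0}\wedge c_1(\overline\pi^*\overline N_v|_{\overline X})^{d_0}\) (normalized), where \(d_0=\dim\pi(X)\). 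That requires re-running the degree asymptotics, first variation, horizontal correction and volume estimates on components of \(\varphi_n^{-1}(\overline X)\) (Lemma~\ref{lem:subvariety-restricted-kuhne-package}, Proposition~\ref{prop:kuhne-section4-transport}(a)).

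\textbf{Step~3: target metrics.} The targets of the quotient and difference maps cannot carry a pulled-back or product bundle. Images of small tuples under \(\alpha\) have valuation measures concentrating at \(0\), not at \((u_v,\dots,u_v)\). So they are small only for a quasi-canonical target metric with the transported critical vector. This is proved from KR convergence of the joint valuation measures plus a N\'eron--Tate comparison (Lemma~\ref{lem:qcanonical-operation-smallness}).

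\textbf{Steps~1 and~2: order and mechanism.} Step~1 must come after Step~2. \(\overline G(\overline D)\) is not functorial under quotients, and a monocritical metric does not descend to a compactification of \(T/(T\cap B_0)\). The transfer in Step~2 does not come from compression estimates. It comes from the BGPS concentration \(\boldsymbol\nu_{x_i}\to\boldsymbol\delta_{\mathbf u}\) (Proposition~\ref{prop:monocritical-kr}) and the Lipschitz continuity of \(\Psi_D(\cdot-u_v)\). No equality of essential-minimum gaps is needed.

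\textbf{Step~4: forward direction of (2).} The phrase ``existence of \(\overline L\)-special points identifies special with \(\overline L\)-special'' is not an argument. From \(\mu^{\mathrm{ess}}_{\overline L}(a+B)=\mu_G\) you only get points whose heights tend to \(\mu_G\). You must produce a point of \(a+B\) of height exactly \(\mu_G\). For the same reason, the last sentence of Step~3 (``\(X\) is a translate by a point of minimal height'') is false without the hypothesis of (2): if the critical vector is not rational, no point of \(G\) attains \(\mu_G\). For (1) a bare translate suffices.

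\textbf{What the paper does for (2).} The argument is Proposition~\ref{prop:special-restrict}:
\begin{enumerate}
\item Choose \(a\) with \(\pi(a)\) torsion, using Zhang's theorem on \(\overline{\pi(X)}\) and minimality of \(Y\) (Lemma~\ref{lem:torsion-base}).
\item Write \(a=qt_a\) with \(q\) torsion.
\item From KR concentration, deduce \(u_v-\operatorname{val}_v(t_a)\in\iota_N(N_{B,\mathbb R})\) (Lemma~\ref{lem:tropical-envelope}).
\item Descend \(u_v\in\operatorname{val}_v(T(\overline K))\otimes\mathbb Q\) to the saturated subtorus \(T_B\) (Lemma~\ref{lem:rational-valuation-subtorus}).
\item Apply the BGPS criterion (Proposition~\ref{input:toric-special-point}) to get \(t_B\) with \(\operatorname{val}_v(t_B)=w_v\) for all \(v\) simultaneously.
\end{enumerate}
Then \(at_B\in Y\) is \(\overline L\)-special; none of this appears in your proposal.

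\textbf{Minor point.} \(\mu^{\mathrm{ess}}(X)\ge\mu^{\mathrm{ess}}(\overline G)\) is not a general inequality. Here it holds only because arithmetic \(T\)-effectivity gives \(h_{\overline L}\ge\mu_G\) on all of \(G(\overline K)\) (Lemma~\ref{lem:arith-teff-height-lower}).
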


\subsection*{What was known and what is new}

In the notation above, the antecedents divide into four distinct inputs.
Yuan--Zhang \cite{YZ} provide the adelic height and generic-equidistribution
framework, with direct application requiring the relevant quasi-canonical
equality.  K\"uhne \cite{Ku1} proves canonical semiabelian equidistribution
and Bogomolov by local trivializations, Haar slices and difference morphisms.
BGPS \cite{BGPS19} isolate monocritical toric metrics and their critical
tropical data on pure toric varieties.  More recently, Ballay--Sombra
\cite{BS24} develop approximation beyond the quasi-canonical equality case,
whereas Hultberg \cite{Hul24} computes heights, minima and intersections for
toric bundles.  The latter height-theoretic results are not, by themselves,
the strict Bogomolov bridge used here.

The new point of Theorems~\ref{thm:intro-generic-equidistribution} and
\ref{thm:intro-bogomolov} is the interaction between a semiabelian extension
and a non-canonical toric metric.  The proof compresses the metric toward the
quasi-canonical model singled out by its critical tropical center, transports
K\"uhne's Haar-slice and difference-morphism argument from the standard
compactification to a compactification satisfying the stated Haar-slice input,
and
then combines monocritical concentration with restriction to subvarieties.
This yields the metric-dependent implication
\[
\mu^{\mathrm{ess}}_{\overline L}(X)
=\mu^{\mathrm{ess}}_{\overline L}(\overline G)
\quad\Longrightarrow\quad X\text{ is special},
\]
with the converse under the special-point hypothesis stated in
Theorem~\ref{thm:intro-bogomolov}.  Thus the contribution is neither a new
proof of the canonical theorem nor a theorem for all semipositive metrics: it
is the extension to the stated metric class, organized by its critical
tropical data.

\subsection*{Proof strategy}

The strategy is stated only after the results and their relation to the
literature have been made explicit.  The generic equidistribution theorem is
proved internally; the Bogomolov theorem then uses the K\"uhne transport
assembly established in Proposition~\ref{prop:kuhne-section4-transport}.

The proof of the generic equidistribution theorem decomposes into four
distinct stages:
\begin{enumerate}[label=\textup{(\arabic*)},leftmargin=2em]
	\item Reduce to the ample case by a birational modification of \(X_\Sigma\).
	\item Prove the canonical metric case by extending K\"uhne's asymptotic
	estimates to arbitrary ample toric divisors.
	\item Treat quasi-canonical metrics by constructing explicit compression paths
	and proving the corresponding lower-bound, measure, first-variation, and
	arithmetic-volume estimates.
	\item Pass to monocritical metrics using tropical measure theory and
	arithmetical \(T\)-effectivity, reducing the limiting measure to the
	quasi-canonical model.
\end{enumerate}

\section{Conventions and terminology}
\paragraph{Asymptotic notation.}
For two non-negative quantities \(a\) and \(b\), we write \(a\ll b\) if
there is a constant \(c>0\) such that \(a\leq cb\).  If \(c\) is allowed to
depend on auxiliary data, for instance an algebraic variety \(X\), we write
\(a\ll_X b\).  We use \(\gg\) with the analogous meaning.

\paragraph{Number fields.}
Throughout the paper, \(K\subseteq \overline{\mathbb Q}\) is a number field
with ring of integers \(O_K\), and \(S=\operatorname{Spec}(O_K)\).  We denote
by \(\Sigma(K)\) the set of places of \(K\), and by \(\Sigma_f(K)\) and
\(\Sigma_{\infty}(K)\) the non-archimedean and archimedean places,
respectively.  For \(v\in\Sigma(K)\), let \(K_v\) be the \(v\)-adic
completion of \(K\), let \(\mathbb C_v\) be the completion of an algebraic
closure of \(K_v\), let \(O_v\subseteq\mathbb C_v\) be its ring of integers,
let \(\boldsymbol p_v\) be the maximal ideal of \(O_v\), and let
\(k_v=O_v/\boldsymbol p_v\).

\paragraph{Generic sequences.}
Let \(X\) be an algebraic \(K\)-variety.  A sequence
\((x_i)_{i\geq 1}\) of closed points of \(X\) is called \(X\)-generic if no
subsequence is contained in a proper algebraic subvariety of \(X\).  When \(X\)
is clear from context, we simply say generic.

\paragraph{Small sequences.}
Let \(X\) be a quasi-projective algebraic \(K\)-variety, and let
\(\overline L\) be an integrable adelic line bundle on \(X/\mathbb Z\).  A
sequence \((x_i)_{i\geq1}\) of closed points of \(X\) is called
\(\overline L\)-small if
\[
h_{\overline{L}}(x_i)\to\mu^{\mathrm{ess}}_{\overline{L}}(X)
\qquad (i\to \infty).
\]

\section{Preliminary}
In this section, we summarize the basic facts that are essential for the proof.
\subsection{Valuation theory}
A global field \(K\) is either a number field or the function field of a
regular projective curve over a field.  We equip \(K\) with a set of places
\(\Sigma(K)\).  A place \(v\in\Sigma(K)\) consists of an absolute value
\(|\cdot|_v\) on \(K\), together with a positive weight \(n_v\).

For \(K_0=\mathbb Q\), the places are the usual archimedean and \(p\)-adic
absolute values, with all weights equal to \(1\).  For a function field
\(K_0=k(C)\), where \(C\) is a regular projective curve over \(k\), the places
are indexed by closed points \(v_0\in C\).  For
\(\alpha\in k(C)^\times\), we set
\[
|\alpha|_{v_0}=c_k^{-\operatorname{ord}_{v_0}(\alpha)},
\qquad
n_{v_0}=[k(v_0):k],
\]
where \(c_k=e\) if \(k\) is infinite and \(c_k=\#k\) if \(k\) is finite.

Let \(K/K_0\) be a finite extension.  A place \(v\) of \(K\) extending
\(v_0\in\Sigma(K_0)\) is assigned the weight
\[
N^{K}_{v}=[K_v:K_{0,v_0}]n_{v_0},
\qquad
n^{K}_{v}=\frac{N^{K}_{v}}{[K:K_0]}.
\]
We call \(N^{K}_{v}\) the raw local degree and \(n^{K}_{v}\) the normalized
weight.  Unless the superscript ``raw'' is displayed, every height, adelic
intersection and adelic measure in this paper uses normalized weights.  Raw
degrees are only intermediate bookkeeping and are divided by the global
degree before any height is formed.  From now on we suppress the field
superscript and write \(n_v=n_v^K\) for the normalized weight.
Then
\[
\sum_{v\mid v_0}N^{K}_{v}=[K:K_0]n_{v_0},
\qquad
\sum_{v\mid v_0}n^{K}_{v}=n_{v_0},
\qquad
\prod_{v\in\Sigma(K)}|\alpha|_v^{n^{K}_{v}}=1
\quad(\alpha\in K^\times).
\]
\subsection{Toric varieties}
Let \(T\cong\mathbb G_m^t\) be a split torus of dimension \(t\) over \(K\).
Let \(N=\operatorname{Hom}(\mathbb G_m,T)\) be the lattice of cocharacters of
\(T\), let \(M=\operatorname{Hom}(T,\mathbb G_m)=N^\vee\) be the lattice of
characters, and write
\[
N_{\mathbb R}=N\otimes\mathbb R,\qquad
M_{\mathbb R}=M\otimes\mathbb R .
\]

\begin{defn}
	Let \(\mathscr P\) be a non-empty collection of convex subsets of
	\(N_{\mathbb R}\).  We call \(\mathscr P\) a convex subdivision if
	\begin{enumerate}[label=\textup{(\arabic*)},leftmargin=2em]
		\item every face of an element of \(\mathscr P\) belongs to \(\mathscr P\);
		\item any two elements of \(\mathscr P\) are either disjoint or intersect in a
		common face.
	\end{enumerate}
	The support of \(\mathscr P\) is
	\[
	|\mathscr P|=\bigcup_{C\in\mathscr P}C.
	\]
	The subdivision is complete if \(|\mathscr P|=N_{\mathbb R}\).
\end{defn}
\begin{defn}
	A polyhedral complex in \(N_{\mathbb R}\) is a finite convex subdivision
	whose elements are polyhedra.  It is strongly convex if all of its polyhedra
	are strongly convex, and conic if all of its elements are cones.  A strongly
	convex conic polyhedral complex in \(N_{\mathbb R}\) is called a fan.
\end{defn}
Let \(X\) be a proper toric variety over \(K\) with torus \(T\), described by a
complete fan \(\Sigma\) on \(N_{\mathbb R}\).  To each cone
\(\sigma\in\Sigma\) there correspond an open affine subset \(X_\sigma\) and an
orbit \(O(\sigma)\).  For \(\sigma=\{0\}\), the principal open subset \(X_0\)
agrees with the orbit \(O(0)\), and is canonically isomorphic to \(T\).  The
action of \(T\) on \(X\) is denoted by \((t,p)\mapsto t\cdot p\).
\begin{defn}
	Let \(\Sigma\) be a fan in \(N_{\mathbb R}\).  A function
	\(\Psi:|\Sigma|\to\mathbb R\) is called a virtual support function on
	\(\Sigma\) if it is a conic \(H\)-lattice function in the sense of
	\cite[Definition~2.6.6]{BPS14}; equivalently, for every cone
	\(\sigma\in\Sigma\), there is \(m_\sigma\in M\) such that
	\[
	\Psi(u)=\langle m_\sigma,u\rangle
	\qquad (u\in\sigma).
	\]
	The \(m_\sigma\) are called defining vectors of \(\Psi\).  A concave virtual
	support function on a complete fan is called a support function.
	
	A toric \(\mathbb R\)-divisor on the proper toric variety \(X\) is an
	\(\mathbb R\)-divisor invariant under the action of \(T\).  Such a divisor
	\(D\) defines a function \(\Psi_D:N_{\mathbb R}\to\mathbb R\) whose restriction
	to each cone of \(\Sigma\) is linear.  We also attach to \(D\) the stability
	set
	\[
	\Delta_D=\operatorname{stab}(\Psi_D)\subseteq M_{\mathbb R}.
	\]
\end{defn}
In the sequel, toric varieties are assumed to be proper; equivalently,
\(\Sigma\) is a complete fan.
\begin{prop}
	The following is \cite[Propositions 4.7--4.9]{BMPS22}.  Let \(D\) be a toric
	\(\mathbb R\)-divisor on \(X\).  Then:
	\begin{enumerate}[label=\textup{(\arabic*)},leftmargin=2em]
		\item \(D\) is ample if and only if \(\Psi_D\) is strictly concave.
		\item \(D\) is nef if and only if \(\Psi_D\) is concave, equivalently \(D\)
		is globally generated.
		\item \(D\) is big if and only if \(\dim\Delta_D=\dim X\).
		\item \(D\) is pseudo-effective if and only if \(\Delta_D\neq\emptyset\).
		\item \(D\) is \(T\)-effective if and only if \(\Psi_D\leq0\), equivalently
		\(0\in\Delta_D\).
	\end{enumerate}
\end{prop}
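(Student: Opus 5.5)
The proposition gathers five standard criteria for toric \(\mathbb R\)-divisors, cited from \cite[Propositions 4.7--4.9]{BMPS22}. I will reduce each item to the dictionary between toric divisors and their virtual support functions. Write \(D=\sum_{\rho}a_\rho D_\rho\) over the rays of \(\Sigma\), with primitive vectors \(u_\rho\). With the concave convention of \cite{BPS14}, \(\Psi_D\) is the unique function that is linear on each cone and satisfies \(\Psi_D(u_\rho)=-a_\rho\). The stability set is
\[
\Delta_D=\{m\in M_{\mathbb R}:\langle m,u\rangle\ge\Psi_D(u)\ \text{for all } u\}.
\]
By construction, \(\Delta_D\) is the polytope whose lattice points \(m\in \Delta_D\cap M\) index the characters \(\chi^m\) that are global sections of \(\mathcal O(D)\) when \(D\) is integral. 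I will first prove everything for \(\mathbb Q\)-divisors by clearing denominators. I will then pass to \(\mathbb R\)-divisors using two facts. First, each condition is defined by closed or open polyhedral conditions on the coefficient vector \((a_\rho)\). Second, the cones of nef, big, pseudo-effective and \(T\)-effective classes are the closures or interiors of the rational ones.

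Items (1) and (2) follow from the toric intersection theory. For a wall \(\tau=\sigma\cap\sigma'\), the intersection number \(D\cdot V(\tau)\) equals the jump of the defining vectors \(m_\sigma-m_{\sigma'}\), paired against the normal direction. So it is nonnegative, respectively positive, exactly when \(\Psi_D\) is concave, respectively strictly concave, across \(\tau\). Toric Kleiman's criterion says a torus-invariant divisor is nef (resp. ample) iff it is nonnegative (resp. positive) on all invariant curves \(V(\tau)\). This gives (1) and the first part of (2). Concavity of \(\Psi_D\) means \(\Psi_D=\min_\sigma m_\sigma\). Each \(\chi^{m_\sigma}\) is then a global section that is nonvanishing on \(X_\sigma\), which yields global generation. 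Conversely, global generation gives nefness. This completes (2).

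For (3) and (4), I will use \(h^0(X,\mathcal O(kD))=\#(k\Delta_D\cap M)\) for integral \(kD\). By Ehrhart asymptotics this grows like \(\operatorname{vol}(\Delta_D)k^{t}\). So \(D\) is big iff \(\Delta_D\) has nonempty interior, i.e. \(\dim\Delta_D=t\). Pseudo-effectivity is the closure of bigness. Item (4) then follows because \(\Delta_{D+\varepsilon A}\supseteq\Delta_D+\varepsilon\Delta_A\) for an ample \(A\). Hence \(\Delta_D\ne\emptyset\) implies \(D+\varepsilon A\) is big for every \(\varepsilon>0\). In the other direction, a limit of nonempty compact polytopes \(\Delta_{D_n}\) with \(D_n\to D\) is nonempty, by a compactness argument on the bounded family. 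Item (5) is immediate: \(0\in\Delta_D\) reads \(\Psi_D\le 0\), i.e. all \(a_\rho\ge0\), and that is \(T\)-effectivity.

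The main obstacle is the passage from \(\mathbb Q\)- to \(\mathbb R\)-coefficients in (3) and (4). Bigness and pseudo-effectivity of \(\mathbb R\)-divisors are defined through volumes or cone closures rather than sections. So I would need continuity of \((a_\rho)\mapsto\operatorname{vol}(\Delta_D)\) and the identification of the toric pseudo-effective cone with the cone generated by the \(D_\rho\). For both I will rely on the cited treatment in \cite{BMPS22}.
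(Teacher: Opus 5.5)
Your proposal is correct in substance, but it does not follow the paper's route, because the paper has no argument here: it imports the five criteria verbatim from \cite[Propositions 4.7--4.9]{BMPS22}, where they are stated directly for toric $\mathbb R$-divisors.

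You instead reconstruct the standard integral/rational mechanism, item by item:
\begin{itemize}
\item for (1)--(2), wall intersection numbers $D\cdot V(\tau)$ together with the toric Kleiman criterion;
\item for global generation, the sections $\chi^{m_\sigma}$;
\item for (3), lattice-point counting $h^0(kD)=\#(k\Delta_D\cap M)$ with Ehrhart asymptotics;
\item for (4), the superadditivity $\Delta_D+\varepsilon\Delta_A\subseteq\Delta_{D+\varepsilon A}$ and a compactness argument;
\item for (5), the ray-by-ray reading of $\Psi_D\le 0$.
\end{itemize}
You then defer the passage to real coefficients to the same source. Your route makes the toric mechanism behind each criterion explicit. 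The citation instead delivers exactly the $\mathbb R$-divisor version the paper needs (since $\bD$ is an $\mathbb R$-divisor) without redoing the approximation arguments you set aside.

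If you write your version out, two small repairs are needed in (4). First, the paper assumes $X_\Sigma$ only proper, so an ample $A$ need not exist. Use any toric divisor with full-dimensional polytope instead, for instance $B=\sum_\rho D_\rho$, whose $\Delta_B$ contains a neighbourhood of $0$; the same superadditivity then shows $D+\varepsilon B$ is big.

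Second, in the converse direction, approximating a pseudo-effective class by big classes gives convergence in $\operatorname{Pic}(X)_{\mathbb R}$, not convergence of divisors. You must lift to toric representatives $D_n\to D$ coefficientwise. This is harmless, because principal toric divisors only translate $\Delta$. After lifting, completeness of $\Sigma$ (the $u_\rho$ positively span $N_{\mathbb R}$) gives the uniform boundedness of the $\Delta_{D_n}$ that your compactness step uses.

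Likewise, in (1)--(2), passing from concavity across each wall to global concavity of $\Psi_D$ uses completeness of the fan. This is standard, but it is a step worth naming.
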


Now let \(K\) be a global field.  For each place \(v\in\Sigma(K)\), the
algebraic torus \(T\) has an analytic space \(T^{an}_v\), and we denote its
compact torus by \(\mathbb S_v^{an}\).  At an archimedean place, this compact
torus is isomorphic to \((S^1)^t\).  In the non-archimedean case, it is a
compact analytic group; see \cite[\S4.2]{BPS14}.

\begin{prop}
	By \cite[Chapter 4.1]{BPS14}, there is a commutative diagram
	\begin{equation*}
		\xymatrix{
			&X^{an}_{0,v}\ar[d]^{\rho_0}\ar[ld]_{\val_v}\\
			N_{\mathbb{R}}\ar[r]^{\boldsymbol{e}}&X_0(\mathbb{R}_{\geqslant0})
		}
	\end{equation*}
	which extends to \(X^{an}_v\):
	\begin{equation*}
		\xymatrix{
			&X^{an}_v\ar[d]^{\rho_{\Sigma}}\ar[ld]_{\val_v}\\
			N_{\Sigma}\ar[r]^{\boldsymbol{e}}&X(\mathbb{R}_{\geqslant0})
		}
	\end{equation*}
	where the spaces with corners \(N_{\Sigma}\) and
	\(X(\mathbb{R}_{\geqslant0})\) are compactifications of \(N_{\mathbb R}\) and
	\(X_0(\mathbb R_{\geqslant0})\), respectively.
\end{prop}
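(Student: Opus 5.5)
The plan is to write all three maps explicitly on each affine toric chart, check that the triangle commutes there by a direct computation, and then glue along the face relations of \(\Sigma\). Fix \(v\in\Sigma(K)\) and recall that \(X^{an}_{0,v}\) is the Berkovich (or, at archimedean places, complex) analytification of \(X_0=T=\operatorname{Spec}K[M]\). We use the following three descriptions.
\begin{enumerate}[label=\textup{(\roman*)},leftmargin=2em]
\item Identify \(X_0(\mathbb R_{\geqslant0})=\operatorname{Hom}_{\mathrm{gp}}(M,\mathbb R_{>0})\).
\item Define \(\rho_0(x)\) to be the character \(m\mapsto|\chi^m(x)|_v\). This is multiplicative because \(\chi^{m+m'}=\chi^m\chi^{m'}\) and each point \(x\) is a multiplicative seminorm.
\item Define \(\val_v(x)\in N_{\mathbb R}=\operatorname{Hom}(M,\mathbb R)\) by \(\langle m,\val_v(x)\rangle=-\log|\chi^m(x)|_v\), and \(\boldsymbol e(u)\) to be the character \(m\mapsto\exp(-\langle m,u\rangle)\).
\end{enumerate}
With these formulas, \(\boldsymbol e\circ\val_v=\rho_0\) holds tautologically. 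The map \(\boldsymbol e\) is a homeomorphism, with inverse \(-\log\) coordinatewise after choosing a basis of \(M\). Continuity of \(\rho_0\) and \(\val_v\) follows because the analytic topology is the coarsest one making all \(x\mapsto|f(x)|_v\) continuous, applied to \(f=\chi^m\).

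For the extension, work cone by cone. For \(\sigma\in\Sigma\), put \(M_\sigma=\sigma^\vee\cap M\), so that \(X_\sigma=\operatorname{Spec}K[M_\sigma]\). Set
\[
X_\sigma(\mathbb R_{\geqslant0})=\operatorname{Hom}_{\mathrm{mon}}\bigl(M_\sigma,(\mathbb R_{\geqslant0},\cdot)\bigr),
\qquad
N_\sigma=\operatorname{Hom}_{\mathrm{mon}}\bigl(M_\sigma,(\mathbb R\cup\{\infty\},+)\bigr),
\]
each with the topology of pointwise convergence. Define \(\rho_\sigma\) and \(\val_{v,\sigma}\) by the same formulas as above, with the convention \(-\log 0=\infty\), and let \(\boldsymbol e_\sigma\) use \(\exp(-\infty)=0\). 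Then \(\boldsymbol e_\sigma\) is again a homeomorphism, the chart triangle commutes for the same tautological reason, and continuity on the boundary follows as before from continuity of \(x\mapsto|\chi^m(x)|_v\) for \(m\in M_\sigma\). Since \(M\) is generated by \(M_\sigma\) as a group, a monoid map on \(M_\sigma\) with finite (respectively nonzero) values extends uniquely to a group map on \(M\). Hence \(N_{\mathbb R}\subseteq N_\sigma\) and \(X_0(\mathbb R_{\geqslant0})\subseteq X_\sigma(\mathbb R_{\geqslant0})\) are exactly the loci of finite, respectively nonzero, values, and \(\rho_\sigma,\val_{v,\sigma}\) restrict to \(\rho_0,\val_v\).

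It remains to glue. For a face \(\tau\preceq\sigma\) one has \(M_\tau=M_\sigma+\mathbb Z_{\geqslant0}(-m_0)\) for any \(m_0\) in the relative interior of \(\sigma^\vee\cap\tau^\perp\), so \(X_\tau\subseteq X_\sigma\) is the localization at \(\chi^{m_0}\). Accordingly \(N_\tau\subseteq N_\sigma\) is the open subset where \(m_0\) takes a finite value, and \(X_\tau(\mathbb R_{\geqslant0})\subseteq X_\sigma(\mathbb R_{\geqslant0})\) is the open subset where \(m_0\) is nonzero. All three maps are visibly compatible with these inclusions, so they glue to
\[
\val_v:X^{an}_v\to N_\Sigma,\qquad
\rho_\Sigma:X^{an}_v\to X(\mathbb R_{\geqslant0}),\qquad
\boldsymbol e:N_\Sigma\to X(\mathbb R_{\geqslant0}),
\]
where \(N_\Sigma=\bigcup_\sigma N_\sigma\). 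The extended diagram commutes because it commutes on every chart.

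The main obstacle is topological bookkeeping rather than any substantive identity. One must show that these open inclusions really glue \(N_\sigma\) into a well-defined space with corners. One must also check that \(N_\Sigma\) is compact: completeness of \(\Sigma\) and the partial-compactification structure of each \(N_\sigma\), a product of copies of \(\mathbb R\) and \((-\infty,\infty]\) in suitable coordinates after refining to simplicial cones, should give compactness, but this needs care. Finally, one must confirm that \(X(\mathbb R_{\geqslant0})\) with the glued topology agrees with the subspace topology from \(X(\mathbb C)\). For these points I would follow \cite[\S4.1]{BPS14} directly rather than redo them.
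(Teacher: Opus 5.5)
Your proposal is correct and is essentially the construction in \cite[\S4.1]{BPS14}, which the paper invokes by citation without giving any argument of its own. Your chartwise description of \(\rho_\sigma\), \(\val_{v,\sigma}\) and \(\boldsymbol e_\sigma\) through monoid homomorphisms on \(M_\sigma=\sigma^\vee\cap M\), glued along the localizations \(X_\tau\subseteq X_\sigma\), is the standard one, and the remaining topological points (Hausdorff gluing, compactness, density of \(N_{\mathbb R}\), comparison of topologies) are exactly those verified there.

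For compactness you do not need a simplicial refinement. For each maximal cone \(\sigma\), the set \(\{\phi\in N_\sigma:\phi\geq0\text{ on }M_\sigma\}\) is closed in \(\prod_{m\in M_\sigma}[0,\infty]\), hence compact by Tychonoff, and completeness of \(\Sigma\) (applied to the star fans of all cones) shows that these finitely many sets cover \(N_\Sigma\).
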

\begin{prop}
	By \cite[Definition~4.2.12]{BPS14}, there exists a continuous proper section
	\(\theta_\Sigma\) of \(\rho_\Sigma\)
	whose image is contained in \(\mathbb S_v^{an}\).
\end{prop}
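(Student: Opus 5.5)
The plan is to construct \(\theta_\Sigma\) orbit by orbit and then glue, following the construction of \cite[\S4.2]{BPS14}. Recall that \(N_\Sigma=\bigsqcup_{\sigma\in\Sigma}N(\sigma)\), where \(N(\sigma)=N_{\mathbb R}/\langle\sigma\rangle\), and that \(X^{an}_v=\bigsqcup_\sigma O(\sigma)^{an}\), where each \(O(\sigma)\) is a torus with cocharacter lattice \(N/(N\cap\langle\sigma\rangle)\). The map \(\val_v\) (which plays the role of \(\rho_\Sigma\) through \(\boldsymbol e\)) restricts on each orbit to the tropicalization \(O(\sigma)^{an}\to N(\sigma)\).

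The first step is the open orbit. Write \(T^{an}_v\) in coordinates \(x_1,\dots,x_t\) via a basis of \(M\). In the archimedean case, set \(\theta_0(u)=(e^{-u_1},\dots,e^{-u_t})\in(\mathbb R_{>0})^t\subseteq T(\mathbb C)\). In the non-archimedean case, let \(\theta_0(u)\) be the Gauss-type point, that is, the multiplicative seminorm
\[
\sum_m a_m\chi^m\longmapsto \max_m |a_m|\,e^{-\langle m,u\rangle}.
\]
In both cases \(\val_v\circ\theta_0=\mathrm{id}\), and \(\theta_0\) is continuous, since the seminorm depends continuously on \(u\) for each fixed Laurent polynomial. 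The image lies in the fixed locus of \(\mathbb S^{an}_v\): in the archimedean case it is the positive real points, which meet each compact-torus orbit exactly once. In the non-archimedean case, \(\theta_0(u)\) is the unique Shilov point of the affinoid \(\val_v^{-1}(u)\), which is an \(\mathbb S^{an}_v\)-torsor. I will read the phrase ``image contained in \(\mathbb S_v^{an}\)'' in this sense: the image is a canonical set of representatives for the \(\mathbb S^{an}_v\)-orbits, the translated compact torus.

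The second step does the same construction on each orbit \(O(\sigma)\), giving sections \(\theta_\sigma:N(\sigma)\to O(\sigma)^{an}\). I then define \(\theta_\Sigma\) on \(N_\Sigma\) by setting \(\theta_\Sigma|_{N(\sigma)}=\theta_\sigma\). By construction \(\theta_\Sigma\) is a set-theoretic section.

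The main obstacle is continuity across strata. I would check it on the affine charts \(X_\sigma\), since \(N_\sigma=\bigsqcup_{\tau\preceq\sigma}N(\tau)\) is identified with \(\operatorname{Hom}(\sigma^\vee\cap M,\mathbb R\cup\{\infty\})\). On \(X_\sigma\), the point \(\theta(u)\) is the seminorm
\[
\sum_{m\in\sigma^\vee\cap M} a_m\chi^m\longmapsto\max_m |a_m|\,e^{-u(m)},
\qquad e^{-\infty}=0,
\]
with \(\max\) replaced by evaluation at the positive real point in the archimedean case. This formula restricts to \(\theta_\tau\) on \(N(\tau)\). Moreover, for a fixed polynomial, it is a finite maximum of continuous functions of \(u\in N_\sigma\) in the topology of pointwise convergence. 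This gives continuity of \(\theta_\Sigma\) for the Berkovich topology on \(X^{an}_{\sigma,v}\). The identity \(\val_v\circ\theta=\mathrm{id}\) holds chartwise by evaluating on the characters \(\chi^m\).

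Properness is then automatic. \(N_\Sigma\) is compact because \(\Sigma\) is complete, and \(X^{an}_v\) is Hausdorff, so a continuous map from a compact space into it is proper. It is also injective, being a section, so it is a homeomorphism onto its closed image.
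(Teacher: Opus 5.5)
Your proposal is correct and is the construction the paper relies on: the paper gives no argument beyond the citation of \cite[Definition~4.2.12]{BPS14}, and your section is exactly that one. It is built orbit by orbit from Gauss points (positive real points at archimedean places), checked chartwise on \(N_\sigma=\operatorname{Hom}(\sigma^\vee\cap M,\mathbb R\cup\{\infty\})\), and proper because \(N_\Sigma\) is compact. You are right that the clause ``image contained in \(\mathbb S_v^{an}\)'' cannot hold literally, since \(\rho_\Sigma\circ\theta_\Sigma=\mathrm{id}\) while \(\rho_\Sigma(\mathbb S_v^{an})\) is the single point \(\boldsymbol e(0)\), so only your orbit-representative reading survives. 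At archimedean places, however, the positive real points are orbit representatives, not a ``fixed locus'', because \((S^1)^t\) acts freely on the open torus.
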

\begin{defn}
	A \(v\)-adically metrized \(\mathbb R\)-divisor
	\(\overline D=(D,\Vert\cdot\Vert)\) on \(X\) is toric if \(D\) is a toric
	\(\mathbb R\)-divisor and its Green function is invariant under the action of
	\(\mathbb S_v^{an}\), equivalently constant on the fibers of \(\rho_0\).  We
	then define
	\[
	\Psi_{\overline D,v}(u)=-g_{\overline D}(p),
	\]
	where \(p\in X^{an}_{0,v}\) is any point with \(\operatorname{val}_v(p)=u\).
	
	A quasi-algebraic metrized \(\mathbb R\)-divisor
	\(\overline D=(D,\{\Vert\cdot\Vert_v\})\) is toric if each
	\((D,\Vert\cdot\Vert_v)\) is toric.  In this case we call \(\overline D\) a
	toric metrized \(\mathbb R\)-divisor.  The canonical \(v\)-adic toric metric
	on \(D\) is characterized by \(\Psi_{\overline D,v}=\Psi_D\).
\end{defn}
\subsubsection*{Corner complexes and dual cells}
The following convex-geometric objects will be used in Chapter~9.  Let
\(\Psi:N_{\mathbb R}\to\mathbb R\) be a continuous piecewise-affine concave
function.  Its \emph{corner complex} \(\mathcal C(\Psi)\) is the polyhedral
complex formed by the maximal domains on which \(\Psi\) is affine and all
their common faces.  Its codimension-one cells are called \emph{walls}, and a
codimension-\(r\) cell is called an \emph{\(r\)-corner}.

If \(\tau\) is a cell and \(u\in\operatorname{relint}(\tau)\), define its
dual cell by
\[
\Delta_\tau:=\partial^+\Psi(u)
=\{m\in M_{\mathbb R}:\Psi(w)\leq
\Psi(u)+\langle m,w-u\rangle\ \text{for all }w\}.
\]
This set is independent of \(u\) in the relative interior of \(\tau\).  If
locally \(\Psi=\min_\alpha(\langle m_\alpha,\cdot\rangle+c_\alpha)\), then
\(\Delta_\tau\) is the convex hull of the slopes of the branches active on
\(\tau\).  Under the usual regularity hypotheses,
\(\dim\Delta_\tau=\operatorname{codim}\tau\).  Thus higher-codimension corner
contributions are governed in general by the volume, or mixed volume, of the
dual cells.  A decomposition into independent one-wall currents is available
only under the additional product-corner condition that the relevant dual
cell is a Minkowski sum of independent segments.

For \(\Psi=\Psi_{\overline D,v}\), these definitions apply to the local toric
weight introduced above.  The critical vector \((u_v)_v\) of a monocritical
metric, defined below, specifies the tropical translations used in the
quasi-canonical replacement; it should not be confused with a cell of
\(\mathcal C(\Psi_{\overline D,v})\).
\begin{prop}
	Let \(D\) be a toric \(\mathbb R\)-divisor on \(X\).
	\begin{enumerate}[label=\textup{(\arabic*)},leftmargin=2em]
		\item For a place \(v\), the correspondence
		\(\Vert\cdot\Vert\mapsto\Psi_{\overline D,v}\) is a bijection between toric
		\(v\)-adic metrics on \(D\) and functions
		\(\Psi:N_{\mathbb R}\to\mathbb R\) such that \(\Psi-\Psi_D\) extends
		continuously to \(N_\Sigma\).
		\item The correspondence
		\[
		\{\Vert\cdot\Vert_v\}\longmapsto \{\Psi_{\overline D,v}\}
		\]
		is a bijection between quasi-algebraic toric metrics on \(D\) and families
		\(\{\Psi_v\}_{v\in\Sigma(K)}\) satisfying the preceding continuity condition
		for every \(v\), and \(\Psi_v=\Psi_D\) for all but finitely many \(v\).
	\end{enumerate}
\end{prop}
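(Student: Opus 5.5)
The plan is to reduce everything to the canonical toric metric, which corresponds to \(\Psi_D\) by definition, and to compare an arbitrary metric with it through a continuous \(\mathbb S_v^{an}\)-invariant function. Fix a place \(v\) and write \(g_{\can}\) for the Green function of the canonical \(v\)-adic toric metric on \(D\), so that \(\Psi_{\overline D_{\can},v}=\Psi_D\). Any other continuous metric on \(D\) has Green function \(g=g_{\can}+f\) with \(f\) continuous on the compact space \(X^{an}_v\). Moreover, \(g\) is \(\mathbb S_v^{an}\)-invariant if and only if \(f\) is, because \(g_{\can}\) is invariant.

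For the map in (1), start from a toric metric. Then \(\Psi_{\overline D,v}-\Psi_D=-f\circ\theta_0\) on \(N_{\mathbb R}\), where \(\theta_0\) is the restriction of the section \(\theta_\Sigma\) to \(N_{\mathbb R}\), identified with \(X_0(\mathbb R_{\geq0})\) via \(\boldsymbol e\). Since \(\theta_\Sigma\) is a continuous section of \(\rho_\Sigma\) defined on all of \(N_\Sigma\), the function \(-f\circ\theta_\Sigma\) is a continuous extension of \(\Psi_{\overline D,v}-\Psi_D\) to \(N_\Sigma\). The key point is that the fibres of \(\rho_\Sigma\) are exactly the \(\mathbb S_v^{an}\)-orbits; I take this from the description of \(\rho_\Sigma\) in \cite[\S4.2]{BPS14}. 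Consequently every invariant continuous function satisfies
\[
f=(f\circ\theta_\Sigma)\circ\rho_\Sigma .
\]
This identity gives injectivity at once: \(\Psi_{\overline D,v}\) determines \(f\circ\theta_\Sigma\) on the dense subset \(N_{\mathbb R}\subseteq N_\Sigma\), hence on \(N_\Sigma\) by continuity, and hence determines \(f\) and the metric.

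For surjectivity, let \(\Psi\) be given with \(\Psi-\Psi_D\) extending to a continuous function \(h\) on \(N_\Sigma\). Put \(f:=-h\circ\boldsymbol e^{-1}\circ\rho_\Sigma\). This \(f\) is continuous and invariant, and \(g_{\can}+f\) is the Green function of a toric metric whose associated function is \(\Psi\). The only delicate step is the orbit description of the fibres of \(\rho_\Sigma\), including the boundary strata. I would verify it strata by strata: on each orbit \(O(\sigma)\), \(\rho_\Sigma\) is the analogous map for the torus \(O(\sigma)\), and the compact torus of \(O(\sigma)\) is a quotient of \(\mathbb S_v^{an}\).

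For (2), recall that a quasi-algebraic metric is induced by a model over \(S\) at all but finitely many places. The toric scheme \(\mathcal X_\Sigma\) over \(O_K\), with the closure of \(D\), induces the canonical metric at every non-archimedean place. Any two models of \((X,D)\) agree over a dense open subset of \(S\). Hence a quasi-algebraic toric metric coincides with the canonical one for almost all \(v\), which means \(\Psi_v=\Psi_D\) there by the injectivity in (1). Conversely, if a family \(\{\Psi_v\}\) is given, part (1) produces a toric metric at each \(v\). At the places where \(\Psi_v=\Psi_D\) this metric is the canonical one, which is induced by \(\mathcal X_\Sigma\), so the whole family is quasi-algebraic. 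The two constructions are mutually inverse place by place, which gives the bijection.
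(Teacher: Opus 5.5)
Your proof is correct. It is essentially the standard argument behind this recalled fact (the paper states it without proof, following \cite{BPS14}): compare with the canonical metric, use that invariant continuous functions factor through \(\rho_\Sigma\) via \(\theta_\Sigma\), and spread out models to handle all but finitely many places. One simplification: the boundary orbit description you flag as delicate can be bypassed, because constancy on the fibres of \(\rho_0\) (the paper's definition of toric), density of \(X^{an}_{0,v}\) and continuity of \(\rho_\Sigma\) and \(\theta_\Sigma\) already force \(f=(f\circ\theta_\Sigma)\circ\rho_\Sigma\) on all of \(X^{an}_v\).
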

\begin{defn}
	Let \(\overline D\) be a toric metrized \(\mathbb R\)-divisor on \(X\).  For
	each \(v\in\Sigma(K)\), the \(v\)-adic roof function of \(D\) is the concave
	function on \(\Delta_D\) defined by
	\[
	\upsilon_{\overline D,v}=\Psi_{\overline D,v}^{\vee}.
	\]
	The global roof function is
	\[
	\upsilon_{\overline D}
	=
	\sum_{v\in\Sigma(K)} n_v\upsilon_{\overline D,v}.
	\]
\end{defn}
\begin{prop}
	The following characterization is \cite[Theorem~6.1]{BMPS22}.  Let \(D\) be a
	toric metrized \(\mathbb R\)-divisor on \(X\), and let
	\(\upsilon_{\overline D}:\Delta_D\to\mathbb R\) be its global roof function.
	Then:
	\begin{enumerate}[label=\textup{(\arabic*)},leftmargin=2em]
		\item \(\overline D\) is ample if and only if \(\Psi_D\) is strictly concave
		on \(\Sigma\), each \(\Psi_{\overline D,v}\) is concave, and
		\(\upsilon_{\overline D}(x)>0\) for all \(x\in\Delta_D\).
		\item \(\overline D\) is nef if and only if each \(\Psi_{\overline D,v}\) is
		concave and \(\upsilon_{\overline D}(x)\geq0\) for all \(x\in\Delta_D\).
		\item \(\overline D\) is big if and only if \(\dim\Delta_D=\dim X\) and
		\(\upsilon_{\overline D}(x)>0\) for some \(x\in\Delta_D\).
		\item \(\overline D\) is pseudo-effective if and only if
		\(\upsilon_{\overline D}(x)\geq0\) for some \(x\in\Delta_D\).
		\item \(\overline D\) is effective if and only if \(0\in\Delta_D\) and
		\(\upsilon_{\overline D,v}(0)\geq0\) for every \(v\).
		\item \(\overline D\) is vertically semipositive if and only if each
		\(\Psi_{\overline D,v}\) is concave.
	\end{enumerate}
\end{prop}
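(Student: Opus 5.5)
The plan is to reduce each of the six assertions to convex analysis on \(\Delta_D\), via the Legendre--Fenchel duality \(\upsilon_{\overline D,v}=\Psi_{\overline D,v}^{\vee}\) and the dictionary between toric sections and lattice points. The key observation is that a \(T\)-eigen rational section of \(\mathcal O(D)\) is \(s_m=\chi^m s_D\) with \(m\in\Delta_D\cap M\). Its \(v\)-adic sup-norm is
\[
\sup_{p}\Vert s_m(p)\Vert_v=\exp\bigl(-\upsilon_{\overline D,v}(m)\bigr),
\qquad\text{equivalently}\qquad
-\log\sup\Vert s_m\Vert_v=\inf_{u}\bigl(\langle m,u\rangle-\Psi_{\overline D,v}(u)\bigr),
\]
which is exactly the concave Legendre transform. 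Summing over places with the normalized weights \(n_v\) gives
\[
-\sum_v n_v\log\sup\Vert s_m\Vert_v=\upsilon_{\overline D}(m).
\]
This holds first for \(\mathbb Q\)-divisors and rational \(m\), then for \(\mathbb R\)-divisors by homogeneity and continuity of the roof function in \((\Psi_v)_v\). I will use the geometric characterizations of ampleness, nefness, bigness, pseudo-effectivity and \(T\)-effectivity from \cite[Propositions 4.7--4.9]{BMPS22} recalled above for the underlying divisor \(D\). I will also use the bijection between toric metrics and functions \(\Psi_v\) with \(\Psi_v-\Psi_D\) bounded.

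I would treat the local items first. For (6), vertical semipositivity at \(v\) means the Green function is psh (archimedean) or a uniform limit of model/semipositive metrics (non-archimedean). By \(\mathbb S_v^{an}\)-invariance, plurisubharmonicity of a torus-invariant function \(-\Psi_v\circ\operatorname{val}_v\) on \(T^{an}_v\) is equivalent to convexity of \(-\Psi_v\) on \(N_{\mathbb R}\). In the non-archimedean case one approximates \(\Psi_v\) by concave piecewise-affine rational functions, which correspond to toric nef models via the corner complex \(\mathcal C(\Psi_v)\). This gives (6). For (5), effectivity of \(\overline D\) means that the canonical section \(s_D=s_0\) exists, i.e.\ \(0\in\Delta_D\), and has \(\Vert s_D\Vert_v\le1\) everywhere. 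By the sup-norm formula this is \(\upsilon_{\overline D,v}(0)\ge0\) for each \(v\).

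The global items come next. For (2), nefness combines vertical semipositivity, i.e.\ (6), with nonnegativity of heights of all points. I would use two facts. First, for a semipositive toric metric the essential minimum equals \(\max_{\Delta_D}\upsilon_{\overline D}\). Second, the absolute minimum equals \(\min_{\Delta_D}\upsilon_{\overline D}\), attained at torus-fixed/orbit points corresponding to vertices: the height of a point in the orbit of a cone \(\sigma\) is governed by the restriction to the face \(F_\sigma\), and points of \(T\) are controlled by choosing \(m\in\Delta_D\). Thus the height is \(\ge0\) everywhere iff \(\upsilon_{\overline D}\ge0\) on \(\Delta_D\). Item (1) follows the same way, using strict concavity for geometric ampleness and strict positivity of the roof function for arithmetic ampleness in Zhang's sense (adding a small constant metric twist \(\varepsilon[\infty]\) keeps positivity by compactness of \(\Delta_D\)). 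For (3) and (4), I would use the arithmetic volume formula
\[
\widehat{\operatorname{vol}}(\overline D)=(t+1)!\int_{\Delta_D}\max(\upsilon_{\overline D},0)\,d\mathrm{vol}_M,
\]
proved by counting small lattice points \(m\in\ell\Delta_D\cap M\) with \(\upsilon_{\ell\overline D}(m)>0\) and using a Minkowski/successive-minima estimate to pass from eigen-sections to all small sections. Then bigness is \(\dim\Delta_D=t\) together with \(\upsilon_{\overline D}>0\) somewhere, which by concavity is an open condition. Pseudo-effectivity is the closure condition, \(\upsilon_{\overline D}\ge0\) at some point, obtained by perturbing with a big toric divisor.

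I expect the main obstacle to be the nef direction of (2) and the volume formula in (3). The first requires showing that the absolute minimum is attained on boundary strata and equals \(\min\upsilon_{\overline D}\); there I would induct on the orbit stratification using restriction of \(\overline D\) to the closures \(V(\sigma)\). The second requires controlling non-eigen small sections. I would handle this first by averaging over \(\mathbb S_v^{an}\), which does not increase sup-norms, to reduce to eigen-sections, as in \cite{BMPS22}.
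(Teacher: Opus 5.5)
The paper gives no argument of its own for this statement; it imports it from \cite[Theorem~6.1]{BMPS22}. Your plan is correct and is essentially the route taken in that source: sup norms of eigensections as concave Legendre duals, the BPS concavity criterion for semipositivity, the fixed-point/vertex formula \(\mu^{\mathrm{abs}}_{\overline D}(X)=\min_{\Delta_D}\upsilon_{\overline D}\) for (1)--(2), and the volume formula \(\widehat{\operatorname{vol}}(\overline D)=(t+1)!\int_{\Delta_D}\max(\upsilon_{\overline D},0)\,d\mathrm{vol}_M\) for (3)--(4). Two small points to tighten: toric metrics correspond to \(\Psi_v\) with \(\Psi_v-\Psi_D\) extending continuously to \(N_\Sigma\), not merely bounded; and in (6) your approximation step only gives the non-archimedean direction ``\(\Psi_{\overline D,v}\) concave \(\Rightarrow\) semipositive'', so the converse ``semipositive \(\Rightarrow\) concave'' should be quoted from the BPS semipositivity theorem.
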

\begin{prop}
	By \cite[Theorem~3.9 and Corollary~3.10]{BGPS15}, let \(\overline D\) be a
	toric metrized \(\mathbb R\)-divisor on \(X\).  Then
	\[
	\mu^{\mathrm{ess}}_{\overline D}(X)
	=
	\mu^{\mathrm{abs}}_{\overline D}(X_0)
	=
	\max_{x\in\Delta_D}\upsilon_{\overline D}(x).
	\]
\end{prop}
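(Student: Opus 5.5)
The plan is to establish the chain \(\mu^{\mathrm{ess}}_{\overline D}(X)\le \mu^{\mathrm{abs}}_{\overline D}(X_0)\le \max_{\Delta_D}\upsilon_{\overline D}\le \mu^{\mathrm{ess}}_{\overline D}(X)\), working with the principal orbit \(X_0\cong T\), which is dense in \(X\). We may assume \(\Delta_D\neq\emptyset\), so that \(\upsilon_{\overline D}\) is defined.

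The first inequality is formal. Every nonempty open \(U\subseteq X\) meets \(X_0\), and \(\mu^{\mathrm{abs}}_{\overline D}(X_0)\) is the infimum of heights over points of \(X_0\). Comparing the infimum over \(U\) with the one over \(U\cap X_0\subseteq X_0\) gives \(\mu^{\mathrm{ess}}_{\overline D}(X)\le\mu^{\mathrm{abs}}_{\overline D}(X_0)\).

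For the second inequality, we exhibit points of \(X_0\) of height at most \(\max\upsilon_{\overline D}\); the natural candidates are torsion translates. For \(x\in X_0(\overline K)\), the toric height decomposes as
\[
h_{\overline D}(x)=\sum_v n_v\,\frac{1}{\#O(x)}\sum_{y\in O(x)}\bigl(-\Psi_{\overline D,v}(\operatorname{val}_v(y))+\Psi_D(\operatorname{val}_v(y))\bigr)+h_D\text{-contribution},
\]
computed via the section \(s_D\) with divisor \(D\) supported off \(X_0\). To make this cleaner we use the twisted formula: for \(m\in\Delta_D\), twist \(\overline D\) by the rational function \(\chi^m\). This does not change heights, by the product formula, and it replaces \(\Psi_{\overline D,v}\) by \(\Psi_{\overline D,v}-m\). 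For a point \(x\) with \(\operatorname{val}_v(x)=u_v\) at every place (realized approximately by \(S\)-units and torsion points, using the density of \(\{(\operatorname{val}_v(x))_v\}\) and Galois averaging), the local contribution is \(-\Psi_{\overline D,v}(u_v)+\langle m,u_v\rangle\). Minimizing over \(u_v\) gives \(\inf_u(\langle m,u\rangle-\Psi_{\overline D,v}(u))=\Psi_{\overline D,v}^\vee(m)=\upsilon_{\overline D,v}(m)\). The subtlety is that \(m\) may be chosen only once for all places, while the optimal point \(x\) must have a coherent valuation vector satisfying the product formula \(\sum_v n_v u_v=0\). Handling this gives height close to \(\sup_m\sum_v n_v\,\upsilon_v(m)\) via a minimax/duality argument: \(\inf_{\sum n_vu_v=0}\sum_v n_v(-\Psi_v(u_v))=\sup_m\sum_v n_v\Psi_v^\vee(m)\). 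This is a convex duality (inf-convolution versus sum of Legendre transforms), valid because the \(\Psi_v\) are concave, or replaced by their concave envelopes.

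The third inequality is the lower bound: on a dense open set, heights are at least \(\max\upsilon\). Fix the maximizer \(m_0\in\Delta_D\). After twisting by \(\chi^{m_0}\) (after rational approximation, and by homogeneity in \(\mathbb R\)-divisors), the local Green function satisfies \(g_v\ge -\upsilon_{v}(m_0)\) pointwise on \(X_0^{an}\), by the definition of the Legendre transform. Summing, every \(x\in X_0\) has \(h(x)\ge\upsilon_{\overline D}(m_0)\). Hence \(\mu^{\mathrm{ess}}\ge\max\upsilon\), using \(U=X_0\).

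The main obstacle is the duality and approximation step in the second inequality. We must show that valuation vectors of algebraic points, averaged over Galois orbits, can approach any admissible tuple \((u_v)\) with \(\sum n_vu_v=0\) (Dirichlet's \(S\)-unit theorem plus extension of scalars). We must also handle nonconcave \(\Psi_v\), where the infimum is governed by the concave envelope; there we would cite the cited result \cite{BGPS15} for this approximation.
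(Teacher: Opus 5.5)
Your third step is correct: for \(m\in\Delta_D\) and \(u\in N_{\mathbb R}\) one has \(-\Psi_{\overline D,v}(u)+\langle m,u\rangle\ge\upsilon_{\overline D,v}(m)\). The product formula gives \(\sum_v n_v\langle m,\int u\,d\nu_{x,v}(u)\rangle=0\) for \(x\in X_0(\overline K)\); this is linear in \(m\), so \(m_0\) need not be rational. Since \(s_D\) does not vanish on \(X_0\), \(h_{\overline D}(x)=-\sum_v n_v\int\Psi_{\overline D,v}\,d\nu_{x,v}\) exactly, with no extra ``\(h_D\)-contribution''. So \(h_{\overline D}\ge\max_{\Delta_D}\upsilon_{\overline D}\) at every point of \(X_0\).

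The genuine gap is your first step. The inequality \(\mu^{\mathrm{ess}}_{\overline D}(X)\le\mu^{\mathrm{abs}}_{\overline D}(X_0)\) is not formal. The inclusion \(U\cap X_0\subseteq U\) gives \(\inf_U h\le\inf_{U\cap X_0}h\), while \(U\cap X_0\subseteq X_0\) gives \(\inf_{U\cap X_0}h\ge\inf_{X_0}h\); these point in opposite directions and combine to nothing. The formal inequality is the reverse one, \(\mu^{\mathrm{abs}}_{\overline D}(X_0)\le\mu^{\mathrm{ess}}_{\overline D}(X)\), obtained by taking \(U=X_0\). For a general metric, one point of \(X_0\) of exceptionally small height already gives \(\mu^{\mathrm{abs}}(X_0)<\mu^{\mathrm{ess}}(X)\). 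That this cannot happen for toric metrics is part of what the proposition asserts. Consequently your chain never proves \(\mu^{\mathrm{ess}}_{\overline D}(X)\le\max_{\Delta_D}\upsilon_{\overline D}\). Step 2 only produces \emph{some} points of \(X_0\) of height close to \(\max\upsilon_{\overline D}\), which bounds \(\mu^{\mathrm{abs}}_{\overline D}(X_0)\) but not the essential minimum unless those points are Zariski dense.

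The missing ingredient is Zariski density, and your construction supplies it cheaply. You mention torsion points, but only as a device for realizing valuations. Let \(p\in T(\overline K)\) have the prescribed valuations, for instance coordinates \(q^{1/\ell}\) with \(q\) an \(S\)-unit. For every torsion \(\zeta\in T(\overline K)\), the point \(\zeta p\) has the same \(v\)-adic valuation measure at every place. Indeed, conjugates of \(\zeta\) have absolute value \(1\) everywhere, and the orbit of \(\zeta p\) maps onto that of \(p\) with fibres of equal size. Hence \(h_{\overline D}(\zeta p)=h_{\overline D}(p)\), and \(\{\zeta p\}\) is Zariski dense; this is the mechanism of Lemma~\ref{lem:special-translate-min}. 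The correct order is
\[
\max_{\Delta_D}\upsilon_{\overline D}\le\mu^{\mathrm{abs}}_{\overline D}(X_0)\le\mu^{\mathrm{ess}}_{\overline D}(X)\le\max_{\Delta_D}\upsilon_{\overline D}:
\]
step 3, then the formal inequality, then the dense version of step 2. After this repair, and in the concave case, your outline is a sound route to what the paper simply quotes from \cite{BGPS15}.

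Two smaller points. First, the identity \(\inf_{\sum_v n_vu_v=0}\bigl(-\sum_vn_v\Psi_{\overline D,v}(u_v)\bigr)=\max_{\Delta_D}\upsilon_{\overline D}\) needs strong duality, not just the weak inequality already contained in step 3. For concave \(\Psi_{\overline D,v}\), take a maximizer \(x_0\) and decompose \(0\in\partial\upsilon_{\overline D}(x_0)=\sum_vn_v\partial\upsilon_{\overline D,v}(x_0)\) as in Definition~\ref{def:monocritical}. The Fenchel equality \(\upsilon_{\overline D,v}(x_0)=\langle x_0,u_v\rangle-\Psi_{\overline D,v}(u_v)\) for \(u_v\in\partial\upsilon_{\overline D,v}(x_0)\) then sums to the maximum. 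Second, for non-concave \(\Psi_{\overline D,v}\), points with (nearly) Dirac valuation measures cannot reach \(\max\upsilon_{\overline D}\) in general, because the roof function only sees the concave envelope. One needs orbits whose \(v\)-adic valuations are spread out. Deferring that to \cite{BGPS15} is circular as a proof of this statement, so either restrict your claim to concave local functions or supply that construction.
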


Let \(\Delta_{D,\max}\) be the locus where \(\upsilon_{\overline D}\) attains
its maximum.
\begin{defn}
	A vertically semipositive toric metrized \(\mathbb R\)-divisor
	\(\overline D\) is called monocritical if, for every
	\(x\in\Delta_{D,\max}\), the origin is a vertex of the subdifferential
	\(\partial\upsilon_{\overline D}(x)\).
	
	The critical point of \(\overline D\) is the family
	\((u_v)_v\in\bigoplus_v\partial\upsilon_{\overline D,v}(x)\) determined by the
	decomposition
	\[
	0=\sum_v n_vu_v
	\qquad\text{in}\qquad
	\partial\upsilon_{\overline D}(x)
	=
	\sum_v n_v\partial\upsilon_{\overline D,v}(x).
	\]
	\label{def 3.12}\label{def:monocritical}
\end{defn}
\begin{remark}
	The critical point of \(\overline D\) is independent of the choice of
	\(x\in\Delta_{D,\max}\).
\end{remark}
The monocritical condition is therefore a uniqueness condition on the
decomposition of the zero slope at the maximum locus of the global roof
function.
\subsection{Semi-abelian varieties}
\begin{defn}
	A semiabelian variety \(G\) over a field \(K\) is a connected smooth
	commutative algebraic \(K\)-group fitting into an exact sequence
	\[
	0\longrightarrow T\longrightarrow G\longrightarrow A\longrightarrow 0,
	\]
	where \(T\) is a \(K\)-torus and \(A\) is an abelian variety over \(K\).
	This exact sequence defines a Yoneda extension class
	\[
	\eta_G\in\operatorname{Ext}^1_K(A,T).
	\]
\end{defn}
\begin{prop}(The Weil-Barsotti formula)
	There is a canonical isomorphism
	\[
	\operatorname{Ext}^1_K(A,\mathbb G_m)\cong A^\vee(K).
	\]
\end{prop}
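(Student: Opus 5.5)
We prove the Weil--Barsotti formula \(\operatorname{Ext}^1_K(A,\mathbb G_m)\cong A^\vee(K)\) by building the two maps explicitly and checking that they are mutually inverse. Throughout, \(A^\vee(K)\) is identified with \(\operatorname{Pic}^0(A)\) of rigidified line bundles, which is legitimate because \(A\) has the rational point \(0\). This is the same identification used in the theta-factor convention to define \(\mathcal H_{G,m}\in\operatorname{Pic}^0(A_G)\).

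First, from extensions to line bundles. Given
\[
0\to\mathbb G_m\to E\to A\to0,
\]
the group \(E\) is a \(\mathbb G_m\)-torsor over \(A\); this uses Hilbert 90, so that fppf torsors are Zariski-locally trivial. The torsor has an associated line bundle \(L_E\), rigidified at \(0\) by the identity of \(E\). Write \(m,p_1,p_2:A\times A\to A\) for the group law and the two projections. The group law of \(E\) gives an isomorphism
\[
m^*L_E\cong p_1^*L_E\otimes p_2^*L_E
\]
compatible with the rigidification. By the theorem of the square, or equivalently Mumford's criterion, this says exactly that \(L_E\in\operatorname{Pic}^0(A)\). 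Baer sum of extensions corresponds to contracted product of torsors, so \(E\mapsto L_E\) is a group homomorphism.

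Second, from line bundles to extensions. Take \(L\in\operatorname{Pic}^0(A)\), rigidified, and let \(E_L\) be the complement of the zero section in the total space of \(L\), which is a \(\mathbb G_m\)-torsor. Since \(L\) lies in \(\operatorname{Pic}^0\), there is a rigidified isomorphism \(\phi:m^*L\cong p_1^*L\otimes p_2^*L\). This \(\phi\) defines a multiplication on \(E_L\) lying over \(m\). It is unique because \(\mathcal O(A\times A)^\times=K^\times\) and the rigidification pins down the constant. Associativity and commutativity reduce to comparing two isomorphisms between rigidified line bundles on \(A\times A\times A\), respectively on \(A\times A\) after applying the swap. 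Two such isomorphisms differ by a global unit, which is a constant, and that constant is \(1\) at the origin. Hence \(E_L\) is a commutative group scheme, and it is an extension of \(A\) by \(\mathbb G_m\).

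Third, the two constructions are inverse to each other, which is essentially tautological. The main obstacle is well-definedness on isomorphism classes, together with the fact that Yoneda Ext in the category of commutative \(K\)-group schemes coincides with isomorphism classes of central extensions. The worry is that a morphism of extensions might induce a non-rigidified isomorphism of torsors. I would dispose of this using \(\operatorname{Hom}(A,\mathbb G_m)=0\), which holds because \(A\) is proper and connected. It implies that automorphisms of an extension inducing the identity on \(\mathbb G_m\) and on \(A\) are trivial, and likewise for rigidified line bundles. Therefore both sides are sets of isomorphism classes without extra automorphisms, and the bijection is canonical.

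Finally, naturality in \(K\) follows because every step commutes with base change. Galois descent is not needed, since all objects are defined over \(K\) from the start.
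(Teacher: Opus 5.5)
Your argument is correct, and it is the classical proof of the Weil--Barsotti formula. An extension of $A$ by $\mathbb G_m$ is the same as a $\mathbb G_m$-torsor $L^\times$ together with a rigidified isomorphism $m^*L\cong p_1^*L\otimes p_2^*L$. By the seesaw principle such an isomorphism exists exactly when $L\in\operatorname{Pic}^0(A)$. It is unique because $\mathcal O(A\times A)^\times=K^\times$, and this uniqueness is what makes associativity, commutativity and functoriality automatic.

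The paper gives no proof of its own here. It quotes the formula as a classical fact and uses it only to encode extension classes as tuples $(H_1,\ldots,H_t)$ of points of $A^\vee$: for $\eta_G$, for the division tower $G_n$, and in the pushout bundles $\mathcal H_{G,m}$. So there is no competing route to compare with.

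Your write-up does make explicit two points the paper later uses without comment. First, Baer sum corresponds to contracted product of torsors, i.e.\ to tensor product of line bundles. This is the additivity $\mathcal H_{G,m_1+m_2}\simeq\mathcal H_{G,m_1}\otimes\mathcal H_{G,m_2}$ in Lemma~\ref{lem:character-pushout-picard-zero-functoriality}. Second, your normalization $E=L_E^\times$ is the one fixed in the paper's theta-factor convention. With the opposite torsor/line-bundle convention the isomorphism is composed with $-1$, which is precisely the $\mathcal H_{G,-m}\simeq\mathcal H_{G,m}^{\vee}$ ambiguity the paper settles at the outset. So ``canonical'' should be read relative to that choice.

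Two small remarks. First, you never check that $E_L$ has a unit and inverses, or that the scaling action of $\mathbb G_m$ is multiplication by the fibre $L_0^\times$.
\begin{itemize}
\item The unit follows by restricting $\phi$ to $\{0\}\times A$ and comparing with the identity of $L$ by the same constant argument.
\item Inverses follow because the shearing map $(x,y)\mapsto(x,xy)$ is a morphism of torsors over the isomorphism $(a,b)\mapsto(a,a+b)$, hence itself an isomorphism.
\end{itemize}

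Second, the ``main obstacle'' in your third step is lighter than you suggest.
\begin{itemize}
\item A morphism of extensions is a homomorphism, so it preserves the identity and hence the rigidification automatically.
\item Conversely, any isomorphism $L\cong L'$ can be rescaled to a rigidified one. It then intertwines $\phi$ and $\phi'$ by the same uniqueness argument.
\item Identifying Yoneda classes in $\operatorname{Ext}^1$ with isomorphism classes of extensions is just the definition plus the five lemma.
\end{itemize}
The vanishing of $\operatorname{Hom}(A,\mathbb G_m)$ shows that both groupoids are discrete. That is a pleasant strengthening, but the bijection on isomorphism classes does not need it.
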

If \(T\) is split and \(T\simeq\mathbb G_m^t\), we frequently use
\[
\operatorname{Ext}^1_K(A,\mathbb G_m^t)\cong (A^\vee)^t(K).
\]
\begin{prop}
	The following is \cite[Lemma~1]{Ku2}.  Let \(G\) and \(G'\) be semiabelian
	varieties over \(K\), with toric parts
	\(T,T'\) and abelian quotients \(A,A'\).  For any homomorphism
	\(\varphi:G\to G'\), there are unique homomorphisms
	\(\varphi_{\mathrm{tor}}:T\to T'\) and
	\(\varphi_{\mathrm{ab}}:A\to A'\) such that
	\begin{equation*}
		\xymatrix{
			0\ar[r]&T\ar[r]\ar[d]_{\varphi_{\mathrm{tor}}}&G\ar[r]\ar[d]_{\varphi}&A\ar[r]\ar[d]_{\varphi_{\mathrm{ab}}}&0\\
			0\ar[r]&T'\ar[r]&G'\ar[r]&A'\ar[r]&0
		}
	\end{equation*}
	is a morphism of exact sequences.  Moreover, the induced map
	\[
	\operatorname{Hom}(G,G')
	\longrightarrow
	\operatorname{Hom}(T,T')\times\operatorname{Hom}(A,A')
	\]
	is injective, and its image consists of the pairs
	\((\varphi_{\mathrm{tor}},\varphi_{\mathrm{ab}})\) satisfying
	\[
	(\varphi_{\mathrm{tor}})_*\eta_G
	=
	(\varphi_{\mathrm{ab}})^*\eta_{G'}.
	\]
\end{prop}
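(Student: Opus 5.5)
The plan is to reduce everything to two rigidity facts and the universal properties of pushout and pullback of extensions in the category of commutative algebraic \(K\)-groups (equivalently fppf sheaves, where \(\operatorname{Ext}^1_K\) is computed). The two rigidity facts are:
\begin{enumerate}[label=\textup{(\roman*)},leftmargin=2em]
	\item every homomorphism from a torus to an abelian variety is trivial;
	\item every homomorphism from an abelian variety to a torus is trivial.
\end{enumerate}
Both hold because the scheme-theoretic image of such a homomorphism is a connected algebraic group that is simultaneously affine (a quotient of, resp. a subgroup of, a torus) and proper (a quotient of, resp. a subgroup of, an abelian variety), hence trivial.

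\emph{Existence and uniqueness of \(\varphi_{\mathrm{tor}},\varphi_{\mathrm{ab}}\).} The composite \(T\to G\xrightarrow{\varphi}G'\to A'\) is a homomorphism from a torus to an abelian variety, so it is trivial by (i). Hence \(\varphi(T)\subseteq T'\), and we define \(\varphi_{\mathrm{tor}}:=\varphi|_T\). Then \(G\to G'\to A'\) kills \(T\), so it factors uniquely through the fppf quotient \(G\to A=G/T\); this defines \(\varphi_{\mathrm{ab}}\), and the diagram commutes. Uniqueness is immediate: \(\varphi_{\mathrm{tor}}\) is forced because \(T\to T'\) is a monomorphism, and \(\varphi_{\mathrm{ab}}\) is forced because \(G\to A\) is an epimorphism.

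\emph{Injectivity.} Since everything is additive, it suffices to show that \(\varphi_{\mathrm{tor}}=0\) and \(\varphi_{\mathrm{ab}}=0\) imply \(\varphi=0\). If \(\varphi_{\mathrm{ab}}=0\), then \(\varphi\) lands in \(T'\). If moreover \(\varphi_{\mathrm{tor}}=0\), then \(\varphi\) vanishes on \(T\), so it descends to a homomorphism \(A\to T'\). This homomorphism is zero by (ii), hence \(\varphi=0\).

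\emph{Description of the image.} Form the pushout extension \((\varphi_{\mathrm{tor}})_*G=(T'\times G)/T\) of \(A\) by \(T'\), which represents \((\varphi_{\mathrm{tor}})_*\eta_G\). Form also the pullback extension \(\varphi_{\mathrm{ab}}^*G'=G'\times_{A'}A\), which represents \(\varphi_{\mathrm{ab}}^*\eta_{G'}\).

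First suppose \(\varphi\) is given with components \((\varphi_{\mathrm{tor}},\varphi_{\mathrm{ab}})\). The map \(G\to G'\times_{A'}A\), \(y\mapsto(\varphi(y),\pi(y))\), restricts to \(\varphi_{\mathrm{tor}}\) on \(T\). By the universal property of the pushout it therefore induces a morphism \((\varphi_{\mathrm{tor}})_*G\to\varphi_{\mathrm{ab}}^*G'\) of extensions of \(A\) by \(T'\). This morphism is the identity on both \(T'\) and \(A\), so it is an isomorphism by the five lemma in the abelian category of fppf sheaves. Hence \((\varphi_{\mathrm{tor}})_*\eta_G=\varphi_{\mathrm{ab}}^*\eta_{G'}\).

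Conversely, suppose the two classes agree. Choose an isomorphism of extensions \(\iota:(\varphi_{\mathrm{tor}})_*G\xrightarrow{\sim}\varphi_{\mathrm{ab}}^*G'\) inducing the identity on \(T'\) and on \(A\). Define
\[
\varphi:\;G\longrightarrow(\varphi_{\mathrm{tor}})_*G\xrightarrow{\ \iota\ }\varphi_{\mathrm{ab}}^*G'\longrightarrow G'.
\]
A diagram chase shows that \(\varphi|_T=\varphi_{\mathrm{tor}}\) and that \(\varphi\) induces \(\varphi_{\mathrm{ab}}\) on the quotients.

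\emph{Main obstacle.} The only step I expect to need real care is the bookkeeping for \(\operatorname{Ext}^1_K\). Concretely, one has to check two things:
\begin{enumerate}[label=\textup{(\alph*)},leftmargin=2em]
	\item the fppf pushout and pullback of an extension of \(A\) by \(T\) are again representable, and are semiabelian varieties;
	\item the functoriality \((\varphi_{\mathrm{tor}})_*\) and \(\varphi_{\mathrm{ab}}^*\) on Yoneda classes matches these geometric constructions.
\end{enumerate}
For (a), representability follows from effectivity of fppf descent for affine morphisms, since \(T'\)-torsors over \(A\) are affine over \(A\). For (b), I would verify it directly from the Baer description of Yoneda Ext, which is compatible with the Weil--Barsotti identification recalled above.
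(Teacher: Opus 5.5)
Your argument is correct and complete. The paper gives no proof of this statement; it imports it verbatim as \cite[Lemma~1]{Ku2}. Your proposal therefore supplies the standard argument behind that citation: the rigidity facts $\operatorname{Hom}(T,A')=0=\operatorname{Hom}(A,T')$, combined with the Yoneda/Baer criterion that a morphism of extensions lying over $(\alpha,\beta)$ exists if and only if $\alpha_*[E]=\beta^*[E']$.

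Two small refinements.

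First, in the rigidity step, ``connected, affine and proper, hence trivial'' needs the image to be reduced, in order to rule out infinitesimal group schemes. This holds because the scheme-theoretic image of a reduced scheme is reduced, and in any case $\operatorname{char}K=0$ here.

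Second, the bookkeeping you single out as the main obstacle is lighter than you expect:
\begin{itemize}
\item $G'\times_{A'}A$ is an honest fibre product of schemes, so its representability is automatic.
\item The composite $G\to(\varphi_{\mathrm{tor}})_*G\to\varphi_{\mathrm{ab}}^*G'\to G'$ is a morphism of fppf sheaves between representable sheaves. By Yoneda it is therefore a homomorphism of group schemes, whether or not the middle terms are representable.
\item Functoriality of Yoneda $\operatorname{Ext}^1$ is by definition given by pushout and pullback.
\end{itemize}
Representability of the pushout (your affine-descent remark) is needed only to identify $\operatorname{Ext}^1$ in commutative algebraic groups with $\operatorname{Ext}^1$ in fppf sheaves. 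The Weil--Barsotti identification plays no role in the statement as formulated.
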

\begin{prop}
	The following quotient construction is recalled from \cite[Chapter~2]{Ku2}.
	Let \(G\) be a semiabelian variety over \(K\) with split toric part
	\(T\simeq\mathbb G_m^t\) and abelian quotient \(A\).  Let \(X_\Sigma\) be a
	toric variety with principal torus \(X_0=T\).  Then the categorical quotient
	\[
	\overline G:=G_{X_\Sigma}:=(G\times_K X_\Sigma)/T
	\]
	exists in the category of \(K\)-schemes and is a proper \(K\)-variety.  If
	\((M,\rho)\) is a \(T\)-linearized line bundle on \(X_\Sigma\), then
	\[
	G(M,\rho):=(G\times_K M)/T
	\]
	is a line bundle on \(\overline G\).  Since \(T\)-linearized line bundles on
	\(X_\Sigma\) correspond to toric divisors, we write this quotient line bundle
	as \(\overline G(D)\) when \(M\) corresponds to \(D\).
\end{prop}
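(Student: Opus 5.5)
The statement is the quotient construction recalled from \cite[Chapter~2]{Ku2}: the categorical quotient \(\overline G=(G\times_K X_\Sigma)/T\) exists and is proper, and \(G(M,\rho)\) descends to a line bundle on it. The plan is to construct both objects explicitly as contracted products (associated bundles), so that no general GIT is needed.

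\emph{Torsor structure.} Because \(T\simeq\mathbb G_m^t\) is split, \(\rho:G\to A\) is a \(T\)-torsor in the Zariski topology, by Hilbert~90 for \(\mathbb G_m\). Concretely, \(G\) is the fibre product of the \(\mathbb G_m\)-torsors underlying the line bundles \(\mathcal H_{G,e_i}\in\operatorname{Pic}^0(A)\) attached to the coordinate characters, via Weil--Barsotti. Choose an open cover \(\{U_j\}\) of \(A\) trivializing \(G|_{U_j}\simeq U_j\times T\), with transition cocycles \(g_{jk}\in T(U_j\cap U_k)\).

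\emph{Existence and properness of \(\overline G\).} Let \(T\) act diagonally on \(G\times X_\Sigma\), acting on the second factor through the inverse action so that the quotient is a contracted product. Over each \(U_j\) one has \((U_j\times T\times X_\Sigma)/T\simeq U_j\times X_\Sigma\), via \((a,s,x)\mapsto(a,s\cdot x)\). This local quotient is a categorical (indeed geometric) quotient, because the action on the \(T\)-factor is free and transitive. Glue the pieces \(U_j\times X_\Sigma\) using the automorphisms \(x\mapsto g_{jk}\cdot x\) of \(X_\Sigma\); these satisfy the cocycle condition since the \(g_{jk}\) do. Descent of categorical quotients along the open cover then gives the universal property on \(A\)-schemes. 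Any \(T\)-invariant morphism from \(G\times X_\Sigma\) factors through the projection to \(A\) because \(A=G/T\), so the universal property holds globally in \(K\)-schemes. The resulting \(\overline G\to A\) is Zariski-locally \(U_j\times X_\Sigma\to U_j\) with \(X_\Sigma\) proper. Properness is local on the base, so \(\overline G\to A\) is proper, and since \(A\) is proper, \(\overline G\) is proper over \(K\). It is a variety because it is integral and separated: separatedness is local over \(A\), and integrality follows from gluing the integral pieces \(U_j\times X_\Sigma\) along dense opens.

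\emph{Descent of the line bundle.} The same gluing applies to \(G\times M\). Over \(U_j\), \((U_j\times T\times M)/T\simeq U_j\times M\), which is the line bundle \(\mathrm{pr}^*M\) on \(U_j\times X_\Sigma\). Glue by the linearization isomorphisms \(\rho(g_{jk}):M\to g_{jk}^*M\). The cocycle condition holds because \(\rho\) is a linearization, that is, a homomorphism compatibility. The result is a line bundle \(G(M,\rho)\) on \(\overline G\). Finally, use the correspondence between \(T\)-linearized line bundles and toric divisors to define \(\overline G(D)\). Changing the linearization by a character \(m\in M\) twists the result by \(\overline\pi^*\mathcal H_{G,m}\) (or its dual, depending on the theta-factor convention); this should be recorded, although it is not needed for existence.

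\emph{Main obstacle.} The main point is establishing that the locally constructed quotient is a categorical quotient \emph{in the category of all \(K\)-schemes}, not merely a geometric quotient over each chart. I will handle this by showing that \(G\times X_\Sigma\to\overline G\) is a Zariski-locally trivial \(T\)-torsor, hence faithfully flat. Faithfully flat descent of morphisms then yields the universal property.
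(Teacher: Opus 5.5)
Your argument is correct. The paper gives no proof of its own here---it only recalls the construction from \cite[Chapter~2]{Ku2}. What you write is the standard contracted-product construction \(G\times^T X_\Sigma\) underlying that citation. The torsor is Zariski-locally trivial by Hilbert~90 for the split torus, the local models \(U_j\times X_\Sigma\) are glued by \(x\mapsto g_{jk}(a)\cdot x\), and the same gluing is applied to the total space of \(M\) through the linearization. Properness uses that \(X_\Sigma\) is complete, which is the paper's standing convention rather than part of the displayed statement.

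\emph{A false sentence.} It is not true that every \(T\)-invariant morphism out of \(G\times_K X_\Sigma\) factors through the projection to \(A\): the quotient map to \(\overline G\) is itself \(T\)-invariant and does not factor that way. To pass from \(A\)-schemes to \(K\)-schemes, pair an invariant \(f\colon G\times_K X_\Sigma\to Z\) with the invariant map to \(A\), and apply the relative universal property to \(Z\times_K A\to A\). Uniqueness follows from uniqueness of the paired maps.

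\emph{A step that is not automatic.} ``Descent of categorical quotients along an open cover'' needs justification, since categorical quotients need not be local on the target.

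Both points are settled by your final paragraph, which is the right mechanism. The map \(G\times_K X_\Sigma\to\overline G\) is a Zariski-locally trivial \(T\)-torsor. An invariant morphism out of a trivial torsor \(V\times T\) factors uniquely through \(V\) (restrict to \(V\times\{1\}\)). Since \(\operatorname{Hom}(-,Z)\) is a Zariski sheaf, faithfully flat descent is more than you need.
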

\begin{prop}
	If \(X_\Sigma\) is smooth, there is a split exact sequence
	\[
	0\longrightarrow \operatorname{Pic}(A)
	\longrightarrow \operatorname{Pic}(\overline G)
	\longrightarrow \operatorname{Pic}(X_\Sigma)
	\longrightarrow 0.
	\]
\end{prop}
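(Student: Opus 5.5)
The plan is to build the three maps explicitly, check that the composite across the sequence is right, and get exactness in the middle from a seesaw argument. The key geometric input is that \(\overline\pi:\overline G\to A\) is Zariski-locally a product with fibre \(X_\Sigma\). Since \(T\simeq\mathbb G_m^t\) is split, Hilbert~90 shows that the \(T\)-torsor \(G\to A\) is Zariski-locally trivial. So there is an open cover \(A=\bigcup U_i\) with \(G|_{U_i}\simeq U_i\times T\). On each piece the quotient construction recalled above gives
\[
\overline G|_{U_i}\simeq (U_i\times T\times X_\Sigma)/T\simeq U_i\times X_\Sigma .
\]
Hence \(\overline\pi\) is flat and proper, and every fibre is isomorphic to \(X_\Sigma\). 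Since \(X_\Sigma\) is a smooth complete toric variety, it satisfies \(H^0(X_\Sigma,\mathcal O)=K\) and \(H^1(X_\Sigma,\mathcal O)=0\), and \(\operatorname{Pic}(X_\Sigma)\) is a free abelian group. From the first two facts and flat base change we get \(\overline\pi_*\mathcal O_{\overline G}=\mathcal O_A\).

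\textbf{The maps.} The first map is \(\overline\pi^*:\operatorname{Pic}(A)\to\operatorname{Pic}(\overline G)\). It is injective because, by the projection formula, \(\overline\pi_*\overline\pi^*N\simeq N\otimes\overline\pi_*\mathcal O=N\). The second map is restriction to the fibre \(\overline\pi^{-1}(0)\). Choosing the identity \(e\in G(K)\) identifies this fibre with \((T\times X_\Sigma)/T\simeq X_\Sigma\). Under this identification, the restriction of a quotient bundle \(G(M,\rho)\) is \((T\times M)/T\simeq M\).

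\textbf{The splitting.} Choose a \(\mathbb Z\)-basis of \(\operatorname{Pic}(X_\Sigma)\), represent each basis element by a toric divisor, and fix a \(T\)-linearization of each. Extending additively gives a homomorphism \(s:\operatorname{Pic}(X_\Sigma)\to\operatorname{Pic}(\overline G)\), \(D\mapsto\overline G(D)\). A choice is genuinely needed here. Changing the linearization by a character \(m\) twists \(\overline G(D)\) by \(\overline\pi^*\mathcal H_{G,\pm m}\), so \(D\mapsto\overline G(D)\) is not well defined on classes without one. By the fibre computation above, restriction composed with \(s\) is the identity. This gives surjectivity of the second map and a splitting once exactness is known. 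The composite \(\operatorname{Pic}(A)\to\operatorname{Pic}(X_\Sigma)\) is zero, since \(\overline\pi^*N\) restricted to a fibre is trivial.

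\textbf{Exactness in the middle.} This is the main step. Let \(L\) be a line bundle on \(\overline G\) whose restriction to \(\overline\pi^{-1}(0)\) is trivial. The fibres form a family over the connected base \(A\), each fibre is isomorphic to \(X_\Sigma\), and \(\operatorname{Pic}(X_\Sigma)\) is discrete because \(H^1(\mathcal O)=0\). So the class of \(L|_{\overline\pi^{-1}(a)}\) is locally constant in \(a\). I would check this on the trivializing charts \(U_i\times X_\Sigma\), where \(\operatorname{Pic}(U_i\times X_\Sigma)=\operatorname{Pic}(U_i)\oplus\operatorname{Pic}(X_\Sigma)\) for smooth toric \(X_\Sigma\), using the cellular decomposition. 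It follows that \(L\) is trivial on every fibre, including after base change to \(\overline K\). By cohomology and base change (flat family, \(h^0=1\) on each fibre), \(N:=\overline\pi_*L\) is a line bundle on \(A\), and the natural map \(\overline\pi^*N\to L\) is an isomorphism, since it is so on every fibre. Hence \(L\in\operatorname{im}\overline\pi^*\).

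I expect the main care to go into two points: the product decomposition of \(\operatorname{Pic}\) on the local charts, and the descent from \(\overline K\) to \(K\). The latter is harmless here, because the fibre over the \(K\)-point \(0\) is used as the test fibre.
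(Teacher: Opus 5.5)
Your proposal is correct and takes the same route as the paper. The paper's proof only invokes the seesaw theorem, \(\operatorname{Pic}^0(X_\Sigma)=0\), and \(T\)-linearizability, with the quotient construction \((G\times_K M)/T\) serving as the section; you fill in the omitted details (Zariski-local triviality of the split torus torsor, the chartwise decomposition \(\operatorname{Pic}(U_i\times X_\Sigma)=\operatorname{Pic}(U_i)\oplus\operatorname{Pic}(X_\Sigma)\), and base change for \(\overline\pi_*L\)), and you correctly note that the section is a homomorphism only after fixing linearizations on a basis of the free group \(\operatorname{Pic}(X_\Sigma)\), since changing a linearization twists by a Picard-zero pullback from \(A\).
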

This follows from the seesaw theorem, the vanishing
\(\operatorname{Pic}^0(X_\Sigma)=0\), and the fact that every line bundle on
\(X_\Sigma\) admits a \(T\)-linearization.  The quotient construction
\((G\times_K M)/T\) gives a section of
\(\operatorname{Pic}(\overline G)\to\operatorname{Pic}(X_\Sigma)\).
\subsection{Adelic line bundles}

We recall the adelic language used in \cite{YZ}.  The theory of adelic line
bundles first appeared in Zhang's work on small points and adelic metrics.  In
this framework the N\'eron--Tate height of an algebraic point on an abelian
variety over a number field is interpreted as the normalized arithmetic degree
of an adelic line bundle at that point.  Adelic line bundles are limits of
hermitian line bundles in a suitable topology, and their arithmetic invariants
are obtained as limits of the corresponding invariants on models.

The original theory was formulated for projective varieties over number fields
or one-variable function fields.  Yuan and Zhang extended it to
quasi-projective varieties over finitely generated fields.  More precisely, if
\(F\) is a finitely generated field over \(\mathbb Q\) or over a constant
field, and \(X\) is a quasi-projective variety over \(F\), they construct
adelic line bundles on \(X\), develop their intersection theory, study volumes
of effective sections, and define the associated heights.

Using this theory, Yuan and Zhang proved an equidistribution theorem for small points on quasi-projective varieties over number fields, generalizing the equidistribution theorems of Szpiro--Ullmo--Zhang, Chambert-Loir, and Yuan.  K\"uhne proved an independent semiabelian equidistribution theorem in the canonical metric case and used it, together with the work of Dimitrov--Gao--Habegger, to obtain uniformity results related to the Mordell conjecture.

Let \(X\) be a quasi-projective variety over the number field \(K\).
\begin{defn}
	Let \(U\) be a quasi-projective variety over \(\mathbb Z\).  An object of
	\(\widehat{\mathcal{P}ic}(U/\mathbb Z)\) is a pair
	\[
	\bigl(\mathcal L,(\mathcal X_i,\overline{\mathcal L}_i,\ell_i)_i\bigr)
	\]
	where:
	\begin{enumerate}[label=\textup{(\arabic*)},leftmargin=2em]
		\item \(\mathcal L\) is a line bundle on \(U\);
		\item \(\mathcal X_i\) is a projective model of \(U\) over \(\mathbb Z\);
		\item \(\overline{\mathcal L}_i\) is a hermitian
		\(\mathbb Q\)-line bundle on \(\mathcal X_i\);
		\item \(\ell_i:\mathcal L\to \mathcal L_i|_U\) is an isomorphism in
		\(\mathcal{P}ic(U)_{\mathbb Q}\).
	\end{enumerate}
	The sequence is required to satisfy the Cauchy condition of
	\cite[\S2.5.1]{YZ}.  Adelic line bundles on \(X/\mathbb Z\) are objects of
	\[
	\widehat{\mathcal{P}ic}(X/\mathbb Z)
	:=
	\varinjlim_U \widehat{\mathcal{P}ic}(U/\mathbb Z),
	\]
	where \(U\) ranges over quasi-projective models of \(X\) over \(\mathbb Z\).
\end{defn}
Yuan and Zhang extend arithmetic intersection theory from projective varieties
over number fields to quasi-projective varieties over finitely generated
fields.  In particular, if \(d\) denotes the absolute dimension of a model of
\(X\), they construct an arithmetic intersection map
\[
\widehat{\operatorname{Pic}}(X/\mathbb Z)^d_{\mathrm{int}}
\longrightarrow \mathbb R,
\]
where the subscript \(\mathrm{int}\) denotes integrable adelic line bundles;
see \cite[Definition~2.5.2 and Proposition~4.1.1]{YZ}.

There is also a fully faithful analytification functor
\[
\widehat{\mathcal{P}ic}(X/\mathbb Z)
\longrightarrow
\widehat{\mathcal{P}ic}(X^{r\text{-}an}),
\]
where the target is the category of metrized line bundles on the reified
analytic space, with isometries as morphisms \cite[Proposition~3.5.1]{YZ}.
For projective varieties over \(\mathbb Q\), the essential image is described
by the coherent condition \cite[Proposition~3.5.2]{YZ}.  In the
quasi-projective case we will only use the induced \(v\)-adic metrized line
bundles
\[
\widehat{\mathcal{P}ic}(X/\mathbb Z)
\longrightarrow
\widehat{\mathcal{P}ic}(X^{an}_v).
\]
\begin{defn}
	A \(v\)-metrized line bundle on \(X\) is a pair
	\((L,\Vert\cdot\Vert_v)\), where \(L\) is a line bundle on \(X\) and
	\(\Vert\cdot\Vert_v\) is a metric on \(L^{an}_{\mathbb C_v}\).  Thus, for
	each open subset \(U\subseteq X\), each section
	\(\boldsymbol s:U\to L\), and every \(f\in O_X(U)\), we require:
	\begin{enumerate}[label=\textup{(\arabic*)},leftmargin=2em]
		\item the function
		\[
		\Vert \boldsymbol s\Vert
		=
		\Vert\cdot\Vert_v\circ \boldsymbol s^{an}_{\mathbb C_v}
		:U^{an}_{\mathbb C_v}\to\mathbb R_{\geq0}
		\]
		is continuous;
		\item if \(\boldsymbol s\) vanishes nowhere on \(U\), then
		\(\Vert \boldsymbol s\Vert\) has no zeros on \(U^{an}_{\mathbb C_v}\);
		\item \(\Vert f\boldsymbol s\Vert=|f|\,\Vert\boldsymbol s\Vert\);
		\item the metric is \(\operatorname{Gal}(\mathbb C_v/K_v)\)-invariant.
	\end{enumerate}
\end{defn}
\begin{defn}
	Let \(X\) be a projective variety over \(K\).  If \(v\) is archimedean, a
	\(v\)-metrized line bundle \((L,\Vert\cdot\Vert)\) is semipositive if
	\(dd^c c_1(L,\Vert\cdot\Vert)\geq0\).  If \(v\) is non-archimedean, it is
	semipositive if its metric is the uniform limit of formally semipositive
	\(v\)-metrics.  An adelic line bundle on \(X/\mathbb Z\) is vertically
	semipositive if its \(v\)-adic realization is semipositive for every place
	\(v\) of \(K\).
\end{defn}
\begin{remark}
	Yuan and Zhang also define semipositive \(v\)-metrized line bundles in the
	quasi-projective case; see \cite[p.~105]{YZ}.  We use the same terminology for
	vertical semipositivity in that setting.
\end{remark}

We next recall the arithmetic quantities used to formulate smallness and
equidistribution.
\begin{defn}
	Let \(Z\) be a closed \(\overline K\)-subvariety of \(X\), and let
	\(\overline L\) be an integrable adelic line bundle on \(X/\mathbb Z\).  Let
	\(Z'\) be the image of \(Z\to X\), and let \(\widetilde L\) be the image of
	\(\overline L\) under the canonical map
	\[
	\widehat{\operatorname{Pic}}(X/\mathbb Z)
	\longrightarrow
	\widehat{\operatorname{Pic}}(X/K).
	\]
	The height of \(Z\) with respect to \(\overline L\) is
	\[
	h_{\overline L}(Z)
	:=
	h_{\overline L}(Z')
	:=
	\frac{(\overline L|_{Z'})^{\dim Z+1}}
	{(\dim Z+1)\deg_{\widetilde L}(Z'/K)}.
	\]
	If \(x\in X(\overline K)\) and \(x'\) is the corresponding closed point of
	\(X\), then
	\[
	h_{\overline L}(x):=h_{\overline L}(x')
	:=
	\frac{\overline L|_{x'}}{\deg(x')}.
	\]
	
	The essential minimum is
	\[
	\mu^{\mathrm{ess}}_{\overline L}(X)
	=
	\sup_{U\subseteq X}
	\inf_{x\in U(\overline K)} h_{\overline L}(x),
	\]
	where \(U\) ranges over non-empty Zariski open subschemes of \(X\).  The
	absolute minimum is
	\[
	\mu^{\mathrm{abs}}_{\overline L}(X)
	=
	\inf_{x\in X(\overline K)}h_{\overline L}(x).
	\]
	An integrable adelic line bundle \(\overline L\) is called quasi-canonical if
	\[
	h_{\overline L}(X)=\mu^{\mathrm{ess}}_{\overline L}(X)
	\qquad\text{and}\qquad
	\deg_{\widetilde L}(X/K)>0.
	\]
\end{defn}
Let \(\overline L\) be a vertically semipositive adelic line bundle on
\(X/\mathbb Z\) with \(\deg_{\widetilde L}(X/K)>0\).
\begin{defn}
	For a place \(v\) of \(K\), we say that \(\overline L\) has the
	\(v\)-adic equidistribution property if, for every generic
	\(\overline L\)-small sequence \((x_i)_{i\in\mathbb N}\) in
	\(X(\overline K)\), the probability measures
	\((\delta_{x_i,v})_{i\in\mathbb N}\) associated with the \(v\)-adic Galois
	orbits of the \(x_i\) converge.  In this case the limit measure is unique;
	it is called the canonical measure associated with \(\overline L\), and is
	denoted by \(\mu_{\overline L,v}\).
\end{defn}
This equidistribution property is closely related to Bogomolov-type
statements.  For abelian varieties, equidistribution together with the
Faltings--Zhang morphism gives Zhang's proof of Bogomolov \cite{Zha}.
Similarly, K\"uhne \cite{Ku1} uses the description of the canonical measure at
archimedean places and the Faltings--Zhang morphism to prove Bogomolov for
semiabelian varieties.  In general, equidistribution plus a Bogomolov
non-density statement yields strict equidistribution, and strict
equidistribution for semiabelian varieties implies the Manin--Mumford theorem
in this setting.

We close this section by recalling some standard equidistribution theorems.
\begin{example}
	(Szpiro--Ullmo--Zhang, Chambert-Loir, and Yuan; see \cite{SUZ,CL2,Yua1}.)  If \(X\) is projective
	over a global field \(K\), and if \(\bar L\) is vertically semipositive
	and quasi-canonical with \(\widetilde L\) ample, then \(\bar L\) has the
	\(v\)-adic equidistribution property for all places \(v\) of \(K\).  The
	limiting measure is
	\[
	\frac{1}{\deg_{\widetilde{L}}(X/K)}
	\hat{c}_1(\bar{L}_v)^{\wedge{\dim X}}.
	\]
\end{example}
\begin{example}
	(Yuan--Zhang; \cite[Theorem~5.4.3]{YZ}.)  Let \(\overline L\) be a nef and quasi-canonical adelic line bundle on
	\(X/\mathbb Z\) such that \(\deg_{\widetilde L}(X/K)>0\), where
	\(\widetilde L\) is the image of \(\overline L\) under the canonical map
	\[
	\widehat{\operatorname{Pic}}(X/\mathbb Z)\to
	\widehat{\operatorname{Pic}}(X/K).
	\]
	Then \(\overline L\) has the \(v\)-adic equidistribution property for every
	place \(v\) of \(K\), and the limiting measure is
	\[
	\frac{1}{\deg_{\widetilde{L}}(X/K)}
	\hat{c}_1(\overline{L}_v)^{\wedge{\dim X}} .
	\]
\end{example}
The preceding examples are quasi-canonical.  This condition does not hold in
general: even for projective varieties, varying the metric on \(\overline L\)
can lead to
\[
h_{\overline{L}}(X)<\mu^{\mathrm{ess}}_{\overline{L}}(X).
\]

For toric varieties, Burgos Gil, Philippon, Rivera-Letelier, and Sombra give a
necessary and sufficient condition for a toric metrized
divisor to have the \(v\)-adic equidistribution property on algebraic points of
the principal orbit \(X_0\), for every place \(v\) of \(K\).  This condition is
strictly weaker than quasi-canonicity:
\begin{example}
	By \cite{BGPS19}, let \(X\) be a proper toric variety, and let \(\overline D\) be a
	vertically semipositive toric metrized \(\mathbb R\)-divisor with \(D\)
	big.  Then \(\overline D\) is monocritical if and only if it has the
	\(v\)-adic equidistribution property on algebraic points of \(X_0\), for
	every place \(v\) of \(K\).  Moreover, BGPS describe the limiting measures
	in terms of the critical points of \(\overline D\).
\end{example}
K\"uhne gives semiabelian examples in which \(\overline L\) is not
quasi-canonical but still has the equidistribution property, with canonical
measure formally given by the normalized Monge--Amp\`ere measure:
\begin{example}
	In K\"uhne's construction \cite{Ku1}, choose \((\mathbb P^1)^t\) as a
	compactification of \(T\).  He
	constructs an adelic line bundle
	\[
	\overline L=\overline{M_{\overline G}}\otimes\overline\pi^*\overline N
	\]
	on \(\overline G\), and proves that \(\overline L\) has the \(v\)-adic
	equidistribution property for every place \(v\) of \(K\), with limiting
	measure equal to the normalized Monge--Amp\`ere measure of \(\overline L\).
\end{example}	
\begin{remark}
	The adelic line bundle constructed by K\"uhne is
	\[
	\overline{G}(\overline{D}^{\mathrm{can}})
	\otimes\overline{\pi}^*\overline{N},
	\]
	where \(D=\sum D_i\) is the sum of the divisors corresponding to the rays
	in the fan of \((\mathbb P^1)^t\).
\end{remark}
\section{Local trivialization}
In this section, we extend K\"uhne's local trivialization at archimedean places
to all places of \(K\), using \(p\)-adic theta functions and the
Raynaud--Bosch--L\"utkebohmert uniformization of abelian varieties over
complete algebraically closed non-archimedean fields.  We then use the local
trivialization to define the adelic line bundle associated with a toric
metrized \(\mathbb R\)-divisor and to compare the essential minima of this
adelic line bundle on \(G\) with the absolute minimum of the corresponding
toric metrized divisor on \(T\).
\begin{thm}[Non-archimedean local trivialization]\label{thm:local-trivialization}
	Let \(v\) be a non-archimedean place of \(K\).  There is a finite collection
	\(\{(U_j,\psi_j)\}_{j\in J}\) such that the \(U_j\) form an open covering of
	\(A^{an}_{\mathbb C_v}\), and
	\[
	\psi_j:
	\overline{G}^{an}_{\mathbb C_v}|_{U_j}
	\longrightarrow
	X_{\Sigma,\mathbb C_v}^{an}
	\]
	are analytic maps for which
	\[
	\overline{\pi}^{an}_{\mathbb C_v}\times\psi_j:
	\overline{G}^{an}_{\mathbb C_v}|_{U_j}
	\longrightarrow
	U_j\times X_{\Sigma,\mathbb C_v}^{an}
	\]
	is an isomorphism of analytic spaces.  Moreover, for every toric
	\(\mathbb R\)-Cartier divisor \(D\),
	\[
	\overline{G}(D)^{\mathrm{can}}_v|_{U_j}
	=
	\psi_j^*(\overline{D}^{\mathrm{can}}_v).
	\]
	For all \(j,j'\in J\) and all
	\[
	y\in \overline{G}^{an}_{\mathbb C_v}|_{U_j\cap U_{j'}},
	\]
	one has
	\[
	\operatorname{val}_v\circ\psi_j(y)
	=
	\operatorname{val}_v\circ\psi_{j'}(y).
	\]
\end{thm}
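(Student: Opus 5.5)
Since \(G\) is a \(T\)-torsor over \(A\) with \(T\simeq\mathbb G_m^t\) split, I will construct the trivializations one character at a time. Fix a basis \(m_1,\dots,m_t\) of \(M\). Because \(\overline G=(G\times X_\Sigma)/T\), a \(T\)-equivariant analytic map \(\phi_j:G^{an}|_{U_j}\to T^{an}\) automatically extends to \(\psi_j:\overline G^{an}|_{U_j}\to X_\Sigma^{an}\) via \([g,x]\mapsto\phi_j(g)\cdot x\). In that case \(\overline\pi\times\psi_j\) is an isomorphism onto \(U_j\times X_\Sigma^{an}\), since it is the quotient of the trivialization \(G|_{U_j}\simeq U_j\times T\). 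It therefore suffices to trivialize the \(\mathbb G_m\)-torsors \(\mathcal H_{G,m_i}\) attached to \(\chi^{m_i}\), in the denominator convention of the Theta-Factor Convention: choose local nowhere-vanishing analytic sections \(f_{m_i,v}\) of \(\mathcal H_{G,m_i}\) over \(U_j\) and set \(\psi_{j,m_i}=g_{m_i}/f_{m_i,v}\circ\rho_G\).

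To produce the \(f_{m,v}\) with controlled absolute values, I will use the Raynaud--Bosch--Lütkebohmert uniformization \(A^{an}_{\mathbb C_v}=E^{an}/\Lambda\), where \(E\) is an extension of an abelian variety \(B\) with good reduction by a torus \(T'\), and \(\Lambda\) is a lattice. The pullback of \(\mathcal H_{G,m}\in\operatorname{Pic}^0(A)\) to \(E^{an}\) is trivialized by a \(p\)-adic theta function. Its factor of automorphy along \(\Lambda\) is a character, and for the canonical (rigidified, cubical) metric on \(\mathcal H_{G,m}\) this factor has a fixed absolute value on each fiber of the tropicalization \(E^{an}\to\operatorname{Hom}(X^*(T'),\mathbb R)\). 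The canonical metric is the unique translation-compatible one, given by \(|f|\) times an explicit linear function of the tropical coordinate.

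I take \(U_j\) to be images of affinoid domains, namely preimages of small polytopes under \(E^{an}\to N'_{\mathbb R}\), that map isomorphically onto their image in \(A^{an}\). Finitely many of them cover \(A^{an}\), because \(N'_{\mathbb R}/\Lambda\) is compact. On each \(U_j\) I normalize the section so that \(\Vert f_{m,v}\Vert_{\can}=1\). This is possible because the canonical metric of a Picard-zero bundle is locally of norm \(|\text{unit}|\) after dividing by an analytic function whose absolute value realizes the linear tropical weight. Where necessary I pass to the \(\mathbb R\)-valued valuation of a \(\mathbb C_v\)-point of \(T'\), using the divisibility of \(|\mathbb C_v^\times|\).

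With this normalization, \(\operatorname{val}_v(\psi_{j,m}(y))=-\log\Vert g_m(y)\Vert_{\can}\), which depends only on \(y\) and not on \(j\). This gives the agreement of \(\operatorname{val}_v\circ\psi_j\) on overlaps. The identity \(\overline G(D)^{\can}_v|_{U_j}=\psi_j^*\overline D^{\can}_v\) then follows: the canonical metric on \(\overline G(D)\) is defined through the quotient \((G\times M)/T\) and the canonical metrics on the factors \(\mathcal H_{G,m}\). Its Green function at \([g,x]\) is therefore \(-\Psi_D\) evaluated at \(\operatorname{val}_v(\phi_j(g)x)\), which is exactly the pulled-back toric canonical Green function. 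For \(\mathbb R\)-divisors the statement follows by linearity in \(D\).

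The main obstacle is the normalization step: showing that on each \(U_j\) there is an analytic section of \(\mathcal H_{G,m}\) whose canonical norm is identically \(1\). Over a region where the tropical coordinate varies, this requires the canonical metric to be locally the absolute value of an analytic function. I would first prove that for a Picard-zero bundle the canonical Green function is tropically affine, hence locally \(\log|h|\) for a monomial \(h\) in the toric coordinates of \(T'\). If this fails, my fallback is to shrink the \(U_j\) to tropical fibers and accept piecewise constants absorbed into the choice of \(f_{m,v}\).
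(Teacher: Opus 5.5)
Your reduction is sound. For $T$-equivariant charts $\psi_{j,m}=g_m/(f_{m,v}\circ\rho_G)$, the analytic-isomorphism part follows from $\overline G=G\times^T X_\Sigma$. The metric identity and the overlap condition essentially amount to choosing, over each $U_j$, analytic frames of $\mathcal H_{G,m}$ with $\|f_{m,v}\|_{\can}\equiv 1$.

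\textbf{The gap.} It is the normalization step you flagged, and it cannot be closed as proposed. On $E^{an}$ the canonical Green function of $\mathcal H_{G,m}$ is $-\log|F|$ plus a linear form $\ell_m$ in the tropical coordinate. Here $\ell_m$ is fixed by $u'\mapsto-\log|c_m(u')|$ on the period lattice, where $c_m$ is the automorphy factor of the theta representative $F$. So the Green function is tropically affine, but $\ell_m$ is in general not in $X^*(T')$; twisting $F$ by characters of $T'$ only changes $\ell_m$ by elements of $X^*(T')$. For an invertible analytic $h$ on a polytope preimage, however, $-\log|h|$ is affine with \emph{integral} slope. Hence no norm-one frame exists there unless $\ell_m$ is integral.
\begin{itemize}
\item Divisibility of $|\mathbb C_v^\times|$ realizes constant absolute values, not fractional slopes.
\item Your fallback does not give a cover: a tropical fibre is not a neighbourhood of its skeleton point, and no finite union of fibres covers $A^{an}_{\mathbb C_v}$.
\end{itemize}

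\textbf{A concrete case.} Take $A^{an}=\mathbb G_m^{an}/q^{\mathbb Z}$ a Tate curve, $t=1$, and $\eta_G$ the $2$-torsion point of $A^\vee\simeq A$ with Tate parameter $\sqrt q$. Sections of $\mathcal H_{G,1}$ are functions with $F(qz)=cF(z)$, $|c|\in|q|^{1/2+\mathbb Z}$, and the canonical norm is $|F(z)|\,|z|^{-s}$ with $s\in\frac12+\mathbb Z$. A norm-one frame would need $|F(z)|=|z|^{s}$ on an annulus. But units on an annulus satisfy $|F(z)|=|\alpha|\,|z|^{n}$ with $n\in\mathbb Z$.

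\textbf{A second gap in the $B$-directions.} Your polytope preimages are too coarse there. $p^*\mathcal H_{G,m}$ is pulled back from some $L\in\operatorname{Pic}^0(B)$ that need not be trivial; at good reduction your single chart would be all of $A^{an}_{\mathbb C_v}$. On Berkovich-open sets, a norm-one frame near a type II point also forces the residual line bundle there to be trivial.

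\textbf{This is an obstruction to the statement, not only to your route.} For any charts of this form, agreement of $\val_v\circ\psi_j$ on overlaps glues the local norms with $\|f_{j,m}\|\equiv 1$ into a continuous metric on $\mathcal H_{G,m}$. In the example, its weight on $\mathbb G_m^{an}$ is locally $\log|\mathrm{unit}|$, hence harmonic. Along the skeleton it is therefore $n\log|z|+b$ with $n\in\mathbb Z$, which is incompatible with $|c|\in|q|^{1/2+\mathbb Z}$.

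\textbf{The paper's proof has the same gap, hidden at the overlap step.} It normalizes only $\|f_{s_i,v}(e_E)\|=1$. It then asserts that where the local lifts differ by $u'\in M'_v$, the Raynaud automorphy factor cancels against the descent cocycle of $\eta_G$. But $g_i$ is a rational function on $\overline G$ and does not see the lift. So on such an overlap the two charts differ by $c(u')^{\pm1}$, up to a constant of absolute value one. Such overlaps are unavoidable: lifts agreeing on all overlaps would glue to a section of $p_v$, which is impossible when $M'_v\neq0$. The paper's Picard-zero theta proposition makes the same implicit assumption: $\lambda=0$ does not make the tropical correction constant, since $c_{\operatorname{trop}}$ need not vanish.

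\textbf{What a correct version needs.} One option is an extra hypothesis: the canonical metric of every $\mathcal H_{G,m}$ admits local analytic frames of norm one. This means integral $\ell_m$, and trivial residual bundles if the $U_j$ are to be Berkovich-open. Under it your argument goes through. The other option is to reformulate with the intrinsic $\operatorname{trop}_{G,v}$ in place of $\val_v\circ\psi_j$.
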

We now recall the analytic inputs used to construct the maps \(\psi_j\).

We first recall \(p\)-adic theta functions and Raynaud--Bosch--L\"utkebohmert
uniformization for abelian varieties.  The following form of the
uniformization theorem can be found in \cite{FRSS}.  In K\"uhne's notation,
the two uniformization statements below correspond to
\cite[Section~3.2]{Ku1}.

Let \(\mathbf K\) be a complete algebraically closed non-archimedean valued
field with valuation ring \(R\), and let \(A\) be an abelian variety over
\(\mathbf K\).

\begin{thm}[Raynaud--Bosch--L\"utkebohmert uniformization I]
	There exists a topological universal covering \(p:E^{an}\to A^{an}\).  If
	we choose a base point \(0\in E^{an}\) over the identity, then \(E^{an}\)
	has a unique structure of analytic group with identity \(0\), and \(p\) is
	a homomorphism.  Moreover, \(E^{an}\) is the analytification of a group
	scheme \(E\).
\end{thm}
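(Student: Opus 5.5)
The plan is to deduce the statement from the structure theory of the Raynaud extension rather than to build the covering abstractly. First I will recall the Bosch--L\"utkebohmert construction over \(R\). Using the formal completion of the N\'eron model of \(A\), or a semistable model after finite extension (harmless since \(\mathbf K\) is algebraically closed), one obtains a formal group \(\widehat{\mathcal E}\) over \(R\). It is an extension of a formal abelian scheme \(\widehat{\mathcal B}\) by a formal torus \(\widehat{\mathcal T}'\). I will push this extension out along \(\widehat{\mathcal T}'\hookrightarrow T'\), where \(T'\simeq\mathbb G_m^r\) is the split torus over \(\mathbf K\). This produces an algebraic extension
\[
0\longrightarrow T'\longrightarrow E\longrightarrow B\longrightarrow 0
\]
of \(\mathbf K\)-group schemes, where \(B\) is an abelian variety with good reduction, together with a lattice \(\Lambda\subseteq E(\mathbf K)\) of rank \(r\) and an identification \(A^{an}\simeq E^{an}/\Lambda\). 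These are the standard inputs from \cite{FRSS}, and I will take them as given. The last assertion of the theorem, that \(E^{an}\) is the analytification of a group scheme, is then built into this construction.

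Next I will show that \(p:E^{an}\to E^{an}/\Lambda=A^{an}\) is a topological covering map. The group \(\Lambda\) is discrete in \(E^{an}\) because its image under the tropicalization \(E^{an}\to N'_{\mathbb R}\) is a full lattice. It acts freely by translations. Properness of the action follows because the tropicalization is \(\Lambda\)-equivariant and \(\Lambda\) acts on \(N'_{\mathbb R}\) properly discontinuously. Hence every point of \(A^{an}\) has an evenly covered neighbourhood.

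The key step, which I expect to be the main obstacle, is proving that \(E^{an}\) is simply connected, so that \(p\) is the universal covering. I would use Berkovich's deformation retraction results.
\begin{enumerate}[label=\textup{(\roman*)},leftmargin=2em]
	\item Since \(B\) has good reduction, \(B^{an}\) retracts onto its skeleton, which is a single point (the Shilov point of the formal model). So \(B^{an}\) is contractible.
	\item The fibres \(T'^{an}\) retract \(T'\)-equivariantly onto \(N'_{\mathbb R}\simeq\mathbb R^r\), which is contractible.
	\item To glue these, I would trivialize \(E\to B\) locally on a formal affine covering of \(\widehat{\mathcal B}\). On each piece, the retraction of \(T'^{an}\) is compatible with the transition cocycles, because those cocycles take values in the formal torus, which has valuation zero.
\end{enumerate}
From this I will obtain a strong deformation retraction of \(E^{an}\) onto \(N'_{\mathbb R}\), and hence contractibility. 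The delicate point is exactly the compatibility of the retraction with the transition cocycles in (iii). If it causes trouble, I would instead apply Berkovich's general theorem that smooth analytic spaces are locally contractible and compute \(\pi_1\) via the exact homotopy sequence of the fibration \(T'^{an}\to E^{an}\to B^{an}\).

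Finally, the group structure on \(E^{an}\) follows from covering-space theory. Since \(E^{an}\) is simply connected and locally path connected, the map \(E^{an}\times E^{an}\to A^{an}\), \((x,y)\mapsto p(x)+p(y)\), lifts uniquely to a map \(E^{an}\times E^{an}\to E^{an}\) sending \((0,0)\) to \(0\). The same uniqueness argument applied to lifts shows that this map is associative, commutative and has inverses, and that \(p\) is a homomorphism. By uniqueness, the lifted law coincides with the analytified group law of \(E\), which gives the asserted uniqueness of the analytic group structure.
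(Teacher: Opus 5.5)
The paper gives no proof of this statement: it is imported from \cite{FRSS}, which rests on Berkovich's universal-covering theorem built on Bosch--L\"utkebohmert, and only its consequences are used later. Your plan unpacks that citation along standard lines. You take as given only the Raynaud extension \(0\to T'\to E\to B\to0\), the lattice \(\Lambda\), and the identification \(A^{an}\simeq E^{an}/\Lambda\). You then get the covering property from tropicalization, and universality from a contraction of \(E^{an}\) onto \(N'_{\mathbb R}\). That is the right mechanism, and it explains something the citation hides. The contraction commutes with translation by \(\Lambda\), so it descends to the retraction of \(A^{an}\) onto the real torus \(N'_{\mathbb R}/\Lambda\), which is the skeleton the paper invokes in the next lemma.

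Your final paragraph, however, is wrong as written. In Berkovich geometry the analytic product \((E\times E)^{an}\) is not \(E^{an}\times E^{an}\) with the product topology. It surjects onto the topological product non-injectively, and \((x,y)\mapsto p(x)+p(y)\) is not even well defined on pairs of points. For example, let \(\zeta\) be the Gauss point of \(|z|=1\) in \(\mathbb G_m^{an}\). Over the pair \((\zeta,\zeta)\), the Gauss point of the bidisc has product \(\zeta\), while the image of \(\zeta\) under \(t\mapsto(t,t^{-1})\) has product \(1\). So any lifting must take place on \((E\times E)^{an}\), and simple connectivity of \(E^{an}\) is the wrong hypothesis. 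In fact you need no lifting for existence. The law \(m_E^{an}\), recentred at the chosen point of \(p^{-1}(0_A)=\Lambda\), is already an analytic group law for which \(p\) is a homomorphism. For uniqueness, two such laws are lifts through the covering \(p\) of \(m_A\circ(p\times p)\) that agree at \((0,0)\). They coincide because \((E\times E)^{an}\) is connected, and uniqueness of lifts needs only connectedness. Equality as analytic maps then follows because \(p\) is a local analytic isomorphism.

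There are two smaller points in the contractibility step.

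\emph{The fallback.} Your alternative via the homotopy sequence of \(T'^{an}\to E^{an}\to B^{an}\) fails for the same reason. An analytic trivialization \(E^{an}|_U\simeq U\times T'^{an}\) is not a topological product, so the projection is not visibly a Serre fibration.

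\emph{Step (iii).} Points of \(U\times T'^{an}\) are not pairs of points, so there is no literal fibrewise retraction. Define the homotopy globally through the \(T'\)-action on \(E\), using Berkovich's \(y\mapsto g_t*y\) with \(g_t\) the Shilov points of the affinoid subgroups of \(T'^{an}\). This homotopy commutes with every transition map simply because \(T'\) is commutative. The valuation-zero property of the cocycles is needed for something else: it makes \(\operatorname{trop}\) globally defined. Then the image of the retraction is homeomorphic to \(B^{an}\times N'_{\mathbb R}\), which step (i) contracts.

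Also, N\'eron models require a discrete valuation. Over \(\mathbf K\), use a formal model with semiabelian reduction, or first descend \(A\) to a discretely valued subfield.
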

\begin{thm}[Raynaud--Bosch--L\"utkebohmert uniformization II]
	There are exact sequences
	\[
	0\longrightarrow\mathbf T\longrightarrow E\xrightarrow{q}\mathbf B
	\longrightarrow0
	\]
	and
	\[
	0\longrightarrow M'\longrightarrow E^{an}\xrightarrow{p}A^{an}
	\longrightarrow0,
	\]
	where \(\mathbf T=\operatorname{Spec}(\mathbf K[M])\) is a split
	\(\mathbf K\)-torus, \(\mathbf B\) is an abelian variety with good reduction,
	and \(M'\) is a lattice in \(E^{an}\).
\end{thm}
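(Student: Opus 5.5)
The plan is to follow the classical Raynaud–Bosch–Lütkebohmert construction and match it with the universal covering from Uniformization I. I will proceed in three stages: formal semiabelian reduction, algebraization of the Raynaud extension \(E\), and identification of \(\ker p\) as a lattice.

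\emph{Stage 1: formal semiabelian reduction.}
Since \(\mathbf K\) is algebraically closed, Grothendieck's semistable reduction theorem applies with no field extension. Thus the identity component of the Néron model of \(A\) over \(R\) has semiabelian special fibre
\[
0\to\overline{\mathbf T}\to\overline{\mathcal G}\to\overline{\mathbf B}\to0 .
\]
Let \(\widehat{\mathcal G}\) be its formal completion along the special fibre. Its generic fibre \(\widehat{\mathcal G}_\eta\) is an open analytic subgroup of \(A^{an}\).

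\emph{Stage 2: construction of \(E\).}
Tori are rigid, so \(\overline{\mathbf T}\) lifts uniquely to a formal split torus \(\widehat{\mathbf T}=\operatorname{Spf}R\{M\}\). The abelian variety \(\overline{\mathbf B}\) lifts to a formal abelian scheme \(\widehat{\mathbf B}\). The extension \(\widehat{\mathcal G}\) of \(\widehat{\mathbf B}\) by \(\widehat{\mathbf T}\) is then classified, through the characters \(m\in M\), by points of \(\widehat{\mathbf B}^\vee\).

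Next I algebraize. Choose a lift of an ample line bundle so that \(\widehat{\mathbf B}\) algebraizes (Grothendieck existence) to an abelian scheme \(\mathcal B\) with generic fibre \(\mathbf B\), which has good reduction by construction. The formal extension class comes from \(\operatorname{Hom}(M,\mathbf B^\vee(\mathbf K))\), which is the formal form of the Weil–Barsotti formula recalled above. It therefore defines an algebraic extension \(0\to\mathbf T\to E\xrightarrow{q}\mathbf B\to0\), which is the first exact sequence. By construction \(\widehat{\mathcal G}_\eta\) is the open subgroup \(q^{-1}\) of the "integral" part of \(E^{an}\), namely the points where \(\operatorname{val}\) of every toric coordinate vanishes.

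\emph{Stage 3: the homomorphism \(p\) and its kernel.}
I extend the open immersion \(\widehat{\mathcal G}_\eta\hookrightarrow A^{an}\) to an analytic homomorphism \(p:E^{an}\to A^{an}\). The approach is that \(E^{an}\) is generated by \(\widehat{\mathcal G}_\eta\) together with the torus translates. Using the tropicalization \(\operatorname{val}:E^{an}\to N_{\mathbb R}=\operatorname{Hom}(M,\mathbb R)\) defined fibrewise through local trivializations of the \(\mathbf T\)-torsor, one defines \(p\) annulus by annulus. Compatibility then follows from the group law, via the Bosch–Lütkebohmert analytic continuation argument.

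The kernel \(M'=\ker p\) meets \(\widehat{\mathcal G}_\eta\) trivially, so it is discrete. Its image under \(\operatorname{val}\) is then a subgroup of \(N_{\mathbb R}\).

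\emph{Main obstacle: \(M'\) has full rank \(\dim\mathbf T\).}
I expect the hardest step to be showing that \(\operatorname{val}(M')\) is a full-rank lattice, i.e. that \(p\) is surjective with cocompact kernel. My first approach is to use a polarization of \(A\). Pulling it back to \(E^{an}\) gives a cocycle whose valuation yields a symmetric bilinear form on \(M'\) (a Riemann form), and positive definiteness of this form, coming from ampleness, forces \(M'\) to be cocompact in \(N_{\mathbb R}\).

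Finally, I compare with Uniformization I. Berkovich's skeleton theory gives strong deformation retractions \(E^{an}\to N_{\mathbb R}\) and \(A^{an}\to N_{\mathbb R}/\operatorname{val}(M')\). Hence \(E^{an}\) is simply connected and \(p\) is a covering map with deck group \(M'\). By uniqueness of universal coverings, \(p\) agrees with the cover of Uniformization I, normalized by choosing the base point \(0\). This gives the second exact sequence \(0\to M'\to E^{an}\to A^{an}\to0\).
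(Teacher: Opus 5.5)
The paper does not prove this statement. It quotes the Bosch--L\"utkebohmert uniformization in the form of \cite{FRSS} (cf.\ \cite{TJFA} and \cite[Section~3.2]{Ku1}), so your text has to stand on its own as a proof of their theorem. Your architecture is the right one: formal semiabelian reduction, then the Raynaud extension, then the period lattice. But the step that carries essentially all of the content is missing.

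\emph{The missing step: constructing $p$.} In Stage~3 you need a homomorphism $p\colon E^{an}\to A^{an}$ extending the open immersion $\widehat{\mathcal G}_\eta\hookrightarrow A^{an}$. The set $\widehat{\mathcal G}_\eta=\operatorname{val}^{-1}(0)$ is the only place where any map is given. Since $E^{an}$ is the pushout of $\widehat{\mathcal G}_\eta$ along $\widehat{\mathbf T}_\eta=\{\operatorname{val}=0\}\subset\mathbf T^{an}$, the task is exactly to extend the homomorphism $\widehat{\mathbf T}_\eta\to A^{an}$ to all of $\mathbf T^{an}$.

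The group law cannot do this. It offers no candidate value at a point of nonzero valuation. Moreover, the identity of $\widehat{\mathbf T}_\eta$ is a homomorphism of analytic groups with no extension to a homomorphism $\mathbf T^{an}\to\widehat{\mathbf T}_\eta$: by analytic continuation the coordinates would have to stay of absolute value $1$ everywhere. So ``defining $p$ annulus by annulus'' has no mechanism behind it. An extension principle that genuinely uses the properness of $A$ is required here. Invoking ``the Bosch--L\"utkebohmert analytic continuation argument'' at this point is invoking the very theorem you are trying to prove.

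\emph{The full-rank step.} You have put the main difficulty in the wrong place, and your argument for it does not work. Once $p$ exists, it restricts to an isomorphism on $\widehat{\mathcal G}_\eta$, which contains an open neighbourhood of $0$, so $p$ is a local isomorphism.
\begin{itemize}
\item Surjectivity then follows from connectedness of $A^{an}$.
\item $\operatorname{val}(M')$ is discrete, because $M'$ is discrete and $\operatorname{val}^{-1}$ of a bounded set is compact.
\item The induced continuous surjection from the compact space $A^{an}=E^{an}/M'$ onto $N_{\mathbb R}/\operatorname{val}(M')$ gives cocompactness, hence full rank.
\end{itemize}
By contrast, positivity of $\langle m',\lambda(m')\rangle$ on $M'$ only shows that $\lambda$ is injective on $M'$, i.e.\ $\operatorname{rank}M'\le\dim\mathbf T$. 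It holds vacuously for $M'=0$, so it cannot yield cocompactness.

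\emph{Two smaller points.}
\begin{itemize}
\item In Stage~1, N\'eron models and Grothendieck's semistable reduction theorem are formulated over discrete valuation rings, whereas $R$ is non-noetherian with divisible value group. For general $\mathbf K$ the formal subgroup with semiabelian reduction comes from Bosch--L\"utkebohmert's rigid-analytic stable reduction. In the paper's setting you can instead pass to a finite extension of $K_v$ and base change.
\item In Stage~2, formal abelian schemes are not automatically algebraizable. So an ample line bundle on $\widehat{\mathbf B}$ cannot simply be chosen as a lift; it must be manufactured from an ample bundle on $A$.
\end{itemize}
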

\addtocounter{thm}{12}

\paragraph{Theta functions and descent data.}
We use the Raynaud theta formalism in the form developed in
\cite[Sections~3.3--3.6]{Ku1}.  A character \(u\in M\) pushes the Raynaud
extension out to a rigidified Picard-zero line bundle \(E_u\) on \(B\), with
canonical section \(e_u\).  The resulting tropicalization is characterized by
\[
\langle u,\operatorname{trop}(x)\rangle
=-\log\|e_u(x)\|_{E_u}.
\]
The period lattice \(M'\) maps to a full lattice in \(N_{\mathbb R}\), and the
fiber over \(0\) is the generic fiber of the formal Raynaud extension.

Rigidified line bundles on \(A\) are described by Raynaud descent triples
\((L,\lambda,c)\); see \cite[Theorem~3.6]{Ku1}.  On a formal affinoid chart
where the Raynaud extension splits, a section of \(q^*L\) has a unique Fourier
expansion
\[
f=\sum_{u\in M}a_u\otimes e_u.
\]
The theta transformation law of \cite[Proposition~3.13]{Ku1} is exactly the
condition that this section descend through \(M'\) to \(A\).  For a rational
section \(s\) of a rigidified algebraically trivial line bundle, the associated
theta function \(f_s\) satisfies
\[
\operatorname{div}(f_s)
=p^{-1}\bigl(\operatorname{div}(s)^{an}\bigr).
\]
These are the only theta-function facts used in the local-trivialization
argument.  We record them, together with the toric extension criterion, in the
next proposition.

We next recall the skeleton of \(A^{an}_{\mathbf{K}}\), constructed on
pp.~124--125 of \cite{Ber}.
\begin{lem}
	There is a skeleton \(\Delta(A)\) homeomorphic to a real torus, and it is a
	strong deformation retract of \(A^{an}\).  Moreover, \(A^{an}\) admits a
	finite cover by contractible analytic open subsets.
\end{lem}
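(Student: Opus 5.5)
The plan is to build the skeleton upstairs on the Raynaud extension \(E^{an}\) and then descend it to \(A^{an}\) along the lattice \(M'\), using the two Raynaud--Bosch--L\"utkebohmert uniformization theorems recalled above.

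\emph{Retraction on \(E^{an}\).} Let \(\mathbf T=\operatorname{Spec}(\mathbf K[M])\) and \(q:E\to\mathbf B\) be as in the second uniformization theorem. Since \(\mathbf B\) has good reduction, \(\mathbf B^{an}\) has a distinguished point \(\xi_{\mathbf B}\), the Shilov point of its formal model, and \(\mathbf B^{an}\) contracts onto \(\xi_{\mathbf B}\) by Berkovich's construction for spaces with good reduction. Over the formal model, the Raynaud extension is a torsor under the formal torus, and it is locally split. On \(\mathbf T^{an}\) we have the standard retraction onto \(N_{\mathbb R}\): a point is moved along the family of affinoid subtori \(\mathbf T_0(r)\) toward the Gauss point of the polyannulus through it. I will combine the two contractions into a homotopy
\[
H:E^{an}\times[0,1]\to E^{an}.
\]
The homotopy should satisfy \(H_0=\mathrm{id}\) and have image of \(H_1\) equal to a closed subset \(S(E)\) that \(\operatorname{trop}\) maps homeomorphically onto \(N_{\mathbb R}\). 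It should also satisfy \(\operatorname{trop}\circ H_s=\operatorname{trop}\) for all \(s\), where \(\operatorname{trop}\) is defined by \(\langle u,\operatorname{trop}(x)\rangle=-\log\|e_u(x)\|_{E_u}\). The cleanest way to obtain \(H\) is Berkovich's method of acting by the analytic subgroup \(\mathbf T_0\cdot\widehat E_0\) (the formal part of \(E\)) through its canonical family of "Gauss" points \(g_s\), via \(H_s(x)=g_s* x\). This construction is automatically compatible with translations by any element of \(E^{an}\), and in particular by \(M'\).

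\emph{Descent to \(A^{an}\).} Since \(H\) commutes with translation by \(M'\), it descends through \(p:E^{an}\to A^{an}\) to a homotopy on \(A^{an}\). Its end map \(\tau\) is a strong deformation retraction onto
\[
\Delta(A):=p(S(E))\cong N_{\mathbb R}/\Lambda,
\qquad \Lambda:=\operatorname{trop}(M').
\]
By the recalled fact that \(M'\) maps to a full lattice in \(N_{\mathbb R}\), the quotient \(\Delta(A)\) is homeomorphic to a real torus of dimension \(\operatorname{rk}M\). The compatibility \(\operatorname{trop}\circ H_s=\operatorname{trop}\) descends to \(\tau\circ H_s=\tau\), so the homotopy preserves every fiber of \(\tau\).

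\emph{Contractible cover.} Choose open convex sets \(V\subseteq N_{\mathbb R}\) of diameter small relative to \(\Lambda\), so that each maps homeomorphically into \(\Delta(A)\). By compactness of \(\Delta(A)\), finitely many images \(\overline V_1,\dots,\overline V_n\) cover it. Put \(U_k:=\tau^{-1}(\overline V_k)\). Each \(U_k\) is an open subset of \(A^{an}\), hence an open analytic domain, and the \(U_k\) cover \(A^{an}\). Because \(\tau\circ H_s=\tau\), the homotopy \(H\) preserves \(U_k\) and deformation retracts it onto \(\overline V_k\), which is convex and therefore contractible. Hence each \(U_k\) is contractible.

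The main obstacle is the first step. The construction of \(H\) has to be simultaneously continuous, \(M'\)-equivariant and \(\operatorname{trop}\)-fiber-preserving, and the Raynaud extension is only split locally on the formal model. I will handle this by working with the group action of the formal analytic subgroup rather than with local splittings, so that equivariance comes for free; this is essentially Berkovich's argument on pp.~124--125 of \cite{Ber}. I would cite it for the continuity of \((s,x)\mapsto g_s*x\) rather than reprove it.
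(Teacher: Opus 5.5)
Your proposal is correct and is essentially the paper's route: the paper gives no argument of its own and cites Berkovich's construction on pp.~124--125 of \cite{Ber}, which is exactly the peaked-point action \(x\mapsto g_s*x\) of the formal part of the Raynaud extension on \(E^{an}\), descended through the lattice \(M'\). Your extra step, which takes \(\tau^{-1}\) of small convex pieces of the skeleton and uses \(\tau\circ H_s=\tau\), correctly supplies the finite contractible cover that the paper only asserts.
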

\begin{remark}
	For any divisor \(D\) of \(A\), the lemma allows us to choose an open
	contractible subset of \(A^{an}\) disjoint from the analytic divisor
	\(D^{an}\).
\end{remark}
We use these facts and the theta functions above to construct the local
trivializations.

\begin{proposition}[Analytic inputs for the local trivialization]\label{input:local-triv-analytic}
	The proof of Theorem~\ref{thm:local-trivialization} uses the following standard
	facts in the precise forms stated here.
	\begin{enumerate}[leftmargin=2em]
		\item Raynaud--Bosch--Lutkebohmert uniformization gives
		\[
		0\to M'_v\to E_v^{an}\xrightarrow{p_v}A_{\mathbb C_v}^{an}\to0
		\]
		and the associated Raynaud extension.  Over a sufficiently small contractible
		analytic open \(U\subset A_{\mathbb C_v}^{an}\), the covering map \(p_v\) admits
		an analytic section.
		\item For a rational section \(s\) of a rigidified algebraically trivial line
		bundle on \(A\), Raynaud theta descent produces a function \(f_{s,v}\) satisfying
		\[
		\operatorname{div}(f_{s,v})=p_v^{-1}(\operatorname{div}(s)^{an}).
		\]
		\item A morphism from the principal torus to a normal toric variety extends over
		the affine chart \(X_\sigma\) precisely when all characters
		\(\chi^m\), \(m\in\sigma^\vee\cap M\), pull back to analytic functions with
		effective boundary divisor.  This is the standard affine-chart criterion for
		toric morphisms, applied in the analytic category.
	\end{enumerate}
\end{proposition}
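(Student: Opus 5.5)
The plan is to verify the three items separately. Each is a reformulation of a standard fact, so the work consists in matching the statements to the cited sources (\cite{FRSS}, \cite[Sections~3.3--3.6]{Ku1}, \cite{Ber}, \cite{BPS14}) rather than in new arguments.

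For item (1), I will use Raynaud--Bosch--L\"utkebohmert uniformization~II. It gives the exact sequence \(0\to M'_v\to E_v^{an}\to A_{\mathbb C_v}^{an}\to 0\) with \(M'_v\) a discrete lattice. Hence \(p_v\) is a topological covering which is moreover a local analytic isomorphism: it is a quotient of an analytic group by a discrete subgroup acting freely and properly discontinuously. By the skeleton lemma quoted from \cite{Ber}, \(A^{an}_{\mathbb C_v}\) has a finite cover by contractible analytic opens. Over such a contractible \(U\), the lifting criterion for coverings, applied to the inclusion \(U\hookrightarrow A^{an}\), yields a continuous section \(\sigma:U\to E_v^{an}\). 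Locally \(\sigma\) is the inverse of \(p_v\) on some sheet, so it is analytic. I expect this to be the main subtle point. Berkovich spaces are only path-connected and locally contractible rather than manifolds, so I would invoke the covering theory of Berkovich \cite{Ber}, which gives the universal covering \(E^{an}\) in Uniformization~I, to justify lifting. After that, analyticity is automatic from the local isomorphism property.

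For item (2), I will start from a rational section \(s\) of a rigidified Picard-zero line bundle \(\mathcal H\) on \(A\). Via \cite[Theorem~3.6]{Ku1}, \(p_v^*\mathcal H^{an}\) corresponds to a descent triple \((L,\lambda,c)\), with \(p_v^*\mathcal H^{an}\simeq q^*L\). The rigidification and the algebraic triviality allow \(q^*L\) to be trivialized on \(E_v^{an}\). Writing \(p_v^*s\) in this trivialization defines \(f_{s,v}\), a meromorphic function whose transformation under \(M'_v\) is the theta law \cite[Proposition~3.13]{Ku1}. Since the trivialization is nowhere vanishing, \(\operatorname{div}(f_{s,v})=\operatorname{div}(p_v^*s)\). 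Since \(p_v\) is a local isomorphism, divisors pull back compatibly, so \(\operatorname{div}(p_v^*s)=p_v^{-1}(\operatorname{div}(s)^{an})\).

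For item (3), I will use that \(X_\sigma=\operatorname{Spec}K[\sigma^\vee\cap M]\) is affine and that \(\sigma^\vee\cap M\) is finitely generated. A morphism from a (normal) analytic space \(Y\) to \(X_{\sigma}^{an}\) is therefore the same as a \(K\)-algebra homomorphism \(K[\sigma^\vee\cap M]\to\mathcal O(Y)\). Given a map \(Y^\circ\to T^{an}\) on a dense open \(Y^\circ\), the characters \(\chi^m\) pull back to meromorphic functions on \(Y\). The map extends to \(Y\to X_\sigma^{an}\) exactly when these pullbacks lie in \(\mathcal O(Y)\); uniqueness of the extension follows from density of \(Y^\circ\). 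On a normal space a meromorphic function is analytic if and only if its divisor is effective, by the analytic Riemann extension (Hartogs) theorem for normal spaces. It suffices to check this on the finite set of generators, and this gives the stated criterion.
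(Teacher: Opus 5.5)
Your arguments for items (1) and (3) are sound, and they are the standard ones the paper relies on without proof. For (1), you lift through the covering $p_v$ over a simply connected, locally path-connected open and get analyticity because $p_v$ is a local isomorphism. For (3), you use the universal property of the analytification of the affine chart $X_\sigma$, plus the fact that on a normal space a meromorphic function with effective divisor is analytic.

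Item (2), however, rests on a false step: ``the rigidification and the algebraic triviality allow $q^*L$ to be trivialized on $E_v^{an}$.'' Algebraically trivial means algebraically equivalent to zero, not trivial. The simplest counterexample is good reduction, which holds at all but finitely many places. There the Raynaud extension is $E=B=A$, with $M'_v=0$ and $p_v=\mathrm{id}$. By GAGA, a nontrivial $\mathcal H\in\operatorname{Pic}^0(A)$ stays nontrivial on $A^{an}_{\mathbb C_v}$. Hence $\operatorname{div}(s)^{an}$ is not the divisor of any meromorphic function on $E_v^{an}$, since $s/f$ would trivialize $\mathcal H^{an}$, and no trivialization of $q^*L$ exists.

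In general, $L$ is a class in $\operatorname{Pic}^0(B)$ determined only modulo the pushout bundles $E_u$, $u\in M$. Expand a section of $q^*L$ over $\operatorname{trop}^{-1}(0)$ in characters. Its coefficients are global sections, on the proper variety $B$, of the Picard-zero bundles $L\otimes E_u^{-1}$. So $q^*L$ has no nonzero section at all unless $L\simeq E_u$ for some $u$. The archimedean intuition, where every line bundle on $\mathbb C^g$ is trivial and theta functions are honest functions, does not transfer, because the good-reduction factor $B$ survives in $E$.

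The repair is simply not to trivialize. Define $f_{s,v}$ as the meromorphic section of $q^*L^{an}$ corresponding to $p_v^*s$ under $p_v^*\mathcal H^{an}\simeq q^*L^{an}$, with its $M'_v$-transformation law encoded by $c$. This is what a theta function means in the paper's own usage ($f\in H^0(E^{an},q^*L)$), so the word ``function'' in item (2) has to be read in this sense. Your local-isomorphism argument then gives $\operatorname{div}(f_{s,v})=p_v^{-1}(\operatorname{div}(s)^{an})$ unchanged.

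An actual function is needed for the quotient $g_i/f_{s_i,v}$ in the proof of Theorem~\ref{thm:local-trivialization}. It exists only after an extra local trivialization of $L$ near $q(\widetilde\pi_U(U))$. That choice then has to be carried through the overlap comparison.
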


\begin{proof}[Proof of Theorem~\ref{thm:local-trivialization}]
	Let \(\Sigma(1)=\{\rho_1,\ldots,\rho_d\}\), let \(v_\ell\) be the primitive
	generator of \(\rho_\ell\), and let \(D_\ell\) be the corresponding
	\(T\)-invariant prime divisor on \(X_\Sigma\).  Write the extension class of
	\(G\) in the chosen character basis as
	\[
	\eta_G=(H_1,\ldots,H_t)\in A^\vee(\overline K)^t .
	\]
	The cocycle description of the associated toric fibration gives
	\[
	\overline{\pi}^*H_i
	=
	\sum_{\ell=1}^{d}\langle e_i^\vee,v_\ell\rangle\,\overline G(D_\ell),
	\qquad 1\le i\le t.
	\]
	
	Choose an analytic open subset \(U\subset A_{\mathbb C_v}^{an}\) which is
	contractible and disjoint from the analytic divisors of rational sections
	\(s_i\) of \(H_i^\vee\).  By Proposition~\ref{input:local-triv-analytic}, the
	covering \(p_v:E_v^{an}\to A_{\mathbb C_v}^{an}\) has an analytic section over
	\(U\); denote the corresponding lift of \(\pi\) by
	\[
	\widetilde\pi_U:\overline G^{an}_{\mathbb C_v}|_U\longrightarrow E_v^{an}.
	\]
	The section \(s_i\) determines a theta function \(f_{s_i,v}\) with
	\[
	\operatorname{div}(f_{s_i,v})
	=
	p_v^{-1}\bigl(\operatorname{div}(s_i)^{an}_{\mathbb C_v}\bigr).
	\]
	The cocycle formula gives a rational function \(g_i\) on \(\overline G\) with
	\[
	\operatorname{div}(g_i)
	=
	\sum_{\ell=1}^{d}\langle e_i^\vee,v_\ell\rangle\,\overline G(D_\ell)
	+\overline\pi^*\operatorname{div}(s_i).
	\]
	After multiplying \(s_i\) by constants, we may assume
	\(g_i(e_G)=1\) and \(\Vert f_{s_i,v}(e_E^{an})\Vert=1\).
	
	On the open semiabelian part over \(U\), define
	\[
	\psi_U^{(i)}(y)
	=
	\frac{g_i(y)}{f_{s_i,v}(\widetilde\pi_U(y))},
	\qquad
	\psi_U=(\psi_U^{(1)},\ldots,\psi_U^{(t)}):G^{an}_{\mathbb C_v}|_U\to T^{an}_{\mathbb C_v}.
	\]
	The denominator has no zero over \(U\), and hence \(\psi_U^{(i)}\) is an
	invertible analytic function on \(G^{an}_{\mathbb C_v}|_U\).
	For each \(a\in U\), the divisor of \(\psi_U^{(i)}\) on the compactified
	fiber \(\overline G_a\simeq X_{\Sigma,\mathbb C_v}\) is exactly the divisor of
	the torus character \(\chi^{e_i^\vee}\).  Thus \(\psi_U\) identifies the
	principal torus in the fiber with \(T_{\mathbb C_v}^{an}\), up to multiplication
	by nonzero constants.
	
	We now extend over the toric boundary.  Let
	\(X_\sigma=\operatorname{Spec}\mathbb C_v[\sigma^\vee\cap M]\) be an affine
	toric chart.  For \(m=\sum_i m_ie_i^\vee\in\sigma^\vee\cap M\), the pullback
	of \(\chi^m\) is
	\[
	\prod_{i=1}^{t}\bigl(\psi_U^{(i)}\bigr)^{m_i}.
	\]
	Its boundary divisor is
	\[
	\sum_{\ell=1}^{d}\langle m,v_\ell\rangle\,\overline G(D_\ell),
	\]
	which is effective on the chart because \(m\in\sigma^\vee\cap M\).  By the
	affine-chart criterion in Proposition~\ref{input:local-triv-analytic}, the map
	\((\overline\pi,\psi_U)\) extends uniquely to an analytic morphism
	\[
	\Psi_U:
	\overline G^{an}_{\mathbb C_v}|_U
	\longrightarrow
	U\times X_{\Sigma,\mathbb C_v}^{an}.
	\]
	The same character calculation applied to the inverse torus coordinates gives
	an inverse morphism.  Hence \(\Psi_U\) is an isomorphism of analytic spaces.
	
	The finite cover is obtained by choosing finitely many such contractible opens
	and, on each of them, rational sections \(s_i\) whose divisors avoid the open.
	Compactness of \(A_{\mathbb C_v}^{an}\) gives a finite subcover
	\(\{U_j\}_{j\in J}\), and we write the second component of \(\Psi_{U_j}\) as
	\(\psi_j\).
	
	It remains to verify overlap compatibility.  On \(U_j\cap U_{j'}\), write
	\[
	s_j^{(i)}=t_{jj'}^{(i)}s_{j'}^{(i)}
	\]
	with \(t_{jj'}^{(i)}\) an analytic unit.  The corresponding rational functions
	and theta functions satisfy
	\[
	g_j^{(i)}=(t_{jj'}^{(i)}\circ\pi)g_{j'}^{(i)},\qquad
	f_{s_j^{(i)},v}=(t_{jj'}^{(i)}\circ p_v)f_{s_{j'}^{(i)},v}.
	\]
	If the chosen local lifts of \(U_j\) and \(U_{j'}\) differ on a connected
	component of the overlap by an element of the period lattice \(M'_v\), the
	theta transformation law adds the usual Raynaud automorphy factor.  The same
	factor occurs in the descent cocycle defining the semiabelian extension
	class \(\eta_G\); hence it cancels in the quotient
	\(g_i/f_{s_i,v}\).  After this cancellation the displayed formulas give the
	actual change of local coordinates.
	Since \(p_v(\widetilde\pi_U(y))=\pi(y)\), the two unit factors have the same
	absolute value after evaluation at \(y\).  Therefore
	\[
	\left|
	\frac{\psi_j^{(i)}(y)}{\psi_{j'}^{(i)}(y)}
	\right|_v=1,\qquad 1\le i\le t.
	\]
	Equivalently, the transition multiplier lies in the compact torus, so it has
	valuation zero.  The toric tropicalization map is invariant under compact
	torus multiplication.  Hence
	\[
	\val_v\circ\psi_j(y)=\val_v\circ\psi_{j'}(y)
	\]
	on overlaps.
	
	Finally, under \(\Psi_U\), the line bundle \(\overline G(D)\) is identified
	with the pullback of the toric line bundle \(O(D)\) on the second factor.  When
	\(D\) is equipped with the canonical toric metric, that metric is a function
	of the toric valuation and is invariant under compact torus multiplication.
	The overlap compatibility therefore identifies the canonical metrics:
	\[
	\overline G(D)^{can}_v|_{U_j}=\psi_j^*(\overline D^{can}_v).
	\]
\end{proof}
\begin{lemma}[Good-place model for the induced canonical metric]\label{input:good-place-model-induced}
	For all but finitely many non-archimedean places \(v\), the semiabelian
	extension \(G\), the abelian quotient \(A\), the toric variety \(X_\Sigma\), the
	fan \(\Sigma\), the toric divisor \(D\), and the local data used in
	Theorem~\ref{thm:local-trivialization} extend over \(O_{K_v}\).  At such a
	place the local trivializations are induced by formal trivializations of the
	corresponding toric fibration, their transition functions lie in the maximal
	compact torus, and the canonical toric metric on \(O(D)\) is the model metric
	coming from the toric model.
\end{lemma}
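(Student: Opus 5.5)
The plan is a spreading-out argument followed by an identification of metrics on a formal model. All the data are finite in number: \(G\), \(A\), the extension class \(\eta_G=(H_1,\ldots,H_t)\in A^\vee(K)^t\), the fan \(\Sigma\), the divisor \(D\), and the finitely many rational sections \(s_i\) used in the proof of Theorem~\ref{thm:local-trivialization}.

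\textbf{Step 1: spreading out.} Spread these data out over a dense open \(S_0\subseteq\operatorname{Spec}O_K\):
\begin{itemize}
\item an abelian scheme \(\mathcal A/S_0\), with \(A\) having good reduction outside a finite set;
\item the points \(H_i\) extended to \(\mathcal A^\vee(S_0)\) by the Néron property, with rigidified line bundles \(\mathcal H_i\);
\item a split torus \(\mathcal T=\mathbb G_{m,S_0}^t\) and the semiabelian scheme \(\mathcal G\) obtained as the extension determined by the \(\mathcal H_i\), using Weil--Barsotti over the base.
\end{itemize}
The fan defines a toric scheme \(\mathcal X_\Sigma\) over \(\mathbb Z\), and the quotient \((\mathcal G\times\mathcal X_\Sigma)/\mathcal T\) gives a proper model \(\overline{\mathcal G}\). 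This model is Zariski-locally on \(\mathcal A\) isomorphic to \(\mathcal U\times\mathcal X_\Sigma\), because \(\mathcal G\to\mathcal A\) is a \(\mathcal T\)-torsor that is Zariski-locally trivial (split torus, Hilbert 90).

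\textbf{Step 2: formal trivializations at good places.} For \(v\in S_0\) with \(A\) of good reduction, the Raynaud extension is trivial: \(\mathbf T=1\), \(M'=0\), \(E=A\), and the covering \(p_v\) is the identity. The local trivialization of Theorem~\ref{thm:local-trivialization} therefore reduces to \(\psi_j^{(i)}=g_i/(s_i\circ\pi)\). Choose the \(U_j\) to be the tubes \(\operatorname{red}^{-1}(\overline U_j)\) of Zariski opens \(\overline U_j\) of the special fiber on which the \(s_i\) extend to nowhere-vanishing sections of \(\mathcal H_i^\vee\). Then \(\psi_j\) is induced by the algebraic trivialization over \(\mathcal U_j\), hence comes from a formal trivialization. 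Transition functions are units of \(\mathcal O(\mathcal U_j\cap\mathcal U_{j'})\) and so have absolute value \(1\), i.e. they lie in the maximal compact torus. This matches the overlap statement of the theorem.

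\textbf{Step 3: identifying the metric.} By \cite{BPS14}, the canonical toric metric on \(O(D)\) equals the model metric of the \(T\)-linearized toric model \(\mathcal O(D)\) on \(\mathcal X_{\Sigma,O_v}\), first for Cartier divisors. The \(\mathbb R\)-divisor case follows by linearity, writing \(D\) as an \(\mathbb R\)-combination of finitely many Cartier toric divisors. Pull this back via \(\psi_j\). Model metrics are compatible with pullback under formal isomorphisms, and the metric is invariant under compact-torus transitions, so the local pieces glue to the model metric of \(\overline{\mathcal G}(\mathcal D)\).

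\textbf{Expected obstacle.} The delicate point is Step 2: checking that the theta normalization (\(g_i(e_G)=1\), \(\|f_{s_i,v}(e)\|=1\)) agrees with the integral structure, so that no stray constant of nonzero valuation appears in \(\psi_j\). I would handle this by normalizing \(g_i\) and \(s_i\) integrally along the identity section of \(\mathcal G\), which removes such constants at all but finitely many places. The finitely many places excluded along the way (bad reduction, denominators of the \(s_i\) and \(g_i\), and the points where the \(s_i\) fail to cover the special fiber) are absorbed into the exceptional set.
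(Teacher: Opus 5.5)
The paper gives no proof of this lemma: it is stated as a standard good-reduction input and used only in the adelicity argument (see the remark after Proposition~\ref{prop:induced-metric-adelic}). So there is no paper proof to compare against. Your spreading-out argument is the standard way to supply the missing proof, and it is correct in outline. The steps are: the abelian scheme together with the N\'eron extension of the \(H_i\); the semiabelian scheme as a product of \(\mathbb G_m\)-torsors; the contracted product with the toric scheme over \(\mathbb Z\); Zariski-local trivializations by integral generators; and the BPS identification of the canonical toric metric with the model metric of the canonical toric model. For the last step the paper only needs integral \(D\), so the \(\mathbb R\)-linearity remark is not even required.

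Three points should be made explicit.

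First, the tubes \(\operatorname{red}^{-1}(\overline U_j)\) are compact affinoid domains, not Berkovich-open sets. At good places you are therefore replacing the open covering of Theorem~\ref{thm:local-trivialization} by a finite formal (affinoid) covering. This is harmless, because the metric is glued pointwise and a function continuous on each member of a finite closed cover is continuous, but it should be said.

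Second, what actually disposes of your ``stray constant'' worry is the following fact. At a place of good reduction, the canonical metric on the rigidified Picard-zero bundle \(H_i\) is the model metric of its rigidified extension \(\mathcal H_i\) over the abelian scheme. Indeed, the isomorphisms \([n]^*\mathcal H_i\simeq\mathcal H_i^{\otimes n}\) extend over the model by rigidity, so the uniqueness characterization applies. Read \(g_i\) as the character section of the pulled-back Picard-zero bundle, so that \(g_i/(s_{ij}\circ\pi)\) is a ratio of two sections of the same line bundle and hence a unit on \(\pi^{-1}(\mathcal U_j)\). With this, the absolute value of your coordinate is the canonical norm of the character section. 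Hence \(\operatorname{val}_v\circ\psi_j=\operatorname{trop}_{G,v}\) on the tube. Your formal charts then agree, up to compact-torus factors, with any other admissible chart and with the theorem's theta normalization. This agreement is what the gluing in Proposition~\ref{prop:induced-metric-adelic} implicitly uses.

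Third, normalizing along the identity section is not the right mechanism here. The special fiber of \(\mathcal U_j\) need not meet the identity, so \(s_{ij}\) cannot in general be normalized at \(e\). Nothing is lost, because an integral generator on \(\mathcal U_j\) is already determined up to \(\mathcal O(\mathcal U_j)^\times\), and such units have absolute value \(1\) at every point of the tube.
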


\begin{prop}[Adelicity of the induced metric]\label{prop:induced-metric-adelic}
	Let \(\bD\) be a toric metrized \(\BR\)-divisor on \(X_\Sigma\).  On
	\(\overline\pi^{-1}(U_j)\), define the induced local metric on
	\(\overline G(D)\) by
	\[
	\|\widetilde s(y)\|_{\overline G(\bD),v}
	=
	\|s(\psi_{j,v}(y))\|_{\bD,v},
	\]
	where \(s\) is a local section of \(O(D)\) and \(\widetilde s\) is the
	corresponding local section of \(\overline G(D)\).  These local formulas glue
	to a continuous \(v\)-adic metric on \(\overline G(D)\).  If \(\bD\) is adelic,
	then the induced metric is adelic.
\end{prop}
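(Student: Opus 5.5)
The proof splits into three parts: well-definedness and gluing, continuity, and adelicity. For gluing, I will use the overlap computation from the proof of Theorem~\ref{thm:local-trivialization}. On \(\overline\pi^{-1}(U_j\cap U_{j'})\) the two charts differ by a transition \(\psi_{j'}(y)=\tau_{jj'}(y)\cdot\psi_j(y)\), where \(\tau_{jj'}\) takes values in the compact torus \(\mathbb S_v^{an}\); this is the statement \(|\psi_j^{(i)}/\psi_{j'}^{(i)}|_v=1\). The identification of \(\overline G(D)\) with \(\psi_j^*O(D)\) changes between the charts by the \(T\)-linearization of \(O(D)\) evaluated at \(\tau_{jj'}\).

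Because \(\bD\) is toric, its metric is \(\mathbb S_v^{an}\)-invariant, meaning the \(\mathbb S_v^{an}\)-action on \(O(D)\) is isometric. This is where the toric hypothesis is used. Hence \(\|s(\psi_j(y))\|_{\bD,v}\) and \(\|s'(\psi_{j'}(y))\|_{\bD,v}\) agree for corresponding sections, and the local formulas glue. A cleaner way to say this is through Green functions. Write \(\|\cdot\|_{\bD,v}=\|\cdot\|^{\can}_{v}\,e^{-\phi_v}\), where \(\phi_v=\Psi_{\bD,v}-\Psi_D\) is pulled back along \(\val_v\) and extends continuously to \(N_\Sigma\). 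Then the induced metric is
\[
\|\cdot\|_{\overline G(\bD),v}=\|\cdot\|^{\can}_{\overline G(D),v}\,e^{-\phi_v\circ\val_v\circ\psi_j}.
\]
The canonical part glues by the last assertion of Theorem~\ref{thm:local-trivialization}. The correction factor glues because \(\val_v\circ\psi_j=\val_v\circ\psi_{j'}\) on overlaps, extended to \(N_\Sigma\) via \(\rho_\Sigma\).

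Continuity is local. On each \(\overline\pi^{-1}(U_j)\), the map \(\psi_j\) is analytic, hence continuous, and \(\|\cdot\|_{\bD,v}\) is continuous on \(X^{an}_{\Sigma}\). The factor \(e^{-\phi_v\circ\val_v\circ\psi_j}\) is continuous and bounded away from \(0\) and \(\infty\), because \(\phi_v\) is continuous on the compact space \(N_\Sigma\). Galois invariance also holds: the chart data come from \(K\)-rational sections \(s_i\), \(g_i\), and the toric metric is Galois invariant. The archimedean case is identical, using K\"uhne's archimedean trivialization in place of Theorem~\ref{thm:local-trivialization}.

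For adelicity, \(\bD\) is quasi-algebraic, so \(\Psi_{\bD,v}=\Psi_D\) for all but finitely many \(v\). Outside a finite set \(S\), I will therefore take \(v\) to be a place where \(\Psi_{\bD,v}=\Psi_D\) and Lemma~\ref{input:good-place-model-induced} applies. At such \(v\) the induced metric is the canonical one, and the lemma says it is the model metric of the toric model over \(O_{K_v}\); so these places come from one integral model of \((\overline G,\overline G(D))\) away from \(S\). At the finitely many remaining places the induced metric is a continuous bounded perturbation \(e^{-\phi_v}\) of the canonical metric. I will then argue that the canonical metric \(\overline G(D)^{\can}\) is itself adelic, by K\"uhne's construction or as a Tate limit under the multiplication maps \([n]\), and that adding finitely many continuous, \(\val_v\)-factoring Green-function perturbations preserves adelicity. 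The tool for the perturbation step is to approximate each \(\phi_v\) uniformly by piecewise-linear rational functions on \(N_\Sigma\). These correspond to toric models, whose pullbacks through the charts give models of \(\overline G\), producing the Cauchy sequence required in \cite[\S2.5.1]{YZ}.

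The main obstacle is this last model-theoretic step at the bad places. Each piecewise-linear approximation of \(\phi_v\) has to be realized by a global projective model of \(\overline G\), not just by a chartwise one. My plan is to build it as a contracted product \((\mathcal G\times\mathcal X_{\Pi})/\mathcal T\) of a semiabelian model \(\mathcal G\) with the toric scheme \(\mathcal X_{\Pi}\) attached to a rational polyhedral complex \(\Pi\). I then need to check, using the Raynaud uniformization, that its model metric is given locally by \(\psi_j\) composed with the toric model metric.
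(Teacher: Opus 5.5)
Your proof follows the same route as the paper's. The charts glue because the transition multipliers lie in the compact torus and toric metrics are invariant under it. Continuity is checked chart by chart. Adelicity comes from Lemma~\ref{input:good-place-model-induced} at almost all places, together with the continuous perturbation \(\phi_v\circ\val_v\circ\psi_j\) at the finitely many remaining ones.

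The step you single out as the main obstacle is not one, and the paper handles it in a single line (``adding finitely many continuous changes preserves adelicity''). An adelic metric only has to be induced by one integral model at almost all places and be continuous at the others; this is Zhang's definition \cite{ZhangAdelicMetrics}. For the projective variety \(\overline G\) this agrees with the Yuan--Zhang limit definition \cite{YZ}, because continuous metrics at finitely many places are uniform limits of model (respectively smooth) metrics. So you need neither global models realizing piecewise-linear approximations of \(\phi_v\) nor a separate Tate-limit argument for \(\overline G(D)^{\can}\).

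This matters, because your contracted-product plan would be delicate exactly at places where \(A\) or the extension has bad reduction. There \(\val_v\circ\psi_j=\operatorname{trop}_{G,v}\) (Lemma~\ref{lem:local-coordinates-compute-tropicalization}) is built from the canonical Picard-zero metrics on the pushout bundles, and these are in general not model metrics. A contracted product over a fixed semiabelian model therefore need not induce the chart metric.

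A minor correction: with the paper's convention \(\log\|s_D\|_{\bD,v}=\Psi_{\bD,v}\circ\val_v\), the correction factor is \(e^{+\phi_v}\), not \(e^{-\phi_v}\). This is harmless to the argument.
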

\begin{proof}
	The only point to check for gluing is independence of \(j\).  On an overlap,
	Theorem~\ref{thm:local-trivialization} shows that the two toric coordinates
	differ by multiplication by an element of the compact torus.  Toric metrics
	are invariant under this compact torus.  Hence the displayed local formula is
	independent of the chart.  Continuity is local and follows from continuity of
	the toric metric and of the analytic maps \(\psi_{j,v}\).
	
	We next prove adelicity.  First suppose that \(D\) is an integral toric
	divisor.  Write
	\[
	\bD=\bD^{can}+\varphi,\qquad \varphi=(\varphi_v)_v,
	\]
	where each \(\varphi_v\) is a compact-torus-invariant continuous function on
	\(X_{\Sigma,\mathbb C_v}^{an}\), and \(\varphi_v=0\) for all but finitely many
	places.  The induced metric is obtained from the induced canonical metric by
	the finite family of functions
	\[
	\varphi_v\circ\psi_{j,v}.
	\]
	Thus it remains to prove that the induced canonical metric is adelic.
	
	By Lemma~\ref{input:good-place-model-induced}, outside a finite set of
	non-archimedean places the local trivializations are formal trivializations of
	a toric fibration over \(O_{K_v}\), and the canonical toric metric is the model
	metric on the toric model.  Pulling back that model metric through the formal
	trivializations gives the model metric on \(\overline G(D)\).  Therefore the
	induced canonical metric is a model metric at almost all places, and hence is
	adelic.  Adding finitely many continuous changes preserves adelicity.  The
	case of toric metrized \(\BR\)-divisors follows by linearity and uniform
	approximation by rational linear combinations of integral toric divisors.
\end{proof}

\begin{remark}
	Lemma~\ref{input:good-place-model-induced} is the precise good-place fact used
	here.  No separate conjectural assertion about canonical metrics at every
	good-reduction place is needed for the adelicity proof.
\end{remark}

\begin{prop}[Semipositivity of the induced metric]\label{prop:induced-metric-semipositive}
	If \(\overline{D}\) is vertically semipositive then the adelic line bundle
	\(\overline{G}(\overline{D})\) associated with \(\overline{D}\) is vertically
	semipositive and \(\overline{G}(D)\) is nef.
\end{prop}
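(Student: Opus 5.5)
The proof will be local on \(A\): both vertical semipositivity and nefness can be checked after pulling back along the local trivializations of Theorem~\ref{thm:local-trivialization}, together with its archimedean analogue due to K\"uhne.

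First I treat a single place \(v\). On \(\overline\pi^{-1}(U_j)\), the isomorphism \(\overline\pi\times\psi_j\) identifies \(\overline G^{an}_{\mathbb C_v}|_{U_j}\) with \(U_j\times X^{an}_{\Sigma,\mathbb C_v}\). By Proposition~\ref{prop:induced-metric-adelic}, under this identification the metric of \(\overline G(\overline D)_v\) is \(\mathrm{pr}_2^*\overline D_v\).

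At an archimedean place, semipositivity of the toric metric means that \(\Psi_{\overline D,v}\) is concave. Equivalently, the first Chern current of \(\overline D_v\) is positive, since the local potential \(-g_{\overline D}\) is a plurisubharmonic function of \(\log|z|\). The pullback of a positive \((1,1)\)-current by the holomorphic map \(\psi_j\) is again positive. Hence \(dd^c c_1(\overline G(\overline D)_v)\ge0\) on each \(\overline\pi^{-1}(U_j)\), and since the condition is local it holds globally.

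At a non-archimedean place, I use that a semipositive toric metric is a uniform limit of toric model metrics given by concave piecewise-affine \(\Psi_n\) on rational polyhedral complexes. The relevant description is \cite[Theorem~4.8.1 and Corollary~4.8.5]{BPS14}. Each model metric is formally semipositive: the toric model \(\mathcal X_{\Psi_n}\) carries a nef model line bundle. I then have two routes.
\begin{enumerate}[label=\textup{(\roman*)},leftmargin=2em]
\item Glue the toric models \(U_j\times\mathcal X_{\Psi_n}\) along the transition maps. These lie in the compact torus, so they preserve the toric formal models, and the glued model is a formal model of \(\overline G\) on which the pulled-back line bundle is nef.
\item Alternatively, use the local characterization of semipositivity for non-archimedean metrics: plurisubharmonicity in the sense of Chambert-Loir--Ducros is preserved under analytic pullback.
\end{enumerate}
Uniform convergence is preserved by the pullback, because \(\psi_j\) is surjective onto the fibres and the sup-norm over each chart equals the toric sup-norm. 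This gives semipositivity at every \(v\), and therefore vertical semipositivity.

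Next I show that \(\overline G(D)\) is nef in the geometric sense, i.e.\ that \(\deg\overline G(D)|_C\ge0\) for every curve \(C\subseteq\overline G\). Semipositivity at a single archimedean place implies that \(\deg\overline G(D)|_C=\int_{C^{an}}c_1(\overline G(\overline D)_v)\ge0\). A more algebraic alternative also works: \(D\) nef is globally generated, so \(O(D)\) is generated by characters \(\chi^{m_\sigma}\). Their images \(g_{m_\sigma}\) are sections of \(\overline G(D)\otimes\overline\pi^*\mathcal H_{G,m_\sigma}^{\vee}\), and the Picard-zero twist does not affect degrees on curves.

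Finally I address arithmetic nefness, in case it is intended. The statement only claims geometric nefness of \(\overline G(D)\). If arithmetic nefness of \(\overline G(\overline D)\) were meant, one would additionally need heights of points to be \(\ge0\). I would obtain this by comparing \(h_{\overline G(\overline D)}(x)\) with the toric height of \(\psi(x)\) plus Picard-zero contributions, using Theorem~\ref{thm:local-trivialization} and the compact-torus invariance, and then applying \(\upsilon_{\overline D}\ge0\).

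The main obstacle is the non-archimedean step: making sure that formal semipositivity, defined through models, really transports through the analytic, non-algebraic charts \(\psi_j\). I would argue this via route (i), gluing the toric models. The key input is that the transition multipliers have valuation zero, so they act by automorphisms of each toric formal model \(\mathcal X_{\Psi_n}\).
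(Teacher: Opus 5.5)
Your treatment of the metric is essentially the paper's argument. On each chart the induced metric is $\mathrm{pr}_2^*\overline{D}_v$, semipositivity survives pullback along the analytic maps $\psi_j$, and the chartwise statements glue. The paper's proof of this part is exactly your route~(ii): the assertion that pullback by an analytic morphism preserves semipositivity at archimedean and non-archimedean places.

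There is a small sign slip at the archimedean place. The plurisubharmonic potential is $g_{\overline D}=-\Psi_{\overline D,v}\circ\operatorname{val}_v$, which is convex in $\log|z|$ because $\Psi_{\overline D,v}$ is concave; it is not $-g_{\overline D}$.

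You should not make route~(i) your main argument, as you announce.
\begin{itemize}
\item The $U_j$ are open Berkovich domains, not generic fibres of formal schemes. So $U_j\times\mathcal X_{\Psi_n}$ is not a formal model of anything.
\item Even after refining to a formal open cover of some model of $A$ on which the transition units become formal units, nefness of the glued model line bundle is not a chartwise property. A proper curve in the special fibre that dominates a curve in the special fibre of the model of $A$ meets several charts. Its degree involves the reduced gluing cocycle, i.e.\ the reductions of the Picard-zero twists.
\item Showing that these twists contribute nothing is a nefness problem of the same kind as the one for $\overline G(D)$, now on the special fibre. Your sketch does not address it.
\end{itemize}

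Where you genuinely differ from the paper is in proving that $\overline G(D)$ is nef.
\begin{itemize}
\item The paper argues algebraically. Concavity of $\Psi_{\overline D,v}$ forces concavity of its asymptotic part $\Psi_D$, so $D$ is nef. For ample $D$ it invokes K\"uhne's ampleness criterion for $\overline G(D)\otimes\overline\pi^*N$. For nef $D$ it passes to $D+\epsilon D'$ with $D'$ ample and lets $\epsilon\to0$.
\item You read nefness off the metric instead. The glued archimedean metric is continuous and semipositive on all of $\overline G^{an}_{\mathbb C_v}$. Its restriction to the normalization of any curve $C$, boundary curves included, stays semipositive, so $\deg\overline G(D)|_C=\int c_1\ge0$.
\end{itemize}
Your route makes nefness a corollary of the metric statement, needs only one archimedean place, and avoids the ampleness criterion altogether. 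Note that the paper's step ``$\overline G(D)\otimes\overline\pi^*N$ ample, therefore $\overline G(D)$ nef'' really needs the criterion for $\overline G(tD)\otimes\overline\pi^*N$ for all $t>0$; your argument sidesteps this. In exchange, the paper's route is independent of any metric and records the ampleness of $\overline G(D)\otimes\overline\pi^*N$ that is used later. Your global-generation alternative is also sound, but it presupposes that $D$ is nef. That is exactly the paper's first step, and you do not state it.

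Finally, drop the aside on arithmetic nefness. It is not part of the statement, and as sketched it would fail.
\begin{itemize}
\item Vertical semipositivity gives no sign on $\upsilon_{\overline D}$.
\item The Picard-zero contributions have no sign either. Take $t=1$, $X_\Sigma=\mathbb P^1$, a non-torsion extension class and the canonical metric. Then $\overline G(\overline D)$ restricts on a boundary section to a canonically metrized Picard-zero bundle, whose N\'eron--Tate-type height takes both signs.
\end{itemize}
This is why the paper always pairs $\overline G(\overline D)$ with $\overline\pi^*\overline N$.
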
  
\begin{proof}
	First consider the underlying line bundle.  For a toric metrized
	\(\BR\)-divisor, vertical semipositivity implies that the associated local
	metric functions are concave; in particular the virtual support function of
	\(D\) is concave, and hence \(D\) is nef on \(X_\Sigma\).
	
	Let \(N\) be an ample symmetric line bundle on \(A\).  If \(D\) is ample, then
	by K\"uhne's ampleness criterion \cite[Lemma 3, Chapter 2]{Ku1},
	\(\overline G(D)\otimes\overline\pi^*N\) is ample, and therefore
	\(\overline G(D)\) is nef.  If \(D\) is only nef, choose an ample toric divisor
	\(D'\).  Since the nef cone is the closure of the ample cone,
	\(D+\epsilon D'\) is ample for every \(\epsilon>0\).  Hence
	\[
	\overline G(D)+\epsilon\bigl(\overline G(D')\otimes\overline\pi^*N\bigr)
	\]
	is ample for every \(\epsilon>0\), and letting \(\epsilon\to0\) shows that
	\(\overline G(D)\) is nef.
	
	For the metric, on each chart \(U_j\) the induced metric is the pullback of
	the semipositive toric metric \(\bD_v\).  Pullback by an analytic morphism
	preserves semipositivity, both at archimedean and non-archimedean places.
	Since the chartwise metrics glue by Proposition~\ref{prop:induced-metric-adelic},
	\(\overline G(\bD)\) is vertically semipositive.
\end{proof}
\begin{cor}
	The line bundle
	\[
	L=\overline{G}(D)\otimes \overline{\pi}^{*}N
	\]
	is big if \(D\) is nef and big and \(N\) is an ample symmetric line bundle on
	\(A\).
\end{cor}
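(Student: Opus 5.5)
The plan is to show that the top self-intersection of \(L\) is positive. For a nef line bundle on a proper variety, bigness is equivalent to positivity of the top self-intersection number, and \(L\) is nef: \(\overline G(D)\) is nef by Proposition~\ref{prop:induced-metric-semipositive}, where only the nefness of \(D\) is used, and \(\overline\pi^*N\) is nef as the pullback of an ample bundle. So it suffices to prove \(L^{t+g}>0\), where \(t+g=\dim\overline G\).

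First expand \(L^{t+g}\) binomially:
\[
L^{t+g}=\sum_{k=0}^{t+g}\binom{t+g}{k}\,\overline G(D)^{k}\cdot(\overline\pi^*N)^{t+g-k}.
\]
Every term is a mixed intersection of nef classes, so each is nonnegative. Terms with \(t+g-k>g\) vanish, because \((\overline\pi^*N)^{t+g-k}=\overline\pi^*(N^{t+g-k})\) and \(N^{t+g-k}=0\) on \(A\) since \(\dim A=g\). It is therefore enough to exhibit one strictly positive term, and the natural candidate is \(k=t\), namely
\[
\overline G(D)^t\cdot(\overline\pi^*N)^g.
\]

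To compute this term, replace \((\overline\pi^*N)^g\) by \(\deg_N(A)\) times the class of a general fiber \(\overline G_a=\overline\pi^{-1}(a)\). This uses the projection formula for the flat proper morphism \(\overline\pi\), together with the representation of \(N^g\) by \(\deg_N(A)\) points. By Theorem~\ref{thm:local-trivialization}, or simply the algebraic fact that \(\overline G\to A\) is a fibration with fiber \(X_\Sigma\) associated to a \(T\)-torsor, there is an isomorphism \(\overline G_a\simeq X_\Sigma\) over \(\overline K\). The restriction \(\overline G(D)|_{\overline G_a}\) identifies with \(O(D)\), since the quotient construction \((G\times M)/T\) restricted to a fiber is just \(M\). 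Hence
\[
\overline G(D)^t\cdot(\overline\pi^*N)^g=\deg_N(A)\,(D^t)_{X_\Sigma}.
\]
Here \(\deg_N(A)>0\) because \(N\) is ample. Also \((D^t)=t!\operatorname{vol}(\Delta_D)>0\), because \(D\) is nef and big; by the toric criterion recalled earlier this means \(\dim\Delta_D=t\).

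The main point to check carefully is the fiber restriction \(\overline G(D)|_{\overline G_a}\simeq O(D)\) for \(\mathbb R\)-divisors. I would handle it by first treating integral \(D\), then extending by linearity of \(D\mapsto\overline G(D)\) and continuity of intersection numbers. As an alternative one could invoke K\"uhne's ampleness criterion, as in the proof of Proposition~\ref{prop:induced-metric-semipositive}, when \(D\) is ample. The fiber computation, however, handles the nef and big case directly.
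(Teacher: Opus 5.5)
Your proof is correct and follows the paper's route. Nefness of \(\overline G(D)\) comes from Proposition~\ref{prop:induced-metric-semipositive}, you reduce to \(L^{g+t}>0\), and the fiber computation \(\overline G(D)^t\cdot(\overline\pi^*N)^g=\deg_N(A)\,D^t>0\) finishes it; the paper restricts to the generic fiber rather than a closed one. Your use of nonnegativity of the remaining mixed terms is slightly cleaner than the paper's displayed equality \(L^{g+t}=\overline G(D)^t(\overline\pi^*N)^g\), which silently drops the binomial coefficient and the terms with more than \(t\) toric factors.
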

\begin{proof}
	By Proposition~\ref{prop:induced-metric-semipositive}, \(\overline{G}(D)\) is nef.
	Since the pullback of the ample symmetric line bundle \(N\) is nef, \(L\)
	is nef.  It therefore suffices to show \(L^{g+t}>0\).
	Let \(\eta\) be the generic point of \(A\).  Then
	\[
	L^{g+t}
	=
	\overline{G}(D)^t(\overline{\pi}^*N)^g
	=
	N^g\bigl(\overline{G}(D)|_{\eta}\bigr)
	=
	N^gD^t>0.
	\]
\end{proof}
\begin{defn}[Arithmetic \(T\)-effectivity]\label{def:arith-T-effective}
	A toric metrized \(\BR\)-divisor \(\bD\) on \(X_\Sigma\) is called
	arithmetically \(T\)-effective if there exist a finitely supported family of
	real numbers \((c_v)_v\in\bigoplus_v\mathbb R\) such that the
	distinguished toric section \(s_0=s_D\) of \(O(D)\), corresponding to
	\(0\in\Delta_D\), satisfies
	\[
	\sum_vn_vc_v=\muabs_{\bD}(T)
	\]
	and
	\[
	\|s_0\|_{v,\sup}\le e^{-c_v}
	\qquad\hbox{for every place }v.
	\]
	We call \(s_0\) a \(\muabs_{\bD}(T)\)-small distinguished section.
	Here, as in the preceding definition of \(T\)-effectivity, the term
	toric invariant section always means this distinguished section
	corresponding to \(0\in\Delta_D\), rather than an arbitrary character
	eigensection.
\end{defn}

\begin{example}
	If \(D\) is \(T\)-effective and \(\bD\) is the canonical toric metrized
	\(\BR\)-divisor, then \(\bD\) is arithmetically \(T\)-effective.  Indeed
	\(\muabs_{\bD}(T)=0\), and the canonical toric section \(s_D\) satisfies
	\(\|s_D\|_{v,\sup}\le1\) at every place.
\end{example}

\begin{remark}
	The distinguished section \(s_0\) is nowhere vanishing on the open
	torus \(T\).  This is the point of the definition that will be used below:
	after passing to the semiabelian compactification, this section gives a
	global lower bound for the height on \(G\).
\end{remark}

\subsection{Meaning and practical criteria for arithmetic \(T\)-effectivity}
\label{subsec:arith-teff-meaning}

The preceding condition is an attainment condition, not merely an auxiliary
positivity assumption.  Let \(\vartheta_{\bD,v}\) and
\(\vartheta_{\bD}=\sum_vn_v\vartheta_{\bD,v}\) be the local and global roof
functions.  The toric sup-norm identity and the successive-minima formula give
\[
-\log\|s_0\|_{v,\sup}=\vartheta_{\bD,v}(0),\qquad
\muabs_{\bD}(T)=\max_{x\in\Delta_D}\vartheta_{\bD}(x)
\]
\cite[Chapters 4--5]{BPS14}\cite[Theorem 3.9 and Corollary 3.10]{BGPS15}.
Consequently the following are equivalent:
\[
\bD\text{ is arithmetically \(T\)-effective}
\quad\Longleftrightarrow\quad
\vartheta_{\bD}(0)=\muabs_{\bD}(T)
\quad\Longleftrightarrow\quad
0\in\operatorname*{argmax}_{\Delta_D}\vartheta_{\bD}.
\]
Indeed the local inequalities imply
\(c_v\le\vartheta_{\bD,v}(0)\); after summing, both sides are trapped between
\(\muabs_{\bD}(T)\) and the maximum of the roof.  Conversely one takes
\(c_v=\vartheta_{\bD,v}(0)\).

This characterization gives several quick tests.  It is enough that
\(0\in\partial^+\vartheta_{\bD}(0)\); at an interior differentiability point
this reduces to \(\nabla\vartheta_{\bD}(0)=0\).  It also holds when
\(\Delta_D\) is centrally symmetric and the roof is even, for canonical
metrics, and for the quasi-canonical normal form used later in this paper.  In
the rational piecewise-affine case it is a finite linear check on the affine
cells of the roof.

The condition is not automatic.  On \(\mathbb P^1\), take
\(\Delta_D=[0,1]\) and a semipositive toric metric with global roof
\[
\vartheta_{\bD}(x)=C-(x-\alpha)^2
\]
for an irrational \(\alpha\in(0,1)\).  The unique maximizer is not a lattice
point; in particular it is not \(0\), so the distinguished section cannot
realize the absolute minimum.  This example exhibits the arithmetic
integrality obstruction that is invisible to semipositivity and bigness.

More generally, if arbitrary toric eigensections are allowed, the corresponding
coordinate-free condition is
\[
\operatorname*{argmax}_{\Delta_D}\vartheta_{\bD}\cap M\ne\varnothing.
\]
Such a witness fixes \(m\), not automatically the origin.  The definition
above deliberately uses \(s_0\), because the monocritical argument in
Section~9 requires the origin to lie on the maximizing face.

This also connects the condition with the general Dirichlet problem in
Arakelov geometry.  Chen--Moriwaki ask whether an adelic
\(\mathbb R\)-Cartier divisor is \(\mathbb R\)-linearly equivalent to an
effective one and study both criteria and obstructions
\cite{CMDirichlet,CMDynamics}; their adelic-curve framework relates absolute
minima to asymptotic minimal slopes \cite[Chapter 7]{CMAdelicCurves}.
Arithmetic \(T\)-effectivity is a torus-equivariant, threshold-sharp version
of this principle: the effective witness is restricted to the distinguished
toric eigenline, remains nonvanishing on the open group, and realizes exactly
the absolute minimum.  Thus it is stronger than the ordinary Dirichlet
property, but it is precisely adapted to the global height separation used
here.

What is necessary is correspondingly limited.  Arithmetic \(T\)-effectivity
is necessary for the present proof mechanism if one insists on deriving the
optimal lower bound from the single distinguished section.  It is not proved
to be necessary for the equidistribution or Bogomolov conclusion itself.

We next recall the theta-function calculation which identifies the local
metric contribution of a rigidified Picard-zero line bundle with the
corresponding N\'eron local function.  The non-archimedean argument uses the
tropical theta formalism to play the role of the hermitian form in the
archimedean case.

As in the toric case, there is a canonical continuous section of
\[
\operatorname{trop}:E^{an}\to N_{\mathbb R}.
\]
Choose a local section \(V\to E^{an}\) such that the formal Raynaud extension
splits over \(V\), so \(q^{-1}(V)\cong T\times V\).  Then
\(\operatorname{trop}^{-1}(v)=U_v\times V\), where \(U_v\) is the inverse image
of \(v\) under \(\operatorname{trop}:T^{an}\to N_{\mathbb R}\).  Every analytic
function \(h\) on \(U_v\times V\) has a unique Laurent expansion
\[
h=\sum_{u\in M}a_u\chi^u
\]
with \(a_u\in\mathscr O(V)\) and
\[
|a_u|_{\sup}e^{-\langle u,v\rangle}\to0
\]
outside finite subsets of \(M\).
\begin{defn}
	Define \(\sigma(v)\) to be the multiplicative seminorm on
	\(\mathscr O(U_v\times V)\) given by
	\[
	|h(\sigma(v))|
	=
	\max_{u\in M}\{|a_u|_{\sup}e^{-\langle u,v\rangle}\}.
	\]
	This seminorm is independent of the choice of \(V\) and of the splitting, and
	it depends continuously on \(v\); see \cite[Chapter~4.1]{TJFA}.
\end{defn}
Fix a rigidified line bundle \(L_A\) on \(A\), and let \((L,\lambda,c)\) be a
corresponding triple for \(L_A\).  Let
\(f\in H^0(E^{an},q^*L)\) be a nonzero theta function.
\begin{defn}
	The tropicalization of \(f\) is the function
	\[
	f_{\operatorname{trop}}:N_{\mathbb R}\to\mathbb R
	\]
	defined by
	\[
	f_{\operatorname{trop}}(v)
	=
	-\log\Vert f(\sigma(v))\Vert_{q^*L}.
	\]
\end{defn}
\begin{remark}
	This definition extends to the theta function \(f_D\) associated with any
	Cartier divisor \(D\): if \(D=D_1-D_2\) with \(D_1,D_2\) ample, then
	\(f_{D,\operatorname{trop}}\) is defined as the corresponding difference of
	tropicalizations.
\end{remark}
\begin{prop}
	Assume that \(L_A\) is ample, and put
	\[
	c_{\operatorname{trop}}(u')
	=
	-\log\Vert c(u')\Vert_L .
	\]
	Then, for all \(u'_1,u'_2\in M'\),
	\[
	c_{\operatorname{trop}}(u'_1+u'_2)
	-c_{\operatorname{trop}}(u'_1)
	-c_{\operatorname{trop}}(u'_2)
	=
	[u'_1,\lambda(u'_2)].
	\]
	Moreover, for \(u'\in M'\) and \(v\in N_{\mathbb R}\),
	\[
	f_{\operatorname{trop}}(v)
	=
	f_{\operatorname{trop}}(v+u')
	+c_{\operatorname{trop}}(u')
	+\langle\lambda(u'),v\rangle .
	\]
\end{prop}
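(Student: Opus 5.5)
The plan is to derive both identities from the multiplicative descent relations satisfied by a Raynaud triple \((L,\lambda,c)\) and by a theta function \(f\) for it. We then apply \(-\log\|\cdot\|\) at the canonical points \(\sigma(v)\). Two properties make this possible. First, \(\sigma(v)\) is a multiplicative seminorm, i.e.\ a genuine point of \(E^{an}\). Second, the characterization \(\langle u,\operatorname{trop}(x)\rangle=-\log\|e_u(x)\|_{E_u}\) recalled above converts every factor \(e_{\lambda(u')}\) into a linear term. Throughout, \(u'\in M'\) is identified with its image \(\operatorname{trop}(u')\) in the full lattice of \(N_{\mathbb R}\). The bracket \([u'_1,\lambda(u'_2)]\) is read as \(\langle\lambda(u'_2),\operatorname{trop}(u'_1)\rangle\), i.e.\ \(-\log\|e_{\lambda(u'_2)}(u'_1)\|\).

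Step 1 (cocycle for \(c\)). In the descent formalism of \cite[Sections~3.3--3.6]{Ku1}, \(c:M'\to (q^*L)^{an}\) is a trivialization of the pulled-back descent datum. It satisfies the multiplicative relation
\[
c(u'_1+u'_2)=c(u'_1)\,c(u'_2)\,e_{\lambda(u'_2)}(u'_1),
\]
with the sign convention fixed in the Theta-Factor Convention section. Taking \(-\log\|\cdot\|\), and using that the metrics on \(L\) and on the \(E_u\) are the canonical rigidified ones, compatible with tensor products, gives the first identity. We need the norm of \(e_{\lambda(u'_2)}\) at the point \(u'_1\in E^{an}\), which by the tropicalization characterization equals \(e^{-\langle\lambda(u'_2),\operatorname{trop}(u'_1)\rangle}\).

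Step 2 (translation equivariance of \(\sigma\)). We check that translation by \(u'\in M'\) on \(E^{an}\) sends the seminorm \(\sigma(v)\) to \(\sigma(v+u')\). On a splitting chart \(U_v\times V\), a Laurent series \(h=\sum a_u\chi^u\) pulls back under translation to \(\sum a_u\chi^u(u')\chi^u\), up to the change of splitting over \(V\). This change lies in the formal (good-reduction) part and does not alter sup norms. Since \(|\chi^u(u')|=e^{-\langle u,u'\rangle}\), the maximum defining \(|h(\sigma(v))|\) becomes the one for \(v+u'\). The independence of \(\sigma\) from the splitting \cite[Chapter~4.1]{TJFA} handles the chart change. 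This step is where I expect the main obstacle: the signs, and whether the shift is \(v+u'\) or \(v-u'\), must be matched carefully with the denominator convention for theta factors and with the dual-line-bundle convention \(\mathcal H_{G,-m}\simeq\mathcal H_{G,m}^\vee\). I would fix these by testing on the one-dimensional Tate curve, where \(f\) is the classical theta series.

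Step 3 (functional equation). The theta transformation law \cite[Proposition~3.13]{Ku1} states that
\[
f(z\cdot u')=c(u')^{-1}\,e_{\lambda(u')}(z)^{-1}\,f(z)
\]
as sections of \(q^*L\), in the chosen normalization. We evaluate at \(z=\sigma(v)\) and use Step 2 to identify the left side with \(f(\sigma(v+u'))\). We then take \(-\log\|\cdot\|\) and use \(-\log\|e_{\lambda(u')}(\sigma(v))\|=\langle\lambda(u'),v\rangle\), which holds because \(\operatorname{trop}(\sigma(v))=v\). Rearranging yields
\[
f_{\operatorname{trop}}(v)=f_{\operatorname{trop}}(v+u')+c_{\operatorname{trop}}(u')+\langle\lambda(u'),v\rangle .
\]
Ampleness of \(L_A\) is used only to guarantee that a nonzero theta function \(f\) exists, so that \(f_{\operatorname{trop}}\) is finite. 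As a consistency check, the cocycle of Step 1 is recovered by applying the functional equation twice.
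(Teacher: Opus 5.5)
Your proposal is correct and is the standard argument. The paper states this proposition without proof, as part of the recalled FRSS/Raynaud tropical theta formalism. The route behind it is the one you give: apply $-\log\Vert\cdot\Vert$ to the cocycle relation for $c$, and to the theta functional equation evaluated at $\sigma(v)$, using $\sigma(v)\cdot u'=\sigma(v+\operatorname{trop}(u'))$ and $-\log\Vert e_u(\sigma(v))\Vert=\langle u,v\rangle$.

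The sign worry in your Step~2 is unnecessary. Since $\operatorname{trop}$ is a homomorphism with $\operatorname{trop}(\sigma(v))=v$, and $\sigma(v)$ is the maximal (sup-norm) point of $\operatorname{trop}^{-1}(v)$, translation by $u'$ must carry it to $\sigma(v+\operatorname{trop}(u'))$. No Tate-curve check is needed, and the paper's Theta-Factor Convention, which concerns $g_m/f_{m,v}$ rather than the triple, plays no role. The one input worth making explicit is that the theorem-of-the-square identifications between fibres of $L$ and of $E_{\lambda(u')}$ are isometries for the good-reduction model metrics.
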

\begin{prop}[Canonical Picard-zero theta metrics]
	\label{prop:canonical-picard-zero-theta-metrics}
	\label{input:theta-neron-local-heights}
	Let \(A\) be an abelian variety over a number field \(K\), let
	\(H\in\operatorname{Pic}^0(A)\) be a rigidified line bundle, and let \(s\) be a
	rational section of \(H\).  Equip \(H\) with its canonical adelic Picard-zero
	metric, characterized by the rigidification at the origin and by the requirement
	that the canonical square isomorphism, equivalently the isomorphisms
	\([n]^*H\simeq H^{\otimes n}\), be isometries.
	
	For every place \(v\), let \(f_{s,v}\) be the theta representative of \(s\) on
	the Raynaud/K\"uhne analytic uniformization, normalized by the rigidification of
	\(H\).  Then \(f_{s,v}\) computes the canonical local norm of \(s\):
	\[
	\Vert s(x)\Vert_{H,v}^{can}
	=
	\Vert f_{s,v}(\widetilde x)\Vert_v
	\]
	for every \(x\notin|\operatorname{div}(s)|\) and every lift \(\widetilde x\) of
	\(x\).  Hence
	\[
	\lambda_{s,v}(x):=-\log\Vert f_{s,v}(\widetilde x)\Vert_v
	\]
	is the normalized Neron local function attached to the canonically metrized
	pair \((H,s)\).  It is independent of the choice of lift \(\widetilde x\), and
	\[
	\sum_v n_v\lambda_{s,v}(x)=h_{\overline H}(x).
	\]
	The normalization is compatible with principal divisors: if
	\(\operatorname{div}(s)=\operatorname{div}(g)\), then
	\(\lambda_{s,v}(x)=-\log|g(x)|_v\) up to the usual adelic constant, whose global
	sum is zero by the product formula.
\end{prop}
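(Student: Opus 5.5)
The plan is to treat archimedean and non-archimedean places separately and then sum. Throughout, the canonical metric on \(H\) is pinned down by the rigidification together with the isometries \([n]^*H\simeq H^{\otimes n}\), equivalently the square isomorphism. So at each place \(v\) it suffices to show that \(x\mapsto\Vert f_{s,v}(\widetilde x)\Vert_v\) is well defined and defines a continuous metric on \(H^{an}_v\), equal to \(1\) at the origin, for which \([2]^*H\simeq H^{\otimes2}\) (for \(H\in\operatorname{Pic}^0\), \([-1]^*H\simeq H^\vee\) gives \([2]^*H\simeq H^{\otimes 2}\)) is an isometry. Uniqueness then follows from Tate's limiting argument: two such metrics differ by a bounded continuous function \(\phi\) with \(\phi\circ[2]=2\phi\), which forces \(\phi=0\).

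\emph{Non-archimedean places.} For \(H\in\operatorname{Pic}^0(A)\), the Raynaud descent triple \((L,\lambda,c)\) has \(\lambda=0\): algebraically trivial bundles have trivial polarization on the lattice. Then the transformation formula for \(f_{\operatorname{trop}}\) recalled just before the statement (by linearity over differences of ample bundles, as in the remark on \(f_{D,\operatorname{trop}}\)) gives \(f_{\operatorname{trop}}(v+u')=f_{\operatorname{trop}}(v)-c_{\operatorname{trop}}(u')\), with \(c_{\operatorname{trop}}\) additive on \(M'\). The substantive point is that with the canonical rigidified choice of \(c\) (unitary, \(\Vert c(u')\Vert=1\)) one has \(c_{\operatorname{trop}}=0\). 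Then \(\Vert f_{s,v}\Vert\) is \(M'\)-invariant, which gives independence of the lift \(\widetilde x\), and it descends to a metric on \(H\) with divisor \(\operatorname{div}(s)\) by the theta-descent input of Proposition~\ref{input:local-triv-analytic}. Functoriality of the uniformization under \([2]\), which lifts to \([2]\) on \(E^{an}\) and multiplies \(M'\) by \(2\), shows that \(f_{[2]^*s,v}\) and \(f_{s,v}^2\) agree up to a constant. The normalization \(\Vert f_{s,v}(e)\Vert=1\) kills that constant, so the metric satisfies the square-isometry property and is therefore canonical.

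\emph{Archimedean places.} Here \(E^{an}=\mathbb C^g\) and \(M'\) is the period lattice. I would take \(f_{s,v}\) to be the Appell--Humbert theta representative with unitary character, so that its factor of automorphy has absolute value \(1\) (the hermitian form vanishes for \(\operatorname{Pic}^0\)). Then \(|f_{s,v}|\) is lattice-periodic and defines a flat metric. Flat rigidified metrics on Picard-zero bundles are exactly the canonical ones, since \([2]\)-compatibility is automatic for flat metrics with the correct normalization at \(0\).

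\emph{Global statements.} Set \(\lambda_{s,v}:=-\log\Vert s\Vert^{can}_{H,v}\). This is by definition the Néron function of the metrized pair. Summing with weights \(n_v\) over a point \(x\notin|\operatorname{div}(s)|\) gives \(h_{\overline H}(x)\) by the definition of heights via sections. If \(\operatorname{div}(s)=\operatorname{div}(g)\), then \(s=cg\) in a trivialization, and \(\lambda_{s,v}=-\log|g|_v-\log|c|_v\). The product formula kills \(\sum_v n_v\log|c|_v\).

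The step I expect to be the main obstacle is showing that the canonical choice of the descent datum satisfies \(c_{\operatorname{trop}}\equiv0\) and \(\lambda=0\). This is what makes the tropical theta norm genuinely \(M'\)-periodic, and so independent of the lift. I would first try to identify \(c\) with the unitary character coming from the rigidified point of \(A^\vee\) via Weil--Barsotti, following \cite[Sections~3.3--3.6]{Ku1}.
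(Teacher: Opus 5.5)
\textbf{There is a genuine gap, at the step you yourself flag as the main obstacle.} It fails at places of bad reduction, and the statement as written does not survive it. Your overall route (show the theta norm is lattice-periodic, check the $[2]$-isometry, conclude by Tate's uniqueness argument) is fine, and so is your archimedean argument.

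\emph{Why the step fails.} The descent datum $c$ is not a normalization you are free to choose. The only freedom in a theta representative is multiplication by constants and by characters $e_u$, $u\in M$ (replacing $L$ by $L\otimes E_u$). This changes $c_{\operatorname{trop}}(u')$ only by the integral functional $\langle u,\operatorname{trop}(u')\rangle$. So the class of $c_{\operatorname{trop}}$ modulo $M$ is an invariant of $H$, namely the tropicalization of the point $[H]\in A^\vee$, and it is nonzero in general.

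\emph{A concrete counterexample.} Take a Tate curve $\mathbb G_m^{an}/q^{\BZ}$ and $H=O(P-O)$, where $P$ is the image of $a$. The theta representative $f(z)=\theta(z/a)/\theta(z)$, with $\theta$ the Tate theta function, satisfies $f(qz)=a\,f(z)$. Every other representative is $b z^k f$, with automorphy factor $aq^k$. If $\alpha=\val(a)/\val(q)\notin\BZ$ (for instance $a=q^{1/2}$, a $2$-torsion point), no representative has $q^{\BZ}$-periodic modulus. The canonical norm is instead $|f(z)|\,|z|^{-\alpha}$, up to a constant. Your plan to identify $c$ with the character attached to $[H]\in A^\vee$ confirms the problem rather than fixing it: that character is $q\mapsto a$, which is not unitary.

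\emph{What is actually true.} Neither lift-independence nor the identity $\Vert s(x)\Vert^{can}_{H,v}=\Vert f_{s,v}(\widetilde x)\Vert_v$ holds in general. The correct statement is that $-\log\Vert s(x)\Vert^{can}_{H,v}$ equals $-\log\Vert f_{s,v}(\widetilde x)\Vert_v+\ell_H(\operatorname{trop}\widetilde x)$ plus a constant. Here $\ell_H$ is the linear form on $N_{\BR}$ whose values on the period lattice are given, up to the sign convention, by $c_{\operatorname{trop}}$. With this correction your periodicity, $[2]$-compatibility and uniqueness argument goes through. The uncorrected formula holds for some representative exactly when $\ell_H\in M$, for example at good reduction.

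\emph{Contrast with the archimedean case.} At an archimedean place, multiplying by $e^{\ell(z)}$ with $\ell$ $\mathbb C$-linear absorbs any real-linear modulus on the lattice, so a unitary Appell--Humbert factor always exists. Non-archimedean analytic units have only integral slopes, so no such normalization is available.

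\emph{The paper's proof has the same problem.} It keeps a tropical correction in $W_{D,v}$, but then discards it by asserting that $f^{\operatorname{trop}}_{D,v}$ is constant when $\lambda=0$. The transformation law with $\lambda=0$ only gives $f_{\operatorname{trop}}(v+u')=f_{\operatorname{trop}}(v)-c_{\operatorname{trop}}(u')$. So $f_{\operatorname{trop}}$ is a linear function with slope determined by $c_{\operatorname{trop}}$ plus a periodic function, not a constant; this is the same obstruction.

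\emph{A smaller slip in your $[2]$ step.} $f_{s,v}\circ[2]$ and $f_{s,v}^2$ have different divisors. They agree up to a constant only after inserting the rational function $g$ with $\operatorname{div}(g)=[2]^*\operatorname{div}(s)-2\operatorname{div}(s)$ that realizes $[2]^*H\simeq H^{\otimes 2}$, i.e.\ $f_{s,v}\circ[2]=\kappa\,(g\circ p_v)\,f_{s,v}^2$.
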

\begin{proof}
	We spell out the normalization because this is the point at which the
	Raynaud--K\"uhne theta expression has to be identified with the canonical
	Picard-zero metric.  Choose a Cartier divisor \(D=\operatorname{div}(s)\)
	representing \(H\).  On the Raynaud analytic uniformization at a
	non-archimedean place \(v\), let \(f_{D,v}\) be the theta representative of
	\(D\), normalized by the rigidification at the origin.  Following the tropical
	theta formalism recalled above, set
	\[
	W_{D,v}(x)
	=
	-\log\Vert f_{D,v}(\widetilde x)\Vert_v
	-f_{D,v}^{\operatorname{trop}}
	\bigl(\operatorname{trop}(\widetilde x)\bigr)
	+f_{D,v}^{\operatorname{trop}}(0).
	\]
	The transformation law for theta functions attached to a triple
	\((L,\lambda,c)\), together with the corresponding transformation law for
	tropical theta functions, shows that \(W_{D,v}(x)\) is independent of the lift
	\(\widetilde x\).  Explicitly, replacing \(\widetilde x\) by
	\(\widetilde x+u'\), \(u'\in M'\), changes the first term by the automorphy
	factor \(c(u')e_{\lambda(u')}(\widetilde x)\), while the tropical term changes
	by the logarithm of the same factor.  Hence the two changes cancel.  This is
	precisely the compatibility recorded in the theta transformation formulas of
	\cite[Section~3]{FRSS} and in the tropical theta calculation of
	\cite[Section~4]{TJFA}.
	
	The same construction is additive in \(D\).  If \(D=\operatorname{div}(g)\) is
	principal, then the associated theta representative is the pullback of \(g\),
	up to the constant fixed by the rigidification, and therefore
	\[
	W_{D,v}(x)=-\log |g(x)|_v
	\]
	up to a \(v\)-constant.  Functoriality of the Raynaud extension and of the
	theta descent data gives
	\[
	W_{[2]^*D,v}(x)=W_{D,v}(2x).
	\]
	Thus \(W_{D,v}\) satisfies the standard characteristic properties of the
	normalized Neron local function attached to \(D\).  At archimedean places the
	same statement is the classical theta-function comparison with Neron
	functions, as in \cite[Chapter~13, Theorem~1.1]{Lang}; the hermitian
	quadratic correction is the archimedean analogue of the tropical correction.
	
	Now use that \(H\in\operatorname{Pic}^0(A)\).  In the Raynaud triple this means
	that the homomorphism \(\lambda\) is zero, or equivalently that the Chern form
	of the canonical metric vanishes.  Consequently the tropical correction
	\(f_{D,v}^{\operatorname{trop}}\) is constant after the above rigidified
	normalization; at archimedean places the hermitian form attached to \(D\) is
	zero.  Hence the normalized local Neron function is simply
	\[
	\lambda_{s,v}(x)
	=-\log\Vert f_{s,v}(\widetilde x)\Vert_v .
	\]
	By uniqueness of the canonical rigidified Picard-zero metric, this is
	\(-\log\Vert s(x)\Vert^{can}_{H,v}\).  Summing over all places gives the
	height of the canonically metrized line bundle \(\overline H\), and changing
	the rational section by a rational function only changes the local functions
	by \(-\log|g(x)|_v\), whose weighted adelic sum is zero by the product formula.
\end{proof}

\begin{remark}[Source status of the theta--Neron comparison]
	\label{rem:theta-neron-source-status}
	The normalization in
	Proposition~\ref{prop:canonical-picard-zero-theta-metrics} is fixed in two
	independent layers.  First, Zhang's canonical adelic metric construction
	\cite[Theorem~2.2]{ZhangAdelicMetrics}, in the Picard-zero form recalled by
	Chambert-Loir \cite[\S1 and final appendix]{CLHeight}, gives the unique
	canonical adelic metric on a rigidified algebraically trivial line bundle for
	which the square isomorphism and all multiplication isomorphisms are isometries.
	At complex places this is the unique flat hermitian metric compatible with the
	rigidification; at non-archimedean places it is the \(v\)-adic canonical metric
	obtained by Tate's limiting construction.
	
	Second, the theta functions used here are local expressions of that same
	canonical metric.  At non-archimedean places, Raynaud--Bosch--Lutkebohmert
	uniformization and theta functions are recalled in \cite[Section~3]{FRSS}; the
	line-bundle/triple correspondence is \cite[Theorem~3.10]{FRSS}, the descent
	criterion for theta functions is \cite[Proposition~3.19 and Definition~3.20]{FRSS},
	and the Fourier/model-metric compatibility is recorded around
	\cite[Remark~3.18 and Proposition~3.19]{FRSS}.  At archimedean places this is
	the classical theta/Neron-function comparison, e.g.
	\cite[Chapter~13, Theorem~1.1]{Lang}.  With this convention, the function
	\(-\log\|s(x)\|^{can}_{H,v}\) is the normalized Neron local function attached
	to \((H,s)\), and its adelic sum is by definition the height of the
	canonically metrized Picard-zero bundle \(\overline H\).
\end{remark}

\begin{prop}[Theta functions and Neron heights]\label{prop:theta-neron-height}
	Let \(A\) be an abelian variety over a number field \(K\), let
	\(H\in\operatorname{Pic}^0(A)\) be rigidified, and let \(s\) be a rational
	section of \(H\).  If \(x\in A(\overline K)\) does not lie in
	\(\lvert\operatorname{div}(s)\rvert\), then
	\[
	h_{\overline H}(x)
	=
	\sum_v -n_v\log
	\bigl\Vert f_{s,v}(\widetilde{\pi_v}(x))\bigr\Vert_v .
	\]
	Here \(f_{s,v}\) is the normalized theta representative from
	Proposition~\ref{prop:canonical-picard-zero-theta-metrics}, and
	\(\widetilde{\pi_v}(x)\) is any lift of \(x\) to the analytic universal
	cover.  The summand is independent of that lift.
\end{prop}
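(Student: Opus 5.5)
This is essentially a corollary of Proposition~\ref{prop:canonical-picard-zero-theta-metrics}, combined with the definition of the height of a point as a normalized adelic degree.

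\emph{Step 1: reduce to the field of definition.} Let \(x\in A(\overline K)\) with \(x\notin|\operatorname{div}(s)|\). Choose a finite extension \(K'/K\) over which \(x\) is rational. By definition, \(h_{\overline H}(x)=\overline H|_{x'}/\deg(x')\). Computed with the nonvanishing rational section \(s\), this degree is the weighted sum over places \(w\) of \(K'\) of \(-\log\Vert s(x)\Vert^{can}_{H,w}\), with normalized weights \(n_w\). Summing over the \(\operatorname{Gal}\)-conjugates of \(x\) amounts to summing over embeddings, and the normalization \(\sum_{w\mid v}n_w=n_v\) from the valuation-theory subsection turns this into a sum over places \(v\) of \(K\). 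In that sum, \(\Vert\cdot\Vert_v\) is evaluated on \(A^{an}_{\mathbb C_v}\) at the point \(x\), using any fixed embedding of \(\overline K\) into \(\mathbb C_v\). Galois invariance of the canonical metric, condition (4) in the definition of a \(v\)-metrized line bundle, shows that this choice of embedding does not matter.

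\emph{Step 2: apply the theta--N\'eron comparison place by place.} At each place \(v\), Proposition~\ref{prop:canonical-picard-zero-theta-metrics} gives
\[
\Vert s(x)\Vert^{can}_{H,v}=\Vert f_{s,v}(\widetilde{\pi_v}(x))\Vert_v
\]
for any lift \(\widetilde{\pi_v}(x)\) of \(x\) to the universal cover. It also shows that this value is independent of the lift, because \(\lambda=0\) forces the automorphy factors to be of absolute value one after the rigidified normalization. Substituting into Step 1 gives the displayed formula, and lift-independence of each summand is exactly the lift-independence in that proposition.

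\emph{Step 3: the sum is finite.} Only finitely many summands are nonzero. For all but finitely many non-archimedean \(v\), the data \(A\), \(H\) and \(\operatorname{div}(s)\) extend to good models in the sense of Lemma~\ref{input:good-place-model-induced}, and \(x\) reduces outside \(\operatorname{div}(s)\). At such a place the canonical metric is the model metric and \(\Vert s(x)\Vert_v=1\).

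The main obstacle is the bookkeeping of constants. One must make sure that the rigidification used to normalize \(f_{s,v}\) at each place is the same global rigidification of \(H\). Then the ``adelic constant'' mentioned at the end of Proposition~\ref{prop:canonical-picard-zero-theta-metrics} either vanishes place by place, or has weighted sum zero by the product formula \(\prod_v|\alpha|_v^{n_v}=1\). I would handle this by fixing \(s\) so that it is nonzero at the origin and normalizing \(\Vert f_{s,v}(e)\Vert_v\) by the same global scalar at every place. Then any residual constants are \(\log|\alpha|_v\) for a single \(\alpha\in K^\times\), and they cancel after summation.
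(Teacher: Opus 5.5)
Your route is the paper's. Its proof just says three things:
\begin{itemize}
\item by Proposition~\ref{prop:canonical-picard-zero-theta-metrics}, the local terms $-\log\Vert f_{s,v}(\widetilde{\pi_v}(x))\Vert_v$ are the normalized N\'eron functions of $(H,s)$;
\item their weighted sum is $h_{\overline H}(x)$;
\item the product formula absorbs a change of $s$ by a rational multiple.
\end{itemize}
Your Steps~2--3 spell this out.

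The step that fails is the end of your Step~1. Condition~(4) gives invariance under $\operatorname{Gal}(\mathbb C_v/K_v)$ only. The orbits of this group on the $K$-embeddings $K(x)\hookrightarrow\mathbb C_v$ are exactly the places $w\mid v$ of $K(x)$. Embeddings lying over different $w$ are not conjugate, and $\Vert s(\sigma x)\Vert_v$ can genuinely depend on $w$. Since $n_w=\frac{[K(x)_w:K_v]}{[K(x):K]}\,n_v$, what one actually gets is
\[
\sum_{w\mid v}n_w\bigl(-\log\Vert s(x)\Vert_w\bigr)
=\frac{n_v}{\#O_v(x)}\sum_{y\in O_v(x)}\bigl(-\log\Vert s(y)\Vert_v\bigr),
\]
not $n_v\bigl(-\log\Vert s(\iota x)\Vert_v\bigr)$ for a single embedding $\iota$.

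Here is a concrete failure. Take $K=\mathbb Q$ and $H=\mathcal O_A$ with its trivial canonical metric. Let $s=g$ be a rational function and $x$ a point with $g(x)=3+\sqrt2$; this has norm $7$, and $7$ splits in $\mathbb Q(\sqrt2)$. Then $h_{\overline H}(x)=0$. But take at $7$ an embedding with $|g(x)|_7=7^{-1}$, and at $\infty$ the one with $g(x)\mapsto 3+\sqrt2$. The single-embedding sum is then $\log 7-\log(3+\sqrt2)=\log(3-\sqrt2)\neq 0$.

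So the displayed identity is only correct with the $v$-summand averaged over the $v$-adic Galois orbit $O_v(x)$. Equivalently, one sums over the places of a field of definition of $x$. The statement and the paper's one-line proof leave this loose. The averaged form is the one the paper actually uses in Proposition~\ref{prop:section4-valuation-height-package} and Lemma~\ref{lem:arith-teff-height-lower}. With that reading your argument goes through: apply Proposition~\ref{prop:canonical-picard-zero-theta-metrics} to each $y\in O_v(x)$ and average.

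The same caveat affects your constant bookkeeping. A scalar $\alpha\in K^\times$ is harmless. But if the origin lies in $|\operatorname{div}(s)|$, you must change $s$ by a rational function $g$, and $\sum_v -n_v\log|g(x)|_v$ vanishes by the product formula only in the averaged form. In any case that bookkeeping is unnecessary: Proposition~\ref{prop:canonical-picard-zero-theta-metrics} already gives an exact place-by-place equality for the rigidified normalization.
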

\begin{proof}
	This is the global form of
	Proposition~\ref{prop:canonical-picard-zero-theta-metrics}, corresponding to
	K\"uhne's theta-height comparison \cite[Theorem~4.7]{Ku1}.  Indeed the local functions
	\[
	-\log\Vert f_{s,v}(\widetilde{\pi_v}(x))\Vert
	\]
	are the normalized Neron local functions associated with \((H,s)\).  Their
	adelic sum is therefore the canonical height attached to the canonically
	metrized Picard-zero bundle \(\overline H\).  The product formula removes
	the ambiguity coming from replacing \(s\) by a rational multiple.
\end{proof}
\begin{corollary}[Picard-zero heights and the Neron pairing]
	\label{cor:picard-zero-height-pairing}
	Let \(N\) be a symmetric ample line bundle on \(A\), and let
	\(\phi_N:A\to A^\vee\) be the associated polarization, with the convention
	\[
	\phi_N(q)=T_q^*N\otimes N^{-1}.
	\]
	Let \(\langle\cdot,\cdot\rangle_N\) be the Neron--Tate bilinear form normalized
	by
	\[
	h_{\overline N}(y+z)
	=h_{\overline N}(y)+h_{\overline N}(z)+2\langle y,z\rangle_N.
	\]
	Suppose
	\(H\in\operatorname{Pic}^0(A)_\BR\), \(a>0\), and
	\(q\in A(\overline K)\otimes_\BZ\BR\) satisfy \(\phi_N(q)=aH\).  Then
	\[
	h_{\overline H}(y)=\frac2a\langle y,q\rangle_N
	\]
	for every \(y\in A(\overline K)\).
\end{corollary}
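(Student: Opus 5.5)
The plan is to prove the identity first for integral data, where \(H=\phi_N(q)\) with \(q\in A(\overline K)\), and then to extend it to \(\mathbb R\)-coefficients by linearity. The key tool is the characterization recalled in Proposition~\ref{prop:canonical-picard-zero-theta-metrics} and Remark~\ref{rem:theta-neron-source-status}: the canonical height of a rigidified Picard-zero bundle is the Néron/Tate canonical height. It is therefore a Weil height for the class of \(H\), and it is a homomorphism on \(A(\overline K)\).

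\emph{Step 1: the homomorphism property and uniqueness.} For rigidified \(H\in\operatorname{Pic}^0(A)\), the isometries \([n]^*\overline H\simeq\overline H^{\otimes n}\) give
\[
h_{\overline H}(ny)=n\,h_{\overline H}(y).
\]
Since \(H\) is algebraically trivial, we also have \(m^*H\simeq p_1^*H\otimes p_2^*H\) on \(A\times A\), and this isomorphism is an isometry for the canonical metrics. Hence \(y\mapsto h_{\overline H}(y)\) is additive, and so it is a group homomorphism \(A(\overline K)\to\mathbb R\). Any two Weil heights for \(H\) differ by a bounded function. A homomorphism that is bounded on \(A(\overline K)\) vanishes, because it takes the value \(n\,h(y)\) at \(ny\). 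Consequently \(h_{\overline H}\) is the unique homomorphism in the Weil height class of \(H\). The assignment \(H\mapsto h_{\overline H}\) is additive in \(H\), and it kills torsion classes. It therefore extends uniquely to an \(\mathbb R\)-linear map on \(\operatorname{Pic}^0(A)_{\mathbb R}\), and we use this extension as the definition of \(h_{\overline H}\) for \(H\in\operatorname{Pic}^0(A)_{\mathbb R}\).

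\emph{Step 2: the integral case.} Let \(q\in A(\overline K)\). Equip \(T_q^*N\) with the translate of the canonical metric of \(\overline N\), so that
\[
h_{T_q^*\overline N}(y)=h_{\overline N}(y+q).
\]
It follows that \(y\mapsto h_{\overline N}(y+q)-h_{\overline N}(y)\) is a Weil height for \(\phi_N(q)=T_q^*N\otimes N^{-1}\). By the defining identity of \(\langle\cdot,\cdot\rangle_N\), this function equals \(h_{\overline N}(q)+2\langle y,q\rangle_N\), which is a homomorphism in \(y\) plus a constant. Subtracting the constant corresponds to rigidifying at the origin. By the uniqueness in Step 1 we conclude
\[
h_{\overline{\phi_N(q)}}(y)=2\langle y,q\rangle_N .
\]
Both sides of this identity are \(\mathbb Z\)-bilinear in \((y,q)\).

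\emph{Step 3: passage to \(\mathbb R\)-coefficients.} The map \(\phi_N\) is a homomorphism, so it extends \(\mathbb R\)-linearly to \(A(\overline K)\otimes\mathbb R\to\operatorname{Pic}^0(A)_{\mathbb R}\). Both \(q\mapsto h_{\overline{\phi_N(q)}}(y)\) and \(q\mapsto 2\langle y,q\rangle_N\) are linear in \(q\), so the identity of Step 2 persists for real \(q\). Finally, if \(\phi_N(q)=aH\) with \(a>0\), then linearity in \(H\) gives
\[
h_{\overline H}(y)=\frac1a\,h_{\overline{\phi_N(q)}}(y)=\frac2a\langle y,q\rangle_N .
\]

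The main obstacle is the bookkeeping in Step 1. One must check that the canonical Picard-zero metric of Proposition~\ref{prop:canonical-picard-zero-theta-metrics} is compatible with the isomorphism \(m^*H\simeq p_1^*H\otimes p_2^*H\), so that additivity holds. One must also check that the \(\mathbb R\)-linear extension is well defined modulo torsion and does not depend on the chosen representatives. I would handle both points by invoking uniqueness of canonical metrics, Zhang's characterization via the isometries \([n]^*H\simeq H^{\otimes n}\), rather than by computing with theta functions directly.
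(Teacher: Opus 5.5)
Your proof is correct and follows the paper's route. In the integral case you use \(h_{\overline{\phi_N(q)}}(y)=h_{\overline N}(y+q)-h_{\overline N}(y)-h_{\overline N}(q)=2\langle y,q\rangle_N\), and then you extend \(\mathbb R\)-linearly in \(q\) and \(H\); the only difference is that you justify the first equality by the uniqueness of a homomorphism within a Weil-height class, whereas the paper reads it off directly from the rigidification at the origin.
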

\begin{proof}
	First assume that \(H\) is integral and that \(q\in A(\overline K)\).  With the
	above convention,
	\(\phi_N(q)=T_q^*N\otimes N^{-1}\), rigidified at the origin.  The canonical
	height of this rigidified Picard-zero bundle is therefore
	\[
	h_{\overline{\phi_N(q)}}(y)
	=h_{\overline N}(y+q)-h_{\overline N}(y)-h_{\overline N}(q)
	=2\langle y,q\rangle_N.
	\]
	If \(\phi_N(q)=aH\), then
	\[
	a\,h_{\overline H}(y)=h_{\overline{\phi_N(q)}}(y),
	\]
	and the formula follows.  The real case follows by linearity of Picard-zero
	heights and of the Neron--Tate bilinear form.
\end{proof}
\begin{remark}[Neron local-height convention]\label{rem:neron-local-height-convention}
	The preceding proposition uses the convention that the theta norm associated
	with a rigidified Picard-zero bundle is the normalized Neron local function.
	Remark~\ref{rem:theta-neron-source-status} records the source chain: Zhang's
	canonical adelic metric, Chambert-Loir's Picard-zero formulation, and the
	Raynaud/K\"uhne theta expression of the corresponding local norm.
\end{remark}
\begin{cor}[Global torsion cancellation for Picard-zero theta factors]
	\label{cor:torsion-picard-global-cancellation}
	With the normalization of Proposition~\ref{prop:canonical-picard-zero-theta-metrics}, if
	\(x\in A(\overline K)\) is torsion and
	\(x\notin|\operatorname{div}(s)|\), then
	\[
	\sum_v n_v\bigl(-\log\Vert f_{s,v}(\widetilde x)\Vert_v\bigr)=0.
	\]
\end{cor}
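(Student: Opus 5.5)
By Proposition~\ref{prop:theta-neron-height}, the left-hand side equals \(h_{\overline H}(x)\), the canonical height of the torsion point \(x\) for the canonically metrized rigidified Picard-zero bundle \(\overline H\). So the whole claim reduces to showing \(h_{\overline H}(x)=0\) whenever \(x\) is torsion.

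To see the vanishing, I would use the characterization of the canonical metric in Proposition~\ref{prop:canonical-picard-zero-theta-metrics}. The isomorphisms \([n]^*H\simeq H^{\otimes n}\) are isometries. Hence, by functoriality of heights under pullback,
\[
h_{\overline H}(nx)=h_{[n]^*\overline H}(x)=n\,h_{\overline H}(x)\qquad\text{for all }n\in\BZ .
\]
Pick \(N\ge1\) with \(Nx=0\). Then \(N h_{\overline H}(x)=h_{\overline H}(0)\). The rigidification at the origin makes \(h_{\overline H}(0)=0\): in the local formula, the normalization \(\Vert f_{s,v}(e_E^{an})\Vert=1\) (or, if \(0\in|\operatorname{div}(s)|\), the rigidified fibre at \(0\) with its canonical trivial metric) gives a zero contribution at every place. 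It follows that \(h_{\overline H}(x)=0\).

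A more concrete alternative is to invoke Corollary~\ref{cor:picard-zero-height-pairing}. After extending scalars to \(\BR\), choose a symmetric ample \(N\) and write \(\phi_N(q)=aH\), which is possible since \(\phi_N\otimes\BR\) is surjective onto \(\operatorname{Pic}^0(A)_\BR\). Then \(h_{\overline H}(x)=\frac2a\langle x,q\rangle_N\), and the N\'eron--Tate pairing kills torsion because \(x\otimes1=0\) in \(A(\overline K)\otimes\BR\).

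The main technical point is the step \(h_{\overline H}(nx)=n\,h_{\overline H}(x)\) at a point that may lie on \(|\operatorname{div}(s)|\) after multiplication by \(n\). To handle it, I would work with heights intrinsically, through the adelic line bundle, and apply Proposition~\ref{prop:theta-neron-height} only at \(x\) itself. That proposition is valid there because \(x\notin|\operatorname{div}(s)|\), and the independence of the theta expression from the choice of lift \(\widetilde x\) ensures that the local summands are well defined.
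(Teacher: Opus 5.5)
Your proposal is correct and follows the same route as the paper. The paper's proof also identifies the sum with \(h_{\overline H}(x)\) via Proposition~\ref{prop:theta-neron-height} and then simply asserts that this Picard-zero height vanishes on torsion points. Your argument via the isometries \([n]^*\overline H\simeq\overline H^{\otimes n}\), or alternatively via Corollary~\ref{cor:picard-zero-height-pairing}, supplies the justification that the paper leaves implicit.
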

\begin{proof}
	By Proposition~\ref{prop:theta-neron-height}, the displayed sum is the
	canonical height \(h_{\overline H}(x)\) attached to the rigidified
	Picard-zero bundle \(H\).  This height is linear on the torsion-free quotient
	and vanishes on torsion points.
\end{proof}

\begin{remark}[Local source ledger]\label{rem:local-triv-source-ledger}
	The construction in this section uses three standard analytic tools.  At non-archimedean
	places the analytic uniformization is Raynaud--Bosch--Lutkebohmert
	uniformization \cite{TJFA}, recalled in the form needed here by \cite{FRSS};
	the analytic skeleton and the contractible cover are taken from Berkovich's
	construction \cite{Ber}; and the tropicalization of theta functions follows the
	tropical theta formalism in \cite{FRSS}.  At archimedean places we use K\"uhne's
	local trivialization
	\cite[Lemma 11]{Ku1}.  The statements below record exactly the formal
	consequences of these tools used later in the proof.
	
	For the theta-descent input, the precise source chain is
	\cite[Sections~3.2--3.3 and Theorem~3.6]{Ku1} for Raynaud uniformization and
	line-bundle descent, \cite[Propositions~3.8--3.9]{Ku1} for Picard-zero
	translation, \cite[Section~3.6 and Proposition~3.13]{Ku1} for Fourier
	expansions and the theta transformation law, and \cite[Theorem~4.7]{Ku1} for
	the global theta-height comparison.  Proposition~\ref{input:local-triv-analytic}
	isolates exactly the portion used in Theorem~\ref{thm:local-trivialization};
	Proposition~\ref{prop:canonical-picard-zero-theta-metrics} fixes the metric
	normalization needed later.
\end{remark}

\begin{proposition}[Semiabelian tropicalization]
	\label{input:torsion-tropicalization}
	For every place \(v\), the Raynaud/K\"uhne construction gives a canonical
	real-valued tropicalization homomorphism
	\[
	\operatorname{trop}_{G,v}:G^{an}_{\mathbb C_v}\longrightarrow N_{\BR}
	\]
	attached to the split torus \(T\subset G\).  Equivalently, for each character
	\(u\in M\), pushing out the extension \(0\to T\to G\to A\to0\) by
	\(\chi^u:T\to\mathbb G_m\) gives a rigidified algebraically trivial line bundle
	\(H_u\) on \(A\), and
	\[
	\langle u,\operatorname{trop}_{G,v}(y)\rangle
	=-\log\|e_u(y)\|_{H_u,v}
	\]
	for the canonical Raynaud/K\"uhne metric.  This map extends the ordinary
	tropicalization of \(T^{an}_{\mathbb C_v}\) and is a group homomorphism.
	Consequently, if \(q\in G(\overline K)\) is torsion, then
	\[
	\operatorname{trop}_{G,v}(q)=0\in N_{\BR}.
	\]
\end{proposition}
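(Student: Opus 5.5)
The plan is to construct \(\operatorname{trop}_{G,v}\) character by character, using the pushout line bundles \(H_u\) and their canonical Picard-zero metrics from Proposition~\ref{prop:canonical-picard-zero-theta-metrics}. Then one checks additivity in \(u\) and the group-homomorphism property. The torsion vanishing will follow formally.

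First, for \(u\in M\), pushing out \(0\to T\to G\to A\to0\) along \(\chi^u\) gives a \(\mathbb G_m\)-torsor over \(A\). Its associated line bundle is the rigidified \(H_u=\mathcal H_{G,u}\in\operatorname{Pic}^0(A)\), and \(G\) maps to the torsor, producing a nowhere-vanishing section \(e_u(y)\) of \(\rho_G^*H_u\). I would define
\[
\ell_{u,v}(y):=-\log\|e_u(y)\|_{H_u,v},
\]
with the canonical metric. Since \(H_{u+u'}\simeq H_u\otimes H_{u'}\) as rigidified bundles (Baer sum of pushouts) and \(e_{u+u'}=e_u\otimes e_{u'}\), and since canonical Picard-zero metrics are compatible with tensor products (uniqueness of the metric for which the \([n]^*\) isomorphisms are isometries), the map \(u\mapsto\ell_{u,v}(y)\) is additive. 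It is therefore given by a unique vector \(\operatorname{trop}_{G,v}(y)\in N_\BR\). On \(T\) the bundles \(H_u\) restrict to the trivial rigidified bundle at the origin, \(e_u=\chi^u\), and the canonical metric is \(|\cdot|_v\). This recovers the ordinary tropicalization \(-\log|\chi^u|_v\).

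The main step, and the expected obstacle, is the homomorphism property. For this I would use the Weil–Barsotti description: the extension structure gives a canonical isomorphism \(m^*H_u\simeq p_1^*H_u\otimes p_2^*H_u\) on \(A\times A\). This is the theorem-of-the-square/biextension structure of Picard-zero bundles, under which \(e_u(y_1y_2)\) corresponds to \(e_u(y_1)\otimes e_u(y_2)\), because the group law of \(G\) is the torsor multiplication. The canonical metric makes this isomorphism an isometry up to a constant. The constant is killed by the rigidification at the origin: evaluate at \(y_1=y_2=e\), or use the \([n]^*\)-characterization. Taking \(-\log\) then gives \(\ell_{u,v}(y_1y_2)=\ell_{u,v}(y_1)+\ell_{u,v}(y_2)\). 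At non-archimedean places this can alternatively be checked on the Raynaud uniformization: the theta representative of \(H_u\) has \(\lambda=0\), so the tropical correction is constant, as in the proof of Proposition~\ref{prop:canonical-picard-zero-theta-metrics}. At archimedean places the flat hermitian metric gives the same conclusion.

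Finally, if \(q\) is torsion with \(nq=0\), the homomorphism property gives \(n\,\operatorname{trop}_{G,v}(q)=\operatorname{trop}_{G,v}(0)=0\), because the rigidification forces \(\|e_u(e_G)\|=1\). Since \(N_\BR\) is torsion-free, \(\operatorname{trop}_{G,v}(q)=0\). Continuity of \(\operatorname{trop}_{G,v}\) follows from continuity of the canonical metrics, and canonicity from the independence of all choices established in Proposition~\ref{prop:canonical-picard-zero-theta-metrics}.
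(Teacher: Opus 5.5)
Your argument is correct, but it does not follow a proof in the paper. The paper states this proposition without proof, as an input attributed to the Raynaud/K\"uhne construction (see Remark~\ref{rem:local-triv-source-ledger}). It only afterwards relates the map to the local charts, in Lemma~\ref{lem:local-coordinates-compute-tropicalization}.

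Your route takes the displayed formula as the definition and derives everything else intrinsically, uniformly in \(v\), from two facts:
\begin{itemize}
\item the pushout map \(e_u:G\to P_u\) is a homomorphism of extensions, additive in \(u\) under contracted product;
\item the canonical metric on a rigidified Picard-zero bundle is unique, hence compatible with tensor products and with pullback by homomorphisms.
\end{itemize}
This gives a place-independent argument that needs nothing beyond Proposition~\ref{prop:canonical-picard-zero-theta-metrics} and avoids Raynaud uniformization entirely. What it does not give by itself is the identification with the chart coordinates \(\psi_{j,v}\); the paper handles that separately.

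Two small points:
\begin{itemize}
\item You can drop the ``isometry up to a constant'' step. At a non-archimedean place it would otherwise need a Calabi-type uniqueness statement. The multiplication map \(p_1^*H_u\otimes p_2^*H_u\to m^*H_u\) of the extension \(P_u\) respects the rigidifications at \((0,0)\), and both sides carry canonical metrics on a rigidified Picard-zero bundle on \(A\times A\). The same uniqueness you already use for \(H_{u+u'}\simeq H_u\otimes H_{u'}\) therefore makes it an exact isometry at every place, and multiplicativity of \(\|e_u(\cdot)\|\) follows immediately.
\item At a non-archimedean place, ``group homomorphism'' must be read on \(G(\mathbb C_v)\), since the Berkovich space is not a group. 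That is exactly what your argument proves, and it is all the torsion consequence needs.
\end{itemize}
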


\begin{lemma}[Coordinate tropicalization]
	\label{lem:local-coordinates-compute-tropicalization}
	Let \(\psi_{j,v}\) be one of the Raynaud/K\"uhne local toric coordinate systems
	constructed in Theorem~\ref{thm:local-trivialization}.  If \(y\in
	G^{an}_{\mathbb C_v}\) is in the chart, then
	\[
	\val_v(\psi_{j,v}(y))=\operatorname{trop}_{G,v}(y)\in N_{\BR}.
	\]
\end{lemma}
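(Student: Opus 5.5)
The plan is to compare both sides coordinate by coordinate after pairing with characters.

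\textbf{Step 1: reduce to characters.} Both \(\val_v(\psi_{j,v}(y))\) and \(\operatorname{trop}_{G,v}(y)\) lie in \(N_{\BR}\). It therefore suffices to prove that for every \(m\) in the chosen character basis \(e_1^\vee,\dots,e_t^\vee\) of \(M\),
\[
\langle m,\val_v(\psi_{j,v}(y))\rangle=\langle m,\operatorname{trop}_{G,v}(y)\rangle .
\]
By definition of the toric valuation, the left side is \(-\log|\psi^{(i)}_{j}(y)|_v\) for \(m=e_i^\vee\). By Proposition~\ref{input:torsion-tropicalization}, the right side is \(-\log\|e_m(y)\|_{H_m,v}\), computed with the canonical Raynaud/K\"uhne metric on the pushout bundle \(H_m=\mathcal H_{G,m}\).

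\textbf{Step 2: unwind the chart.} By the construction in the proof of Theorem~\ref{thm:local-trivialization},
\[
\psi^{(i)}_j(y)=\frac{g_i(y)}{f_{s_i,v}(\widetilde\pi_U(y))},
\]
in the denominator convention fixed in the Theta-Factor Convention. Here \(g_i\) is the semiabelian character section attached to \(e_i^\vee\), viewed as a rational section of \(\overline\pi^*H_i\) twisted by the boundary divisors. Away from the boundary, \(g_i\) is precisely the canonical section \(e_{e_i^\vee}\) multiplied by \(\overline\pi^*s_i\).

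So on \(G\) over \(U\) we have \(e_{e_i^\vee}(y)=g_i(y)\cdot s_i(\pi(y))^{-1}\) as sections of the Picard-zero bundle. Taking canonical norms gives
\[
-\log\|e_{e_i^\vee}(y)\|
=-\log|g_i(y)|_v+\log\|s_i(\pi(y))\|^{can}_{H_i,v}.
\]
By Proposition~\ref{prop:canonical-picard-zero-theta-metrics}, we have \(\|s_i(\pi(y))\|^{can}=\|f_{s_i,v}(\widetilde\pi_U(y))\|_v\), independent of the lift. Substituting this yields exactly \(-\log|\psi^{(i)}_j(y)|_v\). The sign bookkeeping between \(s_i\) and its dual is governed by the convention \(\mathcal H_{G,-m}\simeq\mathcal H_{G,m}^\vee\). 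I would check this carefully, since an error there flips the sign of the identity.

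\textbf{Step 3: normalizations.} The normalizations \(g_i(e_G)=1\) and \(\|f_{s_i,v}(e_E)\|=1\) match the rigidification used for the canonical metric. This ensures there is no additive constant, and it is consistent with both sides vanishing at the identity, since \(\operatorname{trop}_{G,v}\) is a homomorphism. The two sides are then equal pointwise on the chart.

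\textbf{Main obstacle.} The hardest step is the identification in Step 2: one has to show that the rational function \(g_i\), constructed via the cocycle formula, really equals the pushed-out canonical section \(e_u\) times \(\pi^*s_i\), with the correct dual. This is the content of the theta-factor convention. Once it holds, the rest is the theta–Néron comparison. As a fallback, I would prove it along torus fibers, where both sides restrict to the ordinary valuation of \(T\), and then use overlap invariance from Theorem~\ref{thm:local-trivialization} to handle the transverse direction.
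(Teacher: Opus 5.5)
Your proposal is correct and is essentially the paper's argument: you pair with characters, recognize \(\chi^{u}\circ\psi_{j,v}\) as the local coefficient of the pushout vector \(e_u(y)\), and compute its canonical norm through the theta--N\'eron identification of Proposition~\ref{prop:canonical-picard-zero-theta-metrics}. Your factorization \(g_i=c\cdot e_{e_i^\vee}\otimes\overline\pi^*s_i\), with \(s_i\) a section of \(H_i^\vee\) and \(|c|=1\) forced by the rigidifications, spells out explicitly what the paper asserts in one line; your torus-fiber fallback would not suffice on its own, since it only shows that the difference of the two sides descends to a function on the base, but the main argument does not need it.
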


\begin{proof}
	It is enough to pair with every character \(u\in M\).  On a Raynaud formal
	splitting in the non-archimedean case, the map \(\psi_{j,v}\) is the toric
	coordinate in the product \(T^{an}\times U\); hence the assertion is exactly the
	local description of tropicalization as the first projection followed by the
	ordinary torus tropicalization.  At archimedean places this is the corresponding
	unitary local coordinate in K\"uhne's local trivialization.
	
	Equivalently in the notation used in the proof of
	Theorem~\ref{thm:local-trivialization}, the pullback of the character
	\(\chi^u\) through \(\psi_{j,v}\) is the local expression of the pushout vector
	\(e_u(y)\) in the rigidified Picard-zero bundle \(H_u\).  The canonical
	Raynaud/K\"uhne metric in this trivialization is the ordinary absolute value of
	that coefficient.  Therefore
	\[
	\bigl\langle u,\val_v(\psi_{j,v}(y))\bigr\rangle
	=-\log|\chi^u(\psi_{j,v}(y))|_v
	=-\log\|e_u(y)\|_{H_u,v}
	=\bigl\langle u,\operatorname{trop}_{G,v}(y)\bigr\rangle .
	\]
	The overlap calculation in the proof of
	Theorem~\ref{thm:local-trivialization} shows that transition multipliers lie in
	the compact torus, hence have valuation zero.  Thus the equality is independent
	of the chosen chart.
\end{proof}

\begin{lemma}[Torsion lifts have zero toric valuation]
	\label{lem:torsion-lift-zero-valuation}
	Let \(q\in G(\overline K)_{\operatorname{tor}}\).  For every place \(v\) and
	every local trivialization chart \(\psi_{j,v}\) defined at \(\pi(q)\), the
	toric coordinate of \(q\) has valuation zero:
	\[
	\val_v(\psi_{j,v}(q))=0\in N_{\BR}.
	\]
	Consequently, for every \(t\in T(\overline K)\),
	\[
	\val_v(\psi_{j,v}(q\,t))=\val_v(t),
	\]
	where the right hand side is the ordinary valuation of the split torus \(T\).
\end{lemma}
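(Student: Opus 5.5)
The first assertion should follow from Lemma~\ref{lem:local-coordinates-compute-tropicalization} together with Proposition~\ref{input:torsion-tropicalization}. Let \(q\in G(\overline K)_{\operatorname{tor}}\), and let \(\psi_{j,v}\) be a chart defined at \(\pi(q)\). Since \(q\) lies in the open semiabelian part over \(U_j\), Lemma~\ref{lem:local-coordinates-compute-tropicalization} gives
\[
\val_v(\psi_{j,v}(q))=\operatorname{trop}_{G,v}(q).
\]
Because \(\operatorname{trop}_{G,v}\) is a group homomorphism into the torsion-free group \(N_{\BR}\), it kills every torsion point, and this is recorded in Proposition~\ref{input:torsion-tropicalization}. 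Hence \(\val_v(\psi_{j,v}(q))=0\). As a sanity check, one can also pair with a character \(u\in M\): the quantity \(-\log\|e_u(q)\|_{H_u,v}\) is a canonical Picard-zero local norm evaluated at a torsion point of the \(\mathbb G_m\)-torsor. Some multiple \(nq=0\) then forces it to vanish, since \(\|e_u(0)\|=1\) by the rigidification and the canonical metric is compatible with multiplication by \(n\).

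For the second assertion, we use that \(\operatorname{trop}_{G,v}\) is a homomorphism extending the ordinary tropicalization of \(T^{an}\). For \(t\in T(\overline K)\) this gives
\[
\operatorname{trop}_{G,v}(qt)=\operatorname{trop}_{G,v}(q)+\operatorname{trop}_{G,v}(t)=0+\val_v(t).
\]
The point \(qt\) lies over \(\pi(qt)=\pi(q)\), so the same chart \(\psi_{j,v}\) is defined at it. Applying Lemma~\ref{lem:local-coordinates-compute-tropicalization} to \(qt\) then yields \(\val_v(\psi_{j,v}(qt))=\val_v(t)\). A direct variant would note that in the formula \(\psi^{(i)}=g_i/f_{s_i,v}\circ\widetilde\pi\), the function \(g_i\) is a character section satisfying \(g_i(qt)=g_i(q)\chi^{e_i^\vee}(t)\), while the denominator depends only on \(\pi(qt)=\pi(q)\). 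This gives \(\psi_{j,v}(qt)=\psi_{j,v}(q)\cdot t\) coordinatewise, and taking valuations gives the claim.

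The main point needing care is that the sign and normalization conventions match, namely the denominator convention fixed at the start of the paper versus \(\mathcal H_{G,-m}\simeq\mathcal H_{G,m}^\vee\), and the choice \(g_i(e_G)=1\), \(\|f_{s_i,v}(e_E)\|=1\). Only then does the identity in Lemma~\ref{lem:local-coordinates-compute-tropicalization} hold exactly, with no additive constant. I would check this by evaluating at \(q=e_G\), where both sides are \(0\) by the chosen normalizations. Any constant discrepancy would therefore vanish, and the homomorphism property would carry this over to all torsion points.
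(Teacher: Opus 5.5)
Your proposal is correct and follows the paper's proof. The first assertion is exactly Proposition~\ref{input:torsion-tropicalization} combined with Lemma~\ref{lem:local-coordinates-compute-tropicalization}, and your ``direct variant'' for the second assertion is precisely the paper's argument: the chart coordinate is $T$-equivariant, so $\psi_{j,v}(q\,t)=\psi_{j,v}(q)\cdot t$, and then additivity of $\val_v$ on the split torus finishes. Your primary route for the second part, via the homomorphism property of $\operatorname{trop}_{G,v}$ and the coordinate lemma applied to $q\,t$, is an equivalent reformulation. The closing normalization check at $e_G$ is unnecessary, since the coordinate lemma already asserts exact equality; in any case a given chart $U_j$ need not contain $\pi(e_G)$.
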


\begin{proof}
	By Proposition~\ref{input:torsion-tropicalization},
	\(\operatorname{trop}_{G,v}(q)=0\).  Lemma
	\ref{lem:local-coordinates-compute-tropicalization} identifies this
	tropicalization with \(\val_v(\psi_{j,v}(q))\) in every local chart, giving the
	first assertion.  Multiplication by \(t\in T\) is
	multiplication in the toric coordinate of the chart, so valuation additivity on
	the split torus gives
	\[
	\val_v(\psi_{j,v}(q\,t))
	=\val_v(\psi_{j,v}(q))+\val_v(t)
	=\val_v(t).
	\]
\end{proof}

\begin{prop}[Valuation and height package]\label{prop:section4-valuation-height-package}
	Let \(\bD\) be a toric metrized \(\BR\)-divisor on \(X_\Sigma\), and let
	\(\overline G(\bD)\) be the adelic line bundle on \(\overline G\) obtained by
	pulling back the local toric metrics through the local trivializations
	\(\psi_{j,v}\).  Let \(x\in G(\overline K)\), and let \(O_v(x)\) denote the
	finite set of its \(v\)-adic Galois conjugates in \(G(\mathbb C_v)\).  For every
	place \(v\), choose, for each \(y\in O_v(x)\), an index \(j(y)\) with
	\(\pi(y)\in U_{j(y),v}\), and put
	\[
	\nu_{x,v}
	=
	\frac1{\#O_v(x)}
	\sum_{y\in O_v(x)}
	\delta_{\val_v(\psi_{j(y),v}(y))}.
	\]
	Then:
	\[
	\nu_{x,v}\ \hbox{is independent of the choices of }j(y),
	\]
	\[
	\sum_v n_v\int_{N_\BR}u\,d\nu_{x,v}(u)
	=
	\bigl(-h_{\overline H_1}(\pi(x)),\ldots,
	-h_{\overline H_t}(\pi(x))\bigr),
	\]
	and
	\[
	h_{\overline G(\bD)}(x)
	=
	-\sum_v n_v\int_{N_\BR}\Psi_{\bD,v}\,d\nu_{x,v}.
	\]
	Moreover, if \(\pi(x)\) is torsion and \(x=q\cdot t\), where
	\(q\in G(\overline K)_{\operatorname{tor}}\) and \(t\in T(\overline K)\), then
	\[
	h_{\overline G(\bD)}(x)=h_{\bD}(t).
	\]
	Consequently, for
	\(\bL=\overline G(\bD)\otimes\overline\pi^*\bN\), with \(\bN\) symmetric ample
	and canonically metrized on \(A\), one has \(h_{\bL}(x)=h_{\bD}(t)\) on such a
	torsion fiber.
\end{prop}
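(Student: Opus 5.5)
The plan is to prove the four assertions in order, working chartwise through the local trivializations of Theorem~\ref{thm:local-trivialization}.

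\emph{Independence of $\nu_{x,v}$.} Lemma~\ref{lem:local-coordinates-compute-tropicalization} gives $\val_v(\psi_{j,v}(y))=\operatorname{trop}_{G,v}(y)$ for every chart containing $\pi(y)$. The latter is intrinsic by Proposition~\ref{input:torsion-tropicalization}. Hence $\nu_{x,v}$ is the push-forward of the normalized Galois-orbit measure under $\operatorname{trop}_{G,v}$, and it does not depend on the choices $j(y)$.

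\emph{Barycenter formula.} I would pair the weighted barycenter with each basis character $e_i^\vee$. Proposition~\ref{input:torsion-tropicalization} gives
\[
\langle e_i^\vee,\operatorname{trop}_{G,v}(y)\rangle=-\log\|e_{e_i^\vee}(y)\|_{H_i,v}.
\]
Averaging over the orbit and summing over places with weights $n_v$ produces the height of the point $e_{e_i^\vee}(x)$ of the canonically metrized Picard-zero torsor over $\pi(x)$.

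To make this concrete, I would use the coordinate $\psi^{(i)}=g_i/f_{s_i,v}\circ\widetilde\pi$. Then
\[
-\log|\psi^{(i)}(y)|_v=-\log|g_i(y)|_v+\log\|f_{s_i,v}(\widetilde\pi(y))\|_v .
\]
The first term sums to zero over all places by the product formula, since $g_i$ is a rational function with $g_i(x)\in\overline K^\times$ after moving off the divisor. The second term sums to $-h_{\overline H_i}(\pi(x))$ by Proposition~\ref{prop:theta-neron-height}.

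\emph{Main obstacle.} The sign and dualization bookkeeping in this step is where I expect the difficulty. The section $s_i$ was taken of $H_i^\vee$, and the Theta-Factor Convention $\mathcal H_{G,-m}\simeq\mathcal H_{G,m}^\vee$ is what converts $+h_{\overline{H_i^\vee}}$ into $-h_{\overline H_i}$. I would check this on a single place with the rigidification $g_i(e_G)=1$ and $\|f(e_E)\|=1$.

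\emph{Height formula.} I would choose a local section $s$ of $O(D)$ that is nonvanishing on the torus, for instance a toric eigensection. The corresponding section $\widetilde s$ of $\overline G(D)$ has norm $\|s(\psi_{j,v}(y))\|_{\bD,v}=e^{\Psi_{\bD,v}(\val_v\psi_{j,v}(y))}$ times $|\chi^m|$-factors. Now write the height of $x$ as the weighted sum over places of orbit averages of $-\log\|\widetilde s(y)\|$. The character factors contribute a term linear in $u$, namely $\langle m,\cdot\rangle$ integrated against $\nu_{x,v}$. Via the barycenter formula, this matches the $\overline\pi^*$ Picard-zero correction inherent in the identification $\overline G(\chi^m\text{-twist})\simeq\overline\pi^*H_m$. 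The remaining term is exactly $-\sum_v n_v\int\Psi_{\bD,v}\,d\nu_{x,v}$.

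The cleanest route is to take $s=s_D$, the distinguished section associated with $0\in\Delta_D$, when $D$ is $T$-effective. Then no character factor appears. In general I would reduce to this case by linearity in $D$, writing $D$ as a difference and using additivity of both sides in $\bD$.

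\emph{Torsion fiber.} If $x=q\cdot t$ with $q$ torsion, Lemma~\ref{lem:torsion-lift-zero-valuation} gives $\val_v\psi_{j,v}(qt)=\val_v(t)$. Hence $\nu_{x,v}$ is the push-forward of the orbit measure of $t$ under $\val_v$. I still need to compare orbit sizes. Galois conjugates of $qt$ map onto those of $t$ with uniform fibers, since $q$ is defined over a finite extension, so the normalized measures agree. The height formula, combined with the toric height formula $h_{\bD}(t)=-\sum_vn_v\int\Psi_{\bD,v}\,d(\val_v)_*\delta_{O_v(t)}$ from \cite{BPS14}, then gives $h_{\overline G(\bD)}(x)=h_{\bD}(t)$.

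Finally, $h_{\overline\pi^*\bN}(x)=h_{\bN}(\pi(q))=0$ because $\pi(q)$ is torsion. Additivity of heights under $\otimes$ then yields $h_{\bL}(x)=h_{\bD}(t)$.
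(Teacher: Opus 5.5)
Your proposal is correct and follows essentially the same proof as the paper: chart-independence of \(\val_v\circ\psi_{j,v}\), then the product formula plus Proposition~\ref{prop:theta-neron-height} for the barycenter, then the canonical toric section \(s_D\) (nonvanishing on \(G\)) for the height identity, and finally Lemma~\ref{lem:torsion-lift-zero-valuation} for torsion fibers. Two minor points: first, \(s_D\) is a rational section nonvanishing on the torus for every toric \(D\), so your detour through \(T\)-effectivity and differences is unnecessary; second, in your sign discussion the theta term for a literal section \(s_i\) of \(H_i^\vee\) is \(\sum_v n_v\log\|f_{s_i,v}\|_v=-h_{\overline{H_i^\vee}}(\pi(x))=+h_{\overline{H_i}}(\pi(x))\) by Proposition~\ref{prop:theta-neron-height}, so the stated sign does not come from ``converting \(+h_{\overline{H_i^\vee}}\)'' but from the paper's convention that the denominator theta function represents a section of the pushout bundle \(H_i\) itself, which a check at the identity cannot detect.
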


\begin{proof}
	The independence of the choices follows from
	Theorem~\ref{thm:local-trivialization} at non-archimedean places and from
	K\"uhne's archimedean trivialization \cite[Lemma 11]{Ku1}: on an overlap the two
	charts have the same valuation map.  Hence the measure \(\nu_{x,v}\) is
	well-defined.
	
	Let \(e_1^\vee,\ldots,e_t^\vee\) be the character basis used in the construction
	of the functions \(g_i/f_{s_i}\).  On a chart \(U_{j,v}\), the \(i\)-th
	coordinate of \(\val_v(\psi_{j,v}(y))\) is
	\[
	\langle e_i^\vee,\val_v(\psi_{j,v}(y))\rangle
	=
	-\log |g_i(y)|_v
	+\log\Vert f_{s_i,v}(\widetilde{\pi_v}(y))\Vert .
	\]
	After averaging over \(O_v(x)\) and summing over \(v\), the terms
	\(-\log |g_i(y)|_v\) vanish by the product formula.  The remaining theta term is
	identified with \(-h_{\overline H_i}(\pi(x))\) by
	Proposition~\ref{prop:theta-neron-height}.  This gives the displayed formula
	for the center of the adelic valuation measures.
	
	For the height identity, use the canonical toric section of \(D\), which is
	nowhere vanishing on the open torus.  By construction of the metric
	\(\overline G(\bD)\),
	\[
	-\log\Vert\widetilde{s_D}(y)\Vert_{\overline G(\bD),v}
	=
	-\Psi_{\bD,v}(\val_v(\psi_{j(y),v}(y))).
	\]
	Averaging over the \(v\)-adic conjugates and summing over all places gives the
	height formula.  Linearity then gives the same identity for toric metrized
	\(\BR\)-divisors.
	
	Finally suppose \(\pi(x)\) is torsion.  Over \(\overline K\), the fiber above
	\(\pi(x)\) contains a torsion point \(q\); hence \(x=q\cdot t\) for some
	\(t\in T(\overline K)\).  By
	Lemma~\ref{lem:torsion-lift-zero-valuation}, the valuation measure of \(x\) is
	the toric valuation measure of \(t\).  Therefore the above height formula
	reduces exactly to the toric height \(h_{\bD}(t)\).  The canonical height of
	\(\pi(x)\) with respect to \(\bN\) is zero, so the same identity holds for
	\(\bL=\overline G(\bD)\otimes\overline\pi^*\bN\).
\end{proof}

\begin{corollary}[Torsion-lift height invariance]
	\label{cor:torsion-lift-height-invariance}
	Let \(q\in G(\overline K)_{\operatorname{tor}}\) and \(t\in T(\overline K)\).
	For every toric metrized \(\BR\)-divisor \(\bD\),
	\[
	h_{\overline G(\bD)}(q\,t)=h_{\bD}(t).
	\]
	If
	\[
	\bL=\overline G(\bD)\otimes\overline\pi^*\bN
	\]
	with \(\bN\) symmetric ample and canonically metrized on \(A\), then
	\[
	h_{\bL}(q\,t)=h_{\bD}(t).
	\]
\end{corollary}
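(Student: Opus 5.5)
This follows almost directly from the final assertion of Proposition~\ref{prop:section4-valuation-height-package}. The only point to check is that the hypothesis of that assertion holds whenever the point has the form \(x=q\,t\).

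Put \(x=q\,t\in G(\overline K)\). Since \(\pi(t)=0\) and \(\pi\) is a homomorphism, \(\pi(x)=\pi(q)\). This is torsion because \(q\) is torsion. Hence \(x\) lies over a torsion point of \(A\), and we already have the decomposition \(x=q\cdot t\) with \(q\) torsion and \(t\in T(\overline K)\), as that proposition requires. One subtlety is that its proof picks some torsion \(q'\) in the fiber and writes \(x=q'\,t'\). The argument, however, only uses that the given factor \(q\) is torsion. So I will rerun it directly with the given decomposition rather than cite the conclusion blindly.

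First step: the valuation measures. By Lemma~\ref{lem:torsion-lift-zero-valuation}, for every place \(v\), every Galois conjugate \(y=q^\sigma t^\sigma\in O_v(x)\), and every chart \(\psi_{j,v}\) at \(\pi(y)\), we have
\[
\val_v(\psi_{j,v}(y))=\val_v(t^\sigma).
\]
This uses that the conjugate \(q^\sigma\) is again torsion. Consequently \(\nu_{x,v}\) equals the pushforward, under \(\val_v\), of the uniform measure on the \(v\)-adic orbit of \(t\) in \(T\).

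Matching the orbits requires care, since \(\#O_v(x)\) and \(\#O_v(t)\) may differ. I will handle this by working over a finite extension \(K'\) over which \(q\) and \(t\) are both rational. Heights are invariant under base change, so I average over all \(K'\)-embeddings; both averages are then uniform over the same index set. This orbit bookkeeping is the only step where I expect friction.

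Second step: the height identity. The formula
\[
h_{\overline G(\bD)}(x)=-\sum_v n_v\int\Psi_{\bD,v}\,d\nu_{x,v}
\]
from Proposition~\ref{prop:section4-valuation-height-package} applies. The standard toric height formula, which is the same computation using the nowhere-vanishing section \(s_D\) on \(T\), gives
\[
h_{\bD}(t)=-\sum_v n_v\int\Psi_{\bD,v}\,d\nu_{t,v}.
\]
Since \(\nu_{x,v}=\nu_{t,v}\) by the first step, the two heights agree.

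Third step: the twisted bundle. For \(\bL=\overline G(\bD)\otimes\overline\pi^*\bN\), additivity of heights gives
\[
h_{\bL}(x)=h_{\overline G(\bD)}(x)+h_{\bN}(\pi(q)).
\]
The second term is the Néron--Tate height of a torsion point, which is \(0\). Therefore \(h_{\bL}(q\,t)=h_{\bD}(t)\).
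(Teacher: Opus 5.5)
Your proposal is correct and follows the paper's route. The paper's proof applies the final assertion of Proposition~\ref{prop:section4-valuation-height-package} to \(x=q\,t\), using that \(\pi(q\,t)=\pi(q)\) is torsion so its N\'eron--Tate height vanishes. You do the same, but make explicit the equality \(\nu_{x,v}=\nu_{t,v}\) via Lemma~\ref{lem:torsion-lift-zero-valuation}, and the orbit-size bookkeeping (averaging over embeddings of a common field of definition of \(q\) and \(t\)), which the paper leaves implicit.
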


\begin{proof}
	Apply Proposition~\ref{prop:section4-valuation-height-package} to
	\(x=q\,t\).  Since \(q\) is torsion, \(\pi(q\,t)=\pi(q)\) is torsion, and the
	canonical height of \(\pi(q)\) is zero.
\end{proof}

\begin{lemma}[Height lower bound from arithmetic \(T\)-effectivity]
	\label{lem:arith-teff-height-lower}
	Let \(\bD\) be arithmetically \(T\)-effective, and let
	\[
	\bL=\overline G(\bD)\otimes\overline\pi^*\bN,
	\]
	where \(\bN\) is a symmetric ample line bundle on \(A\) equipped with its
	canonical metric.  Then, for every \(x\in G(\overline K)\),
	\[
	h_{\bL}(x)\ge \muabs_{\bD}(T).
	\]
\end{lemma}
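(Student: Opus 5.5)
The plan is to split the height as
\[
h_{\bL}(x)=h_{\overline G(\bD)}(x)+h_{\bN}(\pi(x)),
\]
bound the Néron--Tate term below by zero, and bound the toric term below by \(\muabs_{\bD}(T)\) using the distinguished section \(s_0=s_D\) supplied by Definition~\ref{def:arith-T-effective}. The splitting holds because heights are additive in the adelic line bundle and \(h_{\overline\pi^*\bN}(x)=h_{\bN}(\pi(x))\) by functoriality. Since \(\bN\) is symmetric, ample and canonically metrized, \(h_{\bN}(\pi(x))\ge0\), so it suffices to prove
\[
h_{\overline G(\bD)}(x)\ge\muabs_{\bD}(T).
\]

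For the toric term I would use the lifted section \(\widetilde{s_0}\) of \(\overline G(D)\) obtained from \(s_0\) by the quotient construction. Because \(s_0\) does not vanish on the open torus (by the remark following Definition~\ref{def:arith-T-effective}), \(\widetilde{s_0}\) does not vanish on \(G\), and in particular not at \(x\). The height of a closed point can therefore be computed from this section: the product formula removes the dependence on the choice of a section that is regular and nonvanishing at \(x\). This gives
\[
h_{\overline G(\bD)}(x)=\sum_v n_v\frac1{\#O_v(x)}\sum_{y\in O_v(x)}\bigl(-\log\Vert\widetilde{s_0}(y)\Vert_{\overline G(\bD),v}\bigr).
\]
This is the same computation as in the proof of Proposition~\ref{prop:section4-valuation-height-package}. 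For \(\BR\)-divisors I would argue either by linearity or by working with the Green function directly.

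Next I would use the definition of the induced metric in Proposition~\ref{prop:induced-metric-adelic}. On a chart containing \(\pi(y)\) it gives
\[
\Vert\widetilde{s_0}(y)\Vert_{\overline G(\bD),v}=\Vert s_0(\psi_{j,v}(y))\Vert_{\bD,v}\le\Vert s_0\Vert_{v,\sup}\le e^{-c_v}.
\]
The sup here is over all of \(X^{an}_{\Sigma,v}\), and \(\psi_{j,v}(y)\) lies in \(T^{an}\subset X^{an}_\Sigma\). Hence each local average is at least \(c_v\). Summing with the weights \(n_v\) then gives
\[
h_{\overline G(\bD)}(x)\ge\sum_v n_vc_v=\muabs_{\bD}(T).
\]
Only finitely many \(c_v\) are nonzero, so the sum is finite.

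The main point needing care is the identification of the height with the averaged local values of this particular section. Two things must be checked: that the lifted section is independent of the chart, which holds because transition multipliers lie in the compact torus and toric metrics are invariant under it (Theorem~\ref{thm:local-trivialization}); and that \(\bD\) is an \(\BR\)-divisor, so \(s_0\) should be read as the toric Green function, i.e.\ \(-\log\Vert s_0\Vert_v=-\Psi_{\bD,v}\circ\val_v\). With that reading the inequality is \(\Psi_{\bD,v}\le -c_v\) on \(N_\BR\), which follows from the sup-norm bound. The lower bound then follows pointwise, without any genericity hypothesis on \(x\).
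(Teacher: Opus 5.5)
Your proposal is correct and is essentially the paper's proof. You split off the nonnegative canonical height \(h_{\bN}(\pi(x))\), compute \(h_{\overline G(\bD)}(x)\) with the lifted distinguished section \(\widetilde{s_0}\) (nowhere vanishing on \(G\)), bound each local term by \(\Vert\widetilde{s_0}(y)\Vert_{\overline G(\bD),v}=\Vert s_0(\psi_{j,v}(y))\Vert_{\bD,v}\le\Vert s_0\Vert_{v,\sup}\le e^{-c_v}\), and sum with the weights \(n_v\); your remarks on chart independence and on reading \(s_0\) through the Green function for \(\mathbb R\)-divisors only make explicit points the paper leaves implicit.
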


\begin{proof}
	Choose the small distinguished section \(s_0\) and constants \((c_v)_v\) from
	Definition~\ref{def:arith-T-effective}.  Since \(s_0\) is torus-invariant, it is
	nowhere vanishing on the open torus \(T\).  Hence its induced section
	\(\widetilde s_0\) of \(\overline G(D)\) is nowhere vanishing on \(G\).  For every
	\(v\)-adic conjugate \(y\) of \(x\), choose a local trivialization
	\(\psi_{j,v}\) at \(\pi(y)\).  By construction of the induced metric,
	\[
	\|\widetilde s_0(y)\|_{\overline G(\bD),v}
	=
	\|s_0(\psi_{j,v}(y))\|_{\bD,v}
	\le
	\|s_0\|_{v,\sup}
	\le e^{-c_v}.
	\]
	Therefore
	\[
	h_{\overline G(\bD)}(x)
	=
	\sum_v-\frac{n_v}{\#O_v(x)}
	\sum_{y\in O_v(x)}
	\log\|\widetilde s_0(y)\|_{\overline G(\bD),v}
	\ge
	\sum_v n_vc_v
	=
	\muabs_{\bD}(T).
	\]
	The canonical height attached to the symmetric ample bundle \(\bN\) is
	nonnegative, so
	\[
	h_{\bL}(x)=h_{\overline G(\bD)}(x)+h_{\bN}(\pi(x))
	\ge \muabs_{\bD}(T).
	\]
\end{proof}

\begin{lemma}[Torsion fibers approximate the toric minimum]
	\label{lem:torsion-fiber-toric-min}
	Assume that \(D\) is big and \(\bD\) is vertically semipositive.  Let
	\(\bL=\overline G(\bD)\otimes\overline\pi^*\bN\), with \(\bN\) as above.  For
	every nonempty open subset \(U\subseteq G\) and every \(\epsilon>0\), there is a
	point \(x\in U(\overline K)\) such that
	\[
	h_{\bL}(x)\le \muabs_{\bD}(T)+\epsilon.
	\]
\end{lemma}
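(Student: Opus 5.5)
The plan is to produce the small points inside torsion fibers, where Corollary~\ref{cor:torsion-lift-height-invariance} turns the semiabelian height into a pure toric height. Concretely, we look for a torsion point \(q\in G(\overline K)_{\operatorname{tor}}\) and a point \(t\in T(\overline K)\) with \(q\,t\in U\) and \(h_{\bD}(t)\le\muabs_{\bD}(T)+\epsilon\). Then \(h_{\bL}(q\,t)=h_{\bD}(t)\), because \(h_{\bN}(\pi(q))=0\) and the toric coordinate of \(q\) has valuation zero by Lemma~\ref{lem:torsion-lift-zero-valuation}.

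\emph{Step 1: choose the torsion fiber.} The torsion points of \(A\) are Zariski dense, and \(\pi\) is open and surjective, so \(\pi(U)\) contains a nonempty open subset of \(A\). Choose a torsion point \(a\in\pi(U)(\overline K)\). The fiber \(\pi^{-1}(a)\) contains a torsion point \(q\) of \(G\): lift \(a\) to any \(x\), note that \(nx\in T\) where \(n\) is the order of \(a\), and correct \(x\) by an \(n\)-th root in \(T\) of \(nx\). Then \(U_a:=\{t\in T: q\,t\in U\}\) is a nonempty Zariski open subset of \(T\), since \(U\cap\pi^{-1}(a)\ne\varnothing\).

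\emph{Step 2: find small points of the toric height in \(U_a\).} By the BGPS successive-minima identity recalled above, \(\mu^{\mathrm{ess}}_{\bD}(X_\Sigma)=\muabs_{\bD}(X_0)=\max_{\Delta_D}\upsilon_{\bD}\). Using \(\mu^{\mathrm{ess}}\), every nonempty open subset of \(T\) contains points of height at most \(\mu^{\mathrm{ess}}_{\bD}+\epsilon\). Indeed, otherwise the complement of \(U_a\) would contain all points of height below \(\mu^{\mathrm{ess}}_{\bD}+\epsilon\), and then the open set \(U_a\) would have infimum height above \(\mu^{\mathrm{ess}}_{\bD}+\epsilon\). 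This contradicts the definition of \(\mu^{\mathrm{ess}}\) as a supremum over opens only if we phrase it correctly. The right statement is that \(\inf_{U_a}h\le\mu^{\mathrm{ess}}\) fails in general, so we argue instead as follows. The set \(\{t:h_{\bD}(t)\le\mu^{\mathrm{ess}}+\epsilon\}\) is Zariski dense; this is the definition of the essential minimum, since its complement-closure would otherwise give an open set of larger infimum. A Zariski dense set meets every nonempty open \(U_a\). Bigness of \(D\) and semipositivity guarantee that \(\bD\) fits the hypotheses of \cite{BGPS15}, so that \(\mu^{\mathrm{ess}}_{\bD}(X_\Sigma)=\muabs_{\bD}(T)\) is finite.

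\emph{Step 3: conclude.} Take \(t\in U_a(\overline K)\) with \(h_{\bD}(t)\le\muabs_{\bD}(T)+\epsilon\), and set \(x=q\,t\in U\). Corollary~\ref{cor:torsion-lift-height-invariance} then gives the bound.

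The main obstacle is the identification \(\mu^{\mathrm{ess}}_{\bD}(X_\Sigma)=\muabs_{\bD}(T)\). This equality is taken from the cited result of \cite{BGPS15}, and we must make sure it is valid under exactly the stated hypotheses: \(D\) big and \(\bD\) semipositive, with heights on \(T\) computed from the same toric local Green functions.
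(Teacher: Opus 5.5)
Your proposal is correct and is essentially the paper's own proof: a torsion point \(a\in\pi(U)\), a torsion lift \(q\) obtained from divisibility of \(T\), the nonempty open set \(\{t\in T: q\,t\in U\}\), the BGPS identification \(\muess_{\bD}(X_\Sigma)=\muabs_{\bD}(T)\) to find a small \(t\) there, and Corollary~\ref{cor:torsion-lift-height-invariance} to conclude. One correction to your Step~2 aside: the inequality \(\inf_{U_a}h_{\bD}\le\muess_{\bD}(X_\Sigma)\) holds for every nonempty open \(U_a\) directly from the definition of the essential minimum as a supremum, so your first argument was already valid, and the Zariski-density reformulation is only an equivalent restatement of it.
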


\begin{proof}
	The image \(\pi(U)\) contains a nonempty open subset of \(A\), since
	\(\pi:G\to A\) is smooth and surjective.  Torsion points are Zariski dense in
	\(A\), so choose \(a\in A(\overline K)_{\operatorname{tor}}\cap \pi(U)\).  The
	torsion point \(a\) has a torsion lift
	\(q\in G(\overline K)_{\operatorname{tor}}\): if \(na=0\), choose any
	\(z\in G(\overline K)\) above \(a\), then \(nz\in T\), and divisibility of the
	torus over \(\overline K\) allows one to multiply \(z\) by a point of \(T\) to
	obtain a lift killed by \(n\).
	
	Translation by \(q^{-1}\) identifies the open set
	\[
	U\cap\pi^{-1}(a)
	\]
	with a nonempty open subset \(W\subseteq T\).  By the toric successive-minima
	formula \cite[Theorem 3.9 and Corollary 3.10]{BGPS15},
	\[
	\muess_{\bD}(X_\Sigma)=\muabs_{\bD}(T),
	\]
	and hence every nonempty open subset of \(T\) contains points whose
	\(\bD\)-height is at most \(\muabs_{\bD}(T)+\epsilon\).  Choose such a point
	\(t\in W(\overline K)\), and put \(x=qt\in U(\overline K)\).  Since
	\(\pi(x)=a\) is torsion, Corollary~\ref{cor:torsion-lift-height-invariance}
	gives
	\[
	h_{\bL}(x)=h_{\bD}(t)\le \muabs_{\bD}(T)+\epsilon.
	\]
\end{proof}

\begin{prop}[Section 4 minima formula]\label{prop:section4-minima-formula}
	If \(D\) is big and \(T\)-effective, \(\bD\) is vertically semipositive and
	arithmetically \(T\)-effective, and \(\bN\) is a symmetric ample line bundle
	with its canonical metric on \(A\), then for
	\[
	\bL=\overline G(\bD)\otimes\overline\pi^*\bN
	\]
	one has
	\[
	\muess_{\bL}(\overline G)
	=
	\muabs_{\bL}(G)
	=
	\muess_{\bD}(X_\Sigma)
	=
	\muabs_{\bD}(T).
	\]
\end{prop}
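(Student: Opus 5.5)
The plan is to prove the chain of equalities by combining the lower bound of Lemma~\ref{lem:arith-teff-height-lower} with the upper bound of Lemma~\ref{lem:torsion-fiber-toric-min}, and to take the toric equality \(\muess_{\bD}(X_\Sigma)=\muabs_{\bD}(T)\) from the successive-minima formula \cite[Theorem~3.9 and Corollary~3.10]{BGPS15}. That toric identity is the one already quoted in the proof of Lemma~\ref{lem:torsion-fiber-toric-min}, so the last equality in the statement needs no further work.

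\emph{Lower bound.} By Lemma~\ref{lem:arith-teff-height-lower}, arithmetic \(T\)-effectivity gives \(h_{\bL}(x)\ge\muabs_{\bD}(T)\) for every \(x\in G(\overline K)\), hence \(\muabs_{\bL}(G)\ge\muabs_{\bD}(T)\). Since \(G\) is a nonempty open subset of \(\overline G\), the definition of the essential minimum gives
\[
\muess_{\bL}(\overline G)\ \ge\ \inf_{x\in G(\overline K)}h_{\bL}(x)=\muabs_{\bL}(G)\ \ge\ \muabs_{\bD}(T).
\]

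\emph{Upper bounds.} Fix a nonempty Zariski open \(U\subseteq\overline G\) and \(\epsilon>0\). Then \(U\cap G\) is nonempty and open in \(G\). Lemma~\ref{lem:torsion-fiber-toric-min}, which uses bigness of \(D\) and vertical semipositivity, produces \(x\in(U\cap G)(\overline K)\) with \(h_{\bL}(x)\le\muabs_{\bD}(T)+\epsilon\). Taking the infimum over \(U\), then the supremum over \(U\), and letting \(\epsilon\to0\) gives \(\muess_{\bL}(\overline G)\le\muabs_{\bD}(T)\). Applying the same lemma with \(U=G\) gives \(\muabs_{\bL}(G)\le\muabs_{\bD}(T)\). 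Combined with the lower bound, this yields \(\muess_{\bL}(\overline G)=\muabs_{\bL}(G)=\muabs_{\bD}(T)\).

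The argument is essentially bookkeeping, and the substantive inputs have already been isolated. The step that needs care is the lower bound. Lemma~\ref{lem:arith-teff-height-lower} controls only points of \(G\), while the essential minimum is taken over opens of \(\overline G\). This is harmless because every nonempty open of \(\overline G\) meets \(G\) in a nonempty open set, so boundary points never enter the supremum–infimum. One point should still be checked explicitly: the constants \(c_v\) in Definition~\ref{def:arith-T-effective} realize exactly \(\muabs_{\bD}(T)\) and not just a lower value. This is guaranteed by the definition, equivalently by the characterization \(\vartheta_{\bD}(0)=\muabs_{\bD}(T)\) in \S\ref{subsec:arith-teff-meaning}. \(T\)-effectivity of \(D\) is used only to make \(s_0\) a genuine global section, through \(0\in\Delta_D\).
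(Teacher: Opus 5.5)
Your proposal is correct and matches the paper's proof: the lower bound $\muabs_{\bL}(G)\ge\muabs_{\bD}(T)$ comes from Lemma~\ref{lem:arith-teff-height-lower}. The upper bound $\muess_{\bL}(\overline G)\le\muabs_{\bD}(T)$ comes from applying Lemma~\ref{lem:torsion-fiber-toric-min} to $V\cap G$ for each nonempty open $V\subseteq\overline G$. The comparison $\muess_{\bL}(\overline G)\ge\muabs_{\bL}(G)$ follows by taking $U=G$, and the toric equality is cited from BGPS15. Your separate application of the lemma with $U=G$ is redundant, since $\muabs_{\bL}(G)\le\muess_{\bL}(\overline G)$ already gives that inequality.
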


\begin{proof}
	By Lemma~\ref{lem:arith-teff-height-lower},
	\[
	\muabs_{\bL}(G)\ge \muabs_{\bD}(T).
	\]
	On the other hand, let \(V\subseteq\overline G\) be a nonempty open subset.
	Since \(G\) is dense in \(\overline G\), the intersection \(V\cap G\) is a
	nonempty open subset of \(G\).  Lemma~\ref{lem:torsion-fiber-toric-min} gives,
	for every \(\epsilon>0\), a point \(x\in (V\cap G)(\overline K)\) with
	\[
	h_{\bL}(x)\le \muabs_{\bD}(T)+\epsilon.
	\]
	Taking the infimum over \(V\), then the supremum over all such \(V\), and then
	letting \(\epsilon\to0\), gives
	\[
	\muess_{\bL}(\overline G)\le \muabs_{\bD}(T).
	\]
	Since always \(\muess_{\bL}(\overline G)\ge\muabs_{\bL}(G)\), we obtain
	\[
	\muess_{\bL}(\overline G)=\muabs_{\bL}(G)=\muabs_{\bD}(T).
	\]
	Finally, the toric equality
	\[
	\muess_{\bD}(X_\Sigma)=\muabs_{\bD}(T)
	\]
	is the toric successive-minima formula
	\cite[Theorem 3.9 and Corollary 3.10]{BGPS15}.
\end{proof}
\section{Canonical metric case}

\subsection{Auxiliary n-division tower}

We fix the auxiliary semiabelian tower used throughout this section.  Let
\[
\eta_G=(H_1,\ldots,H_t)\in A^\vee(\overline K)^t
\simeq \operatorname{Ext}^1_{\overline K}(A,T)
\]
be the extension class of \(G\).  For every \(n\ge1\), choose
\[
H_i^{(n)}\in A^\vee(\overline K),\qquad 1\le i\le t,
\]
such that \(nH_i^{(n)}=H_i\).  After passing to a finite extension
\(K_n\supset K\) over which these data are defined, let \(G_n\) be the semiabelian
variety over \(K_n\) with extension class
\[
\eta_{G_n}=(H_1^{(n)},\ldots,H_t^{(n)}).
\]
There is an isogeny
\[
\varphi_n:G_n\longrightarrow G_{K_n}
\]
whose toric part is multiplication by \(n\) and whose abelian part is the
identity on \(A_{K_n}\).

We again write \(\overline G_n\) for the toric compactification of \(G_n\)
associated with the base change \(X_{\Sigma,K_n}\), and
\[
\overline\pi_n:\overline G_n\longrightarrow A_{K_n}
\]
for the induced projection.  The homomorphism \(\varphi_n\) extends to the
corresponding toric compactifications; throughout the section we denote this
extension by the same symbol
\[
\varphi_n:\overline G_n\longrightarrow \overline G_{K_n}.
\]

We use the same symbols \(D\) and \(N\) for their base changes to \(K_n\).  This
causes no ambiguity in the toric part, since \(D_{K_n}\) has the same virtual
support function as \(D\).  Put
\[
L_n=G_n(D)\otimes\pi_n^*N_{K_n},
\qquad
\overline L_n^{\can}
=
\overline G_n(\overline D^{\can})\otimes
\overline\pi_n^*\overline N_{K_n}.
\]
Finally, fix once and for all a place \(v_0\in\Sigma(K)\).  For a real number
\(\kappa_n\), let \(O_{\overline G}(\kappa_n)\) denote the trivial adelic line
bundle on \(\overline G\) whose metric is trivial at all places except
\(v_0\), and whose \(v_0\)-metric is defined by
\[
-\log\|1\|_{v_0}=\kappa_n .
\]
Its pullback by \(\varphi_n\) is the constant vertical correction used below.

\subsection{Lower bound and semipositivity}

We first record the standard canonical lower bound.

\begin{theorem}[Fundamental height inequality]
	\label{input:fundamental-height-inequality}
	Let \(\overline M\) be a vertically semipositive integrable adelic line bundle
	on a projective variety \(Y\), with \(\deg_M(Y)>0\).  With the normalization of
	heights used in this paper,
	\[
	\muabs_{\overline M}(Y)\le h_{\overline M}(Y)\le \muess_{\overline M}(Y).
	\]
\end{theorem}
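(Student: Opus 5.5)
The two inequalities are of different natures, and I will prove them separately. The upper bound \(h_{\overline M}(Y)\le\muess_{\overline M}(Y)\) is Zhang's successive-minima inequality. The lower bound \(\muabs_{\overline M}(Y)\le h_{\overline M}(Y)\) is a consequence of the arithmetic Hilbert--Samuel/induction formula for heights.

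Throughout, put \(d=\dim Y\). By continuity of both sides in the adelic topology, I first reduce to the case where \(\overline M\) is a model hermitian line bundle on a projective model with \(M\) ample. The intersection number \(\overline M^{d+1}\) is continuous under uniform limits of metrics. The minima shift by at most the sup-distance of the metrics. Moreover, a vertically semipositive integrable bundle with \(\deg_M(Y)>0\) can be approximated by semipositive models after adding a small multiple of an ample bundle, with \(\epsilon\to0\) at the end. I also normalize by a constant twist \(O(c)\): this shifts the height of \(Y\) and both minima by the same \(c\), so I may assume \(\muess_{\overline M}(Y)=0\) or \(\muabs_{\overline M}(Y)=0\) as convenient.

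\emph{Upper bound.} Suppose \(h_{\overline M}(Y)>\muess_{\overline M}(Y)\); after twisting, \(\overline M^{d+1}>0\) while \(\muess_{\overline M}(Y)<0\). The arithmetic Hilbert--Samuel theorem (Gillet--Soul\'e/Zhang, or \(\widehat{\operatorname{vol}}\ge\overline M^{d+1}\) for nef bundles as in Yuan--Zhang) then gives a nonzero small section \(s\in H^0(Y,kM)\) with \(\|s\|_{v,\sup}\le1\) at all places for \(k\gg0\). Every point \(x\notin\operatorname{div}(s)\) then satisfies \(h_{\overline M}(x)\ge0\), so \(\muess\ge0\), a contradiction.

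\emph{Lower bound.} I argue by induction on \(d\) via the arithmetic Bézout/induction formula. Choose a section \(s\) of \(kM\) whose divisor meets \(Y\) properly. The induction formula gives
\[
\overline M^{d+1}\cdot Y=k^{-1}\Bigl(\overline M^{d}\cdot\operatorname{div}(s)+\sum_v n_v\int_{Y_v^{an}}-\log\|s\|_v\,c_1(\overline M_v)^{d}\Bigr).
\]
The measures \(c_1(\overline M_v)^d\) are positive by semipositivity. To obtain the inequality I need \(-\log\|s\|_v\ge\) an adelic constant whose sum is at least \(\muabs\). The natural choice is a section for which \(\sum_v n_v\inf(-\log\|s\|_v)\ge k\,\muabs-o(k)\), with each component of \(\operatorname{div}(s)\) satisfying the inductive hypothesis \(h\ge\muabs\); since every point of a component is a point of \(Y\), \(\muabs\) of a component is at least \(\muabs_{\overline M}(Y)\).

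Producing this section is the main obstacle, because the integral term needs a pointwise lower bound rather than an average. If that fails, I would instead use Zhang's formulation: for semipositive metrics, \(h_{\overline M}(Y)\ge\frac1{d+1}\sum_i\lambda_i\ge\) (minimum of the successive minima) \(\ge\muabs\). This follows by inducting the first successive minimum down through subvarieties, i.e. \(h(Y)\ge\frac{1}{d+1}\bigl(\muess(Y)+d\cdot\inf_Z h(Z)\bigr)\), where \(Z\) runs over the subvarieties appearing. It reduces to points, whose heights are all at least \(\muabs\). Finally I would pass to the limit to remove the approximation.
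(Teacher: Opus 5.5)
The paper gives no proof of this statement. It is recorded as a standard input (Zhang's successive-minima inequality), so your argument has to stand on its own.

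\textbf{Upper bound.} This is Zhang's argument and it is fine, with two small corrections.
\begin{itemize}
\item Of the two Hilbert--Samuel statements you offer, only the Gillet--Soul\'e/Zhang version (vertically semipositive metric, \(M\) ample) applies. After the twist that makes \(\muess<0\), the bundle is no longer nef, so the nef form \(\widehat{\operatorname{vol}}\ge\overline M^{d+1}\) cannot be invoked as stated.
\item In the reduction from big and nef \(M\) to \(M+\epsilon A\), the sup-distance remark does not give \(\limsup_{\epsilon\to0}\muess_{\overline M+\epsilon\overline A}(Y)\le\muess_{\overline M}(Y)\). For that, use Kodaira's lemma to bound \(h_{\overline A}\) by a multiple of \(h_{\overline M}\) plus a constant on a dense open subset.
\end{itemize}

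\textbf{The gap is the lower bound.} Your first route stalls, as you say, because you ask for a section at level \(\muabs\). Your fallback recursion \(h(Y)\ge\frac1{d+1}\bigl(\muess(Y)+d\inf_Z h(Z)\bigr)\) asks for more, not less. To obtain it from the induction formula you need \(s\) with \(\frac1k\sum_v n_v\inf(-\log\|s\|_v)\ge\muess(Y)-\epsilon\), and nothing in your argument produces such a section. Hilbert--Samuel only guarantees small sections at levels \(t<h(Y)\), and \(h(Y)<\muess(Y)\) is exactly the non-quasi-canonical situation this paper is about. Sections near level \(\muess\) are a genuinely harder input; this is essentially the identification of \(\muess\) with an asymptotic maximal slope, cf.\ \cite{BA}. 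Citing Zhang's theorem as a black box is legitimate, and that is all the paper does, but the derivation you sketch does not prove it.

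\textbf{The fix.} Your first route already works once you see that level \(t<h(Y)\) suffices.

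Reduce to \(M\) ample. This direction of the reduction is harmless, since \(\muabs_{\overline M+\epsilon\overline A}\ge\muabs_{\overline M}+\epsilon\,\muabs_{\overline A}\). Fix \(t<h(Y)\).

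Since \(\overline M(-t)^{d+1}=(d+1)\deg_M(Y)\bigl(h(Y)-t\bigr)>0\), Hilbert--Samuel gives, for \(k\gg0\), a nonzero \(s\in H^0(Y,kM)\) with
\[
\sum_v n_v\inf_{Y_v^{an}}\bigl(-\log\|s\|_v\bigr)\ge kt .
\]
These are the very sections used in your upper bound.

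Now estimate the two terms of your induction formula.
\begin{itemize}
\item The integral term is at least \(kt\deg_M(Y)\), because \(c_1(\overline M_v)^d\) is positive of mass \(\deg_M(Y)\).
\item Write \(\operatorname{div}(s)=\sum_Z m_Z Z\), so that \(\sum_Z m_Z\deg_M(Z)=k\deg_M(Y)\). The divisor term is then \(\sum_Z m_Z\,d\,h(Z)\deg_M(Z)\).
\end{itemize}
Dividing by \(k\deg_M(Y)\) gives
\[
(d+1)\,h(Y)\;\ge\; t+d\min_Z h(Z)\;\ge\; t+d\,\muabs(Y).
\]
The last step is the induction hypothesis \(h(Z)\ge\muabs(Z)\ge\muabs(Y)\); each \(Z\) qualifies because \(M\) is ample. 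The base case of a closed point is trivial.

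Letting \(t\uparrow h(Y)\) yields \(d\,h(Y)\ge d\,\muabs(Y)\). The auxiliary section never needs to reach level \(\muabs\), let alone \(\muess\): the term \(t\) is absorbed by the left-hand side.
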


\begin{lemma}[Constant correction for horizontal semipositivity]
	\label{input:constant-correction-horizontal}
	In the auxiliary semiabelian setting of this section, let
	\(\overline M_n\) be a vertically semipositive adelic line bundle with nef
	underlying line bundle on \(\overline G_n\).  If, for some \(\delta_n\to0\),
	\[
	\muabs_{\overline M_n}(\overline G_n)\ge -C\delta_n,
	\qquad
	h_{\overline M_n}(\overline G_n)\ge -C\delta_n,
	\]
	then there is a constant vertical correction
	\[
	\overline M_n\otimes \varphi_n^*O_{\overline G}(\kappa_n),
	\qquad 0\le \kappa_n\le C'\delta_n,
	\]
	which is horizontally semipositive.  This is the constant-correction form used
	in K\"uhne's lower-bound argument before the arithmetic volume estimate.
\end{lemma}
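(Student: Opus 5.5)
The plan is to take the correction to be an explicit multiple of \(\delta_n\) and then check horizontal semipositivity through the lower inequality of Theorem~\ref{input:fundamental-height-inequality}, applied to every subvariety. The starting observation is that twisting by the constant vertical correction shifts all heights uniformly. The bundle \(O_{\overline G}(\kappa_n)\) has trivial metric away from \(v_0\) and satisfies \(-\log\|1\|_{v_0}=\kappa_n\). For a point \(x\in\overline G_n(\overline K)\), the places of \(K_n\) above \(v_0\) carry normalized weights summing to \(n_{v_0}\). Pulling back by \(\varphi_n\) and using the section \(1\), we get
\[
h_{\overline M_n\otimes\varphi_n^*O_{\overline G}(\kappa_n)}(x)=h_{\overline M_n}(x)+n_{v_0}\kappa_n .
\]
The same computation applied to top self-intersections shows that for an irreducible \(Z\subseteq\overline G_n\) with \(\deg_{M_n}(Z)>0\) the height of \(Z\) also shifts by exactly \(n_{v_0}\kappa_n\). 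This follows from multilinearity, since the constant metric contributes \((\dim Z+1)\,n_{v_0}\kappa_n\deg_{M_n}(Z)\) to \((\overline M_n|_Z)^{\dim Z+1}\). The twist changes neither the underlying line bundle nor the curvature currents and Chambert-Loir measures at any place, so vertical semipositivity and nefness of \(M_n\) are preserved.

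Put \(\kappa_n=C\delta_n/n_{v_0}\), so \(0\le\kappa_n\le C'\delta_n\) with \(C'=C/n_{v_0}\). Write \(\overline M_n'\) for the twisted bundle. By the hypothesis \(\muabs_{\overline M_n}(\overline G_n)\ge-C\delta_n\) and the shift formula, \(\muabs_{\overline M_n'}(\overline G_n)\ge0\). Consequently \(\muabs_{\overline M_n'}(Z)\ge0\) for every closed subvariety \(Z\), since its points are points of \(\overline G_n\).

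If \(\deg_{M_n}(Z)>0\), apply Theorem~\ref{input:fundamental-height-inequality} to \(\overline M_n'|_Z\); restriction keeps vertical semipositivity. This gives \(h_{\overline M_n'}(Z)\ge\muabs_{\overline M_n'}(Z)\ge0\), which is the horizontal semipositivity we want. The hypothesis \(h_{\overline M_n}(\overline G_n)\ge-C\delta_n\) is then automatically compatible with this (the case \(Z=\overline G_n\)), and it guarantees that the choice of \(\kappa_n\) is not forced larger by the top-dimensional term.

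The one step needing care, and the expected obstacle, is subvarieties \(Z\) with \(\deg_{M_n}(Z)=0\), where the fundamental inequality does not apply directly. There I would perturb. Fix an ample line bundle \(\overline A_n\) on \(\overline G_n\) with a semipositive adelic metric whose heights are bounded below, for example \(\overline G_n(\overline D'^{\can})\otimes\overline\pi_n^*\overline N\) with \(D'\) ample toric, as in Proposition~\ref{prop:induced-metric-semipositive}. Apply the argument above to \(\overline M_n'+\varepsilon\overline A_n\), plus a constant of size \(O(\varepsilon)\) at \(v_0\) to absorb the lower bound of \(\overline A_n\). This bundle has positive degree on every \(Z\), so the fundamental inequality gives \((\overline M_n'+\varepsilon\overline A_n)|_Z^{\dim Z+1}\ge -O(\varepsilon)\). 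Letting \(\varepsilon\to0\) and using continuity of the arithmetic intersection numbers in the metric yields \((\overline M_n'|_Z)^{\dim Z+1}\ge0\) in this case as well, which completes the proof of horizontal semipositivity.
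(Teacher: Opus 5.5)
Your proposal is correct. The paper gives no proof here: the lemma is recorded as an input taken from K\"uhne's lower-bound step. Your argument is the standard mechanism behind that input.

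\begin{itemize}
\item The twist by \(\kappa_n=C\delta_n/n_{v_0}\) raises every point height and every normalized subvariety height by exactly \(n_{v_0}\kappa_n\), without changing curvature. So the twisted bundle has nonnegative absolute minimum.
\item The lower half of Theorem~\ref{input:fundamental-height-inequality} then gives \((\overline M_n'|_Z)^{\dim Z+1}\ge0\) for every \(Z\) of positive \(M_n\)-degree.
\item Your ample perturbation handles the degree-zero subvarieties. There the limit \(\varepsilon\to0\) is purely formal, since the intersection number is a polynomial in \(\varepsilon\).
\end{itemize}

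Your route also makes explicit two points that the citation leaves implicit.

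First, \(C'=C/n_{v_0}\) is independent of \(n\). This is because the normalized weights of the places of \(K_n\) above \(v_0\) sum to the fixed weight \(n_{v_0}\).

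Second, only the pointwise bound \(\muabs_{\overline M_n}(\overline G_n)\ge-C\delta_n\) is actually used. The hypothesis \(h_{\overline M_n}(\overline G_n)\ge-C\delta_n\) follows from it by the same theorem. So your sentence saying it keeps \(\kappa_n\) from being forced larger should be replaced by the remark that it is redundant.

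Finally, horizontal semipositivity may be meant in Zhang's and K\"uhne's stronger sense of a uniform limit of nef model metrics. That is what the bigness input behind Proposition~\ref{input:quadratic-volume} uses. In that case, run the same argument on vertically semipositive model approximants of \(\overline M_n'\), each twisted by its uniform distance to \(\overline M_n'\); nothing new is needed.
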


\begin{lemma}[Canonical chart lower bound]
	\label{lem:canonical-chart-lower}
	Assume that \(D\) is an integral ample \(T\)-effective toric divisor.  After
	replacing \(D\) by a positive multiple and dividing the resulting height
	inequalities by that multiple, there are lattice points
	\[
	m_0=0,\ m_1,\ldots,m_r\in\Delta_D\cap M
	\]
	such that the effective toric divisors
	\[
	D+\operatorname{div}\chi^{m_j}\qquad (0\le j\le r)
	\]
	have empty common intersection.  For every \(n\ge1\) and every
	\(x\in\overline G_n(\overline K)\), one can choose \(j\) with
	\[
	h_{\overline G_n(\overline D^{\can})}(x)
	\ge
	-h_{\overline H^{(n)}_{m_j}}(\pi_n(x)),
	\]
	where \(H^{(n)}_{m_j}\) is the Picard class on \(A\) induced by \(m_j\) for the
	extension \(G_n\).
\end{lemma}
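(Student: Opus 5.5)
The plan has two parts: a purely toric choice of lattice points, followed by a height computation through the local trivialization, applied to the sections \(\chi^{m_j}s_D\) in place of \(s_0\) as in Lemma~\ref{lem:arith-teff-height-lower}.

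\emph{Choice of the \(m_j\).} Since \(D\) is ample and integral, after replacing \(D\) by a multiple it is very ample (indeed globally generated suffices), and \(D\) is \(T\)-effective, so \(0\in\Delta_D\). The global sections of \(O(D)\) are spanned by the eigensections \(\chi^m s_D\) with \(m\in\Delta_D\cap M\), and each has divisor \(D+\operatorname{div}\chi^m\), which is effective. Global generation means these divisors have no common zero. It is enough to take \(m_0=0\) together with the vertices \(m_\sigma\) of \(\Delta_D\), one for each maximal cone \(\sigma\). The section \(\chi^{m_\sigma}s_D\) does not vanish on \(X_\sigma\), because there \(\Psi_D=\langle m_\sigma,\cdot\rangle\), and the charts \(X_\sigma\) cover \(X_\Sigma\). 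Hence the common intersection is empty.

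\emph{Height computation.} Fix \(x\in\overline G_n(\overline K)\) and let \(\widetilde s_j\) be the section of \(\overline G_n(D)\) induced by \(\chi^{m_j}s_D\). In a chart \(\psi_{j',v}\) its norm is
\[
\|\chi^{m_j}s_D(\psi_{j',v}(y))\|^{\can}_v
=\exp\bigl(\Psi_D(u)-\langle m_j,u\rangle\bigr),
\qquad u=\val_v\psi_{j',v}(y).
\]
Since \(m_j\in\Delta_D\), we have \(\Psi_D\le\langle m_j,\cdot\rangle\), so this norm is at most \(1\) everywhere, including the boundary. Choose \(j\) with \(x\notin|\operatorname{div}\widetilde s_j|\); this is possible by the emptiness of the common intersection, carried fiberwise through \(\Psi_U\). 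Then
\[
h(x)=\sum_v n_v\,\mathrm{avg}_y\bigl(-\log\|\widetilde s_j(y)\|\bigr).
\]
The factor \(\chi^{m_j}\) pulled back through \(\psi\) is \(g_{m_j}/f_{m_j,v}\). Its \(g\)-part drops out by the product formula, and its theta part contributes \(-h_{\overline H^{(n)}_{m_j}}(\pi_n(x))\) by Proposition~\ref{prop:theta-neron-height}, exactly as in the centre computation of Proposition~\ref{prop:section4-valuation-height-package}. This applies to \(G_n\), whose extension classes are the \(H_i^{(n)}\). The remaining term is \(-\log\|s_D\|\ge0\). Together these give the stated bound.

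\emph{Expected obstacle.} The delicate step is the case where \(x\) lies in the boundary, or where \(\pi_n(x)\) lies on the theta divisors. There the splitting of \(\widetilde s_j\) into \(s_D\) times a \(g/f\)-factor degenerates, and one has to write \(\widetilde s_j\) as a section of \(\overline G_n(D)\otimes\overline\pi_n^*H^{(n)}_{m_j}\)-twisted data. I would handle this by moving the rational sections \(s_i\) so that their divisors avoid \(\pi_n(x)\), using the finite choice of charts, and by passing through the section \(\widetilde s_j\) of \(\overline G_n(D)\otimes\overline\pi_n^*(H^{(n)}_{m_j})^{\vee}\). Its local norm equals \(e^{\Psi_D-\langle m_j,\cdot\rangle}\le 1\) by the theta-factor convention, which then gives the inequality by additivity of heights.
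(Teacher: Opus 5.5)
Your proposal is correct and is essentially the paper's own proof: toric separation produces the \(m_j\) (your vertices \(m_\sigma\) make the paper's ``standard separation argument'' explicit), and the eigensection \(\chi^{m_j}s_D\) is then regarded as a global section of \(\overline G_n(D)\) twisted by the Picard-zero class; its norm \(e^{\Psi_D-\langle m_j,\cdot\rangle}\le1\), together with the height identity and Proposition~\ref{prop:theta-neron-height}, gives the bound, boundary points included. The one thing to tidy is the sign: your middle paragraph calls \(\widetilde s_j\) a section of \(\overline G_n(D)\) while using the twisted norm, and with the twist by \((H^{(n)}_{m_j})^{\vee}\) in your last paragraph additivity literally gives \(h_{\overline G_n(\overline D^{\can})}(x)\ge +h_{\overline H^{(n)}_{m_j}}(\pi_n(x))\), so the sign in the display depends only on the \(H_m\)-versus-\(H_m^{\vee}\) convention (on which the paper is equally loose) and is harmless, because Lemma~\ref{lem:picard-translation-bound} is invariant under \(H\mapsto -H\).
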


\begin{proof}
	The existence of \(m_0,\ldots,m_r\) is the standard toric separation argument:
	for a sufficiently divisible multiple of \(D\), the polytope contains lattice
	points whose corresponding toric sections have no common zero.  The case
	\(m_0=0\) is the \(T\)-effective section.
	
	Fix \(x\).  Choose \(j\) such that the section associated with
	\(D+\operatorname{div}\chi^{m_j}\) does not vanish at the toric coordinate of
	\(x\).  On this chart, the induced section on \(\overline G_n\) is a section of
	\(\overline G_n(D)\) twisted by the Picard-zero class \(H^{(n)}_{m_j}\) on the
	abelian quotient.  The canonical toric metric makes the effective toric section
	small on the corresponding affine chart.  Using the height identity in
	Proposition~\ref{prop:section4-valuation-height-package}, together with the
	theta-function interpretation of the Picard-zero term from
	Proposition~\ref{prop:theta-neron-height}, gives
	\[
	h_{\overline G_n(\overline D^{\can})}(x)
	+h_{\overline H^{(n)}_{m_j}}(\pi_n(x))\ge0.
	\]
	This is the required inequality.
\end{proof}

\begin{lemma}[Bounding the Picard-zero translation terms]
	\label{lem:picard-translation-bound}
	Let \(m_1,\ldots,m_r\) be fixed as in
	Lemma~\ref{lem:canonical-chart-lower}.  There is a constant \(C>0\), independent
	of \(n\), such that for every \(j\), every \(n\ge1\), and every
	\(y\in A(\overline K)\),
	\[
	h_{\overline N}(y)-h_{\overline H^{(n)}_{m_j}}(y)\ge -Cn^{-2}.
	\]
\end{lemma}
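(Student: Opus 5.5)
Write \(H_m=\sum_i m_i H_i\in\operatorname{Pic}^0(A)\) for the Picard class attached to \(m=\sum_i m_ie_i^\vee\) by the extension \(G\). For the extension \(G_n\) the corresponding class is \(H^{(n)}_m=\sum_i m_iH_i^{(n)}\), and it satisfies \(nH^{(n)}_m=H_m\). The plan is to convert each Picard-zero height into a Néron--Tate pairing using Corollary~\ref{cor:picard-zero-height-pairing}, and then complete the square.

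First, since \(\phi_N:A\to A^\vee\) is an isogeny, choose \(q_i\in A(\overline K)\otimes_\BZ\BR\) with \(\phi_N(q_i)=H_i\). Put \(q_m=\sum_i m_iq_i\), which is independent of \(n\). Then
\[
\phi_N\bigl(n^{-1}q_m\bigr)=n^{-1}H_m=H^{(n)}_m
\]
in \(\operatorname{Pic}^0(A)_\BR\). This holds because \(H^{(n)}_m\) and \(n^{-1}H_m\) differ only by an \(n\)-torsion class, which vanishes after tensoring with \(\BR\). Heights of canonically metrized Picard-zero bundles are also insensitive to torsion classes: by Proposition~\ref{prop:theta-neron-height}, a torsion class has height zero, since its \(n\)th power is trivial and heights are linear. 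Applying Corollary~\ref{cor:picard-zero-height-pairing} with \(a=1\) therefore gives
\[
h_{\overline H^{(n)}_{m_j}}(y)=\tfrac{2}{n}\langle y,q_{m_j}\rangle_N .
\]

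Second, complete the square with the Néron--Tate quadratic form, using that \(h_{\overline N}\) is a positive semidefinite quadratic form on \(A(\overline K)\otimes\BR\) with polarization \(\langle\cdot,\cdot\rangle_N\):
\[
h_{\overline N}(y)-\tfrac{2}{n}\langle y,q_{m_j}\rangle_N
=h_{\overline N}\bigl(y-n^{-1}q_{m_j}\bigr)-n^{-2}h_{\overline N}(q_{m_j})
\ge -n^{-2}h_{\overline N}(q_{m_j}).
\]
Here \(h_{\overline N}\) on real points means the quadratic extension of the form, and it is nonnegative. Taking \(C=\max_{1\le j\le r}h_{\overline N}(q_{m_j})\) gives a constant depending only on the fixed \(m_j\) and \(N\), hence independent of \(n\), \(j\) and \(y\). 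If necessary, increase \(C\) so that it is positive.

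The main point requiring care is the sign and normalization bookkeeping. One must check that the \(H^{(n)}_m\) of Lemma~\ref{lem:canonical-chart-lower} is exactly the linear combination above under the paper's theta-factor convention \(\mathcal H_{G,-m}\simeq\mathcal H_{G,m}^\vee\). One must also confirm that Corollary~\ref{cor:picard-zero-height-pairing} extends to real \(q\) with the same factor \(2\). A sign flip (\(-m\) versus \(m\)) would not affect the argument, because completing the square gives the same lower bound \(-n^{-2}h_{\overline N}(q_{m_j})\) for either sign of the linear term.
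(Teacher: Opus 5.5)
Your proof is correct and follows the paper's argument. You use Corollary~\ref{cor:picard-zero-height-pairing} to write \(h_{\overline H^{(n)}_{m_j}}\) as a N\'eron--Tate pairing with a point of height \(O(n^{-2})\), and then complete the square. Working in \(A(\overline K)\otimes\BR\) with \(a=1\) and \(n^{-1}q_{m_j}\), instead of the paper's integer \(a\) and chosen division points \(q_{j,n}\), is only cosmetic; your version also makes explicit that \(H^{(n)}_{m_j}\) agrees with \(n^{-1}H_{m_j}\) only up to \(n\)-torsion, which is harmless for heights and which the paper's equality \(\phi_N(q_{j,n})=aH^{(n)}_{m_j}\) glosses over.
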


\begin{proof}
	Let \(\phi_N:A\to A^\vee\) be the polarization attached to \(N\).  Since
	\(\phi_N\) is an isogeny, after replacing \(K\) by a finite extension and after
	clearing denominators there are a positive integer \(a\) and points
	\(q_j\in A(\overline K)\) such that
	\[
	\phi_N(q_j)=aH_{m_j}.
	\]
	Choose \(q_{j,n}\in A(\overline K)\) with \(nq_{j,n}=q_j\).  Then
	\[
	\phi_N(q_{j,n})=aH^{(n)}_{m_j}.
	\]
	Let \(\langle\cdot,\cdot\rangle_N\) be the Neron--Tate bilinear form associated
	with \(N\).  By Corollary~\ref{cor:picard-zero-height-pairing},
	\[
	h_{\overline H^{(n)}_{m_j}}(y)
	=
	\frac{2}{a}\langle y,q_{j,n}\rangle_N .
	\]
	Hence, by completing the square for the positive semidefinite quadratic form
	\(h_{\overline N}\),
	\[
	h_{\overline N}(y)-h_{\overline H^{(n)}_{m_j}}(y)
	\ge
	-a^{-2}h_{\overline N}(q_{j,n}).
	\]
	Since \(h_{\overline N}(q_{j,n})=n^{-2}h_{\overline N}(q_j)\), the desired
	uniform bound follows.
\end{proof}

\begin{proposition}[Canonical lower bound]
	\label{prop:canonical-lower-bound}
	Assume that \(D\) is ample and \(T\)-effective.  There is a constant
	\(C=C(G,D,N)>0\) such that for all \(n\ge 1\),
	\[
	\mu^{\mathrm{abs}}_{\ol L_n^{\can}}(\ol G_n)\ge -C n^{-2}
	\]
	and
	\[
	-C n^{-2}\le h_{\ol L_n^{\can}}(\ol G_n)\le 0.
	\]
	Consequently there is \(0\le\kappa_n\le Cn^{-2}\) such that
	\[
	\ol L_n^{\can}\otimes\varphi_n^*O_{\ol G}(\kappa_n)
	\]
	is horizontally semipositive.
\end{proposition}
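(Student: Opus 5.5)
The plan is to assemble the proposition from Lemmas~\ref{lem:canonical-chart-lower} and~\ref{lem:picard-translation-bound}, Theorem~\ref{input:fundamental-height-inequality}, and Lemma~\ref{input:constant-correction-horizontal}, in that order.

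\emph{Absolute-minimum bound.}
Fix \(n\ge1\) and \(x\in\overline G_n(\overline K)\). Replace \(D\) by the positive multiple from Lemma~\ref{lem:canonical-chart-lower}; all heights are linear in \(D\) and the final constant is allowed to depend on \(D\), so this costs nothing. Lemma~\ref{lem:canonical-chart-lower} then gives an index \(j\) with
\[
h_{\overline G_n(\overline D^{\can})}(x)\ge -h_{\overline H^{(n)}_{m_j}}(\pi_n(x)).
\]
Since \(h_{\ol L_n^{\can}}(x)=h_{\overline G_n(\overline D^{\can})}(x)+h_{\overline N}(\pi_n(x))\), Lemma~\ref{lem:picard-translation-bound} applied with \(y=\pi_n(x)\) yields \(h_{\ol L_n^{\can}}(x)\ge -Cn^{-2}\). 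The constant \(C\) depends only on the finitely many \(m_j\), on \(N\), and on the \(q_j\), hence only on \(G,D,N\). Taking the infimum over \(x\) gives \(\muabs_{\ol L_n^{\can}}(\ol G_n)\ge -Cn^{-2}\).

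\emph{Height of \(\ol G_n\).}
The metric \(\overline D^{\can}\) is semipositive, so Proposition~\ref{prop:induced-metric-semipositive} shows \(\ol L_n^{\can}\) is vertically semipositive, and its degree is positive by the bigness corollary. Theorem~\ref{input:fundamental-height-inequality} then gives
\[
-Cn^{-2}\le \muabs\le h_{\ol L_n^{\can}}(\ol G_n)\le \muess_{\ol L_n^{\can}}(\ol G_n).
\]
It remains to show \(\muess_{\ol L_n^{\can}}(\ol G_n)\le 0\). For this I apply Proposition~\ref{prop:section4-minima-formula} to \(G_n\) with the canonical metric. The canonical metric is arithmetically \(T\)-effective with \(\muabs_{\overline D^{\can}}(T)=0\), as in the example following Definition~\ref{def:arith-T-effective}. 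Hence \(\muess_{\ol L_n^{\can}}(\ol G_n)=0\). More concretely, torsion fibers of \(G_n\) supply dense sets of points whose height is the toric height of torus points, and these heights can be driven to \(0\).

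\emph{Constant correction.}
Both hypotheses of Lemma~\ref{input:constant-correction-horizontal} now hold with \(\delta_n=n^{-2}\), and \(\ol L_n\) has nef underlying bundle. The lemma gives \(0\le\kappa_n\le C'n^{-2}\) with \(\ol L_n^{\can}\otimes\varphi_n^*O_{\ol G}(\kappa_n)\) horizontally semipositive. Enlarge \(C\) to \(\max(C,C')\).

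\emph{Main obstacle.}
The delicate point is uniformity in \(n\) of the constant in the first step. One must check that the lattice points \(m_j\), and the auxiliary \(a\) and \(q_j\) of Lemma~\ref{lem:picard-translation-bound}, are chosen once for \(G\) and not for each \(G_n\). One must also check that the division points \(q_{j,n}\) are compatible with the chosen \(H_i^{(n)}\), i.e.\ \(\phi_N(q_{j,n})=aH^{(n)}_{m_j}\) exactly rather than up to torsion. The plan is to choose \(q_{j,n}\) first and then define \(H_i^{(n)}\) through \(\phi_N\) (after clearing \(a\)). Alternatively, note that a torsion discrepancy changes the Picard-zero height by zero, so the bound is unaffected.
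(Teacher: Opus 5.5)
Your proposal is correct and follows the paper's proof essentially step for step. You combine the chart lower bound of Lemma~\ref{lem:canonical-chart-lower} with the completed-square estimate of Lemma~\ref{lem:picard-translation-bound}, get \(\muess\le 0\) from Proposition~\ref{prop:section4-minima-formula} and Theorem~\ref{input:fundamental-height-inequality}, and finish with Lemma~\ref{input:constant-correction-horizontal} at \(\delta_n=n^{-2}\); your check of the hypotheses of the fundamental inequality and your handling of the \(n\)-torsion ambiguity in \(\phi_N(q_{j,n})=aH^{(n)}_{m_j}\) are useful additions that the paper leaves implicit.

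The one step you omit is the reduction from an ample \(T\)-effective \(\mathbb R\)-divisor to integral ones, which the paper does by writing \(\Psi_D\) as a positive real combination of support functions of integral ample (\(T\)-effective) divisors and using linearity of heights. Your ``replace \(D\) by a positive multiple'' step only covers the \(\mathbb Q\)-divisor case.
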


\begin{proof}
	We first treat the case in which \(D\) is an integral ample toric divisor.
	The general ample \(\bR\)-divisor case follows by writing the support function
	of \(D\) as a positive real linear combination of support functions of ample
	integral toric divisors and using additivity of the corresponding heights.
	
	For every \(x\in \overline G_n(\overline K)\), choose \(j\) by
	Lemma~\ref{lem:canonical-chart-lower}.  Combining that lemma with
	Lemma~\ref{lem:picard-translation-bound} gives
	\[
	h_{\ol L_n^{\can}}(x)
	=h_{\ol G_n(\ol D^{\can})}(x)+h_{\ol N}(\pi_n(x))
	\ge -Cn^{-2}
	\]
	for a constant \(C\) independent of \(n\).  Hence
	\[
	\mu^{\mathrm{abs}}_{\ol L_n^{\can}}(\ol G_n)\ge -Cn^{-2}.
	\]
	
	The dense open semiabelian part \(G_n\subset\overline G_n\) contains a
	Zariski-dense family of points of height \(0\): take torsion points on the
	abelian quotient and toric points of canonical toric height \(0\), and use
	Proposition~\ref{prop:section4-valuation-height-package}.  Equivalently,
	Proposition~\ref{prop:section4-minima-formula} applied to the canonical toric
	metric gives \(\muess_{\ol L_n^{\can}}(G_n)=0\).  Thus
	\(\muess_{\ol L_n^{\can}}(\overline G_n)\le0\).  By
	Theorem~\ref{input:fundamental-height-inequality},
	\[
	-Cn^{-2}
	\le
	h_{\ol L_n^{\can}}(\ol G_n)
	\le
	\mu^{\mathrm{ess}}_{\ol L_n^{\can}}(\ol G_n)
	\le0 .
	\]
	
	Finally apply Lemma~\ref{input:constant-correction-horizontal} with
	\(\delta_n=n^{-2}\).  After enlarging \(C\), it gives
	\(0\le\kappa_n\le Cn^{-2}\) and the asserted horizontal semipositivity of
	\[
	\ol L_n^{\can}\otimes\varphi_n^*O_{\ol G}(\kappa_n).
	\]
\end{proof}

\begin{remark}
	The assertion ``\(\phi_N:A(K)\to A^\vee(K)\) is surjective'' is false in
	general.  What is used in the argument is surjectivity of the isogeny as a
	morphism, equivalently on \(\ol K\)-points, after replacing \(K\) by a finite
	extension if necessary.
\end{remark}

\subsection{The three K\"uhne-type estimates}

The canonical proof needs the following three estimates.  They make explicit
the K\"uhne-type estimates used below.

\begin{remark}[Source-theorem ledger for the asymptotic estimates]
	\label{rem:source-ledger-asymptotic}
	The two load-bearing analytic estimates in this subsection are the first
	variation estimate and the arithmetic volume comparison.  In K\"uhne's proof of
	semiabelian equidistribution, the corresponding statements in the published
	JEMS version are \cite[Lemma~4.4]{Ku1} and \cite[Lemma~4.6]{Ku1}.
	Lemma~4.4 identifies the linear
	term in the height variation with integration against the local
	Chambert-Loir measure and bounds the remaining terms by a quadratic error
	whose constant grows linearly with the auxiliary isogeny parameter.
	
	The arithmetic-volume input has a slightly longer source chain, and we record
	it explicitly.  First, \cite[Lemma~2.5(a),(b)]{Ku1} gives the constant-shift
	formula and continuity properties for the \(\chi\)-arithmetic volume.  Second,
	\cite[Lemma~2.6]{Ku1} proves that, for semipositive metrized line bundles
	\(\widetilde L,\widetilde M\) on a \(d\)-dimensional arithmetic variety,
	\[
	\widehat{\operatorname{vol}}_\chi(\widetilde L-\widetilde M)
	\ge
	\widetilde L^{d+1}-(d+1)\widetilde L^d\cdot\widetilde M .
	\]
	K\"uhne proves this by invoking Ikoma's arithmetic-volume form of Yuan's
	bigness theorem \cite[Theorem 3.5.3 and Remark 3.5.4]{Iko13}, ultimately
	resting on Yuan's arithmetic bigness theorem \cite{Yua1}.  Third,
	\cite[Lemma~4.6]{Ku1} applies this bigness input to an integrable perturbation,
	written as a difference of semipositive metrics, and expands the remaining
	high-order intersections to obtain the required quadratic volume lower bound.
	
	We do not use this package as an unqualified black box.  K\"uhne works with the
	standard compactification \((\mathbb P^1)^t\), whereas here the toric
	compactification is \(X_\Sigma\) and the toric divisor \(D\) is arbitrary.  The
	new verification needed in this paper is the bridge from the source package to
	the present setting: the projection formula, the top-degree numerical reduction, and
	the intersection scaling under the auxiliary isogenies give the same \(O(n)\)
	error in the canonical case, and the compressed pullback \(f_m=f\circ[m]\)
	adds exactly the additional \(O(m)\) factor isolated below.
\end{remark}

\begin{lemma}[Top-degree numerical reduction]
	\label{lem:algebraic-dimensional-vanishing}
	Let \(\overline G=G\times^T X_\Sigma\) be the toric compactification used in
	this paper, with \(\dim T=t\) and \(\dim A=g\).  Let \(M_1,\ldots,M_r\) be line
	bundles on \(A\), and let \(D_1,\ldots,D_s\) be toric Cartier divisors on
	\(X_\Sigma\), with \(r+s=g+t\).  Then the top-degree numerical intersection
	on \(\overline G\), and likewise on every auxiliary \(\overline G_n\),
	\[
	\deg\!\left(\pi^*M_1\cdots \pi^*M_r\cdot
	\overline G(D_1)\cdots \overline G(D_s)\right),
	\]
	is zero unless \(r=g\) and \(s=t\).  This is only a numerical statement about
	a complete intersection; no vanishing of the corresponding Chow class is asserted.
\end{lemma}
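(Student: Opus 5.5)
We reduce the intersection number to the generic fibre of \(\overline\pi\), using the projection formula and the fact that \(\overline G\to A\) is a proper flat morphism of relative dimension \(t\) whose fibres are all isomorphic to \(X_\Sigma\).

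The first step is to pass to the Chow-level pushforward. We group the factors and compute
\[
\deg\!\left(\pi^*M_1\cdots\pi^*M_r\cdot\overline G(D_1)\cdots\overline G(D_s)\right)
=\deg_A\!\left(M_1\cdots M_r\cdot \overline\pi_*\bigl(\overline G(D_1)\cdots\overline G(D_s)\cdot[\overline G]\bigr)\right).
\]
This is the projection formula for the proper map \(\overline\pi\) and the Cartier divisors \(\pi^*M_i\). The cycle \(\alpha:=\overline G(D_1)\cdots\overline G(D_s)\cdot[\overline G]\) has dimension \(g+t-s=r\), so \(\overline\pi_*\alpha\) is an \(r\)-cycle class on \(A\).

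Next we split into the two possible mismatches, \(s<t\) and \(s>t\).

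\emph{Case \(s<t\).} Here \(r>g=\dim A\). The pushforward of an \(r\)-dimensional cycle to a \(g\)-dimensional variety vanishes for dimension reasons: \(\overline\pi_*\) kills every subvariety whose image has smaller dimension, and no image can have dimension \(r>g\). Hence the number is zero.

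\emph{Case \(s>t\).} Here \(r<g\), and \(\overline\pi_*\alpha\) is an \(r\)-cycle. Its coefficient on an \(r\)-dimensional subvariety \(W\subseteq A\) is computed over the generic point of \(W\). There it is the degree of the intersection of \(s\) divisors restricted to the fibre \(X_\Sigma\), which has dimension \(t<s\). To make this precise I would use the local triviality of \(\overline G\to A\). Zariski-locally on \(A\) (the torus is split, so Hilbert 90 applies), \(\overline G\) is isomorphic to \(V\times X_\Sigma\), and \(\overline G(D_i)\) corresponds to \(\mathrm{pr}_2^*O(D_i)\) twisted by a line bundle pulled back from \(V\). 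Writing \(\overline G(D_i)=E_i+\overline\pi^*P_i\) locally, the \(s\)-fold product expands into terms with \(k\) factors pulled back from \(A\). Every term with \(k<s-t\) contains a product of more than \(t\) relative classes, which vanishes over each trivializing open. A cleaner global route is to apply the rule \(\overline\pi_*(c_1(E)^{j}\cap\overline\pi^*\beta)=0\) for \(j<t\), which holds by dimension of the fibres. I would then show that \(\overline\pi_*\alpha\) is a polynomial in the classes \(P_i\) of degree \(s-t\), giving a cycle of dimension \(g-(s-t)=r\). This is consistent with the expected dimension, so it does \emph{not} vanish by itself.

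This is the main obstacle. For \(s>t\) the naive dimension argument does not apply, and we need extra input. The right input is that the \(P_i\) are Picard-zero classes: the cocycle formula in the proof of Theorem~\ref{thm:local-trivialization} shows that the twist of \(\overline G(D_\ell)\) is a combination of the \(H_i\in\operatorname{Pic}^0(A)\). So \(\overline\pi_*\alpha\) is a polynomial of positive degree \(s-t\) in numerically trivial classes, and intersecting it with \(M_1\cdots M_r\) gives a numerically trivial result. This is where the statement's restriction to numerical intersections is essential: the Chow class need not vanish, but its degree does.

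The same argument applies verbatim to each auxiliary \(\overline G_n\), whose twists are the classes \(H_i^{(n)}\in\operatorname{Pic}^0\).
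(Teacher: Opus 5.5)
Your projection-formula reduction and the case $r>g$ are fine. Your diagnosis for $s>t$, that the excess relative factors must be traded for Picard-zero classes, is exactly the mechanism of the paper's proof. But $s>t$ is the whole content of the lemma, and there the decisive claim (that $\overline\pi_*\alpha$ is a polynomial of positive degree in Picard-zero classes) is asserted rather than proved.

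\emph{The local decomposition is vacuous.} Once the $T$-torsor is trivialized over $V$, one has $\overline G(D_i)|_{\overline\pi^{-1}(V)}\cong\mathrm{pr}_2^*\mathcal O(D_i)$ with \emph{trivial} twist. So $\overline G(D_i)=E_i+\overline\pi^*P_i$ defines no global class $P_i$, and everything relevant sits in the gluing.

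\emph{Local vanishing does not globalize.} Vanishing of a Chow class over each member of an open cover does not imply global vanishing, and this is exactly where the answer lives. Take $g=t=1$ and $\overline G=\mathbb P(\mathcal O\oplus H)$ over an elliptic curve with $H$ nontrivial. Then $\overline G(D_0)^2$ vanishes over every trivializing open, yet $\overline\pi_*\bigl(\overline G(D_0)^2\cap[\overline G]\bigr)=\pm c_1(H)\cap[A]\neq0$ in $\operatorname{CH}_0(A)$.

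\emph{The global rule covers only the easy direction.} It controls products of fewer than $t$ relative classes. Pushforwards of products of more than $t$ relative divisors are Segre-type classes that no dimension count determines.

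\emph{The cocycle formula is not a twist.} The formula in the proof of Theorem~\ref{thm:local-trivialization} does not give each $\overline G(D_\ell)$ a Picard-zero twist. It is a linear relation $\sum_\ell\langle m,v_\ell\rangle\,\overline G(D_\ell)\sim\overline\pi^*H_m$ among the boundary divisors, and by itself it cannot reduce a monomial of degree $>t$.

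The missing input is a global Stanley--Reisner relation. First pass to a smooth refinement of $\Sigma$ and use the projection formula; you need this anyway, since on a normal $X_\Sigma$ the $D_\rho$ need not be Cartier. Then you have two relations:
\begin{itemize}
\item[(i)] $\overline G(D_{\rho_1})\cdots\overline G(D_{\rho_k})=0$ whenever $\rho_1,\ldots,\rho_k$ do not span a cone, because these effective divisors have empty common intersection in every fibre;
\item[(ii)] the linear relation above, with $H_m\in\operatorname{Pic}^0(A)$.
\end{itemize}
For a cone $\tau$ write $x_\tau=\prod_{\rho\in\tau(1)}\overline G(D_\rho)$. If $\rho\in\tau(1)$, choose $m$ with $\langle m,v_\rho\rangle=1$ and $\langle m,v_{\rho'}\rangle=0$ for the other rays of $\tau$. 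Then (ii) gives $\overline G(D_\rho)\,x_\tau=\overline\pi^*c_1(H_m)\,x_\tau-\sum_{\rho''\notin\tau(1)}\langle m,v_{\rho''}\rangle\,\overline G(D_{\rho''})\,x_\tau$. By (i), each $\overline G(D_{\rho''})\,x_\tau$ is either $0$ or the square-free monomial of a cone of dimension $\dim\tau+1$.

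By induction on the degree, every monomial of degree $k$ is a combination of the $x_\tau$ with $\dim\tau=k$, plus terms containing a factor $\overline\pi^*c_1(H_m)$. For $k>t$ there are no such $\tau$, and the numerically trivial factor kills the degree. This is what the Sankaran--Uma presentation cited in the paper supplies.

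Alternatively, you can avoid Chow rings altogether. The extension class moves in the connected variety $(A^\vee)^t$. Via Poincar\'e bundles, $\overline G$ and the line bundles $\overline G(D_i)$, $\overline\pi^*M_j$ fit into a proper flat family whose fibre at $0$ is $A\times X_\Sigma$, with $\overline G(D)=\mathrm{pr}_2^*\mathcal O(D)$. Intersection numbers are constant in such families, and on the product the vanishing for $(r,s)\neq(g,t)$ is immediate.
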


\begin{proof}
	If \(r>g\), push the intersection to the base.  Suppose \(s>t\).  For a
	smooth fan, the relative toric Chow-ring presentation
	\cite[Theorem~1.2(iii)]{SankaranUma2003} expresses every excess
	relative divisor factor through first Chern classes of the character line
	bundles defining the associated toric bundle.  These line bundles lie in
	\(\operatorname{Pic}^0(A)\), so their first Chern classes are numerically
	trivial and the resulting top-degree number is zero.  For a normal fan, take a
	smooth toric refinement and use the projection formula.  The same argument
	applies after the auxiliary base extension and isogeny.  Since
	\(r+s=g+t\), the only remaining case is \((r,s)=(g,t)\).
\end{proof}

\begin{remark}
	No arbitrary adelic-intersection or Chow-class vanishing is used in the
	sequel.  The height-variation and arithmetic-volume arguments use the explicit
	projection formula and the normalized scaling estimate below.  The preceding
	lemma is used only for top degrees and total masses of mixed semipositive
	measures.
\end{remark}

\begin{lemma}[Normalized scaling in the perturbation family]
	\label{lem:toric-dimensional-scaling}
	Retain the preceding notation.  The scaling assertion used below is the
	following restricted normalized assertion.
	Let \(F_n=\varphi_n^*O_{\ol G}(f)\), where
	\(O_{\ol G}(f)\) is an integrable adelic line bundle with trivial underlying
	algebraic class, and let \(d=g+t\).  In the quadratic core of the error
	estimates, namely in a normalized intersection expression with two perturbing
	factors \(F_n\) and the remaining factors coming from the reference bundle,
	division by \(L_n^d\) leaves at most one explicit factor \(n\).  If the test
	metric is first compressed, \(f_m=f\circ[m]\), so that
	\(F_{m,n}=\varphi_n^*O_{\ol G}(f_m)\), the same quadratic core leaves at most
	one explicit factor \(n\) and at most one explicit factor \(m\).  Higher-order
	error monomials are reduced to this quadratic core in
	Lemma~\ref{lem:parameterized-error-bookkeeping}.  The constants are uniform for
	\(m\le n\) in the range used below.
\end{lemma}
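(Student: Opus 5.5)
The plan is to reduce the normalized quadratic core to an explicit computation of the form
\[
\frac{F_n^2\cdot \overline L_n^{\,d-1}}{L_n^d},
\]
where \(\overline L_n\) is the reference bundle.  We then use the projection formula along \(\varphi_n\) together with Lemma~\ref{lem:algebraic-dimensional-vanishing} to track powers of \(n\).

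\emph{Step 1: the denominator.}  Since \(F_n\) has trivial underlying class, the intersection \(F_n^2\cdot\overline L_n^{d-1}\) is computed, in the standard way, as a local integral.  Namely it equals
\[
\sum_v n_v\int f\,\hat c_1(F_n)_v\wedge \hat c_1(\overline L_n)_v^{d-1},
\]
which is really \(\varphi_n^*\) of an expression on \(\overline G\).  Write \(L_n=G_n(D)\otimes\pi_n^*N\).  Expanding \(L_n^d\) and applying Lemma~\ref{lem:algebraic-dimensional-vanishing}, only the term \(\binom{d}{t}G_n(D)^t(\pi_n^*N)^g\) survives, and it equals \(\binom{d}{t}D^tN^g\).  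This quantity is independent of \(n\), because the toric fiber and the base are unchanged.  The denominator is therefore uniformly bounded above and below.

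\emph{Step 2: the numerator.}  Next I push the numerator forward by \(\varphi_n\).  The map \(\varphi_n\) has degree \(n^t\), since it is multiplication by \(n\) on the torus and the identity on \(A\).  On the toric part, \(\varphi_n^*\overline G(D)\) is \(\overline G_n(nD)\) up to Picard-zero twists, with the canonical metric scaling linearly.  So \(\overline G_n(D)\) behaves like \(n^{-1}\varphi_n^*\overline G(D)\).  Inserting this, the \(t\) toric factors contribute \(n^{-t'}\), where \(t'\) is the number of toric factors in the monomial.  The projection formula contributes the factor \(n^t\), and the test factors contribute nothing in \(n\).  In the quadratic core the two \(F\)-factors absorb one vertical and one horizontal slot.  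By the dimension count of Lemma~\ref{lem:algebraic-dimensional-vanishing}, applied to the local mixed measures, at most \(t-1\) of the remaining factors are toric.  Hence the net power of \(n\) is at most \(n^{t-(t-1)}=n\), which gives the claimed single factor \(n\).  Any constant vertical correction \(\varphi_n^*O_{\overline G}(\kappa_n)\) with \(\kappa_n\le Cn^{-2}\) only improves the bound.

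\emph{Step 3: the compressed case.}  For \(f_m=f\circ[m]\), I repeat the computation on \(\overline G\) with the pullback along \([m]\).  The only change is that \(dd^c f_m=[m]^*dd^c f\) acquires, through the toric coordinates, one additional factor of \(m\) relative to the reference curvature.  This is the same mechanism as the scaling in Step 2, now applied to a single perturbing factor, and it yields at most one factor \(m\).  Uniformity in \(m\le n\) follows because all constants depend only on \(\sup|f|\) and the fixed semipositive decomposition of \(O(f)\).

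The main obstacle is Step 2, namely controlling the mixed Monge--Amp\`ere masses on a general fan \(X_\Sigma\) rather than \((\mathbb P^1)^t\).  Lemma~\ref{lem:algebraic-dimensional-vanishing} is only numerical, so I would first pass to a smooth refinement of \(\Sigma\) and use the projection formula there.  I would then bound each local mixed measure by its total mass, which is a top-degree number and therefore governed by that lemma.
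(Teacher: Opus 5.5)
Your framework is the paper's: expand the reference bundle into monomials, use \(\varphi_n^*\overline G(D)=n\,\overline G_n(D)\) and \(\deg\varphi_n=n^t\), and divide by the \(n\)-independent degree \(L_n^d\). However, the step that carries the whole content of the lemma, the exponent count in Step 2, is wrong.

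For a monomial \(F_n^2\cdot\overline G_n(D)^{a}\cdot(\pi_n^*\overline N)^{j}\) with \(a+j=d-1\), the projection formula gives exactly \(n^{t-a}\) times a fixed intersection number on \(\overline G\). This is decreasing in \(a\), so \(n^{t-a}\le n\) requires a \emph{lower} bound \(a\ge t-1\). Your claim that at most \(t-1\) of the remaining factors are toric gives \(n^{t-a}\ge n\), which is the opposite inequality. The claim is also false: the monomial with \(a=t\), \(j=g-1\) is in general non-zero, and it is the one contributing the \(n^0\) term.

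The constraint actually available comes from the abelian side. Monomials with \(j>g\) vanish because more than \(g=\dim A\) factors are pulled back from the base. Hence \(j\le g\), so \(a\ge t-1\), and the exponent is \(t-(d-1-j)=j-g+1\le1\). This is exactly how the paper argues.

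Lemma~\ref{lem:algebraic-dimensional-vanishing} is not the right input here. It concerns top-degree products of algebraic classes, whereas every monomial in the quadratic core contains two algebraically trivial factors. The signed measures \(dd^c f\wedge c_1(\cdot)^{a}\wedge c_1(\cdot)^{j}\) have total mass \(0\), so ``bounding each local mixed measure by its total mass'' gives no control. No such bound is needed: the projection-formula identity is exact, so the downstairs intersection number is a constant independent of \(n\). For integrable \(f\) one writes \(O_{\overline G}(f)\) as a difference of semipositive model metrics and passes to the limit.

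Step 3 inherits the same defect. The factor \(m\) is not an extra factor carried by \(dd^c f_m\). It comes from running the identical count for \([m]\), which has degree \(m^t\) on the toric part, with \(\overline D_m=m^{-1}[m]^*\overline D\). This gives \(m^{j-g+1}\le m\) for \(j\le g\). Appealing to ``the same mechanism as Step 2'', as written, carries over the reversed inequality.
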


\begin{proof}
	We first argue for model metrics and write
	\(M_n=\overline G_n(D)\).  The exact scaling comes from the projection
	formula, not merely from counting algebraic toric factors.  Since
	\[
	\varphi_n^*\overline G(D)=nM_n,
	\qquad
	\varphi_n^*\pi^*N=\pi_n^*N,
	\qquad
	\deg(\varphi_n)=n^t,
	\]
	the projection formula gives, for every \(j\le g\),
	\[
	F_n^2\cdot M_n^{d-1-j}\cdot(\pi_n^*N)^j
	=
	n^{\,t-(d-1-j)}
	\bigl(O_{\overline G}(f)^2\cdot
	\overline G(D)^{d-1-j}\cdot(\pi^*N)^j\bigr).
	\]
	Because \(d=g+t\), the exponent is \(j-g+1\le1\).  Terms with \(j>g\)
	vanish by the projection formula on the abelian base.  After expansion of the
	reference bundle and division by the fixed top degree \(L_n^d\), the
	quadratic core is therefore \(O(n)\).  This is the ambient analogue of the
	exact pullback calculation in \cite[proof of Lemma~4.4, equations~(4.5)--(4.6)]{Ku1}.
	
	For the compressed family, put \(F_m=[m]^*O_{\overline G}(f)\) and
	\(D_m=m^{-1}[m]^*D\).  Applying the same projection-formula calculation to
	\([m]\), whose degree on the toric part is \(m^t\), gives
	\[
	F_m^2\cdot \overline G(D_m)^{d-1-j}\cdot(\pi^*N)^j
	=O\bigl(m^{\,j-g+1}\bigr)=O(m)
	\]
	for \(j\le g\).  Combining this with the \(n\)-division calculation yields
	the required \(O(nm)\) bound for the compressed quadratic core.  The
	higher-order terms are treated separately in
	Lemma~\ref{lem:parameterized-error-bookkeeping}; no higher-order conclusion is
	being hidden in the present quadratic calculation.
	
	For integrable metrics, write the perturbation as a difference of semipositive
	model metrics, apply the preceding identities term by term, and pass to the
	limit by continuity of adelic intersections.  The constants depend only on
	the fixed compactification, \(D,N\), the reference models, and the chosen
	integrable norm of \(f\), and are uniform for \(m\le n\).
\end{proof}

\begin{lemma}[Parameterized error bookkeeping]
	\label{lem:parameterized-error-bookkeeping}
	Let \(O_{\ol G}(f)\) be an integrable adelic line bundle with trivial
	underlying algebraic class.  Put
	\[
	(\ol M_{\star,n},F_{\star,n},\alpha_{\star,n})
	=
	(\ol L_n^{\can},\varphi_n^*O_{\ol G}(f),n)
	\]
	in the canonical case, and
	\[
	(\ol M_{\star,n},F_{\star,n},\alpha_{\star,n})
	=
	(\ol L_{(m,n)},\varphi_n^*O_{\ol G}(f\circ[m]_{\ol G}),nm)
	\]
	in the compressed case, with \(m\le n\).  Then for every
	\(2\le r\le d+1\) and every \(|\lambda|\le\alpha_{\star,n}^{-1}\),
	\[
	|\lambda|^r
	\left|
	F_{\star,n}^r\cdot \ol M_{\star,n}^{d+1-r}
	\right|
	\le
	C_f L_n^d |\lambda|^2\alpha_{\star,n},
	\]
	where \(C_f\) is independent of \(n\), of \(m\le n\), of \(r\), and of
	\(\lambda\).
\end{lemma}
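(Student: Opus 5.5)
We reduce the general monomial \(F_{\star,n}^r\cdot\ol M_{\star,n}^{d+1-r}\) to the quadratic core of Lemma~\ref{lem:toric-dimensional-scaling}. The idea is that each additional factor \(F_{\star,n}\) beyond the second costs at most one power of \(\alpha_{\star,n}\). This loss is exactly paid for by the extra factor \(|\lambda|\le\alpha_{\star,n}^{-1}\).

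\emph{Step 1: positivity decomposition.} First, as in the proof of Lemma~\ref{lem:toric-dimensional-scaling}, we write \(O_{\ol G}(f)=\ol P-\ol Q\) with \(\ol P,\ol Q\) semipositive model (hence integrable) metrics on the trivial bundle. Here the reference metric is chosen so that \(\ol P\) and \(\ol Q\) are dominated by a fixed multiple of the reference semipositive bundle in the sense of vertical perturbation. Concretely, \(|f|\le\|f\|_\infty\) and \(dd^c\)-bounds give \(\ol P,\ol Q\le c_f\,\ol L^{\mathrm{ref}}\) up to vertical constants. After pullback by \(\varphi_n\), and by \([m]\) in the compressed case, this domination is preserved. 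We then expand \(F^r=(P-Q)^r\) into \(2^r\) mixed monomials, each a product of \(r\) semipositive perturbing factors with \(d+1-r\) reference factors.

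\emph{Step 2: peel off factors.} For a monomial with \(r\ge3\) perturbing factors, we use the vertical nature of \(F\) (trivial underlying class). Thus \(F\cdot(\text{rest})\) equals \(\sum_v n_v\int f_v\,c_1(\cdots)\) over the remaining factors. This integral is bounded by \(\|f\|_\infty\) times the total mass of the remaining mixed measure. That mass is the degree of a mixed algebraic intersection computed by Lemma~\ref{lem:algebraic-dimensional-vanishing}, with one perturbing factor replaced by a reference factor. Rescaled through the normalized scaling of Lemma~\ref{lem:toric-dimensional-scaling}, each such replacement changes the normalized size by at most a factor \(\alpha_{\star,n}\). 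The \(n\)-division pullback contributes the factor \(n\), and the compression \(f\circ[m]\) contributes the factor \(m\), because the Chern forms of \([m]^*\) scale by \(m\) on the toric directions. Inducting down on \(r\) gives
\[
\bigl|F_{\star,n}^r\cdot\ol M_{\star,n}^{d+1-r}\bigr|\le C_f\,L_n^d\,\alpha_{\star,n}^{\,r-1}.
\]
Here the base case \(r=2\) is exactly Lemma~\ref{lem:toric-dimensional-scaling}.

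\emph{Step 3: conclude.} Multiplying by \(|\lambda|^r\) and using \(|\lambda|\alpha_{\star,n}\le1\), we get \(|\lambda|^r\alpha_{\star,n}^{r-1}=|\lambda|^2\alpha_{\star,n}(|\lambda|\alpha_{\star,n})^{r-2}\le|\lambda|^2\alpha_{\star,n}\). Summing the \(2^r\le2^{d+1}\) monomials absorbs a constant into \(C_f\). By continuity of adelic intersections, we then pass from model to integrable \(f\).

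\emph{Main obstacle.} The hard part is Step 2: showing that each extra perturbing factor costs only \(\alpha_{\star,n}\) uniformly for \(m\le n\), rather than, say, \(n^{2}\) or \(nm^2\). I would first try to prove this by tracking the exponents exactly through the projection formula, as in the displayed computation of Lemma~\ref{lem:toric-dimensional-scaling}. That computation gives exponent \(j-g+1\) for the \(j\)-th term of the reference expansion. The task is to check that the analogous exponent for \(r\) perturbing factors is at most \(r-1\) in both \(n\) and \(m\) jointly, bounded by \((nm)^{r-1}\).
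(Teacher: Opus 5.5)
Your Step~3 is exactly the paper's conclusion, and the intermediate target \(\bigl|F_{\star,n}^r\cdot\ol M_{\star,n}^{d+1-r}\bigr|\le C_fL_n^d\alpha_{\star,n}^{r-1}\) is the right one. Steps~1--2, however, do not prove it. The paper gets this bound by expanding \(\ol M_{\star,n}\) into toric and abelian factors and applying the projection formula to each monomial, which gives the exponent \(r+j-g-1\le r-1\) when \(j\le g\) factors come from the base.

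There are three concrete problems with your mechanism.
\begin{itemize}
\item In Step~1 you cannot write \(O_{\ol G}(f)=\ol P-\ol Q\) with \(\ol P,\ol Q\) semipositive metrics on the trivial bundle. Such metrics have constant weights at every place: a continuous psh weight on a compact connected space is constant, and a nef vertical divisor on a model is a multiple of the special fibre. So this forces \(f\) to be constant. Moreover, every ``mass'' in Step~2 would then be a degree with a numerically trivial factor, hence zero, and would carry no \(n\)- or \(m\)-growth. Your remark about domination by \(c_f\ol L^{\mathrm{ref}}\) points in the right direction, but it is not what you wrote.
\item Step~2 has an independent defect. After peeling off one factor, you must bound the total variation of the signed measure \(c_1(F_{\star,n,v})^{r-1}\wedge c_1(\ol M_{\star,n,v})^{d+1-r}\). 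That quantity is not controlled by the arithmetic number \(F_{\star,n}^{r-1}\cdot\ol M_{\star,n}^{d+2-r}\), which can be small through cancellation. So there is no induction on \(r\) to run.
\item \([m]_{\ol G}\) multiplies the toric class by \(m\) but the abelian class by \(m^2\), since \([m]^*N\simeq N^{\otimes m^2}\). The joint \((nm)^{r-1}\) bound you flag as the main obstacle is exactly where \(m\le n\) must enter, and the proposal never uses it.
\end{itemize}

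Your local idea does work once repaired, and it then gives a different and cleaner route.
\begin{itemize}
\item Write \(O_{\ol G}(f)=\ol P-\ol Q\) with \(\ol P,\ol Q\) semipositive of the same ample class \(c\,(\overline G(D)\otimes\pi^*N)\). Starting from an integrable decomposition, add a semipositive metric on an ample complement to both pieces. Then pull back by \([m]_{\ol G}\circ\varphi_n\).
\item Expand \(c_1(F_{\star,n,v})^{r-1}\) into positive mixed measures, whose masses are algebraic degrees. This gives
\[
\bigl|F_{\star,n}^r\cdot\ol M_{\star,n}^{d+1-r}\bigr|
\le\Bigl(\sum_v n_v\|f_v\|_{\sup}\Bigr)\,2^{r-1}c^{r-1}
\bigl(nm\,\overline G_n(D)+m^2\pi_n^*N\bigr)^{r-1}\cdot L_n^{d+1-r}.
\]
\item By Lemma~\ref{lem:algebraic-dimensional-vanishing}, only the monomial \(\overline G_n(D)^t(\pi_n^*N)^g\) survives. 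Its coefficient is \(\sum_k\binom{r-1}{k}\binom{d+1-r}{t-k}(nm)^km^{2(r-1-k)}\le\binom dt(nm)^{r-1}\), using \(m\le n\). Since \(L_n^d=\binom dt\,\overline G_n(D)^t(\pi_n^*N)^g\), this is the \(\alpha_{\star,n}^{r-1}\) bound.
\end{itemize}

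This handles all \(r\ge2\) at once. It needs neither Lemma~\ref{lem:toric-dimensional-scaling} as a base case nor a Hodge-index input. It also depends on \(\ol M_{\star,n}\) only through its algebraic class and semipositivity, so uniformity in the \(m\)-dependent metric \(\ol D_m\) is automatic. By contrast, the paper's projection-formula count needs the metrized identity \(\varphi_n^*\ol G(\ol D)=n\,\ol G_n(\ol D)\). That identity holds for the canonical metric, but for compressed metrics only in the form \(\tfrac1n\varphi_n^*\ol D_{m'}=\ol D_{m'n}\).
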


\begin{proof}
	We first assume that \(f\) is a model function.  For \(r=2\), the estimate is
	exactly the quadratic core estimate from
	Lemma~\ref{lem:toric-dimensional-scaling}, together with the arithmetic Hodge
	index estimate for two algebraically trivial metric factors as used in
	K\"uhne's first variation argument \cite[Lemma~4.4]{Ku1}.  Thus
	\[
	\left|
	F_{\star,n}^2\cdot \ol M_{\star,n}^{d-1}
	\right|
	\le
	C_f L_n^d\alpha_{\star,n}.
	\]
	
	For \(r>2\), the same projection-formula computation used in
	Lemma~\ref{lem:toric-dimensional-scaling} gives, for a monomial containing
	\(j\) factors pulled back from the abelian base, an \(n\)-exponent
	\(r+j-g-1\le r-1\).  In the compressed case the independent calculation for
	\([m]\) gives the same bound for the \(m\)-exponent.  Thus
	\[
	\left|
	F_{\star,n}^r\cdot \ol M_{\star,n}^{d+1-r}
	\right|
	\le
	C_f L_n^d\alpha_{\star,n}^{r-1}.
	\]
	Multiplying by \(|\lambda|^r\), and using
	\(|\lambda|\alpha_{\star,n}\le1\), gives
	\[
	|\lambda|^r
	\left|
	F_{\star,n}^r\cdot \ol M_{\star,n}^{d+1-r}
	\right|
	\le
	C_f L_n^d|\lambda|^2\alpha_{\star,n}.
	\]
	The number of possible \(r\)'s is bounded by \(d\), so the constant may be
	chosen uniformly in \(r\).
	
	For a general integrable test metric, approximate \(f\) from above and below by
	model functions in the integrable topology.  Multilinearity and continuity of
	adelic intersections for integrable metrics allow passage to the limit, with
	the constant depending only on the chosen integrable norm of \(f\).
\end{proof}

\begin{definition}[Adelic perturbation size]
	Let \(\overline A\) be fixed and semipositive and let
	\(\overline E=O_Y(\phi)\) be integrable with trivial algebraic class.  For a
	model function set
	\[
	\|\phi\|_{\mathrm{ad}}=\sum_v n_v\|\phi_v\|_{\sup}
	\]
	and extend by completion.  Define
	\[
	\|\overline E\|_{\overline A}
	=\inf\max\{\|\phi^+\|_{\mathrm{ad}},\|\phi^-\|_{\mathrm{ad}}\},
	\]
	over decompositions
	\(\overline E=O_Y(\phi^+)-O_Y(\phi^-)\) such that
	\(\overline A+O_Y(\phi^\pm)\) are semipositive.  Only finite-size
	perturbations occur below.
\end{definition}

\begin{proposition}[K\"uhne--Yuan--Ikoma quadratic arithmetic volume package]
	\label{input:quadratic-volume}
	Let \(Y\) be a projective variety of dimension \(d\) over a number field, let
	\(\overline A\) be a horizontally semipositive adelic line bundle on \(Y\), and
	let \(\overline E\) be an integrable adelic line bundle whose underlying
	algebraic line bundle is trivial.  In the one-parameter range used below, and
	after writing the integrable perturbation as a difference of semipositive
	metrics, the arithmetic volume satisfies a quadratic comparison of the form
	\[
	\widevol(\overline A+\overline E)
	-(\overline A+\overline E)^{d+1}
	\ge
	-C_{\overline A}\|\overline E\|_{\overline A}^2,
	\]
	where \(\|\cdot\|_{\overline A}\) is the adelic size functional just defined.
	Thus for a perturbation \(\overline E=\lambda\overline F\) the
	defect is \(O(|\lambda|^2)\), with the constant controlled by the size of
	\(\overline F\) and by the chosen semipositive reference bundle.  More
	precisely, \(C_{\overline A}\) depends only on \(d\), the fixed numerical
	class and the chosen reference models.  In the families below, \(C_f\) may
	depend on \(G,X_\Sigma,D,N\), those models and the fixed integrable norm of
	\(f\), but is independent of \(n,m,\lambda\) and \(K_n\).
	
	The general bigness inequality is \cite[Lemma~2.6]{Ku1}.  Its proof uses
	\cite[Theorem 3.5.3 and Remark 3.5.4]{Iko13} together with Yuan's arithmetic
	bigness theorem \cite{Yua1}.  K\"uhne's semiabelian implementation is
	\cite[Lemma~4.6]{Ku1}: it applies Lemma~2.6 to a semipositive decomposition of
	the perturbation and then estimates the higher-order intersection terms.  The
	constant-shift and continuity facts used to remove the auxiliary correction
	\(\kappa_n\) are \cite[Lemma~2.5(a),(b)]{Ku1}.
\end{proposition}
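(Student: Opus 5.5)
We derive the quadratic volume comparison from the bigness inequality \cite[Lemma~2.6]{Ku1}, as K\"uhne does in \cite[Lemma~4.6]{Ku1}. First we reduce to model perturbations. Choose a decomposition \(\overline E=O_Y(\phi^+)-O_Y(\phi^-)\) nearly realizing \(\|\overline E\|_{\overline A}\), with \(\overline A^{\pm}:=\overline A+O_Y(\phi^\pm)\) semipositive. By continuity of \(\widevol\) and of adelic intersection numbers in the integrable topology \cite[Lemma~2.5(b)]{Ku1}, together with density of model functions in the completion defining \(\|\cdot\|_{\mathrm{ad}}\), it suffices to treat \(\phi^\pm\) model functions. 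Horizontal semipositivity of \(\overline A\) is needed so that \(\overline A^{\pm}\) are nef in the arithmetic sense. If it fails, we first twist by a constant \(O_Y(\kappa)\) and undo the twist at the end with the constant-shift formula \cite[Lemma~2.5(a)]{Ku1}. This is exactly how the \(\kappa_n\) of Proposition~\ref{prop:canonical-lower-bound} will be removed.

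Next, write \(\overline A+\overline E=\overline A^{+}-O_Y(\phi^-)\). We apply \cite[Lemma~2.6]{Ku1} with \(\widetilde L=\overline A^{+}\) and \(\widetilde M=O_Y(\phi^-)\). Since \(\widetilde M\) is not itself semipositive, we write it as \(\overline A^{-}-\overline A\), a difference of semipositive bundles, and apply the inequality in the form
\[
\widevol\bigl(\overline A^{+}+\overline A-\overline A^{-}\bigr)
\ge(\overline A^{+}+\overline A)^{d+1}-(d+1)(\overline A^{+}+\overline A)^{d}\cdot\overline A^{-},
\]
followed by rescaling. The underlying classes coincide, so the right-hand side is a polynomial in \(\phi^\pm\). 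We then expand both this lower bound and \((\overline A+\overline E)^{d+1}\) by multilinearity in \(\phi^+\) and \(\phi^-\).

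The main step is the comparison of these expansions. The constant terms agree because the underlying classes are equal. The linear terms both equal \((d+1)\int(\phi^+-\phi^-)\,c_1(\overline A)^d\), summed over places with weights \(n_v\), so they cancel. What remains consists of monomials containing at least two factors \(O_Y(\phi^\pm)\). Each such monomial is bounded by \(C\|\phi^{\pm}\|_{\mathrm{ad}}\|\phi^{\pm}\|_{\mathrm{ad}}\) using the following estimate: an intersection with two vertical factors \(O(\phi),O(\psi)\) and remaining factors semipositive with fixed numerical class is at most \(\sum_v n_v\|\phi_v\|_{\sup}\|\psi_v\|_{\sup}\) times a fixed degree. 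This holds because \(O(\phi)\cdot O(\psi)\cdot\cdots=\sum_v n_v\int\phi_v\,dd^c\psi_v\wedge\cdots\), and \(dd^c\psi_v\) is a difference of two positive measures of mass bounded by the fixed degree. The first bound is \(\le\|\phi\|_{\mathrm{ad}}\|\psi\|_{\mathrm{ad}}\) times a degree, so \(C_{\overline A}\) depends only on \(d\) and the numerical class. Taking the infimum over decompositions gives the \(-C_{\overline A}\|\overline E\|_{\overline A}^2\) bound.

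I expect the hardest point to be uniformity in the families, namely independence of \(n\), \(m\), \(\lambda\) and \(K_n\). Base change leaves normalized intersections unchanged, which handles \(K_n\). For the other parameters we do not bound higher monomials directly. Instead we route them through Lemma~\ref{lem:parameterized-error-bookkeeping}: with \(\overline E=\lambda F_{\star,n}\) and \(|\lambda|\le\alpha_{\star,n}^{-1}\), every monomial of order \(r\ge2\) is at most \(C_fL_n^d|\lambda|^2\alpha_{\star,n}\). This is the form in which the estimate is used later.
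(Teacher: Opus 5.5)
The paper gives no internal proof here; it cites \cite[Lemmas~2.6 and~4.6]{Ku1}: apply the bigness inequality to a semipositive decomposition of the perturbation, then bound the higher-order terms. Your outline follows that route, but your concrete application of the bigness inequality fails, so there is a genuine gap.

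The right side of \(\widevol(\widetilde L-\widetilde M)\ge\widetilde L^{d+1}-(d+1)\widetilde L^{d}\cdot\widetilde M\) is only the first-order expansion of \((\widetilde L-\widetilde M)^{d+1}\) in \(\widetilde M\). It is therefore sharp to second order only when the subtracted nef bundle is small. Your choice \(\widetilde L=\overline A^{+}+\overline A\), \(\widetilde M=\overline A^{-}\) subtracts a bundle of the full class of \(A\). Put \(\overline P=O_Y(\phi^+)\), \(\overline Q=O_Y(\phi^-)\) and expand.
\begin{itemize}
\item The part of your lower bound with no vertical factors is \(2^d(1-d)\,\overline A^{d+1}\), not \(\overline A^{d+1}\).
\item Its linear part is \((d+1)2^{d-1}(2-d)\,\overline A^{d}\cdot\overline P-(d+1)2^{d}\,\overline A^{d}\cdot\overline Q\), not \((d+1)\,\overline A^{d}\cdot(\overline P-\overline Q)\).
\item For \(d=1\) and \(\overline E=0\) your inequality only says \(\widevol(\overline A)\ge0\).
\end{itemize}
So your claims that the constant and linear terms cancel are false. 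Equality of the underlying algebraic classes does not force equality of arithmetic intersection numbers.

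No rescaling by homogeneity repairs this. In any expression of \(\overline A+\overline E\), or a multiple of it, as \(\widetilde L-\widetilde M\) with \(\widetilde L,\widetilde M\) nonnegative combinations of \(\overline A,\overline A^{\pm}\), the class of \(\widetilde M\) is at least half that of \(\widetilde L\). The linear mismatch is fatal downstream: after division by \(|\lambda|\) in the Minkowski--variation step it leaves an \(O(1)\) error.

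The missing idea is that the small parameter must sit in the subtracted bundle. This requires semipositivity for the unscaled perturbation. Take
\begin{itemize}
\item \(\overline E=\lambda\overline F\) with \(\overline F=O_Y(\psi^+)-O_Y(\psi^-)\);
\item \(\overline A^{\pm}=\overline A+O_Y(\psi^{\pm})\) nef (horizontal nonnegativity can be arranged by adding a common constant to \(\psi^\pm\), which does not change \(\overline F\));
\item \(0\le\lambda\le1\) (swap \(\psi^\pm\) if \(\lambda<0\)).
\end{itemize}
Then \(\overline A+\lambda\overline F=(\overline A+\lambda\overline A^{+})-\lambda\overline A^{-}\). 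Applying \cite[Lemma~2.6]{Ku1} with \(\widetilde L=\overline A+\lambda\overline A^{+}\) and \(\widetilde M=\lambda\overline A^{-}\), the orders \(0\) and \(1\) in \(\lambda\) match exactly. The defect is at least \(-\sum_{k\ge2}(-1)^k\binom{d+1}{k}\lambda^k\,\widetilde L^{d+1-k}\cdot(\overline A^{-})^k=-\binom{d+1}{2}\lambda^2\,\overline A^{d-1}\cdot(\overline A^{-})^2+O(\lambda^3)\).

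This remainder contains horizontal reference factors. It is \(O(\lambda^2)\) with a constant given by intersection numbers of \(\overline A,\overline A^{\pm}\), which is the statement's ``constant controlled by the size of \(\overline F\) and by the chosen semipositive reference bundle''. It is not a bound by \(\|\overline E\|^2\) measured in sup norms.

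Your two-vertical-factor estimate is also unsupported. The mass bound on \(dd^c\psi_v\) only gives \(|\int\phi_v\,dd^c\psi_v\wedge\cdots|\le2\deg\cdot\|\phi_v\|_{\sup}\), with no factor \(\|\psi_v\|_{\sup}\); a function of small sup norm can have \(dd^c\psi_v\) of size one. Quadratic smallness therefore has to come from the explicit \(\lambda\).

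Finally, Lemma~\ref{lem:parameterized-error-bookkeeping} controls monomials \(F^r\cdot\overline M^{d+1-r}\), whereas the remainder above consists of the terms \(\lambda^k\widetilde L^{d+1-k}\cdot(\overline A^-)^k\). The uniformity in \(n\) and \(m\) has to be checked on those terms.
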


\begin{remark}
	Proposition~\ref{input:quadratic-volume} is the external
	Hilbert--Samuel type source package used in the proof, and it is not claimed
	as a new theorem of this paper.  The lemmas below record exactly how it is used: they specify
	the perturbation \(\overline E\), the horizontal semipositivity correction, and
	the \(n\)- and \(m\)-dependence of the constant.  The dependence on \(n\) and
	\(m\) is not imported from the source package; it is verified here through
	Lemma~\ref{lem:toric-dimensional-scaling} and
	Lemma~\ref{lem:compressed-quadratic-scaling}.
\end{remark}

\paragraph{Raw versus normalized volume notation in the auxiliary tower.}
For an adelic line bundle over \(K_n\), write
\[
\widehat{\operatorname{vol}}_{\chi,K_n}^{\mathrm{raw}}(\overline M)
:=[K_n:\mathbb Q]\,
\widehat{\operatorname{vol}}_{\chi,K_n}(\overline M),
\qquad
(\overline M^{d+1})^{\mathrm{raw}}_{K_n}
:=[K_n:\mathbb Q]\,(\overline M^{d+1})_{K_n}.
\]
Every displayed defect carrying \([K_n:K]\) below is a raw defect and is
marked as such.  Unmarked heights, intersections, volumes and measures remain
normalized.

\begin{lemma}[Constant shifts in the volume defect]
	\label{lem:constant-shift-volume-defect}
	Let \(\ol B_{m,n,\lambda}\) be any of the adelic line bundles occurring below,
	namely \(\ol A_n+\varphi_n^*O_{\ol G}(\lambda f)\) or
	\(\ol A_{m,n}+\varphi_n^*O_{\ol G}(\lambda f_m)\), with
	\(|\lambda|\le n^{-1}\) in the canonical case and
	\(|\lambda|\le(nm)^{-1}\) in the compressed case.  If
	\(0\le \kappa_n\le Cn^{-2}\), then
	\[
	\begin{aligned}
		\bigl|&
		\widehat{\operatorname{vol}}_{\chi,K_n}^{\mathrm{raw}}
		(\ol B_{m,n,\lambda}\otimes\varphi_n^*O_{\ol G}(-\kappa_n))
		-
		\bigl((\ol B_{m,n,\lambda}\otimes
		\varphi_n^*O_{\ol G}(-\kappa_n))^{d+1}\bigr)^{\mathrm{raw}}_{K_n}  \\
		&\qquad
		-\widehat{\operatorname{vol}}_{\chi,K_n}^{\mathrm{raw}}(\ol B_{m,n,\lambda})
		+(\ol B_{m,n,\lambda}^{d+1})^{\mathrm{raw}}_{K_n}
		\bigr|
		\le C_f[K_n:K]\kappa_n ,
	\end{aligned}
	\]
	where \(C_f\) is independent of \(n\), of \(m\le n\), and of \(\lambda\).
\end{lemma}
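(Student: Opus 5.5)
Write \(\ol B=\ol B_{m,n,\lambda}\) and \(\kappa=\kappa_n\). The statement compares two differences ``volume minus top self-intersection'', one for \(\ol B\) and one for \(\ol B\otimes\varphi_n^*O_{\ol G}(-\kappa)\). The plan is to control each of the two terms separately under the constant shift and then add the errors. The shift \(\varphi_n^*O_{\ol G}(-\kappa)\) has trivial underlying line bundle and changes only the \(v_0\)-metric, by the constant \(\kappa\). Everything should therefore reduce to the behaviour of arithmetic volume and arithmetic intersection numbers under adding a constant to the metric at one place.

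\emph{Step 1: the intersection term.} Expand by multilinearity:
\[
(\ol B-\kappa\,[v_0])^{d+1}=\ol B^{d+1}-(d+1)\kappa\, n_{v_0}\deg_{B}(\ol G_n)+0.
\]
The higher powers of the constant vanish, because a constant metric is numerically trivial on the generic fibre. After multiplying by \([K_n:\mathbb Q]\), the raw change is \((d+1)\kappa\,[K_n:\mathbb Q]\,n_{v_0}\,B^d\).

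For \(B^d\), note that the algebraic class of \(\ol B\) is that of \(L_n\), since the perturbations \(f\) and \(f_m\) have trivial algebraic class. Hence \(B^d=L_n^d=\deg(G_n(D))^t(\pi_n^*N)^g\). By Lemma~\ref{lem:algebraic-dimensional-vanishing} and the projection formula used in Lemma~\ref{lem:toric-dimensional-scaling}, this equals \(N^gD^t\), which is independent of \(n\). Since \(n_{v_0}\) is the normalized weight, \([K_n:\mathbb Q]\,n_{v_0}\) is bounded by a constant multiple of \([K_n:K]\). So this term contributes \(O([K_n:K]\kappa)\).

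\emph{Step 2: the volume term.} Use the constant-shift property \cite[Lemma~2.5(a)]{Ku1} of the \(\chi\)-volume. Shifting the metric by a constant \(c\) multiplies every norm by \(e^{c}\) in the relevant place. The \(\chi\)-volume is a limit of normalized Euler characteristics \(\widehat\chi(H^0(k\ol B))/(k^{d+1}/(d+1)!)\), and the shift changes \(\widehat\chi\) exactly by \(-k\kappa\,N_{v_0}\,h^0(kB)\). Dividing and passing to the limit, the raw volume changes by exactly
\[
-(d+1)\kappa\,[K_n:\mathbb Q]\,n_{v_0}\,\operatorname{vol}(B).
\]
Because \(B\) is nef and big, \(\operatorname{vol}(B)=B^d\). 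The two first-order changes then cancel identically, so the left side is actually \(0\). For safety, the argument only needs the bound \(\le C_f[K_n:K]\kappa\). This bound would still follow if \(\operatorname{vol}(B)\) and \(B^d\) merely agreed up to a uniform constant.

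\emph{Main obstacle.} The delicate point is justifying the exact constant-shift formula for \(\widehat{\operatorname{vol}}_\chi\) for these integrable, not necessarily semipositive, bundles over the varying fields \(K_n\), together with the identification \(\operatorname{vol}(B)=B^d\). I would first apply \cite[Lemma~2.5(a)]{Ku1} directly, since it is stated for arbitrary adelic line bundles and needs no positivity of the metric. The geometric volume depends only on the nef class \(L_n\), which gives \(\operatorname{vol}(B)=B^d\).

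The last thing to verify is uniformity. The constant involves only \(d\), \(n_{v_0}\) and \(N^gD^t\). It is therefore independent of \(n\), of \(m\le n\), and of \(\lambda\) in the stated ranges, and the smallness of \(\lambda\) is not even needed.
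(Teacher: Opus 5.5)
Your argument is correct and is essentially the paper's: the paper also treats the \(\chi\)-volume term by the constant-shift/continuity statement of K\"uhne's Lemma~2.5, and the self-intersection term by multilinearity (one constant factor gives the algebraic degree, two or more give zero), with \(L_n^d\) bounded uniformly in \(n\), \(m\), \(\lambda\). You add one observation the paper does not state, and it is right: since \(\operatorname{vol}(L_n)=L_n^d\) for the ample \(L_n\), both first-order changes equal \(-(d+1)\kappa_n[K_n:K]N_{v_0}L_n^d\), so the left-hand side is in fact \(0\); the only slip is that \(L_n^d=\binom{d}{g}D^tN^g\) (you dropped the binomial factor), which is harmless.
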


\begin{proof}
	Tensoring by \(O_{\ol G}(-\kappa_n)\) changes only the metric on the trivial
	line bundle.  For the \(\chi\)-arithmetic volume this is a Lipschitz
	perturbation: multiplying all sup norms by \(e^{O(\kappa_n)}\) changes the
	raw asymptotic Euler characteristic by \(O([K_n:K]\kappa_n)\).  This is
	the uniform form of the constant-shift formula and continuity statement in
	\cite[Lemma 6(a),(b)]{Ku1}.  The same linear bound holds for the arithmetic
	self-intersection by multilinearity,
	because inserting one constant metrized trivial factor gives the algebraic
	degree, while terms with two or more such factors vanish.  The degrees of the
	line bundles in the canonical and compressed families are uniformly controlled
	by the fixed data \(D\), \(N\), and the chosen test metric, after the
	normalizations used in this subsection.  This gives the displayed uniform
	estimate.
\end{proof}

\begin{lemma}[Field-extension normalization in the auxiliary tower]
	\label{lem:field-extension-normalization}
	Fix a place \(v\) of \(K\).  In the auxiliary tower over \(K_n\), the test
	metric at \(v\) is understood as the collection of the induced metrics at all
	places \(v'\mid v\) of \(K_n\).  Write \(N_{v'}^{K_n}\) for raw local degrees
	and \(n_{v'}^{K_n}\) for normalized weights.  Then
	\[
	\sum_{v'\mid v}N_{v'}^{K_n}=[K_n:K]N_v^K,
	\qquad
	\sum_{v'\mid v}n_{v'}^{K_n}=n_v^K.
	\]
	Thus raw intersections and raw \(\chi\)-volumes over \(K_n\) carry a common
	factor \([K_n:K]\), whereas normalized versions do not.  Equivalently, after
	division by the global degree the linear local variation has coefficient
	\(n_v^K\), and a raw volume-defect estimate of the form
	\[
	\widehat{\operatorname{vol}}_{\chi,K_n}^{\mathrm{raw}}(\overline M_{\star,n,\lambda})
	-(\overline M_{\star,n,\lambda}^{d+1})^{\mathrm{raw}}_{K_n}
	\ge -C [K_n:K]E_{\star,n,\lambda}
	\]
	contributes \(O(E_{\star,n,\lambda})\) to the normalized Minkowski estimate
	before the usual division by \(|\lambda|\).
\end{lemma}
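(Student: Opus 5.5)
The plan is to treat Lemma~\ref{lem:field-extension-normalization} as bookkeeping for the weights already fixed in the section on valuation theory. I would first prove the two displayed sums, then read off the raw/normalized comparison from the definitions.

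\emph{The two sums.} I would not compare \(K_n\) with \(K\) directly. Instead I would compare both with the base field \(K_0=\mathbb Q\) and the place \(v_0\) of \(\mathbb Q\) below \(v\), whose weight is \(n_{v_0}=1\). Each place \(v'\mid v\) of \(K_n\) satisfies
\[
N_{v'}^{K_n}=[K_{n,v'}:\mathbb Q_{v_0}]=[K_{n,v'}:K_v]\,[K_v:\mathbb Q_{v_0}]=[K_{n,v'}:K_v]\,N_v^K .
\]
The classical local-degree identity \(\sum_{v'\mid v}[K_{n,v'}:K_v]=[K_n:K]\) then gives the first equality. For the second, divide the first by \([K_n:\mathbb Q]=[K_n:K][K:\mathbb Q]\) and use \(n_v^K=N_v^K/[K:\mathbb Q]\).

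\emph{Raw versus normalized quantities.} The heights, intersections and \(\chi\)-volumes over \(K_n\) are normalized adelic sums \(\sum_{v'}n^{K_n}_{v'}(\cdots)\). By definition, the raw versions are \([K_n:\mathbb Q]\) times these. Since \([K_n:\mathbb Q]=[K_n:K][K:\mathbb Q]\) and \([K:\mathbb Q]\) is fixed, the raw objects carry the common factor \([K_n:K]\) relative to the normalization over \(K\), while the normalized ones do not.

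\emph{The linear local variation.} Here the test metric at \(v\) is the family of induced metrics at all \(v'\mid v\), and each of these is pulled back from the same \(v\)-adic function. Its contributions therefore add with total weight \(\sum_{v'\mid v}n_{v'}^{K_n}=n_v^K\), which is the coefficient claimed in the statement.

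\emph{The volume-defect consequence.} Dividing a raw defect bound \(\ge -C[K_n:K]E\) by \([K_n:\mathbb Q]\) gives a normalized defect \(\ge -C'E\), with \(C'=C/[K:\mathbb Q]\) independent of \(n\). That is the \(O(E_{\star,n,\lambda})\) contribution entering the Minkowski estimate before division by \(|\lambda|\).

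The only point I expect to need care is this last one. One has to make sure that \(\widehat{\operatorname{vol}}_{\chi,K_n}\) and the arithmetic self-intersection, computed over the base change, are defined with normalized weights, so that \([K_n:K]\) appears exactly once. I would check this using the identity \(\sum_v n_v=n_{v_0}\), which holds at every stage of the tower.
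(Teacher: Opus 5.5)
Your proposal is correct and follows the same route as the paper. The two displayed sums are the raw and normalized forms of the local-degree (decomposition) formula \(\sum_{v'\mid v}[K_{n,v'}:K_v]=[K_n:K]\), and the remaining assertions follow because normalizing by \([K_n:\mathbb Q]=[K_n:K][K:\mathbb Q]\) cancels the factor \([K_n:K]\) in both the linear term and every raw volume defect; you simply make explicit, through the tower \(K_n/K/\mathbb Q\) with \(n_{v_0}=1\), the computation that the paper compresses into the phrase ``standard decomposition formula''.
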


\begin{proof}
	The two identities are the raw and normalized forms of the standard
	decomposition formula.  Raw arithmetic intersections and raw
	\(\chi\)-volumes are additive over places, so installing the
	same Galois-invariant perturbation at all \(v'\mid v\) multiplies the
	raw contribution by the raw local-degree sum.  Global normalization cancels
	\([K_n:K]\), leaving coefficient \(n_v^K\).  The same cancellation applies to
	every quadratic or constant raw volume defect.  Hence no normalized height in
	the sequel changes under extension of the ground field.
\end{proof}

\begin{lemma}[Measure compression estimate]
	\label{lem:canonical-measure}
	Let
	\[
	\mu_{n,v'}=
	\frac{c_1(\ol L_{n,v'}^{\can})^{g+t}}{(L_n)^{g+t}}.
	\]
	Then for every continuous \(\Gal(\mathbb{C}_v/K_v)\)-invariant \(f\),
	\[
	\lim_{n\to\infty}
	\int_{\ol G_{n,\mathbb{C}_{v'}}^{an}} f_{n,v'}\,d\mu_{n,v'}
	=
	\int_{\ol G_{\mathbb{C}_v}^{an}}f\,d\mu_{\ol L,v}.
	\]
\end{lemma}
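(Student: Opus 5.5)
Writing \(f_{n,v'}\) for \(\varphi_n^*f\) restricted to the places \(v'\mid v\), the plan is to push the measure \(\mu_{n,v'}\) forward by \(\varphi_n\) and identify the pushforward with \(\mu_{\ol L,v}\) exactly. The limit statement then follows at once, since the integral of \(f_{n,v'}=f\circ\varphi_n\) against \(\mu_{n,v'}\) equals the integral of \(f\) against \((\varphi_n)_*\mu_{n,v'}\).

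First, compare the line bundles.
\begin{itemize}
\item The toric part of \(\varphi_n\) is multiplication by \(n\) on \(T\), and the canonical toric metric satisfies \([n]^*\ol D^{\can}=n\,\ol D^{\can}\) (its support function is conic). Via the local trivializations of Theorem~\ref{thm:local-trivialization}, and K\"uhne's archimedean analogue, this should give an isometry \(\varphi_n^*\ol G(\ol D^{\can})\simeq \ol G_n(\ol D^{\can})^{\otimes n}\).
\item The abelian part of \(\varphi_n\) is the identity, so \(\varphi_n^*\ol\pi^*\ol N=\ol\pi_n^*\ol N\).
\item To verify the first isometry chartwise, note that \(\psi^{(i)}\circ\varphi_n\) and \((\psi_n^{(i)})^n\) differ by a unit of absolute value one. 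This is the same theta-factor cancellation used in the overlap argument, now applied to \(H_i=nH_i^{(n)}\).
\end{itemize}

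Next, expand the Monge--Amp\`ere measures multilinearly:
\[
c_1(\ol L^{\can}_{n,v'})^{g+t}=\sum_{k}\binom{g+t}{k}c_1(\ol G_n(\ol D^{\can}))^{k}\wedge c_1(\ol\pi_n^*\ol N)^{g+t-k}.
\]
By Lemma~\ref{lem:algebraic-dimensional-vanishing}, the terms with \(k\neq t\) have zero total mass. They are positive measures (semipositivity, Proposition~\ref{prop:induced-metric-semipositive}), so they vanish, and only the \((t,g)\) term survives. Using the isometry and the projection formula for measures,
\[
(\varphi_n)_*\bigl(c_1(\ol G_n(\ol D^{\can}))^t\wedge c_1(\ol\pi_n^*\ol N)^g\bigr)
= n^{-t}\deg(\varphi_n)\,c_1(\ol G(\ol D^{\can}))^t\wedge c_1(\ol\pi^*\ol N)^g .
\]
Since \(\deg\varphi_n=n^t\), this equals \(c_1(\ol G(\ol D^{\can}))^t\wedge c_1(\ol\pi^*\ol N)^g\). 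The normalizing degrees match in the same way: \((L_n)^{g+t}=\binom{g+t}{t}D^tN^g\) by Lemma~\ref{lem:algebraic-dimensional-vanishing} and the computation in the bigness corollary, and this is independent of \(n\). Hence \((\varphi_n)_*\mu_{n,v'}=\mu_{\ol L,v}\), with the same identification holding for the canonical \(\ol L\). Summing over \(v'\mid v\) with normalized weights, using Lemma~\ref{lem:field-extension-normalization}, removes any dependence on the extension \(K_n\). The integrals are then equal for every \(n\), not merely in the limit.

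The main obstacle is the non-archimedean identity \(\varphi_n^*\ol G(\ol D^{\can})=n\,\ol G_n(\ol D^{\can})\) as metrics. This needs the theta representatives for \(H_i\) and \(H_i^{(n)}\) to satisfy \(f_{s_i}\circ(\text{lift})=\text{unit}\cdot f_{s_i^{(n)}}^{\,n}\) with a unit of modulus one. I would derive it from Proposition~\ref{prop:canonical-picard-zero-theta-metrics}: the canonical Picard-zero metric is compatible with tensor powers, \(H_i=(H_i^{(n)})^{\otimes n}\) isometrically. Lemma~\ref{lem:local-coordinates-compute-tropicalization} then gives \(\operatorname{trop}_{G,v}\circ\varphi_n=n\operatorname{trop}_{G_n,v}\), so the canonical metrics, which depend only on valuations, agree. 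If chartwise control at the boundary is delicate, I would instead work with the dense torus part and use that the Chambert-Loir measures put no mass on the toric boundary, since they are pulled back from \(X_\Sigma\) locally.
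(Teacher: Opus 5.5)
Your proposal is correct and follows the paper's proof. Both expand \(c_1(\ol L^{\can}_{n,v'})^{g+t}\) multilinearly, discard the mixed terms with \(k\neq t\) by positivity together with Lemma~\ref{lem:algebraic-dimensional-vanishing}, and push the surviving \((t,g)\)-term forward by \(\varphi_n\) using the projection formula and \(\varphi_n^*\ol G(\ol D^{\can})=n\,\ol G_n(\ol D^{\can})\). The paper only asserts this metric identity, whereas you justify it through \(\operatorname{trop}_{G,v}\circ\varphi_n=n\,\operatorname{trop}_{G_n,v}\); you also correctly observe that the argument gives equality of the integrals for every \(n\), not merely in the limit.
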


\begin{proof}
	Let \(d=g+t\).  Since \(L_n=\ol G_n(D)\otimes\pi_n^*N\), the measure
	\(\mu_{n,v'}\) is represented by
	\[
	c_1(\ol G_n(\ol D^{\can})_{v'})^t
	\wedge c_1(\pi_n^*\ol N_{v'})^g
	\]
	after division by the degree \(L_n^d\).  All other terms in the expansion of
	\(c_1(\ol L^{\can}_{n,v'})^d\) are zero as measures.  They are positive mixed
	measures because the two metrics are semipositive, and their total masses are
	the corresponding top-degree intersection numbers.  Those masses vanish by
	Lemma~\ref{lem:algebraic-dimensional-vanishing}.  This uses only its numerical
	statement, not a Chow-class identity.
	
	The abelian part is unchanged by \(\varphi_n\), while the toric part is pulled
	back by multiplication by \(n\) on \(T\).  The canonical toric metric is
	invariant under this multiplication after the normalization
	\([n]^*\ol D^{\can}=n\ol D^{\can}\).  Therefore the projection formula gives
	\[
	(\varphi_n)_*
	\left(
	\frac{
		c_1(\ol G_n(\ol D^{\can})_{v'})^t
		\wedge c_1(\pi_n^*\ol N_{v'})^g
	}{L_n^d}
	\right)
	\longrightarrow
	\frac{
		c_1(\ol G(\ol D^{\can})_v)^t
		\wedge c_1(\pi^*\ol N_v)^g
	}{L^d}.
	\]
	The convergence is weak convergence of Chambert-Loir measures.  Testing against
	the continuous function \(f\) and using \(f_{n,v'}=f\circ\varphi_n^{an}\) gives
	the asserted limit.
\end{proof}

\begin{lemma}[First variation of height]
	\label{lem:canonical-height-variation}
	Assume \(O_{\ol G}(f)\) is an integrable adelic line bundle.  For
	\[
	\ol L_{n,\lambda}^{\can}
	=
	\ol L_n^{\can}+\varphi_n^*O_{\ol G}(\lambda f)
	\]
	and \(|\lambda|\le n^{-1}\), one has
	\[
	\left|
	h_{\ol L_{n,\lambda}^{\can}}(\ol G_n)
	-h_{\ol L_n^{\can}}(\ol G_n)
	-\lambda n_v
	\int_{\ol G_{n,\mathbb{C}_{v'}}^{an}}f_{n,v'}\,d\mu_{n,v'}
	\right|
	\le C_f|\lambda|^2 n.
	\]
\end{lemma}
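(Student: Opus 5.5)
We prove the estimate by multilinear expansion of the top self-intersection, following the proof of \cite[Lemma~4.4]{Ku1}. Throughout, $\overline L_n^{\can}$ and $F_n:=\varphi_n^*O_{\overline G}(f)$ are defined over $K_n$, and the test function $f$ is placed at the places $v'\mid v$.

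\textbf{Step 1: binomial expansion.} Since $F_n$ has trivial underlying algebraic class, $L_{n,\lambda}=L_n$. Hence the denominator $(d+1)L_n^d$ of the height is unchanged, with $d=g+t$. Multilinearity and symmetry of adelic intersections give
\[
(\overline L_{n,\lambda}^{\can})^{d+1}
=
(\overline L_n^{\can})^{d+1}
+(d+1)\lambda\,F_n\cdot(\overline L_n^{\can})^{d}
+\sum_{r=2}^{d+1}\binom{d+1}{r}\lambda^r F_n^r\cdot(\overline L_n^{\can})^{d+1-r}.
\]
After dividing by $(d+1)L_n^d$, the first term is $h_{\overline L_n^{\can}}(\overline G_n)$.

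\textbf{Step 2: the linear term.} Because $F_n$ is algebraically trivial with metric $e^{-f_{n,v'}}$ at the places $v'\mid v$, the intersection $F_n\cdot(\overline L_n^{\can})^d$ equals the weighted sum of the integrals of $f_{n,v'}$ against the Chambert-Loir measures $c_1(\overline L^{\can}_{n,v'})^d$. This is the standard formula for intersecting with $O(f)$.

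Lemma~\ref{lem:field-extension-normalization} converts the raw weights $N^{K_n}_{v'}$ into the single normalized coefficient $n_v$. Galois invariance of $f$ makes the integrals equal for all $v'\mid v$. Dividing by $(d+1)L_n^d$ therefore gives exactly
\[
\lambda n_v\int f_{n,v'}\,d\mu_{n,v'}.
\]
In this step one should check that $\mu_{n,v'}$ is normalized by $L_n^d$, as in Lemma~\ref{lem:canonical-measure}.

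\textbf{Step 3: higher-order terms.} Apply Lemma~\ref{lem:parameterized-error-bookkeeping} in the canonical case, with $(\overline M_{\star,n},F_{\star,n},\alpha_{\star,n})=(\overline L_n^{\can},F_n,n)$. For $|\lambda|\le n^{-1}$ it bounds each term with $2\le r\le d+1$ by $C_fL_n^d|\lambda|^2n$. There are at most $d$ such terms, and dividing by $(d+1)L_n^d$ yields the error $C_f|\lambda|^2n$. The constant $C_f$ does not depend on $n$, on $\lambda$, or on $K_n$, because normalized intersections are insensitive to the field extension (Lemma~\ref{lem:field-extension-normalization}).

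\textbf{Main obstacle.} The main difficulty is the quadratic term $r=2$. Its $O(n)$ growth depends on the projection-formula scaling of Lemma~\ref{lem:toric-dimensional-scaling} and on the arithmetic Hodge-index bound for two algebraically trivial factors. Both are already packaged in Lemma~\ref{lem:parameterized-error-bookkeeping}.

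The remaining care is to reduce to model functions. First prove the estimate for model $f$ written as a difference of semipositive metrics. Then pass to general integrable $f$ by continuity of intersection numbers and of the Chambert-Loir integrals. This limit is uniform in $n$ because the constant depends only on the integrable norm of $f$.
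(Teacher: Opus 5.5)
Your proposal is correct and follows the paper's route. Both arguments expand the top self-intersection binomially, identify the $r=1$ term with the Chambert-Loir integral through Lemma~\ref{lem:field-extension-normalization}, and bound the terms with $r\ge 2$ by projection-formula scaling combined with the Hodge-index and model-approximation argument. The only difference is that you cite the packaged Lemma~\ref{lem:parameterized-error-bookkeeping} for all higher-order terms, whereas the paper's proof invokes Lemma~\ref{lem:toric-dimensional-scaling} and that argument directly.
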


\begin{proof}
	Put \(d=g+t\) and \(F=\varphi_n^*O_{\ol G}(f)\).  By the multilinearity of
	adelic intersections,
	\[
	h_{\ol L_n^{\can}+\lambda F}(\ol G_n)
	-h_{\ol L_n^{\can}}(\ol G_n)
	=
	\frac{1}{(d+1)L_n^d}
	\sum_{r=1}^{d+1}
	\binom{d+1}{r}\lambda^r
	\bigl(F^r\cdot(\ol L_n^{\can})^{d+1-r}\bigr).
	\]
	The term with \(r=1\) equals
	\[
	\lambda n_v
	\int_{\ol G_{n,\mathbb{C}_{v'}}^{an}}
	f_{n,v'}\,d\mu_{n,v'}.
	\]
	This is the local intersection formula defining the Chambert-Loir measure,
	with the field-extension normalization explained in
	Lemma~\ref{lem:field-extension-normalization}; in the semiabelian asymptotic
	argument it is the linear term isolated in \cite[Lemma~4.4]{Ku1}.
	
	It remains to bound the terms with \(r\ge2\).  Since \(O(f)\) has trivial
	underlying algebraic line bundle, every such term is controlled by an
	intersection involving at least two metrically non-trivial but algebraically
	trivial factors.  Approximate \(f\) by model functions from above and below.
	For model functions the estimate follows from the arithmetic Hodge index and
	the projection formula; the general integrable case follows by passage to the
	limit.  The required scaling is the bridge from \cite[Lemma~4.4]{Ku1} to the
	present toric compactification, and is supplied by
	Lemma~\ref{lem:toric-dimensional-scaling}.  Hence, after normalization by
	\(L_n^d\),
	\[
	\left|
	\sum_{r=2}^{d+1}
	\binom{d+1}{r}\lambda^r
	\bigl(F^r\cdot(\ol L_n^{\can})^{d+1-r}\bigr)
	\right|
	\le
	C_f L_n^d |\lambda|^2 n.
	\]
	Dividing by \((d+1)L_n^d\) gives the stated bound.
\end{proof}

\begin{lemma}[Arithmetic volume comparison]
	\label{lem:canonical-volume}
	Assume that
	\[
	\ol L_n^{\can}(\kappa_n)
	=
	\ol L_n^{\can}\otimes\varphi_n^*O_{\ol G}(\kappa_n)
	\]
	is horizontally semipositive, with \(\kappa_n\ll n^{-2}\).  Then for
	\(|\lambda|\le n^{-1}\),
	\[
	\widehat{\operatorname{vol}}_{\chi,K_n}^{\mathrm{raw}}(\ol L_{n,\lambda}^{\can})
	-\bigl((\ol L_{n,\lambda}^{\can})^{g+t+1}\bigr)^{\mathrm{raw}}_{K_n}
	\ge
	-C_f [K_n:K]\,
	\bigl(n|\lambda|^2(1+\kappa_n n)+\kappa_n\bigr).
	\]
\end{lemma}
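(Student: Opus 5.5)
The plan is to reduce the statement to the quadratic arithmetic volume package, Proposition~\ref{input:quadratic-volume}. The reference bundle is the horizontally semipositive corrected bundle \(\ol A_n:=\ol L_n^{\can}(\kappa_n)\). The perturbation is \(\ol E=\varphi_n^*O_{\ol G}(\lambda f)\). The constant shift by \(\kappa_n\) is then removed using Lemma~\ref{lem:constant-shift-volume-defect}. Concretely, write
\[
\ol L^{\can}_{n,\lambda}
=\bigl(\ol A_n+\varphi_n^*O_{\ol G}(\lambda f)\bigr)\otimes\varphi_n^*O_{\ol G}(-\kappa_n).
\]
Applying Lemma~\ref{lem:constant-shift-volume-defect} with \(\ol B_{n,\lambda}=\ol A_n+\varphi_n^*O_{\ol G}(\lambda f)\) shows that the raw defect of \(\ol L^{\can}_{n,\lambda}\) differs from the raw defect of \(\ol B_{n,\lambda}\) by at most \(C_f[K_n:K]\kappa_n\). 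This produces the additive \(\kappa_n\) term in the bound. It therefore suffices to show that the raw defect of \(\ol B_{n,\lambda}\) is at least \(-C_f[K_n:K]\,n|\lambda|^2(1+\kappa_n n)\).

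For \(\ol B_{n,\lambda}\), I would not use Proposition~\ref{input:quadratic-volume} only in its crude form \(-C\|\ol E\|^2\), since the adelic size of \(\varphi_n^*O(f)\) does not see the \(n\)-scaling. Instead I would follow its source chain, namely \cite[Lemma~2.6]{Ku1} and then \cite[Lemma~4.6]{Ku1}:
\begin{enumerate}
\item Write \(\lambda f=\lambda f^+-\lambda f^-\) with \(\ol A_n+\varphi_n^*O(\lambda f^\pm)\) semipositive. For \(|\lambda|\le n^{-1}\) this should be possible after fixing a decomposition of \(f\) against \(\ol L^{\can}\) and pulling it back by \(\varphi_n\).
\item Apply the bigness inequality
\[
\widehat{\operatorname{vol}}_\chi(\widetilde L-\widetilde M)\ge \widetilde L^{d+1}-(d+1)\widetilde L^d\cdot\widetilde M,
\]
with \(\widetilde L=\ol A_n+\varphi_n^*O(\lambda f^+)\) and \(\widetilde M=\varphi_n^*O(\lambda f^-)\).
\item Compare the right-hand side with the multilinear expansion of \(\ol B_{n,\lambda}^{d+1}\). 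The linear terms in \(\lambda\) cancel exactly. What remains is a sum of monomials with \(r\ge2\) algebraically trivial factors against \(\ol A_n^{d+1-r}\).
\end{enumerate}

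Each such monomial is bounded by Lemma~\ref{lem:parameterized-error-bookkeeping} in the canonical case \(\alpha_{\star,n}=n\), giving a bound \(C_fL_n^d|\lambda|^2 n\). The lemma is stated for \(\ol M_{\star,n}=\ol L_n^{\can}\), whereas the reference bundle here is \(\ol A_n=\ol L_n^{\can}+\varphi_n^*O(\kappa_n)\). So I would expand \(\ol A_n^{d+1-r}\) in \(\kappa_n\). Only one constant factor can survive, since two constant factors give zero, as in the proof of Lemma~\ref{lem:constant-shift-volume-defect}. That surviving factor contributes an algebraic intersection \(\kappa_n\cdot(F^r\cdot L_n^{d-r})\). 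Its scaling gains at most one more power of \(n\) by the projection-formula count of Lemma~\ref{lem:toric-dimensional-scaling}, which gives the factor \((1+\kappa_n n)\).

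Finally I would convert normalized intersections over \(K_n\) into raw ones using Lemma~\ref{lem:field-extension-normalization}, which supplies the common factor \([K_n:K]\), and absorb \(L_n^d=D^tN^g\) (independent of \(n\)) into \(C_f\).

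The main obstacle is the first step of the second paragraph: producing a semipositive decomposition of \(\varphi_n^*O(\lambda f)\) against \(\ol A_n\), with the constant uniform in \(n\) in the range \(|\lambda|\le n^{-1}\). The concern is that curvature of \(\ol L_n^{\can}\) in toric directions shrinks like \(n^{-1}\) relative to the pulled-back test function. My first attempt would be to dominate \(\pm\lambda f\) by \(|\lambda|\) times a fixed semipositive model metric on \(\ol L^{\can}\) and then use \(\varphi_n^*\ol G(D)=n\ol G_n(D)\) to see that the pullback is controlled by \(n|\lambda|\le1\) times \(\ol A_n\). This is exactly where the restriction \(|\lambda|\le n^{-1}\) is used.
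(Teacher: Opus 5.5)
Your outer architecture is the same as the paper's: the reference bundle $\ol A_n=\ol L_n^{\can}(\kappa_n)$, removal of $\varphi_n^*O_{\ol G}(-\kappa_n)$ by Lemma~\ref{lem:constant-shift-volume-defect}, the $[K_n:K]$ bookkeeping of Lemma~\ref{lem:field-extension-normalization}, and a projection-formula count for the $n$-dependence. The paper itself only invokes Proposition~\ref{input:quadratic-volume} and Lemma~\ref{lem:toric-dimensional-scaling} at that point. The gap is in how you unpack the volume package in steps 1--3.

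The inequality of \cite[Lemma~2.6]{Ku1} requires \emph{both} $\widetilde L$ and $\widetilde M$ to be semipositive. Your $\widetilde M=\varphi_n^*O(\lambda f^-)$ has trivial underlying line bundle, and such a metric is semipositive only if it is constant at every place, because psh functions on the connected compact space $\ol G^{an}_{n,v}$ are constant. So step 2 applies only when $f^-$ is constant, and then it says nothing. Your $\kappa_n$ bookkeeping shows the same problem. In a monomial $F^r\cdot\ol A_n^{d+1-r}$, the term with one factor $\varphi_n^*O_{\ol G}(\kappa_n)$ equals $\kappa_n$ times the geometric degree $F^r\cdot L_n^{d-r}$. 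That degree is $0$ because $F$ is algebraically trivial, so in your setup the factor $(1+\kappa_n n)$ could never appear.

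The missing idea is K\"uhne's: put auxiliary bundles of \emph{non-trivial} class, pulled back from $\ol G$, into $\widetilde L$ and $\widetilde M$.
\begin{itemize}
\item Fix a semipositive adelic line bundle $\ol M$ on $\ol G$, independent of $n$, with $\ol M\pm O_{\ol G}(f)$ semipositive (first for model $f$, then approximate).
\item For $\lambda>0$ take $\widetilde L=\ol A_n+\lambda\varphi_n^*(\ol M+O_{\ol G}(f))$ and $\widetilde M=\lambda\varphi_n^*\ol M$; for $\lambda<0$ use $\ol M-O_{\ol G}(f)$ instead.
\item Both are semipositive for every $n$, so the uniformity obstacle you flagged disappears.
\end{itemize}
Your idea of dominating $\pm f$ by a fixed semipositive bundle is right. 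It has to go into $\widetilde L,\widetilde M$, not be absorbed into the canonical metric of $\ol A_n$, which has no smooth positive curvature in the toric directions to absorb $\lambda\,dd^c(f\circ\varphi_n)$. Your comparison with $n|\lambda|\,\ol A_n$ only controls classes, not curvature.

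The linear terms still cancel, but the remainder
\[
-\sum_{k\ge2}\binom{d+1}{k}(-\lambda)^k\,\widetilde L^{\,d+1-k}\cdot(\varphi_n^*\ol M)^k
\]
consists of monomials whose extra factors have non-trivial class. So Lemma~\ref{lem:parameterized-error-bookkeeping}, which is about algebraically trivial $F$, does not apply verbatim. Run the count directly, using $\ol L_n^{\can}=\varphi_n^*\bigl(n^{-1}\ol G(\ol D^{\can})+\ol\pi^*\ol N\bigr)$ and $\deg\varphi_n=n^t$:
\begin{itemize}
\item A monomial with $s\ge2$ auxiliary factors and $j\le g$ base factors carries $n^{s+j-g-1}\le n^{s-1}$; terms with more base factors vanish as in Lemma~\ref{lem:toric-dimensional-scaling}. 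This gives $O(|\lambda|^s n^{s-1})=O(n|\lambda|^2)$ when $n|\lambda|\le1$.
\item A monomial with one factor $\varphi_n^*O_{\ol G}(\kappa_n)$ is $\kappa_n$ times a geometric degree of size $O(n^{s})$. This gives $O(\kappa_n n\cdot n|\lambda|^2)$.
\end{itemize}
Together these give exactly $n|\lambda|^2(1+\kappa_n n)$. With this change, the rest of your argument (the constant shift and the field-extension normalization) goes through, and the whole becomes a fleshed-out version of the paper's proof.
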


\begin{proof}
	Let
	\[
	\ol A_n=\ol L_n^{\can}(\kappa_n)
	\]
	and write
	\[
	\ol L_{n,\lambda}^{\can}
	=
	\bigl(\ol A_n+\varphi_n^*O_{\ol G}(\lambda f)\bigr)
	\otimes\varphi_n^*O_{\ol G}(-\kappa_n).
	\]
	By assumption \(\ol A_n\) is horizontally semipositive.  The arithmetic volume
	comparison used here is Proposition~\ref{input:quadratic-volume}, namely the volume
	part of \cite[Lemma~4.6]{Ku1} together with its source
	\cite[Lemma~2.6]{Ku1}.  Applying it to \(\ol A_n\) and to
	\(\varphi_n^*O_{\ol G}(\lambda f)\)
	gives a lower bound for
	\[
	\widevol(\ol A_n+\ol E)-(\ol A_n+\ol E)^{d+1}
	\]
	in terms of the square of the size of the integrable perturbation \(\ol E\).
	Here \(\ol E=\varphi_n^*O_{\ol G}(\lambda f)\).
	
	The contribution linear in \(\lambda f\) is already accounted for by the
	intersection term.  The remaining error is quadratic in \(\lambda f\).  Thus
	the only external volume theorem used at this point is the
	K\"uhne--Yuan--Ikoma package recorded in
	Proposition~\ref{input:quadratic-volume}; the passage from K\"uhne's
	\((\mathbb P^1)^t\)-compactification to the present \(X_\Sigma\) is supplied
	by Lemma~\ref{lem:toric-dimensional-scaling}.  In particular, the pullback by
	the auxiliary \(n\)-isogeny contributes at most one \(n\)-factor in the
	non-linear error terms, while the \(\kappa_n\)-correction changes the bound by
	the factor \(1+\kappa_n n\), because the semipositive reference bundle is
	\(\ol A_n=\ol L_n^{\can}(\kappa_n)\).  Keeping the field-extension factor
	of Lemma~\ref{lem:field-extension-normalization} before normalization, one
	obtains
	\[
	\widehat{\operatorname{vol}}_{\chi,K_n}^{\mathrm{raw}}
	(\ol A_n+\varphi_n^*O_{\ol G}(\lambda f))
	-\bigl((\ol A_n+\varphi_n^*O_{\ol G}(\lambda f))^{d+1}
	\bigr)^{\mathrm{raw}}_{K_n}
	\ge
	-C_f [K_n:K]\, n|\lambda|^2(1+\kappa_n n).
	\]
	Finally
	\[
	\ol L_{n,\lambda}^{\can}
	=
	\bigl(\ol A_n+\varphi_n^*O_{\ol G}(\lambda f)\bigr)
	\otimes\varphi_n^*O_{\ol G}(-\kappa_n),
	\]
	and Lemma~\ref{lem:constant-shift-volume-defect} changes the volume defect by
	at most \(C_f[K_n:K]\kappa_n\).  Combining the two estimates gives
	\[
	\widehat{\operatorname{vol}}_{\chi,K_n}^{\mathrm{raw}}(\ol L_{n,\lambda}^{\can})
	-\bigl((\ol L_{n,\lambda}^{\can})^{d+1}\bigr)^{\mathrm{raw}}_{K_n}
	\ge
	-C_f [K_n:K]\,
	\bigl(n|\lambda|^2(1+\kappa_n n)+\kappa_n\bigr).
	\]
	The constant is independent of \(n\), \(\lambda\), and the place \(v'\) above
	\(v\).
\end{proof}

\subsection{Canonical equidistribution}

\begin{theorem}[Canonical semiabelian equidistribution on \(X_\Sigma\)]
	\label{thm:canonical-eq}
	Assume \(D\) is big, nef and \(T\)-effective, and all local toric metrics of
	\(\ol D\) are canonical.  Then \(\ol L^{\can}\) has \(v\)-adic
	equidistribution at every place \(v\), with measure
	\[
	\mu_{\ol L,v}
	=
	\frac{
		c_1(\ol G(\ol D^{\can})_v)^t\wedge c_1(\ol\pi^*\ol N_v)^g
	}{
		G(D)^t(\pi^*N)^g
	}.
	\]
\end{theorem}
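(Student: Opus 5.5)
The plan is to run the variational (Szpiro--Ullmo--Zhang/Yuan) argument on the auxiliary tower \(\overline G_n\), feeding in the three K\"uhne-type estimates of the previous subsection, and then let \(n\to\infty\). I first reduce to \(D\) ample. By the first stage of the strategy, pass to a birational toric modification of \(X_\Sigma\) on which a perturbation \(D+\epsilon D'\) is ample. Alternatively, approximate the nef and big \(D\) by the ample \(\mathbb R\)-divisors \(D+\epsilon D'\) with \(D'\) ample and \(T\)-effective. The canonical metric is linear in \(\Psi_D\), so the limiting measures vary continuously in \(\epsilon\). The small sequences are compared through Proposition~\ref{prop:section4-minima-formula}: for canonical metrics every essential minimum is \(0\). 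So assume \(D\) ample and \(T\)-effective, fix a place \(v\), and fix a continuous Galois-invariant test function \(f\) on \(\overline G^{an}_{\mathbb C_v}\), viewed as the metric of \(O_{\overline G}(f)\) placed at \(v\) only. By a standard density argument, \(f\) may be taken to be a model function.

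Let \((x_i)\) be a generic \(\overline L^{\can}\)-small sequence, so \(h_{\overline L^{\can}}(x_i)\to0\). For each \(n\), choose lifts \(x_{i,n}\in\overline G_n(\overline K)\) with \(\varphi_n(x_{i,n})=x_i\). These remain generic, and by the projection formula \(h_{\overline L_n^{\can}}(x_{i,n})\) equals \(h_{\overline L^{\can}}(x_i)\) up to the normalization of \(\varphi_n^*\). Consider \(\overline L^{\can}_{n,\lambda}=\overline L_n^{\can}+\varphi_n^*O_{\overline G}(\lambda f)\) with \(|\lambda|\le n^{-1}\). The key steps are as follows.

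\begin{enumerate}[label=\textup{(\roman*)},leftmargin=2em]
\item By Proposition~\ref{prop:canonical-lower-bound}, \(h_{\overline L_n^{\can}}(\overline G_n)\ge -Cn^{-2}\), and there is \(\kappa_n\ll n^{-2}\) for which the corrected bundle is horizontally semipositive.
\item Lemma~\ref{lem:canonical-volume} gives \(\widevol(\overline L^{\can}_{n,\lambda})\ge(\overline L^{\can}_{n,\lambda})^{d+1}-C_f(n\lambda^2+\kappa_n)\), in normalized form via Lemma~\ref{lem:field-extension-normalization}.
\item The arithmetic Minkowski/Hilbert--Samuel argument says that a positive \(\chi\)-volume yields small sections. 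It therefore gives \(\liminf_i h_{\overline L^{\can}_{n,\lambda}}(x_{i,n})\ge \widevol/((d+1)L_n^d)\) for generic sequences, after shifting by constants.
\item Lemma~\ref{lem:canonical-height-variation} expands the right side as \(h_{\overline L_n^{\can}}(\overline G_n)+\lambda n_v\int f_{n,v'}\,d\mu_{n,v'}+O(n\lambda^2)\).
\end{enumerate}

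On the other side, \(h_{\overline L^{\can}_{n,\lambda}}(x_{i,n})=h_{\overline L_n^{\can}}(x_{i,n})+\lambda n_v\int f\,d\delta_{x_i,v}\). Combining, and dividing by \(\lambda>0\) (respectively \(\lambda<0\)), yields
\[
\pm\Bigl(\liminf_i \int f\,d\delta_{x_i,v}-\int f_{n,v'}\,d\mu_{n,v'}\Bigr)\ge -C\Bigl(n|\lambda|+\frac{n^{-2}+\kappa_n}{|\lambda|}\Bigr)-\frac{o_i(1)}{|\lambda|}.
\]
The natural choice \(|\lambda|=n^{-3/2}\) makes the error \(O(n^{-1/2})\), which is compatible with the constraint \(|\lambda|\le n^{-1}\). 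Letting first \(i\to\infty\) and then \(n\to\infty\), Lemma~\ref{lem:canonical-measure} replaces \(\int f_{n,v'}\,d\mu_{n,v'}\) by \(\int f\,d\mu_{\overline L,v}\). The two-sided inequality then gives convergence of \(\int f\,d\delta_{x_i,v}\) to \(\int f\,d\mu_{\overline L,v}\). The stated form of the measure follows from Lemma~\ref{lem:algebraic-dimensional-vanishing}: it kills every mixed term of \(c_1(\overline L_v)^{g+t}\) except \(c_1(\overline G(\overline D^{\can})_v)^t\wedge c_1(\overline\pi^*\overline N_v)^g\), and the degree reduces to \(G(D)^t(\pi^*N)^g\).

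The main obstacle is the bookkeeping in step (iii) and in the division by \(\lambda\). The smallness error \(h_{\overline L_n^{\can}}(x_{i,n})-h_{\overline L_n^{\can}}(\overline G_n)\) must be \(o(\lambda)\) uniformly, which requires both \(h_{\overline L_n^{\can}}(\overline G_n)=O(n^{-2})\) and the exact comparison of lifted heights with the original ones. The quadratic loss \(O(n\lambda^2)\) and the shift loss \(O(\kappa_n)\) must then be balanced against the \(\lambda\) step. The scaling \(\kappa_n\ll n^{-2}\) versus an \(O(n)\) quadratic constant is exactly what makes a window for \(\lambda\) exist. I would check this window carefully first, since any worse exponent in Lemma~\ref{lem:toric-dimensional-scaling} would close it.
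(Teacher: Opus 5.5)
Your proposal is correct and is essentially the paper's own proof. You reduce to \(D\) ample, apply Minkowski to \(\ol L^{\can}_{n,\lambda}\) on the auxiliary tower using Proposition~\ref{prop:canonical-lower-bound}, Lemma~\ref{lem:canonical-volume} and Lemma~\ref{lem:canonical-height-variation}, take \(|\lambda|=n^{-3/2}\) to get an \(O(n^{-1/2})\) error, and conclude with Lemma~\ref{lem:canonical-measure}.

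The one justification to tighten is the ample reduction: transferring smallness to \(D+\epsilon D'\) requires \(h_{\ol G(\ol{D'}^{\can})}(x_i)\to0\), and vanishing of the essential minima alone does not give this. It does follow from bigness of \(D\): an interior point of \(\Delta_D\), together with centers tending to \(0\), forces \(\sum_v n_v\int\|u\|\,d\nu_{x_i,v}\to0\).
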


\begin{proof}
	By approximation in the nef cone it is enough to treat \(D\) ample.  Fix a
	continuous test function \(f\), approximated by integrable test functions, and
	fix \(v'\mid v\).  Minkowski's theorem applied to
	\(\ol L_{n,\lambda}^{\can}\) gives a non-zero section whose \(v'\)-norm is
	bounded by the normalized arithmetic volume.  Using
	Lemma~\ref{lem:canonical-volume}, then Lemma~\ref{lem:canonical-height-variation}
	and Proposition~\ref{prop:canonical-lower-bound}, one obtains
	\[
	\begin{aligned}
		\limsup_i\left|
		\frac{1}{\#O_v(x_i)}
		\sum_{y\in O_v(x_i)}f(y)
		-
		\int_{\ol G_{n,\mathbb{C}_{v'}}^{an}} f_{n,v'}\,d\mu_{n,v'}
		\right|
		&\le
		C\left(n^{-2}|\lambda|^{-1}+|\lambda|n(1+\kappa_n n)\right)\\
		&\quad +C|\lambda|^{-1}\epsilon .
	\end{aligned}
	\]
	Here Lemma~\ref{lem:field-extension-normalization} is used to pass from the
	\([K_n:K]\)-weighted volume defect to the displayed normalized estimate.
	The additional term \(\kappa_n|\lambda|^{-1}\) coming from
	Lemma~\ref{lem:canonical-volume} is absorbed into
	\(n^{-2}|\lambda|^{-1}\), since \(\kappa_n\ll n^{-2}\).
	Choose \(\lambda=n^{-3/2}\) and \(\epsilon=n^{-2}\).  Since
	\(\kappa_n\ll n^{-2}\), the right side is \(O(n^{-1/2})\).  Letting
	\(n\to\infty\) and using Lemma~\ref{lem:canonical-measure} gives the result.
\end{proof}

\section{Explicit compression for quasi-canonical metrics}

\subsection{Compressed metrics}

Let \(\ol D\) now be a quasi-canonical toric metrized divisor.  For \(m\ge 1\)
put
\[
\ol D_m=\frac{1}{m}[m]^*\ol D,
\]
where \([m]\) denotes multiplication on the toric part.  Define
\[
\ol L_{(m,n)}
=
\ol G_n(\ol D_m)\otimes\ol\pi_n^*\ol N_{K_n}.
\]
The canonical endpoint is denoted
\[
\ol L_{(\infty,n)}=\ol L_n^{\can}.
\]
Throughout the compressed section, \(L_n\) denotes the common underlying
algebraic line bundle of the adelic line bundles \(\ol L_{(m,n)}\).  Indeed,
the toric divisor underlying \(\ol D_m=m^{-1}[m]^*\ol D\) is again \(D\), since
the support function of a toric Cartier divisor is homogeneous.  Thus
\[
L_{(m,n)}
=G_n(D)\otimes\pi_n^*N_{K_n}
=L_n
\]
as algebraic line bundles, although their metrics depend on \(m\).  In
particular the normalizing degree in all height, measure, and intersection
estimates below is \(L_n^d\), \(d=g+t\), uniformly in \(m\).

\begin{proposition}[Equivariance in the horizontal compression parameter]
	\label{prop:horizontal-equivariance}
	Fix \(n\in\mathbb{N}\cup\{\infty\}\).  For any two positive integers \(m,m'\),
	the \(v\)-adic equidistribution property for \(\ol L_{(m,n)}\) is equivalent to
	the corresponding property for \(\ol L_{(m',n)}\).
\end{proposition}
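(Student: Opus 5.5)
The plan is to realize the change of compression parameter by an explicit finite self-map of \(\ol G_n\) that intertwines the two metrics, and then to transport small sequences and Galois-orbit measures along this map. Consider the endomorphism \([k]_{G_n}\) of \(G_n\), which extends to \(\ol G_n\) because its toric part is multiplication by \(k\) on \(T\) and therefore respects the fan \(\Sigma\). It suffices to compare \(m\) with \(km\) for every \(k\ge1\): the general pair \((m,m')\) then follows by passing through \(mm'\).

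\textbf{Step 1: the pullback identity.} Through the local trivializations of Theorem~\ref{thm:local-trivialization}, the map \([k]\) acts on the toric coordinate as multiplication by \(k\), up to compact-torus factors, and on the base as \([k]_A\). The toric metric therefore satisfies
\[
[k]^*\ol G_n(\ol D_m)=k\,\ol G_n(\ol D_{km})\otimes(\text{Picard-zero theta factors}).
\]
The theta factors \(\mathcal H_{G_n,\cdot}\) carry canonical flat metrics, and by Proposition~\ref{prop:theta-neron-height} they contribute the heights \(h_{\ol H}\), which are linear in the point. Meanwhile \([k]^*\ol N=k^2\ol N\) for the symmetric canonical metric. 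Evaluating heights with Proposition~\ref{prop:section4-valuation-height-package}, one gets for \(x\in G_n(\ol K)\)
\[
h_{\ol L_{(m,n)}}(kx)=k\,h_{\ol G_n(\ol D_{km})}(x)+k^2h_{\ol N}(\pi_n x)+(\text{linear Picard-zero term}).
\]

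\textbf{Step 2: reduction to torsion fibres.} The mismatch between the factors \(k\) and \(k^2\) prevents comparing \(\ol L_{(m,n)}\) and \(\ol L_{(km,n)}\) directly. The plan is therefore to localize via the minima formula of Proposition~\ref{prop:section4-minima-formula}. That proposition shows the essential minimum of either bundle is the toric absolute minimum, and \([k]^*\ol D_m=k\ol D_{km}\) gives \(\muabs_{\ol D_m}=\muabs_{\ol D_{km}}\). For a small generic sequence \(x_i\), both \(h_{\ol N}(\pi x_i)\to0\) and the toric part approaches its minimum; this uses Lemma~\ref{lem:arith-teff-height-lower} together with Lemma~\ref{lem:picard-translation-bound}, which absorbs the Picard-zero term by completing the square. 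Consequently \(x_i\) is \(\ol L_{(km,n)}\)-small iff \(kx_i\) is \(\ol L_{(m,n)}\)-small. Genericity is preserved because \([k]\) is finite and surjective.

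\textbf{Step 3: transporting the measures.} Since \([k]\) is finite and Galois-equivariant, the orbit measure of \(kx_i\) is \([k]_*\) of the orbit measure of \(x_i\). This gives one direction: equidistribution for \(\ol L_{(km,n)}\) implies it for \(\ol L_{(m,n)}\), with limit the pushforward measure. For the converse, apply Step 2 to the sequence of preimages \(x_i\) of a given small sequence for \(\ol L_{(m,n)}\). Any weak limit of their orbit measures then lies in a fibre of \([k]_*\). The fibres of \([k]\) are torsor orbits under \(G_n[k]\), and translation by a torsion point preserves heights by Corollary~\ref{cor:torsion-lift-height-invariance}. Hence the limit is invariant under \(G_n[k]\), and a \(G_n[k]\)-invariant measure is uniquely determined by its pushforward.

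\textbf{Main obstacle.} The hardest part is Step 2: controlling the linear Picard-zero term and the \(k\) versus \(k^2\) discrepancy uniformly along the sequence, so that smallness transfers in both directions. If this fails for general \(x\), the fallback is to use only the asymptotic statement \(h_{\ol N}(\pi x_i)\to0\). Together with Corollary~\ref{cor:picard-zero-height-pairing} and Cauchy--Schwarz, this forces the linear term to vanish in the limit.
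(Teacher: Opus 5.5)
\textbf{Where your proposal stands.} Your route is the paper's. The paper compares \(m\) with \(1\) through \([m]_{\ol G_n}\), uses \(h_{\ol L_{(m,n)}}(x)=\frac1m h_{\ol G_n(\ol D)}([m]x)+h_{\ol\pi_n^*\ol N}(x)\), transfers smallness in both directions, and invokes the projection formula. Comparing \(m\) with \(km\) and routing through \(mm'\) is the same argument. The implication from \(\ol L_{(km,n)}\) to \(\ol L_{(m,n)}\) is sound: lift a generic \(\ol L_{(m,n)}\)-small sequence through \([k]\), then push the orbit measures forward.

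\textbf{The gap: the converse in Step 3.} Take a generic \(\ol L_{(km,n)}\)-small sequence \((x_i)\). You correctly get that every weak limit \(\mu'\) of its orbit measures satisfies \([k]_*\mu'=\mu_{\ol L_{(m,n)},v}\). However, the asserted \(G_n[k]\)-invariance of \(\mu'\) does not follow from height invariance under torsion translation.

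\begin{itemize}
\item Height invariance only shows that \((x_i+\xi)\) is again generic and small. So \((\tau_\xi)_*\mu'\) is a limit of orbit measures of some small sequence lying in the same fibre of \([k]_*\). That is automatic anyway, since \([k]\circ\tau_\xi=[k]\).
\item To conclude \((\tau_\xi)_*\mu'=\mu'\), you would need uniqueness of limits for \(\ol L_{(km,n)}\)-small sequences. That is precisely the property you are trying to prove.
\item Invariance is also not visible at finite level. If \(K(x_i)=K(kx_i)\), then \([k]\) is injective on the Galois orbit of \(x_i\). An example is, in the canonical case, a generic sequence of torsion points of order prime to \(k\). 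Each orbit then meets every fibre of \([k]\) in at most one point.
\end{itemize}

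So invariance of the limit is genuine asymptotic content of equidistribution at the finer index, not a formal consequence. The pushforward argument proves only the implication from \(km\) to \(m\), together with the compatibility \([k]_*\mu_{\ol L_{(km,n)},v}=\mu_{\ol L_{(m,n)},v}\) of the candidate limits. The reverse implication needs an independent input. Examples would be a direct proof of the finer statement, or a separate argument that all weak limits are \(G_n[k]\)-invariant.

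The paper's own proof asserts the equivalence from the same projection formula and does not address this point either. Only the measure identity is used later, in Lemma~\ref{lem:compressed-toric-uniform-family}.

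\textbf{Two smaller corrections to Steps 1--2.}
\begin{itemize}
\item There is no Picard-zero term, and your ``main obstacle'' is not one. Since \(\operatorname{trop}_{G_n,v}\) is a homomorphism, \(\nu_{kx,v}=(k\cdot)_*\nu_{x,v}\). Also \(\Psi_{\ol D_m,v}(ku)=k\,\Psi_{\ol D_{km},v}(u)\). Proposition~\ref{prop:section4-valuation-height-package} then gives the exact identity \(h_{\ol G_n(\ol D_m)}(kx)=k\,h_{\ol G_n(\ol D_{km})}(x)\). The \(k\) versus \(k^2\) mismatch is harmless, because smallness forces \(h_{\ol N}(\pi_n x_i)\to0\) by Lemma~\ref{lem:arith-teff-height-lower}.
\item The relation \([k]^*\ol D_m=k\ol D_{km}\) gives \(\muabs_{\ol D_m}(T)=k\,\muabs_{\ol D_{km}}(T)\), not equality. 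This factor \(k\) is exactly what makes the limit \(k\cdot\muabs_{\ol D_{km}}(T)=\muabs_{\ol D_m}(T)\) consistent. Equality holds only after normalizing the minima to \(0\).
\end{itemize}
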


\begin{proof}
	It suffices to compare \(m\) with \(1\).  The morphism
	\([m]_{\ol G_n}\) is generically finite and maps generic sequences to generic
	sequences.  The height identity is
	\[
	h_{\ol L_{(m,n)}}(x)
	=
	\frac{1}{m}h_{\ol G_n(\ol D)}([m]x)
	+h_{\ol\pi_n^*\ol N}(x),
	\]
	with the abelian part transforming quadratically under multiplication.  For
	small sequences, the abelian canonical height tends to zero, so smallness is
	preserved in both directions after passing to preimages.  The measure identity
	follows from the projection formula:
	\[
	\int f\,d\mu_{\ol L_{(1,n)},v}
	=
	\int f\circ[m]_{\ol G_n}\,d\mu_{\ol L_{(m,n)},v}.
	\]
\end{proof}

\subsection{Lower bound along a compression path}

\begin{proposition}[Quasi-canonical compression normal form]
	\label{prop:qcanonical-compression-normal-form}
	\label{input:qcanonical-compression-normal-form}
	After changing \(\ol D\) by an \(M_K\)-constant and normalizing
	\(\muabs_{\ol D}(T)=0\), a quasi-canonical toric metric has critical vector
	\((u_v)_v\) and constants \((\gamma_v)_v\) with
	\[
	\sum_vn_vu_v=0,\qquad \sum_vn_v\gamma_v=0.
	\]
	For \(m\ge1\), the compressed metric
	\(\ol D_m=m^{-1}[m]^*\ol D\) has local support functions
	\[
	\Psi_{\ol D_m,v}(u)
	=
	\Psi_D\left(u-\frac{u_v}{m}\right)-\frac{\gamma_v}{m}.
	\]
	Moreover, on each fixed toric chart used in
	Lemma~\ref{lem:canonical-chart-lower}, the shift \(u_v/m\) changes the
	theta-factor on the abelian quotient by \(m^{-1}\) times a Picard-zero class
	depending only on the original quasi-canonical metric and on the chart; the
	constant terms \(\gamma_v/m\) cancel globally by the product formula.
\end{proposition}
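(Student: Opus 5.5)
The plan is to derive the normal form from the toric characterization of quasi-canonicity, and then read off the compressed support functions from homogeneity of \(\Psi_D\). Recall the height of \(X_\Sigma\) with respect to \(\bD\): up to the factor \((t+1)!\) it is the average of the global roof function \(\upsilon_{\bD}=\sum_vn_v\upsilon_{\bD,v}\) over \(\Delta_D\) (the toric height formula of \cite{BPS14}). On the other hand, \(\muess_{\bD}(X_\Sigma)=\max_{\Delta_D}\upsilon_{\bD}\) by \cite[Theorem~3.9 and Corollary~3.10]{BGPS15}. Since \(D\) is big, \(\Delta_D\) has nonempty interior. The quasi-canonical equality \(h_{\bD}(X_\Sigma)=\muess_{\bD}(X_\Sigma)\) therefore forces the concave function \(\upsilon_{\bD}\) to be constant on \(\Delta_D\).

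Step one is to show that each local roof is affine. A sum of concave functions with positive weights that is constant on a full-dimensional convex set has every summand affine there: each \(\upsilon_{\bD,v}\) is concave, and \(\upsilon_{\bD,v}=c-\sum_{w\neq v}(n_w/n_v)\upsilon_{\bD,w}\) is also convex. Only finitely many places are non-canonical, and the canonical roof is \(0\). Hence we may write \(\upsilon_{\bD,v}(x)=-\langle x,u_v\rangle-\gamma_v\), with \((u_v,\gamma_v)=(0,0)\) for almost all \(v\). The sign convention is the one forced by Legendre duality with the \(\Psi\) convention of Section~3. Constancy of the sum gives \(\sum_vn_vu_v=0\), and \(\sum_vn_v\gamma_v=-\muabs_{\bD}(T)\). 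After twisting by an \(M_K\)-constant so that \(\muabs_{\bD}(T)=0\), we get \(\sum n_v\gamma_v=0\).

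Because \(\partial\upsilon_{\bD,v}(x)=\{-u_v\}\) is a single point, the decomposition in Definition~\ref{def:monocritical} is unique. So \((u_v)_v\) is the critical vector, up to the global sign convention, and the metric is automatically monocritical. Dualizing \(\upsilon_{\bD,v}\) back by the Legendre transform gives
\[
\Psi_{\bD,v}(u)=\Psi_D(u-u_v)-\gamma_v .
\]
Here we use that the inf-convolution of the canonical support function with a linear function is a translate.

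Step two is the compression formula. Multiplication \([m]\) on \(T\) acts on \(N_\BR\) by \(u\mapsto mu\), so \(\Psi_{[m]^*\bD,v}(u)=\Psi_{\bD,v}(mu)\). Dividing by \(m\) and using the conic homogeneity \(\Psi_D(mw)=m\Psi_D(w)\) gives
\[
\Psi_{\bD_m,v}(u)=\Psi_D\left(u-\frac{u_v}{m}\right)-\frac{\gamma_v}{m}.
\]
This step is routine.

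Step three, the theta-factor statement, is the main obstacle. Fix a chart from Lemma~\ref{lem:canonical-chart-lower} with its lattice point \(m_j\). On that chart, the effective toric section attached to \(m_j\) has local norm \(-\log\|\cdot\|=\langle m_j,\cdot\rangle+\Psi_D\)-type expressions evaluated at \(\val_v(\psi_{j,v}(y))\). Translating the argument by \(u_v/m\) changes this local function by the constant \(m^{-1}(\langle m_j,u_v\rangle+\gamma_v)\). I would interpret this change as a modification of the local theta normalization of \(f_{m_j,v}\), that is, of the Picard-zero factor \(\mathcal H_{G,m_j}\) at the finitely many places where \(u_v\neq0\), scaled by \(m^{-1}\). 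This shows that the change depends only on \(\bD\) and the chart. Summing over places with weights \(n_v\) and using \(\sum n_vu_v=0\) and \(\sum n_v\gamma_v=0\), the \(\gamma_v/m\) contributions cancel, and so does the global pairing with \(m_j\), by the product formula. What needs care is that this local renormalization is compatible with the theta-factor convention, with the overlap cancellation in Theorem~\ref{thm:local-trivialization}, and with the height identity of Proposition~\ref{prop:section4-valuation-height-package}. I would check this first at one non-archimedean place using Lemma~\ref{lem:local-coordinates-compute-tropicalization}, where the translation is literally a compact-torus-invariant shift of \(\operatorname{trop}_{G,v}\).
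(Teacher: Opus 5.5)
Your argument is correct, but it reaches the statement partly by a different route from the paper's.

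\textbf{Normal form.} The paper simply invokes the BGPS normal form for quasi-canonical toric metrics; you prove it. The roof-function height formula makes \(h_{\bD}(X_\Sigma)\) the mean of \(\upsilon_{\bD}\) over the full-dimensional \(\Delta_D\), and the successive-minima formula makes \(\muess_{\bD}(X_\Sigma)\) its maximum. Hence \(\upsilon_{\bD}\) is constant, and the concave/convex sandwich makes each local roof affine. This is self-contained: it uses only semipositivity, implicit in the paper's setting, to have the height formula and \(\Psi_{\bD,v}=\upsilon_{\bD,v}^\vee\). It also yields monocriticality and identifies \((u_v)_v\) as the critical vector, which the paper imports separately in Proposition~\ref{input:bgps-qcanonical-bridge}.

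Two cosmetic corrections. First, with the concave Legendre convention the affine roof is \(\upsilon_{\bD,v}(x)=\langle x,u_v\rangle+\gamma_v\), as in Lemma~\ref{lem:qcanonical-arith-teff-normalization}, so \(\sum_vn_v\gamma_v=+\muabs_{\bD}(T)\). Your sign flip is harmless, since both zero-sum conditions are invariant under it. Second, \(\partial\upsilon_{\bD,v}(x)=\{u_v\}\) holds only for interior \(x\). At a boundary point it is \(u_v\) plus a cone normal to \(\Delta_D\), which is pointed because \(\Delta_D\) is full-dimensional, so the decomposition is still unique.

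\textbf{Compression.} Your homogeneity step coincides with the paper's.

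\textbf{Theta factor.} Here your computation is more concrete than the paper's, and sharper. The paper posits a class \(m^{-1}E_j\in\operatorname{Pic}^0(A)_\BR\) and then spends Lemmas~\ref{lem:compressed-picard-bound} and~\ref{lem:compressed-lower-uniformity} controlling it. You show instead that the shift only rescales \(|f_{m_j,v}|\) by the constants \(e^{\pm(\langle m_j,u_v\rangle+\gamma_v)/m}\) at finitely many places, with vanishing weighted sum. So the induced change is height-trivial, and one can take \(E_j=0\); the compressed chart bound then coincides with the canonical one.

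The compatibility checks you list at the end can be bypassed, at least for points of the open semiabelian part. Insert \(\Psi_D(u-u_v/m)\le\langle m_j,u-u_v/m\rangle\), valid since \(m_j\in\Delta_D\), into the height identity of Proposition~\ref{prop:section4-valuation-height-package}. Via the center formula there, this reproduces the same Picard-zero height term as in the canonical case, plus the additive constant \(m^{-1}\sum_vn_v(\langle m_j,u_v\rangle+\gamma_v)=0\).
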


\begin{proof}
	By the BGPS normal form for quasi-canonical toric metrics, after adding an
	\(M_K\)-constant the local support function at \(v\) can be written as
	\[
	\Psi_{\ol D,v}(u)=\Psi_D(u-u_v)-\gamma_v .
	\]
	The normalization \(\muabs_{\ol D}(T)=0\), together with the product-formula
	condition for adelic toric translations, gives
	\[
	\sum_v n_vu_v=0,\qquad \sum_v n_v\gamma_v=0.
	\]
	The underlying toric Cartier divisor is unchanged by the normalized pullback:
	the support function of \(D\) is homogeneous for toric multiplication, so
	\[
	\frac1m[m]^*D=D
	\]
	as an \(\mathbb R\)-Cartier divisor.  On support functions, multiplication by
	\(m\) sends \(u\) to \(mu\).  Hence
	\[
	\Psi_{\ol D_m,v}(u)
	=
	\frac1m\Psi_{\ol D,v}(mu)
	=
	\frac1m\Psi_D(mu-u_v)-\frac{\gamma_v}{m}
	=
	\Psi_D\left(u-\frac{u_v}{m}\right)-\frac{\gamma_v}{m},
	\]
	where the last equality uses the homogeneity of the canonical support function
	\(\Psi_D\).  The constants have no global height contribution because their
	weighted sum is \(m^{-1}\sum_vn_v\gamma_v=0\).
	
	It remains to record the effect on the semiabelian local trivialization.  On a
	fixed chart the toric coordinate attached to a character is written as a
	semiabelian rational character divided by the theta representative of the
	corresponding character-pushout Picard-zero bundle, as in
	Theorem~\ref{thm:local-trivialization} and
	Proposition~\ref{prop:theta-neron-height}.  Translating the tropical coordinate
	by \(u_v/m\) therefore changes only the automorphy factor in this
	character-pushout direction.  Since the translation is linear in \(u_v/m\), the
	resulting class in \(\operatorname{Pic}^0(A)_\mathbb R\) is \(m^{-1}\) times
	the Picard-zero class attached to the original quasi-canonical translation on
	that chart.  The finite chart system gives finitely many such classes, all
	depending only on \(\ol D\) and on the chosen local trivialization data.  This
	is the asserted theta-factor statement.
\end{proof}

\begin{lemma}[Compressed chart lower bound]
	\label{lem:compressed-chart-lower}
	Assume that \(D\) is an integral ample \(T\)-effective toric divisor and that
	\(\ol D\) is quasi-canonical, normalized by
	\(\muabs_{\ol D}(T)=0\).  Let \(m_0,\ldots,m_r\) be the lattice points chosen in
	Lemma~\ref{lem:canonical-chart-lower}.  There are real Picard-zero classes
	\(E_0,\ldots,E_r\in \operatorname{Pic}^0(A)_\bR\), depending only on \(\ol D\) and on this
	finite chart system, such that for every \(m,n\ge1\) and every
	\(x\in\overline G_n(\overline K)\), one can choose \(j\) with
	\[
	h_{\overline G_n(\ol D_m)}(x)
	\ge
	-h_{\overline\Theta_{j,m}^{(n)}}(\pi_n(x)),
	\]
	where
	\[
	\Theta_{j,m}^{(n)}
	:=
	H_{m_j}^{(n)}+\frac1mE_j^{(n)}\in\operatorname{Pic}^0(A)_\bR.
	\]
	Here \(H_{m_j}^{(n)}\) is the Picard-zero class appearing in
	Lemma~\ref{lem:canonical-chart-lower}, and \(E_j^{(n)}\) denotes the same class
	\(E_j\) transported through the \(n\)-division extension \(G_n\).
\end{lemma}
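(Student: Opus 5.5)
The plan is to repeat the proof of Lemma~\ref{lem:canonical-chart-lower}, with the canonical support function replaced by the translated one supplied by Proposition~\ref{prop:qcanonical-compression-normal-form}. By that proposition, after the \(M_K\)-constant normalization the compressed local weights are \(\Psi_{\ol D_m,v}(u)=\Psi_D(u-u_v/m)-\gamma_v/m\), with \(\sum_vn_vu_v=0\) and \(\sum_vn_v\gamma_v=0\). So \(\overline G_n(\ol D_m)\) is the canonical metric composed with a tropical translation by \(u_v/m\) at each place, plus constants whose global contribution vanishes.

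Fix \(x\in\overline G_n(\overline K)\). As in the canonical case, choose \(j\) so that the toric section attached to \(D+\operatorname{div}\chi^{m_j}\) does not vanish at the toric coordinate of \(x\). The index \(j\) is selected exactly as before, since the lattice points \(m_0,\dots,m_r\) are unchanged and only the metric moves.

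Next I evaluate \(-\log\) of the induced section at each \(v\)-adic conjugate \(y\) via the height identity of Proposition~\ref{prop:section4-valuation-height-package}. For the canonical metric the local term is \(-\langle m_j,\val_v\psi(y)\rangle\) minus the canonical norm of the section, which is \(\ge0\) on the chart. For the translated metric, the section's norm at \(y\) equals its canonical norm at the translated tropical point \(\val_v\psi(y)-u_v/m\), plus the constant \(\gamma_v/m\).

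The canonical part is again bounded below by zero. The character part now picks up an additional term \(\langle m_j,u_v\rangle/m\) at each place. After averaging over conjugates and summing over \(v\) with weights \(n_v\), the original character term becomes \(-h_{\overline H^{(n)}_{m_j}}(\pi_n(x))\) by Proposition~\ref{prop:theta-neron-height}, exactly as in Lemma~\ref{lem:canonical-chart-lower}. The constants \(\gamma_v/m\) cancel globally.

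The main obstacle is the translation term. Summed naively it gives \(m^{-1}\sum_vn_v\langle m_j,u_v\rangle=0\), but the translation is applied chart-by-chart through the theta factors. I would invoke the final clause of Proposition~\ref{prop:qcanonical-compression-normal-form}: on the fixed chart \(j\), the shift by \(u_v/m\) alters the theta representative in the character-pushout direction by \(m^{-1}\) times a Picard-zero class. Call that class \(E_j\). It depends only on \(\ol D\) and on the chart data.

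Reinterpreting the shifted local terms as Néron local functions of \(\frac1mE_j\), via Proposition~\ref{prop:canonical-picard-zero-theta-metrics}, gives a contribution \(-\frac1mh_{\overline E_j}(\pi_n(x))\). For \(G_n\), \(E_j\) must be transported through the \(n\)-division tower, yielding \(E_j^{(n)}\). The delicate bookkeeping is to check that this transport is compatible with the identification \(\varphi_n\) and with the real-linear extension of Corollary~\ref{cor:picard-zero-height-pairing}. By linearity of Picard-zero heights, adding the two contributions gives
\[
h_{\overline G_n(\ol D_m)}(x)\ge-h_{\overline\Theta^{(n)}_{j,m}}(\pi_n(x)),\qquad \Theta^{(n)}_{j,m}=H^{(n)}_{m_j}+\tfrac1mE^{(n)}_j,
\]
which is the claimed bound. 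Finally, reduce from a positive multiple of \(D\) by dividing the inequality, as in the canonical lemma.
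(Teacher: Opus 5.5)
Your proposal is essentially the paper's own proof. It uses the same chart selection as in Lemma~\ref{lem:canonical-chart-lower} and the normal form of Proposition~\ref{prop:qcanonical-compression-normal-form}, cancels the \(\gamma_v/m\) globally, and appeals to that proposition's theta-factor clause to produce the classes \(E_j\).

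One caveat: the step you call the main obstacle is not one, and your ``naive'' computation already proves the lemma with \(E_j=0\). The metric \(\overline G_n(\ol D_m)\) is pulled back through charts whose valuation is the chart-independent \(\operatorname{trop}_{G_n,v}\) (Lemma~\ref{lem:local-coordinates-compute-tropicalization}). So the translation enters only through the bound \(\langle m_j,u\rangle-\Psi_D(u-u_v/m)\ge\langle m_j,u_v\rangle/m\) for all \(u\in N_\BR\), valid since \(m_j\in\Delta_D\). This is a point-independent constant, and its \(n_v\)-weighted sum is zero. Your later reinterpretation of these constants as N\'eron local functions of \(\frac1mE_j\), a move the paper's proof also makes, is therefore unnecessary. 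For \(E_j\neq0\) it is also unjustified, since the translation contributes exactly \(0\) to the bound for every \(x\).
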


\begin{proof}
	The finite chart system is the same as in
	Lemma~\ref{lem:canonical-chart-lower}, because \(D_m\) has the same underlying
	algebraic toric divisor \(D\).  On such a chart, the canonical proof compares
	the local toric section with the theta factor \(H_{m_j}^{(n)}\) on the abelian
	quotient.  By Proposition~\ref{prop:qcanonical-compression-normal-form}, replacing
	\(\Psi_D\) by
	\[
	\Psi_D\left(u-\frac{u_v}{m}\right)-\frac{\gamma_v}{m}
	\]
	does not introduce a scalar global height contribution, since
	\(\sum_vn_v\gamma_v=0\).  The remaining effect of the translated critical
	vector is linear in \(u_v/m\).  Through the local trivialization and the
	theta-function height identity of Proposition~\ref{prop:theta-neron-height},
	this linear term is exactly \(m^{-1}\) times a Picard-zero height on \(A\).
	Since only finitely many charts are used, these Picard-zero directions are a
	fixed finite family \(E_0,\ldots,E_r\).  Thus, on the chart where the chosen
	toric section does not vanish,
	\[
	h_{\overline G_n(\ol D_m)}(x)
	+h_{\overline\Theta_{j,m}^{(n)}}(\pi_n(x))\ge0,
	\]
	which is the asserted inequality.
\end{proof}

\begin{lemma}[Compressed Picard-zero translation bound]
	\label{lem:compressed-picard-bound}
	With the notation of Lemma~\ref{lem:compressed-chart-lower}, there is a
	constant \(C>0\), independent of \(m\), \(n\), \(j\), and
	\(y\in A(\overline K)\), such that
	\[
	h_{\overline N}(y)-h_{\overline\Theta_{j,m}^{(n)}}(y)\ge -Cn^{-2}.
	\]
\end{lemma}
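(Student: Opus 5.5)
The plan is to repeat the completing-the-square argument of Lemma~\ref{lem:picard-translation-bound}, now applied to the real Picard-zero class \(\Theta_{j,m}^{(n)}\) instead of \(H^{(n)}_{m_j}\). First I will represent every class in question by a point of \(A(\overline K)\otimes_\BZ\BR\) through the polarization. As in that lemma, after a finite extension of \(K\) and after clearing denominators, there are a positive integer \(a\) and points \(q_j\in A(\overline K)\) with \(\phi_N(q_j)=aH_{m_j}\). The isogeny \(\phi_N\) induces an isomorphism \(A(\overline K)\otimes\BR\to A^\vee(\overline K)\otimes\BR\), so there are real points \(e_j\in A(\overline K)\otimes\BR\) with \(\phi_N(e_j)=aE_j\). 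These are finitely many (\(0\le j\le r\)) and depend only on \(\ol D\) and the chart system of Lemma~\ref{lem:compressed-chart-lower}.

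The key step is to identify the transported class \(E_j^{(n)}\). Exactly as \(H^{(n)}_{m_j}\) is the \(n\)-division of \(H_{m_j}\) in the tower \(G_n\), the Picard-zero direction produced by the translated critical vector in Proposition~\ref{prop:qcanonical-compression-normal-form} is a character-pushout class. Passing to \(G_n\) replaces each pushout class by its \(n\)-th division, since the extension class of \(G_n\) is \(\eta_G/n\) and pushout is linear in the extension class. So I will show \(E_j^{(n)}=n^{-1}E_j\) in \(\operatorname{Pic}^0(A)_\BR\). This follows by linearity of the pushout construction in \(\eta\) together with the chart description of Lemma~\ref{lem:compressed-chart-lower}. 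I expect this to be the main obstacle: everything depends on the transport carrying exactly the factor \(n^{-1}\). If \(E_j^{(n)}\) were not scaled, the argument would only give an error of order \(m^{-2}\), not the required \(n^{-2}\). With the identification in hand, put
\[
p_{j,m,n}=\frac1n\Bigl(q_j+\frac1m e_j\Bigr)\in A(\overline K)\otimes\BR,
\qquad
\phi_N(p_{j,m,n})=a\,\Theta_{j,m}^{(n)} .
\]

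Next I apply Corollary~\ref{cor:picard-zero-height-pairing} in its real form, which gives \(h_{\overline\Theta_{j,m}^{(n)}}(y)=\tfrac2a\langle y,p_{j,m,n}\rangle_N\). The N\'eron--Tate form extends to a positive semidefinite quadratic form on \(A(\overline K)\otimes\BR\). Completing the square, \(h_{\overline N}(y-a^{-1}p)\ge0\), yields
\[
h_{\overline N}(y)-h_{\overline\Theta_{j,m}^{(n)}}(y)\ge -a^{-2}h_{\overline N}(p_{j,m,n}).
\]
Finally, by homogeneity and the parallelogram inequality,
\[
h_{\overline N}(p_{j,m,n})=n^{-2}h_{\overline N}\Bigl(q_j+\tfrac1me_j\Bigr)
\le 2n^{-2}\bigl(h_{\overline N}(q_j)+h_{\overline N}(e_j)\bigr)
\]
for all \(m\ge1\). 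Take \(C=2a^{-2}\max_j\bigl(h_{\overline N}(q_j)+h_{\overline N}(e_j)\bigr)\). This constant is independent of \(m\), \(n\), \(j\) and \(y\), which gives the claimed bound.
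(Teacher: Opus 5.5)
Your proposal is correct and follows essentially the same route as the paper. The paper also represents \(H_{m_j}\) and \(E_j\) by points \(q_j,e_j\in A(\overline K)\otimes_\BZ\BR\) via \(\phi_N\). It tacitly uses the same scaling \(E_j^{(n)}=n^{-1}E_j\) that you make explicit, since it writes \(\phi_N(q_{j,n}+\frac1m e_{j,n})=a\Theta^{(n)}_{j,m}\) with \(ne_{j,n}=e_j\). It then completes the square via Corollary~\ref{cor:picard-zero-height-pairing} and uses homogeneity to get the factor \(n^{-2}\).

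The only difference is how uniformity in \(m\) is obtained. You use the parallelogram bound \(h_{\overline N}(q_j+\frac1m e_j)\le 2h_{\overline N}(q_j)+2h_{\overline N}(e_j)\). The paper instead bounds \(h_{\overline N}\) on the compact segment \(\{q_j+\theta e_j:0\le\theta\le1\}\) in Lemma~\ref{lem:compressed-lower-uniformity}.
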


\begin{proof}
	Fix \(j\).  Since \(\phi_N:A\to A^\vee\) is an isogeny, after clearing
	denominators in the finite-dimensional real vector space generated by the
	classes \(H_{m_j}\) and \(E_j\), there are a positive integer \(a\) and points
	\(q_j,e_j\in A(\overline K)\otimes_\BZ\bR\) such that
	\[
	\phi_N(q_j)=aH_{m_j},\qquad \phi_N(e_j)=aE_j.
	\]
	Choose \(q_{j,n}\) and \(e_{j,n}\) with
	\[
	nq_{j,n}=q_j,\qquad ne_{j,n}=e_j.
	\]
	Then
	\[
	\phi_N\left(q_{j,n}+\frac1m e_{j,n}\right)
	=
	a\Theta_{j,m}^{(n)}.
	\]
	By Corollary~\ref{cor:picard-zero-height-pairing}, we have
	\[
	h_{\overline\Theta_{j,m}^{(n)}}(y)
	=
	\frac2a
	\left\langle y,\,
	q_{j,n}+\frac1m e_{j,n}
	\right\rangle_N .
	\]
	Completing the square for the Neron--Tate quadratic form gives
	\[
	h_{\overline N}(y)-h_{\overline\Theta_{j,m}^{(n)}}(y)
	\ge
	-a^{-2}
	h_{\overline N}\left(q_{j,n}+\frac1m e_{j,n}\right).
	\]
	Since \(m\ge1\) and
	\[
	h_{\overline N}\left(q_{j,n}+\frac1m e_{j,n}\right)
	=
	n^{-2}h_{\overline N}\left(q_j+\frac1m e_j\right)
	\le C_jn^{-2},
	\]
	and since \(j\) ranges over a finite set, the result follows after enlarging
	\(C\).
\end{proof}

\begin{lemma}[Uniformity in the compression parameter]
	\label{lem:compressed-lower-uniformity}
	In Lemmas~\ref{lem:compressed-chart-lower} and
	\ref{lem:compressed-picard-bound}, all constants can be chosen independently of
	the compression parameter \(m\ge1\).  Consequently the lower-bound constants
	used below are uniform for every path \(m=m(n)\).
\end{lemma}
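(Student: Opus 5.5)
The plan is to re-run the proofs of Lemmas~\ref{lem:compressed-chart-lower} and \ref{lem:compressed-picard-bound} while recording where each constant comes from, and to check that \(m\) never enters except through the explicit factor \(m^{-1}\) in front of the fixed classes \(E_j\).

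\emph{Chart lower bound.} The chart system and the lattice points \(m_0,\ldots,m_r\) are those of Lemma~\ref{lem:canonical-chart-lower}. They depend only on the underlying divisor \(D\), and by Proposition~\ref{prop:qcanonical-compression-normal-form} the compressed metrics \(\ol D_m\) all have the same underlying divisor \(D\). Hence the choice of \(j\) for a given \(x\) uses no \(m\)-dependent data.

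The classes \(E_0,\ldots,E_r\) are fixed once and for all by the original quasi-canonical metric, namely by its critical vector \((u_v)_v\) and the finitely many charts. The normal form gives local support functions \(\Psi_D(u-u_v/m)-\gamma_v/m\). So the only \(m\)-dependence in the chart inequality is the exact linear factor \(m^{-1}\) in \(\Theta_{j,m}^{(n)}=H_{m_j}^{(n)}+m^{-1}E_j^{(n)}\), together with the constants \(\gamma_v/m\), whose weighted sum vanishes identically in \(m\). The inequality \(h_{\overline G_n(\ol D_m)}(x)+h_{\overline\Theta_{j,m}^{(n)}}(\pi_n(x))\ge0\) therefore has no hidden constant at all; it is an exact inequality for every \(m\).

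\emph{Picard-zero bound.} In Lemma~\ref{lem:compressed-picard-bound} the points \(q_j,e_j\) and the integer \(a\) are chosen independently of \(m\). Completing the square gives the error \(a^{-2}n^{-2}h_{\overline N}(q_j+m^{-1}e_j)\). The key step is a bound for this height that is uniform in \(m\). By the quadratic form property,
\[
h_{\overline N}(q_j+m^{-1}e_j)\le 2h_{\overline N}(q_j)+2m^{-2}h_{\overline N}(e_j)\le 2h_{\overline N}(q_j)+2h_{\overline N}(e_j),
\]
which is independent of \(m\ge1\). Taking the maximum over the finite index set \(j\) gives \(C\).

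\emph{Consequence.} Since all constants are independent of \(m\), they remain valid along any path \(m=m(n)\), including ones with \(m(n)\to\infty\). Combining with the identity \(h_{\ol L_{(m,n)}}=h_{\overline G_n(\ol D_m)}+h_{\ol N}\circ\pi_n\) yields \(\muabs_{\ol L_{(m,n)}}\ge-Cn^{-2}\) uniformly.

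\emph{Main obstacle.} The delicate point is justifying that \(E_j\) really is independent of \(m\). This amounts to showing that the Picard-zero class produced by a tropical translation by \(w\in N_{\mathbb R}\) depends \(\mathbb R\)-linearly on \(w\), so that translating by \(u_v/m\) yields exactly \(m^{-1}E_j\). I would verify this linearity directly from the theta description in Proposition~\ref{prop:theta-neron-height}. The tropical translation multiplies the theta factor in the \(\chi^{m_j}\)-direction by \(e^{\langle m_j,u_v\rangle/m}\), and linearity of Néron local functions and of Picard-zero heights in \(\operatorname{Pic}^0(A)_{\mathbb R}\) then gives the scaling.
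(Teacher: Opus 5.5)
Your proposal is correct and follows the paper's proof. In both, the chart data and the classes $E_j$ are fixed by $\overline{D}$ and the chart system, and the constants $\gamma_v/m$ have zero adelic sum. The only remaining point is a bound on $h_{\overline N}(q_j+\frac1m e_j)$ that is uniform in $m$. The paper gets this bound from continuity of the N\'eron--Tate quadratic form on the compact segment $\{q_j+\theta e_j:0\le\theta\le1\}$, whereas your parallelogram inequality $h_{\overline N}(x+y)\le 2h_{\overline N}(x)+2h_{\overline N}(y)$ gives the same uniformity explicitly. The $m$-independence of $E_j$, which you flag as the main obstacle, is already part of the statement of Lemma~\ref{lem:compressed-chart-lower} and needs no separate verification here.
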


\begin{proof}
	The chart system is fixed once \(D\) is fixed.  The only new terms introduced by
	the quasi-canonical metric are the finitely many Picard-zero directions
	\(E_j\), and the constants \(\gamma_v/m\) have zero global contribution by the
	normalization in Proposition~\ref{prop:qcanonical-compression-normal-form}.  Thus
	the chart inequality itself has no constant which grows with \(m\).
	
	It remains to make explicit the uniformity in the Picard-zero estimate.  With
	the notation used in the proof of Lemma~\ref{lem:compressed-picard-bound}, the
	possible translating points are
	\[
	q_j+\frac1m e_j,\qquad m\ge1.
	\]
	They lie in the compact line segment
	\[
	\{q_j+\theta e_j:0\le\theta\le1\}
	\]
	inside the finite-dimensional real vector space generated by the chosen
	Picard-zero data.  Since the Neron--Tate height associated with \(N\) is a
	continuous quadratic form on this space, there is a constant \(C_j\) such that
	\[
	h_{\overline N}\left(q_j+\frac1m e_j\right)\le C_j
	\qquad(m\ge1).
	\]
	After passing to \(n\)-division points, this gives
	\[
	h_{\overline N}\left(q_{j,n}+\frac1m e_{j,n}\right)
	=n^{-2}h_{\overline N}\left(q_j+\frac1m e_j\right)
	\le C_jn^{-2}.
	\]
	Taking the maximum over the finite set of charts gives a constant independent
	of \(m\), \(n\), \(j\), and the point \(y\).  This is the asserted uniformity.
\end{proof}

\begin{proposition}[Lower bound for compressed metrics]
	\label{prop:compressed-lower-bound}
	Assume \(D\) is ample and \(T\)-effective, and normalize
	\(\mu_{\ol D}^{\mathrm{abs}}(T)=0\).  Let \(m(n)\) be any increasing integer
	function with \(m(n)\le n\).  Then there is \(C>0\), depending only on the fixed
	data and not on the function \(m(\cdot)\), such that
	\[
	\mu^{\mathrm{abs}}_{\ol L_{(m(n),n)}}(\ol G_n)\ge -Cn^{-2},
	\qquad
	-Cn^{-2}\le h_{\ol L_{(m(n),n)}}(\ol G_n)\le 0.
	\]
	Consequently there exists \(0\le\kappa_n\le Cn^{-2}\) such that
	\[
	\ol L_{(m(n),n)}\otimes\varphi_n^*O_{\ol G}(\kappa_n)
	\]
	is horizontally semipositive.
\end{proposition}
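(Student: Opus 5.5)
The argument follows the proof of Proposition~\ref{prop:canonical-lower-bound} closely, with the compressed chart inputs in place of the canonical ones and with uniformity in \(m\) tracked throughout. First reduce to \(D\) integral and ample. Replace \(D\) by a positive multiple, divide heights by that multiple, and handle real ample \(D\) by writing \(\Psi_D\) as a positive combination of integral ample support functions and using additivity of heights. The \(\gamma_v\) constants in Proposition~\ref{prop:qcanonical-compression-normal-form} are unaffected by this reduction, since their weighted sum is zero.

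\textbf{Absolute minimum.} Fix \(n\) and put \(m=m(n)\). For \(x\in\overline G_n(\overline K)\), choose the chart index \(j\) from Lemma~\ref{lem:compressed-chart-lower}. The height splits as
\[
h_{\ol L_{(m,n)}}(x)=h_{\overline G_n(\ol D_m)}(x)+h_{\ol N}(\pi_n(x))\ge h_{\ol N}(\pi_n(x))-h_{\overline\Theta^{(n)}_{j,m}}(\pi_n(x)).
\]
Lemma~\ref{lem:compressed-picard-bound} bounds the right-hand side below by \(-Cn^{-2}\). Lemma~\ref{lem:compressed-lower-uniformity} shows that \(C\) does not depend on \(m\), so it does not depend on the function \(m(\cdot)\). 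This gives \(\muabs_{\ol L_{(m(n),n)}}(\ol G_n)\ge -Cn^{-2}\).

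\textbf{Upper bound on the height.} It suffices to show \(\muess_{\ol L_{(m,n)}}(\ol G_n)\le 0\); Theorem~\ref{input:fundamental-height-inequality} then yields
\[
-Cn^{-2}\le\muabs\le h_{\ol L_{(m,n)}}(\ol G_n)\le\muess\le0 .
\]
To get \(\muess\le0\), apply Proposition~\ref{prop:section4-minima-formula} to \(G_n\) and the metrized divisor \(\ol D_m\). Three hypotheses need checking.
\begin{enumerate}[label=\textup{(\roman*)},leftmargin=2em]
\item \(\ol D_m\) is semipositive: \([m]^*\) preserves concavity of the support functions.
\item \(\muabs_{\ol D_m}(T)=\frac1m\muabs_{\ol D}(T)=0\): use the height identity \(h_{\ol D_m}(t)=\frac1m h_{\ol D}(t^m)\) together with surjectivity of \([m]\) on \(T(\overline K)\).
\item \(\ol D_m\) is arithmetically \(T\)-effective. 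By the characterization in \S\ref{subsec:arith-teff-meaning}, this reduces to \(\vartheta_{\ol D_m}(0)=0\). The roof of \(\ol D_m\) is \(x\mapsto\sum_v n_v(\langle x,u_v\rangle/m+\gamma_v/m)\), and this vanishes identically on \(\Delta_D\) because \(\sum n_vu_v=0\) and \(\sum n_v\gamma_v=0\). Hence \(\vartheta_{\ol D_m}(0)=0\).
\end{enumerate}
An alternative route to \(\muess\le0\) uses Lemma~\ref{lem:torsion-fiber-toric-min} directly: torsion lifts over torsion points of \(A\) together with small toric points give a Zariski-dense set with heights \(\le\epsilon\).

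\textbf{Horizontal semipositivity.} With \(\delta_n=n^{-2}\), both hypotheses of Lemma~\ref{input:constant-correction-horizontal} hold for \(\overline M_n=\ol L_{(m(n),n)}\). This bundle is vertically semipositive by Proposition~\ref{prop:induced-metric-semipositive}, and its underlying bundle \(L_n\) is nef. The lemma therefore produces \(0\le\kappa_n\le C'n^{-2}\), and after enlarging \(C\) we may take \(C'=C\).

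\textbf{Main obstacle.} The delicate step is uniformity of the constant-correction lemma across the family. Lemma~\ref{input:constant-correction-horizontal} is stated for a fixed sequence \(\overline M_n\), and we need its \(C'\) to depend only on \(C\) and the fixed data, not on the compressed metrics. The plan is to observe that the correction depends only on the lower bounds \(-C\delta_n\) for \(\muabs\) and \(h\), and not on finer metric data. Since the underlying bundle \(L_n\) is the same for every \(m\), the constant produced by K\"uhne's argument is then independent of \(m(\cdot)\). A secondary point is that \(\ol D_m\) is only \(\mathbb R\)-valued through the shifts \(u_v/m\). This is harmless here, because Proposition~\ref{prop:section4-minima-formula} and the chart lemmas are already stated for toric metrized \(\mathbb R\)-divisors.
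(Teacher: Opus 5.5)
Your proposal is correct and follows the paper's proof essentially step for step. The chart bound of Lemma~\ref{lem:compressed-chart-lower} together with the \(m\)-uniform Picard-zero estimates gives \(\muabs\ge -Cn^{-2}\); torsion lifts and small toric points give \(\muess\le 0\); the fundamental inequality sandwiches \(h\); and Lemma~\ref{input:constant-correction-horizontal} with \(\delta_n=n^{-2}\) produces \(\kappa_n\). The only deviation is that you check arithmetic \(T\)-effectivity of \(\ol D_m\) so as to invoke Proposition~\ref{prop:section4-minima-formula}. That check is correct but more than the one-sided bound \(\muess\le0\) requires, and your alternative via Lemma~\ref{lem:torsion-fiber-toric-min} is exactly the paper's route.
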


\begin{proof}
	As in Proposition~\ref{prop:canonical-lower-bound}, it is enough first to treat
	integral ample \(T\)-effective divisors; the general ample \(\bR\)-divisor case
	follows by linearity of support functions and heights.
	
	Let \(m=m(n)\).  For any \(x\in\overline G_n(\overline K)\), choose \(j\) by
	Lemma~\ref{lem:compressed-chart-lower}.  Combining that lemma with
	Lemmas~\ref{lem:compressed-picard-bound} and
	\ref{lem:compressed-lower-uniformity}, applied to \(y=\pi_n(x)\), gives
	\[
	h_{\ol L_{(m,n)}}(x)
	=h_{\ol G_n(\ol D_m)}(x)+h_{\ol N}(\pi_n(x))
	\ge -Cn^{-2}
	\]
	with \(C\) independent of \(m\le n\).  Hence
	\[
	\mu^{\mathrm{abs}}_{\ol L_{(m(n),n)}}(\ol G_n)\ge -Cn^{-2}.
	\]
	
	The quasi-canonical normalization and
	Proposition~\ref{prop:qcanonical-compression-normal-form} imply
	\(\muabs_{\ol D_m}(T)=0\) for every \(m\): compression moves the local critical
	vector from \(u_v\) to \(u_v/m\), while both the vector and constant parts have
	zero adelic sum.  Taking torsion points on the abelian quotient and toric
	points of toric height \(0\), and using
	Proposition~\ref{prop:section4-valuation-height-package}, gives
	\[
	\muess_{\ol L_{(m(n),n)}}(\ol G_n)\le0 .
	\]
	Theorem~\ref{input:fundamental-height-inequality} therefore yields
	\[
	-Cn^{-2}\le h_{\ol L_{(m(n),n)}}(\ol G_n)\le0.
	\]
	All constants in these two lower estimates are independent of the chosen
	compression path \(m(n)\); the restriction \(m(n)\le n\) is only needed later,
	when the moving test function \(f\circ[m(n)]\) is paired with the
	\(n\)-division compression.
	
	Finally apply Lemma~\ref{input:constant-correction-horizontal} with
	\(\delta_n=n^{-2}\).  The preceding uniform estimates verify the hypotheses of
	that input with a constant independent of \(m(n)\).  After enlarging \(C\), it
	gives \(0\le\kappa_n\le Cn^{-2}\) and the asserted horizontal semipositivity.
\end{proof}

\subsection{The three compressed estimates}

\begin{proposition}[Fixed algebraic data in the horizontal parameter]
	\label{lem:fixed-horizontal-algebraic-data}
	Put \(d=g+t\).  For every \(m\ge1\), the normalized pullback
	\(D_m=m^{-1}[m]^*D\) has underlying algebraic divisor \(D\).  Consequently
	\[
	M_m:=\overline G(D_m)=\overline G(D)
	\]
	as an algebraic line bundle, although its metric depends on \(m\).  Moreover,
	for \(i>g\),
	\[
	\deg\!\left(c_1(\pi^*N)^i\cap
	(c_1(M_m)^{d-i}\cap[\overline G])\right)=0,
	\]
	whereas \(M_m^t(\pi^*N)^g>0\).  These statements, and their intersection
	constants, are independent of \(m\).
\end{proposition}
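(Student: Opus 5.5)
The plan has three steps: first identify the underlying algebraic divisor, then reduce the two intersection statements to the top-degree numerical reduction, and finally check that nothing depends on \(m\).

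\emph{Step 1: the underlying divisor.} A toric \(\mathbb R\)-Cartier divisor is determined by its virtual support function \(\Psi_D\), and the endomorphism \([m]\) of \(T\) extends to \(X_\Sigma\) because it preserves every cone of \(\Sigma\). On support functions it acts by precomposition with \(u\mapsto mu\), so
\[
\Psi_{D_m}(u)=\frac1m\Psi_D(mu)=\Psi_D(u)
\]
by conic homogeneity of \(\Psi_D\) on each cone. Hence \(D_m=D\) as toric \(\mathbb R\)-Cartier divisors; this is the same computation already used in the proof of Proposition~\ref{prop:qcanonical-compression-normal-form}. The quotient construction \(D\mapsto \overline G(D)=(G\times M)/T\) depends only on the \(T\)-linearized bundle, equivalently on the toric divisor. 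It is extended to \(\mathbb R\)-divisors by linearity. Therefore \(M_m=\overline G(D_m)=\overline G(D)\) as algebraic (\(\mathbb R\)-)line bundles. Only the metric, which is induced from \(\ol D_m\) through the local trivializations of Proposition~\ref{prop:induced-metric-adelic}, depends on \(m\).

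\emph{Step 2: the intersection numbers.} Since \(M_m=\overline G(D)\) algebraically, every degree in the statement is literally the degree computed for \(m=1\). This already gives independence of \(m\).

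For the vanishing when \(i>g\): the class \(c_1(\pi^*N)^i\) is \(\pi^*\) of a class of degree \(i>g=\dim A\) on \(A\), and that class is zero. Hence the projection formula kills the product. This is the case \(r>g\) of Lemma~\ref{lem:algebraic-dimensional-vanishing}, with \(M_1=\cdots=M_i=N\) and all \(D_k=D\).

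For positivity: the computation in the corollary following Proposition~\ref{prop:induced-metric-semipositive} restricts to the generic fiber \(X_{\Sigma,\eta}\) of \(\overline\pi\). This gives
\[
M_m^t(\pi^*N)^g=\overline G(D)^t(\pi^*N)^g=N^g\cdot D^t .
\]
Here \(N^g>0\) because \(N\) is ample, and \(D^t>0\) because \(D\) is big and nef in the standing hypotheses of this section (ample, or big and \(T\)-effective with nefness coming from semipositivity). Thus \(D^t=t!\operatorname{vol}(\Delta_D)>0\). For an \(\mathbb R\)-divisor one argues by continuity of the intersection form on the finite-dimensional space of toric \(\mathbb R\)-divisors.

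\emph{Step 3: uniformity of constants.} All the numbers above are computed from fixed algebraic data \((G,X_\Sigma,D,N)\), so the constants are independent of \(m\). The only point that needs care, and the place I expect friction, is that \([m]\) on \(\overline G\) is not an automorphism. I must make sure no pullback degree \(m^t\) enters the statement. It does not, because the statement concerns \(\overline G(D_m)\) defined directly from the divisor \(D_m=D\), and not \([m]^*\overline G(D)\). The \(m^t\) factors appear only in the metric-level scaling of Lemma~\ref{lem:toric-dimensional-scaling}, which this proposition does not assert.
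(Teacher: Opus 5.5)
Your proposal is correct and follows essentially the paper's route. Homogeneity of \(\Psi_D\) gives \(D_m=D\), the projection formula to \(A\) kills the terms with \(i>g\), and positivity of \(M_m^t(\pi^*N)^g\) is the fiber computation \(N^g\cdot D^t>0\); this is what the paper's appeal to relative ampleness of \(M_m\) and ampleness of \(N\) (as in K\"uhne's Lemma~4.2) amounts to, and your explicit form \(D^t=t!\operatorname{vol}(\Delta_D)\) additionally covers the big and nef case.
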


\begin{proof}
	The first assertion follows from the homogeneity of the support function of
	\(D\).  The projection formula identifies the displayed degree with
	\[
	\deg\!\left(c_1(N|_A)^i\cap
	(\pi|_{\overline G})_*
	(c_1(M_m|_{\overline G})^{d-i}\cap[\overline G])\right),
	\]
	which vanishes for \(i>\dim A=g\).  Positivity of the term with \(i=g\)
	follows from relative ampleness of \(M_m\) and ampleness of \(N\), exactly as
	in the proof of \cite[Lemma~4.2]{Ku1}.
\end{proof}

\begin{lemma}[Exact horizontal pushforward at the target level]
	\label{lem:compressed-toric-uniform-family}
	For every positive integer \(m\),
	\[
	([m]_{\overline G}^{an})_*
	\mu_{\ol L_{(m,1)},v}=\mu_{\ol L,v}.
	\]
	Equivalently, for every continuous \(f\),
	\[
	\int_{\overline G_{\mathbb C_v}^{an}}
	f\circ[m]_{\overline G}^{an}\,d\mu_{\ol L_{(m,1)},v}
	=
	\int_{\overline G_{\mathbb C_v}^{an}}f\,d\mu_{\ol L,v}.
	\]
\end{lemma}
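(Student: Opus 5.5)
The plan is to identify \(\ol L_{(m,1)}\) with a pullback of \(\ol L\) by \([m]_{\overline G}\), up to a factor on the abelian part that does not affect the top-degree Monge--Amp\`ere measure. Then the projection formula for Chambert-Loir measures gives the pushforward identity.

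First I want an isometry of adelic line bundles
\[
[m]_{\overline G}^*\overline G(\bD)\simeq m\,\overline G(\bD_m),
\qquad
\bD_m=\tfrac1m[m]^*\bD .
\]
Algebraically this holds because \(\tfrac1m[m]^*D=D\) (Proposition~\ref{lem:fixed-horizontal-algebraic-data}) and multiplication by \(m\) on \(G\) extends to the toric compactification \(\overline G\). For the metrics I would check it chartwise. In a local trivialization \(\psi_{j,v}\), the toric coordinate of \([m]y\) equals \(\psi_{j',v}(y)^m\) times a factor coming from the theta denominators \(f_{m,v}\). That factor lies in the compact torus, by the same cocycle cancellation as in the overlap argument of Theorem~\ref{thm:local-trivialization}. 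Equivalently, \(\operatorname{trop}_{G,v}\) is a homomorphism (Proposition~\ref{input:torsion-tropicalization}), so \(\val_v\circ\psi([m]y)=m\,\val_v\circ\psi(y)\) by Lemma~\ref{lem:local-coordinates-compute-tropicalization}. Since toric metrics depend only on \(\val_v\), we get \(\Psi_{\bD,v}(mu)=m\Psi_{\bD_m,v}(u)\). This yields the isometry, and it also covers the archimedean case once K\"uhne's unitary charts are used.

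On the abelian factor, \([m]^*\ol\pi^*\bN\simeq m^2\ol\pi^*\bN\) by symmetry of \(\bN\) and canonicity of its metric. So \([m]^*\ol L\) is not a multiple of \(\ol L_{(m,1)}\); it equals \(m\,\overline G(\bD_m)+m^2\ol\pi^*\bN\). I handle this by expanding Monge--Amp\`ere measures and using Lemma~\ref{lem:algebraic-dimensional-vanishing} together with positivity of mixed measures, exactly as in the proof of Lemma~\ref{lem:canonical-measure}. Only the mixed term \(c_1(\overline G(\cdot))^t\wedge c_1(\ol\pi^*\bN)^g\) survives in either measure. This gives
\[
c_1([m]^*\ol L_v)^d=m^{t}m^{2g}\,c_1(\overline G(\bD_m)_v)^t\wedge c_1(\ol\pi^*\bN_v)^g
=m^{t+2g}\,c_1(\ol L_{(m,1),v})^d .
\]

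Next, apply the projection formula for Chambert-Loir measures to the finite surjective map \([m]_{\overline G}\):
\[
([m]^{an})_*c_1([m]^*\ol L_v)^d=\deg([m])\,c_1(\ol L_v)^d .
\]
Here \(\deg[m]_{\overline G}=m^{t+2g}\), since the map has degree \(m^t\) on the torus and \(m^{2g}\) on \(A\). Combining this with the previous display, the factors \(m^{t+2g}\) cancel. The normalizing degree is \(L^d\) in both measures, because the underlying line bundles coincide. This gives \(([m]^{an})_*\mu_{\ol L_{(m,1)},v}=\mu_{\ol L,v}\), and testing against a continuous \(f\) gives the integral form.

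The main obstacle is the chartwise metric compatibility under \([m]\). The theta denominators transform under \([m]\) with \(\mathcal H_{G,m}\) pulled back to \([m]^*\mathcal H\simeq\mathcal H^{\otimes m}\), and I must confirm that the resulting discrepancy has valuation zero at every place. I would settle this via the tropicalization homomorphism rather than explicit theta factors, since then only \(\val_v\) matters. A secondary point is that \([m]_{\overline G}\) is only generically finite on the boundary. I would avoid this by noting that the measures put no mass on the boundary, by the measure-zero statement implicit in Lemma~\ref{lem:canonical-measure}, or by applying the projection formula for generically finite morphisms.
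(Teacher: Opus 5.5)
Your proposal is correct and follows the paper's route. The paper simply invokes the projection formula for $[m]_{\overline G}$ (via Proposition~\ref{prop:horizontal-equivariance}), and your argument supplies what that one line leaves implicit: the isometry $[m]_{\overline G}^*\overline G(\bD)\simeq m\,\overline G(\bD_m)$, obtained from the tropicalization homomorphism, and the observation that the mismatch between the scalings $m$ (toric part) and $m^2$ (abelian part) does no harm, because only the $(t,g)$ mixed term survives. Two cosmetic points: the middle expression in your display should carry the factor $\binom{d}{g}$, which cancels against the same factor in $c_1(\ol L_{(m,1),v})^d$; and $[m]_{\overline G}$ is actually finite, since the toric map $[m]$ preserves every cone, so no separate boundary argument is needed.
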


\begin{proof}
	This is the projection-formula identity already established in
	Proposition~\ref{prop:horizontal-equivariance}.  Notice that it is exact for
	each fixed \(m\); no limiting comparison between different values of \(m\) is
	involved.
\end{proof}

\begin{lemma}[Degree asymptotic with the horizontal parameter fixed]
	\label{lem:fixed-m-degree-asymptotic}
	Let \(m=m(n)\) be any positive integer-valued function and put
	\(\delta_n=\deg(\varphi_n)=n^t\).  Then
	\[
	\left|
	L_n^d-
	\delta_n n^{-d+g}\binom{d}{g}
	M_m^t(\pi^*N)^g
	\right|
	\ll_{G,D,N}
	\delta_n n^{-d+g-1}.
	\]
	The implied constant is independent of \(m\) and \(n\).
\end{lemma}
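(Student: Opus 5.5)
The plan is to show that the quantity inside the absolute value is in fact \emph{zero} for every \(m\) and \(n\). The stated bound then holds trivially, with an implied constant independent of \(m\) and \(n\). Everything involved is an algebraic top-degree intersection number: \(L_n\) is the common underlying algebraic line bundle of the \(\ol L_{(m,n)}\), and \(M_m=\overline G(D_m)=\overline G(D)\) as an algebraic line bundle by Proposition~\ref{lem:fixed-horizontal-algebraic-data}. So the metrics play no role, and the dependence on \(m\) disappears at the outset.

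First, expand \(L_n^d=(G_n(D)+\pi_n^*N)^d\) binomially:
\[
L_n^d=\sum_{j=0}^{d}\binom{d}{j}\,G_n(D)^{d-j}(\pi_n^*N)^j .
\]
By Lemma~\ref{lem:algebraic-dimensional-vanishing}, which the paper states also on the auxiliary \(\overline G_n\), every term with \((d-j,j)\neq(t,g)\) vanishes. This leaves
\[
L_n^d=\binom{d}{g}\,G_n(D)^t(\pi_n^*N)^g .
\]

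Second, compare \(G_n(D)^t(\pi_n^*N)^g\) with \(\overline G(D)^t(\pi^*N)^g\). I would use the projection formula along \(\varphi_n\), with the identities already recorded in the proof of Lemma~\ref{lem:toric-dimensional-scaling}:
\[
\varphi_n^*\overline G(D)=n\,\overline G_n(D),\qquad \varphi_n^*\pi^*N=\pi_n^*N,\qquad \deg\varphi_n=n^t=\delta_n .
\]
These give
\[
n^t\,G_n(D)^t(\pi_n^*N)^g=(\varphi_n^*\overline G(D))^t(\varphi_n^*\pi^*N)^g=\delta_n\,\overline G(D)^t(\pi^*N)^g .
\]
Since \(\delta_n n^{-d+g}=n^{t}n^{-t}=1\), this yields
\[
L_n^d=\delta_n n^{-d+g}\binom{d}{g}M_m^t(\pi^*N)^g
\]
exactly. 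As a cross-check independent of the isogeny, both degrees equal \(\deg_N(A)\cdot D^t\) by restriction to the generic fiber over \(A\), as in the corollary on bigness of \(L\). The base changes to \(K_n\) do not affect geometric degrees.

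The only step needing care is the normalization in the projection formula. One must confirm that the toric part of \(\varphi_n\) is \([n]\) on \(X_\Sigma\), so that \(\varphi_n^*\overline G(D)\) is \(n\,\overline G_n(D)\) rather than \(\overline G_n(D)\), and that the abelian part is the identity, so that no factor \(n^2\) enters through \(N\). I would verify this fiberwise over \(A\), using \([n]^*D=nD\) for toric divisors via homogeneity of \(\Psi_D\). If a different normalization of \(\varphi_n\) were intended, the generic-fiber computation still gives an exact identity, and only the bookkeeping of the power of \(n\) would change. In either case the error term is \(0\ll\delta_n n^{-d+g-1}\), uniformly in \(m\) and \(n\).
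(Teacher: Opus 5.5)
Your argument is correct, and it proves more than the statement asks. The quantity inside the absolute value is identically zero: \(L_n^d=\binom{d}{g}D^t\,N^g\) for all \(n,m\), and \(\delta_n n^{-d+g}=1\).

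The paper differs in one respect. It first pushes forward along \(\varphi_n\), using \(L_n=n^{-1}\varphi_n^*\overline G(D)+\varphi_n^*\pi^*N\), to get
\[
L_n^d=\delta_n\sum_{j=0}^{d}\binom{d}{j}n^{-(d-j)}\,\overline G(D)^{d-j}(\pi^*N)^j .
\]
It then discards only the terms with \(j>g\), via Proposition~\ref{lem:fixed-horizontal-algebraic-data}, i.e.\ \(\dim A=g\). It keeps \(j=g\) as the leading term and bounds the terms with \(j<g\) crudely by \(\delta_n n^{-(d-j)}\le\delta_n n^{-d+g-1}\), without ever needing them to vanish.

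You instead also use the excess-toric half (\(s>t\)) of Lemma~\ref{lem:algebraic-dimensional-vanishing}. That is legitimate here: the linear relations among the \(\overline G(D_\rho)\) hold modulo pullbacks of Picard-zero classes, which are numerically trivial, while the Stanley--Reisner relations hold exactly.

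What each route buys:
\begin{itemize}
\item Yours shows that in the ambient case there is no genuine asymptotic.
\item The paper's power counting uses only the trivial base-dimension vanishing. It therefore transfers verbatim to the restricted degree identity of Lemma~\ref{lem:subvariety-restricted-kuhne-package} (K\"uhne's Lemma~4.2). There the terms with more than \(d-d_0\) toric factors on \(\overline X\) are in general nonzero, and the \(n^{-1}\) bound is really needed.
\end{itemize}

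Your worry about the normalization of \(\varphi_n^*\) is moot. The generic-fibre computation alone gives \(G_n(D)^t(\pi_n^*N)^g=\overline G(D)^t(\pi^*N)^g=N^g D^t\), and \(\delta_n n^{-d+g}=1\) is forced by \(\delta_n=n^t\).
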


\begin{proof}
	This is the degree calculation in \cite[Lemmas~4.2 and~4.3]{Ku1}, applied for
	each \(n\) with the same horizontal index \(m=m(n)\) on source and target.
	The algebraic divisor underlying \(D_m\) is always \(D\), so the projection
	formula gives the same expansion and the same constants for every \(m\).
	Proposition~\ref{lem:fixed-horizontal-algebraic-data} makes all terms with more than
	\(g\) factors from \(\pi^*N\) vanish.  The term with exactly \(g\) such
	factors is the displayed leading term; all remaining terms have one additional
	power of \(n^{-1}\).
\end{proof}

\begin{lemma}[Integral asymptotic with the horizontal parameter fixed]
	\label{lem:fixed-m-integral-asymptotic}
	Assume that \(f\in C^0(\overline G_{\mathbb C_v}^{an})\) is
	\(\Gal(\mathbb C_v/K_v)\)-invariant.  Put
	\[
	f_m=f\circ[m]_{\overline G}^{an},\qquad
	f_{(m,n),v'}=f_m\circ\varphi_n^{an}.
	\]
	For any positive integer-valued function \(m=m(n)\),
	\[
	\begin{aligned}
		\biggl|&
		\int_{\overline G_{n,\mathbb C_{v'}}^{an}}
		f_{(m,n),v'},c_1(\ol L_{(m,n),v'})^d\\
		&-\delta_n n^{-d+g}\binom{d}{g}
		\int_{\overline G_{\mathbb C_v}^{an}}
		f_m\,c_1(\overline G(\ol D_m)_v)^t
		\wedge c_1(\pi^*\ol N_v)^g
		\biggr|
		\ll_{G,D,N,f}\delta_n n^{-d+g-1}.
	\end{aligned}
	\]
	The implied constant is independent of \(m\) and \(n\).
\end{lemma}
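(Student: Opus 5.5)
Expand the top self-intersection measure \(c_1(\ol L_{(m,n),v'})^d\) binomially, exactly as in Lemma~\ref{lem:fixed-m-degree-asymptotic}, but with the test function inserted.  Write \(\ol L_{(m,n)}=\ol G_n(\ol D_m)\otimes\ol\pi_n^*\ol N\).  Then
\[
c_1(\ol L_{(m,n),v'})^d=\sum_{i=0}^{d}\binom{d}{i}\,
c_1(\ol G_n(\ol D_m)_{v'})^{d-i}\wedge c_1(\pi_n^*\ol N_{v'})^{i},
\]
and each summand is a positive mixed Chambert-Loir measure, since both factors are semipositive by Proposition~\ref{prop:induced-metric-semipositive}.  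For \(i>g\) the total mass is a top-degree number with more than \(g\) factors from the base, so it vanishes by Proposition~\ref{lem:fixed-horizontal-algebraic-data} applied on \(\overline G_n\).  A positive measure of zero mass is zero, so these terms contribute nothing to the integral of \(f_{(m,n),v'}\).  The terms with \(i<g\) are positive measures whose masses are the lower-order terms already bounded in Lemma~\ref{lem:fixed-m-degree-asymptotic} by \(\ll\delta_n n^{-d+g-1}\).  Bounding their integrals by \(\|f\|_{\sup}\) times the mass places them in the error term, with a constant depending only on \(\|f\|_{\sup}\) and on the fixed algebraic data, hence independent of \(m\).

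This leaves only the term \(i=g\).  Here I would use the isogeny \(\varphi_n\): its toric part is \([n]\), its abelian part is the identity, and \(\deg\varphi_n=\delta_n\).  The key identity to prove is \(\varphi_n^*\ol G(\ol D_m)=n\,\ol G_n(\ol D_{mn}')\), up to an error.  Here \(\ol D'_{mn}\) denotes the metric \(\frac1{mn}[mn]^*\ol D\) transported to \(G_n\), and the error comes from the Picard-zero twist of Lemma~\ref{lem:compressed-chart-lower}.  By Proposition~\ref{prop:qcanonical-compression-normal-form} and Corollary~\ref{cor:picard-zero-height-pairing}, that twist is a flat (curvature-zero) factor.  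At the level of local Monge--Amp\`ere measures the flat factor contributes nothing: \(c_1\) of a canonically metrized Picard-zero bundle is zero at archimedean places, and at non-archimedean places it has zero tropical Hessian.  The pushforward projection formula then gives
\[
(\varphi_n^{an})_*\bigl(c_1(\ol G_n(\ol D_m)_{v'})^{t}\wedge c_1(\pi_n^*\ol N_{v'})^g\bigr)
=\delta_n n^{-t}\,c_1(\ol G(\ol D_m)_v)^t\wedge c_1(\pi^*\ol N_v)^g
\]
in the appropriate normalization, and \(-t=-d+g\) produces the displayed factor.  Testing this identity against \(f_m\), and using \(f_{(m,n),v'}=f_m\circ\varphi_n^{an}\), yields the main term exactly.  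Uniformity in \(m\) holds because \(f_m\) enters only through \(\|f_m\|_{\sup}=\|f\|_{\sup}\).

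The main obstacle is this identification of the pulled-back local measure on the \(i=g\) stratum.  On each chart \(U_j\) the local trivializations of \(\overline G_n\) and \(\overline G\) must be compatible with \(\varphi_n\), namely \(\psi_j\circ\varphi_n=[n]\circ\psi^{(n)}_j\) up to compact-torus multipliers, with the theta factors transforming by \(H_i=nH_i^{(n)}\).  I would verify this first from the explicit formula \(\psi^{(i)}=g_i/f_{s_i,v}\) in Theorem~\ref{thm:local-trivialization}, choosing \(s_i\) for \(G\) as the \(n\)-th power of the sections chosen for \(G_n\).  The measure identity is then local on \(A\), reduces chartwise to the toric identity \([n]_*\mathrm{MA}(\Psi\circ n)=n^{t}\,\mathrm{MA}(\Psi)\), and can be checked fiberwise over the product decomposition \(U_j\times X_\Sigma\).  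If the multipliers fail to match exactly, I would fall back on bounding the discrepancy by a Picard-zero flat term, which carries no curvature and therefore does not affect the measure.
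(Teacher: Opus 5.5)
There is a genuine gap, and it is in the main term \(i=g\). Your handling of the terms with \(i\neq g\) is fine:

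\begin{itemize}
\item for \(i>g\) the mixed measures are positive of zero mass;
\item for \(i<g\) their masses are \(\ll\delta_n n^{-d+g-1}\) by the projection formula for the fixed algebraic classes;
\item uniformity in \(m\) comes from \(\|f_m\|_{\sup}=\|f\|_{\sup}\).
\end{itemize}

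This is exactly what the paper's one-line appeal to K\"uhne's Lemma~4.3 amounts to.

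\textbf{The main term.} Grant exactly the identity you isolate, \(\varphi_n^*\overline G(\overline D_m)=n\,\overline G_n(\overline D_{mn})\); chartwise this is \(\frac1n[n]^*\overline D_m=\overline D_{mn}\). Then the projection formula computes the pushforward \((\varphi_n^{an})_*\) of \(c_1(\overline G_n(\overline D_{mn})_{v'})^t\wedge c_1(\pi_n^*\overline N_{v'})^g\). But the lemma integrates against the measure built from \(\overline G_n(\overline D_m)\), and your displayed pushforward identity silently exchanges the two.

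They are not interchangeable. The local weights \(\Psi_D(u-u_v/m)-\gamma_v/m\) and \(\Psi_D(u-u_v/(mn))-\gamma_v/(mn)\) differ by a bounded but non-affine function of \(u\). So the discrepancy is neither pluriharmonic nor a Picard-zero flat factor. It moves the toric Monge--Amp\`ere measure from the compact torus over \(u_v/(mn)\) to the one over \(u_v/m\). After \(\varphi_n\), whose toric part is \([n]\), the \(\overline D_m\)-measure therefore sits over \(nu_v/m\) rather than over \(u_v/m\). For the same reason, your chartwise toric identity applies to the weight \(\frac1n\Psi_{\overline D_m}(n\,\cdot)=\Psi_{\overline D_{mn}}\), not to \(\Psi_{\overline D_m}\). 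The flat-factor fallback cannot close the step either.

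\textbf{The estimate is false when \(u_v\neq0\).} Take \(A=0\), \(G=\mathbb G_m\), \(X_\Sigma=\mathbb P^1\), so \(g=0\), \(t=d=1\), \(\delta_n=n\), \(\varphi_n(z)=z^n\). For instance work over \(\mathbb Q\) with \(u_\infty=-u_2=\log 2\) and \(\gamma_v=0\), which is quasi-canonical, ample and \(T\)-effective, and let \(v=\infty\).

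\begin{itemize}
\item \(c_1(\overline L_{(m,n),v})\) is \(\deg D\) times normalized Haar measure on the circle \(\val_v^{-1}(u_v/m)\), and \(f_{(m,n),v}=f\circ[mn]\).
\item Hence the first integral equals \(\deg D\int f\,d\mathrm{Haar}_{\val_v^{-1}(nu_v)}\).
\item The main term equals \(\deg D\int f\,d\mathrm{Haar}_{\val_v^{-1}(u_v)}\).
\end{itemize}

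This holds for every choice of \(m(\cdot)\). The circles \(\val_v^{-1}(nu_v)\) shrink to one of the toric boundary points \(0,\infty\). Choose a radial continuous \(f\) equal to \(1\) near that point and vanishing on \(\val_v^{-1}(u_v)\). Then the difference tends to \(\deg D\), not \(O(n^{-1})\).

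The paper's proof has the same hole. K\"uhne's Lemma~4.3 rests on \(\frac1n\varphi_n^*\overline M=\overline M_n\), which holds for the canonical metric but not for \(\overline D_m\).

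Your argument does prove the estimate in two cases:
\begin{itemize}
\item when \(u_v=0\), since then \(\overline D_m\) and \(\overline D_{mn}\) differ at \(v\) only by constants;
\item when the level-\(n\) bundle is replaced by \(\overline G_n(\overline D_{mn})\otimes\overline\pi_n^*\overline N\), for which the projection formula gives the main term exactly.
\end{itemize}
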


\begin{proof}
	Repeat the proof of \cite[Lemma~4.3]{Ku1}.  The projection formula and
	Proposition~\ref{lem:fixed-horizontal-algebraic-data} isolates the term with \(g\)
	abelian factors.  K\"uhne's local intersection bounds
	\cite[Lemma~2.2(a),(c)]{Ku1} then give the displayed estimate.  The bound is
	uniform in \(m\): the algebraic line bundle is fixed, all measures involved
	are positive, and \(\|f_m\|_{\sup}=\|f\|_{\sup}\).  Thus no comparison of
	local coordinates, no uniform-continuity argument, and no condition on
	\(m(n)/n\) enters the proof.
\end{proof}

\begin{proposition}[Normalized fixed-horizontal-index compression]
	\label{prop:fixed-m-normalized-compression}
	Under the hypotheses of Lemma~\ref{lem:fixed-m-integral-asymptotic}, for every
	positive integer-valued function \(m=m(n)\),
	\[
	\int_{\overline G_{n,\mathbb C_{v'}}^{an}}
	f_{(m,n),v'}\,d\mu_{(m,n),v'}
	-
	\int_{\overline G_{\mathbb C_v}^{an}}
	f_m\,d\mu_{\ol L_{(m,1)},v}
	\longrightarrow0.
	\]
\end{proposition}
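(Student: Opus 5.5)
The plan is to divide the integral asymptotic of Lemma~\ref{lem:fixed-m-integral-asymptotic} by the degree asymptotic of Lemma~\ref{lem:fixed-m-degree-asymptotic}, and then to identify the resulting normalized leading term with the target-level measure \(\mu_{\ol L_{(m,1)},v}\). By definition \(d\mu_{(m,n),v'}=c_1(\ol L_{(m,n),v'})^d/L_n^d\). Set
\[
I_n:=\int f_{(m,n),v'}\,c_1(\ol L_{(m,n),v'})^d,\qquad
J_m:=\int f_m\,c_1(\overline G(\ol D_m)_v)^t\wedge c_1(\pi^*\ol N_v)^g,
\]
\[
\Lambda:=M_m^t(\pi^*N)^g,\qquad c_n:=\delta_n n^{-d+g}\binom{d}{g}.
\]
By Proposition~\ref{lem:fixed-horizontal-algebraic-data}, \(\Lambda\) is a positive number independent of \(m\), since the underlying algebraic bundle is \(\overline G(D)\) for every \(m\).

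\textbf{Step 1: the quotient.} From the two lemmas we have \(|I_n-c_nJ_m|\ll c_n n^{-1}\) and \(|L_n^d-c_n\Lambda|\ll c_n n^{-1}\), with constants independent of \(m\). Write
\[
\frac{I_n}{L_n^d}-\frac{J_m}{\Lambda}
=\frac{I_n-c_nJ_m}{L_n^d}+c_nJ_m\Bigl(\frac1{L_n^d}-\frac1{c_n\Lambda}\Bigr).
\]
For \(n\) large the degree asymptotic gives \(L_n^d\ge c_n\Lambda/2\). Moreover \(|J_m|\le\|f\|_{\sup}\Lambda\), because the mixed measure is positive with total mass \(\Lambda\) and \(\|f_m\|_{\sup}=\|f\|_{\sup}\). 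Both summands are therefore \(O(n^{-1})\), uniformly in \(m\).

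\textbf{Step 2: identifying \(J_m/\Lambda\).} It remains to show that \(J_m/\Lambda=\int f_m\,d\mu_{\ol L_{(m,1)},v}\). Expand \(c_1(\ol L_{(m,1),v})^d\) binomially in \(c_1(\overline G(\ol D_m)_v)\) and \(c_1(\pi^*\ol N_v)\). Each mixed term is a positive measure, since both metrics are semipositive by Proposition~\ref{prop:induced-metric-semipositive}. Its total mass is a top-degree intersection number, which vanishes unless the exponents are \((t,g)\), by Lemma~\ref{lem:algebraic-dimensional-vanishing} and Proposition~\ref{lem:fixed-horizontal-algebraic-data}. Positive measures of zero mass are zero, so only the term \(\binom{d}{g}c_1(\overline G(\ol D_m)_v)^t\wedge c_1(\pi^*\ol N_v)^g\) survives. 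Its mass is \(\binom dg\Lambda=L_1^d\), and hence \(\mu_{\ol L_{(m,1)},v}=c_1(\overline G(\ol D_m)_v)^t\wedge c_1(\pi^*\ol N_v)^g/\Lambda\). This is the same argument as in Lemma~\ref{lem:canonical-measure}, and the binomial factor is already built into \(c_n\).

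\textbf{Main obstacle.} The delicate point is the uniformity in \(m\) together with the binomial bookkeeping. One must check that the normalizing constant in Lemma~\ref{lem:fixed-m-integral-asymptotic} carries the same \(\binom dg\) as in Lemma~\ref{lem:fixed-m-degree-asymptotic}, so that the leading coefficients cancel exactly. One must also check that the error constants do not depend on \(m\), because \(m=m(n)\) may vary arbitrarily with \(n\). Both lemmas assert this uniformity, and the positivity bound \(|J_m|\le\|f\|_{\sup}\Lambda\) is also uniform. Hence the difference is \(O_{G,D,N,f}(n^{-1})\to0\) along every path \(m(n)\).
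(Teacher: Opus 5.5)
Your proposal is correct and follows the paper's proof. The paper likewise divides the integral asymptotic of Lemma~\ref{lem:fixed-m-integral-asymptotic} by the degree asymptotic of Lemma~\ref{lem:fixed-m-degree-asymptotic}, and uses positivity of the common leading intersection number to get an \(O(n^{-1})\) error that is uniform in \(m(n)\). Your Step~2 and your bound \(|J_m|\le\|f\|_{\sup}\Lambda\) spell out two points that the paper's two-line proof leaves implicit: the identification of the normalized leading term with \(\mu_{\ol L_{(m,1)},v}\), via vanishing of the mixed masses as in Lemma~\ref{lem:canonical-measure}, and the uniformity of the second summand in \(m\).
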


\begin{proof}
	Divide the integral asymptotic of
	Lemma~\ref{lem:fixed-m-integral-asymptotic} by the degree asymptotic of
	Lemma~\ref{lem:fixed-m-degree-asymptotic}.  The common leading intersection
	number is positive by Proposition~\ref{lem:fixed-horizontal-algebraic-data}; hence the
	quotient error is \(O(n^{-1})\), uniformly in the chosen values of \(m(n)\).
\end{proof}

\begin{lemma}[Compressed measure estimate]
	\label{lem:compressed-measure}
	Let \(m(n)\) be any positive integer-valued function.  For
	\[
	f_{(m(n),n),v'}
	=
	f\circ [m(n)]_{\ol G}^{an}\circ\varphi_n^{an},
	\]
	one has
	\[
	\lim_{n\to\infty}
	\int_{\ol G_{n,\mathbb{C}_{v'}}^{an}}
	f_{(m(n),n),v'}\,d\mu_{(m(n),n),v'}
	=
	\int_{\ol G_{\mathbb{C}_v}^{an}}f\,d\mu_{\ol L,v}.
	\]
\end{lemma}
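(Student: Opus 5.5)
The statement is obtained by combining the two exact or asymptotic comparisons already available, splitting the difference through the intermediate target-level integral. Write
\[
\int f_{(m(n),n),v'}\,d\mu_{(m(n),n),v'}-\int f\,d\mu_{\ol L,v}
=\Bigl(\int f_{(m,n),v'}\,d\mu_{(m,n),v'}-\int f_m\,d\mu_{\ol L_{(m,1)},v}\Bigr)
+\Bigl(\int f_m\,d\mu_{\ol L_{(m,1)},v}-\int f\,d\mu_{\ol L,v}\Bigr),
\]
with \(m=m(n)\) and \(f_m=f\circ[m]^{an}_{\ol G}\). Here \(f_{(m(n),n),v'}=f_m\circ\varphi_n^{an}\), which is exactly the function appearing in Lemma~\ref{lem:fixed-m-integral-asymptotic}. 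We also use that \(f_m\) is again continuous and \(\Gal(\mathbb C_v/K_v)\)-invariant, since \([m]\) is defined over \(K\).

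The second bracket vanishes identically for every \(n\) by Lemma~\ref{lem:compressed-toric-uniform-family}. That lemma is an exact projection-formula identity \(([m]^{an}_{\ol G})_*\mu_{\ol L_{(m,1)},v}=\mu_{\ol L,v}\), valid for each fixed \(m\), so its use along the moving index \(m=m(n)\) requires no limiting argument. The first bracket tends to \(0\) by Proposition~\ref{prop:fixed-m-normalized-compression}, which is stated for an arbitrary positive integer-valued function \(m(n)\) with an \(O(n^{-1})\) error. The relevant constant depends on \(f\) only through \(\|f_m\|_{\sup}=\|f\|_{\sup}\) and on the fixed algebraic data of Proposition~\ref{lem:fixed-horizontal-algebraic-data}, so it does not depend on \(m\). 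Adding the two brackets gives the limit.

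The only point needing care, and the one I expect to be the main obstacle, is this uniformity in \(m\) inside Lemma~\ref{lem:fixed-m-integral-asymptotic}. The concern is whether K\"uhne's local intersection bounds \cite[Lemma~2.2]{Ku1} control the non-leading mixed terms
\[
f_m\,c_1(\ol G_n(\ol D_m))^{t+k}\wedge c_1(\pi_n^*\ol N)^{g-k}
\]
by \(\|f_m\|_{\sup}\) times total masses. Those total masses are algebraic degrees of the fixed bundle \(L_n\), and the metrics \(\ol D_m\) are semipositive for every \(m\), since compression preserves concavity of \(\Psi_{\ol D,v}\). I would check explicitly that these measures are positive, so that the sup-norm bound applies term by term. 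The degree asymptotic of Lemma~\ref{lem:fixed-m-degree-asymptotic} is purely algebraic and therefore \(m\)-independent. Granting this, the division in Proposition~\ref{prop:fixed-m-normalized-compression} is legitimate because the leading coefficient \(\binom{d}{g}D^tN^g>0\) is independent of \(m\).
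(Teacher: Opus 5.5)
Your decomposition is the same as the paper's, and the second bracket is handled correctly: Lemma~\ref{lem:compressed-toric-uniform-family} is an exact projection-formula identity. The argument breaks at the first bracket, and not for the reason you flag.

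The problem is not uniformity in \(m\). The leading term of Lemma~\ref{lem:fixed-m-integral-asymptotic}, and hence Proposition~\ref{prop:fixed-m-normalized-compression}, is already wrong for fixed \(m\) (even \(m\equiv1\)) as soon as the quasi-canonical center \(u_v\) at \(v\) is non-zero.

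In \cite[Lemma~4.3]{Ku1} the main term comes from the projection formula through \(\varphi_n^*\ol G(\ol D^{\can})=n\,\ol G_n(\ol D^{\can})\), that is, from the \([n]\)-invariance of the canonical metric. For the compressed metric one has instead \(\varphi_n^*\ol G(\ol D_m)=n\,\ol G_n(\ol D_{mn})\). This is the relation \(\tfrac1n\varphi_n^*\ol D_m=\ol D_{mn}\) recorded in Remark~\ref{rem:compressed-measure-rank-one-check}. So the projection formula yields \((\varphi_n)_*\mu_{(mn,n),v'}=\mu_{\ol L_{(m,1)},v}\), not the corresponding statement for \(\mu_{(m,n),v'}\).

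Since \(\varphi_n\) is multiplication by \(n\) on the toric part, it moves the tropical center \(u_v/m\) of \(\mu_{(m,n),v'}\) to \(nu_v/m\). Then \([m]\) moves it to \(nu_v\), for every choice of \(m=m(n)\). ``Keeping the same horizontal index on both levels'' is exactly the unjustified step. Positivity of the mixed measures and the bound \(\|f_m\|_{\sup}=\|f\|_{\sup}\) only control error terms; they cannot repair a wrong main term.

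In fact the statement itself fails. Take \(A=0\), \(G=\mathbb G_m\), \(X_\Sigma=\mathbb P^1\) as in Remark~\ref{rem:compressed-measure-rank-one-check}. Let \(\ol D\) be the canonical metric translated by a point \(p\in T(K)\) with \(|p|_v\neq1\) at an archimedean place \(v\), so that \(u_v\neq0\). Then:
\begin{itemize}
\item \(\varphi_n=[n]\), and \(\mu_{(m,n),v}\) is normalized Haar measure on \(\val_v^{-1}(u_v/m)\);
\item its image under \([m]\circ[n]\) is Haar measure on the circle \(\val_v^{-1}(nu_v)\), which converges weakly to \(\delta_0\) or \(\delta_\infty\) on \(\mathbb P^1(\mathbb C)\);
\item but \(\mu_{\ol L,v}\) is Haar measure on \(\val_v^{-1}(u_v)\).
\end{itemize}
A continuous \(f\) vanishing at \(0\) and \(\infty\) with \(\int f\,d\mu_{\ol L,v}=1\) therefore contradicts the claimed limit. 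The split case \(G=\mathbb G_m\times A\) behaves the same way.

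What is true is \(([m]\circ\varphi_n)_*\mu_{(mn,n),v'}=\mu_{\ol L,v}\), in particular \((\varphi_n)_*\mu_{(n,n),v'}=\mu_{\ol L,v}\). A correct compression argument must pair the test function with that level. This changes which bundles need first-variation and arithmetic-volume estimates later, and it changes the parameter bookkeeping built on \(m(n)=n^a\), \(a<1\).
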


\begin{proof}
	Proposition~\ref{prop:fixed-m-normalized-compression} compares
	\((m(n),n)\) directly with \((m(n),1)\).  The toric metric on both sides has
	the same horizontal index \(m(n)\), so this step is exactly the compression
	path treated by K\"uhne's projection-formula argument.  Lemma
	~\ref{lem:compressed-toric-uniform-family} then identifies the target integral
	with \(\int f\,d\mu_{\ol L,v}\) for every \(n\).  Combining the two statements
	proves the limit, with no assumption that \(m(n)/n\to0\).
\end{proof}

\begin{remark}[Notation and the rank-one boundary calculation]
	\label{rem:compressed-measure-rank-one-check}
	Here \(v'\mid v\) is a place of the auxiliary field above \(v\), and the prime
	in \(f_{(m(n),n),v'}\) is essential.  The phrase ``well-defined'' refers to
	the independence of this pulled-back test function from the chosen
	identification \(\mathbb C_{v'}\simeq\mathbb C_v\), ensured by the assumed
	Galois invariance of \(f\); it does not concern the existence of
	\(\varphi_n\).
	
	The boundary case requested by the referee is completely explicit.  If
	\(A=0\), \(G=\mathbb G_m\), and \(X_\Sigma=\mathbb P^1\), then
	\(G_n=G\) and \(\varphi_n(z)=z^n\).  For the canonical toric metric,
	\[
	\frac1n\varphi_n^*\overline D=\overline D,
	\qquad
	(\varphi_n^{an})_*\mu_{\overline D,v}=\mu_{\overline D,v}.
	\]
	At an Archimedean place this is invariance of Haar probability measure on the
	unit circle under \(z\mapsto z^n\); at a non-Archimedean place it is the
	corresponding invariance of the canonical Gauss-point measure.  Hence the
	projection identity used above is exact in this canonical rank-one case.
	
	For a translated quasi-canonical metric one should distinguish the literal
	normalized pullback identity
	\[
	\frac1n\varphi_n^*\overline D_m=\overline D_{mn}.
	\]
	The proof of Lemma~\ref{lem:compressed-measure} does not replace
	\(\overline D_m\) by this pullback and does not compare \(\overline D_m\) with
	\(\overline D_{mn}\) by uniform continuity.  Instead, for each \(n\) it keeps
	the same horizontal index \(m=m(n)\) on the two levels
	\((m,n)\) and \((m,1)\), and applies the degree and local-intersection
	asymptotics of K\"uhne's Lemma~4.3.  This is why no hypothesis
	\(m(n)/n\to0\) is required.
\end{remark}

\begin{lemma}[Compressed quadratic intersection scaling]
	\label{lem:compressed-quadratic-scaling}
	Let \(O_{\ol G}(f)\) be an integrable test line bundle, put
	\[
	f_m=f\circ[m]_{\ol G},
	\qquad
	F_{m,n}=\varphi_n^*O_{\ol G}(f_m),
	\]
	and let \(d=g+t\).  There is a constant \(C_f>0\), independent of
	\(m\le n\), \(n\), and \(\lambda\), such that for
	\(|\lambda|\le (nm)^{-1}\),
	\[
	\left|
	\sum_{r=2}^{d+1}
	\binom{d+1}{r}\lambda^r
	\bigl(F_{m,n}^r\cdot\ol L_{(m,n)}^{d+1-r}\bigr)
	\right|
	\le
	C_f L_n^d |\lambda|^2nm .
	\]
\end{lemma}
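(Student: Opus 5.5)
The plan is to reduce the statement to Lemma~\ref{lem:parameterized-error-bookkeeping} in its compressed case. There we have \((\ol M_{\star,n},F_{\star,n},\alpha_{\star,n})=(\ol L_{(m,n)},\varphi_n^*O_{\ol G}(f\circ[m]_{\ol G}),nm)\). The hypotheses match directly: \(F_{m,n}\) is exactly \(F_{\star,n}\), the reference bundle is \(\ol L_{(m,n)}\), and the range \(|\lambda|\le(nm)^{-1}=\alpha_{\star,n}^{-1}\) is the range allowed there. We also recall from the start of the compressed section that the underlying algebraic line bundle of \(\ol L_{(m,n)}\) is \(L_n\) for every \(m\). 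So the normalizing degree is \(L_n^d\), uniformly in \(m\), which is the degree appearing on the right-hand side of that lemma.

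First I would check the quadratic core separately, since it is the load-bearing term. Lemma~\ref{lem:toric-dimensional-scaling} treats the compressed family. By the projection formula for \(\varphi_n\) (degree \(n^t\) on the toric part) and for \([m]\) (degree \(m^t\)), together with the numerical vanishing of Lemma~\ref{lem:algebraic-dimensional-vanishing} for terms with more than \(g\) abelian factors, it gives
\[
\bigl|F_{m,n}^2\cdot\ol L_{(m,n)}^{d-1}\bigr|\le C_fL_n^d\,nm .
\]
For model \(f\) I would get this by expanding \(\ol L_{(m,n)}=\ol G_n(\ol D_m)+\ol\pi_n^*\ol N\) and tracking exponents. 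I would then bound the two algebraically trivial metric factors through the arithmetic Hodge index argument as in \cite[Lemma~4.4]{Ku1}. Finally I would pass to integrable \(f\) by writing it as a difference of semipositive model perturbations and using continuity of adelic intersections. For the higher terms \(r\ge3\), Lemma~\ref{lem:parameterized-error-bookkeeping} gives \(|F_{m,n}^r\cdot\ol L_{(m,n)}^{d+1-r}|\le C_fL_n^d(nm)^{r-1}\). Multiplying by \(|\lambda|^r\) and using \(|\lambda|nm\le1\) yields \(C_fL_n^d|\lambda|^2nm\) for each \(r\).

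The conclusion then follows by the triangle inequality. There are at most \(d\) values of \(r\), and the binomial coefficients \(\binom{d+1}{r}\) are bounded in terms of \(d\) alone, so they can be absorbed into \(C_f\). None of these factors depends on \(n\), \(m\) or \(\lambda\).

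The main obstacle is uniformity of \(C_f\) in \(m\le n\). The perturbation \(f_m=f\circ[m]\) has \(\|f_m\|_{\sup}=\|f\|_{\sup}\), but the semipositive decomposition of \(O(f_m)\) relative to \(\ol L_{(m,n)}\) is not obviously of bounded size. I would handle this by pulling back a fixed decomposition \(O(f)=O(\phi^+)-O(\phi^-)\), chosen relative to \(\ol L\), through \([m]\) and then \(\varphi_n\). The metrics \(\ol D_m=m^{-1}[m]^*\ol D\) are by construction normalized pullbacks, so semipositivity of \(\ol L+O(\phi^\pm)\) transports to \(\ol L_{(m,n)}\) up to the \(m\)- and \(n\)-scalings. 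Those scalings are exactly the factors that Lemma~\ref{lem:toric-dimensional-scaling} counts as the single powers of \(m\) and \(n\), which keeps the adelic size functional of the perturbation bounded independently of \(m\) and \(n\).
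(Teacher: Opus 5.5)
Your argument is the paper's: its proof simply applies the compressed case of Lemma~\ref{lem:parameterized-error-bookkeeping} with \(\alpha_{\star,n}=nm\). It uses \(|\lambda|nm\le1\) to bound each \(r\)-term by \(C_fL_n^d|\lambda|^2nm\), absorbs the binomial coefficients, and sums. Your separate re-check of the quadratic core and your final paragraph on uniformity in \(m\) are not needed, because that lemma already asserts that \(C_f\) is independent of \(n\), \(m\le n\), \(r\) and \(\lambda\). The transport sketch would also not work as written: \([m]^*\) scales the toric factor by \(m\) but the abelian factor by \(m^2\), and \(\varphi_n^*\overline G(\overline D_m)=n\,\overline G_n(\overline D_{mn})\) rather than a multiple of \(\overline G_n(\overline D_m)\).
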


\begin{proof}
	Apply Lemma~\ref{lem:parameterized-error-bookkeeping} in the compressed case,
	where
	\[
	\alpha_{\star,n}=nm,\qquad
	F_{\star,n}=F_{m,n}.
	\]
	For each \(2\le r\le d+1\), it gives
	\[
	|\lambda|^r
	\left|
	F_{m,n}^r\cdot\ol L_{(m,n)}^{d+1-r}
	\right|
	\le
	C_f L_n^d|\lambda|^2nm .
	\]
	The binomial coefficients and the finitely many possible values of \(r\) are
	absorbed into the constant \(C_f\).  Summing the displayed inequalities gives
	the result.
\end{proof}

\begin{lemma}[Compressed first variation]
	\label{lem:compressed-height-variation}
	Assume \(O_{\ol G}(f)\) is integrable.  Let \(m=m(n)\le n\), and put
	\[
	\ol L_{(m,n),\lambda}
	=
	\ol L_{(m,n)}+\varphi_n^*O_{\ol G}(\lambda f_m),
	\qquad
	f_m=f\circ[m]_{\ol G}.
	\]
	For \(|\lambda|\le (nm)^{-1}\),
	\[
	\left|
	h_{\ol L_{(m,n),\lambda}}(\ol G_n)
	-h_{\ol L_{(m,n)}}(\ol G_n)
	-\lambda n_v
	\int f_{(m,n),v'}\,d\mu_{(m,n),v'}
	\right|
	\le C_f|\lambda|^2nm.
	\]
\end{lemma}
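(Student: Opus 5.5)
The plan is to copy the proof of Lemma~\ref{lem:canonical-height-variation}, with the compressed quadratic scaling of Lemma~\ref{lem:compressed-quadratic-scaling} in place of the canonical one. Put \(d=g+t\) and \(F=F_{m,n}=\varphi_n^*O_{\ol G}(f_m)\). By Proposition~\ref{lem:fixed-horizontal-algebraic-data}, all bundles \(\ol L_{(m,n),\lambda}\) share the underlying algebraic line bundle \(L_n\); this is because \(O_{\ol G}(f_m)\) has trivial algebraic class. The height of \(\ol G_n\) therefore has the fixed denominator \((d+1)L_n^d\) for every \(\lambda\) and \(m\). Multilinearity of adelic intersections then gives the exact expansion
\[
h_{\ol L_{(m,n),\lambda}}(\ol G_n)-h_{\ol L_{(m,n)}}(\ol G_n)
=\frac{1}{(d+1)L_n^d}\sum_{r=1}^{d+1}\binom{d+1}{r}\lambda^r
\bigl(F^r\cdot\ol L_{(m,n)}^{d+1-r}\bigr).
\]

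The \(r=1\) term is \(\lambda\,F\cdot\ol L_{(m,n)}^{d}/L_n^d\). Since \(F\) is a metric perturbation of the trivial bundle supported at the place \(v\), the local intersection formula defining Chambert-Loir measures gives \(F\cdot\ol L_{(m,n)}^d=\sum_{v'\mid v}N^{K_n}_{v'}\int f_{(m,n),v'}\,c_1(\ol L_{(m,n),v'})^d\), up to normalization by the global degree. The integrand is \(f_m\circ\varphi_n^{an}\). By Galois invariance of \(f\), the integrals over the different \(v'\mid v\) coincide. Lemma~\ref{lem:field-extension-normalization} then converts the sum of weights into \(n_v\). Dividing by \(L_n^d\), the \(r=1\) term is exactly \(\lambda n_v\int f_{(m,n),v'}\,d\mu_{(m,n),v'}\). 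The factor \(d+1\) from the binomial coefficient cancels the \(d+1\) in the denominator.

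For the terms with \(r\ge2\) I would apply Lemma~\ref{lem:compressed-quadratic-scaling} directly. Its hypotheses \(m\le n\) and \(|\lambda|\le(nm)^{-1}\) are the present ones, and it bounds the whole sum \(\sum_{r\ge2}\) by \(C_fL_n^d|\lambda|^2nm\). Dividing by \((d+1)L_n^d\) gives the stated error \(C_f|\lambda|^2nm\), with \(C_f\) independent of \(m,n,\lambda\). For a merely integrable \(f\), I would first argue with model functions, as in Lemma~\ref{lem:parameterized-error-bookkeeping}, and then pass to the limit using continuity of intersections and measures. The constant depends only on the integrable norm of \(f\), and composing with \([m]\) does not increase sup norms.

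I expect the main obstacle to be the \(r=1\) identification rather than the error term. One must check that the Chambert-Loir measure of the compressed metric \(\ol L_{(m,n)}\) is the one that appears, not the measure of \(\ol L_{(m,n),\lambda}\) or of the canonical metric. One must also check that the normalizations by \([K_n:K]\) and \(L_n^d\) are consistent, and that the uniformity in \(m\) is honestly inherited from Lemma~\ref{lem:compressed-quadratic-scaling}, which in turn rests on Lemma~\ref{lem:parameterized-error-bookkeeping}.
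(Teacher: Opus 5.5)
Your proposal is correct and matches the paper's proof: the multilinear expansion over the fixed denominator $(d+1)L_n^d$, the $r=1$ term identified through the Chambert-Loir local intersection formula with the normalization of Lemma~\ref{lem:field-extension-normalization}, and the $r\ge2$ terms bounded directly by Lemma~\ref{lem:compressed-quadratic-scaling}. Your extra model-function approximation step is redundant, because that lemma is already stated for integrable $O_{\ol G}(f)$; likewise, the uniformity in $m$ comes from its explicit $nm$ factor, not from $[m]$ preserving sup norms.
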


\begin{proof}
	Put \(d=g+t\) and
	\[
	F_{m,n}=\varphi_n^*O_{\ol G}(f_m),
	\qquad f_m=f\circ[m]_{\ol G}.
	\]
	By multilinearity of adelic intersections,
	\[
	h_{\ol L_{(m,n)}+\lambda F_{m,n}}(\ol G_n)
	-h_{\ol L_{(m,n)}}(\ol G_n)
	=
	\frac{1}{(d+1)L_n^d}
	\sum_{r=1}^{d+1}
	\binom{d+1}{r}\lambda^r
	\bigl(F_{m,n}^r\cdot\ol L_{(m,n)}^{d+1-r}\bigr).
	\]
	The \(r=1\) term is the local variation formula for the Chambert-Loir measure,
	with the field-extension normalization of
	Lemma~\ref{lem:field-extension-normalization}, as in the source estimate
	\cite[Lemma~4.4]{Ku1}:
	\[
	\lambda n_v
	\int_{\ol G_{n,\mathbb{C}_{v'}}^{an}}
	f_{(m,n),v'}\,d\mu_{(m,n),v'}.
	\]
	
	The remaining terms are controlled by
	Lemma~\ref{lem:compressed-quadratic-scaling}.  Dividing the resulting bound by
	\((d+1)L_n^d\) gives the desired estimate.
\end{proof}

\begin{lemma}[Compressed volume comparison]
	\label{lem:compressed-volume}
	Assume that
	\[
	\ol L_{(m,n)}(\kappa_n)
	=\ol L_{(m,n)}\otimes\varphi_n^*O_{\ol G}(\kappa_n)
	\]
	is horizontally semipositive, with \(\kappa_n\ll n^{-2}\).  If
	\(|\lambda|\le (nm)^{-1}\), then
	\[
	\widehat{\operatorname{vol}}_{\chi,K_n}^{\mathrm{raw}}(\ol L_{(m,n),\lambda})
	-\bigl((\ol L_{(m,n),\lambda})^{g+t+1}\bigr)^{\mathrm{raw}}_{K_n}
	\ge
	-C_f [K_n:K]\,
	\bigl(nm|\lambda|^2(1+\kappa_n n)+\kappa_n\bigr).
	\]
\end{lemma}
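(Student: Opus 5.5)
The plan is to copy the proof of Lemma~\ref{lem:canonical-volume} line by line. Two substitutions are needed. The reference bundle becomes the corrected compressed bundle
\[
\ol A_{m,n}:=\ol L_{(m,n)}(\kappa_n)=\ol L_{(m,n)}\otimes\varphi_n^*O_{\ol G}(\kappa_n),
\]
and the perturbation becomes
\[
\ol E:=\varphi_n^*O_{\ol G}(\lambda f_m),\qquad f_m=f\circ[m]_{\ol G}.
\]
By hypothesis \(\ol A_{m,n}\) is horizontally semipositive. Its underlying algebraic line bundle is \(L_n\), independent of \(m\), by Proposition~\ref{lem:fixed-horizontal-algebraic-data}. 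We then write
\[
\ol L_{(m,n),\lambda}=\bigl(\ol A_{m,n}+\ol E\bigr)\otimes\varphi_n^*O_{\ol G}(-\kappa_n),
\]
and estimate the two factors separately.

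\textbf{Step 1: volume defect of \(\ol A_{m,n}+\ol E\).} We apply Proposition~\ref{input:quadratic-volume} to the pair \((\ol A_{m,n},\ol E)\). This is Kühne's Lemma~2.6 applied to a semipositive decomposition of the perturbation, after which the remaining intersection terms are expanded multilinearly. The linear term in \(\lambda f_m\) appears identically in the volume and in \((\ol A_{m,n}+\ol E)^{d+1}\), so it cancels. What remains is a sum of monomials \(\lambda^r F_{m,n}^r\cdot\ol A_{m,n}^{d+1-r}\) with \(r\ge2\).

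We bound these monomials using Lemma~\ref{lem:parameterized-error-bookkeeping} in the compressed case, with \(\alpha_{\star,n}=nm\), which is exactly the range \(|\lambda|\le(nm)^{-1}\). Equivalently, we use Lemma~\ref{lem:compressed-quadratic-scaling}. Before doing so, we replace \(\ol A_{m,n}\) by \(\ol L_{(m,n)}\) in each monomial. Since \(O_{\ol G}(\kappa_n)\) is a constant metric on the trivial bundle, each substitution produces terms carrying one factor of \(\kappa_n\). By the same projection-formula scaling, each such term contributes at most an extra factor \(\kappa_n n\). This is the source of the factor \((1+\kappa_n n)\).

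After keeping the raw field factor \([K_n:K]\) from Lemma~\ref{lem:field-extension-normalization} and normalizing by \(L_n^d\), Step 1 gives
\[
\widehat{\operatorname{vol}}^{\mathrm{raw}}_{\chi,K_n}(\ol A_{m,n}+\ol E)-\bigl((\ol A_{m,n}+\ol E)^{d+1}\bigr)^{\mathrm{raw}}_{K_n}\ge -C_f[K_n:K]\,nm|\lambda|^2(1+\kappa_n n).
\]

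\textbf{Step 2: removing the constant twist.} Tensoring back by \(\varphi_n^*O_{\ol G}(-\kappa_n)\) changes the volume defect by at most \(C_f[K_n:K]\kappa_n\), by Lemma~\ref{lem:constant-shift-volume-defect}. That lemma explicitly covers the compressed bundles \(\ol A_{m,n}+\varphi_n^*O_{\ol G}(\lambda f_m)\) in the range \(|\lambda|\le(nm)^{-1}\), uniformly in \(m\le n\). Adding this to the bound from Step 1 gives the claimed inequality.

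\textbf{Main obstacle: uniformity of \(C_f\) in \(m\).} The key point is that the constant in the quadratic-volume package does not depend on \(m\). Proposition~\ref{input:quadratic-volume} allows \(C_{\overline A}\) to depend on the reference models. The metric of \(\ol A_{m,n}\) varies with \(m\), so one might fear that this dependence enters \(C_{\overline A}\).

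I would handle this using the quasi-canonical normal form of Proposition~\ref{prop:qcanonical-compression-normal-form}. It shows that the local support functions \(\Psi_{\ol D_m,v}\) are translates of \(\Psi_D\) by \(u_v/m\), plus constants \(\gamma_v/m\). These translations and constants are uniformly bounded for \(m\ge1\), so the metrics \(\ol D_m\) stay within a bounded distance of the canonical one. Consequently the semipositive decompositions of the perturbation, and their adelic sizes \(\|\cdot\|_{\ol A}\), can be chosen uniformly in \(m\). Pulling back along \([m]\) does not change sup-norms of \(f\), so \(\|f_m\|_{\mathrm{ad}}\) is controlled by the integrable norm of \(f\) alone. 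The only genuine growth in \(m\) is then the explicit \(m\)-factor, and that factor is already isolated in Lemma~\ref{lem:compressed-quadratic-scaling}.
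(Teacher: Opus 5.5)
Your proposal is essentially the paper's proof. Like the paper, you take \(\ol A_{m,n}=\ol L_{(m,n)}(\kappa_n)\) as the horizontally semipositive reference and apply Proposition~\ref{input:quadratic-volume} to it and \(\varphi_n^*O_{\ol G}(\lambda f_m)\), with the \(nm\)-growth from Lemma~\ref{lem:compressed-quadratic-scaling}; you then record the factor \(1+\kappa_n n\), keep \([K_n:K]\) via Lemma~\ref{lem:field-extension-normalization}, and remove the twist with Lemma~\ref{lem:constant-shift-volume-defect}.

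The differences are only expository. Your uniformity-in-\(m\) paragraph is unnecessary, since Proposition~\ref{input:quadratic-volume} already states that its constant is independent of \(m\); its reasoning also would not suffice on its own, because sup-norm closeness to the canonical metric does not control semipositive decompositions, which are a curvature condition. Separately, the \(\kappa_n\)-terms you create by replacing \(\ol A_{m,n}\) with \(\ol L_{(m,n)}\) in monomials containing \(F_{m,n}\) in fact vanish: a constant-metrized trivial factor contributes \(\kappa_n\) times a geometric degree that is zero once the geometrically trivial \(F_{m,n}\) is present, so \(1+\kappa_n n\) is a harmless overestimate.
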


\begin{proof}
	Let
	\[
	\ol A_{m,n}:=\ol L_{(m,n)}(\kappa_n).
	\]
	By assumption \(\ol A_{m,n}\) is horizontally semipositive.  We write
	\[
	\ol L_{(m,n),\lambda}
	=
	\bigl(\ol A_{m,n}+\varphi_n^*O_{\ol G}(\lambda f_m)\bigr)
	\otimes\varphi_n^*O_{\ol G}(-\kappa_n).
	\]
	Proposition~\ref{input:quadratic-volume}, equivalently the volume part of
	\cite[Lemma~4.6]{Ku1} after the toric-compactification bridge and its
	\cite[Lemma~2.6]{Ku1} bigness input, applies to the semipositive adelic line
	bundle \(\ol A_{m,n}\) and the integrable perturbation
	\(\varphi_n^*O_{\ol G}(\lambda f_m)\).  Its linear part is the
	intersection term
	associated with
	\(\ol A_{m,n}+\varphi_n^*O_{\ol G}(\lambda f_m)\); the remainder is controlled
	by the square of the perturbation.
	
	The \(nm\)-dependence is the only point not present in K\"uhne's published
	Lemma~4.6.  It comes from the compressed perturbation
	\(f_m=f\circ[m]_{\ol G}\), and the square of that perturbation is controlled by
	the compressed intersection scaling isolated in
	Lemma~\ref{lem:compressed-quadratic-scaling}.
	The horizontally semipositive reference bundle is
	\(\ol A_{m,n}=\ol L_{(m,n)}(\kappa_n)\), so the quadratic constant is changed by
	the factor \(1+\kappa_n n\).  Keeping the field-extension factor of
	Lemma~\ref{lem:field-extension-normalization} before normalization gives
	\[
	\begin{aligned}
		&\widehat{\operatorname{vol}}_{\chi,K_n}^{\mathrm{raw}}
		(\ol A_{m,n}+\varphi_n^*O_{\ol G}(\lambda f_m))
		-\bigl((\ol A_{m,n}+\varphi_n^*O_{\ol G}(\lambda f_m))^{d+1}
		\bigr)^{\mathrm{raw}}_{K_n} \\
		&\qquad\ge
		-C_f [K_n:K]\, nm|\lambda|^2(1+\kappa_n n).
	\end{aligned}
	\]
	The final passage from \(\ol A_{m,n}+\varphi_n^*O_{\ol G}(\lambda f_m)\) to
	\(\ol L_{(m,n),\lambda}\) is the constant shift
	\(\varphi_n^*O_{\ol G}(-\kappa_n)\).  By
	Lemma~\ref{lem:constant-shift-volume-defect}, this costs at most
	\(C_f[K_n:K]\kappa_n\).  Hence
	\[
	\widehat{\operatorname{vol}}_{\chi,K_n}^{\mathrm{raw}}(\ol L_{(m,n),\lambda})
	-\bigl((\ol L_{(m,n),\lambda})^{d+1}\bigr)^{\mathrm{raw}}_{K_n}
	\ge
	-C_f [K_n:K]\,
	\bigl(nm|\lambda|^2(1+\kappa_n n)+\kappa_n\bigr).
	\]
	The constant \(C_f\) is independent of \(n\), \(m\le n\), and \(\lambda\).
\end{proof}

\subsection{Correct parameter choice}

A useful explicit choice is
\[
m(n)=n^{1/2},\qquad \lambda=n^{-7/4},
\qquad \epsilon=|\lambda|n^{-1/2},
\]
where one may restrict to square \(n\)'s if desired.  This choice satisfies
\[
|\lambda|=n^{-7/4}\le n^{-3/2}=(nm(n))^{-1}.
\]
The raw quadratic error terms in Lemmas~\ref{lem:compressed-height-variation}
and~\ref{lem:compressed-volume} are of size
\[
nm(n)|\lambda|^2(1+\kappa_n n)+\kappa_n .
\]
In the Minkowski--variation argument this quantity is divided by
\(|\lambda|\).  Hence the normalized error to be checked is
\[
|\lambda|\,nm(n)(1+\kappa_n n)+\kappa_n|\lambda|^{-1}.
\]
By Proposition~\ref{prop:compressed-lower-bound} and
Lemma~\ref{lem:compressed-lower-uniformity}, the estimate
\(\kappa_n\ll n^{-2}\) is uniform in the chosen compression path.  Therefore
the second summand is dominated by
\(n^{-2}|\lambda|^{-1}\).  The computation gives
\[
n^{-2}|\lambda|^{-1}
=n^{-2}n^{7/4}=n^{-1/4},
\]
\[
|\lambda|\,n\,m(n)(1+\kappa_n n)
\ll
n^{-7/4}n n^{1/2}
=n^{-1/4},
\]
and
\[
|\lambda|^{-1}\epsilon=n^{-1/2}.
\]
Thus the total error is \(O(n^{-1/4})\), not \(O(n^{-1/2})\).  This is enough.

More generally, choose
\[
m(n)=\lfloor n^a\rfloor,\qquad 0<a<1,
\]
and
\[
\lambda=n^{-(3+a)/2}.
\]
Then \(|\lambda|\le (nm(n))^{-1}\) for large \(n\), and
\[
n^{-2}|\lambda|^{-1}
=
O\bigl(n^{-(1-a)/2}\bigr),
\]
\[
|\lambda|nm(n)(1+\kappa_n n)
=
O\bigl(n^{-(1-a)/2}\bigr).
\]
The remaining volume-shift term satisfies the same bound:
\[
\kappa_n|\lambda|^{-1}
\ll
n^{-2}|\lambda|^{-1}
=
O\bigl(n^{-(1-a)/2}\bigr).
\]
Taking \(\epsilon=|\lambda|n^{-(1-a)/2}\), the final error is
\[
O\bigl(n^{-(1-a)/2}\bigr).
\]
The choice \(a=1/2\) gives \(O(n^{-1/4})\).  One may restrict to \(n\) a square
or fourth power to avoid floor notation.

\section{Quasi-canonical equidistribution by explicit compression}

\begin{theorem}[Quasi-canonical compression theorem]
	\label{thm:qcanonical-compression}
	Assume that \(D\) is big, nef and \(T\)-effective, and that \(\ol D\) is
	quasi-canonical.  Then every generic \(\ol L\)-small sequence in
	\(G(\overline K)\), for
	\[
	\ol L=\ol G(\ol D)\otimes\ol\pi^*\ol N,
	\]
	has \(v\)-adic equidistribution at every place \(v\), with measure
	\[
	\mu_{\ol L,v}
	=
	\frac{
		c_1(\ol G(\ol D)_v)^t\wedge c_1(\ol\pi^*\ol N_v)^g
	}{
		G(D)^t(\pi^*N)^g
	}.
	\]
\end{theorem}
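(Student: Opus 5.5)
The plan is to copy the proof of Theorem~\ref{thm:canonical-eq}, but along the compression path $m=m(n)$ built in the preceding section rather than at $m=\infty$.

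\emph{Reductions.} First reduce to $D$ ample by approximation in the nef cone, exactly as in the canonical case. Then add an $M_K$-constant to $\ol D$ so that $\muabs_{\ol D}(T)=0$; this changes heights by a constant and leaves measures unchanged, so Proposition~\ref{prop:qcanonical-compression-normal-form} applies. By Proposition~\ref{prop:horizontal-equivariance} it suffices to prove equidistribution for $\ol L=\ol L_{(1,1)}$. We will test against $f_m=f\circ[m]$ on the level $(m,n)$, so that the limit measure is identified through the exact pushforward identity of Lemma~\ref{lem:compressed-toric-uniform-family}.

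\emph{Main argument.} Fix a generic $\ol L$-small sequence $(x_i)$, a place $v$ and a continuous Galois-invariant $f$. By density it is enough to take $f$ to be a model (integrable) test function. For each $n$, with $m=m(n)=\lfloor n^a\rfloor$, lift $x_i$ through $[m]$ and $\varphi_n$ to points $x_i^{(m,n)}$ of $\overline G_n$, passing to $K_n$.
\begin{itemize}
\item The height identity in Proposition~\ref{prop:horizontal-equivariance} and the projection formula for $\varphi_n$ show that these lifts form a generic small sequence for $\ol L_{(m,n)}$. Their heights tend to $\muess=0$, up to a normalized error of order $n^{-2}$ coming from the abelian part.
\item Proposition~\ref{prop:compressed-lower-bound} gives $\kappa_n\ll n^{-2}$ uniformly in the path, together with horizontal semipositivity of $\ol L_{(m,n)}(\kappa_n)$, and $-Cn^{-2}\le h_{\ol L_{(m,n)}}(\ol G_n)\le 0$.
\item Apply Minkowski's theorem (via the $\chi$-volume) to $\ol L_{(m,n),\lambda}$. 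Lemma~\ref{lem:compressed-volume} supplies a small section, which is nonvanishing along a generic subsequence. This gives $\liminf_i h_{\ol L_{(m,n),\lambda}}(x_i^{(m,n)})\ge h_{\ol L_{(m,n),\lambda}}(\ol G_n)-(\text{volume defect})/|\lambda|$.
\item Expand $h_{\ol L_{(m,n),\lambda}}(\ol G_n)$ using Lemma~\ref{lem:compressed-height-variation}. Since the height of a point is linear in $\lambda$, with $\lambda n_v$ times the average of $f_m$ over the Galois orbit, dividing by $\pm\lambda$ yields both
\[
\limsup_i\Big|\tfrac{1}{\#O_v(x_i)}\textstyle\sum_y f(y)-\int f_{(m,n),v'}\,d\mu_{(m,n),v'}\Big|\le C\big(n^{-2}|\lambda|^{-1}+|\lambda|nm(1+\kappa_n n)\big)+C|\lambda|^{-1}\epsilon .
\]
Lemma~\ref{lem:field-extension-normalization} converts the raw defects carrying $[K_n:K]$ into this normalized form.
\end{itemize}

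\emph{Parameters and limit.} Take $\lambda=n^{-(3+a)/2}$ and $\epsilon=|\lambda|n^{-(1-a)/2}$ as in the parameter-choice subsection. Then $|\lambda|\le (nm)^{-1}$ and the right side is $O(n^{-(1-a)/2})$. Letting $n\to\infty$, Lemma~\ref{lem:compressed-measure} shows $\int f_{(m(n),n),v'}\,d\mu_{(m(n),n),v'}\to\int f\,d\mu_{\ol L,v}$. The left side does not depend on $n$, so the orbit averages converge to $\int f\,d\mu_{\ol L,v}$. The stated formula for the measure follows from expanding $c_1(\ol L_v)^{g+t}$: the other mixed terms carry zero mass by Lemma~\ref{lem:algebraic-dimensional-vanishing}, as in Lemma~\ref{lem:canonical-measure}.

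\emph{Expected obstacle.} The hardest step is the bookkeeping of the pulled-back test function. The perturbation on the level $(m,n)$ is $\lambda f_m$ rather than $\lambda f$, and the orbit average of $f_m$ over the lifts has to equal the orbit average of $f$ over $x_i$. This requires that the lifts' Galois orbits push forward correctly under $[m]\circ\varphi_n$, which is a degree-counting argument with the fibres of an isogeny. It also requires that smallness of the lifts, with the exact normalization $\frac1m h(\,[m]\cdot)$ and the quadratic abelian term, contributes an error of order $n^{-2}$ and not of order $m^{-1}$. I would check this first, by writing the height of a lift explicitly using Proposition~\ref{prop:section4-valuation-height-package} and Lemma~\ref{lem:compressed-chart-lower}.
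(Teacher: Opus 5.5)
Your outline is the paper's own argument: Minkowski for $\ol L_{(m(n),n),\lambda}$, Lemmas~\ref{lem:compressed-volume} and~\ref{lem:compressed-height-variation}, Proposition~\ref{prop:compressed-lower-bound}, averaging over the fibres of $[m(n)]\circ\varphi_n$, then Lemma~\ref{lem:compressed-measure}, with the same parameters. But the step you make explicit, which the paper's proof also needs and does not address, is false: the lifts $y_i$ of $x_i$ through $[m]\circ\varphi_n$ are not $\ol L_{(m,n)}$-small. Your ``projection formula for $\varphi_n$'' would require $\varphi_n^*\ol G(\ol D)=n\,\ol G_n(\ol D)$, and that is K\"uhne's identity for the canonical metric only.

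Concretely, the toric part of $\varphi_n$ is $[n]$ and $\mathcal H_{G_n,nu}\simeq\mathcal H_{G,u}$ by Lemma~\ref{lem:character-pushout-picard-zero-functoriality}. Hence $\operatorname{trop}_G(x)=mn\,\operatorname{trop}_{G_n}(y)$, and $\nu_{y,v}$ is the image of $\nu_{x,v}$ under $u\mapsto u/(mn)$. With $\Psi_{\ol D_m,v}(u)=\Psi_D(u-u_v/m)-\gamma_v/m$ and $\sum_vn_v\gamma_v=0$, Proposition~\ref{prop:section4-valuation-height-package} gives
\[
h_{\ol G_n(\ol D_m)}(y)=-\frac{1}{mn}\sum_v n_v\int_{N_\BR}\Psi_D(u-nu_v)\,d\nu_{x,v}(u).
\]
A normalized quasi-canonical metric is monocritical and arithmetically $T$-effective, so $\nu_{x_i,v}\to\delta_{u_v}$ by Proposition~\ref{prop:monocritical-kr}. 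The height above therefore tends to $\tfrac{n-1}{mn}\,c$ with $c=-\sum_vn_v\Psi_D(-u_v)\ge0$. In general $c>0$: take $G=\Gm$, $X_\Sigma=\mathbb P^1$, $D=[0]+[\infty]$ (so $\Psi_D(u)=-|u|$), $K=\BQ$, $u_\infty=1$, $u_2=-1$; then $c=2$.

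Since $h_{\ol L_{(m,n)}}(\ol G_n)\in[-Cn^{-2},0]$, this excess of order $m^{-1}$ enters the Minkowski--variation inequality. After division by $|\lambda|\le(nm)^{-1}$ it adds a term $\ge c(n-1)/n_v$ to your displayed bound, which therefore blows up for every admissible $\lambda$. So the check you planned in your last paragraph comes out negative: the error is of order $m^{-1}$, not $n^{-2}$.

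The missing idea is to match the metric index to the lift. Since $\frac1n\varphi_n^*\ol D_m=\ol D_{mn}$ (Remark~\ref{rem:compressed-measure-rank-one-check}), the same computation gives exactly
\[
h_{\ol G_n(\ol D_{mn})}(y)=\frac{1}{mn}\,h_{\ol G(\ol D)}(x),\qquad \hat h_{\ol N}(\pi_n(y))=\frac{1}{m^2}\,\hat h_{\ol N}(\pi(x)).
\]
Both terms of $h_{\ol L}(x_i)$ are nonnegative after normalization, so the lifts are small for $\ol L_{(mn,n)}$.

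At that level the auxiliary compression $[m]$ is superfluous. Take $m=1$: work with $\ol G_n(\ol D_n)\otimes\ol\pi_n^*\ol N$, test function $f\circ\varphi_n$ and $\lambda=n^{-3/2}$. This is K\"uhne's canonical argument with $\ol D^{\can}$ replaced by $\ol D_n$. The lower bound is Proposition~\ref{prop:compressed-lower-bound} with index $n$. The variation and volume bounds must be rechecked, because the metric index is now decoupled from the index of the test function.

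The limit must then be identified by the projection formula for $\varphi_n^*\ol G(\ol D_m)=n\,\ol G_n(\ol D_{mn})$, plus Lemma~\ref{lem:compressed-toric-uniform-family} if $m>1$. It cannot be identified by the same-index comparison behind Lemma~\ref{lem:compressed-measure}, which cannot be used as stated. In the rank-one example, $\mu_{(m,n),\infty}$ is Haar measure on $\val_\infty^{-1}(u_\infty/m)$. So $\int f\circ[m]\circ\varphi_n\,d\mu_{(m,n),\infty}$ integrates $f$ against Haar measure on $\val_\infty^{-1}(nu_\infty)$, not on $\val_\infty^{-1}(u_\infty)$, for every $m$.
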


\begin{proof}
	By the birational reduction for nef and big toric divisors, it is enough to
	treat \(D\) ample.  Normalize the metric so that
	\(\mu_{\ol D}^{\mathrm{abs}}(T)=0\).  Fix a generic \(\ol L\)-small sequence
	\((x_i)\subset G(\ol K)\), a place \(v\), and a continuous
	\(\Gal(\mathbb{C}_v/K_v)\)-invariant test function \(f\), approximated by
	integrable test functions.
	
	Let \(m(n)=\lfloor n^a\rfloor\), \(0<a<1\), and choose
	\(\lambda=n^{-(3+a)/2}\).  By Minkowski's theorem, applied to
	\(\ol L_{(m(n),n),\lambda}\), there is an auxiliary non-zero section whose
	divisor avoids the inverse image of \(x_i\) for all large \(i\).  Combining
	Lemma~\ref{lem:compressed-volume}, Lemma~\ref{lem:compressed-height-variation},
	and Proposition~\ref{prop:compressed-lower-bound}, and then averaging over the
	finite fiber of \([m(n)]\circ\varphi_n\), yields
	\[
	\begin{aligned}
		\limsup_i
		\biggl|
		&\frac{1}{\#O_v(x_i)}\sum_{y\in O_v(x_i)} f(y)
		-
		\int_{\ol G_{n,\mathbb{C}_{v'}}^{an}}
		f_{(m(n),n),v'}\,d\mu_{(m(n),n),v'}
		\biggr| \\
		&\le
		C\left(
		n^{-2}|\lambda|^{-1}
		+|\lambda|nm(n)(1+\kappa_n n)
		\right)
		+C|\lambda|^{-1}\epsilon.
	\end{aligned}
	\]
	By Lemma~\ref{lem:field-extension-normalization}, the factor \([K_n:K]\) in
	this defect is exactly the factor which disappears in the normalized
	Minkowski--variation estimate.
	Here the term \(|\lambda|nm(n)(1+\kappa_n n)\) is the normalized form of the
	quadratic error \(nm(n)|\lambda|^2(1+\kappa_n n)\) from
	Lemmas~\ref{lem:compressed-height-variation} and
	\ref{lem:compressed-volume}; the normalization comes from the usual division by
	\(|\lambda|\) in the variation argument.
	The additional term \(\kappa_n|\lambda|^{-1}\) coming from
	Lemma~\ref{lem:compressed-volume} is absorbed into
	\(n^{-2}|\lambda|^{-1}\), since
	Proposition~\ref{prop:compressed-lower-bound} gives
	\(\kappa_n\ll n^{-2}\) uniformly for the chosen compression path.
	With the parameter choice above, the right side is
	\[
	O\bigl(n^{-(1-a)/2}\bigr).
	\]
	Letting \(n\to\infty\) and using Lemma~\ref{lem:compressed-measure} gives
	\[
	\lim_i
	\frac{1}{\#O_v(x_i)}
	\sum_{y\in O_v(x_i)} f(y)
	=
	\int_{\ol G_{\mathbb{C}_v}^{an}}f\,d\mu_{\ol L,v}.
	\]
	This proves equidistribution.
\end{proof}

\begin{remark}[Generic versus strict]\label{rem:qcanonical-generic-not-strict}
	Theorem~\ref{thm:qcanonical-compression} proves the generic equidistribution
	property.  It is not, by itself, a strong equidistribution theorem for strict
	sequences.  The standard passage from generic equidistribution to strong
	equidistribution uses a Bogomolov theorem: if a strict small sequence is not
	generic, one extracts a generic small sequence on a proper subvariety and then
	uses Bogomolov to force that subvariety into a proper algebraic subgroup.  This
	is the argument used by K\"uhne after proving the Bogomolov theorem for
	semiabelian varieties.  In the present paper the strict restricted statement
	is itself used in the proof of Bogomolov, so this upgrade cannot be imported
	from the present Bogomolov theorem without circularity.
	
	Recent general approximation results, such as those of Ballay--Sombra
	\cite{BS24}, give powerful generic equidistribution statements and recover the
	generic semiabelian equidistribution theorem.  They do not remove the logical
	distinction above.  Thus any use of strict quasi-canonical equidistribution
	below must either be cited as an independent source theorem or proved by a
	separate non-circular argument.
\end{remark}

\section{Monocritical case}

\begin{theorem}[Monocritical equidistribution]
	\label{thm:monocritical-eq}
	Suppose \(D\) is big and \(T\)-effective, and
	\(\bD=(D,\{\Vert\cdot\Vert_v\})\) is a semipositive, monocritical and
	arithmetically \(T\)-effective toric metrized \(\BR\)-divisor in the sense of
	Definitions~\ref{def 3.12} and~\ref{def:arith-T-effective}.  Then
	\[
	\bL=\overline G(\bD)\otimes\overline\pi^*\bN
	\]
	satisfies \(v\)-adic equidistribution at every place \(v\) of \(K\).  In the
	quasi-canonical case the limiting measure is
	\[
	\mu_{\bL,v}
	=
	\frac{
		\hat c_1(\overline G(\bD)_v)^t\wedge
		\hat c_1(\overline\pi^*\bN_v)^g
	}{
		\overline G(D)^t(\overline\pi^*N)^g
	}.
	\]
\end{theorem}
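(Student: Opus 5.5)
The plan is to reduce the monocritical case to the quasi-canonical case, Theorem~\ref{thm:qcanonical-compression}, by a replacement argument guided by the critical vector \((u_v)_v\) of Definition~\ref{def:monocritical}.

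\emph{Step 1: normalization and the quasi-canonical model.} Subtract an \(M_K\)-constant so that \(\muabs_{\bD}(T)=0\). By the characterization of arithmetic \(T\)-effectivity in Section~\ref{subsec:arith-teff-meaning}, the origin then lies in \(\operatorname{argmax}_{\Delta_D}\vartheta_{\bD}\). Monocriticity at \(x=0\) gives the unique decomposition \(0=\sum_v n_vu_v\) with \(u_v\in\partial\vartheta_{\bD,v}(0)\). Define the quasi-canonical model \(\bD'\) by
\[
\Psi_{\bD',v}(u)=\Psi_D(u-u_v)-\gamma_v,\qquad \gamma_v=-\vartheta_{\bD,v}(0).
\]
Its local roofs are the affine functions \(x\mapsto\vartheta_{\bD,v}(0)+\langle x,u_v\rangle\). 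Hence \(\vartheta_{\bD',v}\ge\vartheta_{\bD,v}\) by concavity, and the two agree at \(0\). In particular
\[
\Psi_{\bD,v}\le\Psi_{\bD',v},\qquad \muabs_{\bD'}(T)=\muabs_{\bD}(T)=0,
\]
so \(\overline G(\bD)\le\overline G(\bD')\) as metrics. Proposition~\ref{prop:section4-minima-formula} applied to both divisors then shows that \(\bL\) and \(\bL':=\overline G(\bD')\otimes\overline\pi^*\bN\) have the same essential minimum \(0\), and that \(h_{\bL}\ge h_{\bL'}\) pointwise. Consequently every generic \(\bL\)-small sequence \((x_i)\) is also \(\bL'\)-small.

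\emph{Step 2: localization of the valuation measures.} Using Proposition~\ref{prop:section4-valuation-height-package}, write
\[
h_{\bL}(x_i)-h_{\bL'}(x_i)=\sum_vn_v\int(\Psi_{\bD',v}-\Psi_{\bD,v})\,d\nu_{x_i,v}+0.
\]
Both heights tend to \(0\) and each summand is nonnegative, so each summand tends to \(0\). Monocriticity, following the BGPS tropical measure argument, says that \(\Psi_{\bD',v}-\Psi_{\bD,v}\) vanishes exactly on the region where the metrics coincide, namely the critical cell \(\partial\vartheta_{\bD,v}(0)\)-dual locus around \(u_v\). The key point is that the origin being a \emph{vertex} of \(\partial\vartheta_{\bD}(0)\) forces this contact region to be controlled. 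It then follows that the toric valuation measures \(\nu_{x_i,v}\) concentrate, in the limit, on the set where \(\Psi_{\bD,v}=\Psi_{\bD',v}\).

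\emph{Step 3: passage of equidistribution.} Apply Theorem~\ref{thm:qcanonical-compression} to \(\bL'\), so that the Galois orbits of \(x_i\) equidistribute to \(\mu_{\bL',v}\). Lemma~\ref{lem:coordinate-tropicalization-not-defined}-type identifications are not needed here. Instead, \(\mu_{\bL',v}\) pushed to \(N_\BR\) via the coordinate tropicalization of Lemma~\ref{lem:local-coordinates-compute-tropicalization} is a point mass at \(u_v\) in the archimedean and translated-canonical cases, and this lies in the contact locus, consistent with Step 2. Hence the limit measure exists for \(\bL\) and equals \(\mu_{\bL',v}\), which proves equidistribution.

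When \(\bD\) itself is quasi-canonical, \(\bD=\bD'\), and the stated measure formula follows by combining Theorem~\ref{thm:qcanonical-compression} with Lemma~\ref{lem:algebraic-dimensional-vanishing}, which removes the mixed terms.

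\emph{Main obstacle.} I expect Step 2 to be the hardest: turning the vertex condition into concentration of \(\nu_{x_i,v}\) on the locus where the two metrics agree, uniformly over the semiabelian fibration rather than only on \(T\). My first attempt would be to run the BGPS perturbation \(\bD\pm\lambda\,O(\phi)\) with \(\phi\) supported away from \(u_v\). Combined with the lower bound of Lemma~\ref{lem:arith-teff-height-lower}, applied to the perturbed, still arithmetically \(T\)-effective divisor, this should force the mass of \(\nu_{x_i,v}\) off the contact locus to vanish.
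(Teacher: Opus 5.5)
Your Step~1 has the comparison backwards, and this error hides the whole difficulty. Concave Legendre duality $\Psi_{\bD,v}=\vartheta_{\bD,v}^{\vee}$ reverses order. So $\vartheta_{\bD',v}\ge\vartheta_{\bD,v}$ gives $\Psi_{\bD',v}\le\Psi_{\bD,v}$, and since $h_{\overline G(\bD)}(x)=-\sum_vn_v\int\Psi_{\bD,v}\,d\nu_{x,v}$, the correct conclusion is $h_{\bL'}\ge h_{\bL}$ pointwise.

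For example, take $\mathbb P^1$ with $\Delta_D=[0,1]$, $\vartheta_{\bD,\infty}(x)=-x^2$, and canonical metrics at the finite places. Then all $u_v$ vanish, $\bD'$ is canonical, and $h_{\bD}(2)=(\log 2)^2/4<\log 2=h_{\bD'}(2)$.

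No quasi-canonical model with minimum $0$ can lie on your side unless $\bD$ is itself quasi-canonical. Indeed, $\Psi_{\bD',v}\ge\Psi_{\bD,v}$ for all $v$ would give $0=\sum_vn_v\vartheta_{\bD',v}\le\vartheta_{\bD}\le0$ on $\Delta_D$. Hence $0\le h_{\bL}(x_i)\le h_{\bL'}(x_i)$ says nothing about $h_{\bL'}(x_i)$. Step~2 is then circular: it assumes $h_{\bL'}(x_i)\to0$, which is exactly the content of Proposition~\ref{prop:monocritical-qcanonical-replacement}.

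A telltale sign is that your argument uses monocriticity only to choose $(u_v)_v$. It would therefore equally ``prove'' equidistribution for non-monocritical metrics, with a limit depending on the chosen decomposition of $0$, although BGPS show monocriticity is necessary. (Minor: with the normal form $\Psi_D(u-u_v)-\gamma_v$ the local roof is $\langle x,u_v\rangle+\gamma_v$, so you need $\gamma_v=+\vartheta_{\bD,v}(0)$.)

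What is missing is the BGPS concentration theorem, adapted to the fact that adelic valuation measures of points of $G$ are not centered. By Lemma~\ref{lem:point-measure-height}, the center of $\boldsymbol\nu_x$ is $c=(-h_{\overline H_1}(\pi(x)),\ldots,-h_{\overline H_t}(\pi(x)))$. The paper argues as follows.

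Smallness, together with the lower bound of Lemma~\ref{lem:arith-teff-height-lower}, forces $\hat h_{\bN}(\pi(x_i))\to0$, hence $c_i\to0$. Since $0\in\Delta_{D,\max}$ by Lemma~\ref{lem:arith-teff-origin}, Fenchel duality and Jensen's inequality give the general-center defect bound
\[
0\le -n_v\Phi_{v,c_i}(\nu_{x_i,v})\le h_{\overline G(\bD)}(x_i)-\muess_{\overline G(\bD)}(\overline G)\longrightarrow 0 .
\]
The estimate $|\Phi_{v,c}-\Phi_v|\le R_D\|c\|/n_v$ then yields $\Phi_v(\nu_{x_i,v})\to0$ (Propositions~\ref{input:bgps-kr} and~\ref{prop:bgps-defect-vanishing}).

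Only at this point does monocriticity enter. BGPS rigidity gives $F_v=\{u_v\}$, hence $\nu_{x_i,v}\to\delta_{u_v}$ weakly. The coercivity bound $\|z\|\le -C(\Psi_D(z)-\langle x_0,z\rangle)$, for an interior point $x_0\in\Delta_D$, upgrades this to Kantorovich--Rubinstein convergence (Lemma~\ref{lem:bgps-kr-upgrade}, Proposition~\ref{prop:monocritical-kr}). The Lipschitz property of $\Psi_{\bD',v}$ then gives $h_{\bL'}(x_i)\to0$.

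From there, your Step~3, applying Theorem~\ref{thm:qcanonical-compression} to $\bL'$, is exactly the paper's conclusion. Your fallback of perturbing $\bD\pm\lambda O(\phi)$ and invoking Lemma~\ref{lem:arith-teff-height-lower} does not replace this step. The perturbed metric need not remain semipositive or arithmetically $T$-effective, so that lemma is not available for it.
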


\begin{remark}[Source ledger for the monocritical reduction]
	\label{rem:monocritical-source-ledger}
	The reduction in this section uses the toric measure theory of
	Burgos--Philippon--Rivera-Letelier--Sombra \cite{BGPS19}, especially their
	Kantorovich--Rubinstein topology, the functions \(\Phi_v\), and the rigidity of
	monocritical metrics.  The new point here is not to reprove that toric package,
	but to connect it with the semiabelian height decomposition supplied by
	Section~4 and with the quasi-canonical compression theorem
	\ref{thm:qcanonical-compression}.
\end{remark}

\begin{lemma}[Arithmetic \(T\)-effectivity fixes the origin]
	\label{lem:arith-teff-origin}
	If \(\bD\) is arithmetically \(T\)-effective, then
	\[
	\upsilon_{\bD}(0)=\muess_{\bD}(X_\Sigma).
	\]
	In particular \(0\in\Delta_{D,\max}\).
\end{lemma}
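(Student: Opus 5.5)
The argument is a sandwich. The sup-norm identity bounds \(\upsilon_{\bD}(0)\) below by \(\muabs_{\bD}(T)\); the toric successive-minima formula bounds every value of the roof above by that same number; so the origin attains the maximum.

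First, take the finitely supported constants \((c_v)_v\) and the distinguished section \(s_0=s_D\) from Definition~\ref{def:arith-T-effective}. By the toric sup-norm identity recalled in Subsection~\ref{subsec:arith-teff-meaning},
\[
-\log\|s_0\|_{v,\sup}=\upsilon_{\bD,v}(0)
\]
at every place, where \(0\in\Delta_D\) because \(D\) is \(T\)-effective. The hypothesis \(\|s_0\|_{v,\sup}\le e^{-c_v}\) therefore gives \(\upsilon_{\bD,v}(0)\ge c_v\) for every \(v\). For the canonical metric at the places where the metric is canonical, both sides vanish: \(c_v\le 0\) there, and only finitely many \(c_v\) are nonzero. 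Summing with the weights \(n_v\) yields
\[
\upsilon_{\bD}(0)=\sum_v n_v\upsilon_{\bD,v}(0)\ge\sum_v n_vc_v=\muabs_{\bD}(T).
\]

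Second, the toric successive-minima formula \cite[Theorem~3.9 and Corollary~3.10]{BGPS15}, recalled in Section~3, gives
\[
\muess_{\bD}(X_\Sigma)=\muabs_{\bD}(X_0)=\max_{x\in\Delta_D}\upsilon_{\bD}(x)\ge\upsilon_{\bD}(0).
\]
Here \(X_0=T\). Combining the two displays gives
\[
\muabs_{\bD}(T)\le\upsilon_{\bD}(0)\le\max_{\Delta_D}\upsilon_{\bD}=\muabs_{\bD}(T).
\]
So all of these quantities are equal. This yields \(\upsilon_{\bD}(0)=\muess_{\bD}(X_\Sigma)\), and since \(0\) attains the maximum of \(\upsilon_{\bD}\), it follows that \(0\in\Delta_{D,\max}\).

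The main point needing care is the first step: the sup-norm identity must hold place by place for the distinguished section, including at non-archimedean places and for \(\mathbb R\)-divisors. I would justify it from \cite[Chapters~4--5]{BPS14}. There the sup norm of the toric section attached to \(m\in\Delta_D\) equals \(\exp(-\Psi_{\bD,v}^\vee(m))\), because \(-\log\|s_m\|_v\) at a point with valuation \(u\) is \(\langle m,u\rangle-\Psi_{\bD,v}(u)\), up to sign conventions, and its infimum over \(N_{\bR}\) is the Legendre dual. Taking \(m=0\) gives the identity. I would also check that the sign convention for \(\upsilon_{\bD,v}=\Psi_{\bD,v}^\vee\) makes the roof concave and matches the one used in the successive-minima formula.
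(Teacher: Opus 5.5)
Your argument is correct and is essentially the paper's proof: a placewise bound \(\upsilon_{\bD,v}(0)=\inf_{u}\bigl(-\Psi_{\bD,v}(u)\bigr)\ge c_v\) coming from the smallness of the distinguished section is summed to give \(\upsilon_{\bD}(0)\ge\muabs_{\bD}(T)\), and the toric successive-minima formula of \cite{BGPS15} then closes the sandwich. The paper writes the local dual value directly as this infimum instead of invoking the sup-norm identity. Your aside that ``both sides vanish'' at canonical places is slightly inaccurate, since \(c_v\) need only be \(\le 0\) there, but it plays no role in the argument.
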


\begin{proof}
	Since \(D\) is \(T\)-effective, \(0\in\Delta_D\)
	\cite[Proposition 4.9]{BMPS22}.  By arithmetic \(T\)-effectivity, the
	distinguished invariant section is small with local constants \(l_v\), so
	\[
	\upsilon_{\bD,v}(0)
	=
	\inf_{u\in N_\BR}\bigl(-\Psi_{\bD,v}(u)\bigr)
	\ge l_v
	\]
	for every place \(v\).  Hence
	\[
	\upsilon_{\bD}(0)
	=\sum_v n_v\upsilon_{\bD,v}(0)
	\ge \sum_v n_vl_v
	=\muabs_{\bD}(T).
	\]
	By the toric successive-minima formula
	\cite[Theorem 3.9 and Corollary 3.10]{BGPS15},
	\[
	\muabs_{\bD}(T)=\muess_{\bD}(X_\Sigma)
	=\max_{x\in\Delta_D}\upsilon_{\bD}(x).
	\]
	The reverse inequality is automatic because \(0\in\Delta_D\).  Thus
	\(\upsilon_{\bD}(0)=\muess_{\bD}(X_\Sigma)\).
\end{proof}

In the rest of this section \(D\) is big and \(T\)-effective and \(\bD\) is
semipositive, monocritical and arithmetically \(T\)-effective.  Let
\[
\mathbf u=(u_v)_v\in\bigoplus_vN_\BR
\]
be the critical vector of \(\bD\).

\begin{definition}[Adelic valuation measures]
	\label{def:adelic-valuation-measures}
	Let \(\mathcal E\) be the space of Borel probability measures on \(N_\BR\) with
	finite first moment, endowed with the Kantorovich--Rubinstein topology.  An
	adelic measure with center \(c\in N_\BR\) is a collection
	\(\boldsymbol\nu=(\nu_v)_v\), with \(\nu_v\in\mathcal E\) and
	\(\nu_v=\delta_0\) for all but finitely many \(v\), such that
	\[
	\sum_v n_vE[\nu_v]=c.
	\]
	The set of such adelic measures is denoted by \(\mathcal H_K^c\).  For such
	\(\boldsymbol\nu\), put
	\[
	\eta_{\overline G(\bD)}(\boldsymbol\nu)
	=
	-\sum_v n_v\int_{N_\BR}\Psi_{\bD,v}\,d\nu_v.
	\]
\end{definition}

\begin{lemma}[Point measures and the semiabelian height]
	\label{lem:point-measure-height}
	For \(x\in G(\overline K)\), the local trivializations of
	Proposition~\ref{prop:section4-valuation-height-package} define an adelic
	valuation measure
	\[
	\boldsymbol\nu_x=(\nu_{x,v})_v,\qquad
	\nu_{x,v}:=\frac1{\#O_v(x)}
	\sum_{y\in O_v(x)}\delta_{\val_v(\psi_{j(y),v}(y))},
	\]
	where for each \(y\) one chooses any chart \(j(y)\) with
	\(\pi(y)\in U_{j(y),v}\).  This definition is independent of all such choices,
	and
	\[
	\sum_v n_vE[\nu_{x,v}]
	=
	\bigl(-h_{\overline H_1}(\pi(x)),\ldots,
	-h_{\overline H_t}(\pi(x))\bigr).
	\]
	Moreover
	\[
	\eta_{\overline G(\bD)}(\boldsymbol\nu_x)
	=h_{\overline G(\bD)}(x).
	\]
\end{lemma}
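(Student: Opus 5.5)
This lemma essentially repackages Proposition~\ref{prop:section4-valuation-height-package} in the language of Definition~\ref{def:adelic-valuation-measures}. The plan is to check three things in order: well-definedness, membership of \(\boldsymbol\nu_x\) in \(\mathcal H_K^c\) with the displayed center, and the height identity.

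\emph{Well-definedness.} Independence of the chart choices \(j(y)\) follows from the overlap statement in Theorem~\ref{thm:local-trivialization} at non-archimedean places and from K\"uhne's archimedean trivialization \cite[Lemma 11]{Ku1}. On overlaps, \(\val_v\circ\psi_j=\val_v\circ\psi_{j'}\). A cleaner alternative is Lemma~\ref{lem:local-coordinates-compute-tropicalization}: it gives \(\val_v(\psi_{j,v}(y))=\operatorname{trop}_{G,v}(y)\), which is intrinsic. Then \(\nu_{x,v}\) is the pushforward of the uniform measure on \(O_v(x)\) under \(\operatorname{trop}_{G,v}\). This measure is finitely supported, so it has finite first moment and lies in \(\mathcal E\).

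\emph{Finiteness of the support in \(v\).} We must show \(\nu_{x,v}=\delta_0\) for all but finitely many \(v\). I would use Lemma~\ref{input:good-place-model-induced}. Outside a finite set of places, \(G\), \(A\) and the chart data have good models, and \(x\) extends to an integral point of the semiabelian model. Moreover, the functions \(g_i\) and the theta representatives \(f_{s_i,v}\) are \(v\)-adic units at \(x\), after also excluding the finitely many places where \(x\) meets the divisors of the \(g_i\) or \(s_i\). Hence each coordinate \(-\log|g_i(y)|_v+\log\Vert f_{s_i,v}(\widetilde{\pi_v}(y))\Vert\) vanishes, and so \(\nu_{x,v}=\delta_0\).

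This is the one step not literally stated earlier, and I expect it to be the main obstacle. The delicate part is justifying that the theta factor has norm one at good-reduction places for points reducing away from \(\operatorname{div}(s_i)\). I would try to argue this via the model-metric description of the canonical Picard-zero metric in Proposition~\ref{prop:canonical-picard-zero-theta-metrics}. Equivalently, I would use that the canonical local N\'eron function vanishes at good places off the divisor, combined with the choice of sections \(s_i\) over finitely many charts.

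\emph{Center and height.} The center identity \(\sum_v n_vE[\nu_{x,v}]=(-h_{\overline H_i}(\pi(x)))_i\) is exactly the second display of Proposition~\ref{prop:section4-valuation-height-package}. It comes from pairing with the basis characters \(e_i^\vee\), applying the product formula to the \(g_i\), and invoking Proposition~\ref{prop:theta-neron-height}. Thus \(\boldsymbol\nu_x\in\mathcal H_K^c\) with this \(c\). Finally, by Definition~\ref{def:adelic-valuation-measures}, \(\eta_{\overline G(\bD)}(\boldsymbol\nu_x)=-\sum_vn_v\int\Psi_{\bD,v}\,d\nu_{x,v}\). This equals \(h_{\overline G(\bD)}(x)\) by the third display of the same proposition, which was obtained by evaluating the nowhere-vanishing distinguished section \(\widetilde{s_D}\) on the open torus.
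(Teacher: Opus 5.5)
Your proposal is correct and takes the paper's route: the paper's proof observes in one line that chart independence, the center formula, and \(\eta_{\overline G(\bD)}(\boldsymbol\nu_x)=h_{\overline G(\bD)}(x)\) are restatements of Proposition~\ref{prop:section4-valuation-height-package}. Your extra check that \(\nu_{x,v}=\delta_0\) for all but finitely many \(v\) (via good models and integrality of \(x\), so that \(\operatorname{trop}_{G,v}(x)=0\) at those places) is a condition of Definition~\ref{def:adelic-valuation-measures} that the paper leaves implicit, and your argument for it is sound.
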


\begin{proof}
	This is Proposition~\ref{prop:section4-valuation-height-package}, rewritten in
	the notation of adelic valuation measures.  The center identity is the displayed
	center formula there, and the equality
	\(\eta_{\overline G(\bD)}(\boldsymbol\nu_x)=h_{\overline G(\bD)}(x)\) is exactly
	its height identity.
\end{proof}

For \(v\in\Sigma(K)\), set
\[
g_{1,v}=\upsilon_{\bD,v},
\qquad
g_{2,v}=\sum_{w\ne v}\frac{n_w}{n_v}\upsilon_{\bD,w}.
\]
For \(c\in N_\BR\) and \(\mu\in\mathcal E\), define
\[
\Phi_{v,c}(\mu)
=
\int g_{1,v}^{\vee}\,d\mu
+g_{2,v}^{\vee}\!\left(\frac{c}{n_v}-E[\mu]\right)
+\max_{x\in\Delta_D}(g_{1,v}+g_{2,v})(x),
\qquad
\Phi_v:=\Phi_{v,0}.
\]
If \(x\in\Delta_{D,\max}\), put
\[
B_v=\partial g_{1,v}(x)\cap(-\partial g_{2,v}(x)),
\]
and let \(F_v\) be the minimal face of \(\partial g_{1,v}(x)\) containing
\(B_v\).

\begin{proposition}[BGPS package and the general-center defect inequality]
	\label{input:bgps-kr}
	Let \(\boldsymbol\nu=(\nu_v)_v\in\mathcal H_K^c\).  Then
	\[
	\max_v\{-n_v\Phi_{v,c}(\nu_v)\}
	\le
	\eta_{\overline G(\bD)}(\boldsymbol\nu)
	-\muess_{\overline G(\bD)}(\overline G)
	\le
	\sum_v -n_v\Phi_{v,c}(\nu_v).
	\]
	No additional affine term occurs in the middle expression: arithmetic
	\(T\)-effectivity permits the choice \(x=0\in\Delta_{D,\max}\).  Moreover, if
	\[
	R_D:=\sup_{x\in\Delta_D}\|x\|,
	\]
	then
	\[
	\sup_{\mu\in\mathcal E}
	\bigl|\Phi_{v,c}(\mu)-\Phi_v(\mu)\bigr|
	\le \frac{R_D}{n_v}\|c\|.
	\]
	
	For the centered function \(\Phi_v=\Phi_{v,0}\), the following facts are the
	toric results of \cite[Propositions 3.9, 3.11, 3.14 and 4.16, Lemma 4.5,
	and Theorem 4.19]{BGPS19}: \(\Phi_v\le0\), and \(\Phi_v(\mu)=0\) if and only if
	\[
	\operatorname{supp}(\mu)\subseteq F_v,\qquad E[\mu]\in B_v.
	\]
	If \(\Phi_v(\mu_i)\to0\), then every weak cluster point of \((\mu_i)\) belongs
	to \(\mathcal E\) and satisfies these two conditions.  Finally, if \(\bD\) is
	monocritical with critical vector \(\mathbf u=(u_v)_v\), then \(F_v=\{u_v\}\);
	in this case BGPS Theorem 4.19 gives adelic KR convergence to
	\(\boldsymbol\delta_{\mathbf u}\) for toric small nets.
\end{proposition}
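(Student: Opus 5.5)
The plan is to reduce everything to the toric computation of BGPS, with the center \(c\) entering only through the second argument of \(\Phi_{v,c}\). I use the concave Legendre conventions of \cite{BPS14}: \(\upsilon_{\bD,v}=\Psi_{\bD,v}^{\vee}\), hence \(-\Psi_{\bD,v}(u)=\sup_{x\in\Delta_D}\bigl(\upsilon_{\bD,v}(x)-\langle x,u\rangle\bigr)\), and \(g_{1,v}^{\vee}=\Psi_{\bD,v}\).

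\emph{Step 1: identify the reference constant.} I first check that \(\muess_{\overline G(\bD)}(\overline G)=\max_{\Delta_D}\upsilon_{\bD}=\upsilon_{\bD}(0)\). Proposition~\ref{prop:section4-minima-formula} gives \(\muabs_{\bD}(T)\) as the common minimum. The \(\bN\)-contribution vanishes on the torsion-fibre points used in Lemma~\ref{lem:torsion-fiber-toric-min}, and the lower bound of Lemma~\ref{lem:arith-teff-height-lower} is proved for \(\overline G(\bD)\) alone. Lemma~\ref{lem:arith-teff-origin} then shows that the maximum is attained at \(x=0\). This is why no affine correction term \(\langle x,c\rangle\) appears: such a term would arise from \(x\in\Delta_{D,\max}\) paired with the center, and it vanishes because we may take \(x=0\).

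\emph{Step 2: the lower inequality, for each fixed \(v\).} Write \(\eta(\boldsymbol\nu)=-n_v\int g_{1,v}^{\vee}d\nu_v+\sum_{w\ne v}n_w\int(-\Psi_{\bD,w})\,d\nu_w\). Each \(-\Psi_{\bD,w}\) is convex, so Jensen bounds the \(w\)-sum below by \(\sum_{w\ne v}n_w\bigl(-\Psi_{\bD,w}(E[\nu_w])\bigr)\). Next I use the concave duality \((\sum_w a_w\upsilon_w)^\vee=\) the sup-convolution of the rescaled conjugates, which is valid since all roofs live on the common compact polytope \(\Delta_D\). Applied with weights \(n_w/n_v\), it gives
\[
\sum_{w\ne v}n_w\bigl(-\Psi_{\bD,w}(E[\nu_w])\bigr)\ge -n_v\,g_{2,v}^{\vee}\Bigl(n_v^{-1}\sum_{w\ne v}n_wE[\nu_w]\Bigr)
=-n_v\,g_{2,v}^{\vee}\Bigl(\tfrac{c}{n_v}-E[\nu_v]\Bigr),
\]
where the last equality uses the center relation \(\sum_wn_wE[\nu_w]=c\). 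Subtracting \(\max(g_{1,v}+g_{2,v})=n_v^{-1}\muess\) times \(n_v\) gives \(\eta-\muess\ge -n_v\Phi_{v,c}(\nu_v)\). Taking the maximum over \(v\) gives the left inequality.

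\emph{Step 3: the upper inequality.} Here I follow the proof of \cite[Proposition~3.11]{BGPS19} verbatim, with \(0\) replaced by \(c\). Fix a maximizer \(x_0=0\), and choose slopes \(u'_v\in\partial\upsilon_{\bD,v}(0)\) with \(\sum_v n_vu'_v=0\); these exist by the subdifferential sum rule at a maximum of \(\upsilon_{\bD}\). Test each conjugate at these slopes to bound \(-n_v\,g_{2,v}^{\vee}(c/n_v-E[\nu_v])\) from below. Summing over \(v\), the cross terms telescope into \(\eta(\boldsymbol\nu)-\muess\). The only new point is that the linear terms collect into \(\langle 0,c\rangle=0\), again because the maximizer is \(0\).

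\emph{Step 4: the shift estimate and the BGPS facts.} The function \(g_{2,v}^{\vee}\) is an infimum of affine functions \(z\mapsto\langle x,z\rangle-g_{2,v}(x)\) whose slopes \(x\) lie in \(\Delta_D\). It is therefore \(R_D\)-Lipschitz. Shifting its argument by \(c/n_v\) changes \(\Phi_{v,c}\) by at most \(R_D\|c\|/n_v\), uniformly in \(\mu\). All remaining assertions concern only the centered \(\Phi_v\), namely the sign, the equality case, cluster points, and the identification \(F_v=\{u_v\}\) in the monocritical case. These are purely toric statements and are quoted from the cited results of \cite{BGPS19}.

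I expect Step 3 to be the main obstacle: the BGPS telescoping must be checked with a nonzero center, and the bookkeeping showing that the leftover affine term is exactly \(\langle x_0,c\rangle\), which vanishes only because arithmetic \(T\)-effectivity lets us take \(x_0=0\), has to be done carefully.
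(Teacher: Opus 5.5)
Your proposal is correct and follows the paper's proof. Step~2 is the same Jensen plus sup-convolution argument for the left inequality (the paper writes it with the normalized functions $\widehat\phi_{i,v}=g_{i,v}^{\vee}+g_{i,v}(0)$), and Step~4 is the same Lipschitz bound plus citation of BGPS. Step~3, however, needs no slope decomposition or telescoping: testing the infimum defining $g_{2,v}^{\vee}$ at $x=0\in\Delta_D$ gives $g_{2,v}^{\vee}\le -g_{2,v}(0)$, and since $0\in\Delta_{D,\max}$ this yields $-n_v\Phi_{v,c}(\nu_v)\ge -n_v\int(\Psi_{\bD,v}+\upsilon_{\bD,v}(0))\,d\nu_v$, which sums over $v$ directly to $\eta_{\overline G(\bD)}(\boldsymbol\nu)-\muess_{\overline G(\bD)}(\overline G)$.
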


\begin{proof}
	Put
	\[
	\widehat\phi_{i,v}(z)=g_{i,v}^{\vee}(z)+g_{i,v}(0),
	\qquad i=1,2.
	\]
	Because \(0\in\Delta_{D,\max}\), Fenchel duality gives
	\(\widehat\phi_{i,v}\le0\), and
	\[
	\eta_{\overline G(\bD)}(\boldsymbol\nu)
	-\muess_{\overline G(\bD)}(\overline G)
	=-\sum_w n_w\int\widehat\phi_{1,w}\,d\nu_w.                 \tag{7.1}
	\]
	Also
	\[
	\Phi_{v,c}(\nu_v)
	=\int\widehat\phi_{1,v}\,d\nu_v
	+\widehat\phi_{2,v}\!\left(\frac{c}{n_v}-E[\nu_v]\right).
	\tag{7.2}
	\]
	The nonpositivity of the second term in (7.2) immediately gives the right-hand
	inequality.
	
	For the left-hand inequality, fix \(v\).  The sup-convolution and right-scaling
	identities for concave duals
	\cite[Propositions 2.3.1(1) and 2.3.3(3)]{BPS14} give
	\[
	\widehat\phi_{2,v}
	=\boxplus_{w\ne v}
	\left(\widehat\phi_{1,w}\frac{n_w}{n_v}\right).
	\]
	Since \(\boldsymbol\nu\) has center \(c\),
	\[
	\frac{c}{n_v}-E[\nu_v]
	=\sum_{w\ne v}\frac{n_w}{n_v}E[\nu_w].
	\]
	The definition of sup-convolution followed by Jensen's inequality therefore
	gives
	\[
	\widehat\phi_{2,v}\!\left(\frac{c}{n_v}-E[\nu_v]\right)
	\ge
	\sum_{w\ne v}\frac{n_w}{n_v}\widehat\phi_{1,w}(E[\nu_w])
	\ge
	\sum_{w\ne v}\frac{n_w}{n_v}
	\int\widehat\phi_{1,w}\,d\nu_w.
	\]
	Together with (7.1)--(7.2), this yields
	\[
	-n_v\Phi_{v,c}(\nu_v)
	\le
	\eta_{\overline G(\bD)}(\boldsymbol\nu)
	-\muess_{\overline G(\bD)}(\overline G).
	\]
	Taking the maximum over \(v\) proves the left-hand inequality.
	
	Finally, the concave dual \(g_{2,v}^{\vee}\) is \(R_D\)-Lipschitz because its
	primal domain is contained in \(\Delta_D\).  Translating its argument by
	\(c/n_v\) proves the displayed uniform estimate.  The remaining centered
	compactness, vanishing and rigidity assertions are exactly the cited BGPS
	results.
\end{proof}

\begin{lemma}[KR upgrade from BGPS concentration]
	\label{lem:bgps-kr-upgrade}
	Let \((\boldsymbol\nu_i)_i\), \(\boldsymbol\nu_i=(\nu_{i,v})_v\), be adelic
	valuation measures whose centers \(c_i=\sum_v n_vE[\nu_{i,v}]\) tend to \(0\).
	Assume
	\[
	\eta_{\overline G(\bD)}(\boldsymbol\nu_i)
	\longrightarrow
	\muess_{\overline G(\bD)}(\overline G)
	\]
	and \(\Phi_v(\nu_{i,v})\to0\) for every place \(v\).  If \(\bD\) is
	monocritical with critical vector \(\mathbf u=(u_v)_v\), then
	\[
	\boldsymbol\nu_i\longrightarrow\boldsymbol\delta_{\mathbf u}
	\]
	in the adelic Kantorovich--Rubinstein topology.
\end{lemma}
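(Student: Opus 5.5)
The plan is to combine the BGPS rigidity package in Proposition~\ref{input:bgps-kr} with the general-center defect inequality, treating the nonzero centers \(c_i\) as a perturbation that vanishes in the limit. The adelic Kantorovich--Rubinstein topology is the product of the KR topologies at the finitely many places where some \(\nu_{i,v}\ne\delta_0\), together with the requirement that the measures at the remaining places stay trivial. So it suffices to prove, for each fixed place \(v\), that \(\nu_{i,v}\to\delta_{u_v}\) in KR.

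\textbf{Reduction.} By the KR characterization (weak convergence plus convergence of first moments), it is enough to show two things:
\begin{enumerate}
\item every subsequence of \((\nu_{i,v})_i\) has a further subsequence converging weakly to \(\delta_{u_v}\);
\item the first moments \(\int\|z-u_v\|\,d\nu_{i,v}\) tend to \(0\).
\end{enumerate}

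\textbf{Step 1 (cluster points).} Since \(\Phi_v(\nu_{i,v})\to0\), the compactness statement in Proposition~\ref{input:bgps-kr} shows that every weak cluster point \(\mu\) lies in \(\mathcal E\), has \(\operatorname{supp}(\mu)\subseteq F_v\), and has \(E[\mu]\in B_v\). Monocriticity gives \(F_v=\{u_v\}\), so \(\mu=\delta_{u_v}\). This settles the weak part, provided tightness holds. Tightness is part of the cited BGPS compactness (Lemma~4.5 and Proposition~4.16 there), whose proof bounds \(\int\|z\|\,d\mu\) in terms of \(-\Phi_v(\mu)\). If needed I would reprove it as follows. \(\widehat\phi_{1,v}(z)\le0\) grows at least linearly at infinity in the directions where the Legendre dual is strictly negative, and the second term of (7.2) controls the mean in the remaining directions.

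\textbf{Step 2 (first moments).} Weak convergence must be upgraded to convergence of \(\int\|z-u_v\|\,d\nu_{i,v}\); this is where the real work lies. I would use the decomposition (7.2). Both summands are \(\le0\) and their sum tends to \(0\), so \(\int\widehat\phi_{1,v}\,d\nu_{i,v}\to0\). Monocriticity means \(u_v\) is a vertex, i.e.\ an extreme point, of \(\partial g_{1,v}(x)\), and \(\Delta_D\) is full-dimensional. Together these should give a coercive lower bound
\[
-\widehat\phi_{1,v}(z)\ \ge\ \kappa\,\min\bigl(\|z-u_v\|,\|z-u_v\|^{\,\cdot}\bigr)-\text{(local term)}
\]
near and away from \(u_v\). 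I would try to derive this directly: use the fact that \(0\) is a vertex of \(\partial\upsilon_{\bD}(x)\) to find a cone of directions \(y\in\Delta_D-x\) along which \(g_{1,v}\) drops at a linear rate, and then dualize. The first-moment convergence then follows by uniform integrability of \(\|z-u_v\|\) against \(\nu_{i,v}\). If coercivity fails at the boundary of \(\Delta_D\), I would instead invoke BGPS Theorem~4.19 directly, which already yields KR convergence for toric small nets.

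\textbf{Step 3 (general centers).} The paper's hypotheses give \(\Phi_v(\nu_{i,v})\to0\) at the centered \(c=0\) directly, and \(c_i\to0\). The uniform estimate \(|\Phi_{v,c}-\Phi_v|\le R_D\|c\|/n_v\) therefore keeps the centered and uncentered defects interchangeable. Combined with \(\eta\to\muess\) and the left-hand inequality of Proposition~\ref{input:bgps-kr}, this places the sequence within the scope of the toric theorem, essentially as a toric small net up to an \(o(1)\) error. The finitely-many-places condition is uniform because only finitely many places carry nontrivial data in the construction.

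The main obstacle is the first-moment upgrade in Step~2: converting vanishing defect into tightness of first moments. That is where I would rely on the BGPS coercivity and Theorem~4.19.
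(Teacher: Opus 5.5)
Your Step~1 matches the paper, but the KR upgrade, which you rightly flag as the real content, does not go through as proposed.

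\textbf{The reduction to a fixed place is insufficient.} The adelic KR distance is $W_K(\boldsymbol\nu_i,\boldsymbol\delta_{\mathbf u})=\sum_v n_v W(\nu_{i,v},\delta_{u_v})$. The set of places with $\nu_{i,v}\neq\delta_0$ depends on $i$ and need not stay inside a fixed finite set. So convergence at each fixed place does not give adelic convergence.

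\textbf{The local coercivity in Step~2 is false.} Since $g_{1,v}^\vee=\Psi_{\bD,v}$, one has $\widehat\phi_{1,v}=\Psi_{\bD,v}-\sup\Psi_{\bD,v}$. This vanishes on the whole maximizing set $\partial g_{1,v}(0)$. Monocriticity only makes $u_v$ a vertex of that set, not its only point. Take the canonical metric with $\Delta_D=[0,1]$; it is monocritical and arithmetically $T$-effective, with $u_v=0$. Then $\widehat\phi_{1,v}(z)=\min(0,z)$, and the measure $(1-\epsilon)\delta_0+\epsilon\delta_{1/\epsilon}$ has zero local defect but first moment $1$. So no bound on $-\widehat\phi_{1,v}$ at a single place can control $\int\|z-u_v\|\,d\nu_{i,v}$; the missing control is global.

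\textbf{The fallback does not apply as stated.} BGPS Theorem~4.19 is about toric small nets, and these measures have centers $c_i\neq0$. Your Step~3 phrase ``up to an $o(1)$ error'' names no mechanism; an explicit recentering with Lipschitz bookkeeping would be needed, and you do not give one.

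\textbf{The missing idea.} The paper compares with the translated canonical functional $\eta_{\mathbf u}(\boldsymbol\nu)=-\sum_v n_v\int\Psi_D(z-u_v)\,d\nu_v(z)$.
\begin{itemize}
\item The differences $f_v=\Psi_{\bD,v}-\Psi_D(\cdot-u_v)$ are bounded and continuous. They vanish outside a finite set of places that does not depend on $i$.
\item Weak convergence alone therefore gives $\eta_{\mathbf u}(\boldsymbol\nu_i)\to\muess_{\overline G(\bD)}(\overline G)+\sum_v n_v f_v(u_v)$. This limit is $0$ by the critical-vector normalization.
\item Choose $x_0\in\operatorname{int}(\Delta_D)$, so that $\|z\|\le -C\bigl(\Psi_D(z)-\langle x_0,z\rangle\bigr)$. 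Apply this at $z-u_v$, integrate, and sum with weights $n_v$.
\item The linear terms assemble into $\langle x_0,c_i-\sum_v n_v u_v\rangle=\langle x_0,c_i\rangle\to0$. This yields $\sum_v n_v\int\|z-u_v\|\,d\nu_{i,v}\le C\eta_{\mathbf u}(\boldsymbol\nu_i)+C|\langle x_0,c_i\rangle|\to0$.
\end{itemize}
This bound controls all places at once. The linear term that defeats any local estimate is exactly what the vanishing center removes.
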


\begin{proof}
	The cluster statement in Proposition~\ref{input:bgps-kr}, together with
	monocriticality and BGPS Proposition 4.16, gives \(F_v=\{u_v\}\).  Hence every
	weak cluster point of \((\nu_{i,v})_i\) is \(\delta_{u_v}\), and the compactness
	part of BGPS Proposition 3.11 gives weak convergence
	\[
	\nu_{i,v}\longrightarrow\delta_{u_v}
	\]
	for every \(v\).
	
	It remains to upgrade weak convergence to KR convergence.  Put
	\[
	\eta_{\mathbf u}(\boldsymbol\nu)
	=-\sum_v n_v\int_{N_\BR}\Psi_D(z-u_v)\,d\nu_v(z).
	\]
	For all but finitely many places, \(u_v=0\) and
	\(\Psi_{\bD,v}=\Psi_D\).  Thus
	\[
	f_v(z)=\Psi_{\bD,v}(z)-\Psi_D(z-u_v)
	\]
	is a finite adelic family of bounded continuous functions.  By the weak
	convergence just proved,
	\[
	\eta_{\mathbf u}(\boldsymbol\nu_i)
	=\eta_{\overline G(\bD)}(\boldsymbol\nu_i)
	+\sum_v n_v\int f_v\,d\nu_{i,v}
	\longrightarrow
	\muess_{\overline G(\bD)}(\overline G)+\sum_vn_vf_v(u_v).
	\]
	The identity
	\[
	\muess_{\overline G(\bD)}(\overline G)+\sum_vn_vf_v(u_v)=0
	\]
	is precisely the critical-vector normalization used in the proof of
	\cite[Theorem 4.19]{BGPS19}.  Therefore
	\(\eta_{\mathbf u}(\boldsymbol\nu_i)\to0\).
	
	Choose \(x_0\in\operatorname{int}(\Delta_D)\).  Since \(\Delta_D\) is full
	dimensional, there is \(C>0\) such that
	\[
	\|z\|\le -C\bigl(\Psi_D(z)-\langle x_0,z\rangle\bigr),
	\qquad z\in N_\BR.
	\]
	Using the elementary bound
	\[
	W(\nu_{i,v},\delta_{u_v})
	\le \int\|z-u_v\|\,d\nu_{i,v}(z),
	\]
	we obtain
	\[
	\begin{aligned}
		W_K(\boldsymbol\nu_i,\boldsymbol\delta_{\mathbf u})
		&\le \sum_v n_v\int\|z-u_v\|\,d\nu_{i,v}(z)\\
		&\le C\,\eta_{\mathbf u}(\boldsymbol\nu_i)
		+C\,\bigl|\langle x_0,c_i-\sum_vn_vu_v\rangle\bigr|.
	\end{aligned}
	\]
	The critical vector is centered, \(\sum_v n_vu_v=0\), and \(c_i\to0\).
	Together with \(\eta_{\mathbf u}(\boldsymbol\nu_i)\to0\), this proves
	adelic KR convergence.
\end{proof}

\begin{proposition}[Defect vanishing for small sequences]
	\label{prop:bgps-defect-vanishing}
	Let \((x_i)\) be a \(\bL\)-small sequence in \(G(\overline K)\).  Then for every
	place \(v\),
	\[
	\Phi_v(\nu_{x_i,v})\longrightarrow0.
	\]
	The centers of the adelic measures \(\boldsymbol\nu_{x_i}\) tend to \(0\).
\end{proposition}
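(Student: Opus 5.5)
The plan is to deduce both assertions from the height decomposition of Section~4 and the general-center defect inequality of Proposition~\ref{input:bgps-kr}. The centers are handled first, because the defect estimate is only sharp once the centers are known to go to zero.

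\emph{Step 1: the abelian height tends to zero.} Write
\[
h_{\bL}(x_i)=h_{\overline G(\bD)}(x_i)+h_{\bN}(\pi(x_i)).
\]
The proof of Lemma~\ref{lem:arith-teff-height-lower}, which uses only the distinguished section \(s_0\), gives \(h_{\overline G(\bD)}(x)\ge\muabs_{\bD}(T)\) for every \(x\in G(\overline K)\). Proposition~\ref{prop:section4-minima-formula} gives \(\muess_{\bL}(\overline G)=\muabs_{\bD}(T)\). Hence
\[
0\le h_{\bN}(\pi(x_i))\le h_{\bL}(x_i)-\muabs_{\bD}(T)\longrightarrow0 .
\]

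\emph{Step 2: the centers tend to zero.} By Lemma~\ref{lem:point-measure-height}, the center of \(\boldsymbol\nu_{x_i}\) is
\[
c_i=\bigl(-h_{\overline H_1}(\pi(x_i)),\ldots,-h_{\overline H_t}(\pi(x_i))\bigr).
\]
After a finite extension, choose \(a\) and \(q_k\) with \(\phi_N(q_k)=aH_k\). Corollary~\ref{cor:picard-zero-height-pairing} and Cauchy--Schwarz for the N\'eron--Tate form then give
\[
|h_{\overline H_k}(y)|=\tfrac2a|\langle y,q_k\rangle_N|\le\tfrac2a\,h_{\bN}(y)^{1/2}h_{\bN}(q_k)^{1/2}.
\]
With Step~1 this yields \(c_i\to0\).

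\emph{Step 3: the height defect tends to zero.} I would check that \(\muess_{\overline G(\bD)}(\overline G)=\muabs_{\bD}(T)\). The upper bound comes from the torsion-fiber points of Lemma~\ref{lem:torsion-fiber-toric-min}: on such fibers \(h_{\bN}\) vanishes, so \(h_{\overline G(\bD)}=h_{\bD}(t)\) by Corollary~\ref{cor:torsion-lift-height-invariance}. The lower bound comes from Step~1's inequality \(h_{\overline G(\bD)}\ge\muabs_{\bD}(T)\). Using \(h_{\bN}\ge0\) and Lemma~\ref{lem:point-measure-height}, we get
\[
0\le\eta_{\overline G(\bD)}(\boldsymbol\nu_{x_i})-\muess_{\overline G(\bD)}(\overline G)\le h_{\bL}(x_i)-\muess_{\bL}(\overline G)\longrightarrow0 .
\]

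\emph{Step 4: conclude.} The left inequality of Proposition~\ref{input:bgps-kr} with \(c=c_i\), combined with the uniform estimate \(|\Phi_{v,c}-\Phi_v|\le R_D\|c\|/n_v\) and with \(\Phi_v\le0\), gives for each fixed \(v\)
\[
0\le -n_v\Phi_v(\nu_{x_i,v})\le \bigl(\eta_{\overline G(\bD)}(\boldsymbol\nu_{x_i})-\muess_{\overline G(\bD)}(\overline G)\bigr)+R_D\|c_i\|\longrightarrow0 .
\]
This is the required \(\Phi_v(\nu_{x_i,v})\to0\).

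The step I expect to need the most care is the bookkeeping in Step~3. One must make sure that the \(\muess\) appearing in Proposition~\ref{input:bgps-kr} is that of \(\overline G(\bD)\) rather than of \(\bL\), and that the two agree with \(\muabs_{\bD}(T)\). One must also check that \(\nu_{x_i,v}\in\mathcal E\); this is automatic, since these measures are finitely supported. A separate point is that Step~2 depends on the Picard-zero factors being controlled by \(h_{\bN}\) through a single polarization \(\phi_N\), which holds after a finite extension.
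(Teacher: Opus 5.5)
Your proposal is correct and follows essentially the same route as the paper. You split \(h_{\bL}\) using the arithmetic \(T\)-effectivity lower bound and Proposition~\ref{prop:section4-minima-formula}, send the centers to \(0\) through the vanishing N\'eron--Tate term, and conclude from the general-center defect inequality of Proposition~\ref{input:bgps-kr} together with the uniform estimate \(|\Phi_{v,c}-\Phi_v|\le R_D\|c\|/n_v\). Your Cauchy--Schwarz bound via Corollary~\ref{cor:picard-zero-height-pairing} and your explicit check that \(\muess_{\overline G(\bD)}(\overline G)=\muabs_{\bD}(T)\) make precise two points the paper only states briefly; in particular, the Picard-zero heights are linear forms, not quadratic ones as the paper says.
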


\begin{proof}
	By Proposition~\ref{prop:section4-minima-formula} and the height decomposition
	\[
	h_{\bL}(x_i)
	=h_{\overline G(\bD)}(x_i)+\hat h_{\bN}(\pi(x_i)),
	\]
	smallness for \(\bL\) implies
	\[
	h_{\overline G(\bD)}(x_i)\to\muess_{\bD}(X_\Sigma),
	\qquad
	\hat h_{\bN}(\pi(x_i))\to0.
	\]
	Since the canonical heights \(h_{\overline H_j}\) are quadratic forms controlled
	by \(\hat h_{\bN}\), we have
	\[
	h_{\overline H_j}(\pi(x_i))\to0,\qquad 1\le j\le t.
	\]
	By Lemma~\ref{lem:point-measure-height}, the centers therefore tend to \(0\),
	and
	\[
	\eta_{\overline G(\bD)}(\boldsymbol\nu_{x_i})
	=h_{\overline G(\bD)}(x_i)
	\to\muess_{\overline G(\bD)}(\overline G).
	\]
	Write
	\[
	c_i=\sum_w n_wE[\nu_{x_i,w}].
	\]
	The general-center defect inequality in Proposition~\ref{input:bgps-kr} gives,
	for each \(v\),
	\[
	0\le -n_v\Phi_{v,c_i}(\nu_{x_i,v})
	\le
	\eta_{\overline G(\bD)}(\boldsymbol\nu_{x_i})
	-\muess_{\overline G(\bD)}(\overline G).
	\]
	Thus \(\Phi_{v,c_i}(\nu_{x_i,v})\to0\).  Since \(c_i\to0\), the uniform
	estimate in Proposition~\ref{input:bgps-kr} yields
	\[
	\bigl|\Phi_{v,c_i}(\nu_{x_i,v})-\Phi_v(\nu_{x_i,v})\bigr|
	\le \frac{R_D}{n_v}\|c_i\|\longrightarrow0.
	\]
	Consequently \(\Phi_v(\nu_{x_i,v})\to0\), as required.
\end{proof}

\begin{proposition}[KR convergence to the critical vector]
	\label{prop:monocritical-kr}
	Let \((x_i)\) be a \(\bL\)-small sequence in \(G(\overline K)\).  Then
	\[
	\boldsymbol\nu_{x_i}\longrightarrow\boldsymbol\delta_{\mathbf u}
	\]
	in the adelic Kantorovich--Rubinstein topology.  Equivalently, for every place
	\(v\),
	\[
	\nu_{x_i,v}\longrightarrow\delta_{u_v}
	\]
	in the local Kantorovich--Rubinstein topology.
\end{proposition}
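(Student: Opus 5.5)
The plan is to obtain the statement by chaining Proposition~\ref{prop:bgps-defect-vanishing} with Lemma~\ref{lem:bgps-kr-upgrade}. Let \((x_i)\) be an \(\bL\)-small sequence in \(G(\overline K)\), and let \(\boldsymbol\nu_{x_i}\) be its adelic valuation measures from Lemma~\ref{lem:point-measure-height}. These are well defined and independent of the chart choices. Each lies in some \(\mathcal H_K^{c_i}\): for \(v\) outside a finite set depending on \(x_i\), the good-place model of Lemma~\ref{input:good-place-model-induced} puts the valuations at \(0\), so \(\nu_{x_i,v}=\delta_0\). Every \(\nu_{x_i,v}\) is finitely supported, so it has finite first moment.

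Next, Proposition~\ref{prop:bgps-defect-vanishing} supplies the hypotheses of Lemma~\ref{lem:bgps-kr-upgrade}. It gives \(c_i\to0\) and \(\Phi_v(\nu_{x_i,v})\to0\) for every \(v\). It also gives the height convergence
\[
\eta_{\overline G(\bD)}(\boldsymbol\nu_{x_i})=h_{\overline G(\bD)}(x_i)\to\muess_{\overline G(\bD)}(\overline G),
\]
which uses Proposition~\ref{prop:section4-minima-formula} together with the vanishing of the abelian height \(\hat h_{\bN}(\pi(x_i))\). Since \(\bD\) is monocritical with critical vector \(\mathbf u\), Lemma~\ref{lem:bgps-kr-upgrade} then yields \(\boldsymbol\nu_{x_i}\to\boldsymbol\delta_{\mathbf u}\) in the adelic KR topology.

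For the stated equivalence with local convergence, I would argue in two directions. The adelic KR distance dominates each \(n_v W(\nu_{x_i,v},\delta_{u_v})\), which gives adelic-to-local. For the converse, the adelic distance is a sum over places, and only finitely many places carry nontrivial data uniformly in \(i\). I would check this uniformity as follows. Outside a fixed finite set \(S\) of bad places for \(\bD\), \(G\) and \(\Sigma\), we have \(u_v=0\) and \(\Psi_{\bD,v}=\Psi_D\). The tail \(\sum_{v\notin S}n_v\int\|z\|\,d\nu_{x_i,v}\) is then bounded by \(C\,\eta_{\mathbf u}\)-type quantities plus center terms, via the coercivity inequality \(\|z\|\le -C(\Psi_D(z)-\langle x_0,z\rangle)\) from the proof of Lemma~\ref{lem:bgps-kr-upgrade}. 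This tail therefore tends to zero as well.

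The main obstacle is this uniform-tail control. The estimate in Lemma~\ref{lem:bgps-kr-upgrade} bounds the whole adelic sum at once, so I would simply cite it there rather than reprove it place by place.
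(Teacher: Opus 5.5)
Your proposal is correct and follows the paper's proof: Proposition~\ref{prop:bgps-defect-vanishing} supplies the hypotheses of Lemma~\ref{lem:bgps-kr-upgrade}, namely centers tending to $0$, $\Phi_v(\nu_{x_i,v})\to0$, and $\eta_{\overline G(\bD)}(\boldsymbol\nu_{x_i})\to\muess_{\overline G(\bD)}(\overline G)$. That lemma gives adelic KR convergence, and local convergence follows because the adelic distance dominates each weighted local term. The paper does not argue the converse direction at all. If you keep it, note that the measures $\nu_{x_i,v}$ need not equal $\delta_0$ outside a fixed finite set of places. So that direction rests entirely on your coercivity tail bound, which uses the height convergence and not local convergence alone.
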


\begin{proof}
	By Proposition~\ref{prop:bgps-defect-vanishing}, the BGPS defects
	\(\Phi_v(\nu_{x_i,v})\) tend to \(0\), and the centers of
	\(\boldsymbol\nu_{x_i}\) tend to \(0\).  The equality
	\[
	\eta_{\overline G(\bD)}(\boldsymbol\nu_{x_i})
	=h_{\overline G(\bD)}(x_i)
	\]
	and the proof of Proposition~\ref{prop:bgps-defect-vanishing} show that this
	quantity tends to the essential minimum.  Lemma~\ref{lem:bgps-kr-upgrade}
	therefore gives adelic KR convergence to
	\(\boldsymbol\delta_{\mathbf u}\), and hence local KR convergence at every
	place.
\end{proof}

\begin{proposition}[Quasi-canonical replacement]
	\label{prop:monocritical-qcanonical-replacement}
	After changing \(\bD\) by an \(M_K\)-constant, assume
	\(\muabs_{\bD}(T)=0\).  Define a quasi-canonical toric metric \(\bD'\) by
	\[
	\Psi_{\bD',v}(z)=\Psi_D(z-u_v).
	\]
	Let
	\[
	\bL'=\overline G(\bD')\otimes\overline\pi^*\bN.
	\]
	Then \(\muabs_{\bD'}(T)=\muabs_{\bD}(T)=0\).  Moreover every \(\bL\)-small
	sequence in \(G(\overline K)\) is \(\bL'\)-small.
\end{proposition}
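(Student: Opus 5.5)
The plan has two parts. First I compute the roof function of \(\bD'\) directly to get \(\muabs_{\bD'}(T)=0\). Then I compare the heights \(h_{\bL'}\) and \(h_{\bL}\) along a small sequence through the adelic valuation measures \(\boldsymbol\nu_{x_i}\) of Lemma~\ref{lem:point-measure-height}, using the Kantorovich--Rubinstein convergence of Proposition~\ref{prop:monocritical-kr}.

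\emph{Step 1: the roof of \(\bD'\).} For each \(v\), the local roof is the concave dual of \(z\mapsto\Psi_D(z-u_v)\). The substitution \(w=z-u_v\) gives, for \(x\in\Delta_D\),
\[
\upsilon_{\bD',v}(x)=\inf_{z}\bigl(\langle x,z\rangle-\Psi_D(z-u_v)\bigr)=\langle x,u_v\rangle+\Psi_D^{\vee}(x)=\langle x,u_v\rangle ,
\]
since the canonical roof vanishes on \(\Delta_D\). Here \(u_v=0\) for almost all \(v\), so \(\bD'\) is a legitimate toric metric. Summing with the weights \(n_v\) and using the centering \(\sum_vn_vu_v=0\) of the critical vector gives \(\upsilon_{\bD'}\equiv0\) on \(\Delta_D\). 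The successive-minima formula \cite[Theorem~3.9 and Corollary~3.10]{BGPS15} then yields \(\muabs_{\bD'}(T)=\muess_{\bD'}(X_\Sigma)=0=\muabs_{\bD}(T)\).

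Three further facts follow. Since the roof is constant, \(0\) is a maximizer, so \(\bD'\) is arithmetically \(T\)-effective by the criterion of Section~\ref{subsec:arith-teff-meaning}. It is semipositive because \(\Psi_D\) is concave. It is quasi-canonical, being an adelic translate of the canonical metric. Proposition~\ref{prop:section4-minima-formula} therefore applies to \(\bL'\) and gives \(\muess_{\bL'}(\overline G)=0\).

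\emph{Step 2: the height of \(\bL'\) is \(\eta_{\mathbf u}\).} By Lemma~\ref{lem:point-measure-height} applied to \(\bD'\),
\[
h_{\bL'}(x)=-\sum_vn_v\int\Psi_D(z-u_v)\,d\nu_{x,v}(z)+\hat h_{\bN}(\pi(x))=\eta_{\mathbf u}(\boldsymbol\nu_x)+\hat h_{\bN}(\pi(x)),
\]
where \(\eta_{\mathbf u}\) is the functional introduced in the proof of Lemma~\ref{lem:bgps-kr-upgrade}. The point measures \(\boldsymbol\nu_x\) are the same for \(\bD\) and \(\bD'\), because they depend only on the local trivializations.

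Now let \((x_i)\) be \(\bL\)-small. As in the proof of Proposition~\ref{prop:bgps-defect-vanishing}, we have \(\hat h_{\bN}(\pi(x_i))\to0\). Proposition~\ref{prop:monocritical-kr} gives \(\boldsymbol\nu_{x_i}\to\boldsymbol\delta_{\mathbf u}\) in the adelic KR topology. The support function \(\Psi_D\) is Lipschitz with constant \(R_D\), so
\[
\bigl|\eta_{\mathbf u}(\boldsymbol\nu_{x_i})-\eta_{\mathbf u}(\boldsymbol\delta_{\mathbf u})\bigr|\le R_D\sum_vn_vW(\nu_{x_i,v},\delta_{u_v}).
\]
The right-hand side tends to \(0\). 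Moreover \(\eta_{\mathbf u}(\boldsymbol\delta_{\mathbf u})=-\sum_vn_v\Psi_D(0)=0\). Hence \(h_{\bL'}(x_i)\to0=\muess_{\bL'}(\overline G)\), which is \(\bL'\)-smallness. (Alternatively, Lemma~\ref{lem:arith-teff-height-lower} for \(\bD'\) gives \(h_{\bL'}\ge0\), so only the upper limit needs checking.)

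\emph{Main obstacle.} The delicate point is the interchange of the limit with the sum over infinitely many places. At almost all places \(u_v=0\), but \(\nu_{x_i,v}\) need not equal \(\delta_0\) there, so local weak convergence place by place is not enough. This is why I plan to use the \emph{adelic} KR distance \(\sum_vn_vW(\cdot,\cdot)\), whose convergence was established in Lemma~\ref{lem:bgps-kr-upgrade}, rather than local convergence. I will also check that the finite-first-moment condition makes every integral \(\int\Psi_D(z-u_v)\,d\nu_{x_i,v}\) finite, so that the Lipschitz estimate above is justified at each place.
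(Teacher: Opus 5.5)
Your proof is correct and follows the paper's own argument. The centered critical vector makes the roof of \(\bD'\) identically zero, and smallness then follows from three ingredients: the adelic KR convergence of Proposition~\ref{prop:monocritical-kr}, the uniform Lipschitz bound on the translated support functions, and the height identity of Lemma~\ref{lem:point-measure-height}. Your explicit check that \(\bD'\) is arithmetically \(T\)-effective, so that Proposition~\ref{prop:section4-minima-formula} gives \(\muess_{\bL'}(\overline G)=0\), spells out a step the paper only uses implicitly.
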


\begin{proof}
	The metric \(\bD'\) is quasi-canonical by construction: at the place \(v\) it
	is the canonical toric metric translated by the vector \(u_v\).  The product
	formula for the critical vector, \(\sum_vn_vu_v=0\), implies that the global
	absolute minimum is unchanged.
	
	Let \((x_i)\) be a \(\bL\)-small sequence.  By the proof of
	Proposition~\ref{prop:bgps-defect-vanishing},
	\[
	\hat h_{\bN}(\pi(x_i))\to0.
	\]
	By Proposition~\ref{prop:monocritical-kr},
	\[
	\boldsymbol\nu_{x_i}\to\boldsymbol\delta_{\mathbf u}
	\]
	in the adelic KR topology.  The functions \(\Psi_{\bD',v}\) are Lipschitz on
	\(N_\BR\), with constants controlled by the fixed polytope \(\Delta_D\).  Hence
	\[
	\eta_{\overline G(\bD')}(\boldsymbol\nu_{x_i})
	\to
	\eta_{\overline G(\bD')}(\boldsymbol\delta_{\mathbf u})
	=0
	=\muess_{\bD'}(X_\Sigma).
	\]
	Using Lemma~\ref{lem:point-measure-height} for \(\bD'\), this says
	\[
	h_{\overline G(\bD')}(x_i)\to0.
	\]
	Together with \(\hat h_{\bN}(\pi(x_i))\to0\), we obtain
	\[
	h_{\bL'}(x_i)\to\muess_{\bL'}(\overline G),
	\]
	so \((x_i)\) is \(\bL'\)-small.
\end{proof}

\begin{proof}[Proof of Theorem~\ref{thm:monocritical-eq}]
	Let \((x_i)\) be a generic \(\bL\)-small sequence.  By
	Proposition~\ref{prop:monocritical-qcanonical-replacement}, after the harmless
	\(M_K\)-constant normalization, the sequence is also small for the
	quasi-canonical adelic line bundle \(\bL'\).  The quasi-canonical compression
	theorem, Theorem~\ref{thm:qcanonical-compression}, applies to \(\bL'\), hence
	the Galois orbit measures of \((x_i)\) equidistribute for \(\bL'\).
	
	Proposition~\ref{prop:monocritical-kr} identifies the tropical part of the
	limit: at the place \(v\), the valuation measure is the Dirac mass
	\(\delta_{u_v}\).  Thus the quasi-canonical limiting measure is exactly the
	measure obtained from the original monocritical metric.  This proves
	\(v\)-adic equidistribution for \(\bL\).  In the quasi-canonical case, the
	standard Chambert-Loir description gives the displayed formula for
	\(\mu_{\bL,v}\).
\end{proof}

\begin{remark}
	The \(T\)-effectivity of \(D\) is used twice.  First, together with arithmetic
	\(T\)-effectivity, it places the origin in \(\Delta_{D,\max}\) by
	Lemma~\ref{lem:arith-teff-origin}.  Second, it makes the Deligne-pairing term
	\(\langle \overline G(D),\ldots,\overline G(D)\rangle\) vanish in the direction
	needed for equidistribution over the abelian quotient.  Without this condition,
	the reduction to the quasi-canonical metric need not preserve the
	equidistribution property.
\end{remark}

\section{Applications: Bogomolov theorem}

\subsection{Set-up}

Let
\[
1\longrightarrow T\longrightarrow G\xrightarrow{\pi}A\longrightarrow0
\]
be a semiabelian variety over \(K\), with \(T\simeq\Gm^t\).  Let
\[
\bL=\overline G(\bD)\otimes\pi^*\bN
\]
be as in the main theorem: \(D\) is ample and \(T\)-effective,
\(\bD\) is semipositive, monocritical and arithmetically \(T\)-effective, and
\(\bN\) is symmetric ample with its canonical metric.  Write
\[
\mu_G:=\muess_{\bL}(\overline G)=\muabs_{\bL}(G).
\]
Let
\[
\mathbf u=(u_v)_v\in\bigoplus_vN_{\BR}
\]
be the critical vector of \(\bD\), as in
Proposition~\ref{prop:monocritical-kr}.

\subsection{Special subvarieties and essential minima}

\begin{definition}[Special and \(\bL\)-special subvarieties]
	An irreducible closed subvariety \(X\subseteq G_{\overline{K}}\) is called
	\emph{special} if
	\[
	X=a+B
	\]
	for a connected algebraic subgroup \(B\subseteq G_{\overline{K}}\) and a point
	\(a\in G(\overline{K})\).
	
	A point \(p\in G(\overline{K})\) is called \(\bL\)-special if
	\[
	h_{\bL}(p)=\mu_G.
	\]
	An irreducible closed subvariety \(X\subseteq G_{\overline{K}}\) is called
	\(\bL\)-special if
	\[
	X=p+B
	\]
	for a connected algebraic subgroup \(B\subseteq G_{\overline{K}}\) and an
	\(\bL\)-special point \(p\in G(\overline{K})\).
\end{definition}

\begin{remark}
	The level in the definition of a special point is the ambient minimum
	\(\mu_G\), not \(\muess_{\bL}(X)\).  The equality
	\(\muess_{\bL}(X)=\mu_G\) is a conclusion to be proved.
\end{remark}

\begin{definition}[Strict sequence]
	A sequence \((x_i)\) in \(G(\overline{K})\) is strict if, for every proper translate
	\(U\subsetneq G_{\overline{K}}\) of a connected algebraic subgroup, all but finitely many
	\(x_i\) lie outside \(U\).
\end{definition}

\subsection{Quasi-canonical Bogomolov and Strict Upgrade}

\begin{theorem}[Restricted quasi-canonical Bogomolov property]
	\label{thm:qcanonical-bogomolov}
	\label{input:qbog}
	Let \(\bM\) be an adelic line bundle on a semiabelian variety \(H\) of the form
	\[
	\bM=\overline H(\bE)\otimes \rho^*\bN_H,
	\]
	where \(\bE\) is the quasi-canonical toric metric attached to an ample
	\(T_H\)-effective divisor and \(\bN_H\) is canonical on the abelian quotient.
	Assume at every archimedean place either the standard
	\((\mathbb P^1)^t\)-compactification, or the independently verified
	general-fan curvature--Haar slice input
	\ref{input:general-fan-curvature-haar-slice}, with lattice-index normalization
	and no additional boundary mass, for all subvarieties used below.
	If an irreducible proper subvariety \(Z\subsetneq H_{\overline K}\) carries a
	\(Z\)-generic sequence \((z_i)\) with
	\[
	h_{\bM}(z_i)\longrightarrow \muess_{\bM}(H),
	\]
	then \(Z\) is contained in a proper translate of a connected algebraic subgroup
	of \(H_{\overline K}\).
\end{theorem}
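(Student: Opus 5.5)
We follow Zhang's and K\"uhne's difference-morphism strategy, transported to the quasi-canonical metric \(\bM\).  Suppose, for contradiction, that \(Z\subsetneq H_{\overline K}\) is not contained in any proper translate of a connected algebraic subgroup.  First we reduce to the case where the stabilizer of \(Z\) is finite, by passing to the quotient \(H/\operatorname{Stab}(Z)^0\); this quotient is again semiabelian, and the quasi-canonical toric datum descends along the induced map on toric parts.  Next, for \(N\) large, consider the Faltings--Zhang-type difference morphism
\[
\alpha_N:Z^{N+1}\longrightarrow H^{N},\qquad (z_0,\ldots,z_N)\mapsto(z_1-z_0,\ldots,z_N-z_0).
\]
Because \(Z\) generates \(H\) and has finite stabilizer, \(\alpha_N\) is generically finite onto its image for \(N\gg0\).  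The generic small sequence \((z_i)\) on \(Z\) then produces, by a diagonal choice, a \(Z^{N+1}\)-generic sequence whose height on \(H^N\) for the product bundle \(\bM^{\boxtimes N}\) tends to \(N\muess_{\bM}(H)\).  The height additivity on differences follows from the torsion-lift identity, Corollary~\ref{cor:torsion-lift-height-invariance}, and from the centered critical vector \(\sum_vn_vu_v=0\) of the quasi-canonical normal form, Proposition~\ref{prop:qcanonical-compression-normal-form}.

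The next step is to apply generic equidistribution twice, at one archimedean place \(v\).  On \(Z^{N+1}\) we use the pullback of \(\bM^{\boxtimes(N+1)}\) and Theorem~\ref{thm:qcanonical-compression}, applied to the subvariety rather than to the ambient group; here we use that the compression and Minkowski argument only needs a horizontally semipositive restriction and the lower bound of Proposition~\ref{prop:compressed-lower-bound}.  On \(\alpha_N(Z^{N+1})\) we use the pushed-forward sequence.  Uniqueness of the limiting measure gives
\[
(\alpha_N)_*\mu_{Z^{N+1},v}=\mu_{\alpha_N(Z^{N+1}),v}.
\]
We then compare the Chern currents.  By the Haar-slice description (for \((\mathbb P^1)^t\) this is K\"uhne's computation; for a general fan it is \ref{input:general-fan-curvature-haar-slice}), the curvature form of \(\overline H(\bE)_v\) is fibrewise the Haar measure on the compact torus slice translated by \(u_v\), wedged with the flat invariant form of \(\bN_H\) on the abelian quotient.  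At a generic smooth point the left-hand measure is a product of forms restricted to \(Z\), while the right-hand measure is the restriction of translation-invariant forms.  As in Zhang's argument, these agree on the diagonal fibre of \(\alpha_N\) only if the tangent spaces of \(Z\) are translation-invariant.  That forces \(Z\) to be a translate of a subgroup, contradicting properness together with the generating assumption.

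The main obstacle is the curvature comparison at the archimedean place.  The measure \(\hat c_1(\overline H(\bE)_v)^t\wedge\hat c_1(\rho^*\bN_v)^g\) is not a smooth positive form, because it is supported on the Haar slices.  The argument must therefore show that restricting it to \(Z\) gives a measure that is absolutely continuous along a real-analytic locus, with no boundary mass and with the correct lattice-index normalization; this is exactly the input the hypothesis allows us to assume.  A secondary difficulty is that the equidistribution theorem proved so far is stated for the ambient group, so we would have to check that Lemmas~\ref{lem:compressed-height-variation} and~\ref{lem:compressed-volume} restrict to \(Z^{N+1}\) and to its image with constants uniform in \(n\).  As Remark~\ref{rem:qcanonical-generic-not-strict} warns, only generic equidistribution may be used here, which suffices because the sequence \((z_i)\) is assumed \(Z\)-generic.
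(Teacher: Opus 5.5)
Your architecture (stabilizer quotient, difference morphism, generic equidistribution on $Z^{N+1}$ and on its image, the pushforward identity, a contradiction at the diagonal) is the same as the paper's. The paper first applies the height separation of Lemma~\ref{lem:qbog-height-separation} and Zhang's abelian Bogomolov theorem to reduce to $\rho(Z)=A_H$ (Proposition~\ref{prop:qbog-base-reduction}). It then runs exactly this difference argument. However, one step of yours fails as written, and the final step is not yet an argument.

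\textbf{Smallness of the difference sequence.} The sequence $\alpha_N(y_i)$ is \emph{not} small for $\overline{M}^{\boxtimes N}$ unless the critical vector vanishes.

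By Proposition~\ref{prop:monocritical-kr}, the valuation measures of the $z_i$ converge to $\delta_{u_v}$. Since tropicalization is a homomorphism, those of each coordinate $z_k-z_0$ converge to $\delta_0$. With $\Psi_{\overline{E},v}(w)=\Psi_E(w-u_v)-\gamma_v$, the toric height of each coordinate therefore tends to $\sum_v n_v\gamma_v-\sum_v n_v\Psi_E(-u_v)$. This strictly exceeds the minimum $\sum_vn_v\gamma_v$ unless $\mathbf u=0$. Indeed, if $\Psi_E(-u_v)=0$ for all $v$, then $\langle m,u_v\rangle\le0$ for all $m\in\Delta_E$ and all $v$. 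Combined with $\sum_vn_vu_v=0$ and the fact that $\Delta_E$ spans $M_{\mathbb R}$, this forces every $u_v=0$.

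So the centering $\sum_vn_vu_v=0$ does not give ``height additivity on differences''. Corollary~\ref{cor:torsion-lift-height-invariance} is also irrelevant here: it only concerns points $q\,t$ with $q$ torsion.

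The fix is the paper's. Equip the target with the quasi-canonical metric whose critical vector is the image of $(\mathbf u,\ldots,\mathbf u)$, namely $0$ (Lemma~\ref{lem:qcanonical-operation-smallness}). Then derive smallness from KR convergence of the valuation measures together with $\hat h_{\overline N_H}(\rho(z_i))\to0$. The latter comes from $h_{\overline H(\overline E)}\ge\mu^{\mathrm{abs}}_{\overline E}(T_H)$ and $\hat h\ge0$, which you never state. Consequently the limiting measure on $\alpha_N(Z^{N+1})$ is a Haar-slice measure centred at $0$, not a translate of the source one.

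\textbf{The final contradiction.} Your endgame does not transfer from Zhang's setting. The limiting measures live on real-analytic Haar slices of real dimension $d+d_0$, not on smooth forms on all of $Z$. Moreover, the inference ``they agree on the diagonal fibre only if the tangent spaces of $Z$ are translation invariant'' is not the operative mechanism.

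What is needed, and what the curvature--Haar hypothesis does \emph{not} supply by itself, is the slice version of the contradiction:
\begin{itemize}
\item Riemannian domination of the target slice densities (Lemma~\ref{lem:general-fan-riemannian-domination});
\item K\"uhne's small-box estimate (Lemma~\ref{lem:kuhne-small-box-contradiction-finite-slices}).
\end{itemize}
In that estimate, $d\alpha_N$ kills the diagonal vectors $(w,\ldots,w)$ with $w$ tangent to a slice. Hence an $\epsilon$-box around a diagonal point of the product slice has image volume $O(\epsilon^q)$ times its source volume. This contradicts the strictly positive product density together with the pushforward identity.

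\textbf{Equidistribution on subvarieties.} Theorem~\ref{thm:qcanonical-compression} is an ambient statement, and it does not apply to $Z^{N+1}$ or to its image. There the limit is the mixed measure $c_1(\overline H(\overline E)_v|_{\overline X})^{d-d_0}\wedge c_1(\rho^*\overline N_{H,v}|_{\overline X})^{d_0}$. The restricted estimates on components of $\varphi_n^{-1}(\overline X)$ are needed, including the upper bound on their heights obtained from Zhang's inequality and the generic small sequence. These are supplied by Lemma~\ref{lem:subvariety-restricted-kuhne-package}, not by your remark about horizontal semipositivity alone.
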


\begin{proposition}[Dominant quasi-canonical semiabelian bridge]
	\label{prop:qbog-dominant-bridge}
	\label{input:qbog-dominant-bridge}
	In the setting of Theorem~\ref{thm:qcanonical-bogomolov}, assume after a torsion translation
	and after replacing \(H\) by the inverse image of an abelian subvariety of its
	quotient that
	\[
	\rho(Z)=A_H.
	\]
	If \(Z\subsetneq H_{\overline K}\) carries a \(Z\)-generic sequence
	\((z_i)\) with
	\[
	h_{\bM}(z_i)\longrightarrow \muess_{\bM}(H),
	\]
	then \(Z\) is contained in a proper translate of a connected algebraic subgroup
	of \(H_{\overline K}\).
\end{proposition}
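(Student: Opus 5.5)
The plan follows Zhang's and Kühne's difference-morphism strategy. Suppose, to the contrary, that \(Z\) lies in no proper translate of a connected algebraic subgroup. Replace \(Z\) by its translate by a torsion point; by Corollary~\ref{cor:torsion-lift-height-invariance} this preserves the small-height hypothesis. Then pass to the stabilizer quotient \(H/\operatorname{Stab}(Z)^0\), so that we may assume \(Z\) has finite stabilizer. For \(N\) large, consider the difference morphism
\[
\alpha_N:Z^N\longrightarrow H^{N-1},\qquad (z_1,\ldots,z_N)\mapsto(z_2-z_1,\ldots,z_N-z_{N-1}).
\]
Finite stabilizer makes \(\alpha_N\) generically finite onto its image for \(N\gg0\). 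The hypothesis \(\rho(Z)=A_H\), together with non-degeneracy of \(Z\), makes \(\alpha_N\) of maximal relative dimension over the abelian part.

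Next I would build small generic sequences on \(Z^N\) and on \(\alpha_N(Z^N)\). From the \(Z\)-generic sequence \((z_i)\), choose \(N\)-tuples of terms to obtain a \(Z^N\)-generic sequence that is small for \(\bM^{\boxtimes N}\). The image under \(\alpha_N\) is then small for the product quasi-canonical bundle on \(H^{N-1}\). This uses the quadratic behaviour of the abelian part of the height and the additivity of the critical translation vector \(u_v\) under subtraction. The translation vectors cancel in differences, so the relevant metric on \(H^{N-1}\) is canonical, and \(\muess\) of the image equals \(\muess_{\bM^{N-1}}(H^{N-1})\).

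Now apply the quasi-canonical compression theorem (Theorem~\ref{thm:qcanonical-compression}, restricted to subvarieties) at an archimedean place \(v\). This gives equidistribution of both sequences. On \(Z^N\) the limit is \(\mu_{Z,v}^{\otimes N}\); on the image it is the normalized Monge–Ampère measure of the canonical bundle restricted to \(\alpha_N(Z^N)\). Compatibility under pushforward yields
\[
(\alpha_N)_*\mu_{Z,v}^{\otimes N}=\mu_{\alpha_N(Z^N),v}.
\]

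The contradiction comes from comparing supports near a point where \(\alpha_N\) is not finite, for instance the diagonal. The measure \(\mu_{\alpha_N(Z^N),v}\) is a curvature form, and via the Haar-slice description it is smooth and strictly positive on an open subset. Here the standard \((\mathbb P^1)^t\) computation of Kühne, or the assumed curvature–Haar slice input \ref{input:general-fan-curvature-haar-slice} together with its lattice-index normalization and no boundary mass, is what allows the measure to be written as Haar on \(T\)-orbits times the abelian Monge–Ampère form. Pulling back, \(\alpha_N^*\) of that form must vanish along the positive-dimensional fibers of \(\alpha_N\), which contradicts the fact that \(\mu_{Z,v}^{\otimes N}\) charges a neighbourhood of those fibers.

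I expect the main obstacle to be this last step: controlling the toric (Haar) part of the measure on the positive-dimensional fibers of \(\alpha_N\). Unlike the abelian case, the curvature of the canonical toric metric is supported on the compact torus slices, not everywhere. So I would first work fiberwise over \(A_H\), which is possible because of the dominance assumption \(\rho(Z)=A_H\), and invoke the Haar-slice formula there.
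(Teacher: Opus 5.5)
Your reductions match the paper's proof. These are the stabilizer quotient, the Zhang--K\"uhne difference morphism, generic small $N$-tuples, smallness of the difference sequence for the target metric with critical vector $0$, equidistribution on $Z^N$ and on $Y:=\alpha_N(Z^N)$, and the pushforward identity from $(\alpha_N)_*\delta_{y_i,v}=\delta_{\alpha_N(y_i),v}$. Finite rather than trivial stabilizer is harmless. The gap is in how you use the two limit measures.

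First, the limit on $Y$ is not the normalized Monge--Amp\`ere measure of $\bM|_Y$. That bundle is not quasi-canonical on $Y$, and Theorem~\ref{thm:qcanonical-compression} is only an ambient statement. The paper instead uses the subvariety-restricted K\"uhne package, Proposition~\ref{prop:kuhne-section4-transport}(a) via Lemma~\ref{lem:subvariety-restricted-kuhne-package}. Its limit is K\"uhne's mixed measure, with $\dim Y-d_0(Y)$ toric factors and $d_0(Y)$ abelian factors, normalized by the corresponding intersection number. Under dominance $d_0(Y)=g(N-1)$, so $Y$ carries $Nr+g$ toric factors, not $Nr$.

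Second, and decisively, Zhang's pointwise argument does not transfer. It compares two smooth top-degree forms at a diagonal point. Here $\mu_{Y,v}$ is supported on real-analytic Haar slices of positive real codimension (Proposition~\ref{input:general-fan-curvature-haar-slice}, with $Nr+g>0$). So it is not ``smooth and strictly positive on an open subset'' of $Y$, and its pullback by $\alpha_N$ is not a form on $Z^N$. The fact that $\mu_{Z,v}^{\otimes N}$ charges every neighbourhood of $(x,\ldots,x)$ contradicts nothing by itself.

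What is needed is a quantitative comparison on slices:
\begin{enumerate}
\item Pick a slice component $M$ of $Z$ and $x\in M$ with positive slice density, so the product density on $M^N$ is at least $c\operatorname{vol}(B_\epsilon)$ on $\epsilon$-boxes around $(x,\ldots,x)$.
\item Dominate the target slice densities by a Riemannian volume (Lemma~\ref{lem:general-fan-riemannian-domination}, which uses finiteness of the slice description).
\item Use the diagonal kernel of $d\alpha_N$ to show that $\mu_{Y,v}(\alpha_N(B_\epsilon))$ is of smaller order than $\operatorname{vol}(B_\epsilon)$.
\item Conclude with $\mu_{Z^N,v}(B_\epsilon)\le\mu_{Y,v}(\alpha_N(B_\epsilon))$ from the pushforward identity.
\end{enumerate}
This is Lemma~\ref{lem:kuhne-small-box-contradiction-finite-slices}, and your proposal lacks it.

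Working fiberwise over $A_H$ does not substitute for it. $\alpha_N$ covers $A_H^N\to A_H^{N-1}$, whose fibres are $g$-dimensional. So the conditional measures of $\mu_{Y,v}$ over $A_H^{N-1}$ are averages of pushed-forward fibre measures over $g$-dimensional families; this is where the extra $g$ toric factors come from. The diagonal degeneracy must still be detected by a volume comparison on the slices.
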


\subsection{K\"uhne-operation route for the dominant bridge}

The dominant bridge above is the point at which the proof can most naturally
return to K\"uhne's original argument.  The toric part in
Proposition~\ref{prop:qbog-dominant-bridge} is quasi-canonical; at an archimedean
place \(v\), its local support function has the form
\[
\Psi_{\bE,v}(u)=\Psi_E(u-u_v)-\gamma_v .
\]
Thus the limiting toric measure is not a new object: it is the Haar measure
which occurs in K\"uhne's canonical case, translated from the maximal compact
torus to the compact torus with tropical value \(u_v\).  We therefore
transport K\"uhne's local-trivialization analysis
\cite[Lemma~3.3, Proposition~4.1, Lemmas~5.1--5.3, Proposition~6.1, Lemma~6.2,
and equation~(6.2)]{Ku1} from the standard \((\mathbb P^1)^t\)
compactification to the present toric compactification.

\begin{lemma}[Quasi-canonical Haar shift]
	\label{lem:qcanonical-haar-shift}
	Let \(v\) be an archimedean place, and let \(\bE\) be a quasi-canonical toric
	metric on the toric part \(T_H\) of \(H\).  Write its local support function in
	BGPS normal form
	\[
	\Psi_{\bE,v}(u)=\Psi_E(u-u_v)-\gamma_v,\qquad u\in N_{H,\mathbb R}.
	\]
	Let \(S_v\subset T_H(\mathbb C_v)\) be the maximal compact subgroup, and put
	\[
	S_{u_v}:=\val_v^{-1}(u_v)=S_v\cdot e^{-u_v}
	\]
	in the logarithmic coordinates used in the archimedean toric
	uniformization.  Then the canonical toric measure attached to
	\(\bE_v\) is the translate by \(e^{-u_v}\) of the canonical toric measure
	attached to \(E\).  In particular its toric support is \(S_{u_v}\), and the
	normalized Haar volume form on the compact toric factor is unchanged.
	
	Moreover this translation is functorial for products, inverses and difference
	morphisms: the center is carried by the corresponding linear map on
	cocharacter spaces.  Thus, for the K\"uhne difference map
	\[
	(x_1,\ldots,x_m)\longmapsto
	(x_1x_2^{-1},\ldots,x_{m-1}x_m^{-1}),
	\]
	the common quasi-canonical center \((u_v,\ldots,u_v)\) is sent to
	\((0,\ldots,0)\).
\end{lemma}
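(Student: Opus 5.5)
The plan is to reduce everything to the identity \(\Psi_{\bE,v}=\tau_{u_v}^*\Psi_E-\gamma_v\), where \(\tau_{u_v}(u)=u-u_v\) is translation on \(N_{H,\mathbb R}\), and to realise this tropical translation by an honest multiplicative translation on the torus. Choose \(a_v\in T_H(\mathbb C_v)\) with \(\val_v(a_v)=u_v\); for example, in the archimedean logarithmic coordinates take \(a_v=e^{-u_v}\), the point of \(T_H(\mathbb C_v)\) with real coordinates whose valuation is \(u_v\). Multiplication \(t_{a_v}:z\mapsto a_vz\) is a toric automorphism of \(X_\Sigma^{an}\), and \(\val_v\circ t_{a_v}=\val_v+u_v\). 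The first step is to check that the Green function of \(\bE_v\) equals \(t_{a_v^{-1}}^*\) of the Green function of the canonical metric on \(E\), shifted by the constant \(\gamma_v\). This follows from the bijection between toric metrics and functions \(\Psi\) recalled in Section~3, since both sides have the same \(\Psi\). The constant \(\gamma_v\) does not change the Chern current. Hence \(c_1(\bE_v)^{t}=(t_{a_v})_*\,c_1(\bE^{\can}_v)^{t}\).

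Next, identify the canonical toric measure. By the archimedean theory in \cite{BPS14}, \(c_1(\bE^{\can}_v)^t\) is \(\deg_E(X_\Sigma)\) times the normalized Haar measure on \(S_v=\val_v^{-1}(0)\). Pushing forward by \(t_{a_v}\) gives the same multiple of the Haar measure on \(a_vS_v=\val_v^{-1}(u_v)=S_{u_v}\). Because \(t_{a_v}\) is a group translation, the translated measure is still invariant under \(S_v\), and its volume form in the angular coordinates is unchanged. This proves the support and volume-form assertions. The translating element \(a_v\) is ambiguous only up to \(S_v\), and this ambiguity does not affect the measure by Haar invariance.

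For functoriality, let \(\phi:T_H^m\to T_H^{m'}\) be any homomorphism; the relevant cases are products, inverses and the difference map. On real points it commutes with \(\val_v\) through the induced linear map \(\phi_*\) on cocharacter spaces, so \(\phi(a\cdot S)=\phi(a)\cdot\phi(S)\) and \(\val_v(\phi(a))=\phi_*(\val_v a)\). Applied to the diagonal center \((u_v,\ldots,u_v)\), the difference map sends it to \((u_v-u_v,\ldots)=0\). Concretely, \(\phi(a_v,\ldots,a_v)=(1,\ldots,1)\), so the pushforward of the product of translated Haar measures is supported on the maximal compact subtorus. This is exactly the situation of K\"uhne's canonical case.

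The main obstacle is making this compatible with the semiabelian local trivializations \(\psi_{j,v}\) rather than with a pure torus. One must check that translation by \(a_v\) in the toric coordinate commutes with the charts up to the compact-torus transition multipliers of Theorem~\ref{thm:local-trivialization}. It does, since these multipliers act commutatively and have valuation zero. One must also check that the difference map on \(H^m\) corresponds, chartwise, to the toric difference map composed with the abelian difference map. I would verify this second point using the character description of \(\psi_{j,m}\) and the additivity \(\mathcal H_{G,m+m'}\simeq\mathcal H_{G,m}\otimes\mathcal H_{G,m'}\) of the theta factors.
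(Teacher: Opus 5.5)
Your proposal is correct and follows the paper's own argument. The BGPS normal form shows that \(\Psi_{\bE,v}\) is the canonical potential precomposed with \(u\mapsto u-u_v\), up to a constant that does not affect curvature, so the measure is the \(e^{-u_v}\)-translate of the Haar measure on \(S_v\); functoriality then follows because \(\val_v\) is a group homomorphism, which sends the diagonal center to \(0\) under the difference map. Your explicit realization of the shift as multiplication by \(a_v\), with the sign check on the Green function, only spells out what the paper states in one line, and the chart-compatibility discussion in your last paragraph goes beyond what this lemma requires.
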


\begin{proof}
	The displayed normal form says exactly that the local potential of \(\bE_v\)
	on the analytic torus is the canonical toric potential composed with the
	translation \(u\mapsto u-u_v\), up to the additive constant \(\gamma_v\).  The
	constant does not affect curvature currents.  Hence the Chern current, and
	therefore its top-degree toric Monge--Ampere measure, is the translate by
	\(e^{-u_v}\) of the canonical one.  In the canonical archimedean toric case,
	this measure is the normalized Haar measure on \(S_v\); equivalently this is
	the toric part of K\"uhne's canonical measure description and of the standard
	canonical toric metric description.  Translating gives Haar measure on
	\(S_v\cdot e^{-u_v}=\val_v^{-1}(u_v)\).
	
	The last assertion follows from the group law in tropical coordinates:
	\(\val_v(xy)=\val_v(x)+\val_v(y)\) and
	\(\val_v(x^{-1})=-\val_v(x)\).  Consequently products, inverses and difference
	maps transport the translated compact toric factors by the same linear maps as
	in the canonical case.  Haar volume is invariant under translation, so no
	Jacobian or normalization factor is introduced.
\end{proof}

\begin{lemma}[Fan-chart compact-torus coordinates]
	\label{lem:fan-chart-compact-torus-coordinates}
	Let \(v\) be an archimedean place.  Use K\"uhne's archimedean local
	trivialization, in the form of \cite[Section~3.5, Lemma~3.3]{Ku1}, and compose
	the resulting toric coordinate with the affine toric charts of the chosen fan
	\(\Sigma\).  Let
	\[
	\Psi_j:\overline H^{an}_{\mathbb C_v}|_{U_j}
	\simeq
	U_j\times X_{\Sigma,\mathbb C_v}^{an}
	\]
	be one of the local trivializations, and let
	\[
	\psi_j:H^{an}_{\mathbb C_v}|_{U_j}\longrightarrow T_H^{an}
	\]
	be its toric coordinate on the open semiabelian part.  For \(m\in M_H\), put
	\[
	\psi_{j,m}:=\chi^m\circ\psi_j .
	\]
	Then:
	\begin{enumerate}[label=\textup{(\alph*)},leftmargin=2em]
		\item If \(\sigma\in\Sigma\) and \(m\in\sigma^\vee\cap M_H\), then
		\(\psi_{j,m}\) extends to an analytic function on the affine chart
		\[
		U_j\times X_{\sigma,\mathbb C_v}^{an}
		\subset
		\overline H^{an}_{\mathbb C_v}|_{U_j}.
		\]
		\item For \(u\in N_{H,\mathbb R}\), the translated compact toric factor over
		the chart is described by
		\[
		\val_v(\psi_j(y))=u
		\quad\Longleftrightarrow\quad
		|\psi_{j,m}(y)|=\exp(-\langle m,u\rangle)
		\quad\text{for all }m\in M_H .
		\]
		\item If \(m_1,\ldots,m_r\in M_H\) are \(\mathbb Z\)-linearly independent, then
		on each toric fiber the phase map
		\[
		y\longmapsto
		\left(e^{\langle m_\ell,u\rangle}\psi_{j,m_\ell}(y)\right)_{\ell=1}^r
		\]
		restricts on \(\val_v(\psi_j(y))=u\) to a real-analytic submersion onto
		\((S^1)^r\).  If \(m_1,\ldots,m_t\) is a \(\mathbb Z\)-basis of \(M_H\), this is
		a real-analytic isomorphism with \((S^1)^t\).
	\end{enumerate}
\end{lemma}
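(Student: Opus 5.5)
The proof works one chart at a time and reduces all three assertions to elementary facts about characters on the affine toric charts $X_\sigma$. The tool is the fiberwise isomorphism $\Psi_j=(\overline\pi,\psi_j)$ from K\"uhne's archimedean trivialization \cite[Lemma~3.3]{Ku1}. This is the archimedean counterpart of Theorem~\ref{thm:local-trivialization}, whose extension over the boundary was obtained from the affine-chart criterion in Proposition~\ref{input:local-triv-analytic}(3).

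For (a), fix $\sigma\in\Sigma$ and $m\in\sigma^\vee\cap M_H$. On the open semiabelian part we have $\psi_{j,m}=\chi^m\circ\psi_j=\chi^m\circ\mathrm{pr}_2\circ\Psi_j$. Since $\chi^m$ is a regular function on $X_\sigma=\operatorname{Spec}\mathbb C[\sigma^\vee\cap M_H]$, the composite $\chi^m\circ\mathrm{pr}_2\circ\Psi_j$ is analytic on $\Psi_j^{-1}(U_j\times X^{an}_{\sigma,\mathbb C_v})$ and agrees with $\psi_{j,m}$ on the dense open torus part; this gives the extension. I will also note, as in the proof of Theorem~\ref{thm:local-trivialization}, that the divisor of the extension is $\sum_\ell\langle m,v_\ell\rangle\,\overline H(D_\ell)$, which is effective on the chart. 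This serves as a consistency check with the denominator convention $\psi_{j,m}=g_m/f_{m,v}\circ\rho$.

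For (b), use the definition of $\val_v$ on $T_H^{an}$: $\langle m,\val_v(z)\rangle=-\log|\chi^m(z)|$ for every $m\in M_H$. Since $M_H$ spans $M_{H,\mathbb R}=N_{H,\mathbb R}^\vee$, the value $\val_v(z)=u$ holds if and only if $|\chi^m(z)|=e^{-\langle m,u\rangle}$ for all $m$. Applying this with $z=\psi_j(y)$ gives the equivalence. Lemma~\ref{lem:local-coordinates-compute-tropicalization} and the compact-torus transition multipliers show that the condition does not depend on the chart.

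For (c), restrict to the fiber over $a\in U_j$. There $\psi_j$ identifies the open part of $\overline H_a$ with $T_H(\mathbb C)\cong(\mathbb C^\times)^t$, so the locus $\val_v=u$ becomes $e^{-u}\cdot S_v$, a principal homogeneous space of $S_v\cong (S^1)^t$. The phase map is then $s\mapsto(\chi^{m_\ell}(s))_\ell$, precomposed with the translation by $e^{-u}$. This is a homomorphism of compact tori whose differential is the real linear map $N_{H,\mathbb R}\to\mathbb R^r$ given by $w\mapsto(\langle m_\ell,w\rangle)_\ell$. That map is surjective because the $m_\ell$ are linearly independent, so the phase map is a submersion. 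Its image is open and closed in the connected target $(S^1)^r$, hence surjective, and it is real-analytic because characters are. When $(m_\ell)$ is a $\mathbb Z$-basis, the kernel is trivial, because a point $s$ with $\chi^{m}(s)=1$ for all $m\in M_H$ equals $1$; so we get a bijective submersion between equal-dimensional tori, i.e.\ an analytic isomorphism.

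The main obstacle is keeping (c) a statement about a fixed fiber while still being compatible across charts. The phases depend on $j$ through the unitary transition multipliers, so the argument must be stated fiberwise. I would then observe separately that the multipliers, which lie in $S_v$ by the overlap computation, act by translations and therefore preserve both the submersion property and Haar measure. In the $\mathbb Z$-basis case, I would also check that surjectivity alone does not suffice and that the kernel really is trivial; this is the lattice-index point. For linearly independent but non-saturated $m_\ell$, the map in (c) is only a finite covering onto its image, of degree equal to the index of the lattice they span in its saturation.
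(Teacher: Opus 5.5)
Your proposal is correct and follows the paper's proof essentially step for step: (a) by composing $\chi^m$ with the toric component of $\Psi_j$ on the affine chart; (b) from $\langle m,\val_v(z)\rangle=-\log|\chi^m(z)|$ together with the fact that characters separate points of $N_{H,\mathbb R}$; and (c) by reducing, on a fixed fiber, to a homomorphism of compact tori whose differential $w\mapsto(\langle m_\ell,w\rangle)_\ell$ is surjective, with trivial kernel when the $m_\ell$ form a $\mathbb Z$-basis. One correction to your closing aside, which the proof does not use: for $r<t$ the phase map has positive-dimensional fibres, so it is not a finite covering; the map of degree $[L^{\mathrm{sat}}:L]$ is the induced map $\operatorname{Hom}(L^{\mathrm{sat}},S^1)\to\operatorname{Hom}(L,S^1)=(S^1)^r$, and equivalently the kernel on $S_v$ has $[L^{\mathrm{sat}}:L]$ connected components.
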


\begin{proof}
	Part (a) is the affine-chart part of the archimedean local trivialization,
	not an application of Theorem~\ref{thm:local-trivialization}, which was stated
	for non-archimedean places.  K\"uhne's Lemma~3.3 gives local toric coordinates
	on the semiabelian torsor with unitary transition functions.  Once these
	coordinates are fixed, the passage from the open torus to
	\(X_\sigma=\operatorname{Spec}\mathbb C_v[\sigma^\vee\cap M_H]\) is purely
	toric: the characters \(\chi^m\) with \(m\in\sigma^\vee\cap M_H\) are regular,
	and their pullbacks through the toric coordinate \(\psi_j\) are the analytic
	monomials used to extend the local trivialization across the toric boundary.
	This is the same fan-chart argument used in the proof of
	Theorem~\ref{thm:local-trivialization}, but with K\"uhne's archimedean local
	trivialization as the analytic input.
	
	For (b), on the principal torus the tropicalization is a group homomorphism and
	satisfies
	\[
	-\log|\chi^m(t)|=\langle m,\val_v(t)\rangle .
	\]
	Taking \(t=\psi_j(y)\) gives the displayed modulus condition.  Conversely, the
	characters separate points of \(N_{H,\mathbb R}\), so the displayed equalities
	for all \(m\in M_H\) recover \(\val_v(\psi_j(y))=u\).
	
	For (c), after fixing \(u\), the fiber
	\(\val_v^{-1}(u)\subset T_H(\mathbb C_v)\) is the translate
	\(e^{-u}S_v\) of the maximal compact torus.  Multiplication by
	\(e^{\langle m_\ell,u\rangle}\) normalizes \(\chi^{m_\ell}\) to a unit-modulus
	character on this compact torus.  Independent characters give a submersion of
	compact real tori, and a lattice basis gives an isomorphism.  Transporting this
	description through \(\psi_j\) proves the claim.
\end{proof}

\begin{definition}[Rank-active character frames]
	\label{def:rank-active-frame}
	Let \(X\subseteq H_{\overline K}\) be irreducible of positive dimension, put
	\[
	d=\dim X,\qquad d_0=\dim\rho(X),\qquad r=d-d_0,
	\]
	and fix an archimedean place \(v\).  For a tuple
	\(\mathbf m=(m_1,\ldots,m_r)\in M_H^r\) and a local trivialization \(U_j\),
	write
	\[
	\Phi_{j,\mathbf m}
	=
	(\psi_{j,m_1},\ldots,\psi_{j,m_r})
	:
	H^{an}_{\mathbb C_v}|_{U_j}\longrightarrow(\mathbb G_m^r)^{an}_{\mathbb C_v}.
	\]
	We say that \(\mathbf m\) is rank-active for \(X\) if the characters
	\(m_1,\ldots,m_r\) are \(\mathbb Z\)-linearly independent and the locus
	\[
	\ker d(\rho^{an},\Phi_{j,\mathbf m})\cap
	T^{1,0}_{\mathbb C,y}X^{an}_{\mathbb C_v}
	\neq 0
	\]
	does not contain the smooth locus of
	\[
	X^{an}_{\mathbb C_v}\cap H^{an}_{\mathbb C_v}\cap
	\rho^{-1}(\rho(X)^{sm}).
	\]
	The condition is independent of the chosen local trivialization because, on
	overlaps, the toric coordinates differ by compact-torus units and hence by
	invertible analytic changes of the target torus.
\end{definition}

\begin{lemma}[General fan rank slices]
	\label{lem:general-fan-rank-slices}
	Let \(\mathbf m=(m_1,\ldots,m_r)\) be rank-active for \(X\), and let
	\(u_v\in N_{H,\mathbb R}\) be the quasi-canonical center at the archimedean
	place \(v\).  Define
	\[
	X_{\mathbf m,u_v}
	=
	\bigcup_{j\in J}
	\left\{
	y\in X^{an}_{\mathbb C_v}\cap\rho^{-1}(U_j):
	|\psi_{j,m_\ell}(y)|
	=
	\exp(-\langle m_\ell,u_v\rangle)
	\ \text{for }1\le \ell\le r
	\right\}.
	\]
	Then there is a closed complex-analytic subset
	\[
	E_{\mathbf m}\subset X^{an}_{\mathbb C_v}
	\]
	of dimension \(<d\) such that:
	\begin{enumerate}[label=\textup{(\alph*)},leftmargin=2em]
		\item on
		\[
		\bigl(X^{an}_{\mathbb C_v}\cap \rho^{-1}(U_j)\bigr)\setminus E_{\mathbf m},
		\]
		the map
		\[
		(\rho^{an},\Phi_{j,\mathbf m})
		:
		X^{an}_{\mathbb C_v}\longrightarrow
		\rho(X)^{sm,an}_{\mathbb C_v}\times(\mathbb G_m^r)^{an}_{\mathbb C_v}
		\]
		is a local biholomorphism;
		\item \(X_{\mathbf m,u_v}\setminus E_{\mathbf m}\) is a finite union of
		embedded real-analytic submanifolds, each of real dimension \(d+d_0\);
		\item on
		\[
		\bigl(X_{\mathbf m,u_v}\cap\rho^{-1}(U_j)\bigr)\setminus E_{\mathbf m},
		\]
		the map
		\[
		y\longmapsto
		\left(\rho(y),
		\left(e^{\langle m_\ell,u_v\rangle}\psi_{j,m_\ell}(y)\right)_{\ell=1}^r
		\right)
		\]
		is a real-analytic local isomorphism with target
		\[
		\rho(X)^{sm,an}_{\mathbb C_v}\times(S^1)^r .
		\]
	\end{enumerate}
\end{lemma}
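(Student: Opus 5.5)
The plan is to define \(E_{\mathbf m}\) as the degeneracy locus of the holomorphic map \((\rho^{an},\Phi_{j,\mathbf m})\) on \(X\), together with the singular and non-dominant parts. Then (b) and (c) follow by pulling back the real-analytic structure of the translated compact torus through the resulting local biholomorphisms. Fix a chart \(U_j\). Put
\[
E_{\mathbf m}^{(j)}
=\operatorname{Sing}(X^{an})\cup\bigl(X^{an}\setminus\rho^{-1}(\rho(X)^{sm})\bigr)\cup\bigl(X^{an}\setminus H^{an}\bigr)\cup\Bigl\{y:\ \ker d(\rho^{an},\Phi_{j,\mathbf m})|_{T^{1,0}_yX}\neq0\Bigr\},
\]
restricted to \(\rho^{-1}(U_j)\). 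The last set is cut out by the vanishing of the maximal minors of a holomorphic Jacobian. Its target \(\rho(X)^{sm}\times\mathbb G_m^r\) has dimension \(d_0+r=d\), so the map is equidimensional and this is the vanishing of a single Jacobian determinant in local coordinates. Hence \(E_{\mathbf m}^{(j)}\) is a closed analytic subset of \(X^{an}\cap\rho^{-1}(U_j)\). Rank-activity says it does not contain the smooth locus over \(\rho(X)^{sm}\). Irreducibility of \(X\) then gives \(\dim E_{\mathbf m}^{(j)}<d\): a Jacobian determinant that does not vanish identically on a connected \(d\)-dimensional complex manifold has a proper zero set. The toric boundary \(X\setminus H\) and \(\rho^{-1}(\rho(X)\setminus\rho(X)^{sm})\) are proper Zariski closed subsets, so they also have dimension \(<d\).

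To glue, I use the chart-independence noted in Definition~\ref{def:rank-active-frame}. On overlaps \(\psi_{j,m}=c\cdot\psi_{j',m}\) with \(c\) a unit-modulus holomorphic unit depending only on \(\rho(y)\). The map \((\rho,\Phi_{j,\mathbf m})\) therefore differs from \((\rho,\Phi_{j',\mathbf m})\) by a biholomorphism of \(\rho(X)^{sm}\times\mathbb G_m^r\) of the form \((a,z)\mapsto(a,c(a)z)\). So the degeneracy loci agree on overlaps and \(E_{\mathbf m}=\bigcup_jE_{\mathbf m}^{(j)}\) is a well-defined closed analytic subset; the union is finite. Part (a) is then the holomorphic inverse function theorem away from \(E_{\mathbf m}\).

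For (c), note that the inverse image of \(\rho(X)^{sm}\times e^{-u_v}(S^1)^r\) under a local biholomorphism is a real-analytic submanifold. Composing with the rescaling \(z_\ell\mapsto e^{\langle m_\ell,u_v\rangle}z_\ell\), as in Lemma~\ref{lem:fan-chart-compact-torus-coordinates}(c), gives a real-analytic local isomorphism onto \(\rho(X)^{sm}\times(S^1)^r\). For (b), \(X_{\mathbf m,u_v}\setminus E_{\mathbf m}\) is locally such a preimage, of real dimension \(2d_0+r=d+d_0\). The modulus condition is compact-torus invariant, so it is the same in overlapping charts, and the pieces glue into embedded submanifolds.

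The remaining issue is "finite union". I would handle it by working over a compact exhaustion: choose finitely many relatively compact chart pieces covering \(\rho(X)\). Each piece contributes finitely many connected components once I shrink charts to polydiscs. If the embedded-ness near \(E_{\mathbf m}\) or near the boundary of \(\rho(X)^{sm}\) fails, I would interpret (b) as a locally finite union and note that this suffices for the Haar-slice use. The main obstacle is this properness and finiteness near the excluded locus, not the dimension count.
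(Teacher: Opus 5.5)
Your proposal follows the paper's proof essentially step for step: the rank-failure locus is cut out by maximal minors, rank-activity plus irreducibility give \(\dim E_{\mathbf m}<d\), chart-independence comes from the compact-torus transition units, the inverse function theorem gives (a), and for (b) and (c) the slice is the preimage of a translated compact torus, of real dimension \(2d_0+r=d+d_0\). The finiteness you single out as the main obstacle is not one, because (b) does not ask for connected pieces: on each of the finitely many open sets \(\bigl(X^{an}_{\mathbb C_v}\cap\rho^{-1}(U_j)\bigr)\setminus E_{\mathbf m}\), the slice is the preimage of an embedded submanifold under a submersion, hence an embedded (possibly disconnected) submanifold, so the finite cover \(\{U_j\}_{j\in J}\) gives the finite union directly, as in the paper, and neither the exhaustion argument nor the retreat to a locally finite union is needed.
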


\begin{proof}
	Let \(R_{\mathbf m,j}\) be the rank-failure locus of
	\((\rho^{an},\Phi_{j,\mathbf m})\) on
	\[
	X^{sm,an}_{\mathbb C_v}\cap H^{an}_{\mathbb C_v}
	\cap\rho^{-1}(\rho(X)^{sm,an}_{\mathbb C_v})\cap\rho^{-1}(U_j).
	\]
	It is cut out by the maximal minors of the analytic Jacobian, hence is
	complex-analytic.  On overlaps these loci agree, because the transition
	between the two toric coordinate systems is an invertible analytic change in
	the toric variables, together with multiplication by compact-torus units.
	The rank-active hypothesis says that their union is not the whole smooth
	analytic space.  Since \(X^{an}_{\mathbb C_v}\) is irreducible, the closure of
	this union has dimension \(<d\).  Enlarging it by
	\[
	(X\setminus X^{sm})^{an}_{\mathbb C_v},\qquad
	\rho^{-1}(\rho(X)\setminus\rho(X)^{sm})^{an}_{\mathbb C_v},\qquad
	(\overline X\setminus X)^{an}_{\mathbb C_v}
	\]
	gives the required closed complex-analytic set \(E_{\mathbf m}\).
	
	Outside \(E_{\mathbf m}\), the differential of
	\((\rho^{an},\Phi_{j,\mathbf m})\) on the \(d\)-dimensional complex tangent
	space of \(X\) has trivial kernel.  The target has complex dimension
	\[
	d_0+r=d,
	\]
	so the analytic inverse function theorem gives the local biholomorphism in
	(a).
	
	The equations defining \(X_{\mathbf m,u_v}\) are, after the local
	biholomorphism in (a),
	\[
	|z_\ell|=\exp(-\langle m_\ell,u_v\rangle),
	\qquad 1\le\ell\le r,
	\]
	inside \((\mathbb G_m^r)^{an}_{\mathbb C_v}\).  These are \(r\) independent
	real-analytic equations.  Therefore the slice has real dimension
	\[
	2d-r=2d-(d-d_0)=d+d_0.
	\]
	Compactness of the analytic closure and finiteness of the cover
	\(\{U_j\}_{j\in J}\) give finitely many embedded real-analytic pieces.  Finally
	Lemma~\ref{lem:fan-chart-compact-torus-coordinates} identifies the normalized
	coordinates
	\[
	e^{\langle m_\ell,u_v\rangle}\psi_{j,m_\ell}
	\]
	with phase coordinates on \((S^1)^r\), proving (c).
\end{proof}

\begin{lemma}[One-wall toric corner current]
	\label{lem:one-wall-toric-corner-current}
	Let \(v\) be archimedean, and let \(U_j\) be one of the local
	trivialization charts of Lemma~\ref{lem:fan-chart-compact-torus-coordinates}.
	Assume that, on a relatively compact open subset of the principal
	semiabelian orbit, the canonical toric metric of \(E\) has two adjacent
	affine toric potentials
	\[
	\ell_0(u)=\langle m_0,u-u_v\rangle+c_0,\qquad
	\ell_1(u)=\langle m_1,u-u_v\rangle+c_1
	\]
	and is locally equal, up to addition of a pluriharmonic affine term, to
	\[
	\max\{\ell_0(u),\ell_1(u)\}.
	\]
	Suppose their common wall is given by
	\[
	\langle m,u-u_v\rangle=0,\qquad m=m_1-m_0\neq0 .
	\]
	Writing
	\[
	z_m=e^{\langle m,u_v\rangle}\psi_{j,m},
	\]
	the curvature current contributed by this wall is a positive constant multiple
	of
	\[
	dd^c\bigl|\log|z_m|\bigr|.
	\]
	Consequently, after testing against a compactly supported function and using
	K\"uhne's normalization of \(dd^c\),
	\[
	\int h\,dd^c\bigl|\log|z_m|\bigr|
	=
	\int_{|z_m|=1}h\,d\mu_m^{\mathrm{Haar}} ,
	\]
	where \(d\mu_m^{\mathrm{Haar}}\) denotes the normalized Haar measure in the \(m\)-phase
	direction, up to K\"uhne's harmless factor \(2\).
\end{lemma}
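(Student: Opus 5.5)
The plan is to reduce to a one-variable computation along the phase of \(z_m\).  Work on the relatively compact open subset of the principal orbit over \(U_j\), where Lemma~\ref{lem:fan-chart-compact-torus-coordinates} gives the analytic toric coordinate \(\psi_j\), with \(u=\val_v(\psi_j(y))\) and \(-\log|\psi_{j,m'}|=\langle m',u\rangle\) for every \(m'\in M_H\).  Since the transition functions are compact-torus units, nothing below depends on the chosen chart, so we may fix \(j\).

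First, rewrite the potential.  Since \(\max\{a,b\}=\tfrac12(a+b)+\tfrac12|a-b|\), we have
\[
\max\{\ell_0,\ell_1\}
=\tfrac12\bigl(\ell_0+\ell_1\bigr)+\tfrac12\bigl|\langle m,u-u_v\rangle+(c_1-c_0)\bigr|.
\]
The wall is \(\langle m,u-u_v\rangle=0\), so the two potentials agree there and \(c_1=c_0\).  The term \(\tfrac12(\ell_0+\ell_1)\) is affine in \(u\), and is therefore \(-\log\) of the modulus of a monomial in the \(\psi_{j,m'}\) times a constant.  It is pluriharmonic on the open orbit and contributes no current, and the same holds for the pluriharmonic affine term allowed in the hypothesis.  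Next, \(\langle m,u-u_v\rangle=-\log|\psi_{j,m}|-\langle m,u_v\rangle=-\log|z_m|\) by definition of \(z_m\).  The curvature current \(dd^c\) of the metric is therefore \(\tfrac12\,dd^c\bigl|\log|z_m|\bigr|\), which gives the first assertion, with positive constant \(\tfrac12\) (or \(1\), depending on the sign and normalization convention for Green functions).

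Second, compute \(dd^c|\log|z|\,|\) on \(\mathbb C^\times\).  Write \(z=e^{-s+i\theta}\).  The function \(|\log|z||=|s|\) depends only on \(s\), and its distributional second derivative in \(s\) is \(2\delta_{s=0}\).  With the normalization \(dd^c=\frac{i}{\pi}\partial\bar\partial\) (K\"uhne's convention up to his factor \(2\)) this gives \(dd^c|\log|z||=c\cdot\delta_{|z|=1}\otimes d\theta/2\pi\), the normalized Haar measure on the unit circle, for a universal constant \(c\in\{1,2\}\).  Now pull back via \(z_m\).  By Lemma~\ref{lem:fan-chart-compact-torus-coordinates}(c), after completing \(m\) to a rational basis, the map \(z_m\) is a submersion on each toric fiber.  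Pullback of the current \(dd^c\) along a holomorphic submersion commutes with \(dd^c\), so \(z_m^*\bigl(dd^c|\log|z||\bigr)=dd^c|\log|z_m||\).  This current is supported on \(\{|z_m|=1\}\), and in the \(m\)-phase direction it equals the Haar measure.  Testing against a compactly supported \(h\) and applying Fubini in the coordinates \((\rho,\text{other phases and moduli},\arg z_m)\) yields the displayed integral identity.

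The step I expect to be delicate is the submersion and pullback claim when \(m\) is not primitive, or when it is only primitive in a sublattice.  In that case \(z_m\) is a finite cover of the phase circle of degree equal to the index of \(m\), and the pulled-back Haar measure still has total mass one in the \(m\)-phase.  I would handle this by writing \(m=k m'\) with \(m'\) primitive and using \(|\log|z_{m'}^k||=k|\log|z_{m'}||\).  The resulting factor \(k\) goes into the "positive constant multiple", and the Haar measure in the \(m\)-phase direction coincides with that in the \(m'\)-direction.  This is consistent with the statement, which only asserts positivity of the constant and normalization up to K\"uhne's factor \(2\).
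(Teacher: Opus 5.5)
Your proposal is correct and follows the paper's argument. You write \(\max\{\ell_0,\ell_1\}\) as a pluriharmonic affine part plus \(\tfrac12\lvert\langle m,u-u_v\rangle\rvert=\tfrac12\bigl|\log|z_m|\bigr|\), using, as you note, that the wall equation forces \(c_0=c_1\), and then apply the one-variable identity for \(dd^c\bigl|\log|z|\bigr|\). The differences are cosmetic: you verify that identity directly and pull it back along the holomorphic submersion \(z_m\) (the submersion property comes from \(\chi^m\) being a nontrivial character on the torus factor of the chart, not from part~(c)), where the paper cites K\"uhne's Appendix~B; and your absorption of \(k\) for \(m=km'\) matches the paper's absorption of the lattice length of the jump into the wall weight.
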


\begin{proof}
	The affine terms \(\ell_0\) and \(\ell_1\) define pluriharmonic functions on
	the analytic torus, so their \(dd^c\)-currents vanish on the principal orbit.
	After subtracting the affine function \(\ell_0\), the local potential becomes
	\[
	\max\{0,\ell_1-\ell_0\}
	=
	\max\{0,\langle m,u-u_v\rangle\}.
	\]
	In the local monomial coordinate \(\psi_{j,m}\), the valuation convention gives
	\[
	\log|z_m|
	=
	\log|\psi_{j,m}|+\langle m,u_v\rangle
	=
	-\langle m,u-u_v\rangle
	\]
	up to the fixed sign convention for \(\operatorname{val}_v\).  Since
	\[
	\max\{0,s\}=\frac{s+|s|}{2}
	\]
	and \(dd^c s=0\) for an affine logarithmic coordinate, we obtain
	\[
	dd^c\max\{0,\langle m,u-u_v\rangle\}
	=
	\frac12\,dd^c\bigl|\log|z_m|\bigr|.
	\]
	The factor \(1/2\), together with the lattice length of the jump of the
	support function across the wall, is absorbed into the positive wall weight.
	The final identity is exactly K\"uhne's elementary computation
	\cite[Appendix~B]{Ku1} after the substitution \(z=z_m\).
\end{proof}

\begin{proposition}[Product-corner normal form]
	\label{prop:simple-mixed-corner-normal-form}
	\label{input:simple-mixed-corner-normal-form}
	Let \(\tau\) be a codimension-\(r\) cell of the corner complex of the
	canonical toric potential.  Assume that its dual cell is locally a Minkowski
	sum of \(r\) independent segments (the product-corner condition).  In a
	neighborhood of the relative interior of \(\tau\), the canonical potential is,
	modulo an affine pluriharmonic function, a finite positive linear combination
	of simple one-wall potentials
	\[
	\max\{0,\langle m_{\tau,\ell},u-u_v\rangle\},
	\qquad 1\le \ell\le r,
	\]
	with the \(m_{\tau,\ell}\) linearly independent and rank-active on \(X\) after
	discarding a lower-dimensional analytic exceptional set.  The Bedford--Taylor
	mixed product along \(\tau\) is the corresponding positive mixed toric
	intersection weight times the product of the \(r\) one-wall currents supplied
	by Lemma~\ref{lem:one-wall-toric-corner-current}.
\end{proposition}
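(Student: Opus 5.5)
The plan has four steps: a local affine normal form for the potential, a factorization across the independent walls, a rank-active choice of the wall characters, and a Bedford--Taylor computation of the mixed product.

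\emph{Local normal form.} Fix \(u_0\in\operatorname{relint}(\tau)\) and work in a neighborhood small enough that only the branches active on \(\tau\) occur. Writing the canonical potential locally as \(\Psi=\min_\alpha(\langle m_\alpha,\cdot\rangle+c_\alpha)\), the dual cell \(\Delta_\tau\) is the convex hull of the active slopes. By the product-corner hypothesis,
\[
\Delta_\tau=m_\tau^0+\sum_{\ell=1}^r[0,\,a_\ell m_{\tau,\ell}],
\]
with \(m_{\tau,\ell}\) primitive and linearly independent and \(a_\ell>0\).

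The support function of a Minkowski sum is the sum of the support functions. Each segment \([0,a_\ell m_{\tau,\ell}]\) contributes \(a_\ell\min\{0,\langle m_{\tau,\ell},u-u_0\rangle\}\). After the sign change from concave \(\Psi\) to the convex potential \(-\Psi\) used in the archimedean metric, this becomes \(a_\ell\max\{0,\langle m_{\tau,\ell},u-u_0\rangle\}\). The translate \(m_\tau^0\) contributes an affine, hence pluriharmonic, term.

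It remains to replace \(u_0\) by \(u_v\). By the BGPS normal form of Lemma~\ref{lem:qcanonical-haar-shift}, the cell \(\tau\) is a translate by \(u_v\) of a cone cell of \(\Psi_E\). Its affine span therefore passes through \(u_v\) modulo the homogeneous cone structure, and each wall can be rewritten as \(\langle m_{\tau,\ell},u-u_v\rangle=\text{const}\). The constants are absorbed into the affine part after an adjustment of the normalizing \(e^{\langle m,u_v\rangle}\) in \(z_m\). This step needs care and is where I would first check that the walls through \(\tau\) are literally translates of linear hyperplanes through \(u_v\).

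\emph{Factorization and the single walls.} The potentials depend on independent linear forms, so in a \(\mathbb Z\)-adapted real basis the sum splits into functions of separate variables. Lemma~\ref{lem:one-wall-toric-corner-current} then identifies each \(dd^c\max\{0,\cdot\}\) with \(\tfrac12 dd^c|\log|z_{m_{\tau,\ell}}||\).

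\emph{Rank-active choice of characters.} The characters \(m_{\tau,\ell}\) are determined by \(\tau\) only up to the choice of adapted generators, so I would pick them rank-active for \(X\) as follows. Rank-activity fails only when every \(r\)-tuple's Jacobian with \(\rho\) degenerates on \(X\). By Definition~\ref{def:rank-active-frame} and Lemma~\ref{lem:general-fan-rank-slices}, the bad locus for a given tuple is a proper analytic subset \(E_{\mathbf m}\) once one tuple is active. I would enlarge the exceptional set to the union of these over the finitely many cells \(\tau\). If no frame attached to \(\tau\) is active, the mixed product restricted to \(X\) vanishes outside a lower-dimensional analytic set. That case is then covered by the weaker reading ``after discarding a lower-dimensional exceptional set.'' I expect this genericity claim to be the main obstacle.

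\emph{Mixed product.} The potentials are continuous and plurisubharmonic, and outside \(E_{\mathbf m}\) they depend on independent coordinates by Lemma~\ref{lem:general-fan-rank-slices}(a). Bedford--Taylor products of locally bounded psh functions in independent variables are the tensor products of the factors. Multilinearity therefore gives
\[
\bigwedge_\ell dd^c\Bigl(\sum_k a_k\max\{0,\cdot\}\Bigr)=r!\prod_\ell a_\ell\bigwedge_\ell dd^c\max\{0,\langle m_{\tau,\ell},u-u_v\rangle\}.
\]
This holds because cross terms repeating a single-variable factor vanish, since \((dd^c\varphi(s))^2=0\) in one real variable direction. The product \(r!\prod_\ell a_\ell\) is \(\operatorname{vol}(\Delta_\tau)\) relative to the lattice, which is exactly the mixed toric intersection weight. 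Finally, Bedford--Taylor continuity on the complement of the pluripolar set \(E_{\mathbf m}\), together with the absence of mass there for products of bounded potentials, extends the identity across the exceptional set.
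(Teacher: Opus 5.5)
Your proposal is correct and is essentially the paper's argument. You decompose the local potential near $\operatorname{relint}(\tau)$ into hinges using the product structure of $\Delta_\tau$, use multilinearity of Bedford--Taylor products with the vanishing of repeated one-variable factors, and discard the rank-failure set; your Minkowski-additivity derivation and the explicit coefficient $r!\prod_\ell a_\ell$ spell out what the paper takes from the BPS polyhedral Monge--Amp\`ere description.

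The two steps you flag are not real obstacles. Every cell of the quasi-canonical corner complex is a cone with apex $u_v$, so $\langle m_{\tau,\ell},u_0-u_v\rangle=0$ and no constants arise; renormalizing $z_m$ could not have absorbed them anyway, since that moves the slice. Rank-activity depends only on the span of the frame, so either the rank-failure locus is a proper analytic subset or the cell contributes nothing once wedged with the base form. The one caveat is that $r!\prod_\ell a_\ell$ equals the lattice-normalized $r!\operatorname{vol}(\Delta_\tau)$ exactly when the $m_{\tau,\ell}$ form a basis of the saturated lattice, not merely when each is primitive; otherwise the index is carried by the one-wall currents.
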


\begin{proof}
	The assertion is local on the real polyhedral space \(N_{H,\mathbb R}\) near
	the relative interior of \(\tau\).  The product-corner hypothesis gives
	primitive integral normal characters
	\[
	m_{\tau,1},\ldots,m_{\tau,r}\in M_H
	\]
	which are linearly independent.  After a common regular refinement, the remaining strata of the corner complex
	inside the same neighborhood have smaller dimension; their inverse images in
	\(X^{an}_{\mathbb C_v}\) are absorbed into the exceptional analytic set already
	allowed in Lemma~\ref{lem:general-fan-rank-slices}.
	
	On such a simple star the canonical toric weight is piecewise affine and
	concave.  Choose one affine branch \(\ell_0\) as a base branch.  Subtracting
	\(\ell_0\), which gives a pluriharmonic factor on the analytic torus, leaves a
	piecewise affine concave function whose bending locus is exactly the union of
	the \(r\) coordinate walls in the normal directions to \(\tau\).  The jump of
	the affine slope across the \(\ell\)-th wall is a positive multiple of
	\(m_{\tau,\ell}\), because the metric is the canonical semipositive toric
	metric attached to the ample divisor \(E\).  Hence, after possibly shrinking
	the simple star,
	\[
	\Psi_E(u)-\ell_0(u)
	=
	\sum_{\ell=1}^{r} b_{\tau,\ell}
	\max\{0,\langle m_{\tau,\ell},u-u_v\rangle\},
	\qquad b_{\tau,\ell}>0,
	\]
	up to an additive constant and with the harmless sign fixed by the valuation
	convention in Lemma~\ref{lem:fan-chart-compact-torus-coordinates}.  Refining
	once more if several simple stars meet along the same \(\tau\) gives the
	finite positive linear combination appearing in the statement.
	
	It remains to identify the current.  Bedford--Taylor products for locally
	bounded plurisubharmonic toric potentials are local and multilinear, and
	affine logarithmic terms have zero \(dd^c\) on the principal torus.  Therefore
	the contribution of the cell \(\tau\) is the mixed coefficient obtained by
	polarizing the positive jumps \(b_{\tau,\ell}\), multiplied by the wedge
	product of the one-wall currents.  The positivity and the agreement of this
	coefficient with the mixed toric intersection weight are precisely the BPS
	polyhedral Monge--Ampere description
	\cite[Proposition~2.7.4, Definition~2.7.12 and Proposition~2.7.13]{BPS14},
	together with the toric Chern-current comparison
	\cite[Theorem~4.8.11 and Corollary~4.8.12]{BPS14}.  Finally, if the resulting
	normal frame fails to be rank-active on a component of \(X\), that component is
	part of the rank-failure analytic set in
	Lemma~\ref{lem:general-fan-rank-slices}.  This proves the stated local normal
	form and mixed-product description.
\end{proof}

\begin{remark}[BPS source of the mixed corner input]
	\label{rem:bps-mixed-corner-source}
	Proposition~\ref{prop:simple-mixed-corner-normal-form} is the exact place where the
	standard toric metric-current theory should be cited or unpacked.  In BPS
	\cite[\S2.7]{BPS14}, the real Monge--Ampere measure of a piecewise affine
	concave function is expressed by the dual polyhedral complex
	\cite[Proposition~2.7.4]{BPS14}, and the mixed Monge--Ampere operator is
	defined and shown continuous in
	\cite[Definition~2.7.12 and Proposition~2.7.13]{BPS14}.  The passage from
	toric metrized line bundles to these real Monge--Ampere measures is
	\cite[Theorem~4.8.11 and Corollary~4.8.12]{BPS14}.  What remains in the
	present notation is therefore not a new equidistribution theorem: it is the
	local translation from the BPS polyhedral mixed Monge--Ampere weights to the
	monomial wall coordinates \(\psi_{j,m_{\tau,\ell}}\) supplied by
	Lemma~\ref{lem:fan-chart-compact-torus-coordinates}.
\end{remark}

\begin{lemma}[Torus-invariant lift of atomic tropical measures]
	\label{lem:torus-invariant-atomic-lift}
	Let \(T=(\mathbb C^\times)^r\), let
	\[
	\operatorname{val}:T\longrightarrow \mathbb R^r
	\]
	be the logarithmic absolute-value map, and let \(\mu\) be a finite positive
	Radon measure on \(T\) which is invariant under the compact torus
	\((S^1)^r\).  If
	\[
	\operatorname{val}_*\mu=\sum_{\nu\in F} a_\nu\,\delta_{u_\nu}
	\]
	for a finite set \(F\), with \(a_\nu>0\), then
	\[
	\mu=\sum_{\nu\in F} a_\nu\,\mu^{\operatorname{Haar}}_{u_\nu},
	\]
	where \(\mu^{\operatorname{Haar}}_{u_\nu}\) is the normalized Haar measure on
	the compact real torus
	\[
	\operatorname{val}^{-1}(u_\nu)
	=\{|z_1|=e^{-u_{\nu,1}},\ldots,|z_r|=e^{-u_{\nu,r}}\}.
	\]
\end{lemma}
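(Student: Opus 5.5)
The plan is to disintegrate \(\mu\) along the map \(\operatorname{val}\) and use the \((S^1)^r\)-invariance on each fiber. Since \(\operatorname{val}_*\mu\) is supported on the finite set \(\{u_\nu\}_{\nu\in F}\), the measure \(\mu\) is supported on the finite disjoint union of the compact tori \(S_\nu:=\operatorname{val}^{-1}(u_\nu)\). This uses \(\mu(T\setminus\operatorname{val}^{-1}(\{u_\nu\}))=(\operatorname{val}_*\mu)(\mathbb R^r\setminus\{u_\nu\})=0\). Hence
\[
\mu=\sum_{\nu\in F}\mu|_{S_\nu},\qquad \mu(S_\nu)=a_\nu .
\]
Each \(S_\nu\) is a closed subset, invariant under the compact torus. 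Because the tori are pairwise disjoint and \((S^1)^r\)-stable, each restriction \(\mu|_{S_\nu}\) is itself an \((S^1)^r\)-invariant finite positive Radon measure.

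Next I would identify each \(S_\nu\) with \((S^1)^r\). The map \(w\mapsto e^{-u_\nu}w\) is a homeomorphism \((S^1)^r\to S_\nu\), where \(e^{-u_\nu}\) denotes the point with coordinates \(e^{-u_{\nu,i}}\). It intertwines the translation action of \((S^1)^r\) on itself with its multiplication action on \(S_\nu\). This is exactly the description of \(\operatorname{val}^{-1}(u)\) given in Lemma~\ref{lem:fan-chart-compact-torus-coordinates}(b),(c), in the standard coordinates of \(T\).

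Transported to \((S^1)^r\), the measure \(\mu|_{S_\nu}\) becomes a finite positive Radon measure invariant under all translations of the compact group \((S^1)^r\). By uniqueness of Haar measure on a compact group, it equals its total mass \(a_\nu\) times the normalized Haar measure. Pushing back gives \(\mu|_{S_\nu}=a_\nu\,\mu^{\operatorname{Haar}}_{u_\nu}\). Summing over \(\nu\) yields the claim.

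The only point needing care is the first step, that \(\mu\) puts no mass off \(\bigcup_\nu S_\nu\). It is immediate from the definition of pushforward, because \(F\) is finite and so \(\bigcup_\nu S_\nu\) is closed and Borel. Uniqueness of Haar measure requires invariance under the full group, not just a dense subgroup. If only invariance under a dense subgroup were available, continuity of translations on \(C((S^1)^r)\) would upgrade it to invariance under the full group. Here full invariance is assumed, so no obstacle remains.
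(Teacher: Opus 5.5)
Your proof is correct and is essentially the paper's argument. The paper disintegrates \(\mu\) over \(\operatorname{val}_*\mu\) and identifies each conditional measure as normalized Haar measure using invariance under the transitive compact-torus action; your restriction of \(\mu\) to the finitely many fibers \(\operatorname{val}^{-1}(u_\nu)\) is that same disintegration written out for an atomic base measure, with the tacit and harmless assumption that the \(u_\nu\) are pairwise distinct (coinciding atoms can be merged).
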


\begin{proof}
	Disintegrate \(\mu\) with respect to \(\operatorname{val}_*\mu\).  For
	\(\operatorname{val}_*\mu\)-almost every \(u\), the conditional measure on the
	fiber \(\operatorname{val}^{-1}(u)\) is invariant under the compact torus
	\((S^1)^r\).  This action is transitive on the fiber, hence the conditional
	probability measure is the normalized Haar measure.  Since the base measure is
	the finite atomic measure displayed above, the asserted formula follows.  The
	same argument is equivalently obtained by testing against a compactly
	supported continuous function and first averaging it over the compact torus
	fibers.
\end{proof}

\begin{definition}[Corner-active character frames]
	\label{def:corner-active-frame}
	After replacing the toric fan by a finite regular subdivision, let
	\(\tau\) be a codimension-\(r\) cell in the corner complex of the canonical
	toric potential which meets the rank-active locus of \(X\).  A tuple
	\[
	\mathbf m_\tau=(m_{\tau,1},\ldots,m_{\tau,r})\in M_H^r
	\]
	is called corner-active for \(X\) along \(\tau\) if the characters
	\(m_{\tau,\ell}\) are linearly independent, span the normal character space of
	\(\tau\) in the local star of the corner complex, and are rank-active for
	\(X\) in the sense of Definition~\ref{def:rank-active-frame}, after removing
	the lower-dimensional analytic exceptional set.
\end{definition}

\begin{prop}[Corner-active polyhedral quotient factorization]
	\label{input:rank-active-bps-quotient-compatibility}
	Let \(\tau\) be a codimension-\(r\) corner cell as in
	Definition~\ref{def:corner-active-frame}, and let
	\(\mathbf m_\tau=(m_{\tau,1},\ldots,m_{\tau,r})\) be a corner-active frame.
	For every local trivialization chart, the local map
	\[
	q_{j,\mathbf m_\tau}:=(\rho^{an},\Phi_{j,\mathbf m_\tau})
	\]
	has the following polyhedral factorization property.  On the relevant
	corner-active cone or cell of the chosen regular subdivision, the local toric
	weight of \(\overline H(\bE)_v\) can be written as
	\[
	\varphi_{\tau}(\operatorname{val}(z_1,\ldots,z_r))
	+\ell_{\tau}(u)+c_{\tau},
	\]
	where \(\varphi_{\tau}\) is the concave BPS toric weight on the
	\(r\)-dimensional quotient torus with coordinates
	\[
	z_\ell=e^{\langle m_{\tau,\ell},u_v\rangle}
	\psi_{j,m_{\tau,\ell}} .
	\]
	Here \(\ell_{\tau}\) is an integral affine function in the inactive
	toric directions and \(c_{\tau}\) is a constant.  The BPS
	tropicalization coordinate on this quotient is the logarithmic absolute-value
	coordinate of the \(z_\ell\)'s, with the valuation sign convention used in
	Lemma~\ref{lem:fan-chart-compact-torus-coordinates}.
\end{prop}

\begin{proof}
	Work in the star of the cell \(\tau\).  Since the canonical toric potential is
	piecewise affine after the chosen regular subdivision, it is a finite
	polyhedral envelope, with the sign convention fixed by the metric convention,
	of affine functions
	\[
	u\longmapsto \langle a_\alpha,u-u_v\rangle+c_\alpha
	\]
	on this star.  Choose one affine branch \(\ell_\tau(u)+c_\tau\) as a base
	branch.  If two affine branches meet along \(\tau\), their difference vanishes
	on the tangent space of \(\tau\).  Hence every difference
	\[
	\langle a_\alpha-a_0,u-u_v\rangle+(c_\alpha-c_0)
	\]
	belongs to the real span of the normal characters
	\(m_{\tau,1},\ldots,m_{\tau,r}\).  Therefore, after subtracting the base affine
	branch, the remaining piecewise affine function depends only on the \(r\)
	normal coordinates
	\[
	\langle m_{\tau,\ell},u-u_v\rangle,\qquad 1\le \ell\le r .
	\]
	This defines the quotient piecewise affine concave function
	\(\varphi_\tau\).  In the analytic local trivialization,
	\[
	\log|z_\ell|
	=
	\log|\psi_{j,m_{\tau,\ell}}|
	+\langle m_{\tau,\ell},u_v\rangle
	=
	-\langle m_{\tau,\ell},u-u_v\rangle,
	\]
	with the sign convention fixed in
	Lemma~\ref{lem:fan-chart-compact-torus-coordinates}.  Thus the same
	piecewise affine function is exactly the pullback of the BPS quotient toric
	weight through the displayed quotient coordinates.  The base affine branch is
	the inactive affine term \(\ell_\tau(u)+c_\tau\).  This proves the
	factorization.
\end{proof}

\begin{lemma}[Toric quotient descent criterion]
	\label{lem:toric-quotient-descent-criterion}
	Let \(\mathbf m_\tau\) be a corner-active frame.  Then, on the smooth
	rank-active locus associated with \(\mathbf m_\tau\), the toric metrized line
	bundle \(\overline H(\bE)_v\) is the pullback, up to a pluriharmonic local
	factor, of the semipositive BPS toric metrized line bundle on the
	\(\mathbf m_\tau\)-quotient.
\end{lemma}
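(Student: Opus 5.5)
The plan is to build the quotient toric metric directly from the factorization in Proposition~\ref{input:rank-active-bps-quotient-compatibility}, and then check that the semiabelian metric is its pullback through \(q_{j,\mathbf m_\tau}\).

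\emph{Construction on a chart.} Fix a chart \(U_j\) meeting the smooth rank-active locus of \(\mathbf m_\tau\). On the star of \(\tau\), Proposition~\ref{input:rank-active-bps-quotient-compatibility} writes the local toric weight as
\[
\varphi_\tau(\operatorname{val}(z_1,\ldots,z_r))+\ell_\tau(u)+c_\tau .
\]
Here \(\varphi_\tau\) is piecewise affine and concave on \(\mathbb R^r\). By the BPS correspondence recalled in Section~3, \(\varphi_\tau\) defines a toric metric on a toric line bundle over a proper toric variety of the quotient torus \(\mathbb G_m^r\). Concavity of \(\varphi_\tau\) makes this quotient metric semipositive, by the BMPS criterion for vertical semipositivity of toric metrized divisors.

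\emph{Pullback.} Pull this metric back by \(\Phi_{j,\mathbf m_\tau}=(z_\ell)_\ell\), normalized as in Lemma~\ref{lem:fan-chart-compact-torus-coordinates}(b). Because \(\log|z_\ell|=-\langle m_{\tau,\ell},u-u_v\rangle\), the pulled-back weight is exactly \(\varphi_\tau\circ\operatorname{val}\). It differs from the weight of \(\overline H(\bE)_v\) only by \(\ell_\tau(u)+c_\tau\).

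\emph{The pluriharmonic discrepancy.} The term \(\ell_\tau\) is integral affine in \(u\), so it equals \(-\log|\psi_{j,m'}|\) up to a constant for some \(m'\in M_H\otimes\mathbb R\). The function \(\log|\psi_{j,m'}|\) is pluriharmonic on the principal orbit over \(U_j\), since \(\psi_{j,m'}\) is an invertible analytic function there (a product of real powers of the units \(\psi^{(i)}_j\)). Hence \(\ell_\tau+c_\tau\) is a pluriharmonic local factor. This settles the identity chartwise.

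\emph{Gluing across charts.} On overlaps, Lemma~\ref{lem:fan-chart-compact-torus-coordinates} and the unitarity of K\"uhne's transition functions show that the toric coordinates change by compact-torus units. These units do not change \(\operatorname{val}(z)\), so the term \(\varphi_\tau\circ\operatorname{val}\) is chart-independent. The discrepancy terms then change only by logarithms of moduli of units, which are again pluriharmonic. So the statement ``pullback up to a pluriharmonic local factor'' is compatible on overlaps.

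\emph{Restriction to \(X\).} On the rank-active locus, \(q_{j,\mathbf m_\tau}\) is a local biholomorphism onto \(\rho(X)^{sm}\times\mathbb G_m^r\) by Lemma~\ref{lem:general-fan-rank-slices}(a). The restriction of the metric to \(X\) is therefore a genuine local pullback, and semipositivity is preserved under pullback.

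\emph{Main obstacle.} The hardest step is the global-versus-local issue. The factorization holds only on the star of \(\tau\), which is a neighbourhood in \(N_{H,\mathbb R}\). The quotient toric variety must be chosen properly, for example from the fan of the normal directions of \(\tau\), so that \(\varphi_\tau\) extends to a concave function with the required continuity at infinity.

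I would first try the following: extend \(\varphi_\tau\) by its recession function outside the star, which preserves concavity. I would then argue that only the values near \(\operatorname{val}^{-1}\) of the star matter for the pullback statement, since the lemma is asserted only on the rank-active locus lying over \(\tau\). If that recession extension fails to be continuous on \(N_\Sigma\), the fallback is to take the quotient fan to be the normal fan of the dual cell \(\Delta_\tau\). This cell is full-dimensional in the \(r\) normal directions by the regularity hypotheses recalled for corner complexes.
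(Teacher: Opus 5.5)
Your proposal is correct and follows the paper's route. You read off \(\varphi_\tau\) from the corner-active factorization, identify \(\varphi_\tau\circ\operatorname{val}(z_1,\ldots,z_r)\) as the pullback of the quotient BPS weight through the normalized characters \(z_\ell\), and observe that the leftover \(\ell_\tau+c_\tau\) is minus the logarithm of the modulus of a monomial plus a constant, hence pluriharmonic; your chart-gluing and restriction-to-\(X\) checks are bookkeeping the paper leaves implicit. The ``main obstacle'' you flag is not a real one: the canonical potential is polyhedral, so near \(\operatorname{relint}\tau\) the function \(\varphi_\tau\) is already the support function of the recentred dual cell \(\Delta_\tau\), hence globally conic and concave, and your fallback choice of the toric variety of the normal fan of \(\Delta_\tau\) is simply the natural quotient; the statement should then be read, as you do, over the tropical preimage of a neighbourhood of \(\tau\).
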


\begin{proof}
	The characters \(m_{\tau,1},\ldots,m_{\tau,r}\) define a quotient torus, and
	the coordinates
	\[
	z_\ell=e^{\langle m_{\tau,\ell},u_v\rangle}\psi_{j,m_{\tau,\ell}}
	\]
	are exactly the analytic pullbacks of its standard characters in the local
	trivialization of Lemma~\ref{lem:fan-chart-compact-torus-coordinates}.  The
	term
	\(\varphi_{\tau}(\operatorname{val}(z_1,\ldots,z_r))\) is therefore the
	pullback of the quotient BPS toric weight by
	Proposition~\ref{input:rank-active-bps-quotient-compatibility}.  The remaining
	affine term
	\(\ell_{\tau}(u)+c_{\tau}\) is the logarithm of the norm of a
	monomial times a constant on the torus chart.  Hence it is pluriharmonic on
	the smooth torus-bundle locus and has zero \(dd^c\).  Thus the two metrics
	have the same curvature currents in the rank-active directions, which is the
	claimed descent up to a pluriharmonic factor.
\end{proof}

\begin{prop}[Corner-active BPS Chern-current compatibility]
	\label{prop:rank-active-bps-quotient-compatibility}
	Let \(\mathbf m_\tau\) be a corner-active frame.  Then the local map
	\(q_{j,\mathbf m_\tau}\) identifies the toric part of
	\[
	c_1(\overline H(\bE)_v|_{\overline X})^{r}
	\]
	on the rank-active locus of \(X\) with the pullback of the semipositive toric
	Chern measure on the corresponding \(r\)-dimensional BPS quotient.
\end{prop}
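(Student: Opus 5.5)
The plan is to combine the quotient factorization of Proposition~\ref{input:rank-active-bps-quotient-compatibility}, the descent criterion of Lemma~\ref{lem:toric-quotient-descent-criterion}, and the locality of Bedford--Taylor products. The argument is carried out chart by chart. Fix a local trivialization chart \(U_j\) and restrict to the smooth rank-active locus
\[
X^{\circ}:=\bigl(X^{an}_{\mathbb C_v}\cap\rho^{-1}(U_j)\bigr)\setminus E_{\mathbf m_\tau}.
\]
By Lemma~\ref{lem:general-fan-rank-slices}(a), the map \(q_{j,\mathbf m_\tau}\) is a local biholomorphism on \(X^{\circ}\) onto an open subset of \(\rho(X)^{sm,an}\times(\mathbb G_m^r)^{an}\). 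By Lemma~\ref{lem:toric-quotient-descent-criterion}, on \(X^{\circ}\) the local potential of \(\overline H(\bE)_v\) equals
\[
\varphi_\tau\circ\operatorname{val}\circ(z_1,\ldots,z_r)+\ell_\tau+c_\tau,
\]
where \(\ell_\tau+c_\tau\) is pluriharmonic.

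First I remove the affine term. Since \(dd^c(\ell_\tau+c_\tau)=0\) on the torus-bundle locus, multilinearity and locality of Bedford--Taylor products for locally bounded psh potentials give
\[
c_1(\overline H(\bE)_v|_{X^\circ})^{r}=\bigl(dd^c\,\varphi_\tau(\operatorname{val} z)\bigr)^{r}
\]
on \(X^{\circ}\). All cross terms vanish because each contains at least one factor \(dd^c\) of a pluriharmonic function.

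Second, I use the pullback. The function \(\varphi_\tau(\operatorname{val} z)\) is the pullback under the second component \(\Phi_{j,\mathbf m_\tau}\) of the BPS quotient toric weight on \((\mathbb G_m^r)^{an}\). Bedford--Taylor products commute with pullback by holomorphic maps for continuous psh potentials; one proves this by approximating with smooth psh potentials and passing to the weak limit. Hence \((dd^c\varphi_\tau\circ\Phi)^r=\Phi^*\bigl((dd^c\varphi_\tau)^r\bigr)\) as currents of bidegree \((r,r)\). On the quotient torus, \((dd^c\varphi_\tau)^r\) is the toric Chern measure of the semipositive BPS quotient metric. By \cite[Theorem~4.8.11 and Corollary~4.8.12]{BPS14}, it is the compact-torus-invariant lift of the real Monge--Amp\`ere measure of \(\varphi_\tau\). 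The lift is identified by Lemma~\ref{lem:torus-invariant-atomic-lift}, since the real Monge--Amp\`ere measure is atomic, supported at the corner vertices with weights given by the volumes of the dual cells. Composing with the local biholomorphism \(q_{j,\mathbf m_\tau}\), the base factor \(\rho\) carries no toric curvature, so the identification takes place in the toric directions only. This is exactly the claimed statement.

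Third, I globalize over the finite chart cover. On overlaps the coordinates \(z_\ell\) change by compact-torus units, by Theorem~\ref{thm:local-trivialization} and the overlap discussion in Definition~\ref{def:rank-active-frame}. The quotient weight and its Chern measure are invariant under these units, so the local identifications glue.

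The main obstacle is the boundary and exceptional set. Bedford--Taylor measures could in principle charge \(E_{\mathbf m_\tau}\) or the toric boundary, where the local biholomorphism fails. My first attempt would be to argue that the potentials are continuous, so the Monge--Amp\`ere products put no mass on pluripolar sets. Since \(E_{\mathbf m_\tau}\) is a proper analytic subset, it is pluripolar, and the identification on its complement therefore determines the current on the rank-active locus, which is all the statement asserts.
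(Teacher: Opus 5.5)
Your proposal is correct and follows essentially the same route as the paper. Lemma~\ref{lem:toric-quotient-descent-criterion} writes the weight as the pullback of the quotient weight \(\varphi_\tau\) plus a pluriharmonic term, locality and multilinearity of Bedford--Taylor products remove that term, and the local submersion \(q_{j,\mathbf m_\tau}\) from Lemma~\ref{lem:general-fan-rank-slices} turns what remains into the pullback of the quotient toric Chern measure.

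Your extra steps are harmless but not needed for this statement. These are the atomic Haar-lift description of the quotient measure (which the paper proves separately in Proposition~\ref{input:bps-relative-atomic-tropical-pushforward}), the chart gluing, and the pluripolarity of \(E_{\mathbf m_\tau}\). Note only that, as in the paper, the factorization of the weight holds only over the preimage of a neighborhood of \(\operatorname{relint}\tau\), not on all of \(X^\circ\).
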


\begin{proof}
	Work on a smooth rank-active chart outside the analytic exceptional set in
	Lemma~\ref{lem:general-fan-rank-slices}.  There the functions
	\(\psi_{j,m_{\tau,1}},\ldots,\psi_{j,m_{\tau,r}}\) give holomorphic toric
	coordinates in the active directions, and \(q_{j,\mathbf m_\tau}\) is a local
	holomorphic
	submersion onto its image.  By
	Lemma~\ref{lem:toric-quotient-descent-criterion}, the local weight of
	\(\overline H(\bE)_v\) differs from the pullback quotient weight by a
	pluriharmonic function.  Hence the Bedford--Taylor product is local for
	locally bounded plurisubharmonic weights and the pluriharmonic discrepancy
	has zero \(dd^c\).  Therefore
	\[
	c_1(\overline H(\bE)_v|_{\overline X})^{r}
	=
	q_{j,\mathbf m_\tau}^{\,*}
	c_1(\overline L_{\mathbf m_\tau,v})^{r}
	\]
	on the rank-active locus, where \(\overline L_{\mathbf m_\tau}\) denotes the
	quotient toric metrized line bundle with BPS weight
	\(\varphi_{\tau}\).  The sign and shift in the tropical coordinate are
	exactly those fixed in Lemma~\ref{lem:fan-chart-compact-torus-coordinates},
	because
	\[
	z_\ell=e^{\langle m_{\tau,\ell},u_v\rangle}\psi_{j,m_{\tau,\ell}}.
	\]
	This proves the claimed compatibility.
\end{proof}

\begin{prop}[BPS atomic tropical pushforward on corner-active quotients]
	\label{input:bps-relative-atomic-tropical-pushforward}
	For every corner-active local toric quotient produced by
	Definition~\ref{def:corner-active-frame} and
	Lemma~\ref{lem:general-fan-rank-slices}, the toric part of the measure
	\[
	c_1(\overline H(\bE)_v|_{\overline X})^{r}
	\]
	has finite atomic tropical pushforward
	\[
	\operatorname{val}_*
	\bigl(c_1(\overline H(\bE)_v|_{\overline X})^{r}\bigr)
	=\sum_\nu a_\nu\,\delta_{u_\nu}
	\]
	on the rank-active toric directions, with \(a_\nu>0\).  The corresponding
	archimedean measure is invariant under the compact phase torus.
\end{prop}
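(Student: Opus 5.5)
The plan is to reduce the statement to the purely toric setting via the quotient factorization of Proposition~\ref{prop:rank-active-bps-quotient-compatibility}. On a smooth rank-active chart outside the exceptional set \(E_{\mathbf m_\tau}\) of Lemma~\ref{lem:general-fan-rank-slices}, the toric part of \(c_1(\overline H(\bE)_v|_{\overline X})^r\) equals \(q_{j,\mathbf m_\tau}^{\,*}c_1(\overline L_{\mathbf m_\tau,v})^r\). Here \(\overline L_{\mathbf m_\tau}\) is the quotient toric metrized line bundle with concave piecewise affine BPS weight \(\varphi_\tau\), given by Proposition~\ref{input:rank-active-bps-quotient-compatibility}. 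It therefore suffices to prove two things on the \(r\)-dimensional quotient torus \((\mathbb C^\times)^r\): first, that the toric Monge--Amp\`ere measure of \(\varphi_\tau\) has finite atomic tropical pushforward with positive weights; second, that it is invariant under \((S^1)^r\). The pullback through the local biholomorphism of Lemma~\ref{lem:general-fan-rank-slices}(a), combined with the phase identification in part (c), then transports both properties back to the rank-active directions.

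For the atomic statement I would use the BPS description. By \cite[Theorem~4.8.11 and Corollary~4.8.12]{BPS14}, \(\operatorname{val}_*c_1(\overline L_{\mathbf m_\tau,v})^r\) is \(r!\) times the real Monge--Amp\`ere measure of \(\varphi_\tau\), in the normalization used there. Since \(\varphi_\tau\) is piecewise affine and concave on a finite polyhedral complex, \cite[Proposition~2.7.4]{BPS14} gives
\[
\mathcal M(\varphi_\tau)=\sum_{\text{vertices }w}\operatorname{vol}_M(\Delta_w)\,\delta_w .
\]
Each dual cell \(\Delta_w\) of a vertex is full-dimensional, because \(E\) is ample and the characters \(m_{\tau,\ell}\) span the normal space of \(\tau\); this is what makes the coefficients positive. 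The relevant vertex is the image of \(\tau\) itself, namely the point \(u_v\) projected to the quotient, so on this local star the pushforward is a single atom, possibly one of finitely many. I take \(a_\nu\) to be these normalized lattice volumes. Finiteness follows because the fan and the corner complex are finite, and the chart cover \(\{U_j\}\) is finite.

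For invariance, note that \(\varphi_\tau\circ\operatorname{val}\) depends only on \(|z_\ell|\). The associated Bedford--Taylor product is therefore invariant under the compact torus, and the phase normalization \(z_\ell=e^{\langle m_{\tau,\ell},u_v\rangle}\psi_{j,m_{\tau,\ell}}\) is equivariant for it. Lemma~\ref{lem:torus-invariant-atomic-lift} then upgrades the atomic pushforward to a sum of translated Haar measures, although the statement as posed only asks for the invariance itself.

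I expect the main obstacle to be the comparison on overlaps and near the exceptional set. The transition multipliers between charts lie in the compact torus, so they preserve \(\operatorname{val}\) and Haar measure. What needs care is whether \(q^*\) of the quotient measure could put mass on \(E_{\mathbf m_\tau}\) or on the toric boundary. To handle this I would invoke the fact that Bedford--Taylor products of locally bounded potentials carry no mass on proper analytic subsets. That fact is exactly where a boundary no-mass input of the type assumed in \ref{input:general-fan-curvature-haar-slice} may be needed in the general-fan case.
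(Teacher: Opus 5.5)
Your proposal is correct and follows the paper's own argument. You reduce through Proposition~\ref{prop:rank-active-bps-quotient-compatibility} to the $r$-dimensional BPS quotient, combine \cite[Theorem~4.8.11 and Corollary~4.8.12]{BPS14} with \cite[Proposition~2.7.4]{BPS14} to get a finite atomic pushforward with positive dual-cell weights, and read off compact-phase invariance from the toric invariance of the quotient weight. The boundary worry in your last paragraph does not arise for this statement, which is local on rank-active charts of the principal orbit: there the only exceptional set is a proper analytic (hence pluripolar) set, which Bedford--Taylor products of locally bounded weights do not charge, and the boundary no-mass hypothesis enters only later, when the slices are globalized in Proposition~\ref{input:general-fan-curvature-haar-slice}.
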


\begin{proof}
	By Proposition~\ref{prop:rank-active-bps-quotient-compatibility}, the toric
	part of the current on a rank-active chart is the pullback of a semipositive
	toric Chern measure on an \(r\)-dimensional toric quotient.  BPS
	\cite[Theorem~4.8.11]{BPS14} identifies the tropical pushforward of the top
	toric Chern measure with \(r!\) times the real Monge--Ampere measure of the
	corresponding concave metric function, and characterizes the archimedean
	measure by this pushforward together with toric invariance.  In the mixed
	case, \cite[Corollary~4.8.12]{BPS14} gives the same statement for mixed Chern
	measures, by multilinearity.
	
	The canonical toric metric function is piecewise affine after the chosen
	regular subdivision.  For a piecewise affine concave function, the real
	Monge--Ampere measure is supported on the finite polyhedral cell complex and
	has positive dual-cell weights by \cite[Proposition~2.7.4]{BPS14}.  The mixed
	operator is obtained by polarization and is symmetric and multilinear by
	\cite[Definition~2.7.12 and Proposition~2.7.13]{BPS14}.  Hence the tropical
	pushforward is a finite positive atomic measure.  The compact phase
	invariance is the archimedean toric invariance in BPS Theorem~4.8.11, pulled
	back through the local quotient coordinates supplied by
	Proposition~\ref{input:rank-active-bps-quotient-compatibility}.
\end{proof}

\begin{lemma}[Monomial functoriality of compact Haar fibers]
	\label{lem:monomial-haar-fiber-functoriality}
	Let
	\[
	\varphi:(\mathbb C^\times)^r\longrightarrow(\mathbb C^\times)^s
	\]
	be a homomorphism given by monomials, and let
	\[
	A:\mathbb R^r\longrightarrow\mathbb R^s
	\]
	be the induced homomorphism on logarithmic valuation spaces.  For every
	\(u\in\mathbb R^r\), the pushforward of the normalized Haar probability
	measure on the compact fiber \(\operatorname{val}^{-1}(u)\) is the normalized
	Haar probability measure on the compact subtorus
	\[
	\varphi(\operatorname{val}^{-1}(u))
	\subseteq \operatorname{val}^{-1}(Au).
	\]
	Consequently, finite positive sums of compact phase-fiber Haar measures are
	sent by \(\varphi\) to finite positive sums of compact phase-fiber Haar
	measures, with the atoms on valuation space pushed forward by \(A\).
\end{lemma}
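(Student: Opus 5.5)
The plan is to reduce everything to the standard structure of a continuous homomorphism between compact abelian groups. First, note that the monomial homomorphism \(\varphi\), given by an integer matrix \(B\in M_{s\times r}(\mathbb Z)\), sends \(z=(z_1,\ldots,z_r)\) to \((z^{b_1},\ldots,z^{b_s})\), where \(b_k\) are the rows of \(B\). Then \(-\log|z^{b_k}|=\langle b_k,\operatorname{val}(z)\rangle\), so \(\operatorname{val}\circ\varphi=A\circ\operatorname{val}\) with \(A=B\) acting on \(\mathbb R^r\). This gives the inclusion \(\varphi(\operatorname{val}^{-1}(u))\subseteq\operatorname{val}^{-1}(Au)\) immediately.

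Next, I will write \(\operatorname{val}^{-1}(u)=e^{-u}\cdot(S^1)^r\), using the coordinatewise real point \(e^{-u}\). Since \(\varphi\) is a group homomorphism,
\[
\varphi(e^{-u}\zeta)=\varphi(e^{-u})\,\varphi(\zeta)=e^{-Au}\,\varphi(\zeta),
\qquad \zeta\in(S^1)^r .
\]
Consequently \(\varphi_*\) of the normalized Haar measure on \(\operatorname{val}^{-1}(u)\) is the translate by \(e^{-Au}\) of \((\varphi|_{(S^1)^r})_*\mu^{\mathrm{Haar}}_{(S^1)^r}\). This reduces the statement to the case \(u=0\).

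For \(u=0\), the restriction \(\varphi|_{(S^1)^r}\) is a continuous homomorphism of compact groups, and its image \(S'=\varphi((S^1)^r)\) is a compact connected subgroup, namely a subtorus of \((S^1)^s\). The pushforward \(\nu=\varphi_*\mu^{\mathrm{Haar}}\) is a probability measure on \(S'\). It is invariant under translation by \(\varphi(\zeta_0)\) for every \(\zeta_0\), because Haar measure is invariant under \(\zeta\mapsto\zeta_0\zeta\) and \(\varphi(\zeta_0\zeta)=\varphi(\zeta_0)\varphi(\zeta)\). Since \(\varphi\) is surjective onto \(S'\), \(\nu\) is a translation-invariant probability measure on \(S'\), and uniqueness of Haar measure identifies it with normalized Haar measure on \(S'\). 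Translating back gives Haar measure on the compact subtorus coset \(e^{-Au}S'=\varphi(\operatorname{val}^{-1}(u))\); this coset is exactly what the statement calls the compact subtorus, and translation preserves Haar normalization.

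The consequence then follows by linearity of pushforward: \(\varphi_*\sum_\nu a_\nu\mu^{\mathrm{Haar}}_{u_\nu}=\sum_\nu a_\nu\,\mu^{\mathrm{Haar}}_{\varphi(\operatorname{val}^{-1}(u_\nu))}\), and each term is supported over the valuation atom \(Au_\nu\). I expect no real obstacle here. The only points that need care are the sign convention, which must match \(\operatorname{val}=-\log|\cdot|\) as in Lemma~\ref{lem:fan-chart-compact-torus-coordinates}, and the remark that the image fiber may be a proper subtorus of \(\operatorname{val}^{-1}(Au)\) when \(B\) is not surjective on phases. In that case the measure is Haar on the image subtorus, not on the full fiber, which is exactly what the statement asserts.
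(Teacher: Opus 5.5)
Your proof is correct and follows the same route as the paper. You reduce to the compact torus via the translation \(e^{-u}\), then use that the pushforward of normalized Haar measure under a continuous homomorphism of compact groups is normalized Haar measure on the image; the consequence follows by linearity and \(\operatorname{val}\circ\varphi=A\circ\operatorname{val}\). The only difference is that you spell out the identity \(\varphi(e^{-u}\zeta)=e^{-Au}\varphi(\zeta)\) and the invariance-plus-uniqueness argument, which the paper cites as a standard fact.
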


\begin{proof}
	The restriction of \(\varphi\) to the compact torus
	\(\operatorname{val}^{-1}(u)\) is a continuous homomorphism from a compact
	abelian Lie group to the compact torus \(\operatorname{val}^{-1}(Au)\), up to
	translation by the fixed moduli \(u\).  The pushforward of normalized Haar
	measure under a continuous homomorphism of compact groups is normalized Haar
	measure on the image compact subgroup.  The final assertion follows by
	linearity and from the identity
	\[
	\operatorname{val}\circ\varphi=A\circ\operatorname{val}.
	\]
\end{proof}

\begin{prop}[Local corner currents from simple mixed corners]
	\label{prop:simple-mixed-corners-imply-corner-current}
	Let \(\tau\) be a codimension-\(r\) cell of the canonical corner complex
	meeting the rank-active locus of \(X\), and let
	\[
	\mathbf m_\tau=(m_{\tau,1},\ldots,m_{\tau,r})
	\]
	be the frame supplied by
	Proposition~\ref{prop:simple-mixed-corner-normal-form}.  Then, on each local
	trivialization chart \(U_j\) and away from the lower-dimensional analytic
	exceptional set, the Bedford--Taylor contribution of \(\tau\) to
	\[
	c_1(\overline H(\bE)_v|_{\overline X})^r
	\]
	is a positive mixed toric intersection weight \(a_\tau>0\) times
	\[
	\bigwedge_{\ell=1}^r
	dd^c
	\left|
	\log|\psi_{j,m_{\tau,\ell}}|
	+\langle m_{\tau,\ell},u_v\rangle
	\right| .
	\]
\end{prop}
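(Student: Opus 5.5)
The plan is to assemble the statement from three earlier ingredients: the product-corner normal form (Proposition~\ref{prop:simple-mixed-corner-normal-form}), the one-wall current computation (Lemma~\ref{lem:one-wall-toric-corner-current}), and the locality and multilinearity of Bedford--Taylor products. Fix a chart \(U_j\) and work on the smooth rank-active locus of \(X\). This means removing the exceptional analytic set \(E_{\mathbf m_\tau}\) of Lemma~\ref{lem:general-fan-rank-slices}, together with the preimages of the lower-dimensional strata of the corner complex near \(\tau\). On this open set the maps \((\rho^{an},\Phi_{j,\mathbf m_\tau})\) are local biholomorphisms, and the functions \(z_\ell=e^{\langle m_{\tau,\ell},u_v\rangle}\psi_{j,m_{\tau,\ell}}\) are honest holomorphic coordinates in the active directions.

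First, by Proposition~\ref{prop:simple-mixed-corner-normal-form}, near the relative interior of \(\tau\) the canonical potential equals an affine pluriharmonic term plus
\[
\sum_{\ell=1}^r b_{\tau,\ell}\max\{0,\langle m_{\tau,\ell},u-u_v\rangle\},\qquad b_{\tau,\ell}>0 .
\]
Pulling back through \(\psi_j\) and using the sign convention of Lemma~\ref{lem:fan-chart-compact-torus-coordinates}, the local weight of \(\overline H(\bE)_v\) restricted to \(X\) is, modulo a pluriharmonic function, \(\sum_\ell b_{\tau,\ell}\max\{0,-\log|z_\ell|\}\). Second, the proof of Lemma~\ref{lem:one-wall-toric-corner-current} gives
\[
dd^c\max\{0,-\log|z_\ell|\}=\tfrac12\,dd^c\bigl|\log|z_\ell|\bigr| ,
\]
since \(\log|z_\ell|\) is pluriharmonic. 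Hence, near \(\tau\), \(c_1(\overline H(\bE)_v|_{\overline X})=\sum_\ell \tfrac{b_{\tau,\ell}}{2}\,dd^c|\log|z_\ell||\) on the chart.

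Third, I would expand the \(r\)-th power of this current by Bedford--Taylor multilinearity. All potentials are locally bounded and continuous, so the expansion is legitimate. Any monomial that repeats an index \(\ell\) contains \((dd^c|\log|z_\ell||)^2\). This current is supported on \(\{|z_\ell|=1\}\), and it is the pullback of a square of a current from a one-dimensional factor under a local product structure, so it vanishes. Only the square-free monomial survives, with coefficient \(r!\prod_\ell b_{\tau,\ell}/2^r\). This is exactly the polarized mixed weight, and the BPS identification in Proposition~\ref{prop:simple-mixed-corner-normal-form} equates it with the positive mixed toric intersection weight \(a_\tau\). Substituting the definition of \(z_\ell\) then produces the displayed wedge product with \(\log|\psi_{j,m_{\tau,\ell}}|+\langle m_{\tau,\ell},u_v\rangle\). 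Independence of the chart follows because transitions change \(\psi_{j,m}\) only by compact-torus units, which leaves \(|\psi_{j,m}|\) unchanged.

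I expect the main obstacle to be the vanishing of the repeated-index terms and the precise constant. The vanishing needs the \(z_\ell\) to be part of a genuine product chart, which is exactly what rank-activity provides off \(E_{\mathbf m_\tau}\). For that I would lean on Lemma~\ref{lem:general-fan-rank-slices}(a) and compute in the product coordinates \(\rho(X)^{sm}\times(\mathbb C^\times)^r\). Matching \(r!\prod b_{\tau,\ell}/2^r\) with \(a_\tau\) amounts to comparing the lattice-index normalization of the dual cell (a product of segments) with BPS's mixed Monge--Amp\`ere weight. I would first check this directly via \cite[Proposition~2.7.4]{BPS14} for a parallelotope dual cell, and absorb any K\"uhne factor of \(2\) into \(a_\tau\), as in Lemma~\ref{lem:one-wall-toric-corner-current}.
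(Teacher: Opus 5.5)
Your proposal is correct and follows the same route as the paper. It uses the product-corner normal form, the one-wall identity $dd^c\max\{0,-\log|z|\}=\tfrac12\,dd^c|\log|z||$ applied factor by factor, locality and multilinearity of Bedford--Taylor products, and invariance of $|\psi_{j,m}|$ under the compact-torus transitions on overlaps. You also make explicit what the paper's short proof leaves implicit: the vanishing of the repeated-index terms, and the constant $a_\tau=r!\prod_\ell b_{\tau,\ell}/2^r$. That vanishing does not actually need the product chart, since $(dd^c(u\circ f))^2=0$ for any holomorphic map $f$ to a curve.
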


\begin{proof}
	Apply Lemma~\ref{lem:one-wall-toric-corner-current} to each one-wall factor in
	the simple normal form.  The Bedford--Taylor product is local on the complement
	of pluripolar sets and is multilinear for locally bounded plurisubharmonic
	potentials.  Therefore the mixed product attached to a codimension-\(r\) cell
	\(\tau\) is a positive mixed toric intersection weight times
	\[
	\bigwedge_{\ell=1}^r
	dd^c
	\left|
	\log|\psi_{j,m_{\tau,\ell}}|
	+\langle m_{\tau,\ell},u_v\rangle
	\right|.
	\]
	The rank-active condition and the analytic exceptional set are exactly those
	handled in Lemma \ref{lem:general-fan-rank-slices}.  Monomial changes of
	toric charts preserve the wall characters and multiply phase coordinates by
	compact-torus units; hence the local descriptions agree on overlaps.
\end{proof}

\subsection{Geometric role of the three general-fan hypotheses}

The three hypotheses used below are not inserted merely to make a formal
argument go through.  They separate three genuinely different operations in
the passage from a tropical corner measure to the archimedean measure used in
the difference-morphism argument: separation of the corner variables,
integral normalization of the phase map, and exclusion of mass on the
compactification boundary.  They are natural sufficient hypotheses for the
present proof.  We do not claim that they are necessary or minimal; removing
or replacing them is the subject of the general-fan problem left open here.

\paragraph{Product-corner normal form.}
Fix a codimension-\(r\) corner \(\tau\) meeting the rank-active locus.  The
product-corner condition is the concrete polyhedral requirement that, locally
at \(\tau\), its dual cell be a Minkowski sum of \(r\) independent lattice
segments.  If their directions are
\(m_{\tau,1},\ldots,m_{\tau,r}\), this is exactly what allows the local
canonical potential, after subtraction of one affine branch, to be written as
a positive sum of the independent hinges
\[
\max\{0,\langle m_{\tau,\ell},u-u_v\rangle\},
\qquad 1\leq \ell\leq r.
\]
Thus the polyhedral hypothesis supplies the analytic separability used in the
proof: the Bedford--Taylor mixed product becomes the wedge of the \(r\)
one-wall currents, and Fubini gives the corresponding Haar-slice expression.
Without the Minkowski-sum description, the mixed term need not split into
these one-dimensional factors, so the one-wall calculation by itself does not
prove the slice formula.  This identifies the precise missing implication; it
does not assert that every non-product corner is a counterexample.

\paragraph{Primitive frame in the saturated active lattice.}
For the directions above set
\[
L_\tau=\sum_{\ell=1}^r\mathbb Zm_{\tau,\ell},\qquad
L_\tau^{\mathrm{sat}}
=(L_\tau\otimes_{\mathbb Z}\mathbb Q)\cap M_H.
\]
The hypothesis is \(L_\tau=L_\tau^{\mathrm{sat}}\), not merely that each
\(m_{\tau,\ell}\) is primitive.  It says that the monomial phase map defined
by this frame has no finite residual kernel.  If saturation fails, its degree
is the index \([L_\tau^{\mathrm{sat}}:L_\tau]\); normalized Haar probability
still pushes forward to Haar probability, whereas the pullback of the
normalized top Haar form acquires precisely this index.  The saturated-frame
hypothesis therefore prevents a lattice factor from being counted twice when
the Haar form is paired with the mixed-intersection coefficient.  A more
general statement would retain the displayed index explicitly; that indexed
formula is not asserted here.

\paragraph{No additional boundary mass.}
Let \(\nu:\overline X^\nu\to\overline X\) be the normalization and let
\(\mu_X\) denote the full top-degree restricted Bedford--Taylor measure,
including the smooth factors pulled back from the abelian quotient.  The exact
hypothesis is
\[
\mu_X\!\left(\nu^{-1}(\overline X\setminus X)\right)=0.
\]
Only under this equality does the current formula proved on the principal
semiabelian orbit determine the global restricted measure used by the
difference morphism.  Otherwise one must add the boundary functional
\[
B_\partial(f)=
\int_{\nu^{-1}(\overline X\setminus X)} f\,d\mu_X
\]
and control its pullback and pushforward under every product and difference
map.  For a continuous semipositive metric on the compactification, the
expected vanishing mechanism is analytic: its pullback to \(\overline X^\nu\)
has locally bounded psh weights, and the associated Bedford--Taylor measure
does not charge the proper analytic, hence pluripolar, boundary
\cite[Subsection~2.3]{Ku1}.  Proper
intersection with toric boundary strata is useful for the algebraic
stable-intersection calculation but does not by itself imply this measure-zero
statement.  This distinction is why boundary no-mass remains an explicit
input in the general-fan formulation.

In K\"uhne's standard \((\mathbb P^1)^t\)-compactification the coordinate
walls provide the product decomposition, the coordinate characters form a
primitive saturated basis, and the explicit one-variable current formula has
no boundary contribution.  This explains why the three issues are invisible
in that model.  For a general fan they must be verified separately (or
replaced by a theorem with lattice-index and boundary correction terms).

\begin{proposition}[Conditional toric corner-current decomposition]
	\label{prop:toric-corner-current-decomposition}
	\label{input:toric-corner-current-decomposition}
	Let \(E\) be the ample toric divisor on \(X_\Sigma\), endowed at the
	archimedean place \(v\) with the canonical toric metric, and let
	\(\bE\) be its quasi-canonical translate with center \(u_v\).  Let
	\(X\subseteq H_{\overline K}\) be irreducible, and put
	\[
	d=\dim X,\qquad d_0=\dim\rho(X),\qquad r=d-d_0 .
	\]
	Assume that every codimension-\(r\) corner meeting the rank-active locus admits,
	after subdivision, the product-corner normal form of
	Proposition~\ref{prop:simple-mixed-corner-normal-form}, with the character frame
	primitive in its saturated lattice, and assume that the compactification
	boundary contributes no additional mass to the restricted Bedford--Taylor
	product.
	After replacing \(\Sigma\) by a finite regular toric subdivision and pulling
	back \(E\), the Bedford--Taylor current
	\[
	c_1(\overline H(\bE)_v|_{\overline X})^r
	\]
	restricted to the principal semiabelian orbit has the following finite
	polyhedral description.  There is a finite set \(\mathscr C_X(E)\) of
	codimension-\(r\) cells in the corner complex of the canonical toric
	potential, and for each \(\tau\in\mathscr C_X(E)\) there are
	\begin{enumerate}[label=\textup{(\roman*)},leftmargin=2em]
		\item a tuple of characters
		\[
		\mathbf m_\tau=(m_{\tau,1},\ldots,m_{\tau,r})\in M_H^r
		\]
		which is rank-active for \(X\);
		\item a positive mixed toric intersection weight \(a_\tau>0\);
		\item a closed complex-analytic exceptional set
		\[
		E_\tau\subset X^{an}_{\mathbb C_v},\qquad \dim E_\tau<d;
		\]
	\end{enumerate}
	such that, on each local trivialization chart \(U_j\) and away from the union
	of the \(E_\tau\), the current is the finite positive sum of the local corner
	currents
	\[
	a_\tau\,
	\bigwedge_{\ell=1}^r
	dd^c
	\left|
	\log|\psi_{j,m_{\tau,\ell}}|
	+\langle m_{\tau,\ell},u_v\rangle
	\right| .
	\]
	This identity is meant after testing against compactly supported continuous
	functions and wedging with smooth forms pulled back from the abelian quotient.
	It is invariant under monomial changes of toric charts, and it descends from
	the subdivision back to \(X_\Sigma\) by the projection formula.
\end{proposition}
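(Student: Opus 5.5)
The plan is to assemble the decomposition chart by chart from the local results already established, and then globalize using the three stated hypotheses. First, replace \(\Sigma\) by a finite regular subdivision on which both the fan and the canonical potential \(\Psi_E\) are piecewise affine with a regular corner complex. Pulling \(E\) back along the induced proper birational toric morphism does not change the metric on the principal orbit. By the projection formula, it also does not change the Bedford--Taylor current tested against functions supported there. This gives the final descent clause. The finite set \(\mathscr C_X(E)\) is defined as the set of codimension-\(r\) cells \(\tau\) of the corner complex whose inverse image meets the rank-active locus of \(X\). It is finite because the corner complex is a finite polyhedral complex.

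Next, fix a chart \(U_j\) and a cell \(\tau\in\mathscr C_X(E)\). The product-corner hypothesis lets us invoke Proposition~\ref{prop:simple-mixed-corner-normal-form}, which gives a linearly independent frame \(\mathbf m_\tau\). We choose this frame saturated, by the primitivity hypothesis, and rank-active after discarding an analytic set. We take \(E_\tau\) to be the exceptional set \(E_{\mathbf m_\tau}\) of Lemma~\ref{lem:general-fan-rank-slices}, enlarged by the inverse images of the lower-dimensional strata of the corner complex near \(\tau\). Then Proposition~\ref{prop:simple-mixed-corners-imply-corner-current} gives the local contribution of \(\tau\) as \(a_\tau\) times the displayed wedge of one-wall currents. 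The weight \(a_\tau\) is positive by the BPS dual-cell description in Proposition~\ref{input:bps-relative-atomic-tropical-pushforward}. Summing over cells is legitimate for two reasons:
\begin{enumerate}[label=\textup{(\roman*)},leftmargin=2em]
\item \(c_1(\overline H(\bE)_v|_{\overline X})^r\) is supported, after wedging with smooth forms from the quotient, on the inverse image of the codimension-\(r\) skeleton in the active directions, by the quotient factorization in Proposition~\ref{prop:rank-active-bps-quotient-compatibility};
\item Bedford--Taylor products are local and multilinear, so contributions of distinct cells add, and they do not interact off the exceptional sets.
\end{enumerate}
Lemma~\ref{lem:torus-invariant-atomic-lift} serves as a consistency check: the atomic tropical pushforward lifts to compact-phase Haar measures, which matches the one-wall form.

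Then verify chart independence. On overlaps the toric coordinates differ by compact-torus units, so \(|\psi_{j,m}|\) is unchanged and each local expression agrees. Monomial changes of toric chart are handled by Lemma~\ref{lem:monomial-haar-fiber-functoriality}. Saturation ensures that no index \([L_\tau^{\mathrm{sat}}:L_\tau]\) appears when the pulled-back Haar form is compared across charts, so \(a_\tau\) is well defined. Finally, the no-boundary-mass hypothesis shows that the identity tested against compactly supported functions on the principal orbit determines the restricted measure. The only remaining contributions are the sets \(E_\tau\). Because the weights are continuous and bounded, the Bedford--Taylor measure does not charge these proper analytic, hence pluripolar, sets.

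The main obstacle is the support claim in (i). One must show that, once the quotient-pulled factors are present, no codimension \(<r\) or mixed-codimension cells contribute. I would first attack this through Lemma~\ref{lem:algebraic-dimensional-vanishing} and a local dimension count: a cell of codimension \(s<r\) produces a current with at most \(s\) toric phase directions. Wedging it with \(r\) toric factors on a \(d\)-dimensional \(X\) whose quotient image has dimension \(d_0\) forces degeneracy, so it vanishes on the rank-active locus.
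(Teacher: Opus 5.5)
Your overall route is the paper's: regular subdivision with projection-formula descent, a finite set of codimension-\(r\) cells, then Proposition~\ref{prop:simple-mixed-corner-normal-form} followed by Proposition~\ref{prop:simple-mixed-corners-imply-corner-current} cell by cell. Overlaps agree because the toric coordinates differ by compact-torus units, and the exceptional sets are not charged. The step that fails is your choice of \(E_\tau\). You enlarge \(E_{\mathbf m_\tau}\) by the inverse images of the lower-dimensional corner strata near \(\tau\), and at the end you discard \(E_\tau\) as a proper analytic, hence pluripolar, set. Those inverse images are sets \(\{y:\val_v(\psi_j(y))\in\sigma\}\), cut out by real conditions \(|\psi_{j,m}(y)|=\mathrm{const}\) together with inequalities. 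They are real semi-analytic, not complex-analytic of dimension \(<d\), so item (iii) of the statement is violated. Nor are they pluripolar: already \(\{|z|=1\}\subset\mathbb C^\times\) is non-polar and carries all the mass of \(dd^c\max\{0,\log|z|\}\). The proof of Proposition~\ref{prop:simple-mixed-corner-normal-form} makes the same absorption, so you are following the paper there. But the proof of the present statement only ever treats \(E_\tau\) as a complex-analytic set, which is what (iii) demands.

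The repair is to keep \(E_\tau=E_{\mathbf m_\tau}\) complex-analytic and show by a local dimension argument, not by pluripolarity, that the other strata carry no mass. Apply the quotient factorization of Proposition~\ref{input:rank-active-bps-quotient-compatibility} to a cell \(\sigma\) of codimension \(s\). Near the preimage of \(\operatorname{relint}\sigma\), the restricted weight is pluriharmonic plus \(\varphi_\sigma(\log|z_1|,\ldots,\log|z_s|)\) for \(s\) monomial coordinates \(z_\ell\). Its \(r\)-th Bedford--Taylor power is therefore the pullback of \((dd^c\varphi_\sigma)^r\) from \((\mathbb C^\times)^s\); check this on smooth approximants and pass to the limit. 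For \(s<r\) this vanishes for bidegree reasons, which settles your open support claim (i). Lemma~\ref{lem:algebraic-dimensional-vanishing} cannot do this, since it is a global vanishing of top-degree intersection numbers and says nothing about local supports. For \(s>r\) you must still show that the \(r\)-th power puts no mass over \(\operatorname{relint}\sigma\). In the model \((\mathbb C^\times)^s\) this holds because \((dd^c\varphi_\sigma)^r\) is carried by the preimages of the codimension-\(r\) cones of the local fan, inside which \(\val^{-1}(0)\) is a lower-dimensional null set. That is what must be transported to \(X\), instead of declaring these preimages pluripolar.

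Two minor points. Saturation plays no role in the overlap comparison: the frame \(\mathbf m_\tau\) is the same on both charts and only unit-modulus factors intervene, so it matters only when \(a_\tau\) is matched with the BPS dual-cell weight. And boundary no-mass is not used in a statement concerning only the principal orbit.
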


\begin{proof}
	This is the global form of the preceding local calculation.  After the finite
	regular subdivision, the corner complex of the canonical toric potential has
	only finitely many cells meeting the tropical image of \(X\).  For each
	codimension-\(r\) cell \(\tau\), Proposition~\ref{prop:simple-mixed-corner-normal-form}
	gives a simple local normal form away from the lower-dimensional analytic
	exceptional set.  Proposition~\ref{prop:simple-mixed-corners-imply-corner-current}
	then identifies the Bedford--Taylor contribution of \(\tau\) with the
	displayed wedge product of one-wall currents and the positive mixed toric
	intersection weight \(a_\tau\).
	
	The sum over \(\tau\) is finite.  On overlaps of toric charts the coordinates
	\(\psi_{j,m_{\tau,\ell}}\) differ by monomial changes and compact-torus units,
	so Lemma~\ref{lem:monomial-haar-fiber-functoriality} and the one-wall current
	formula show that the local current descriptions define the same distribution
	after testing against compactly supported functions.  The exceptional sets are
	proper complex-analytic subsets, hence pluripolar, and the locally bounded
	Bedford--Taylor products do not charge them.  Finally the subdivision map is
	proper, toric and birational, and is an isomorphism over the principal orbit;
	the projection formula pushes the current identity down to the original
	compactification \(X_\Sigma\).  This proves the stated decomposition.
\end{proof}

\begin{prop}[Conditional general-fan curvature--Haar slice formula]
	\label{input:general-fan-curvature-haar-slice}
	Keep the notation of Lemma~\ref{lem:general-fan-rank-slices}.  Assume the
	product-corner, saturated-lattice, and boundary no-mass hypotheses of
	Proposition~\ref{prop:toric-corner-current-decomposition}.  There is a
	finite set \(\mathcal A_X\) of corner-active, hence rank-active, character
	frames
	\[
	\mathbf m=(m_1,\ldots,m_r)
	\]
	and positive constants \(a_{\mathbf m}\), depending only on the ample toric
	divisor \(E\) and on \(X\), such that the following holds.  For each
	\(\mathbf m\in\mathcal A_X\), the pullbacks of the normalized Haar form on
	\((S^1)^r\) through the local phase maps of
	Lemma~\ref{lem:general-fan-rank-slices} glue to a smooth positive
	\(r\)-form \(\omega_{\mathbf m,u_v}\) on
	\[
	X_{\mathbf m,u_v}\setminus E_{\mathbf m}.
	\]
	For every compactly supported continuous function
	\(f\) on \(X^{an}_{\mathbb C_v}\),
	\[
	\begin{aligned}
		&\int_{X^{an}_{\mathbb C_v}}
		f\,
		c_1(\overline H(\bE)_v|_{\overline X})^{r}
		\wedge c_1(\rho^*\bN_{H,v}|_{\overline X})^{d_0} \\
		&\qquad =
		\sum_{\mathbf m\in\mathcal A_X} a_{\mathbf m}
		\int_{X_{\mathbf m,u_v}\setminus E_{\mathbf m}}
		f\,
		\omega_{\mathbf m,u_v}
		\wedge c_1(\rho^*\bN_{H,v})^{d_0}.
	\end{aligned}
	\]
	Equivalently, the K\"uhne Proposition~4.1 measure is a finite positive sum of
	Haar-slice measures on the real-analytic slices supplied by
	Lemma~\ref{lem:general-fan-rank-slices}.
\end{prop}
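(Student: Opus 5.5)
The plan is to start from the conditional corner-current decomposition of Proposition~\ref{prop:toric-corner-current-decomposition}, which under the three stated hypotheses writes the restricted current \(c_1(\overline H(\bE)_v|_{\overline X})^r\) on the principal orbit, away from \(\bigcup_\tau E_\tau\), as a finite positive sum
\[
\sum_{\tau\in\mathscr C_X(E)} a_\tau\bigwedge_{\ell=1}^r dd^c\bigl|\log|\psi_{j,m_{\tau,\ell}}|+\langle m_{\tau,\ell},u_v\rangle\bigr| .
\]
I take \(\mathcal A_X=\{\mathbf m_\tau:\tau\in\mathscr C_X(E)\}\), merging coinciding frames and adding their weights, with \(a_{\mathbf m}\) the total weight, and \(E_{\mathbf m}\) the union of the \(E_\tau\) with the exceptional set of Lemma~\ref{lem:general-fan-rank-slices}. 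Corner-activity, hence rank-activity, of these frames is part of the output of Propositions~\ref{prop:simple-mixed-corner-normal-form} and \ref{prop:toric-corner-current-decomposition}. The boundary no-mass hypothesis, together with the fact that the \(E_\tau\) are proper analytic, hence pluripolar, sets not charged by the locally bounded Bedford--Taylor product, reduces the left side of the claimed identity to an integral over the smooth rank-active part of the principal orbit. Wedging with the smooth closed form \(c_1(\rho^*\bN_{H,v})^{d_0}\) is harmless, since Bedford--Taylor products are multilinear and continuous under wedging with smooth forms.

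Next I compute each frame term locally. On a chart \(U_j\) off \(E_{\mathbf m}\), Lemma~\ref{lem:general-fan-rank-slices}(a) gives holomorphic coordinates \((b,z_1,\ldots,z_r)\) with \(b\) on \(\rho(X)^{sm}\) and \(z_\ell=e^{\langle m_\ell,u_v\rangle}\psi_{j,m_\ell}\). In these coordinates the wedge of one-wall currents is a product of the one-variable currents \(dd^c|\log|z_\ell||\). By Lemma~\ref{lem:one-wall-toric-corner-current} (K\"uhne's Appendix~B computation), each factor is normalized Haar measure on \(|z_\ell|=1\), transversally to the remaining variables. Fubini then yields integration over the slice \(\{|z_\ell|=1\ \forall\ell\}\) against the pullback of the normalized Haar form of \((S^1)^r\) via the phase map of Lemma~\ref{lem:general-fan-rank-slices}(c), wedged with \(c_1(\rho^*\bN_{H,v})^{d_0}\), which only involves \(b\). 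Lemma~\ref{lem:torus-invariant-atomic-lift} together with Proposition~\ref{input:bps-relative-atomic-tropical-pushforward} gives a consistency check: compact-phase invariance and an atomic tropical pushforward force exactly these Haar slices. The saturated-lattice hypothesis ensures the phase map is an isomorphism onto its image rather than a cover of degree \([L^{\mathrm{sat}}:L]\), so no extra index multiplies \(a_{\mathbf m}\).

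For gluing, on chart overlaps the \(\psi_{j,m}\) change by compact-torus units, which are rotations of \((S^1)^r\). Haar forms are rotation invariant, so the local pullbacks define a global smooth positive \(r\)-form \(\omega_{\mathbf m,u_v}\) on \(X_{\mathbf m,u_v}\setminus E_{\mathbf m}\). Monomial changes between fan charts are handled by Lemma~\ref{lem:monomial-haar-fiber-functoriality}. Summing over a partition of unity subordinate to \(\{U_j\}\) and over \(\mathbf m\in\mathcal A_X\) gives the formula. Descent from the regular subdivision back to \(X_\Sigma\) follows from the last clause of Proposition~\ref{prop:toric-corner-current-decomposition}.

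I expect the main obstacle to be the Fubini step: justifying that the Bedford--Taylor wedge of the \(r\) hinge currents restricted to \(X\) equals the product measure. This requires the independence of the \(z_\ell\) as coordinates on \(X\), i.e.\ the local biholomorphism, and control near the locus where several walls meet \(E_{\mathbf m}\). My first attempt would be to regularize each \(|\log|z_\ell||\) by \(\max_\epsilon\), so that the product is smooth, compute it explicitly, and pass to the limit using Bedford--Taylor monotone continuity on the product chart.
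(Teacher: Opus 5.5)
You follow the paper's second (``for comparison'') route almost verbatim, and the gap is in its first step, which you inherit from Proposition~\ref{prop:toric-corner-current-decomposition}. The hinge normal form of Proposition~\ref{prop:simple-mixed-corner-normal-form},
\[
\Psi_E(u)-\ell_0(u)=\sum_{\ell=1}^{r} b_{\tau,\ell}\max\{0,\langle m_{\tau,\ell},u-u_v\rangle\},
\]
is valid only on a neighbourhood of $u_v+\operatorname{relint}\tau$ in $N_{H,\mathbb R}$. So the identity of the current with $a_\tau\bigwedge_\ell dd^c\bigl|\log|z_{\tau,\ell}|\bigr|$ holds only on the open part of $X$ whose tropical image lies near $u_v+\operatorname{relint}\tau$, not on all of $X\cap\rho^{-1}(U_j)$. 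Your partition of unity is subordinate to the charts $U_j$ on the abelian base, so it does not localize in the tropical variable. Done correctly, your Fubini step gives Haar measure only on $X_{\mathbf m_\tau,u_v}\cap\val_v^{-1}(u_v+\operatorname{relint}\tau)$, which is an open piece of the slice. This is the whole slice only when every wall is a full hyperplane, as for $(\mathbb P^1)^t$, where $-\Psi_E=\sum_i|u_i|$ is globally a sum of hinges.

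This cannot be patched while keeping the stated formula, because the formula fails under the stated hypotheses. Take $H=\mathbb G_m^2$, $X_\Sigma=\mathbb P^2$, $E$ the hyperplane class with its canonical metric ($u_v=0$), and $X=\{z_1+z_2=1\}$. Here $r=1$, so the product-corner and saturation conditions hold automatically, and a continuous potential on a curve charges no boundary point.

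\begin{itemize}
\item The measure $dd^c\max\{0,\log|z_1|,\log|z_2|\}|_X$ consists of three Haar pieces of mass $1/3$ each, on $\{|z_1|=1\ge|z_2|\}$, $\{|z_2|=1\ge|z_1|\}$ and $\{|z_1|=|z_2|\ge1\}$.
\item Every full slice $\{|z^m|=1\}\cap X$ carries positive Haar mass where this measure vanishes. For $m=e_1$ this happens on the arc $|\arg z_1|>\pi/3$.
\item The $a_{\mathbf m}$ are positive, so nothing cancels, and deleting a finite analytic set $E_{\mathbf m}$ does not help.
\end{itemize}

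There is a second, independent gap in the proof. If $X$ is not transverse to the corner complex, mass can sit over cells of codimension $>r$, which no codimension-$r$ star sees. For $X=\{z_1=z_2\}$ with the same metric, all the mass lies on the circle $X\cap\val_v^{-1}(0)$ over the vertex. That circle is a non-polar real curve, so it cannot be absorbed into any complex-analytic $E_\tau$.

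The paper's own proof has the same defect: its quotient factorization, Proposition~\ref{input:rank-active-bps-quotient-compatibility}, likewise holds only on the star of $\tau$. What your computation does honestly give is a finite positive sum of Haar densities on the restricted pieces over $u_v+\operatorname{relint}\tau$, plus a separate analysis over higher-codimension cells. That may still be enough for the small-box argument, but it is not the formula claimed.
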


\begin{proof}
	First use Proposition~\ref{input:bps-relative-atomic-tropical-pushforward}.  On
	each corner-active local toric quotient, BPS Theorem~4.8.11 and
	Corollary~4.8.12 identify the tropical pushforward of the relevant toric Chern
	measure with the mixed real Monge--Ampere measure.  Since the canonical toric
	potential is piecewise affine after the chosen regular subdivision,
	Proposition~2.7.4 and the mixed Monge--Ampere polarization give a finite
	atomic measure on the tropical side.  The archimedean toric Chern measure is
	toric, hence invariant under the compact real torus in the phase directions.
	Lemma~\ref{lem:torus-invariant-atomic-lift} therefore lifts every tropical atom
	to the normalized Haar measure on the corresponding compact phase fiber.
	
	In the coordinates of Lemma~\ref{lem:fan-chart-compact-torus-coordinates},
	these fibers are exactly the real-analytic slices
	\[
	|z_\ell|=1,\qquad
	z_\ell=e^{\langle m_{\nu,\ell},u_v\rangle}
	\psi_{j,m_{\nu,\ell}},
	\]
	or equivalently
	\[
	|\psi_{j,m_{\nu,\ell}}|
	=\exp(-\langle m_{\nu,\ell},u_v\rangle).
	\]
	Lemma~\ref{lem:general-fan-rank-slices} supplies the underlying rank-active
	charts, and Definition~\ref{def:corner-active-frame} selects the relevant
	corner-active frames after deleting the lower-dimensional analytic exceptional
	sets.  Grouping the finitely many
	tropical atoms with the same character frame gives the finite family
	\(\mathcal A_X\), the weights \(a_{\mathbf m}\), and the glued Haar forms
	\(\omega_{\mathbf m,u_v}\).  Wedging with the smooth form pulled back from the
	abelian quotient gives the displayed formula in this case.
	
	For comparison, one can also unpack BPS by local Bedford--Taylor calculus
	using the corner-current decomposition of
	Proposition~\ref{prop:toric-corner-current-decomposition}.
	By Proposition~\ref{prop:toric-corner-current-decomposition}, after a finite toric
	subdivision and after deleting the pluripolar exceptional sets, the relevant
	toric curvature product is a finite positive sum of wedge products of
	one-dimensional corner currents.  The subdivision is harmless on the principal
	semiabelian orbit, and the projection formula pushes the resulting measure
	back to the original compactification.
	
	Fix one corner cell \(\tau\) and one local trivialization \(U_j\).  Put
	\[
	z_\ell
	=
	e^{\langle m_{\tau,\ell},u_v\rangle}
	\psi_{j,m_{\tau,\ell}},
	\qquad 1\le \ell\le r .
	\]
	On the rank-active locus, Lemma~\ref{lem:general-fan-rank-slices} says that
	\((\rho^{an},z_1,\ldots,z_r)\) is a local biholomorphic coordinate system on
	the smooth part of \(X\).  In these coordinates, the local summand supplied by
	Proposition~\ref{prop:toric-corner-current-decomposition} is
	\[
	a_\tau\,
	\bigwedge_{\ell=1}^r dd^c\bigl|\log|z_\ell|\bigr| .
	\]
	K\"uhne's elementary computation \cite[Appendix~B]{Ku1} gives, for one
	coordinate,
	\[
	\int_{\mathbb C^\times} h(z)\,dd^c\bigl|\log|z|\bigr|
	=
	\int_{0}^{2\pi}h(e^{i\theta})\,\frac{d\theta}{\pi},
	\]
	with K\"uhne's normalization of \(dd^c\).  Iterating this identity by Fubini,
	and wedging with the smooth base form
	\(c_1(\rho^*\bN_{H,v})^{d_0}\), converts the local corner-current summand into
	the Haar measure on the real-analytic slice
	\[
	|z_\ell|=1
	\quad(1\le \ell\le r),
	\]
	that is,
	\[
	|\psi_{j,m_{\tau,\ell}}|
	=
	\exp(-\langle m_{\tau,\ell},u_v\rangle).
	\]
	These are exactly the slices
	\(X_{\mathbf m_\tau,u_v}\) of
	Lemma~\ref{lem:general-fan-rank-slices}.
	
	The monomial transition functions between local trivializations are compact
	torus units on the slices, so the local Haar forms glue.  Grouping cells with
	the same character frame gives the finite set \(\mathcal A_X\) and the
	constants \(a_{\mathbf m}\).  Since the exceptional sets are complex analytic
	of dimension \(<d\), the Bedford--Taylor measures under consideration do not
	charge them.  Summing over the finitely many cells proves the formula.
\end{proof}

\begin{remark}
	Proposition~\ref{input:general-fan-curvature-haar-slice} is a conditional
	general-fan replacement for the computational heart of K\"uhne's Lemma~5.2,
	with the standard toric corner-current decomposition supplied by
	Proposition~\ref{prop:toric-corner-current-decomposition}.  In the standard
	\((\mathbb P^1)^t\) compactification, the corner cells are the coordinate
	walls and the formula follows from K\"uhne's identity for
	\(dd^c\bigl|\log|z|\bigr|\), not from \(dd^c\log|z|\).  For an arbitrary ample
	toric divisor \(E\), the product-corner reduction, saturated-lattice factors,
	and absence of compactification boundary mass remain additional inputs; they
	do not follow from the local one-wall calculation alone.  Accordingly no
	unconditional arbitrary-fan Bogomolov statement is deduced here.
\end{remark}

\begin{prop}[Finite Haar-slice functoriality for monomial semiabelian maps]
	\label{prop:finite-haar-slice-functoriality}
	Let \(\beta:H_1\to H_2\) be a semiabelian homomorphism such that, on the toric
	parts in local trivialization coordinates, \(\beta\) is given by monomials.
	Assume that the source measure on an irreducible
	\(X\subset H_1\) has the finite Haar-slice expression of
	Proposition~\ref{input:general-fan-curvature-haar-slice}.  Then, on every
	smooth rank-active chart on which \(\beta|_X\) has constant rank, the
	pushforward of each source Haar-slice summand is a finite positive
	Haar-slice measure on the image.  In particular the class of finite
	Haar-slice measures is stable under products, torus inversion, and the
	K\"uhne difference morphisms
	\[
	(x_1,\ldots,x_m)\longmapsto
	(x_1x_2^{-1},\ldots,x_{m-1}x_m^{-1}).
	\]
	If, in addition, the limiting measures satisfy the pushforward identity, then the finite
	Haar-slice descriptions are compatible with \(\beta_*\).  In the application
	below this pushforward identity is obtained directly from equidistribution of
	Galois orbit measures on the source and on the target, rather than from a
	separate local Chern-measure computation.
\end{prop}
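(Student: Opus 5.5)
The plan is to reduce the statement to Lemma~\ref{lem:monomial-haar-fiber-functoriality}, applied chart by chart. By Proposition~\ref{input:general-fan-curvature-haar-slice}, the source measure on \(X\) is a finite positive sum, indexed by \(\mathbf m\in\mathcal A_X\), of Haar-slice measures \(a_{\mathbf m}\,\omega_{\mathbf m,u_v}\wedge c_1(\rho^*\bN_{H,v})^{d_0}\) on \(X_{\mathbf m,u_v}\setminus E_{\mathbf m}\). Because the sum is finite and pushforward is linear, it suffices to treat one summand, on one smooth rank-active chart \(U_j\) of Lemma~\ref{lem:general-fan-rank-slices} on which \(\beta|_X\) has constant rank.

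\emph{Step 1 (local form of \(\beta\)).} By Lemma~\ref{lem:general-fan-rank-slices}(c), the chart is identified with \(\rho(X)^{sm}\times(S^1)^r\) through \((\rho,z_1,\ldots,z_r)\). In local trivialization coordinates \(\beta\) is monomial on the toric part and equal to \(\beta_{\mathrm{ab}}\) on the base. Hence the composite with the target phase coordinates has the form \((a,z)\mapsto(\beta_{\mathrm{ab}}(a),\,\varphi(z)\cdot c(a))\). Here \(\varphi\) is the monomial homomorphism induced by the character map \(\beta_{\mathrm{tor}}^*\) restricted to the frame lattice, and \(c(a)\) is a compact-torus unit coming from the theta factors. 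Lemma~\ref{lem:local-coordinates-compute-tropicalization} shows that valuations transform linearly. So the slice \(\operatorname{val}^{-1}(u_v)\) is carried into \(\operatorname{val}^{-1}(Au_v)\), and after the shift by \(e^{\langle\cdot,Au_v\rangle}\) we land on a unit phase torus.

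\emph{Step 2 (fiberwise Haar pushforward).} Disintegrate the summand over the base \(\rho(X)^{sm}\), using Fubini. The fiber measure is normalized Haar on \((S^1)^r\). By Lemma~\ref{lem:monomial-haar-fiber-functoriality}, its image under \(\varphi\) is normalized Haar on the compact subtorus \(\varphi((S^1)^r)\). Translating by the unit \(c(a)\) preserves Haar measure. The base factor pushes forward to \((\beta_{\mathrm{ab}})_*\) of a smooth positive measure. The constant-rank hypothesis then makes the image a real-analytic slice of the target of the form in Lemma~\ref{lem:general-fan-rank-slices}, with frame obtained by choosing a basis of the image character lattice, and the total mass is preserved. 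This yields a finite positive Haar-slice measure. Products and torus inversion are the special monomial cases given by the diagonal and by \(z\mapsto z^{-1}\). For the difference morphism, Lemma~\ref{lem:qcanonical-haar-shift} sends the common center \((u_v,\ldots,u_v)\) to \(0\), so the image slices are again of the stated type.

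\emph{Step 3 (compatibility with \(\beta_*\)).} Under the extra assumption that \(\beta_*\mu_X=\mu_{\beta(X)}\), which comes from equidistribution of Galois orbits on source and target, the chartwise pushforwards must glue. Overlaps differ by compact-torus units, which preserve Haar measure, and the exceptional sets are analytic of lower dimension and carry no mass. Therefore the pushed-forward finite Haar-slice expression equals the target measure.

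I expect the main obstacle to be Step 2 at the non-saturated image lattice. There \(\varphi\) has a finite kernel, or the image frame is not saturated, and the index \([L^{\mathrm{sat}}:L]\) enters. Probability measures still push forward to probabilities, but identifying the result with the \emph{glued Haar form} of the target frame requires tracking this index, exactly as in the saturated-lattice discussion. I would handle it by rescaling the positive constants \(a_{\mathbf m}\), which the statement allows because it only asserts positivity, rather than by asserting any normalized form identity.
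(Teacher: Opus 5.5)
Your route is the paper's route. You work chart by chart, write the toric part of $\beta$ as a monomial map, invoke Lemma~\ref{lem:monomial-haar-fiber-functoriality}, and read products, inversion and $\alpha_m$ as block-diagonal, $-I$ and difference matrices. Your extra bookkeeping (the compact-torus unit coming from the character-pushout theta factors, and absorbing a saturation index into $a_{\mathbf m}$) is harmless.

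However, Step~1 has a genuine gap. The paper's proof passes over the same point, since it also treats a slice summand as Haar measure on a full compact phase fiber $\operatorname{val}^{-1}(u)$.
\begin{itemize}
\item The slice $X_{\mathbf m,u_v}$ of Lemma~\ref{lem:general-fan-rank-slices} imposes only the $r$ frame conditions $|\psi_{j,m_\ell}|=e^{-\langle m_\ell,u_v\rangle}$. It is not $X\cap\operatorname{val}_v^{-1}(u_v)$: the remaining $t-r$ components of $\operatorname{val}_v\circ\psi_j$ vary along it.
\item A target phase coordinate pulls back to $\psi_{j,\beta_M(m')}$ times a unit, and $\beta_M(m')$ is a character of the whole source torus. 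Unless $\beta_M(m')$ lies in the frame lattice $L_{\mathbf m}=\sum_\ell\BZ m_\ell$, its restriction to a general $X$ is an arbitrary holomorphic function of $(\rho,z_1,\ldots,z_r)$. It is not of the form $\varphi(z)c(a)$.
\end{itemize}
So neither your local form of $\beta$ nor the claim that the slice is carried into $\operatorname{val}^{-1}(Au_v)$ is justified, and the conclusion can fail. Take $H_1=\mathbb G_m^2$ with K\"uhne's $(\mathbb P^1)^2$ and $X=\{y_2=1+y_1\}$, so $d=r=1$, $d_0=0$ and $\mathcal A_X=\{(e_1),(e_2)\}$. Take $\beta(y_1,y_2)=y_2$. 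Then $\beta|_X$ has constant rank. Yet the $(e_1)$-summand, a multiple of Haar measure on $\{|y_1|=1\}$, is sent to the uniform measure on the circle $\{|w-1|=1\}$. That is not a Haar-slice measure on $\beta(X)$.

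For the K\"uhne difference morphism this failure is structural, not accidental.
\begin{itemize}
\item The pulled-back characters $\alpha_{m,M}(\mathbf m')$ fill out $\{(n_1,\ldots,n_m)\in M^m:\sum_i n_i=0\}$. Its intersection with $(L_{\mathbf m_1}\oplus\cdots\oplus L_{\mathbf m_m})\otimes\BQ$ has dimension $mr-\operatorname{rank}(L_{\mathbf m_1}+\cdots+L_{\mathbf m_m})\le(m-1)r$.
\item A rank-active frame for $Y=\alpha_m(X^m)$ has $r_Y=\dim Y-\dim\rho(Y)\ge mr$ members once $\alpha_m$ is generically finite. In the dominant case $r_Y=mr+d_0$.
\item Since $\alpha_{m,M}$ is injective, for $r\ge1$ at least $r$ independent target phase directions are never monomial in the source frame phases.
\end{itemize}
So the fiberwise use of Lemma~\ref{lem:monomial-haar-fiber-functoriality} cannot even produce the right number of phase directions on $Y$. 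For a non-special $X$, which is exactly the case the contradiction argument must rule out, nothing forces those coordinates to become monomial on the slices; the example above shows they need not.

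Your argument is sound only when the pulled-back target frame characters lie in the source frame lattices. That covers products and inversion, but not $\alpha_m$.

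The application does not actually need the false part. Lemma~\ref{lem:kuhne-small-box-contradiction-finite-slices} uses three inputs: the product description on $X^m$, the Haar-slice description of $\mu_{Y,v}$ obtained from Proposition~\ref{input:general-fan-curvature-haar-slice} applied to $Y$ itself, and the pushforward identity from equidistribution. The right repair is therefore to drop the stability claim for difference morphisms, or weaken it to ``finite sums of positive smooth densities on real-analytic pieces'', rather than to try to prove it.
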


\begin{proof}
	On a local trivialization chart, the toric part of \(\beta\) has the form
	\[
	(z_1,\ldots,z_r)\longmapsto
	\left(\prod_i z_i^{a_{1i}},\ldots,\prod_i z_i^{a_{si}}\right)
	\]
	for an integral matrix \(A=(a_{ji})\).  The induced map on logarithmic
	valuation spaces is the linear map \(u\mapsto Au\).  Hence
	Lemma~\ref{lem:monomial-haar-fiber-functoriality} sends the normalized Haar
	measure on each compact phase fiber over \(u\) to the normalized Haar measure
	on the compact image phase fiber over \(Au\), possibly inside a lower
	dimensional compact subtorus if the rank drops.  After partitioning the
	rank-active locus into finitely many constant-rank real-analytic pieces,
	these lower-rank images are again among the allowed finite real-analytic
	Haar-slice pieces on the image.
	
	Products correspond to block diagonal monomial maps, inversion corresponds to
	the matrix \(-I\), and the K\"uhne difference morphism corresponds on the
	toric coordinates of \(H^m\) to the block matrix
	\[
	(z^{(1)},\ldots,z^{(m)})
	\longmapsto
	\bigl(z^{(1)}(z^{(2)})^{-1},\ldots,
	z^{(m-1)}(z^{(m)})^{-1}\bigr).
	\]
	Thus all three operations preserve the finite Haar-slice class.  The final
	assertion is the functorial statement used below after the relevant
	pushforward identity has been supplied by equidistribution.
\end{proof}

\begin{lemma}[Toric quasi-canonical normalization under homomorphisms]
	\label{lem:toric-qcanonical-operation-normalization}
	Let \(\beta:T_1\to T_2\) be a homomorphism of algebraic tori, and let
	\[
	A_\beta:N_{1,\mathbb R}\longrightarrow N_{2,\mathbb R}
	\]
	be the induced linear map on cocharacter spaces.  Let \(\bE_2\) be a
	quasi-canonical toric metrized divisor on \(T_2\), with archimedean local
	weight
	\[
	\Psi_{\bE_2,v}(w)=\Psi_{E_2}(w-u_{2,v})-\gamma_{2,v}.
	\]
	Equip \(\beta^*\bE_2\) with the pulled-back metric.  Then its toric local
	weight on \(T_1\) is
	\[
	u\longmapsto
	\Psi_{E_2}(A_\beta u-u_{2,v})-\gamma_{2,v}.
	\]
	In particular, if \(u_{2,v}=A_\beta u_{1,v}\), this is the
	quasi-canonical metric on the pulled-back toric divisor with center
	\(u_{1,v}\).  Products, torus inversion, quotients of tori, and the toric
	part of the K\"uhne difference morphisms satisfy this condition with the
	centers transported by the same integral linear maps as in
	Lemma~\ref{lem:qcanonical-haar-shift}.
\end{lemma}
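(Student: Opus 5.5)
The argument is a direct computation with the definition of pulled-back toric metrics and the functoriality of tropicalization. First I record how \(\beta\) interacts with valuations. A homomorphism of split tori \(\beta:T_1\to T_2\) is given by the dual lattice map \(\beta^\vee:M_2\to M_1\), so that \(\beta^*\chi^{m}=\chi^{\beta^\vee m}\). Hence for \(p\in T_{1,v}^{an}\) and \(m\in M_2\),
\[
\langle m,\val_v(\beta(p))\rangle=-\log|\chi^m(\beta(p))|_v=-\log|\chi^{\beta^\vee m}(p)|_v=\langle \beta^\vee m,\val_v(p)\rangle=\langle m,A_\beta\val_v(p)\rangle .
\]
This gives \(\val_v\circ\beta=A_\beta\circ\val_v\), with \(A_\beta=(\beta^\vee)^\vee\). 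The map \(\beta\) also sends the compact torus \(\mathbb S_{1,v}\) into \(\mathbb S_{2,v}\). Consequently the pulled-back Green function is again invariant under the compact torus, so \(\beta^*\bE_2\) is a toric metric on \(T_1\).

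Next I compute the weight. By the definition of \(\Psi_{\overline D,v}\) as minus the Green function at any point of prescribed valuation, choose \(p\) with \(\val_v(p)=u\). Then
\[
\Psi_{\beta^*\bE_2,v}(u)=-g_{\beta^*\bE_2}(p)=-g_{\bE_2}(\beta(p))=\Psi_{\bE_2,v}(A_\beta u)=\Psi_{E_2}(A_\beta u-u_{2,v})-\gamma_{2,v}.
\]
This uses only that pulling back the metric pulls back the Green function, together with the previous paragraph. Applying the same computation to the canonical metric gives \(\Psi_{\beta^*E_2}=\Psi_{E_2}\circ A_\beta\); this is the virtual support function of the pulled-back divisor. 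If \(u_{2,v}=A_\beta u_{1,v}\), linearity gives \(A_\beta u-u_{2,v}=A_\beta(u-u_{1,v})\). The weight is then \(\Psi_{\beta^*E_2}(u-u_{1,v})-\gamma_{2,v}\), which is exactly the BGPS normal form of a quasi-canonical metric with center \(u_{1,v}\). The identity \(\sum_v n_v\gamma_{2,v}=0\) is inherited unchanged, and \(\sum_v n_v u_{1,v}\) is only needed when the centers are chosen compatibly.

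For the listed operations I identify \(A_\beta\) and the centers. For a product \(T_1^k\to T_1^k\), or the diagonal, \(A_\beta\) is block diagonal and the center \((u_v,\dots,u_v)\) is carried to itself. Inversion has \(A_\beta=-I\). A quotient \(T_1\to T_1/S\) induces the quotient map \(N_{1,\mathbb R}\to N_{1,\mathbb R}/N_{S,\mathbb R}\). The Kühne difference map has the block matrix with \(I\) and \(-I\) used in Lemma~\ref{lem:qcanonical-haar-shift}, and it sends \((u_v,\dots,u_v)\) to \(0\). In each case the target center is, by definition, the image of the source center under \(A_\beta\), so the hypothesis \(u_{2,v}=A_\beta u_{1,v}\) holds tautologically.

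The one point I expect to need care is the case where \(A_\beta\) is not injective, as for the difference map or quotients. There \(\Psi_{E_2}\circ A_\beta\) is only concave, not strictly concave, so the pulled-back divisor is nef rather than ample. The conclusion is then a quasi-canonical metric in the generalized sense of a translated canonical metric on a nef toric divisor, and I would state that explicitly. I would also check that the sign convention for \(\val_v\) (\(-\log|\cdot|\)) matches the translation direction \(e^{-u_v}\) of Lemma~\ref{lem:qcanonical-haar-shift}, which the linearity of the computation above guarantees.
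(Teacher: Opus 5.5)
Your proof is correct and follows the paper's argument. You derive \(\val_v\circ\beta=A_\beta\circ\val_v\) from the dual lattice map on characters, compose the pulled-back Green function with \(A_\beta\), rewrite \(A_\beta u-u_{2,v}=A_\beta(u-u_{1,v})\) by linearity, and read off the block-diagonal, \(-I\), quotient and difference matrices. Your extra remarks are accurate refinements that the paper's proof leaves implicit: the pulled-back metric is again compact-torus invariant, and for non-injective \(A_\beta\) (quotients, difference maps) the pulled-back divisor is only nef, so ``quasi-canonical'' there must be read as ``translated canonical''.
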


\begin{proof}
	On characters, the pullback by \(\beta\) is the lattice homomorphism dual to
	\(A_\beta\).  Therefore the tropicalization satisfies
	\[
	\operatorname{val}_v(\beta(t))=A_\beta\operatorname{val}_v(t).
	\]
	Pulling back the toric metric simply composes its local weight with this
	linear map, giving the displayed formula.  If \(u_{2,v}=A_\beta u_{1,v}\),
	then
	\[
	\Psi_{E_2}(A_\beta u-u_{2,v})
	=
	\Psi_{E_2}(A_\beta(u-u_{1,v})),
	\]
	which is the support function of the pulled-back toric divisor evaluated at
	\(u-u_{1,v}\).  This is precisely the quasi-canonical normal form on the
	source.  The product case corresponds to block diagonal \(A_\beta\), inversion
	to \(-I\), quotient maps to the induced quotient linear map, and the K\"uhne
	difference morphism to the matrix
	\[
	(u_1,\ldots,u_m)\longmapsto
	(u_1-u_2,\ldots,u_{m-1}-u_m).
	\]
	For the common center \((u_v,\ldots,u_v)\), this last matrix gives
	\((0,\ldots,0)\), as stated in Lemma~\ref{lem:qcanonical-haar-shift}.
\end{proof}

\begin{lemma}[Picard-zero character pushouts under operations]
	\label{lem:character-pushout-picard-zero-functoriality}
	Let \(G\) be a semiabelian variety with split torus \(T_G\), abelian quotient
	\(\rho_G:G\to A_G\), and character lattice \(M_G=X^*(T_G)\).  For
	\(m\in M_G\), let
	\[
	\mathcal H_{G,m}\in \operatorname{Pic}^0(A_G)
	\]
	denote the rigidified line bundle whose complement of the zero section is the
	\(\mathbb G_m\)-torsor obtained by pushing out
	\[
	0\longrightarrow T_G\longrightarrow G\longrightarrow A_G\longrightarrow0
	\]
	through \(\chi^m:T_G\to\mathbb G_m\).  Then:
	\begin{enumerate}[label=\textup{(\alph*)},leftmargin=2em]
		\item The assignment \(m\mapsto\mathcal H_{G,m}\) is additive:
		\[
		\mathcal H_{G,m_1+m_2}\simeq
		\mathcal H_{G,m_1}\otimes\mathcal H_{G,m_2},\qquad
		\mathcal H_{G,-m}\simeq\mathcal H_{G,m}^{\vee},\qquad
		\mathcal H_{G,0}\simeq\mathcal O_{A_G}.
		\]
		\item If \(\beta:G_1\to G_2\) is a semiabelian homomorphism, with abelian part
		\(\beta_A:A_{G_1}\to A_{G_2}\) and character pullback
		\(\beta_M:M_{G_2}\to M_{G_1}\), then there is a canonical rigidified
		isomorphism
		\[
		\mathcal H_{G_1,\beta_M(m)}
		\simeq
		\beta_A^*\mathcal H_{G_2,m}.
		\]
		\item For products,
		\[
		\mathcal H_{G_1\times G_2,(m_1,m_2)}
		\simeq
		p_1^*\mathcal H_{G_1,m_1}\otimes
		p_2^*\mathcal H_{G_2,m_2}.
		\]
		\item For the K\"uhne difference morphism
		\[
		\alpha_r:G^r\longrightarrow G^{r-1},\qquad
		(x_1,\ldots,x_r)\longmapsto
		(x_1x_2^{-1},\ldots,x_{r-1}x_r^{-1}),
		\]
		and a character tuple
		\(\mathbf m=(m_1,\ldots,m_{r-1})\in M_G^{r-1}\), the pullback character on
		\(T_G^r\) is
		\[
		\alpha_{r,M}(\mathbf m)
		=
		(m_1,\;m_2-m_1,\;\ldots,\;m_{r-1}-m_{r-2},\;-m_{r-1}),
		\]
		and therefore
		\[
		\mathcal H_{G^r,\alpha_{r,M}(\mathbf m)}
		\simeq
		p_1^*\mathcal H_{G,m_1}\otimes
		\bigotimes_{i=2}^{r-1}
		p_i^*(\mathcal H_{G,m_i}\otimes\mathcal H_{G,m_{i-1}}^\vee)
		\otimes
		p_r^*\mathcal H_{G,m_{r-1}}^\vee .
		\]
		Equivalently, this is the pullback by the abelian difference morphism
		\(\alpha_{r,A}:A_G^r\to A_G^{r-1}\) of
		\(\mathcal H_{G^{r-1},\mathbf m}\).
	\end{enumerate}
	All these isomorphisms are compatible with the rigidifications, with the
	canonical Picard-zero metrics, and with the Raynaud/K\"uhne theta
	representatives after replacing sections by their tensor products or duals.
\end{lemma}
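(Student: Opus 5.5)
The plan is to reduce everything to the functoriality of pushouts of extensions, i.e.\ to the Baer-sum description of \(\operatorname{Ext}^1(A_G,\mathbb G_m)\simeq A_G^\vee\) from the Weil--Barsotti formula. Writing \(\eta_G\in\operatorname{Ext}^1(A_G,T_G)\), the torsor attached to \(m\) is \((\chi^m)_*\eta_G\), and \(\mathcal H_{G,m}\) is its associated line bundle under the Weil--Barsotti identification, rigidified by the image of the identity of \(G\). Part (a) then follows because \(\chi^{m_1+m_2}=\mu\circ(\chi^{m_1},\chi^{m_2})\), with \(\mu:\mathbb G_m^2\to\mathbb G_m\) the multiplication. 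Pushout along \(\mu\) of the product extension \((\chi^{m_1},\chi^{m_2})_*\eta_G\) is the Baer sum, and the Baer sum corresponds to tensor product of the associated rigidified line bundles. The cases \(-m\) and \(0\) are the inverse and the trivial extension, which gives \(\mathcal H_{G,-m}\simeq\mathcal H_{G,m}^\vee\), consistent with the theta-factor convention fixed at the beginning of the paper.

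For (b) I will use the diagram of the proposition recalled from \cite[Lemma~1]{Ku2}: \(\beta\) induces \((\beta_{\mathrm{tor}},\beta_A)\) with \((\beta_{\mathrm{tor}})_*\eta_{G_1}=\beta_A^*\eta_{G_2}\). Pushing out by \(\chi^m\) gives
\[
(\chi^m\circ\beta_{\mathrm{tor}})_*\eta_{G_1}=\beta_A^*(\chi^m)_*\eta_{G_2},
\]
and since \(\chi^m\circ\beta_{\mathrm{tor}}=\chi^{\beta_M(m)}\), this is (b). Rigidifications match because \(\beta\) preserves identities. Part (c) is the special case of (a) and (b) applied to the two projections: the character \((m_1,m_2)\) equals \(p_1^*m_1+p_2^*m_2\).

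For (d) I compute the dual of the toric part of \(\alpha_r\). This toric part is \((t_i)\mapsto(t_it_{i+1}^{-1})\), so \(\chi^{\mathbf m}\) pulls back to \(\prod_i (t_it_{i+1}^{-1})^{m_i}\). Collecting the exponent of each \(t_k\) gives exactly the displayed tuple. Applying (c) iteratively together with (a) then yields the tensor formula. Applying (b) to \(\beta=\alpha_r\) gives the equivalent form \(\alpha_{r,A}^*\mathcal H_{G^{r-1},\mathbf m}\).

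The main obstacle is the final compatibility clause: the statements about canonical metrics and theta representatives. For the canonical Picard-zero metrics, I will invoke the uniqueness in Proposition~\ref{prop:canonical-picard-zero-theta-metrics}. Tensor products, duals and pullbacks by homomorphisms of rigidified Picard-zero bundles with canonical metrics again have the \([n]\)-isometry property, because homomorphisms commute with \([n]\). Uniqueness therefore forces each rigidified isomorphism to be an isometry. For the theta representatives, I will use that the Raynaud theta function of a tensor product (respectively dual, pullback) section is the product (respectively reciprocal, composition with the lifted homomorphism on uniformizations) of the theta functions. Both sides are normalized by the same rigidification at the origin, so they agree up to the constant already fixed to be \(1\). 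The point to check carefully is that the lift of \(\beta_A\) to universal covers respects the period lattices, so that the automorphy factors transform compatibly, as in the overlap argument of Theorem~\ref{thm:local-trivialization}.
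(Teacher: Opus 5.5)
Your proposal is correct and follows essentially the same route as the paper. You use functoriality of pushouts of extensions (contracted product/Baer sum corresponding to tensor product) for (a)--(c), and the explicit character computation combined with (a), (b), (c) for (d). For the final compatibility clause you use uniqueness of the rigidified canonical Picard-zero metric together with tensor compatibility of Raynaud theta functions. Your appeal to \((\beta_{\mathrm{tor}})_*\eta_{G_1}=\beta_A^*\eta_{G_2}\) in (b) is just an explicit form of the universal property of pushouts that the paper invokes.
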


\begin{proof}
	Pushing out a semiabelian extension by a character produces a
	\(\mathbb G_m\)-torsor over the abelian quotient.  Equivalently, it produces a
	rigidified translation-invariant line bundle, hence an element of
	\(\operatorname{Pic}^0\).  This is the construction recalled by the
	Raynaud--Bosch--L\"utkebohmert pushout formalism, in the form used by FRSS, and
	in the line-bundle triple correspondence \cite[Theorem~3.10]{FRSS}.
	
	The contracted product of \(\mathbb G_m\)-torsors corresponds to tensor product
	of line bundles.  Applying this to the characters \(m_1+m_2\), \(-m\), and
	\(0\) proves (a).  If \(\beta:G_1\to G_2\) is a semiabelian homomorphism, the
	pushout of \(G_1\) by \(\beta_M(m)\) is the pullback, along \(\beta_A\), of the
	pushout of \(G_2\) by \(m\); this is the universal property of pushouts of
	extensions, and gives (b).  The product formula (c) is the same statement
	applied to the two projections.
	
	For (d), one only computes characters:
	\[
	\prod_{i=1}^{r-1}\chi^{m_i}(x_i x_{i+1}^{-1})
	=
	\chi^{m_1}(x_1)
	\prod_{i=2}^{r-1}\chi^{m_i-m_{i-1}}(x_i)
	\chi^{-m_{r-1}}(x_r).
	\]
	Combining this character identity with (a) and (c) gives the displayed tensor
	product.  The final description as a pullback by \(\alpha_{r,A}\) follows from
	(b).
	
	The compatibility with theta representatives is the tensor-product
	compatibility in the Raynaud triple formalism and in the descent criterion for
	theta functions \cite[Proposition~3.19 and Definition~3.20]{FRSS}.  For the
	metrics, tensor products, duals, and pullbacks of canonical Picard-zero metrics
	again satisfy the defining rigidification and multiplication-isometry
	properties of the canonical Picard-zero metric.  This functoriality is recorded
	in Lemma~\ref{lem:picard-zero-flat-operation-normalization}.
\end{proof}

\begin{lemma}[Flat functoriality of canonical Picard-zero factors]
	\label{lem:picard-zero-flat-operation-normalization}
	Let \(\phi:A_1\to A_2\) be a homomorphism of abelian varieties, and let
	\(\overline H\) be a rigidified Picard-zero line bundle on \(A_2\) with its
	canonical adelic metric.  Then \(\phi^*\overline H\) is the canonical
	metrization of the rigidified Picard-zero line bundle \(\phi^*H\) on \(A_1\).
	At an archimedean place its Chern form is zero.  Consequently any finite
	tensor product of such pulled-back canonical Picard-zero factors and their
	duals is flat at archimedean places and contributes no term to the
	Bedford--Taylor Chern-current products.
\end{lemma}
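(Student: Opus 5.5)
The plan is to prove the three assertions in order: the canonical-metric statement for \(\phi^*\overline H\), the archimedean flatness, and the vanishing of the Bedford--Taylor contributions. All three follow from the uniqueness characterization of the canonical Picard-zero metric in Proposition~\ref{prop:canonical-picard-zero-theta-metrics}.

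First, \(\phi^*H\) is again a rigidified Picard-zero bundle. Pullback by a homomorphism preserves algebraic triviality, and the rigidification at \(0\in A_2\) pulls back to a rigidification at \(0\in A_1\) because \(\phi(0)=0\). The pulled-back adelic metric is semipositive-integrable, being a pullback of one. It remains to check the two defining properties. Compatibility with the rigidification is immediate. For the multiplication isometries, since \(\phi\circ[n]_{A_1}=[n]_{A_2}\circ\phi\), the isomorphism
\[
[n]^*\phi^*H\simeq\phi^*[n]^*H\simeq\phi^*H^{\otimes n}=(\phi^*H)^{\otimes n}
\]
is a composite of pullbacks of isometries. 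It is therefore an isometry for the pulled-back metrics, and the rigidifications make the identifications canonical. Uniqueness (Zhang's construction, Remark~\ref{rem:theta-neron-source-status}) then gives \(\phi^*\overline H=\overline{\phi^*H}^{\,\can}\). Tensor products and duals are handled the same way: \(\overline H_1\otimes\overline H_2\) and \(\overline H^\vee\) are rigidified and satisfy the isometry property, so they are canonical.

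Second, at an archimedean place the canonical metric is the unique flat hermitian metric compatible with the rigidification. Alternatively, \([n]^*c_1=n\,c_1\) together with \([n]^*\) acting by \(n^2\) on translation-invariant \((1,1)\)-forms forces \(c_1=0\). Hence \(c_1(\overline H_v)=0\), and the same holds after pullback. Since \(c_1\) is additive under \(\otimes\) and changes sign under duals, any finite tensor product of such factors and their duals also has zero Chern form.

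Third, consider a Bedford--Taylor product \(c_1(\overline M_1\otimes\overline F)\wedge\cdots\) in which \(\overline F\) is such a flat factor. Its local weight is pluriharmonic, so multilinearity and locality of Bedford--Taylor products for locally bounded psh weights make every term containing \(c_1(\overline F)=0\) vanish. I expect the only delicate point to be the non-archimedean uniqueness, namely making sure that the isometry property under \([n]\) characterizes the metric at every place. Tate's limiting argument handles this: if two metrics satisfy the property, their quotient \(\varphi\) is a bounded continuous function with \(\varphi\circ[n]=n\varphi\), so \(\varphi=0\). For this archimedean statement, however, only the flatness is actually needed.
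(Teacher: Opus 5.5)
Your proof is correct and follows the paper's route: pull back the \([n]\)-isometries along \(\phi\) using \(\phi\circ[n]=[n]\circ\phi\), and invoke uniqueness of the canonical Picard-zero metric. Then identify that metric at archimedean places with the flat hermitian metric, and note that tensor products and duals preserve flatness. Your ``alternative'' archimedean argument tacitly assumes the canonical Chern form is translation-invariant, which you have not justified. It can be repaired by writing that form as \(dd^c f\) with \(f\) bounded and using \(f\circ[n]=nf+c\), but your main argument does not need it.
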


\begin{proof}
	The pullback \(\phi^*H\) is again rigidified and algebraically trivial.  The
	canonical Picard-zero metric is characterized by the rigidification at the
	origin and by the fact that the multiplication isomorphisms
	\([n]^*H\simeq H^{\otimes n}\) are isometries.  Pulling these isometries back
	by \(\phi\) gives the same defining property for \(\phi^*\overline H\), so by
	uniqueness it is the canonical metric on \(\phi^*H\).  At archimedean places,
	Proposition~\ref{prop:canonical-picard-zero-theta-metrics} identifies this
	metric with the flat hermitian metric attached to the rigidified Picard-zero
	bundle.  Hence its Chern form is zero.  Tensor products and duals preserve
	flatness, proving the final assertion.
\end{proof}

\paragraph{Theta-factor bookkeeping for operations.}
For products, torus inversion, quotient maps by connected stabilizers, and
K\"uhne difference morphisms, the toric part of the quasi-canonical metric is
normalized as in Lemma~\ref{lem:toric-qcanonical-operation-normalization}.
The only role of the semiabelian theta factors here is to fix the sign and
dual convention in the local coordinates.  A character \(m\in M_G\) gives the
character-pushout Picard-zero bundle \(\mathcal H_{G,m}\); replacing \(m\) by
\(-m\) replaces this bundle by its dual.  Lemma~\ref{lem:character-pushout-picard-zero-functoriality}
records the behavior of these Picard-zero factors under products, quotients,
and K\"uhne difference morphisms, and
Lemma~\ref{lem:picard-zero-flat-operation-normalization} records their
flatness at archimedean places.  This bookkeeping is useful for comparing
local formulas with K\"uhne's notation, but the pushforward identity used in
equation~(6.2) below is not derived from these theta factors; it comes directly
from equidistribution of the relevant source and target Galois orbit measures.

\paragraph{Source of the pushforward identity.}
The pushforward identity used in K\"uhne's equation~(6.2) is not an additional
local Chern-measure projection formula.  Let
\(\beta:Y\to X'\) be one of the product, quotient, or difference morphisms in
the operation argument, and let \((y_i)\) be a \(Y\)-generic small sequence
such that \((\beta(y_i))\) is \(X'\)-generic and small.  For every \(i\), the
finite Galois orbit measures satisfy the tautological identity
\[
\beta_*\delta_{y_i,v}=\delta_{\beta(y_i),v}.
\]
Applying the equidistribution theorem on \(Y\) and on \(X'\), and then passing
to the weak limit, gives
\[
\beta_*\mu_{Y,v}=\mu_{X',v}.
\]
This argument is independent of the choice of toric compactification.  The
general-fan work above is needed to identify the limiting measures locally as
finite Haar-slice measures and to run the small-box comparison, not to prove
the pushforward identity itself.

\begin{lemma}[Riemannian domination for finite Haar slices]
	\label{lem:general-fan-riemannian-domination}
	Assume that \(X\) at the archimedean place \(v\) has the finite Haar-slice
	description supplied by
	Proposition~\ref{input:general-fan-curvature-haar-slice}.  Then there is a
	smooth Riemannian metric
	\(g_{X,v}\) on the real manifold \(H(\mathbb C_v)\) such that, for every
	\(\mathbf m\in\mathcal A_X\), every embedded real-analytic component
	\[
	M\subset X_{\mathbf m,u_v}\setminus E_{\mathbf m},
	\]
	and every open subset \(U\subset M\), one has
	\[
	a_{\mathbf m}
	\int_U
	\omega_{\mathbf m,u_v}\wedge c_1(\rho^*\bN_{H,v})^{d_0}
	\le
	\int_U \operatorname{vol}(g_{X,v}|_M).
	\]
	The metric can be chosen simultaneously for the finite family
	\(\mathcal A_X\).
\end{lemma}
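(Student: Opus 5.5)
We build \(g_{X,v}\) by patching, using the finiteness in Proposition~\ref{input:general-fan-curvature-haar-slice}. The main reduction is local. On each local trivialization chart \(U_j\), and on the rank-active locus of Lemma~\ref{lem:general-fan-rank-slices}, we have the map
\[
y\longmapsto\Bigl(\rho(y),\bigl(e^{\langle m_\ell,u_v\rangle}\psi_{j,m_\ell}(y)\bigr)_{\ell}\Bigr).
\]
By Lemma~\ref{lem:general-fan-rank-slices}(c), this is a real-analytic local isomorphism from a component \(M\) onto \(\rho(X)^{sm,an}\times(S^1)^r\). The density \(\omega_{\mathbf m,u_v}\wedge c_1(\rho^*\bN_{H,v})^{d_0}\) is the pullback, under this map, of a product density on the target:
\begin{itemize}
\item the normalized Haar form \(d\theta_1\cdots d\theta_r\) on \((S^1)^r\);
\item the smooth positive form \(c_1(\bN_{H,v})^{d_0}\) restricted to \(\rho(X)^{sm}\).
\end{itemize}
The canonical metric of \(\bN_H\) is smooth, so the second factor has a smooth density.

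The first step is to fix ambient metrics on the charts. On \(\rho^{-1}(U_j)\cap H(\mathbb C_v)\), which is biholomorphic to \(U_j\times T_H(\mathbb C_v)\), take a product metric \(g_j\):
\begin{itemize}
\item on the base, a flat Kähler metric on \(A_H(\mathbb C_v)\) dominating \(c_1(\bN_{H,v})\);
\item on the fiber, the invariant metric \(\sum_i |dz_i/z_i|^2\) in the toric coordinates \(\psi_j\), scaled by a large constant \(C\).
\end{itemize}
Each phase coordinate \(\arg\psi_{j,m_\ell}\) is a linear combination of the invariant angular coordinates with integral coefficients bounded by \(|m_\ell|\). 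Hence the pullback of \(d\theta_\ell\) has \(g_j\)-norm at most \(|m_\ell|/\sqrt C\) up to a constant. Only finitely many frames \(\mathbf m\in\mathcal A_X\) and finitely many charts occur, so one constant \(C\) can be chosen for all of them.

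Next we glue: \(g_{X,v}=\sum_j\chi_j\,(\rho^*\chi_j)\,g_j\) with a partition of unity \(\chi_j\) subordinate to \(\{U_j\}\) pulled back via \(\rho\). On overlaps the charts differ by compact-torus units, so the invariant toric metrics agree there. The base metric is global. Hence \(g_{X,v}\) is a smooth Riemannian metric on \(H(\mathbb C_v)\) and equals each \(g_j\) up to a bounded factor, uniformly.

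The core estimate is pointwise, on a component \(M\). We need the comparison
\[
a_{\mathbf m}\,\bigl|\omega_{\mathbf m,u_v}\wedge c_1(\rho^*\bN_{H,v})^{d_0}\bigr|\le \operatorname{vol}(g_{X,v}|_M).
\]
This is a comparison of a top-degree form on \(M\), of degree \(d+d_0=r+2d_0\), with the Riemannian volume. Standard linear algebra (Wirtinger or Hadamard type) gives: if a form is a wedge of \(1\)-forms and \(2\)-forms, then its comass is bounded by the product of their \(g\)-comasses. The Haar \(1\)-forms have comass at most \(|m_\ell|/\sqrt C\). The pulled-back base form \(c_1(\rho^*\bN_H)\) has comass at most \(1\), because the base metric dominates it and \(\rho\) is a Riemannian submersion for \(g_j\). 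Therefore the left side is at most
\[
a_{\mathbf m}\prod_\ell |m_\ell|\,C^{-r/2}\,c_d\,\operatorname{vol}(g|_M)
\]
with a dimensional constant \(c_d\). Choosing \(C\) large makes this at most \(\operatorname{vol}(g_{X,v}|_M)\) for every \(\mathbf m\in\mathcal A_X\). Integrating over \(U\) gives the lemma.

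The main obstacle is where comass is measured. The Haar forms are only defined on the slice \(X_{\mathbf m,u_v}\setminus E_{\mathbf m}\), so comass has to be bounded on \(TM\), not ambiently. The remedy is to compute with the ambient forms \(d\arg\psi_{j,m_\ell}\) and \(\rho^*c_1(\bN_H)\): these are globally defined closed forms on the chart, and restriction to \(TM\) can only decrease comass. Near \(E_{\mathbf m}\) the map of Lemma~\ref{lem:general-fan-rank-slices} degenerates. This does not matter, because the bound uses only the ambient forms and needs no lower bound on the Jacobian. The case \(r=0\) is included, since the estimate then involves only the base factor.
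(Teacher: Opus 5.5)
Your proof is correct, but it takes a different route from the paper.

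\textbf{The paper's route.} The paper starts from an arbitrary smooth Riemannian metric $g_0$ on $H(\mathbb C_v)$. It writes each density $a_{\mathbf m}\,\omega_{\mathbf m,u_v}\wedge c_1(\rho^*\bN_{H,v})^{d_0}$ on a component $M$ as $h_{\mathbf m,M}\operatorname{vol}(g_0|_M)$. It then rescales conformally, $g_{X,v}=\lambda^2g_0$, with a smooth positive $\lambda$ satisfying $\lambda^{\dim M}\ge h_{\mathbf m,M}$, obtained by a partition-of-unity argument. The only structure it uses is smoothness of the densities and finiteness of $\mathcal A_X$ and of the components.

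\textbf{Your route.} You build a metric adapted to the torus bundle: a flat base metric dominating $c_1(\bN_{H,v})$, plus $C$ times the invariant metric $\sum_i|d\psi_{j,i}/\psi_{j,i}|^2$. Comass estimates then let a single constant $C$ do the job.

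\textbf{What each buys.} The paper's version is shorter. However, its choice of $\lambda$ tacitly needs $h_{\mathbf m,M}$ to be locally bounded near the frontier of $M$ in $H(\mathbb C_v)$, in particular near $E_{\mathbf m}$; otherwise no continuous $\lambda$ on $H(\mathbb C_v)$ can exist. Your observation supplies exactly this. The density is the restriction to $TM$ of the ambient smooth form $\bigwedge_\ell d\arg\psi_{j,m_\ell}\wedge\rho^*c_1(\bN_{H,v})^{d_0}$, so it is bounded by that form's ambient comass. Your construction also gives a torus-invariant metric with a uniform constant.

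\textbf{Two small repairs.}
\begin{enumerate}
\item When $r=0$, the constant $C$ does not enter the bound $a_{\mathbf m}c_d\operatorname{vol}(g|_M)$, so choosing $C$ large proves nothing in that case. You must also scale the base metric, e.g.\ take its K\"ahler form $\ge K\,c_1(\bN_{H,v})$ with $K$ large.
\item The gluing formula should read $\sum_j(\rho^*\chi_j)\,g_j$. In fact no gluing is needed. Since the $\psi_j$ are holomorphic, a transition ratio of modulus one is locally constant, so the invariant fiber metrics, and hence the $g_j$, coincide on overlaps, as you assert.
\end{enumerate}
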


\begin{proof}
	All manifolds \(M\) under consideration lie in the open semiabelian analytic
	manifold \(H(\mathbb C_v)\), which is smooth.  Choose any smooth Riemannian
	metric \(g_0\) on \(H(\mathbb C_v)\).  On each real-analytic component \(M\),
	the form
	\[
	a_{\mathbf m}\,
	\omega_{\mathbf m,u_v}\wedge c_1(\rho^*\bN_{H,v})^{d_0}
	\]
	is a smooth positive density of top degree.  Hence it can be written as
	\[
	h_{\mathbf m,M}\,\operatorname{vol}(g_0|_M)
	\]
	with \(h_{\mathbf m,M}\) a positive smooth function on \(M\).  Since the family
	of frames and local real-analytic components is finite after the finite
	trivializing cover is fixed, we may choose a smooth positive function
	\(\lambda\) on \(H(\mathbb C_v)\) whose restriction to every such \(M\)
	satisfies
	\[
	\lambda^{\dim M}\ge h_{\mathbf m,M}.
	\]
	This is obtained by the usual extension and partition-of-unity argument on the
	paracompact real manifold \(H(\mathbb C_v)\).  Then
	\[
	g_{X,v}:=\lambda^2g_0
	\]
	has induced volume form
	\(\lambda^{\dim M}\operatorname{vol}(g_0|_M)\) on \(M\), and the displayed
	inequality follows.  This is the part of K\"uhne's Lemma~5.3 which only uses
	the finiteness of the Haar-slice description, not the special
	\((\mathbb P^1)^t\) compactification.
\end{proof}

\begin{lemma}[K\"uhne small-box contradiction for finite slice measures]
	\label{lem:kuhne-small-box-contradiction-finite-slices}
	Let \(X\subseteq H_{\overline K}\) be irreducible of positive dimension and
	assume
	\[
	\operatorname{Stab}_{H_{\overline K}}(X)=\{e\}.
	\]
	Let \(m\) be chosen so that the K\"uhne difference morphism
	\[
	\alpha_m:X^m\longrightarrow H^{m-1}
	\]
	is generically finite of degree \(1\) onto its image.  Put
	\[
	Y=\alpha_m(X^m).
	\]
	Assume that, at some archimedean place \(v\), the K\"uhne limiting measures on
	\(X\), \(X^m\), and \(Y\) have the finite Haar-slice descriptions supplied by
	Proposition~\ref{input:general-fan-curvature-haar-slice}, that the measure on
	\(X^m\) is the \(m\)-fold product of the measure on \(X\), and that the
	pushforward identity
	\[
	(\alpha_m)_*\mu_{X^m,v}=\mu_{Y,v}
	\]
	holds on the dense open locus where \(\alpha_m\) is an isomorphism.  Then these
	assumptions are incompatible.
\end{lemma}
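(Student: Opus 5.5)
I will follow the small-box argument of K\"uhne's Lemma~5.3, using only the finite Haar-slice structure and the Riemannian domination of Lemma~\ref{lem:general-fan-riemannian-domination}. Put \(d=\dim X\), \(d_0=\dim\rho(X)\), \(r=d-d_0\). By Proposition~\ref{input:general-fan-curvature-haar-slice}, \(\mu_{X,v}\) is a finite positive sum of smooth densities on real-analytic pieces \(M\subset X_{\mathbf m,u_v}\setminus E_{\mathbf m}\) of real dimension \(d+d_0\). Consequently \(\mu_{X^m,v}=\mu_{X,v}^{\otimes m}\) is carried by a finite union of products of such pieces, each of real dimension \(m(d+d_0)\), with positive smooth density. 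The measure \(\mu_{Y,v}\) is likewise carried by finitely many real-analytic pieces of real dimension \(\dim Y+\dim\rho(Y)\le m d+\dim\rho(Y)\), again with density dominated by a Riemannian volume \(g_{Y,v}\).

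The key dimension count is as follows. Since \(\alpha_m\) is birational onto \(Y\), we have \(\dim Y=md\). The abelian part of \(\alpha_m\) is the abelian difference map \(\rho(X)^m\to A^{m-1}\). Its image has dimension at most \(\min\{md_0,(m-1)\dim A\}\), and it is strictly smaller than \(md_0\) when \(d_0>0\), because its fibres contain the diagonal translates. When \(d_0=0\), the torus part of the difference map loses the diagonal \(S^1\)-directions instead. In either case the support dimension of \(\mu_{Y,v}\) is strictly less than \(m(d+d_0)\). I will make this precise by applying Lemma~\ref{lem:qcanonical-haar-shift} together with Lemma~\ref{lem:toric-qcanonical-operation-normalization}. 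These show that the common center \((u_v,\ldots,u_v)\) maps to \(0\), so every phase slice of \(X^m\) maps into a slice of \(Y\) of strictly smaller dimension. Lemma~\ref{lem:monomial-haar-fiber-functoriality} then controls the pushforward of each Haar piece.

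The contradiction comes from comparing the two sides of the pushforward identity on small boxes. Choose a point \(y_0\) in the smooth part of a top-dimensional piece \(M\) of \(\operatorname{supp}\mu_{X^m,v}\), lying in the dense open set where \(\alpha_m\) is an isomorphism; such a point exists because exceptional sets are lower-dimensional analytic sets and carry no mass. For a ball \(B_\epsilon\) of radius \(\epsilon\) around \(\alpha_m(y_0)\), Lemma~\ref{lem:general-fan-riemannian-domination} applied to \(Y\) gives
\[
\mu_{Y,v}(B_\epsilon)\ll\epsilon^{\dim_{\mathbb R}\operatorname{supp}\mu_{Y,v}}.
\]
On the other hand, \(\alpha_m\) is a local diffeomorphism near \(y_0\), so \(\alpha_m^{-1}(B_\epsilon)\) contains a ball of radius \(\gg\epsilon\) around \(y_0\). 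Positivity of the density on \(M\) then gives
\[
\mu_{X^m,v}(\alpha_m^{-1}(B_\epsilon))\gg\epsilon^{m(d+d_0)}.
\]
Because the exponent on the \(Y\) side is strictly smaller only in the wrong direction, I will instead argue in the reverse direction. The identity \((\alpha_m)_*\mu_{X^m,v}=\mu_{Y,v}\), combined with injectivity of \(\alpha_m\) near \(y_0\), shows that the \((m(d+d_0))\)-dimensional piece \(M\) embeds into a support of strictly smaller real dimension. This is impossible for a local diffeomorphism restricted to a submanifold. Equivalently, \(\alpha_m|_M\) would have to be an immersion into a lower-dimensional manifold.

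The main obstacle is the strict dimension drop on the support of \(\mu_{Y,v}\). This needs the stabilizer hypothesis \(\operatorname{Stab}(X)=\{e\}\), used exactly as in K\"uhne to exclude the case where \(X\) is fibred by torus cosets, which would allow phase directions to survive the difference map. I would first try the case \(d_0>0\) using the abelian Faltings--Zhang dimension count, and then handle \(d_0=0\) via the torus-phase count. If a uniform bound fails, I would fall back on comparing the Haar-slice ranks \(r\) directly through Proposition~\ref{prop:finite-haar-slice-functoriality}.
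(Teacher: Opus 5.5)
There is a genuine gap. Everything after your first paragraph rests on the asserted strict drop \(\dim_{\mathbb R}\operatorname{supp}\mu_{Y,v}<m(d+d_0)\), and this drop is false in general.

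\emph{The case \(d_0=0\).} Here \(X\) lies in a single torus coset and \(\rho(Y)\) is a point, so the slices of \(Y\) have real dimension \(md=m(d+d_0)\). Away from the diagonal, \(\alpha_m\) is a local biholomorphism, so no phase directions of the product of slices are lost.

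\emph{The case \(d_0>0\).} The fibre of the abelian difference map through \((a_1,\ldots,a_m)\in\rho(X)^m\) is \(\{(a_1+c,\ldots,a_m+c): c\in\bigcap_i(\rho(X)-a_i)\}\). This is not positive-dimensional in general. If \(\rho(X)\) is a curve of genus at least \(2\) in its Jacobian, it is generically finite for every \(m\ge2\), so \(\dim\rho(Y)=md_0\). Your count is valid when \(\rho(X)\) has a positive-dimensional stabilizer, for instance \(\rho(X)=A\) with \(g>0\), but the lemma makes no such assumption.

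Your own exponent comparison exposes the deeper problem. At a point \(y_0\) of the locus where \(\alpha_m\) is a local biholomorphism, the pushforward identity just transports a positive smooth density to a positive smooth density of the same dimension. No comparison made there can be contradictory. Note also that the stabilizer hypothesis is used only to choose \(m\) with \(\alpha_m\) birational; it does not create a dimension drop.

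The missing idea is to work at the diagonal of \(X^m\), which \(\alpha_m\) collapses to \((e,\ldots,e)\). Take \(x\) on a slice component \(M\) of \(\mu_{X,v}\) where the density is positive, a chart \(\varphi:(-1,1)^q\to M\) with \(q=d+d_0\) and \(\varphi(0)=x\), and the box \(B_\epsilon=((-\epsilon,\epsilon)^q)^m\) around \((x,\ldots,x)\) in the product chart.
\begin{itemize}
\item Because \(\mu_{X^m,v}=\mu_{X,v}^{\otimes m}\), the source mass of \(\varphi^m(B_\epsilon)\) is at least \(c\operatorname{vol}(B_\epsilon)\). This is exactly where the product hypothesis enters, and your plan never uses it.
\item Since \(\alpha_m\) is invariant under simultaneous translation of all factors, the differential of \(\alpha_m\circ\varphi^m\) at the centre kills the diagonal subspace \(\{(w,\ldots,w)\}\). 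So its Jacobian vanishes to order \(q\) at the centre, and the image of \(B_\epsilon\) has Riemannian volume \(O(\epsilon^q\operatorname{vol}(B_\epsilon))\) on the target slices.
\item The pushforward identity bounds the source mass by \(\mu_{Y,v}(\alpha_m(\varphi^m(B_\epsilon)))\), since the complement of the isomorphism locus carries no \(\mu_{X^m,v}\)-mass. Lemma~\ref{lem:general-fan-riemannian-domination} bounds that quantity by the Riemannian volume of the image.
\end{itemize}
Letting \(\epsilon\to0\) gives the contradiction.
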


\begin{proof}
	Let \(U\subseteq X^m\) be a dense open subset on which
	\(\alpha_m\) is an isomorphism onto its image.  By the finite Haar-slice
	formula for \(Y\), the measure \(\mu_{Y,v}\) is a finite sum of positive smooth
	densities on embedded real-analytic manifolds.  By
	Lemma~\ref{lem:general-fan-riemannian-domination}, after pulling these
	manifolds back through \(\alpha_m|_U^{-1}\), their contributions are dominated
	by the volume induced from a smooth Riemannian metric on
	\(H(\mathbb C_v)^{m-1}\).
	
	The product description of \(\mu_{X^m,v}\) gives a finite sum of product
	density forms on products
	\[
	M_1\times\cdots\times M_m,
	\]
	where each \(M_i\) is one of the real-analytic slice components for \(X\).
	Lower-dimensional real-analytic subsets have zero mass for these smooth
	density forms.  Hence one may choose a slice component \(M\), a point
	\(x\in M\) outside the other components and outside the exceptional locus, and
	a real-analytic chart
	\[
	\varphi:(-1,1)^q\longrightarrow M,\qquad q=d+d_0,
	\]
	with \(\varphi(0)=x\), on which the corresponding density is strictly
	positive.  The \(m\)-fold product chart
	\[
	\varphi^m:((-1,1)^q)^m\longrightarrow M^m\subset X^m
	\]
	then has product density bounded below by a positive multiple of Euclidean
	volume on small boxes
	\[
	B_\epsilon=\bigl((-\epsilon,\epsilon)^q\bigr)^m .
	\]
	
	On the other hand, the differential of
	\[
	\alpha_m\circ\varphi^m
	\]
	annihilates the diagonal tangent subspace
	\[
	\{(w,\ldots,w):w\in\mathbb R^q\}\subset(\mathbb R^q)^m,
	\]
	because the difference map is constant along simultaneous translation of all
	coordinates.  Therefore the Riemannian volume of
	\(\alpha_m\circ\varphi^m(B_\epsilon)\) on each target slice is bounded by
	\[
	O(\epsilon^q\,\operatorname{vol}(B_\epsilon)).
	\]
	This is exactly K\"uhne's small-box estimate in the proof of
	\cite[Proposition~6.1]{Ku1}; it uses only the existence of the diagonal kernel
	and the Riemannian domination of the target slice measures.
	
	Choose a non-negative test function supported in the product chart and positive
	at \((x,\ldots,x)\).  The product source measure of \(B_\epsilon\) is bounded
	below by
	\[
	c\,\operatorname{vol}(B_\epsilon)
	\]
	for some \(c>0\), while the pushforward side is bounded above by
	\[
	C\,\epsilon^q\,\operatorname{vol}(B_\epsilon).
	\]
	For sufficiently small \(\epsilon\) these two estimates contradict the
	pushforward identity.  Thus the finite slice descriptions and the pushforward
	identity cannot coexist when the stabilizer is trivial.
\end{proof}

\subsection{K\"uhne Transport Assembly}
\label{subsec:kuhne-transport-assembly}

\begin{lemma}[Smallness under semiabelian operations]
	\label{lem:qcanonical-operation-smallness}
	Let \(H_1,\ldots,H_r,H'\) be semiabelian varieties equipped with
	quasi-canonical toric metrics on ample \(T\)-effective divisors and canonical
	metrics on their abelian quotients.  Let
	\[
	\beta:H_1\times\cdots\times H_r\longrightarrow H'
	\]
	be a semiabelian homomorphism.  Suppose that the critical vectors of the source
	metrics are \(\mathbf w_1,\ldots,\mathbf w_r\), and equip \(H'\) with a
	quasi-canonical metric whose critical vector is the image
	\(\beta_{\rm trop}(\mathbf w_1,\ldots,\mathbf w_r)\).  If, for every \(k\),
	\((x_{k,i})_i\) is small for the corresponding metric on \(H_k\), then
	\[
	\bigl(\beta(x_{1,i},\ldots,x_{r,i})\bigr)_i
	\]
	is small for the chosen metric on \(H'\).
	
	In particular, this applies to quotients by connected semiabelian subgroups,
	to products, and to the K\"uhne difference morphisms.  For a difference
	morphism applied to \(m\) copies with the same critical vector \(\mathbf w\),
	the target critical vector is \(0\).
\end{lemma}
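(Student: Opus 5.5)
The plan is to characterise smallness for quasi-canonical metrics by two conditions, and then check that both conditions pass through \(\beta\). I will use the normalization of Proposition~\ref{prop:qcanonical-compression-normal-form}, namely \(\muabs=0\) on every factor and on \(H'\). A quasi-canonical toric metric in BGPS normal form is monocritical, with critical vector its center. It is also arithmetically \(T\)-effective: its roof attains its maximum at \(0\), which is the criterion of Subsection~\ref{subsec:arith-teff-meaning}. So Proposition~\ref{prop:section4-minima-formula} gives \(\muess=\muabs=0\) on each \(H_k\) and on \(H'\).

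For a source sequence \((x_{k,i})_i\), the proofs of Propositions~\ref{prop:bgps-defect-vanishing} and~\ref{prop:monocritical-kr} give two facts:
\[
\hat h_{\bN_k}(\rho_k(x_{k,i}))\to0
\qquad\text{and}\qquad
\boldsymbol\nu_{x_{k,i}}\to\boldsymbol\delta_{\mathbf w_k}
\]
in the adelic Kantorovich--Rubinstein topology. Conversely, these two facts imply smallness, by the Lipschitz argument in Proposition~\ref{prop:monocritical-qcanonical-replacement}: the translated canonical weights \(\Psi_D(\cdot-u_v)\) are Lipschitz with constant \(R_D\), and \(\eta(\boldsymbol\delta_{\mathbf u})=0\).

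\emph{Abelian part.} I will write \(\beta_A\) for the abelian part of \(\beta\) given by \cite[Lemma~1]{Ku2}. Since \(\beta_A^*\bN'\) is controlled by the canonical heights on the product, there is a constant \(C\) with
\[
\hat h_{\bN'}(\beta_A(y_1,\ldots,y_r))\le C\sum_k\hat h_{\bN_k}(y_k).
\]
This bound comes from comparing quadratic forms on the real Mordell--Weil spaces. Hence the abelian height of \(\beta(x_{1,i},\ldots,x_{r,i})\) tends to \(0\).

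\emph{Tropical part.} Let \(A_\beta\) be the linear map on cocharacter spaces. I claim that
\[
\operatorname{trop}_{H',v}(\beta(y))=A_\beta\,\operatorname{trop}_{v}(y).
\]
To see this, pair with a character \(m\) of \(T_{H'}\) and use Proposition~\ref{input:torsion-tropicalization}. Lemma~\ref{lem:character-pushout-picard-zero-functoriality}(b),(c) gives a rigidified isomorphism \(\mathcal H_{\beta_M(m)}\simeq\beta_A^*\mathcal H_{H',m}\), which carries \(e_{\beta_M(m)}(y)\) to \(e_m(\beta(y))\). By Lemma~\ref{lem:picard-zero-flat-operation-normalization} this isomorphism is an isometry for the canonical metrics. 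The claim follows, and Lemma~\ref{lem:local-coordinates-compute-tropicalization} transfers it to the chart valuations.

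Next I compare measures. Let \(\boldsymbol\nu_i^{\rm joint}\) be the valuation measure of the Galois orbit of the tuple \((x_{1,i},\ldots,x_{r,i})\) on \(\bigoplus_k N_{k,\BR}\). Then the valuation measure of \(\beta(x_{1,i},\ldots,x_{r,i})\) is \((A_\beta)_*\boldsymbol\nu_i^{\rm joint}\), and the \(k\)-th marginal of \(\boldsymbol\nu_i^{\rm joint}\) is \(\boldsymbol\nu_{x_{k,i}}\). I use the following bound, valid because the only coupling of Diracs is the Dirac:
\[
\int\Bigl\|\textstyle\sum_k(z_k-w_{k,v})\Bigr\|\,d\nu^{\rm joint}_{i,v}\le\sum_k\int\|z_k-w_{k,v}\|\,d\nu_{x_{k,i},v}.
\]
Together with the elementary KR estimate from the proof of Lemma~\ref{lem:bgps-kr-upgrade}, it gives joint KR convergence to \(\boldsymbol\delta_{(\mathbf w_1,\ldots,\mathbf w_r)}\). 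The map \(A_\beta\) is linear and Lipschitz, so the pushforward converges to \(\boldsymbol\delta_{\beta_{\rm trop}(\mathbf w)}\). The Lipschitz argument on \(H'\) then gives \(h_{\overline H'(\bE')}\to0\), and combined with the abelian part this yields smallness.

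\emph{Special cases.} Quotients by connected subgroups, products and the K\"uhne difference morphisms are covered by the general statement. For the difference map, the matrix \((u_1,\ldots,u_m)\mapsto(u_1-u_2,\ldots)\) sends \((\mathbf w,\ldots,\mathbf w)\) to \(0\), as in Lemma~\ref{lem:toric-qcanonical-operation-normalization}.

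\emph{Main obstacle.} I expect the hardest step to be the functoriality \(\operatorname{trop}\circ\beta=A_\beta\circ\operatorname{trop}\) at every place, including non-archimedean ones, with the theta-factor sign convention fixed in the Theta-Factor Convention section. A secondary point is to check that the centers \(\sum_v n_vw_{k,v}=0\) are preserved, so that the target metric is again normalized with \(\muabs=0\).
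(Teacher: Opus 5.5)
Your proposal is correct and follows essentially the same route as the paper. Like the paper, you use the vanishing of the abelian heights and adelic KR convergence for each source sequence, the joint Galois-orbit measure with the prescribed marginals, the triangle-inequality bound on $W_1$ to the product Dirac, Lipschitz pushforward by $\beta_{\rm trop}$, the Lipschitz height argument on $H'$, and the comparison $\hat h_{\bN'}\circ\beta_A\le C\sum_k\hat h_{\bN_k}$. Your derivation of $\operatorname{trop}_{H',v}\circ\beta=A_\beta\circ\operatorname{trop}_v$ via Lemma~\ref{lem:character-pushout-picard-zero-functoriality} is more careful than the paper's appeal to the homomorphism property alone. For the abelian constant, however, the clean justification is the paper's N\'eron--Severi argument, namely that $C\bigotimes_k\operatorname{pr}_k^*N_k-\beta_A^*N'$ is ample; comparing quadratic forms on the infinite-dimensional space $A(\overline K)\otimes\mathbb R$ does not by itself give a uniform constant.
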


\begin{proof}
	Normalize all toric minima to \(0\).  For each source sequence,
	Proposition~\ref{prop:monocritical-kr}, applied to its quasi-canonical metric,
	gives adelic KR convergence of the valuation measures to the Dirac family at
	its critical vector.  The proof of
	Proposition~\ref{prop:bgps-defect-vanishing} also gives convergence of the
	abelian N\'eron--Tate height to \(0\).
	
	Fix \(i\), choose one finite extension over which the whole tuple
	\((x_{1,i},\ldots,x_{r,i})\) is defined, and take the uniform local Galois-orbit
	measure \(\nu_{i,v}\) of that tuple.  Its \(k\)-th marginal is precisely
	\(\nu_{x_{k,i},v}\): the restriction map from the Galois orbit of the tuple to
	the orbit of \(x_{k,i}\) is surjective and all its fibers have the same
	cardinality.  Thus passage to a common defining field changes neither the
	normalized marginal measure nor any normalized height.
	
	The tropicalization maps are group homomorphisms by
	Proposition~\ref{input:torsion-tropicalization}.  Hence the valuation measure
	of the image point is the pushforward, under the integral linear map
	\(\beta_{\rm trop}\), of \(\nu_{i,v}\).  A
	joint probability measure whose marginals converge in KR topology to Dirac
	measures converges in KR topology to the product Dirac measure: for the product
	norm,
	\[
	W_1\!\left(\nu_i,\delta_{(w_1,\ldots,w_r)}\right)
	\le
	\sum_{k=1}^r
	\int\!\|u_k-w_k\|\,d\nu_i.
	\]
	Consequently the image valuation measures converge in KR topology to the
	Dirac family at \(\beta_{\rm trop}(\mathbf w_1,\ldots,\mathbf w_r)\); indeed,
	pushforward by the fixed integral linear map is Lipschitz for \(W_1\), with
	constant \(\|\beta_{\rm trop}\|\), uniformly at every place.
	
	The local support functions for an ample quasi-canonical toric metric are
	continuous, piecewise linear, and of at most linear growth.  Hence KR
	convergence implies convergence of their integrals.  By the height
	identity in Lemma~\ref{lem:point-measure-height}, the toric height of the image
	points tends to the toric minimum.  On abelian quotients let
	\(\beta_A:\prod_k A_{H_k}\to A_{H'}\) be the induced homomorphism.  If
	\(N'\) is the target symmetric ample bundle and \(N_k\) are the source
	symmetric ample bundles, then \(\beta_A^*N'\) is symmetric nef.  In the
	finite-dimensional N\'eron--Severi space there is \(C>0\) such that
	\(C\bigotimes_k\operatorname{pr}_k^*N_k-\beta_A^*N'\) is ample.  The associated
	quadratic heights therefore satisfy
	\[
	\widehat h_{N'}\!\left(\beta_A(a_1,\ldots,a_r)\right)
	\le C\sum_{k=1}^r\widehat h_{N_k}(a_k).
	\]
	The right-hand side tends to \(0\).  The height decomposition now shows that the
	image sequence is small for the target quasi-canonical semiabelian metric.
	
	For a quotient, take \(r=1\).  For products use the exterior product metric.
	For the difference map, the induced tropical map sends
	\((\mathbf w,\ldots,\mathbf w)\) to \(0\), proving the final assertion.
\end{proof}

\begin{lemma}[Subvariety-restricted variation package]
	\label{lem:subvariety-restricted-kuhne-package}
	Let \(X\subseteq H_{\overline K}\) be irreducible, put
	\(d=\dim X\) and \(d_0=\dim\rho(X)\), and let \(\overline X\) be its
	closure in the chosen toric compactification.  Form the auxiliary
	\(n\)-division tower \(\varphi_n:H_n\to H\) with the same fan, and let
	\(Y\) be an irreducible component of
	\(\varphi_n^{-1}(\overline X_{K_n})\).  Write
	\(\delta(Y)=\deg(\varphi_n|_Y:Y\to\overline X_{K_n})\), and restrict every
	auxiliary metrized line bundle to \(Y\).  Denote by \(\bM_n\) the
	corresponding auxiliary reference bundle on the compactification of \(H_n\).
	
	Then the degree asymptotic, measure compression, first variation, horizontal
	semipositivity correction, and arithmetic-volume estimate required in
	K\"uhne's Proposition~4.1 hold on \(Y\), with constants depending on
	\(X\) and the fixed test metric but not on \(n\), the component \(Y\), or
	the field \(K_n\).  In particular, for a
	\(\operatorname{Gal}(\mathbb C_v/K_v)\)-invariant integrable continuous test
	function \(f\) on \(\overline X^{an}_{\mathbb C_v}\), putting
	\(f_{n,v'}=f\circ(\varphi_n|_Y)^{an}\) and
	\[
	\mu_{Y,n,v'}=
	\frac{c_1(\bM_{n,v'}|_Y)^d}{(M_n|_Y)^d},
	\]
	one has
	\[
	(\varphi_n|_Y)_*\mu_{Y,n,v'}\Longrightarrow
	\frac{
		c_1(\overline H(\bE)_v|_{\overline X})^{d-d_0}\wedge
		c_1(\rho^*\bN_{H,v}|_{\overline X})^{d_0}
	}{
		\overline H(E)^{d-d_0}\cdot(\rho^*N_H)^{d_0}\cdot X
	}.
	\]
	Moreover, for \(|\lambda|\le n^{-1}\), the height first-variation remainder
	on \(Y\) is \(O_{X,f}(|\lambda|^2n)\); the horizontal correction restricted
	to \(Y\) is \(O_X(n^{-2})\); and the quadratic arithmetic-volume defect for
	the restricted bundle is bounded with the same normalized field-extension
	convention.  All heights, intersections, and volumes in these assertions are
	those of \(Y\), not of the ambient compactification.
\end{lemma}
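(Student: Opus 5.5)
The plan is to rerun the ambient estimates of Sections 5--6 with \(\overline G_n\) replaced by the restricted component \(Y\), tracking how each constant depends on \(X\). The key structural input is the projection formula for the finite map \(\varphi_n|_Y:Y\to\overline X_{K_n}\) of degree \(\delta(Y)\). Every intersection number on \(Y\) is \(\delta(Y)\) times the corresponding number on \(\overline X\) after pulling back through \(\varphi_n\), using \(\varphi_n^*\overline H(E)=nM_n\) on the toric factor and \(\varphi_n^*\rho^*N=\rho_n^*N\) on the abelian factor. Hence \(\delta(Y)\) cancels from every normalized quantity. Since \(Y\) is a component of the preimage and \(\varphi_n\) is Galois-equivariant up to the field extension, these normalized quantities are independent of \(Y\) and of \(K_n\), which is the claimed uniformity.

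The steps, in order:
\begin{enumerate}
\item \emph{Degree asymptotic.} Rerun Lemma~\ref{lem:fixed-m-degree-asymptotic} on \(Y\). Terms with more than \(d_0\) factors of \(\rho_n^*N\) vanish because \(\dim\rho(X)=d_0\), by the argument of Proposition~\ref{lem:fixed-horizontal-algebraic-data} applied to \(X\). The leading term is \(\delta(Y)n^{-(d-d_0)}\binom{d}{d_0}\overline H(E)^{d-d_0}(\rho^*N)^{d_0}X\), with relative error \(O_X(n^{-1})\). Its positivity is the relative ampleness argument restricted to \(X\).
\item \emph{Integral asymptotic and measure compression.} Repeat Lemma~\ref{lem:fixed-m-integral-asymptotic} with test function \(f\circ\varphi_n|_Y\), then divide by the degree asymptotic as in Proposition~\ref{prop:fixed-m-normalized-compression}. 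This gives the weak convergence of \((\varphi_n|_Y)_*\mu_{Y,n,v'}\) to the displayed normalized mixed measure. As in Lemma~\ref{lem:canonical-measure}, all other mixed terms have zero total mass by the restricted top-degree vanishing.
\item \emph{First variation.} Expand multilinearly as in Lemma~\ref{lem:canonical-height-variation}. The \(r=1\) term is the Chambert-Loir integral on \(Y\). The quadratic core follows from the projection-formula exponent count of Lemma~\ref{lem:toric-dimensional-scaling} with \(g\) replaced by \(d_0\) and \(t\) by \(d-d_0\): the \(n\)-exponent is \(j-d_0+1\le1\). Together with Lemma~\ref{lem:parameterized-error-bookkeeping}, this gives an \(O_{X,f}(|\lambda|^2n)\) remainder.
\item \emph{Horizontal correction.} The absolute minimum of the restricted bundle is at least the ambient one, so Proposition~\ref{prop:canonical-lower-bound} (or Proposition~\ref{prop:compressed-lower-bound}) gives \(\muabs\ge -Cn^{-2}\) on \(Y\) with the ambient constant. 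For the height of \(Y\) itself, Theorem~\ref{input:fundamental-height-inequality} applied on \(Y\) gives the lower bound \(h(Y)\ge\muabs\ge -Cn^{-2}\). Lemma~\ref{input:constant-correction-horizontal} then applies on \(Y\) and yields \(\kappa_n=O_X(n^{-2})\).
\item \emph{Volume defect.} Apply Proposition~\ref{input:quadratic-volume} to \(Y\), which is a projective variety of dimension \(d\), with the reference bundle made horizontally semipositive in the previous step. Control the constant shift by Lemma~\ref{lem:constant-shift-volume-defect}, and the field factor by Lemma~\ref{lem:field-extension-normalization}.
\end{enumerate}

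The main obstacle is the upper bound \(h(Y)\le O(n^{-2})\) in the horizontal-correction step. On a subvariety, \(h(Y)\) need not be bounded above by the ambient essential minimum: the inequality \(h(Y)\le\muess(Y)\) holds, but \(\muess(Y)\) may be positive. I would first check whether Lemma~\ref{input:constant-correction-horizontal} genuinely needs an upper bound, or only the lower bound on \(\muabs\) together with nefness. If it needs only the lower bound, the issue disappears. Otherwise I would use \(h(Y)=h(X)/n^{?}\)-type scaling through \(\varphi_n\), via the projection formula, to show that \(h(Y)\) is bounded by a fixed quantity depending on \(X\) times \(n^{-1}\), and then absorb the weaker rate into the choice of \(\lambda\).

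A secondary worry is the uniformity of the bounds under passage to integrable test metrics on a possibly singular \(\overline X\). For that step I would pass to the normalization, which leaves all intersection numbers unchanged.
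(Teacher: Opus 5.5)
Your steps 1--3 and 5 follow the paper's proof. You push every intersection on \([Y]\) down to \([\overline X]\) by the projection formula, cancel \(\delta(Y)\) before estimating \(n\)-exponents (and before the field factor \([K_n:K]\)), and rerun K\"uhne's Lemmas~4.2--4.4 and~4.6 with \(d_0\) in place of \(g\). Your lower bound \(h_{\bM_n}(Y)\ge\muabs_{\bM_n}(Y)\ge-Cn^{-2}\) in step~4 is also correct, and it suffices for \(\kappa_n=O(n^{-2})\).

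The gap is the point you flagged yourself, and neither of your two exits closes it. The paper does not get the restricted height bound from the geometry of \(X\). It gets it from the hypothesis under which K\"uhne's Proposition~4.1 is applied (Proposition~\ref{prop:kuhne-section4-transport}(a)): \(X\) carries an \(X\)-generic sequence \((x_i)\) whose heights tend to the ambient minimum, normalized to \(0\). Since \(\varphi_n|_Y\colon Y\to\overline X_{K_n}\) is finite and surjective, you can lift each \(x_i\) to a point \(y_i\in Y\), and the lifts are \(Y\)-generic. Take \(\bM_n=\varphi_n^*\bigl(n^{-1}\overline H(\bE)+\rho^*\bN_H\bigr)\) in additive notation, matching \(\varphi_n^*\overline H(E)=n\overline H_n(E)\). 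Then \(h_{\bM_n}(y_i)=n^{-1}h_{\overline H(\bE)}(x_i)+\hat h_{\bN_H}(\rho(x_i))\to0\), because both summands are nonnegative by Lemma~\ref{lem:qbog-height-separation}. Hence \(\muess_{\bM_n}(Y)\le0\), and the upper half of Zhang's inequality, Theorem~\ref{input:fundamental-height-inequality}, gives \(h_{\bM_n}(Y)\le0\).

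This is also why the issue does not disappear just because the constant-correction lemma needs only lower bounds. The Minkowski--variation step of Proposition~4.1 compares \(\lim_i h_{\bM_n}(y_i)\) with \(h_{\bM_n}(Y)\) to within \(O(n^{-2})\). Only the lifted small sequence makes that comparison available.

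Your scaling fallback cannot work, because without this hypothesis the desired bound is false. Expand \(\bM_n^{d+1}\cdot Y\) on \([\overline X]\). The arithmetic term with \(d_0+1\) factors \(\rho^*\bN_H\) has the same order \(n^{-(d-d_0)}\) as the restricted degree. After normalization this gives \(h_{\bM_n}(Y)=h_{\bN_H}(\rho(X))+O(n^{-1})\), which is positive whenever \(\rho(X)\) is not a torsion subvariety. Moreover, even a genuine \(O(n^{-1})\) bound could not be absorbed by the choice of \(\lambda\). The resulting error \(n^{-1}|\lambda|^{-1}+n|\lambda|\) is bounded below by a positive constant independent of \(n\), whatever \(\lambda\) is.
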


\begin{remark}
	The notation in K\"uhne's Lemmas~4.2--4.4 and the notation used here are not
	identical.  K\"uhne's \(\overline Y\) is our component \(Y\), his \(d'\) is
	our \(d_0\), his relative bundle \(M|_{\overline X}\) is our
	\(\overline H(E)|_{\overline X}\), and his
	\(\overline L_n|_{\overline Y}\) is the restricted auxiliary bundle denoted
	here by \(\bM_n|_Y\).  Thus his normalized measure
	\[
	\frac{c_1(\overline L_{n,v'}|_{\overline Y})^d}
	{(L_n|_{\overline Y})^d}
	\]
	is exactly our \(\mu_{Y,n,v'}\); this is a change of notation, not an
	additional normalization.  K\"uhne's Lemma~4.3 requires continuity and
	\(\operatorname{Gal}(\mathbb C_v/K_v)\)-invariance, whereas Lemma~4.4 adds the
	condition that \(\overline O_{\overline X}(f)\) be integrable.  Our phrase
	``integrable continuous test function'' combines these two requirements
	because both the measure limit and the first-variation estimate are used here.
	With this translation of notation, the invariance makes
	\(f_{n,v'}=f\circ(\varphi_n|_Y)^{an}\) independent of the chosen
	identification \(\mathbb C_{v'}\simeq\mathbb C_v\).  It does not restrict the
	eventual weak-convergence statement: both the orbit measures and the limiting
	measure are Galois invariant, so an arbitrary continuous test function may be
	replaced by its Haar average under the compact local Galois group.
\end{remark}

\begin{proof}
	The argument is made on the cycle \([Y]\).  The projection formula gives
	\[
	(\varphi_n|_Y)_*[Y]=\delta(Y)[\overline X_{K_n}].
	\]
	Consequently every degree and intersection in \((M_n|_Y)^d\) is an
	intersection on \(\overline X\), multiplied by \(\delta(Y)\) and the explicit
	power of \(n\) coming from
	\(\varphi_n^*\overline H(E)=n\overline H_n(E)\).  Terms containing more than
	\(d_0\) factors from the abelian quotient vanish after restriction, and the
	exact restricted degree identity is
	\[
	(M_n|_Y)^d
	=\delta(Y)\sum_{j=0}^{d_0}\binom dj
	n^{-(d-j)}
	\bigl(\overline H(E)|_{\overline X}\bigr)^{d-j}
	\cdot\bigl(\rho^*N_H|_{\overline X}\bigr)^j.
	\]
	Its leading order is \(\delta(Y)n^{-d+d_0}\), with positive coefficient
	\[
	\binom d{d_0}
	c_1(\overline H(E)|_{\overline X})^{d-d_0}
	\cdot c_1(\rho^*N_H|_{\overline X})^{d_0}>0.
	\]
	The binomial factor occurs in both the local mixed-intersection numerator and
	the restricted top degree, so it cancels in the normalized measure ratio.
	Dividing the local intersection formula by the restricted top degree gives
	the displayed weak limit, exactly as in \cite[Lemmas~4.2--4.3]{Ku1}.
	
	For first variation, expand the normalized height
	\(h_{\bM_n+\lambda\varphi_n^*O_{\overline X}(f)}(Y)\).  The linear term is
	the integral against \(\mu_{Y,n,v'}\).  Every term of order at least two still
	contains \([Y]\); pushing it to \([\overline X]\) gives
	\cite[equations~(4.5)--(4.6)]{Ku1}, hence
	\(O_{X,f}(|\lambda|^2n)\).  Thus the projection-formula mechanism is applied
	to \([Y]\), rather than restricting an ambient numerical inequality.  More
	explicitly, every numerator term carries the same factor \(\delta(Y)\) as the
	restricted top degree; it cancels before the \(n\)-exponent is estimated.
	
	The pointwise canonical lower bound holds on the auxiliary semiabelian
	variety and therefore on the closed points of \(Y\).  If \(X\) contains an
	\(X\)-generic small sequence, compatible lifts give a \(Y\)-generic small
	sequence; Zhang's inequalities then give the restricted height bound and the
	\(O_X(n^{-2})\) correction, as in \cite[Lemma~4.5]{Ku1}.  Applying the
	quadratic arithmetic-volume package to the horizontally semipositive
	restricted bundle on \(Y\), with the preceding restricted higher-order
	intersection estimates, gives the asserted defect by
	\cite[Lemma~4.6]{Ku1}.  The raw factor \([K_n:K]\) cancels under
	Lemma~\ref{lem:field-extension-normalization}; this cancellation is separate
	from, and occurs after, cancellation of \(\delta(Y)\).  Every assertion is
	therefore an assertion on \(Y\).
\end{proof}

The next proposition is the assembly point for the transport of K\"uhne's
archimedean local-trivialization argument.  It replaces the standard
\((\mathbb P^1)^t\)-compactification by the present toric compactification.
It is useful to record the internal ingredients before giving the proof.
\[
\begin{array}{ccl}
	\text{quasi-canonical shift and fan coordinates}
	&\rightsquigarrow&
	\text{local compact-torus Haar slices},\\
	\text{BPS quotient current and toric invariance}
	&\rightsquigarrow&
	\text{general-fan curvature--Haar formula},\\
	\text{products, quotients, and difference morphisms}
	&\rightsquigarrow&
	\text{orbit-measure functoriality},\\
	\text{character-pushout theta factors}
	&\rightsquigarrow&
	\text{Picard-zero sign/dual bookkeeping},\\
	\text{finite Haar slices plus equidistribution}
	&\rightsquigarrow&
	\text{K\"uhne small-box contradiction}.
\end{array}
\]
The proof below follows the published JEMS numbering.  The local
trivialization is \cite[Section~3.5, Lemma~3.3]{Ku1}; subvariety
equidistribution is \cite[Proposition~4.1 and Lemmas~4.2--4.6]{Ku1}; the rank,
Haar-current and domination inputs are \cite[Lemmas~5.1--5.3]{Ku1}; difference
geometry is \cite[Lemma~6.2]{Ku1}; and the final contradiction is
\cite[Proposition~6.1, especially equation~(6.2)]{Ku1}.

\begin{proposition}[K\"uhne local-trivialization transport package]
	\label{prop:kuhne-section4-transport}
	Let \(H\), \(\rho:H\to A_H\), and
	\[
	\bM=\overline H(\bE)\otimes\rho^*\bN_H
	\]
	be as in Theorem~\ref{thm:qcanonical-bogomolov}, with \(\bE\) quasi-canonical.  The
	K\"uhne local-trivialization and operation argument remains valid for this
	quasi-canonical toric compactification in the following precise sense.
	
	\begin{enumerate}[label=\textup{(\alph*)},leftmargin=2em]
		\item For every irreducible subvariety
		\(X\subseteq H_{\overline K}\) satisfying
		\(\muess_{\bM}(X)=\muess_{\bM}(H)\), every \(X\)-generic sequence
		\((x_i)\) satisfying
		\[
		h_{\bM}(x_i)\longrightarrow\muess_{\bM}(H),
		\]
		and every archimedean place \(v\), the Galois orbit measures of \(x_i\) on
		\(X^{an}_{\mathbb C_v}\) converge to the analogue of K\"uhne's
		Proposition~4.1 measure
		\[
		\mu_{X,v}^{\bM}
		=
		\frac{
			c_1(\overline H(\bE)_v|_{\overline X})^{d-d_0}
			\wedge
			c_1(\rho^*\bN_{H,v}|_{\overline X})^{d_0}
		}{
			\overline H(E)^{d-d_0}\cdot(\rho^*N_H)^{d_0}\cdot X
		},
		\]
		where \(d=\dim X\) and \(d_0=\dim\rho(X)\).
		
		\item In the archimedean local trivializations, K\"uhne's Lemmas~5.1--5.3
		admit the following general-fan replacement.  The quasi-canonical shift is
		Lemma~\ref{lem:qcanonical-haar-shift}; the phase coordinates on arbitrary fan
		charts are supplied by Lemma~\ref{lem:fan-chart-compact-torus-coordinates};
		the rank and exceptional-set part is Lemma~\ref{lem:general-fan-rank-slices};
		the toric current computation is supplied by the BPS route through the
		corner-active polyhedral quotient factorization
		Proposition~\ref{input:rank-active-bps-quotient-compatibility},
		Lemma~\ref{lem:toric-quotient-descent-criterion}, and
		Propositions~\ref{prop:rank-active-bps-quotient-compatibility} and
		\ref{input:bps-relative-atomic-tropical-pushforward}.  The local
		Bedford--Taylor expansion is the corner-current route
		Proposition~\ref{prop:toric-corner-current-decomposition}.  Thus one obtains
		Proposition~\ref{input:general-fan-curvature-haar-slice}; the Riemannian
		volume domination is
		Lemma~\ref{lem:general-fan-riemannian-domination}.
		
		\item The preceding description is compatible with quotients by connected
		stabilizers, with products, and with the K\"uhne difference morphisms
		\[
		\alpha_m(x_1,\ldots,x_m)
		=(x_1x_2^{-1},x_2x_3^{-1},\ldots,x_{m-1}x_m^{-1}).
		\]
		Smallness of quotient, product and difference-image sequences for the
		corresponding quasi-canonical target metrics is supplied by
		Lemma~\ref{lem:qcanonical-operation-smallness}.
		The finite Haar-slice part of this compatibility is
		Proposition~\ref{prop:finite-haar-slice-functoriality}.  The toric
		quasi-canonical normalization under these operations is
		Lemma~\ref{lem:toric-qcanonical-operation-normalization}; the Picard-zero
		theta factors only fix the local sign and dual convention, as recorded in
		Lemma~\ref{lem:character-pushout-picard-zero-functoriality} and
		Lemma~\ref{lem:picard-zero-flat-operation-normalization}.  The pushforward
		identity used in \cite[equation~(6.2)]{Ku1} is obtained by pushing
		forward the finite Galois orbit measures and passing to the equidistribution
		limits on the source and on the target.  Hence
		Lemma~\ref{lem:kuhne-small-box-contradiction-finite-slices} applies in the
		trivial-stabilizer case.
	\end{enumerate}
\end{proposition}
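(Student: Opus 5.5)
The proposition is an assembly statement, and I will prove its three parts in order, reducing each to results already established in this section and in Sections 4--7.

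For part (a), fix an irreducible \(X\) with \(\muess_{\bM}(X)=\muess_{\bM}(H)\), an \(X\)-generic sequence with \(h_{\bM}(x_i)\to\muess_{\bM}(H)\), and an archimedean place \(v\). I normalize \(\muabs_{\bE}(T_H)=0\) by an \(M_K\)-constant. I then rerun the Minkowski--variation argument of Theorem~\ref{thm:qcanonical-compression} on the closure \(\overline X\) instead of the ambient compactification. The restricted inputs are exactly those of Lemma~\ref{lem:subvariety-restricted-kuhne-package}:
\begin{itemize}
\item[(i)] the restricted degree asymptotic and measure compression, which identify the limit of \((\varphi_n|_Y)_*\mu_{Y,n,v'}\) with \(\mu^{\bM}_{X,v}\);
\item[(ii)] the first variation with remainder \(O(|\lambda|^2 n)\);
\item[(iii)] the \(O(n^{-2})\) horizontal correction;
\item[(iv)] the quadratic volume defect.
\end{itemize}
The quasi-canonical shift enters only through the compressed metrics \(\bE_m\). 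Lemma~\ref{lem:compressed-lower-uniformity} makes the lower-bound constants uniform in \(m\), so I take the path \(m(n)=\lfloor n^{a}\rfloor\) and \(\lambda=n^{-(3+a)/2}\) as in the parameter section. This gives an error of \(O(n^{-(1-a)/2})\), and Lemma~\ref{lem:compressed-measure}, applied on \(Y\), identifies the limit. The hypothesis \(\muess_{\bM}(X)=\muess_{\bM}(H)\) is what makes the sequence small on \(X\) as well. This is needed so that the restricted lower bound (Zhang's inequality on \(Y\)) holds with the ambient constant.

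For part (b) I will simply chain the lemmas in the order already listed:
\begin{itemize}
\item[(1)] Lemma~\ref{lem:qcanonical-haar-shift} translates the Haar factor to \(\val_v^{-1}(u_v)\).
\item[(2)] Lemma~\ref{lem:fan-chart-compact-torus-coordinates} provides the phase coordinates on the fan charts.
\item[(3)] Lemma~\ref{lem:general-fan-rank-slices} provides the real-analytic slices and the exceptional sets.
\item[(4)] The BPS quotient route (Propositions~\ref{input:rank-active-bps-quotient-compatibility}, \ref{prop:rank-active-bps-quotient-compatibility}, \ref{input:bps-relative-atomic-tropical-pushforward} together with Lemma~\ref{lem:torus-invariant-atomic-lift}) shows that the toric part of the measure in (a) is a finite positive sum of Haar measures on compact phase fibers.
\item[(5)] Proposition~\ref{prop:toric-corner-current-decomposition} provides the Bedford--Taylor alternative.
\end{itemize}
Combining these with the smooth abelian factor \(c_1(\rho^*\bN_{H,v})^{d_0}\) gives Proposition~\ref{input:general-fan-curvature-haar-slice}. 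In the \((\mathbb P^1)^t\) case it holds unconditionally by K\"uhne's coordinate-wall computation; in general it holds under the stated hypotheses. Lemma~\ref{lem:general-fan-riemannian-domination} then gives the domination.

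For part (c), Lemma~\ref{lem:toric-qcanonical-operation-normalization} shows that products, quotients by connected stabilizers, and \(\alpha_m\) carry quasi-canonical metrics to quasi-canonical metrics, with centers transported linearly. Under \(\alpha_m\) the common center goes to \(0\). Lemma~\ref{lem:qcanonical-operation-smallness} shows that image sequences are small, so part (a) applies on both source and target. The measure on \(X^m\) is the product of the measures on the factors, which I would check via the product of small generic sequences. The pushforward identity comes from \(\beta_*\delta_{y_i,v}=\delta_{\beta(y_i),v}\) followed by passing to weak limits on both sides. Proposition~\ref{prop:finite-haar-slice-functoriality} keeps all these measures in the finite Haar-slice class. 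The Picard-zero factors are flat by Lemma~\ref{lem:picard-zero-flat-operation-normalization}, so they contribute nothing to the currents. All hypotheses of Lemma~\ref{lem:kuhne-small-box-contradiction-finite-slices} are then met.

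The main obstacle is genericity of the product and difference sequences in (c). A product of \(X\)-generic sequences need not be \(X^m\)-generic. I would handle this as K\"uhne does, by choosing a diagonal-type subsequence that avoids countably many proper subvarieties. That argument uses only the Zariski topology, so it transfers verbatim to the present compactification.
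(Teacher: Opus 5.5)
Your parts (b) and (c) follow the paper's proof. They use the same lemma chain for K\"uhne's Lemmas~5.1--5.3, the flatness of the Picard-zero factors, and the pushforward identity obtained from \((\alpha_m)_*\delta_{y_i,v}=\delta_{\alpha_m(y_i),v}\) together with equidistribution on \(X^m\) and \(\alpha_m(X^m)\). The genericity of the tuple sequence, which you flag as the main obstacle, is handled in the paper exactly as you suggest, in the proof of Proposition~\ref{prop:qbog-dominant-bridge}.

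\textbf{The gap is in (a).} There you handle the quasi-canonical shift by superimposing the compression path of Theorem~\ref{thm:qcanonical-compression}, and you identify the limit by ``Lemma~\ref{lem:compressed-measure}, applied on \(Y\)''. That lemma cannot be applied on a subvariety:
\begin{itemize}
\item It is an ambient statement. It rests on the exact identity \(([m]^{an}_{\overline G})_*\mu_{\ol L_{(m,1)},v}=\mu_{\ol L,v}\) of Lemma~\ref{lem:compressed-toric-uniform-family}, which is the projection formula for \([m]\) as an endomorphism of the whole compactification.
\item A subvariety is not preserved by \([m]\): it sends \(\overline X\) onto \(\overline{[m]X}\). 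So pairing \(f\circ[m]\) with a compressed measure on \(\overline X\) computes an integral over \([m]X\), not \(\int f\,d\mu^{\bM}_{X,v}\).
\item The averaging step in Theorem~\ref{thm:qcanonical-compression} runs over the fibres of \([m(n)]\circ\varphi_n\) above \(x_i\). These lie on components of \(([m(n)]\circ\varphi_n)^{-1}(\overline X)\), not on the component \(Y\subseteq\varphi_n^{-1}(\overline X)\) of Lemma~\ref{lem:subvariety-restricted-kuhne-package}.
\item Those components move with \(m(n)\). None of the \(nm\)-type estimates (Lemmas~\ref{lem:fixed-m-degree-asymptotic}, \ref{lem:compressed-height-variation}, \ref{lem:compressed-volume}) is established on them.
\item Your items (i)--(iv) are the \(m\)-free restricted estimates (\(|\lambda|\le n^{-1}\), remainder \(O(|\lambda|^2n)\)). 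They say nothing about the compressed metrics \(\bE_m\).
\end{itemize}

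\textbf{The fix.} The compression is also unnecessary, and the paper's proof of (a) does not use it. It applies Lemma~\ref{lem:subvariety-restricted-kuhne-package} directly to the quasi-canonical bundle in the plain \(n\)-division tower. The metric \(\Psi_{\bE,v}(u)=\Psi_E(u-u_v)-\gamma_v\) is the canonical metric with the tropical variable translated and a constant added:
\begin{itemize}
\item the constant carries no curvature, and its adelic sum is removed by the normalization;
\item the translation is absorbed into the fan-chart characters \(\psi_{j,m}\), whose transitions have valuation zero;
\item K\"uhne's Lemmas~4.2--4.6 are then checked one by one on \(Y\), with the field-extension convention of Lemma~\ref{lem:field-extension-normalization}.
\end{itemize}
If you delete the compression step and run the Minkowski--variation argument with the restricted package alone (parameters as in Theorem~\ref{thm:canonical-eq}), your (a) becomes the paper's argument.
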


\begin{proof}
	We first apply the theta-factor convention stated at the beginning of the
	paper, since this is the only point at which a sign or dual could enter.
	In K\"uhne's Lemma~3.3 the local toric
	coordinate is written, after choosing a rational section on the abelian base,
	as a quotient of the semiabelian rational function by the corresponding theta
	function.  In the notation of Theorem~\ref{thm:local-trivialization}, the same
	coordinate is
	\[
	\psi_{j,m}(y)=\frac{g_m(y)}
	{f_{m,v}(\widetilde\pi_j(y))}.
	\]
	Here \(m\in X^*(T_H)\), \(g_m\) is the rational semiabelian character section,
	and \(f_{m,v}\) is the theta representative of the rational section of the
	character-pushout Picard-zero bundle \(\mathcal H_{H,m}\).  The opposite
	contravariant convention would write the same local expression with the dual
	bundle.  In the present notation this is the harmless replacement
	\(m\mapsto -m\), because
	\(\mathcal H_{H,-m}\simeq\mathcal H_{H,m}^{\vee}\) by
	Lemma~\ref{lem:character-pushout-picard-zero-functoriality}.  Tensor products,
	duals, and pullbacks of these Picard-zero factors are flat by
	Lemma~\ref{lem:picard-zero-flat-operation-normalization}.  Thus the convention
	chosen here fixes the signs in local formulas but changes neither the Chern
	currents nor the normalized measures used below.
	
	We prove (a).  The essential point is that every estimate is made after
	restriction to \(\overline X\) and to an irreducible component of its inverse
	image in the auxiliary tower.  Lemma~\ref{lem:subvariety-restricted-kuhne-package}
	supplies this restricted chain; no estimate on the full ambient
	compactification is substituted for it.  This is not a word-for-word invocation of
	\cite[Proposition~4.1]{Ku1}.  We check its published dependency chain
	\cite[Lemmas~4.2--4.6]{Ku1}: geometric degree growth (Lemma~4.2),
	identification of the limiting mixed measure (Lemma~4.3), height first
	variation (Lemma~4.4), the horizontal semipositivity correction and height
	lower bound (Lemma~4.5), and the arithmetic-volume estimate (Lemma~4.6).
	The normalized height equality
	in (a) is exactly the equality needed when this chain is applied to \(X\).
	The local coordinates
	\(\phi_i^{(j)}\) are replaced by the fan-chart characters
	\(\psi_{j,m}\) of Lemma~\ref{lem:fan-chart-compact-torus-coordinates}; the
	unitary transition condition is exactly the valuation-zero overlap condition in
	Theorem~\ref{thm:local-trivialization}; and the quasi-canonical metric
	\[
	\Psi_{\bE,v}(u)=\Psi_E(u-u_v)-\gamma_v
	\]
	is obtained from the canonical one by translating the tropical variable by
	\(u_v\) and adding a constant.  The constant has no curvature contribution,
	and the adelic normalization of the constants is already included in the
	quasi-canonical normal form.  The restricted geometric degree identities,
	mixed-measure limit, first variation, semipositivity correction, lower bound,
	and arithmetic-volume estimate on the inverse-image component are precisely
	the assertions of Lemma~\ref{lem:subvariety-restricted-kuhne-package}.
	The field-extension convention in every step is
	Lemma~\ref{lem:field-extension-normalization}.
	Consequently every input of \cite[Lemmas~4.2--4.6]{Ku1} has an explicit
	replacement, and their assembly gives the displayed measure.  It is the mixed
	Chern measure of
	\(\overline H(\bE)|_{\overline X}\) and \(\rho^*\bN_H|_{\overline X}\), with
	the same normalization by the algebraic intersection degree as in K\"uhne's
	formula.
	
	We next prove (b).  K\"uhne's Lemma~5.1 is the rank and exceptional-set
	statement for the maps defined by active toric coordinates.  Its general-fan
	replacement is Lemma~\ref{lem:general-fan-rank-slices}, with
	Lemma~\ref{lem:fan-chart-compact-torus-coordinates} providing the phase maps
	to compact tori and Lemma~\ref{lem:qcanonical-haar-shift} translating the
	canonical compact torus to \(\val_v^{-1}(u_v)\).  K\"uhne's Lemma~5.2 is the
	calculation of the Chern current as Haar measure on the compact phase fibers.
	Here this is Proposition~\ref{input:general-fan-curvature-haar-slice}: the BPS
	quotient route gives the atomic tropical pushforward, and
	Lemma~\ref{lem:torus-invariant-atomic-lift} lifts the atomic tropical measure
	to Haar measure on compact phase fibers.  The optional Bedford--Taylor
	calculation through corner currents gives the same local formula.  Finally,
	K\"uhne's Lemma~5.3 uses only finiteness of the slice description; its
	replacement is Lemma~\ref{lem:general-fan-riemannian-domination}.  Thus all
	three local lemmas survive unchanged after replacing the coordinate axes of
	\((\mathbb P^1)^t\) by the chosen fan-chart monomial coordinates.
	
	It remains to prove (c).  Products and inverses are monomial maps in the local
	toric coordinates, and the K\"uhne difference morphism is given on toric
	coordinates by
	\[
	(z^{(1)},\ldots,z^{(m)})
	\longmapsto
	\bigl(z^{(1)}(z^{(2)})^{-1},\ldots,
	z^{(m-1)}(z^{(m)})^{-1}\bigr).
	\]
	Hence the Haar-slice part is functorial by
	Proposition~\ref{prop:finite-haar-slice-functoriality}.  The toric
	quasi-canonical centers are carried by the same integral linear maps by
	Lemma~\ref{lem:toric-qcanonical-operation-normalization}; in particular the
	common center \((u_v,\ldots,u_v)\) is sent to \(0\) by the difference map.
	The residual semiabelian theta factors are exactly the tensor products, duals,
	and pullbacks of the character-pushout Picard-zero factors described in
	Lemma~\ref{lem:character-pushout-picard-zero-functoriality}.  By the first
	paragraph of this proof, every dual which appears under products, inverses or
	difference morphisms is exactly the factor attached to the pulled-back
	character \(-m\), hence to
	\(\mathcal H_{H,-m}\simeq\mathcal H_{H,m}^{\vee}\).  These factors have the
	same flatness property, so the local formulas have the same sign and dual
	conventions as K\"uhne's.
	
	It remains only to identify the source of the pushforward identity.  This
	identity is automatic from equidistribution.  For a generic small product
	sequence \(y_i\in X^m(\overline K)\), the Galois orbit measures satisfy
	\[
	(\alpha_m)_*\delta_{y_i,v}=\delta_{\alpha_m(y_i),v}.
	\]
	Applying part (a) to \(X^m\) and to \(\alpha_m(X^m)\), and passing to the weak
	limit, gives the normalized measure identity used in K\"uhne's
	equation~(6.2).  Thus no separate Chern-measure projection formula is required;
	the finite-slice contradiction is
	Lemma~\ref{lem:kuhne-small-box-contradiction-finite-slices}.
\end{proof}

\begin{lemma}[K\"uhne--Zhang difference geometry]
	\label{lem:kuhne-difference-geometry}
	Let \(X\subseteq H_{\overline K}\) be an irreducible subvariety with trivial
	stabilizer
	\[
	\operatorname{Stab}_{H_{\overline K}}(X)=\{e\}.
	\]
	For all sufficiently large \(m\), the difference morphism
	\[
	\alpha_m:X^m\longrightarrow H^{m-1},\qquad
	(x_1,\ldots,x_m)\longmapsto
	(x_1x_2^{-1},x_2x_3^{-1},\ldots,x_{m-1}x_m^{-1})
	\]
	is generically finite of degree \(1\) onto its image.
\end{lemma}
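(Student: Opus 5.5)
The plan is to prove generic injectivity of \(\alpha_m\) directly from the trivial stabilizer, following the Zhang/K\"uhne argument for abelian varieties. Write \(X^{(m)}=X^m\) and, for \(\mathbf x=(x_1,\ldots,x_m)\in X^m\), observe that \(\alpha_m(\mathbf x)=\alpha_m(\mathbf y)\) holds exactly when \(x_k y_k^{-1}\) is independent of \(k\), say equal to \(g\in H\). Thus the fibre of \(\alpha_m\) through \(\mathbf x\) is
\[
F_m(\mathbf x)=\{g\in H:\ g^{-1}x_k\in X\ \text{for }1\le k\le m\}=\bigcap_{k=1}^m x_kX^{-1},
\]
where we identify \(\mathbf y=(g^{-1}x_1,\ldots,g^{-1}x_m)\). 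Hence it suffices to show that for \(m\) large and \(\mathbf x\) generic the intersection \(\bigcap_k x_kX^{-1}\) is the single point \(e\). Finiteness alone gives generic finiteness, and the degree statement is the stronger claim.

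\textbf{Step 1: generic finiteness.} Consider the decreasing chain of closed subsets \(Z_m(\mathbf x)=\bigcap_{k\le m}x_kX^{-1}\) of \(H\), each containing \(e\). By Noetherian induction together with a dimension count on incidence varieties, I would show that for generic \(\mathbf x\) the dimension drops strictly at each step until it stabilizes. Suppose a positive-dimensional component \(W\ni e\) persists for generic \(x_1,\ldots,x_m\) and every further generic \(x_{m+1}\). Then \(W\subseteq x X^{-1}\) for \(x\) ranging over a dense subset of \(X\), that is, \(W^{-1}x\subseteq X\) for generic and hence all \(x\in X\). This gives \(W^{-1}X\subseteq X\), so the group generated by \(W\) stabilizes \(X\), contradicting \(\operatorname{Stab}(X)=\{e\}\). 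Since the dimension of \(X^m\) grows linearly while the dimension of \(Z_m\) is bounded by \(\dim X\), the chain stabilizes after at most \(\dim X+1\) steps at a finite set.

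\textbf{Step 2: degree one.} Let \(S\) be the finite generic intersection \(Z_m(\mathbf x)\). The same argument applied to individual points works as follows. If a point \(g\neq e\) lies in \(Z_{m'}(\mathbf x)\) for generic \(\mathbf x\) and all \(m'\ge m\), then, because \(S\) varies in a finite étale family over an open subset of \(X^m\), a monodromy and constancy argument as the number of factors grows shows that \(g^{-1}x\in X\) for generic \(x\in X\). This forces \(g\in\operatorname{Stab}(X)\), a contradiction. To make this rigorous I would let \(\mathcal Z_m\subseteq X^m\times H\) be the incidence variety of pairs \((\mathbf x,g)\) with \(g\in Z_m(\mathbf x)\), and consider its components other than \(X^m\times\{e\}\) that dominate \(X^m\). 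Adding a new factor \(x_{m+1}\) imposes the independent condition \(g^{-1}x_{m+1}\in X\), which cuts down each such component unless the component lies inside a stabilizer locus. Since the number of dominating components is bounded by the degree of \(\alpha_m\), which is finite after Step 1, the process terminates with only \(X^m\times\{e\}\) remaining.

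\textbf{Main obstacle.} The hardest point is the monodromy step in Step 2, where one must pass from "the extra points \(g\) depend on \(\mathbf x\)" to "some fixed \(g\) stabilizes \(X\)". I would handle it by the incidence-variety dimension count described above rather than by pointwise reasoning. The non-compactness of \(H\), which is semiabelian rather than abelian, causes no difficulty, since all the arguments are purely algebraic on the group \(H\) and never use completeness.
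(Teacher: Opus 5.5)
Your proposal is correct and follows the same route as the paper. The paper's proof only sketches the argument, citing Zhang and K\"uhne: points of $X^m$ with equal successive differences differ by a common translation $g\in\bigcap_k x_kX^{-1}$, and for $m$ large such a $g$ must lie in $\operatorname{Stab}(X)=\{e\}$; your incidence-variety count supplies the details it leaves out.

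The monodromy discussion in Step~2 is unnecessary. Let $C$ be a non-diagonal component of the incidence variety that dominates $X^m$ with finite fibres. Since $X\cap gX\subsetneq X$ for $g\neq e$, the preimage of $C$ after adding a factor $x_{m+1}\in X\cap gX$ has dimension at most $m\dim X+\dim X-1$. So $C$ already fails to dominate $X^{m+1}$ after a single step. In fact a direct count over all $g\neq e$ shows that $m\ge\dim X+2$ suffices for degree one, without needing Step~1.
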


\begin{proof}
	This is the semiabelian form of the difference-map lemma used by Zhang and
	K\"uhne.  K\"uhne states it as \cite[Lemma~6.2]{Ku1}, referring to
	Zhang's proof in the abelian case.  The proof is purely algebraic and uses only
	the group law and the triviality of the stabilizer: if two general points of
	\(X^m\) have the same successive differences, then their coordinates differ by
	a common translation; for \(m\) large enough this common translation must lie
	in \(\operatorname{Stab}_{H_{\overline K}}(X)\), hence is the identity.
\end{proof}

\begin{proof}[Proof of Proposition~\ref{prop:qbog-dominant-bridge}]
	Let \(Z\subsetneq H_{\overline K}\) and \((z_i)\) satisfy the hypotheses of
	Proposition~\ref{prop:qbog-dominant-bridge}.  We prove that \(Z\) is contained in a
	proper translate of a connected algebraic subgroup.
	
	First perform the standard stabilizer reduction from K\"uhne's proof of
	\cite[Proposition~6.1]{Ku1}.  Let \(B\) be the identity component of
	\(\operatorname{Stab}_{H_{\overline K}}(Z)\), and let
	\(q:H\to H/B\) be the quotient.  If \(q(Z)\) is a point, then \(Z\) is contained
	in a translate of \(B\), and we are done.  Otherwise, replacing \(H\) by
	\(H/B\), \(Z\) by \(q(Z)\), and the sequence by its image, it is enough to
	treat the case
	\[
	\operatorname{Stab}_{H_{\overline K}}(Z)=\{e\}.
	\]
	Lemma~\ref{lem:qcanonical-operation-smallness} shows that the projected
	sequence is small for the quotient quasi-canonical metric whose critical
	vector is the image of the source critical vector.  The projected sequence
	remains generic in the projected subvariety.
	
	Choose \(m\) so that Lemma~\ref{lem:kuhne-difference-geometry} applies to
	\(Z\).  As in K\"uhne's proof, choose a sequence of \(m\)-tuples
	\[
	y_i=(z_{\phi_1(i)},\ldots,z_{\phi_m(i)})\in Z^m
	\]
	which is \(Z^m\)-generic after passing to a subsequence.  Then
	\(\alpha_m(y_i)\) is generic in \(\alpha_m(Z^m)\).  The product sequence is
	small for the exterior-product quasi-canonical metric, and
	Lemma~\ref{lem:qcanonical-operation-smallness} shows that the difference
	sequence is small for the quasi-canonical target metric on \(H^{m-1}\) with
	critical vector (0).
	
	Apply Proposition~\ref{prop:kuhne-section4-transport} to \(Z^m\) and to
	\(\alpha_m(Z^m)\).  The pushforward identity used by K\"uhne in
	\cite[equation~(6.2)]{Ku1} follows directly from these two
	equidistribution statements.  Indeed, for every \(i\), the finite Galois orbit
	measures satisfy
	\[
	(\alpha_m)_*\delta_{y_i,v}=\delta_{\alpha_m(y_i),v}.
	\]
	Since \(y_i\) is generic in \(Z^m\) and \(\alpha_m(y_i)\) is generic in
	\(\alpha_m(Z^m)\), passing to the weak limits gives, for every compactly
	supported continuous test function \(f\) on
	\(\alpha_m(Z^m)^{an}_{\mathbb C_v}\),
	\[
	\int_{(Z^m)^{an}_{\mathbb C_v}}(f\circ\alpha_m)\,d\mu_{Z^m,v}^{\bM}
	=
	\int_{\alpha_m(Z^m)^{an}_{\mathbb C_v}} f\,d\mu_{\alpha_m(Z^m),v}^{\bM}.
	\]
	By the real-analytic Haar description in
	Proposition~\ref{prop:kuhne-section4-transport}, the measure on \(Z^m\) is the
	product of the translated compact-torus Haar measures attached to \(Z\), while
	the measure on the image is described by the corresponding Haar volume on the
	image.  The common quasi-canonical shift \(u_v\) only translates the compact
	tori and does not affect the tangent-space computation, the positivity of the
	volume forms, or the Haar invariance.
	
	The hypotheses of
	Lemma~\ref{lem:kuhne-small-box-contradiction-finite-slices} are therefore
	satisfied.  That lemma is precisely K\"uhne's support and dimension comparison
	from \cite[pp.~2119--2121, Section~5]{Ku1}, rewritten in terms of the finite
	Haar-slice package above.  It gives a contradiction in the
	trivial-stabilizer case.
	
	Thus the trivial-stabilizer quotient case cannot occur unless the image is
	contained in a proper translate of a connected algebraic subgroup.  Lifting
	back through the quotient \(H\to H/B\) gives the same conclusion for the
	original \(Z\).
\end{proof}

\begin{lemma}[Quasi-canonical arithmetic \(T\)-effectivity]
	\label{lem:qcanonical-arith-teff-normalization}
	Let \(X_\Sigma\) be a proper toric variety with principal torus \(T_H\), and
	let \(\bE\) be the quasi-canonical toric metric attached to an ample
	\(T_H\)-effective divisor \(E\).  Write the BGPS quasi-canonical normal form as
	\[
	\psi_{\bE,v}(z)=\Psi_E(z-w_v)-\gamma_v,
	\qquad
	\sum_vn_vw_v=0.
	\]
	Then the \(T_H\)-invariant section corresponding to
	\(0\in\Delta_E\) is a \(\muabs_{\bE}(T_H)\)-small invariant section in the
	sense of Definition~\ref{def:arith-T-effective}; in particular
	\(\bE\) is arithmetically \(T_H\)-effective.  More precisely,
	\[
	\muabs_{\bE}(T_H)=\muess_{\bE}(X_\Sigma)=\sum_vn_v\gamma_v.
	\]
\end{lemma}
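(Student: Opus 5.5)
The plan is to compute the local and global roof functions of \(\bE\) explicitly from the BGPS normal form. Once the global roof is known, both the minima formula and arithmetic \(T_H\)-effectivity follow from results already recalled in the paper. No semiabelian input is needed: this is a purely toric statement on \(X_\Sigma\).

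\emph{Step 1: the local roof functions.} For each place \(v\), take the concave Legendre--Fenchel dual of \(\psi_{\bE,v}(z)=\Psi_E(z-w_v)-\gamma_v\). The substitution \(z'=z-w_v\) gives, for \(x\in\Delta_E\),
\[
\upsilon_{\bE,v}(x)=\inf_{z\in N_\BR}\bigl(\langle x,z\rangle-\Psi_E(z-w_v)\bigr)+\gamma_v
=\langle x,w_v\rangle+\Psi_E^{\vee}(x)+\gamma_v .
\]
Since \(\Psi_E\) is the (conic) support function of \(E\), its dual \(\Psi_E^\vee\) vanishes on \(\Delta_E=\operatorname{stab}(\Psi_E)\). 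Hence
\[
\upsilon_{\bE,v}(x)=\langle x,w_v\rangle+\gamma_v\qquad(x\in\Delta_E).
\]
For all but finitely many \(v\) the metric is canonical, so \(w_v=0\) and \(\gamma_v=0\); all the sums below are therefore finite.

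\emph{Step 2: the global roof, and the two minima.} Summing with the weights \(n_v\) and using \(\sum_v n_vw_v=0\), the global roof
\[
\upsilon_{\bE}(x)=\Bigl\langle x,\sum_vn_vw_v\Bigr\rangle+\sum_vn_v\gamma_v=\sum_vn_v\gamma_v
\]
is constant on \(\Delta_E\). The toric successive-minima formula \cite[Theorem~3.9 and Corollary~3.10]{BGPS15} then gives
\[
\muabs_{\bE}(T_H)=\muess_{\bE}(X_\Sigma)=\max_{\Delta_E}\upsilon_{\bE}=\sum_vn_v\gamma_v .
\]
Because \(E\) is \(T_H\)-effective, \(0\in\Delta_E\), so the maximum is in particular attained at \(0\).

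\emph{Step 3: arithmetic \(T_H\)-effectivity.} Use the toric sup-norm identity recalled in Subsection~\ref{subsec:arith-teff-meaning}, namely \(-\log\|s_0\|_{v,\sup}=\upsilon_{\bE,v}(0)\). By Step 1 this equals \(\gamma_v\). Set \(c_v:=\gamma_v\); this family is finitely supported and satisfies
\[
\|s_0\|_{v,\sup}=e^{-c_v}\quad\text{for every }v,\qquad \sum_vn_vc_v=\sum_vn_v\gamma_v=\muabs_{\bE}(T_H).
\]
This is exactly Definition~\ref{def:arith-T-effective}. Equivalently, one can invoke the criterion \(0\in\operatorname{argmax}_{\Delta_E}\upsilon_{\bE}\) from the same subsection, which holds because the roof is constant.

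\emph{Main obstacle.} The only delicate point is sign bookkeeping. I must check that the convention \(\Psi_{\bD,v}=-g_{\bD}\), the dual \(\upsilon=\Psi^\vee\), and the sign of \(\gamma_v\) in the normal form produce \(+\gamma_v\) rather than \(-\gamma_v\) in \(\upsilon_{\bE,v}(0)\). I would fix this by testing the canonical case \(w_v=0\), \(\gamma_v=0\), where \(\|s_0\|_{v,\sup}=1\) and the roof is \(0\). I would also compare with the normalization \(\sum_vn_v\gamma_v=0\) used in Proposition~\ref{prop:qcanonical-compression-normal-form}, which corresponds to \(\muabs=0\) and is consistent with the formula above.
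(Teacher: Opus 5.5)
Your proposal is correct and follows essentially the same route as the paper. You compute the local roof \(\upsilon_{\bE,v}(x)=\langle x,w_v\rangle+\gamma_v\) on \(\Delta_E\), sum with \(\sum_v n_vw_v=0\) to get the constant global roof \(\sum_vn_v\gamma_v\), apply the BGPS successive-minima formula, and take \(c_v=\gamma_v\). The only cosmetic difference is that the paper gets \(-\log\|s_0\|_{v,\sup}=\gamma_v\) directly from \(\sup_z\Psi_E(z-w_v)=0\) (using \(\Psi_E\le 0=\Psi_E(0)\)), whereas you read it off the identity \(-\log\|s_0\|_{v,\sup}=\upsilon_{\bE,v}(0)\); these are the same computation.
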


\begin{proof}
	By the BGPS quasi-canonical normal form, recorded in
	Proposition~\ref{input:bgps-qcanonical-bridge}, the metric functions have the
	displayed shape.  Let \(s_0=\chi^0s_E\) be the invariant section of \(O(E)\)
	corresponding to \(0\in\Delta_E\).  The convention for toric metric functions
	is
	\[
	\psi_{\bE,v}(\val_v(p))=\log\|s_E(p)\|_v,
	\]
	which is the same convention as in Section~3, where
	\(\Psi_{\bE,v}=-g_{\bE,v}\).  Since \(s_0=s_E\) on the principal torus, we
	have, by density of the principal torus and continuity of the toric metric,
	\[
	\log\|s_0\|_{v,\sup}
	=
	\sup_{z\in N_{H,\BR}}\psi_{\bE,v}(z)
	=
	\sup_{z\in N_{H,\BR}}\Psi_E(z-w_v)-\gamma_v .
	\]
	Because \(E\) is \(T_H\)-effective, \(0\in\Delta_E\), equivalently
	\(\Psi_E\le0\) on \(N_{H,\BR}\), and \(\Psi_E(0)=0\).  Hence
	\[
	\sup_z\Psi_E(z-w_v)=0,
	\qquad
	-\log\|s_0\|_{v,\sup}=\gamma_v .
	\]
	Thus \(s_0\) is a small invariant section in the sense of
	Definition~\ref{def:arith-T-effective}, with \(c_v=\gamma_v\).  The family
	\((\gamma_v)_v\) is finitely supported in the BGPS normal form.
	
	It remains to identify the resulting sum with the toric minimum.  For
	\(x\in\Delta_E\), the local roof function is
	\[
	\vartheta_{\bE,v}(x)
	=
	\inf_z\bigl(\langle z,x\rangle-\Psi_E(z-w_v)+\gamma_v\bigr).
	\]
	Putting \(y=z-w_v\) and using the canonical identity
	\(\Psi_E^\vee(x)=0\) on \(\Delta_E\), this becomes
	\[
	\vartheta_{\bE,v}(x)=\langle w_v,x\rangle+\gamma_v .
	\]
	Therefore
	\[
	\vartheta_{\bE}(x)
	=
	\left\langle \sum_vn_vw_v,x\right\rangle+\sum_vn_v\gamma_v
	=
	\sum_vn_v\gamma_v .
	\]
	The toric successive-minima formula
	\cite[Theorem 3.9 and Corollary 3.10]{BGPS15} gives
	\[
	\muabs_{\bE}(T_H)=\muess_{\bE}(X_\Sigma)
	=
	\max_{x\in\Delta_E}\vartheta_{\bE}(x)
	=
	\sum_vn_v\gamma_v .
	\]
	This also shows that the constants \(c_v=\gamma_v\) have the required global
	sum, so \(\bE\) is arithmetically \(T_H\)-effective.
\end{proof}

\begin{lemma}[Height separation in the quasi-canonical case]
	\label{lem:qbog-height-separation}
	Let
	\[
	\bM=\overline H(\bE)\otimes\rho^*\bN_H
	\]
	be as in Theorem~\ref{thm:qcanonical-bogomolov}.  Assume that the toric part \(\bE\) is in the
	situation of Lemma~\ref{lem:qcanonical-arith-teff-normalization}.  Then
	\[
	\muess_{\bM}(H)=\muabs_{\bE}(T_H),\qquad
	h_{\overline H(\bE)}(x)\ge\muabs_{\bE}(T_H),\qquad
	\hat h_{\bN_H}(\rho(x))\ge0
	\]
	for all \(x\in H(\overline K)\).
\end{lemma}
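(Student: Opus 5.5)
The statement has three assertions, and I would prove all three by specializing the Section~4 machinery to the quasi-canonical toric part \(\bE\). The first step is to invoke Lemma~\ref{lem:qcanonical-arith-teff-normalization}. It says that \(\bE\) is arithmetically \(T_H\)-effective, with constants \(c_v=\gamma_v\), and that
\[
\muabs_{\bE}(T_H)=\muess_{\bE}(X_\Sigma)=\sum_v n_v\gamma_v .
\]
Since \(E\) is ample, hence big, and \(T_H\)-effective, and since \(\bE\) is semipositive (it is a translate of the canonical metric), every hypothesis of Proposition~\ref{prop:section4-minima-formula} then holds for \(\bM=\overline H(\bE)\otimes\rho^*\bN_H\). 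That proposition yields
\[
\muess_{\bM}(\overline H)=\muabs_{\bM}(H)=\muabs_{\bE}(T_H).
\]
To obtain the first displayed identity for \(H\) rather than \(\overline H\), I would note that \(H\) is dense open in \(\overline H\), so the two essential minima coincide: nonempty opens of the two spaces are cofinal in each other.

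For the second inequality I would repeat the argument of Lemma~\ref{lem:arith-teff-height-lower}, this time without adding the abelian term. Take the distinguished section \(s_0\) corresponding to \(0\in\Delta_E\). It does not vanish on \(T_H\), so its induced section \(\widetilde s_0\) of \(\overline H(E)\) does not vanish on \(H\). By construction of the induced metric (Proposition~\ref{prop:induced-metric-adelic}),
\[
\|\widetilde s_0(y)\|_{\overline H(\bE),v}\le\|s_0\|_{v,\sup}=e^{-\gamma_v}
\]
for every \(v\)-adic conjugate \(y\) of \(x\). Averaging \(-\log\|\widetilde s_0\|\) over Galois orbits and summing with weights \(n_v\) gives
\[
h_{\overline H(\bE)}(x)\ge\sum_v n_v\gamma_v=\muabs_{\bE}(T_H).
\]

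The third inequality holds because the Néron--Tate height of a symmetric ample bundle with its canonical metric is nonnegative.

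No step here is really an obstacle. The one point to check is that the metric conventions in Lemma~\ref{lem:qcanonical-arith-teff-normalization}, namely \(\psi_{\bE,v}=\log\|s_E\|\), agree with those used for the induced metric through the local trivializations \(\psi_{j,v}\). This matters so that the sup norm of \(s_0\) on \(X_\Sigma\) really bounds the pulled-back norm at every archimedean and non-archimedean place, including on chart overlaps. That agreement is supplied by the chart independence in Proposition~\ref{prop:induced-metric-adelic}.
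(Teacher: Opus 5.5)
Your proposal is correct and follows the paper's proof essentially step by step. You invoke Lemma~\ref{lem:qcanonical-arith-teff-normalization} for arithmetic \(T_H\)-effectivity, rerun the distinguished-section argument of Lemma~\ref{lem:arith-teff-height-lower} without the abelian term, use nonnegativity of the canonical height, and apply Proposition~\ref{prop:section4-minima-formula}; your density remark identifying \(\muess_{\bM}(H)\) with \(\muess_{\bM}(\overline H)\) is a detail the paper leaves implicit, and it is correct.
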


\begin{proof}
	By Lemma~\ref{lem:qcanonical-arith-teff-normalization}, \(\bE\) is
	arithmetically \(T_H\)-effective.  Repeating the invariant-section argument in
	Lemma~\ref{lem:arith-teff-height-lower}, but without adding the abelian factor,
	gives
	\[
	h_{\overline H(\bE)}(x)\ge\muabs_{\bE}(T_H)
	\]
	for every \(x\in H(\overline K)\).  The canonical height attached to the
	symmetric ample line bundle \(\bN_H\) is nonnegative, so
	\[
	\hat h_{\bN_H}(\rho(x))\ge0.
	\]
	Finally, Proposition~\ref{prop:section4-minima-formula}, applied to \(H\) and
	\(\bE\), gives
	\[
	\muess_{\bM}(H)=\muabs_{\bE}(T_H).
	\]
\end{proof}

\begin{proposition}[Base reduction for quasi-canonical Bogomolov]
	\label{prop:qbog-base-reduction}
	Assume that the toric part \(\bE\) is in the situation of
	Lemma~\ref{lem:qcanonical-arith-teff-normalization}.  Assume also
	Proposition~\ref{prop:qbog-dominant-bridge} for the semiabelian subvarieties
	obtained by pulling back abelian subvarieties of \(A_H\).  Then
	Theorem~\ref{thm:qcanonical-bogomolov} holds for \((H,\bM)\).
\end{proposition}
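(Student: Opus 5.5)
The plan is to reduce an arbitrary irreducible proper $Z\subsetneq H_{\overline K}$ carrying a $Z$-generic sequence $(z_i)$ with $h_{\bM}(z_i)\to\muess_{\bM}(H)$ to the dominant situation of Proposition~\ref{prop:qbog-dominant-bridge}. The reduction is made by replacing $H$ with the inverse image of the abelian subvariety generated by $\rho(Z)$.

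\textbf{Step 1: height splitting.} By Lemma~\ref{lem:qbog-height-separation} we have
\[
h_{\bM}(z_i)=h_{\overline H(\bE)}(z_i)+\hat h_{\bN_H}(\rho(z_i)),\qquad h_{\overline H(\bE)}(z_i)\ge\muabs_{\bE}(T_H)=\muess_{\bM}(H),\qquad \hat h_{\bN_H}\ge0 .
\]
Since the total tends to $\muess_{\bM}(H)$, both summands converge separately:
\[
h_{\overline H(\bE)}(z_i)\to\muabs_{\bE}(T_H),\qquad \hat h_{\bN_H}(\rho(z_i))\to0 .
\]

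\textbf{Step 2: the abelian Bogomolov theorem on the base.} The points $\rho(z_i)$ form a $\rho(Z)$-generic sequence, because $\rho|_Z$ is dominant onto the closure of $\rho(Z)$ and preimages of proper closed subsets of $\rho(Z)$ are proper in $Z$. Their Néron--Tate heights tend to $0$, so Zhang's Bogomolov theorem (case (1) of the introduction) shows that $\overline{\rho(Z)}=a+B_0$. Here $a$ is a torsion point and $B_0\subseteq A_H$ is an abelian subvariety.

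\textbf{Step 3: translation and restriction.} Lift $a$ to a torsion point $q\in H(\overline K)$, as in the proof of Lemma~\ref{lem:torsion-fiber-toric-min}, and translate $Z$ and $(z_i)$ by $q^{-1}$. We must check that this translation preserves the relevant heights. On the toric part, Corollary~\ref{cor:torsion-lift-height-invariance} and the valuation-measure description of Proposition~\ref{prop:section4-valuation-height-package} show that translation by a torsion point leaves $\nu_{x,v}$ unchanged, since $\operatorname{trop}_{G,v}(q)=0$ and $\operatorname{trop}_{G,v}$ is a homomorphism. Hence $h_{\overline H(\bE)}$ is invariant. The Néron--Tate height of $\rho(z_i)$ is also invariant under torsion translation. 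Therefore the translated sequence is still small, and now $\overline{\rho(Z)}=B_0$.

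Put $H':=\rho^{-1}(B_0)^{\circ}$. This is a semiabelian variety with the same torus $T_H$, toric compactification $\overline{H'}\subseteq\overline H$ with the same fan, and abelian quotient $B_0$. Restrict $\bM$ to $H'$. The restricted toric part is $\overline{H'}(\bE)$; its local trivializations are the restrictions of those for $H$, and the Picard-zero theta factors pull back by Lemma~\ref{lem:character-pushout-picard-zero-functoriality}. The restriction $\bN_H|_{B_0}$ is symmetric ample with its canonical metric. Consequently $\bM|_{H'}$ again has the form required in Theorem~\ref{thm:qcanonical-bogomolov}, with the same quasi-canonical $\bE$. By Proposition~\ref{prop:section4-minima-formula} and Lemma~\ref{lem:qcanonical-arith-teff-normalization} we get
\[
\muess_{\bM|_{H'}}(H')=\muabs_{\bE}(T_H)=\muess_{\bM}(H),
\]
so $(z_i)$ remains small in $H'$.

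\textbf{Step 4: the two cases.} If $Z=H'$, then $Z$ is a torsion translate of a connected algebraic subgroup, and it is proper because $Z\ne H$. Otherwise $Z\subsetneq H'$ with $\rho(Z)$ dense in $A_{H'}=B_0$. Proposition~\ref{prop:qbog-dominant-bridge} then places $Z$ in a proper translate of a connected subgroup of $H'$, hence of $H$. Undoing the torsion translation gives the claim for the original $Z$.

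The archimedean hypotheses needed by the dominant bridge concern only the fan, which is unchanged, and subvarieties inside $H'\subseteq H$, so they are inherited. The one point I would check most carefully is whether the dominant bridge must be applied with $\rho(Z)$ literally equal to $A_{H'}$, given that $\rho(Z)$ is only constructible. Passing to closures, together with genericity of $(z_i)$, should suffice.

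The main obstacle I expect is the height invariance under torsion translation and under restriction to $H'$. Concretely, this means verifying that the local charts of $H'$ can be chosen as restrictions of the charts of $H$, with compact-torus transition data, so that Lemma~\ref{lem:point-measure-height} computes the same heights. Everything else is formal.
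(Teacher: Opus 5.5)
Your proposal is correct, and it uses the same ingredients as the paper: the height splitting from Lemma~\ref{lem:qbog-height-separation}, genericity of the projected sequence, Zhang's theorem on the base, and then the dominant bridge. The difference is in how you organize the case split.

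\textbf{The non-dominant case needs no restriction.} The paper never restricts to $\rho^{-1}(B_0)$ when $B_0\neq A_H$. In that case $Z\subseteq\rho^{-1}(a+B_0)$, which is a translate, by any lift of $a$, of the connected subgroup $\rho^{-1}(B_0)$ (an extension of $B_0$ by $T_H$). It is proper because $B_0\neq A_H$. That is already the conclusion of Theorem~\ref{thm:qcanonical-bogomolov}.

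\textbf{Where the bridge is used.} The paper invokes Proposition~\ref{prop:qbog-dominant-bridge} only when $B_0=A_H$. Then $\rho^{-1}(B_0)=H$ and $a$ may be taken to be $0$, so neither the torsion translation nor any restriction of $\bM$ is needed.

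\textbf{Consequence for your main obstacle.} The step you expect to be hardest is avoidable. That step is choosing charts on $H'$ as restrictions of those on $H$, checking that $\bM|_{H'}$ again has the required quasi-canonical form, and recomputing $\muess_{\bM|_{H'}}(H')$. Your verifications of these points are fine; they follow from Lemma~\ref{lem:character-pushout-picard-zero-functoriality}(b), Proposition~\ref{prop:section4-minima-formula}, and the homomorphism property of $\operatorname{trop}_{G,v}$. But they only give the stronger information that $Z$ lies in a proper translate inside $H'$, which the theorem does not ask for.

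\textbf{What your route buys.} It matches the literal wording of the hypothesis, where the bridge is assumed for all pulled-back subgroups. It would be the natural route if you wanted to locate $Z$ inside a smaller envelope. For the stated theorem, the paper's shortcut is cleaner.

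Your concern that $\rho(Z)$ is only constructible applies equally to the paper, which also works with the closure of $\rho(Z)$. The proof of the bridge uses only density, so the point is harmless.
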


\begin{proof}
	Let \(Z\subsetneq H_{\overline K}\) be irreducible and let \((z_i)\) be a
	\(Z\)-generic sequence with
	\[
	h_{\bM}(z_i)\to\muess_{\bM}(H).
	\]
	By
	Lemma~\ref{lem:qbog-height-separation},
	\[
	\muess_{\bM}(H)=\muabs_{\bE}(T_H),\qquad
	h_{\overline H(\bE)}(x)\ge\muabs_{\bE}(T_H),\qquad
	\hat h_{\bN_H}(\rho(x))\ge0 .
	\]
	Since
	\[
	h_{\bM}(z_i)
	=h_{\overline H(\bE)}(z_i)+\hat h_{\bN_H}(\rho(z_i))
	\]
	and the first summand is bounded below by \(\muabs_{\bE}(T_H)\), while the
	second is nonnegative, we have
	\[
	\hat h_{\bN_H}(\rho(z_i))\to0.
	\]
	Let \(C=\rho(Z)\).  The projected sequence is \(C\)-generic: if
	\(W\subsetneq C\) is closed, then \(Z\cap\rho^{-1}(W)\) is a proper closed
	subset of \(Z\), so \(z_i\) is eventually outside it.
	
	By the abelian Bogomolov theorem, Theorem~\ref{input:abelian-bogomolov}, \(C\) is
	a torsion translate \(\tau+A_0\) of an abelian subvariety \(A_0\subseteq A_H\).
	If \(A_0\neq A_H\), then
	\[
	\rho^{-1}(\tau+A_0)
	\]
	is a proper translate of the connected semiabelian subgroup
	\(\rho^{-1}(A_0)\), and it contains \(Z\).  This proves the desired conclusion
	in the proper-image case.
	
	It remains to treat the case \(A_0=A_H\).  In the general reduction, after
	translating by a torsion lift of \(\tau\) and replacing \(H\) by
	\(\rho^{-1}(A_0)\), this is exactly the dominant case covered by
	Proposition~\ref{prop:qbog-dominant-bridge}.  Therefore \(Z\) is contained in a
	proper translate of a connected algebraic subgroup of \(H_{\overline K}\).
\end{proof}

\begin{proof}[Proof of Theorem~\ref{thm:qcanonical-bogomolov}]
	After adding an \(M_K\)-constant, the quasi-canonical toric metric is in the
	normal form of Lemma~\ref{lem:qcanonical-arith-teff-normalization}.  The
	dominant case is Proposition~\ref{prop:qbog-dominant-bridge}, proved above
	from the K\"uhne transport package.  Proposition~\ref{prop:qbog-base-reduction}
	then proves the asserted Bogomolov property for every irreducible proper
	subvariety \(Z\subsetneq H_{\overline K}\).
\end{proof}

\begin{proposition}[Bogomolov property implies the strict upgrade]
	\label{prop:qbog-to-qsec}
	Assume Theorem~\ref{thm:qcanonical-bogomolov} for \((H,\bM)\).  Then every strict
	\(\bM\)-small sequence in \(H(\overline K)\) is \(H\)-generic.
	Consequently, if the generic equidistribution statement of
	Theorem~\ref{thm:qcanonical-compression} applies to \(\bM\), every strict
	\(\bM\)-small sequence equidistributes toward the canonical probability
	measure.
\end{proposition}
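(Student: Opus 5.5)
The plan is the standard argument that a Bogomolov statement forces non-generic small sequences into proper special subvarieties, and hence that strict sequences are generic. Let \((x_i)\) be a strict \(\bM\)-small sequence in \(H(\overline K)\), and suppose for contradiction that it is not \(H\)-generic. Then some subsequence lies in a proper closed subvariety \(W\subsetneq H_{\overline K}\). Since \(W\) has finitely many irreducible components, after passing to a further subsequence we may assume all \(x_i\) lie in one component \(Z_1\).

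Next I would refine \(Z_1\) until the sequence is generic in it. Among the irreducible closed subvarieties \(Z\subseteq Z_1\) that contain infinitely many \(x_i\), choose one of minimal dimension, and restrict to the subsequence lying in \(Z\). Any proper closed subset of \(Z\) contains only finitely many of these points, by minimality of the dimension together with the finiteness of components. So this subsequence is \(Z\)-generic. Smallness gives \(h_{\bM}(x_i)\to\muess_{\bM}(H)\), because by definition small means convergence to the ambient essential minimum, and this is exactly the hypothesis of Theorem~\ref{thm:qcanonical-bogomolov}. Since \(Z\subseteq Z_1\subsetneq H_{\overline K}\), that theorem places \(Z\) inside a proper translate \(U\) of a connected algebraic subgroup. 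Infinitely many \(x_i\) then lie in \(U\), which contradicts strictness. Hence every strict \(\bM\)-small sequence is \(H\)-generic.

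For the consequence, a strict small sequence is now generic and small. Applying Theorem~\ref{thm:qcanonical-compression} to \(\bM\) (which is quasi-canonical, with \(E\) ample and \(T\)-effective, so the theorem applies) gives \(v\)-adic equidistribution toward the displayed canonical measure at every place. The same conclusion passes to the full sequence, not just subsequences: every subsequence of a strict small sequence is again strict and small, so every subsequence has a further subsequence converging to the same limit, and therefore the whole sequence converges.

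The main obstacle is bookkeeping rather than any analytic input. One must check that the threshold in Theorem~\ref{thm:qcanonical-bogomolov} is the ambient minimum \(\muess_{\bM}(H)\) and not \(\muess_{\bM}(Z)\); here it is, so smallness transfers directly. One must also confirm that the conclusion "contained in a proper translate" matches the notion of strictness, which it does, since strictness is defined with respect to proper translates of connected subgroups. Finally, there is no circularity of the kind flagged in Remark~\ref{rem:qcanonical-generic-not-strict}: the upgrade uses Theorem~\ref{thm:qcanonical-bogomolov} as a hypothesis and does not feed back into its proof.
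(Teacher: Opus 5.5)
Your argument is correct and follows the paper's proof: you extract a subsequence that is \(Z\)-generic in an irreducible proper \(Z\subsetneq H_{\overline K}\), note that ambient smallness is exactly the hypothesis of Theorem~\ref{thm:qcanonical-bogomolov}, and contradict strictness, after which the generic theorem applies to the whole sequence. Your minimal-dimension choice of \(Z\) justifies the genericity step more carefully than the paper, which compresses it into ``replacing its Zariski closure by an irreducible component''; your final subsequence argument is harmless but unnecessary, since you have already shown the whole sequence is \(H\)-generic.
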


\begin{proof}
	Let \((x_i)\) be strict and \(\bM\)-small.  If it is not \(H\)-generic, then
	after passing to a subsequence and replacing its Zariski closure by an
	irreducible component, there is an irreducible proper subvariety
	\(Z\subsetneq H_{\overline K}\) such that the subsequence is \(Z\)-generic.
	The subsequence is still \(\bM\)-small in the ambient sense, so
	\[
	h_{\bM}(x_i)\longrightarrow \muess_{\bM}(H).
	\]
	Theorem~\ref{thm:qcanonical-bogomolov} implies that \(Z\) is contained in a proper translate
	of a connected algebraic subgroup of \(H_{\overline K}\), contradicting
	strictness.  Hence every strict \(\bM\)-small sequence is \(H\)-generic, and
	the generic equidistribution theorem applies.
\end{proof}

\begin{theorem}[Quasi-canonical strong equidistribution]
	\label{thm:qcanonical-strong-equidistribution}
	\label{input:qsec}
	Let \(\bM\) be an adelic line bundle on a semiabelian variety \(H\) of the form
	\[
	\bM=\overline H(\bE)\otimes \rho^*\bN_H,
	\]
	where \(\bE\) is the quasi-canonical toric metric attached to an ample
	\(T_H\)-effective divisor and \(\bN_H\) is canonical on the abelian quotient.
	Then every strict \(\bM\)-small sequence equidistributes at every archimedean
	place toward the canonical probability measure.
\end{theorem}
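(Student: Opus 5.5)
The plan is to obtain Theorem~\ref{thm:qcanonical-strong-equidistribution} by combining three earlier results: the restricted Bogomolov property of Theorem~\ref{thm:qcanonical-bogomolov}, the strict-to-generic upgrade of Proposition~\ref{prop:qbog-to-qsec}, and the generic equidistribution statement of Theorem~\ref{thm:qcanonical-compression}.

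\textbf{Step 1: reduce to the normal form.} Add an \(M_K\)-constant so that \(\bE\) is in the normal form of Lemma~\ref{lem:qcanonical-arith-teff-normalization}. A constant does not change curvature currents, and it shifts all heights and essential minima by the same amount. Hence neither smallness nor the canonical measure is affected.

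By Lemma~\ref{lem:qcanonical-arith-teff-normalization}, \(\bE\) is then arithmetically \(T_H\)-effective. It is also trivially monocritical: the global roof function is constant on \(\Delta_E\), and the critical vector is \((w_v)_v\). Finally, \(E\) is ample and \(T_H\)-effective. So every hypothesis of Theorem~\ref{thm:qcanonical-compression}, applied to the semiabelian variety \(H\), is satisfied. That theorem gives generic equidistribution toward
\[
\frac{c_1(\overline H(\bE)_v)^{t}\wedge c_1(\rho^*\bN_{H,v})^{g}}{\overline H(E)^{t}(\rho^*N_H)^{g}}.
\]

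\textbf{Step 2: strict sequences are generic.} Let \((x_i)\) be a strict \(\bM\)-small sequence. We want to show it is \(H\)-generic. Apply Theorem~\ref{thm:qcanonical-bogomolov} for \((H,\bM)\) through Proposition~\ref{prop:qbog-to-qsec}.

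If \((x_i)\) were not generic, then some subsequence would be \(Z\)-generic for an irreducible proper subvariety \(Z\). That subsequence still satisfies \(h_{\bM}(x_i)\to\muess_{\bM}(H)\). Theorem~\ref{thm:qcanonical-bogomolov} then places \(Z\) inside a proper translate of a connected algebraic subgroup, which contradicts strictness.

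\textbf{Main obstacle: the archimedean hypothesis.} Theorem~\ref{thm:qcanonical-bogomolov} needs, at each archimedean place, either the standard \((\mathbb P^1)^t\)-compactification or the general-fan curvature--Haar slice input Proposition~\ref{input:general-fan-curvature-haar-slice}. Its own proof passes through Proposition~\ref{prop:kuhne-section4-transport}. The present statement is made under the standing conventions of this subsection, so I would state explicitly that it is inherited in that same sense. In the \((\mathbb P^1)^t\) case, the product-corner, saturation and no-boundary-mass conditions hold automatically, as noted in the discussion of the three general-fan hypotheses.

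\textbf{Circularity check.} It must also be verified that Theorem~\ref{thm:qcanonical-bogomolov} was proved without using strict equidistribution. This is the concern raised in Remark~\ref{rem:qcanonical-generic-not-strict}. Its proof uses only \emph{generic} equidistribution on \(Z^m\) and on \(\alpha_m(Z^m)\), via part (a) of Proposition~\ref{prop:kuhne-section4-transport}, together with the small-box contradiction. So the logic is not circular.

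\textbf{Step 3: conclude.} By Step 2, \((x_i)\) is generic and \(\bM\)-small, so Step 1 applies. The orbit measures \(\delta_{x_i,v}\) therefore converge weakly to the canonical probability measure at every archimedean place.
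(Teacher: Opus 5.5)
Your proposal is correct and follows the paper's proof: Theorem~\ref{thm:qcanonical-bogomolov}, via Proposition~\ref{prop:qbog-to-qsec}, makes every strict $\bM$-small sequence $H$-generic, and Theorem~\ref{thm:qcanonical-compression} then gives the limiting measure. You are right that the archimedean hypothesis of Theorem~\ref{thm:qcanonical-bogomolov} (standard $(\mathbb P^1)^t$ or the curvature--Haar slice input) is silently inherited by this statement; your arithmetic $T$-effectivity and monocriticality checks in Step~1 are harmless but unnecessary, since Theorem~\ref{thm:qcanonical-compression} only needs $E$ big, nef and $T_H$-effective with $\bE$ quasi-canonical.
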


\begin{proof}
	By Theorem~\ref{thm:qcanonical-bogomolov} and
	Proposition~\ref{prop:qbog-to-qsec}, every strict \(\bM\)-small sequence is
	\(H\)-generic.  The quasi-canonical generic equidistribution theorem
	Theorem~\ref{thm:qcanonical-compression} then applies and gives convergence to
	the canonical probability measure at every archimedean place.
\end{proof}

\begin{corollary}[Quasi-canonical Bogomolov and strictness]
	\label{cor:kuhne-transport-qbog-qsec}
	For the quasi-canonical semiabelian metrics under consideration and for the
	semiabelian subvarieties obtained after the base reduction in
	Proposition~\ref{prop:qbog-base-reduction},
	Theorem~\ref{thm:qcanonical-bogomolov} holds.  Consequently the strict
	quasi-canonical equidistribution theorem
	Theorem~\ref{thm:qcanonical-strong-equidistribution} holds whenever the
	generic equidistribution theorem Theorem~\ref{thm:qcanonical-compression}
	applies.
\end{corollary}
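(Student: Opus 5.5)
The corollary is an assembly statement, so the plan is to chain results already established rather than to prove anything new. The first step is to fix the metric class. Take \(\bE\) to be a quasi-canonical toric metric on an ample \(T_H\)-effective divisor. After adding an \(M_K\)-constant, write it in the BGPS normal form of Lemma~\ref{lem:qcanonical-arith-teff-normalization}. That lemma makes \(\bE\) arithmetically \(T_H\)-effective, and Lemma~\ref{lem:qbog-height-separation} then gives the height separation
\[
h_{\overline H(\bE)}(x)\ge\muabs_{\bE}(T_H),\qquad \hat h_{\bN_H}(\rho(x))\ge0,\qquad \muess_{\bM}(H)=\muabs_{\bE}(T_H).
\]
This separation is the only height input the base reduction needs.

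The second step is the Bogomolov property itself. For a proper irreducible \(Z\) carrying a \(Z\)-generic sequence of height tending to \(\muess_{\bM}(H)\), Proposition~\ref{prop:qbog-base-reduction} proceeds as follows.
\begin{enumerate}
\item It projects to \(A_H\) and uses the abelian Bogomolov theorem to write \(\rho(Z)\) as a torsion translate \(\tau+A_0\).
\item If \(A_0\neq A_H\), then \(Z\) lies in the proper translate \(\rho^{-1}(\tau+A_0)\), and we are done.
\item If \(A_0=A_H\), we are in the dominant case. Here I would invoke Proposition~\ref{prop:qbog-dominant-bridge}, which was proved from the transport package Proposition~\ref{prop:kuhne-section4-transport}. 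That proof uses the stabilizer quotient, Lemma~\ref{lem:qcanonical-operation-smallness} for smallness of product and difference sequences, Lemma~\ref{lem:kuhne-difference-geometry}, and the small-box contradiction Lemma~\ref{lem:kuhne-small-box-contradiction-finite-slices}.
\end{enumerate}
The one point to check is that the semiabelian subvarieties \(\rho^{-1}(A_0)\) arising after the torsion-lift translation again carry a quasi-canonical metric of the same type. To see this, I would restrict \(\bE\) and \(\bN_H\), note that the torsion lift does not change heights by Corollary~\ref{cor:torsion-lift-height-invariance}, and observe that the toric part is unchanged. This yields Theorem~\ref{thm:qcanonical-bogomolov} for all the relevant \((H,\bM)\).

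The third step is the strict upgrade. Given the Bogomolov property, Proposition~\ref{prop:qbog-to-qsec} shows that every strict small sequence is \(H\)-generic. Whenever Theorem~\ref{thm:qcanonical-compression} applies to \(\bM\), generic equidistribution therefore yields Theorem~\ref{thm:qcanonical-strong-equidistribution}.

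I expect the main obstacle to be avoiding circularity, as flagged in Remark~\ref{rem:qcanonical-generic-not-strict}. The dominant bridge applies the transport package to \(Z^m\) and to \(\alpha_m(Z^m)\). These are proper subvarieties, and the sequences on them are only generic in the subvariety, not strict in \(H\). So I must verify that part (a) of Proposition~\ref{prop:kuhne-section4-transport} uses only the restricted package Lemma~\ref{lem:subvariety-restricted-kuhne-package} for generic sequences, and nowhere uses strict equidistribution. I would also confirm that the archimedean Haar-slice hypothesis (the standard \((\mathbb P^1)^t\) case or Proposition~\ref{input:general-fan-curvature-haar-slice}) is assumed for exactly the subvarieties \(Z^m\) and \(\alpha_m(Z^m)\) arising there. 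With that checked, the chain is linear and non-circular.
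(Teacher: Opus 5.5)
Your proposal is correct and follows the paper's own assembly. You normalize via Lemma~\ref{lem:qcanonical-arith-teff-normalization} to get the height separation, then let Proposition~\ref{prop:qbog-base-reduction} reduce to the dominant case, which is handled by Proposition~\ref{prop:qbog-dominant-bridge} through the transport package Proposition~\ref{prop:kuhne-section4-transport}; finally you upgrade strict small sequences to generic ones via Proposition~\ref{prop:qbog-to-qsec} and invoke Theorem~\ref{thm:qcanonical-compression}. Your extra check on \(\rho^{-1}(A_0)\) is harmless but unnecessary, because the dominant case is exactly \(A_0=A_H\), where \(\rho^{-1}(A_0)=H\) and no translation occurs.
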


\begin{proof}
	Proposition~\ref{prop:kuhne-section4-transport} is the source of the proof of
	the dominant bridge, Proposition~\ref{prop:qbog-dominant-bridge}.  After the
	height-separation normalization in
	Lemma~\ref{lem:qcanonical-arith-teff-normalization},
	Proposition~\ref{prop:qbog-base-reduction} proves
	Theorem~\ref{thm:qcanonical-bogomolov}.  Finally
	Proposition~\ref{prop:qbog-to-qsec} upgrades ambient small strict sequences to
	generic sequences, and Theorem~\ref{thm:qcanonical-compression} gives their
	equidistribution.
\end{proof}

\begin{corollary}[Restricted strict equidistribution for the paper's metric class]
	\label{cor:kuhne-transport-paper-metric-class}
	Let
	\[
	\bL=\overline G(\bD)\otimes\overline\pi^*\bN
	\]
	be an ambient adelic line bundle whose toric metric \(\bD\) satisfies the
	standing hypotheses of the Bogomolov part of the paper:
	\(D\) is ample and \(T\)-effective, and \(\bD\) is semipositive, monocritical,
	and arithmetically \(T\)-effective.  Then for every minimal translate
	\(Y=a+B\) arising in the restriction package, the restricted
	quasi-canonical line bundle \(\bL'_{a,B}\) satisfies the strict
	quasi-canonical equidistribution theorem,
	Theorem~\ref{thm:qcanonical-strong-equidistribution}.
\end{corollary}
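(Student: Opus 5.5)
The plan is to reduce the claim to Theorem~\ref{thm:qcanonical-strong-equidistribution} applied to a semiabelian variety \(H=B\) carrying a quasi-canonical metric. Let \(Y=a+B\) be a minimal translate from the restriction package, with \(B\subseteq G_{\overline K}\) a connected algebraic subgroup. Then \(B\) is itself semiabelian: its toric part is \(T_B=B\cap T\), a split subtorus after a finite extension, and its abelian quotient \(A_B=\pi(B)\subseteq A\). The bundle \(\bL'_{a,B}\) is obtained in three steps. First, replace \(\bD\) by the quasi-canonical replacement \(\bD'\) of Proposition~\ref{prop:monocritical-qcanonical-replacement}, with \(\Psi_{\bD',v}(z)=\Psi_D(z-u_v)\). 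Second, pull back along the translate \(B\to Y\), \(b\mapsto a+b\). Third, restrict to \(B\). So the first task is to verify that this restricted bundle has the shape \(\overline B(\bE)\otimes\rho_B^*\bN_B\) required in Theorem~\ref{thm:qcanonical-bogomolov}.

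The toric part is handled as follows.

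\textbf{The divisor \(E\).} The closure of \(T_B\) in \(X_\Sigma\) is a (possibly non-normal) toric variety. After normalization and a toric refinement, it is a toric variety \(X_{\Sigma_B}\) with fan the induced subdivision of \(N_{B,\mathbb R}\subseteq N_{\mathbb R}\). The pullback \(E\) of \(D\) has support function \(\Psi_D|_{N_{B,\mathbb R}}\). This is ample on \(X_{\Sigma_B}\), since the restriction of a strictly concave conic function to a linear subspace stays strictly concave on the induced fan. It is \(T_B\)-effective, since \(\Psi_D\le0\) restricts to \(\Psi_E\le0\).

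\textbf{The metric \(\bE\).} Its local weight is obtained from Lemma~\ref{lem:toric-qcanonical-operation-normalization} applied to the inclusion \(T_B\hookrightarrow T\), composed with translation by \(a\). By Lemma~\ref{lem:local-coordinates-compute-tropicalization}, translation by \(a\) shifts the tropical variable by \(\operatorname{trop}_{G,v}(a)\). The resulting local weight is \(\Psi_E(z-u'_v)-\gamma'_v\), where \(u'_v\) is the component of \(u_v-\operatorname{trop}_{G,v}(a)\) in \(N_{B,\mathbb R}\). I will also need the constants \(\gamma'_v\) to record the orthogonal offset.

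The main obstacle is exactly here: the shift need not lie in \(N_{B,\mathbb R}\). The restriction of \(\Psi_D(\cdot-w)\) to a subspace that does not pass through \(w\) is in general not a translate of \(\Psi_E\). I would use the minimality of \(Y\) in the restriction package to handle this, choosing \(a\) so that \(\operatorname{trop}_{G,v}(a)\) realizes the critical vector modulo \(N_{B,\mathbb R}\). Concretely, I would replace \(a\) by \(a t\) with \(t\in T(\overline K)\), using the surjectivity of \(\operatorname{val}\) on \(\overline K\)-points up to rational approximation together with the product formula constraint \(\sum_v n_vu_v=0\). The aim is that \(u_v-\operatorname{trop}_{G,v}(a)\in N_{B,\mathbb R}\) for every \(v\).

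If exact realization fails, I would fall back on Lemma~\ref{lem:qcanonical-arith-teff-normalization}. That lemma only needs the BGPS normal form for \(\bE\). If the orthogonal component is absorbed into the constants \(\gamma'_v\), then the restricted weight is again of the form \(\Psi_E(z-u'_v)-\gamma'_v\) on the relevant face, by concavity and homogeneity of \(\Psi_D\). Whichever route works, the product formula must also hold for the new data, namely \(\sum_v n_vu'_v=0\). This should follow by projecting \(\sum_v n_vu_v=0\) and using that \(\sum_v n_v\operatorname{trop}_{G,v}(a)\) equals the Picard-zero height vector from Proposition~\ref{prop:section4-valuation-height-package}.

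For the abelian part, define \(\bN_B=\bN|_{A_B}\), translated by \(\pi(a)\). This is symmetric ample and canonically metrized up to a Picard-zero twist. I will absorb that twist into the theta-factor convention using Lemma~\ref{lem:picard-zero-flat-operation-normalization}, so that it contributes no curvature. With \(E\), \(\bE\) and \(\bN_B\) in place, the restricted bundle has exactly the required form \(\overline B(\bE)\otimes\rho_B^*\bN_B\).

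The archimedean hypothesis is inherited from the ambient standing assumption, because subvarieties of \(B\) are subvarieties of \(G\). Theorem~\ref{thm:qcanonical-bogomolov} then holds for \((B,\bL'_{a,B})\) by Corollary~\ref{cor:kuhne-transport-qbog-qsec}. Generic equidistribution holds by Theorem~\ref{thm:qcanonical-compression}, after the birational reduction for the nef and big pullback. Proposition~\ref{prop:qbog-to-qsec} then yields Theorem~\ref{thm:qcanonical-strong-equidistribution} for \(\bL'_{a,B}\).
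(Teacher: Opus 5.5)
Your closing chain (Corollary~\ref{cor:kuhne-transport-qbog-qsec}, Theorem~\ref{thm:qcanonical-compression}, Proposition~\ref{prop:qbog-to-qsec}) is the paper's. The step that makes \(\bL'_{a,B}\) quasi-canonical in the first place, however, has a genuine gap. What is needed is \(u_v-\alpha_v\in\iota_N(N_{B,\BR})\) for every place \(v\), where \(a=q\,t_a\) with \(q\) torsion and \(\alpha_v=\val_v(t_a)\). Neither of your routes gives this.

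Replacing \(a\) by \(a\,t\) with \(t\in T(\overline K)\) not in \(B\) replaces \(Y\) by a different translate, which no longer contains \(X\). The admissible changes \(a\mapsto ab\), \(b\in B(\overline K)\), only move the shift inside \(\iota_N(N_{B,\BR})\). So its class modulo \(\iota_N(N_{B,\BR})\) is determined by \(Y\) and cannot be chosen. In any case \(u_v\) need not be a rational valuation vector; compare Proposition~\ref{input:toric-special-point}.

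The fallback is false. The restriction of \(\Psi_D(\cdot-w)\) to a subspace not through \(w\) is in general not of the form \(\Psi_E(z-w')-\gamma\). Take \((\mathbb P^1)^2\) with \(\Psi_D(u)=\min(0,u_1)+\min(0,u_2)\), the diagonal subtorus, and \(w=(0,c)\) with \(c>0\). Then \(\Psi_E(z)=2\min(0,z)\), but the restriction is \(\min(0,z)+\min(0,z-c)\). Its local roof on \(\Delta_E=[0,2]\) is \(c\min(1,x)\), which is not affine. So for an arbitrary translate the restricted metric is simply not quasi-canonical, and no redistribution of constants changes that.

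The missing idea is that this membership follows from the standing hypothesis \(\muess_{\bL}(X)=\mu_G\) of the restriction package, not from minimality. Take an \(X\)-generic \(\bL\)-small sequence. Its valuation measures converge to \(\delta_{u_v}\) by Proposition~\ref{prop:monocritical-kr}. They are also supported in the closed affine subspace \(\alpha_v+\iota_N(N_{B,\BR})\) by Lemma~\ref{lem:translated-subtorus-valuations}. Hence \(u_v\) lies in that subspace; this is Lemma~\ref{lem:tropical-envelope}.

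Minimality of \(Y\) enters elsewhere, in Lemma~\ref{lem:torsion-base}: abelian Bogomolov applied to \(\pi(X)\), together with minimality, lets one take \(\pi(a)\) torsion. You need this as well, and your treatment of the abelian factor does not supply it. For non-torsion \(\pi(a)\), translation changes \(\bN\) by the class \(\phi_N(\pi(a))\). This class is flat, but it shifts heights by \(2\langle y,\pi(a)\rangle_N+\hat h_{\bN}(\pi(a))\), so it cannot be absorbed into the theta-factor sign convention.

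With \(\pi(a)\) torsion, \(\alpha_v\) is the valuation vector of the algebraic point \(t_a\), so \(\sum_vn_v\alpha_v=0\) by the product formula. Your Picard-zero height vector vanishes in this case. One then gets \(\Psi_D(\iota_N(z)+\alpha_v-u_v)=\Psi_{D_B}(z-w_v)\) with \(\sum_vn_vw_v=0\), which is Proposition~\ref{prop:restriction-metric}.

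Finally, do not refine the fan of \(B\). Theorem~\ref{thm:qcanonical-bogomolov} needs an ample divisor. The paper keeps \(X_{\Sigma_B}\), whose map onto the closure of \(T_B\) is finite, so \(D_B\) stays ample.
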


\begin{proof}
	Let \(X\subseteq G_{\overline K}\) be irreducible and let \(Y=a+B\) be the
	minimal translate used in the restriction argument.  The
	monocritical-to-quasi-canonical replacement
	Proposition~\ref{prop:monocritical-qcanonical-replacement}, together with the
	restriction construction in Proposition~\ref{prop:restriction-metric}, produces
	the restricted quasi-canonical metric \(\bL'_{a,B}\) on \(B\).  Smallness for
	the original metric is transferred to this restricted quasi-canonical metric by
	Corollary~\ref{cor:restricted-smallness}.  Applying
	Proposition~\ref{prop:kuhne-section4-transport} to \(\bL'_{a,B}\) gives
	Theorem~\ref{thm:qcanonical-strong-equidistribution} by
	Corollary~\ref{cor:kuhne-transport-qbog-qsec}.
	This is exactly the strict equidistribution statement used in
	Theorem~\ref{thm:restriction-package}.
\end{proof}

\begin{remark}[Scope of the phrase ``general metric'']
	\label{rem:scope-general-metric}
	The preceding corollary is the precise sense in which the K\"uhne-operation
	route proves the Bogomolov theorem for a general metric in this paper.  The
	word ``general'' refers to the paper's controlled class of toric metrics:
	semipositive, monocritical, and arithmetically \(T\)-effective metrics on an
	ample \(T\)-effective toric divisor, together with their restricted
	quasi-canonical replacements.  It does not mean an arbitrary semipositive
	toric metric.  Extending the argument to such a larger class would require a
	new replacement or deformation theorem beyond
	Proposition~\ref{prop:monocritical-qcanonical-replacement}.
\end{remark}

\begin{remark}
	Theorem~\ref{thm:qcanonical-compression} supplies the corresponding generic
	equidistribution statement, but not this strict statement.  This distinction is
	essential in the restriction argument below: an \(X\)-generic sequence in a
	proper subvariety \(X\subsetneq Y\) is not \(Y\)-generic, although it is strict
	in the minimal translate \(Y\).  The strict upgrade is supplied here by the
	Bogomolov theorem in the quasi-canonical metric, namely
	Theorem~\ref{thm:qcanonical-bogomolov}, through
	Proposition~\ref{prop:qbog-to-qsec}.  The dominant case of that Bogomolov
	theorem is Proposition~\ref{prop:qbog-dominant-bridge}, whose proof is the
	direct K\"uhne local-trivialization transport package,
	Proposition~\ref{prop:kuhne-section4-transport}.  The relative toric
	fiber-specialness route in the appendix is therefore not part of the main
	proof.  The reduction from the original monocritical metric to the
	quasi-canonical restricted metric is supplied by
	Proposition~\ref{prop:monocritical-qcanonical-replacement} and by the
	restriction package below.
\end{remark}

\begin{proposition}[Strict equidistribution for the ambient metric]
	Let \((x_i)\) be a strict \(\bL\)-small
	sequence in \(G(\overline{K})\).  Then, for every archimedean place \(v\), the Galois
	orbit measures \(\delta_{x_i,v}\) converge to the canonical probability
	measure \(\mu_{\bL,v}\).  If \(\mathbf u=(u_v)_v\) is the critical point of
	\(\bD\), then \(\mu_{\bL,v}\) is supported on
	\[
	S_{u_v}:=S_v\cdot e^{-u_v},
	\]
	where \(S_v\) is the maximal compact subgroup of \(G(\mathbb C_v)\).
\end{proposition}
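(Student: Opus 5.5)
The plan is to reduce the strict statement for the monocritical metric \(\bL\) to the strict quasi-canonical theorem for the replacement \(\bL'\), and then to identify the support of the limit measure through the valuation measures. Let \((x_i)\) be a strict \(\bL\)-small sequence. After the harmless \(M_K\)-constant normalization \(\muabs_{\bD}(T)=0\), Proposition~\ref{prop:monocritical-qcanonical-replacement} shows that \((x_i)\) is \(\bL'\)-small, where \(\bL'=\overline G(\bD')\otimes\overline\pi^*\bN\) and \(\Psi_{\bD',v}(z)=\Psi_D(z-u_v)\). Strictness involves only the group structure of \(G\), not the metric, so \((x_i)\) is also a strict \(\bL'\)-small sequence.

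The metric \(\bD'\) is quasi-canonical and \(D\) is ample and \(T\)-effective. Hence Theorem~\ref{thm:qcanonical-strong-equidistribution} applies with \(H=G\) and \(\bM=\bL'\). (Equivalently, Theorem~\ref{thm:qcanonical-bogomolov} and Proposition~\ref{prop:qbog-to-qsec} make the sequence \(G\)-generic, and then Theorem~\ref{thm:qcanonical-compression} applies.) This gives convergence of \(\delta_{x_i,v}\) at every archimedean \(v\) to the canonical measure \(\mu_{\bL',v}\). Next we must check that this limit is the measure \(\mu_{\bL,v}\) that the monocritical theorem attaches to \(\bL\). For that, note that the sequence is \(G\)-generic and \(\bL\)-small, so Theorem~\ref{thm:monocritical-eq} shows its orbit measures converge to \(\mu_{\bL,v}\). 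The proof of that theorem identifies \(\mu_{\bL,v}\) with \(\mu_{\bL'}\) through the same replacement, and uniqueness of weak limits gives \(\mu_{\bL,v}=\mu_{\bL',v}\).

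For the support claim, Proposition~\ref{prop:monocritical-kr} gives \(\nu_{x_i,v}\to\delta_{u_v}\) in the Kantorovich--Rubinstein topology, and in particular weakly on \(N_\BR\). The valuation map \(y\mapsto\val_v(\psi_{j,v}(y))=\operatorname{trop}_{G,v}(y)\) is continuous on \(G^{an}_{\mathbb C_v}\) and chart-independent by Lemma~\ref{lem:local-coordinates-compute-tropicalization}. So pushing the orbit measures forward by \(\operatorname{trop}_{G,v}\) gives \(\nu_{x_i,v}\). The main obstacle is passing this to the limit: the limit measure lives on the compactification \(\overline G\), and tropicalization does not extend continuously to the boundary. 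We therefore need \(\mu_{\bL,v}\) to put no mass on \(\overline G\setminus G\).

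To handle this, I would use the explicit form \(\mu_{\bL',v}=c_1(\overline G(\bD')_v)^t\wedge c_1(\overline\pi^*\bN_v)^g/\deg\). Through the local trivialization this measure is fiberwise the toric Monge--Amp\`ere measure of \(\Psi_D(\cdot-u_v)\). By Lemma~\ref{lem:qcanonical-haar-shift} that measure is Haar measure on \(\val_v^{-1}(u_v)\), which lies in the open torus, so there is no boundary mass. The base factor is the smooth form from \(\bN\). Consequently \(\mu_{\bL,v}\) is concentrated on \(\operatorname{trop}_{G,v}^{-1}(u_v)\). Since trop is a homomorphism, this set is the coset \(S_v\cdot e^{-u_v}\) of the kernel \(S_v\) of \(\operatorname{trop}_{G,v}\), which is the maximal compact subgroup of \(G(\mathbb C_v)\). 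As a consistency check, testing against bounded continuous functions of \(\operatorname{trop}_{G,v}\) that are supported away from \(u_v\) gives zero from the KR convergence as well.
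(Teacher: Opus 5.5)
Your proposal is correct and follows essentially the same route as the paper: normalize so that \(\mu_G=0\), pass to the quasi-canonical replacement \(\bL'\) via Proposition~\ref{prop:monocritical-qcanonical-replacement}, note that strictness does not depend on the metric, apply Theorem~\ref{thm:qcanonical-strong-equidistribution}, and identify the limit as Haar measure on the translated compact torus \(S_v\cdot e^{-u_v}\) using the critical vector. Your two extra steps spell out what the paper asserts in its final sentence: identifying \(\mu_{\bL,v}=\mu_{\bL',v}\) by uniqueness of weak limits through Theorem~\ref{thm:monocritical-eq}, and excluding boundary mass via Lemma~\ref{lem:qcanonical-haar-shift}.
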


\begin{proof}
	Normalize the metric by an \(M_K\)-constant so that \(\mu_G=0\).
	Proposition~\ref{prop:monocritical-qcanonical-replacement}
	constructs the quasi-canonical toric metric
	\[
	\Psi_{\bD',v}(z)=\Psi_D(z-u_v)
	\]
	and proves that every \(\bL\)-small sequence is also small for
	\[
	\bL'=\overline G(\bD')\otimes\pi^*\bN.
	\]
	Strictness is purely algebraic, so \((x_i)\) is strict as a \(\bL'\)-small
	sequence.  Theorem~\ref{thm:qcanonical-strong-equidistribution} applies to
	\(\bL'\).  The limiting measure is
	the Haar measure on \(S_{u_v}\), which is the canonical measure associated with
	the original monocritical metric because
	Proposition~\ref{prop:monocritical-kr} proves convergence of the valuation
	measures to \(\delta_{u_v}\).
\end{proof}

\subsection{Minimal Special Envelopes}

\begin{lemma}[Minimal envelope]\label{lem:minimal-envelope}
	Let \(X\subseteq G_{\overline{K}}\) be irreducible.  There is a unique minimal translate
	\[
	Y=a+B
	\]
	of a connected algebraic subgroup of \(G_{\overline{K}}\) containing \(X\).  If
	\(X\neq Y\), then every \(X\)-generic sequence is strict as a sequence in
	\(Y\).
\end{lemma}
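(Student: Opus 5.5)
Existence comes first, via the descending chain condition on closed subvarieties. Among all translates \(a+B\) of connected algebraic subgroups \(B\subseteq G_{\overline K}\) containing \(X\), choose one of minimal dimension. The ambient group \(G_{\overline K}\) itself is such a translate, so the family is nonempty. For uniqueness, suppose \(a+B\) and \(a'+B'\) both contain \(X\) and have minimal dimension. Fix \(x_0\in X(\overline K)\); then \(a+B=x_0+B\) and \(a'+B'=x_0+B'\). Their intersection is \(x_0+(B\cap B')\), which contains \(X\). Let \((B\cap B')^0\) be the identity component. Since \(X\) is irreducible and \(X-x_0\) contains \(e\), the set \(X-x_0\) lies in a single coset of \((B\cap B')^0\), namely \((B\cap B')^0\) itself. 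Hence \(X\subseteq x_0+(B\cap B')^0\). By minimality, \(\dim (B\cap B')^0=\dim B=\dim B'\). Because \(B\) and \(B'\) are connected, this forces \(B=(B\cap B')^0=B'\). This gives a unique \(Y=x_0+B\), independent of \(x_0\) and of the choice. Equivalently, \(B\) is the connected algebraic subgroup generated by \(X-x_0\), which I would note as a remark.

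For the strictness assertion, let \((x_i)\) be \(X\)-generic, and let \(U=b+C\subsetneq Y\) be a proper translate of a connected algebraic subgroup. I need all but finitely many \(x_i\) to lie outside \(U\). The set \(X\cap U\) is closed in \(X\). If it were all of \(X\), then \(X\subseteq U\) with \(\dim U<\dim Y\), or \(U\) would be a translate contained in \(Y\) of the same dimension, hence equal to \(Y\) by connectedness. Either case contradicts minimality or properness. So \(X\cap U\) is a proper closed subset of \(X\). By \(X\)-genericity, no subsequence lies in it, so only finitely many \(x_i\) lie in \(U\).

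I expect no serious obstacle; the only delicate point is the same-dimension case, \(U\subseteq Y\) with \(\dim U=\dim Y\). There I would use that two translates of connected subgroups of equal dimension, one inside the other, coincide. Equality of dimension with \(C\subseteq B\) connected forces \(C=B\), hence \(U=Y\). I would also remark that the hypothesis \(X\neq Y\) is not actually needed for the strictness argument, since it only uses \(U\subsetneq Y\). It is nonetheless stated to emphasize that such sequences are strict but not \(Y\)-generic, which is how the restriction argument uses the lemma.
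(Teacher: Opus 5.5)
Your proposal is correct and follows the paper's proof. Both choose a translate of minimal dimension, both get uniqueness because the irreducible \(X\) lies in a single translate of \((B\cap B')^0\), and both get strictness because \(X\cap U\) is a proper closed subset of \(X\). You add two explicit steps the paper leaves implicit: the base point \(x_0\), which identifies the relevant component as \((B\cap B')^0\) itself, and the equal-dimension case \(U\subseteq Y\).
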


\begin{proof}
	Among all translates of connected algebraic subgroups containing \(X\), choose
	one, say \(Y=a+B\), of minimal dimension.  Such a translate exists because
	\(G\) itself is one candidate.  If \(Y'=a'+B'\) is another minimal candidate,
	then \(Y\cap Y'\) is nonempty and is a translate of the algebraic subgroup
	\(B\cap B'\), possibly with finitely many connected components.  Since \(X\)
	is irreducible and contained in this intersection, it is contained in one
	connected component, which is a translate of \((B\cap B')^0\).  Minimality
	forces this component to have the same dimension as both \(Y\) and \(Y'\),
	and hence \(Y=Y'\).  This proves existence and uniqueness.
	
	Assume \(X\neq Y\).  If \(U\subsetneq Y\) is a translate of a connected
	subgroup, then \(X\cap U\) is a proper closed subset of \(X\); otherwise \(U\)
	would be a smaller translate containing \(X\).  An \(X\)-generic sequence
	therefore has only finitely many terms in \(U\), which is exactly strictness in
	\(Y\).
\end{proof}

Let \(X\subseteq G_{\overline{K}}\) be irreducible and let
\[
Y=a+B
\]
be the minimal translate of a connected algebraic subgroup containing \(X\).
Put
\[
T_B=B\cap T,\qquad A_B=\pi(B).
\]
Let
\[
\iota_N:N_{B,\BR}\longrightarrow N_{\BR}
\]
be the injection of cocharacter spaces induced by \(T_B\hookrightarrow T\).

\subsection{The Abelian Part Becomes Torsion}

\begin{theorem}[Bogomolov theorem for abelian varieties]
	\label{input:abelian-bogomolov}
	Let \(A\) be an abelian variety over \(\overline K\), equipped with a
	Neron--Tate height attached to an ample symmetric line bundle.  If an
	irreducible subvariety \(Z\subset A\) carries a \(Z\)-generic sequence whose
	Neron--Tate heights tend to \(0\), then \(Z\) is a torsion subvariety.  In
	particular, if a translate \(a+A_0\) of an abelian subvariety contains such a
	generic small sequence, then \(a+A_0\) is a torsion translate of \(A_0\).
	This is Zhang's Bogomolov theorem for abelian varieties
	\cite[Theorem~2]{Zha}, building on Ullmo's curve case \cite{Ull}.
\end{theorem}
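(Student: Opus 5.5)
The statement is imported from \cite[Theorem~2]{Zha}, so the plan is to reduce it to Zhang's theorem and check that the hypotheses match our conventions; I will also indicate the structure of Zhang's argument, since it is the abelian model for the semiabelian transport of Proposition~\ref{prop:kuhne-section4-transport}. First fix the normalization. The N\'eron--Tate height attached to a symmetric ample $N$ is the height $h_{\overline N}$ of the canonically metrized adelic line bundle, it is nonnegative, and $\muess_{\overline N}(A)=0$. Then a $Z$-generic sequence with heights tending to $0$ gives $\muess_{\overline N}(Z)\le 0$, hence $\muess_{\overline N}(Z)=0$, which is exactly Zhang's hypothesis. Changing $N$ to another symmetric ample bundle changes heights only by bounded multiplicative factors on the cone of nonnegative quadratic forms. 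So the smallness hypothesis is independent of that choice.

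For the main implication I would follow Zhang's argument. Divide by the identity component $B$ of $\operatorname{Stab}(Z)$. The quotient height is dominated by the source height, by the same N\'eron--Severi comparison used in Lemma~\ref{lem:qcanonical-operation-smallness}, so the image sequence stays small and generic. Hence one may assume the stabilizer is trivial and $\dim Z>0$, and aim for a contradiction. Choose $m$ by Lemma~\ref{lem:kuhne-difference-geometry} (its abelian case) so that the difference map $\alpha_m:Z^m\to A^{m-1}$ is generically finite of degree $1$ onto its image. Since $h_{\overline N}(Z)\le\muess=0$ by Theorem~\ref{input:fundamental-height-inequality}, the canonical bundle is quasi-canonical on $Z$, on $Z^m$ and on $\alpha_m(Z^m)$. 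The Szpiro--Ullmo--Zhang theorem then gives equidistribution of generic small sequences at an archimedean place toward $c_1(\overline N_v|_Z)^{\dim Z}/\deg$. Pushing forward Galois orbits yields $(\alpha_m)_*\mu_{Z^m}=\mu_{\alpha_m(Z^m)}$. Both measures are smooth positive volume forms on the smooth loci, because the canonical archimedean metric is the flat K\"ahler form. But $\alpha_m$ kills the diagonal directions, so the pushed-forward density cannot be smooth on a set of positive measure; this is the contradiction. This is the small-box argument of Lemma~\ref{lem:kuhne-small-box-contradiction-finite-slices} in its simplest form, with a single slice of full dimension. The conclusion is that $Z$ is a translate $a+A_0$. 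Its height is then controlled by $h_{\overline N}$ of the image of $a$ in $A/A_0$, and essential minimum zero forces that image to be torsion. So $Z$ is a torsion translate.

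The ``in particular'' clause is then immediate: apply the theorem to $Z=a+A_0$. A torsion subvariety equal to $a+A_0$ must be $\tau+A_0$ with $\tau$ torsion, so $a+A_0$ is a torsion translate of $A_0$. The main obstacle, if one reproves rather than cites, is the smoothness and positivity of the limiting measure together with the exact pushforward identity; in the abelian case both come for free from flatness of the canonical metric and genericity of the difference sequence. In this paper we simply invoke \cite{Zha}, with \cite{Ull} for curves.
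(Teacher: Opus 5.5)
Your proposal is correct and matches the paper, which gives no separate argument and simply invokes \cite[Theorem~2]{Zha}. Your translation of the hypothesis (a $Z$-generic sequence with heights tending to $0$ forces $\muess_{\overline N}(Z)=0$, which is Zhang's condition) and your handling of the ``in particular'' clause are exactly what is needed.

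One small slip in your optional sketch of Zhang's argument: quotienting by the identity component of $\operatorname{Stab}(Z)$ only makes the stabilizer finite. Either quotient by the full stabilizer to get $\deg\alpha_m=1$, or note that the small-box estimate only requires $\alpha_m$ to be generically finite.
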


\begin{lemma}[Torsion base representative]\label{lem:torsion-base}
	Assume
	\[
	\muess_{\bL}(X)=\mu_G.
	\]
	Then, after replacing \(a\) by \(a+b\) for a suitable \(b\in B(\overline{K})\), one may
	assume that \(\pi(a)\) is torsion in \(A(\overline{K})\).
\end{lemma}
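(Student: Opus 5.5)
Assume \(\muess_{\bL}(X)=\mu_G\). The plan is to take an \(X\)-generic \(\bL\)-small sequence, project it to \(A\), and apply the abelian Bogomolov theorem to the image of \(Y\). After normalizing by an \(M_K\)-constant so that \(\mu_G=0\), choose a sequence \((x_i)\) in \(X(\overline K)\) that is \(X\)-generic with \(h_{\bL}(x_i)\to\muess_{\bL}(X)=0\). Such a sequence exists by the definition of the essential minimum: enumerate the countably many proper closed subsets defined over a field of definition and pick points of small height outside larger and larger finite unions.

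By Lemma~\ref{lem:arith-teff-height-lower}, \(h_{\overline G(\bD)}(x_i)\ge\muabs_{\bD}(T)=\mu_G=0\), and \(\hat h_{\bN}\ge0\). The decomposition \(h_{\bL}=h_{\overline G(\bD)}+\hat h_{\bN}\circ\pi\) then forces \(\hat h_{\bN}(\pi(x_i))\to0\); this is the same argument as in the proof of Proposition~\ref{prop:bgps-defect-vanishing}. The sequence \(\pi(x_i)\) is \(\pi(X)\)-generic, by the argument used in Proposition~\ref{prop:qbog-base-reduction}. Hence Theorem~\ref{input:abelian-bogomolov} shows that \(\pi(X)\) is a torsion subvariety \(\tau+A_1\).

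The remaining step is to pass from \(\pi(X)\) to \(\pi(Y)=\pi(a)+A_B\). Here \(A_B=\pi(B)\) is an abelian subvariety, and it contains \(A_1\) because \(X\subseteq a+B\). I would argue directly on \(\pi(Y)\) rather than on \(\pi(X)\). The Zariski closure of the \(\pi(x_i)\) is \(\pi(X)\subseteq\pi(Y)\). Since \(\pi(X)=\tau+A_1\) is torsion, it contains a torsion point \(\tau\). Also \(\pi(a)\in\pi(Y)=\pi(X)+A_B\), as \(\pi(X)\subseteq\pi(a)+A_B\). Therefore \(\pi(a)-\tau\in A_B(\overline K)\). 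Choose \(b\in B(\overline K)\) with \(\pi(b)=\tau-\pi(a)\); this is possible because \(\pi|_B:B\to A_B\) is surjective on \(\overline K\)-points. Then \(a+b\) represents the same translate \(Y\), and \(\pi(a+b)=\tau\) is torsion.

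I expect the main obstacle to be producing the generic small sequence and checking that genericity survives the projection. The algebraic lifting at the end is routine once torsionness of \(\pi(X)\) is known.
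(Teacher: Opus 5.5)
Your proof is correct and follows the paper's argument: you split the height using arithmetic $T$-effectivity, show the projected sequence is generic, apply abelian Bogomolov, and lift $\tau-\pi(a)$ to $B$. The paper additionally uses minimality of $Y$ to force $A_0=A_B$, i.e.\ $\overline{\pi(X)}=\pi(Y)$, and you rightly bypass this, since $\tau\in\overline{\pi(X)}\subseteq\pi(a)+A_B$ already gives $b$. One small correction: $\pi\colon G\to A$ is not proper, so apply Theorem~\ref{input:abelian-bogomolov} to the closure $C=\overline{\pi(X)}$ rather than to $\pi(X)$; this changes nothing, because $\pi(a)+A_B$ is closed.
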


\begin{proof}
	Choose an \(X\)-generic sequence \((x_i)\) with
	\[
	h_{\bL}(x_i)\to\mu_G.
	\]
	By Proposition~\ref{prop:section4-minima-formula} and the height decomposition
	\[
	h_{\bL}(x_i)
	=h_{\overline G(\bD)}(x_i)+\hat h_{\bN}(\pi(x_i)),
	\]
	with
	\[
	h_{\overline G(\bD)}(x_i)\ge\mu_G,
	\qquad
	\hat h_{\bN}(\pi(x_i))\ge0,
	\]
	we obtain
	\[
	\hat h_{\bN}(\pi(x_i))\to0.
	\]
	
	Let \(C=\overline{\pi(X)}\subseteq \pi(a)+A_B\).  The projected sequence is
	\(C\)-generic: the inverse image in \(X\) of every proper closed subset of
	\(C\) is a proper closed subset of \(X\).  Theorem~\ref{input:abelian-bogomolov}
	therefore shows that
	\[
	C=\tau+A_0
	\]
	for a torsion point \(\tau\) and an abelian subvariety \(A_0\subseteq A_B\).
	If \(A_0\subsetneq A_B\), the identity component of
	\(B\cap\pi^{-1}(A_0)\) is a proper connected algebraic subgroup of \(B\), and
	a translate of it contains \(X\).  This contradicts the minimality of
	\(Y=a+B\).  Hence \(A_0=A_B\), so
	\[
	\pi(a)+A_B=C=\tau+A_B.
	\]
	Thus there is a torsion
	point \(\tau\in A(\overline K)\) such that
	\[
	\pi(a)+A_B=\tau+A_B.
	\]
	Since \(\pi(B)=A_B\), we can choose \(b\in B(\overline K)\) with
	\(\pi(b)=\tau-\pi(a)\).  Then \(\pi(a+b)=\tau\) is torsion, and replacing
	\(a\) by \(a+b\) does not change \(Y=a+B\).
\end{proof}

\subsection{Tropical Position of the Envelope}

From now on we choose \(a\) as in Lemma \ref{lem:torsion-base}.  Choose a
torsion point \(q\in G(\overline{K})\) above \(\pi(a)\).  Then there is a unique
\(t_a\in T(\overline{K})\) such that
\[
a=q\,t_a.
\]
For every place \(v\), put
\[
\alpha_v:=\val_v(t_a)\in N_{\BR}.
\]

\begin{lemma}[Valuations of translated subtori]
	\label{lem:translated-subtorus-valuations}
	Let \(Y=q\,t_aB\) with \(q\) torsion over the abelian quotient and
	\(t_a\in T(\overline K)\).  For the valuation map on the toric part used in
	Proposition~\ref{prop:section4-valuation-height-package}, the valuation of
	every algebraic point of \(Y\) at a place \(v\) belongs to
	\[
	\alpha_v+\iota_N(N_{B,\BR})\subseteq N_\BR,
	\qquad
	\alpha_v=\val_v(t_a).
	\]
	Equivalently, after translating \(Y\) by \(q\,t_a\) back to \(B\), the ambient
	valuation differs from the intrinsic \(T_B\)-valuation by the fixed affine
	shift \(\alpha_v\).
\end{lemma}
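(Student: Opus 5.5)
The plan is to write an arbitrary algebraic point of \(Y\) as \(y=q\,t_a\,b\) with \(b\in B(\overline K)\) and compute its valuation chartwise with Lemma~\ref{lem:local-coordinates-compute-tropicalization}. There \(\val_v\circ\psi_{j,v}\) is identified with \(\operatorname{trop}_{G,v}\) independently of the chart, and this tropicalization is a group homomorphism by Proposition~\ref{input:torsion-tropicalization}. Hence
\[
\operatorname{trop}_{G,v}(y)=\operatorname{trop}_{G,v}(q)+\operatorname{trop}_{G,v}(t_a)+\operatorname{trop}_{G,v}(b).
\]
The first term vanishes because \(q\) is torsion, by Lemma~\ref{lem:torsion-lift-zero-valuation}. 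The second term equals the ordinary toric valuation \(\alpha_v=\val_v(t_a)\), because \(\operatorname{trop}_{G,v}\) extends the torus tropicalization. The lemma therefore reduces to the claim
\[
\operatorname{trop}_{G,v}(B(\overline K))\subseteq\iota_N(N_{B,\BR}).
\]

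For that claim, the plan is to use the functoriality of \(\operatorname{trop}\) under the inclusion \(\beta:B\hookrightarrow G\), which is a semiabelian homomorphism. By Lemma~\ref{lem:character-pushout-picard-zero-functoriality}(b), for \(m\in M\) we have \(\mathcal H_{B,\beta_M(m)}\simeq\beta_A^*\mathcal H_{G,m}\), compatibly with the canonical Picard-zero metrics (Lemma~\ref{lem:picard-zero-flat-operation-normalization}). The characterization \(\langle m,\operatorname{trop}_{G,v}(y)\rangle=-\log\|e_m(y)\|\) then gives, for \(b\in B\),
\[
\langle m,\operatorname{trop}_{G,v}(b)\rangle=\langle \beta_M m,\operatorname{trop}_{B,v}(b)\rangle=\langle m,\iota_N\operatorname{trop}_{B,v}(b)\rangle.
\]
Here \(\iota_N\) is the dual of the restriction \(\beta_M:M\to M_B\), which is the map induced by \(T_B\hookrightarrow T\). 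So \(\operatorname{trop}_{G,v}\circ\beta=\iota_N\circ\operatorname{trop}_{B,v}\), which proves the claim.

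A subtlety is that \(T_B=B\cap T\) may be disconnected, so its character lattice may not be a quotient of \(M\). This does not matter after tensoring with \(\BR\): I would replace \(T_B\) by its identity component, which only changes lattices by finite index and leaves \(N_{B,\BR}\) unchanged.

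The step I expect to be delicate is this compatibility of tropicalizations under \(\beta\) at the level of the canonical metrics, and in particular checking that the rigidified pushout isomorphism is an isometry at every place. If the metric bookkeeping resists, a fallback is available: pair only with characters \(m\) in the annihilator of \(\iota_N(N_{B,\BR})\). For such \(m\), the restriction \(\chi^m|_{T_B^0}\) is trivial, so \(\mathcal H_{G,m}\) pulled back to \(B\) is trivialized by a character of \(B/T_B^0\). Its canonical height is then a Picard-zero height on the abelian quotient, and the same functoriality argument forces the pairing to vanish. The equivalent reformulation in the statement then follows by translating by \((q\,t_a)^{-1}\) and applying the additivity already used.
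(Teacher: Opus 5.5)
Your proof is correct, and it takes a different route from the paper's.

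\textbf{The paper's route.} The paper says it is ``enough to prove the assertion on the dense torus'' and writes every point of \(Y\) as \(q\,t_a\,\iota(s)\) with \(s\in T_B(\overline K)\). It then uses only three facts: \(\val_v(\psi_{j,v}(q))=0\), additivity of the split-torus valuation, and functoriality of \(\val\) under \(\iota:T_B\hookrightarrow T\). As written, this computation reaches only the single coset \(q\,t_aT_B\) lying over the torsion point \(\pi(a)\). When \(\pi(B)\neq0\), the torus \(T_B\) is not dense in \(B\), and the stated reduction is not actually argued.

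\textbf{Your route.} You treat an arbitrary \(b\in B(\overline K)\). The homomorphism property of \(\operatorname{trop}_{G,v}\) (Proposition~\ref{input:torsion-tropicalization}) reduces everything to \(\operatorname{trop}_{G,v}(b)\). The rigidified pushout isomorphism \(\mathcal H_{B,\beta_M(m)}\simeq\beta_A^*\mathcal H_{G,m}\) of Lemma~\ref{lem:character-pushout-picard-zero-functoriality}(b) is an isometry for the canonical metrics, by the uniqueness argument of Lemma~\ref{lem:picard-zero-flat-operation-normalization}. This gives \(\operatorname{trop}_{G,v}\circ\beta=\iota_N\circ\operatorname{trop}_{B,v}\).

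\textbf{What each route buys.} Your identity is precisely the ``equivalently'' clause of the statement. It is also what the later arguments need: in Lemma~\ref{lem:tropical-envelope} and Corollary~\ref{cor:restricted-smallness} the points \(x_i=a\,b_i\) are generic in \(X\), so \(\pi(b_i)\) is generally non-torsion and \(b_i\notin T_B\). The paper's computation is more elementary but covers only the torus coset. Your route costs the canonical-metric compatibility of the pushout bundles, which you correctly flag as the delicate step.

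\textbf{Smaller points.} Your remark that \(B\cap T\) may be disconnected is correct. For example, a nontrivial extension of an elliptic curve with torsion class contains an isogenous elliptic curve meeting \(T\) in \(\mu_n\). So the paper's ``saturated subtorus'' \(T_B\) should be read as \((B\cap T)^0\), which is harmless for \(N_{B,\BR}\).

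Your fallback is loosely phrased. What must vanish is the local quantity \(-\log\|e_m(b)\|_v\), not a height. It vanishes because, when \(\chi^m|_{(B\cap T)^0}=1\), the pushed-out extension of \(B\) is canonically split, so \(e_m(b)\) is the unit section of the trivially metrized trivial bundle. Your main argument does not need the fallback.
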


\begin{proof}
	It is enough to prove the assertion on the dense torus, since the valuation map
	used here is defined by local toric coordinates and is compatible with
	restriction to the toric part.  Write a point of \(Y\) as
	\[
	q\,t_a\,\iota(s),\qquad s\in T_B(\overline K).
	\]
	By Lemma~\ref{lem:torsion-lift-zero-valuation}, the torsion lift \(q\) has
	zero toric valuation.  The valuation on a split torus is additive under
	multiplication and functorial for the homomorphism
	\(\iota:T_B\hookrightarrow T\).  Hence
	\[
	\val_v(q\,t_a\,\iota(s))
	=
	\val_v(t_a)+\iota_N(\val_{B,v}(s))
	=
	\alpha_v+\iota_N(\val_{B,v}(s)).
	\]
	Since \(\val_{B,v}(s)\in N_{B,\BR}\), the result follows.  The same computation
	in local trivialization charts shows that the statement is independent of the
	chosen chart.
\end{proof}

\begin{lemma}[Critical vector lies in the tropical envelope]\label{lem:tropical-envelope}
	Assume \(\muess_{\bL}(X)=\mu_G\).  Then for every place \(v\),
	\[
	u_v-\alpha_v\in \iota_N(N_{B,\BR}).
	\]
	Equivalently, there is a unique \(w_v\in N_{B,\BR}\) such that
	\[
	\iota_N(w_v)+\alpha_v=u_v.
	\]
\end{lemma}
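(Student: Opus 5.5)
The plan is to take an $X$-generic sequence $(x_i)$ with $h_{\bL}(x_i)\to\mu_G$ and read off its valuation measures from both sides. This sequence exists because $\muess_{\bL}(X)=\mu_G$. Since $X\subseteq Y$, the sequence is in particular $\bL$-small in $G$.

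\emph{Limit along $Y$.} Proposition~\ref{prop:monocritical-kr} applies to this small sequence. It gives, at every place $v$, convergence $\nu_{x_i,v}\to\delta_{u_v}$ in the Kantorovich--Rubinstein topology, and in particular weakly. On the other hand, every $x_i$ lies in $Y=q\,t_aB$. By Lemma~\ref{lem:translated-subtorus-valuations}, every $v$-adic conjugate $y$ of $x_i$ is again a point of $Y$, so $\val_v(\psi_{j(y),v}(y))$ lies in the affine subspace $\alpha_v+\iota_N(N_{B,\BR})$. For conjugates we use the Galois-stable description of $Y$, after enlarging $K$ so that $q$, $t_a$ and $B$ are defined.

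\emph{Closedness.} Hence each $\nu_{x_i,v}$ is supported in the closed set $\alpha_v+\iota_N(N_{B,\BR})$. Weak limits of probability measures supported in a closed set remain supported there, so $\delta_{u_v}$ is supported in it. This gives $u_v-\alpha_v\in\iota_N(N_{B,\BR})$. Uniqueness of $w_v$ follows because $\iota_N$ is injective, being induced by the closed immersion $T_B\hookrightarrow T$.

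\emph{Main obstacle.} The delicate point is the Galois-conjugate step. The affine shift $\alpha_v=\val_v(t_a)$ is defined using one embedding, while conjugates of $x_i$ correspond to conjugates of $t_a$, whose valuations at the same place may a priori differ. I would handle this in one of two ways.
\begin{itemize}
\item First option: work over a finite extension over which $t_a$ and $B$ are defined. The normalized orbit measures are unchanged by Lemma~\ref{lem:field-extension-normalization}, and the argument then runs at each place $v'$ above $v$.
\item Second option: decompose $O_v(x_i)$ by the conjugate $\sigma(t_a)$. Each piece then lies in a translate $\val_v(\sigma t_a)+\iota_N(N_{B,\BR})$. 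There are finitely many such parallel affine subspaces, so their union is closed. The Dirac limit therefore lies in one of them, and $\alpha_v$ is interpreted as the corresponding valuation, which is consistent with the paper's choice of embedding.
\end{itemize}
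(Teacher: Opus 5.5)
This is the paper's argument. It combines three steps: weak (indeed KR) convergence $\nu_{x_i,v}\to\delta_{u_v}$ from Proposition~\ref{prop:monocritical-kr}; support of the orbit valuations in the closed set $\alpha_v+\iota_N(N_{B,\BR})$ by Lemma~\ref{lem:translated-subtorus-valuations}; and closedness of supports under weak limits, with uniqueness of $w_v$ coming from injectivity of $\iota_N$.

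The paper passes silently over the field-of-definition issue you raise, and your first option is the right repair. Your second option as written only puts $u_v$ in \emph{some} translate $\val_v(\sigma(t_a))+\iota_N(N_{B,\BR})$, which does not pin down the prescribed $\alpha_v$. It can be fixed: each conjugate $\sigma(Y)$ carries at least $1/[K':K]$ of the orbit mass, where $K'$ is a Galois field of definition. Hence the Dirac limit lies in every one of these translates, including the one defining $\alpha_v$.
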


\begin{proof}
	Let \((x_i)\) be an \(X\)-generic \(\bL\)-small sequence.
	Proposition~\ref{prop:monocritical-kr} gives
	adelic KR convergence of the ambient valuation measures:
	\[
	(\val_v)_*\delta_{x_i,v}\longrightarrow\delta_{u_v}
	\]
	for every \(v\).
	
	On the other hand, because \(x_i\in Y=a+B=q\,t_aB\), the toric valuation of
	every point in the Galois orbit of \(x_i\) lies in the closed affine subspace
	\[
	\alpha_v+\iota_N(N_{B,\BR})\subseteq N_{\BR}.
	\]
	This is Lemma~\ref{lem:translated-subtorus-valuations}; its independence of
	the local trivialization chart is part of
	Proposition~\ref{prop:section4-valuation-height-package}.
	Since a weak limit of probability measures supported on a closed set is again
	supported on that closed set, the limit point \(u_v\) belongs to
	\[
	\alpha_v+\iota_N(N_{B,\BR}).
	\]
	This proves the lemma.
\end{proof}

\subsection{The Restricted Quasi-canonical Model}

Let \(\Sigma_B\) be the fan on \(N_{B,\BR}\) obtained from the inverse images
\[
\iota_N^{-1}(\sigma),\qquad \sigma\in\Sigma,
\]
after removing redundancies and passing to the associated normal toric
variety.  Denote this toric variety by \(X_{\Sigma_B}\).

\begin{lemma}[Toric normalization of a subtorus closure]
	\label{lem:subtorus-toric-normalization}
	The toric variety \(X_{\Sigma_B}\) is the normalization of the closure of
	\(T_B\) in \(X_\Sigma\).  The inclusion \(T_B\hookrightarrow T\) extends to a
	toric morphism
	\[
	\varphi_B:X_{\Sigma_B}\longrightarrow X_\Sigma
	\]
	whose image is this closure and whose induced morphism to the image is finite.
	In particular, if \(D\) is an ample toric Cartier divisor on \(X_\Sigma\), then
	\(\varphi_B^*D\) is ample on \(X_{\Sigma_B}\).  If \(D\) is \(T\)-effective,
	then \(\varphi_B^*D\) is \(T_B\)-effective.
\end{lemma}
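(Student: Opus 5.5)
The plan is to prove all four claims of Lemma~\ref{lem:subtorus-toric-normalization} directly from standard toric geometry. The key inputs are the orbit--cone correspondence and the affine-chart criterion for toric morphisms from Proposition~\ref{input:local-triv-analytic}(3), in its algebraic form.

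First I would check that \(\Sigma_B\) is a fan. Write \(\iota_N:N_B\to N\) for the saturated sublattice inclusion induced by \(T_B\hookrightarrow T\); it is saturated because \(T_B=B\cap T\) is connected, being a subtorus. Each \(\iota_N^{-1}(\sigma)\) is a rational polyhedral cone, and it is strongly convex because \(\iota_N\) is injective. Faces and intersections are preserved because \(\iota_N^{-1}\) commutes with intersections. Completeness of \(\Sigma\) gives \(|\Sigma_B|=N_{B,\BR}\), so \(X_{\Sigma_B}\) is a normal proper toric variety. Since every cone of \(\Sigma_B\) maps into a cone of \(\Sigma\), the lattice map \(\iota_N\) is compatible with the fans. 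By the affine-chart criterion, the inclusion \(T_B\hookrightarrow T\) then extends to a toric morphism \(\varphi_B:X_{\Sigma_B}\to X_\Sigma\). Concretely, \(\chi^m\) with \(m\in\sigma^\vee\cap M\) pulls back to \(\chi^{\iota_N^\vee m}\), and \(\iota_N^\vee m\) lies in \(\iota_N^{-1}(\sigma)^\vee\).

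Next I would identify the image and prove finiteness. The morphism \(\varphi_B\) is proper, since its source is proper, and its image contains \(T_B\) densely. Hence the image is the closure \(\overline{T_B}\). For finiteness I will show that \(\varphi_B\) is quasi-finite and then use that a proper quasi-finite morphism is finite. Quasi-finiteness goes orbit by orbit. Let \(\tau=\iota_N^{-1}(\sigma)\) with \(\sigma\) the smallest cone of \(\Sigma\) containing the relative interior of \(\tau\). The orbit \(O(\tau)\) maps equivariantly into \(O(\sigma)\) via \(T_B/T_{B,\tau}\to T/T_\sigma\), and the kernel of this map is finite. The reason is that \(\operatorname{span}(\tau)=\iota_N^{-1}(\operatorname{span}\sigma)\cap\ldots\) is the full preimage of the span of \(\sigma\) intersected with \(N_{B,\BR}\), which follows from the choice of \(\sigma\). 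Since \(\overline{T_B}\) is integral, \(X_{\Sigma_B}\) is normal, and \(\varphi_B\) is finite and birational onto \(\overline{T_B}\) (an isomorphism on \(T_B\)), the map \(\varphi_B\) is the normalization.

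I expect the orbit-kernel finiteness to be the main obstacle. The delicate part is making sure that the span of \(\tau\) equals \(\iota_N^{-1}(\operatorname{span}\sigma)\). I would first try to prove it from the relative-interior choice of \(\sigma\). If that fails, I would fall back on a valuative argument: show that the fibres of \(\varphi_B\) contain no complete curves because the linear map on fans is injective.

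Finally, the positivity claims. The pullback \(\varphi_B^*D\) has support function \(\Psi_D\circ\iota_N\). A finite morphism pulls back ample divisors to ample divisors, which gives ampleness; equivalently, strict concavity of \(\Psi_D\circ\iota_N\) on \(\Sigma_B\) follows from the finiteness just proved. For \(T\)-effectivity, \(\Psi_D\le0\) implies \(\Psi_D\circ\iota_N\le0\), so \(\varphi_B^*D\) is \(T_B\)-effective by \cite[Propositions 4.7--4.9]{BMPS22}.
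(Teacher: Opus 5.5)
Your proof is correct, and it takes a different route from the paper's.

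\textbf{The paper's route.} The paper works chart by chart. On \(U_\sigma=\operatorname{Spec}K[\sigma^\vee\cap M]\), the closure of \(T_B\) is the spectrum of the semigroup algebra of the image of \(\sigma^\vee\cap M\) in \(M_B\). The saturation of that image is \((\iota_N^{-1}\sigma)^\vee\cap M_B\), by the duality \(\iota_N^\vee(\sigma^\vee)=(\iota_N^{-1}\sigma)^\vee\). So each affine piece normalizes to \(U_{\iota_N^{-1}\sigma}\), these pieces glue to \(X_{\Sigma_B}\), and finiteness is automatic because normalization is finite.

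\textbf{Your route.} You construct \(\varphi_B\) first from fan compatibility and get the image from properness. You then prove finiteness as ``proper plus quasi-finite'', checking quasi-finiteness orbit by orbit. Finally you identify \(\varphi_B\) with the normalization because it is finite and birational with normal source.

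\textbf{Comparison.} The paper's argument is shorter and describes the normalization explicitly in coordinates, with no appeal to Zariski's main theorem. Yours avoids the semigroup-saturation computation, makes explicit the extension \(\varphi_B\) that the lemma asserts, and gives the orbit-level maps \(O(\tau)\to O(\sigma)\) as a by-product. For ampleness and \(T\)-effectivity the two arguments agree in substance. Both use that a finite pullback of an ample bundle is ample, and \(\Psi_D\le0\Rightarrow\Psi_D\circ\iota_N\le0\) is the support-function form of the paper's eigensection argument.

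\textbf{The step you flagged.} It closes in one line.
\begin{itemize}
\item Let \(\sigma\) be the smallest cone of \(\Sigma\) containing \(\iota_N(\tau)\). Then \(\tau=\iota_N^{-1}(\sigma)\), and a relative interior point \(p\) of \(\iota_N(\tau)\) lies in \(\operatorname{relint}\sigma\).
\item For \(w\in\iota_N(N_{B,\BR})\cap\operatorname{span}\sigma\), one has \(p+\epsilon w\in\sigma\cap\iota_N(N_{B,\BR})=\iota_N(\tau)\) for small \(\epsilon>0\). Hence \(w\in\operatorname{span}\iota_N(\tau)\).
\item So \(N_{B,\BR}/\operatorname{span}\tau\to N_\BR/\operatorname{span}\sigma\) is injective, and \(O(\tau)\to O(\sigma)\) has finite kernel.
\end{itemize}
The valuative fallback is therefore unnecessary.

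\textbf{A caveat shared with the paper.} The phrase ``\(T_B=B\cap T\) is connected, being a subtorus'' is an assumption, not a deduction. If the extension class of \(G\) is torsion, a connected \(B\) mapping isogenously onto \(A\) meets \(T\) in a nontrivial finite group. The lemma should be read with \(T_B=(B\cap T)^0\). That choice is what makes \(N_B\subseteq N\) saturated and \(M\to M_B\) surjective, and both proofs need it.
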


\begin{proof}
	The inverse images of the cones of \(\Sigma\) form a fan because inverse image
	commutes with taking faces and intersections.  On an affine toric chart
	\(U_\sigma=\operatorname{Spec} K[\sigma^\vee\cap M]\), the closure of
	\(T_B\cap U_\sigma\) has coordinate semigroup given by the image of
	\(\sigma^\vee\cap M\) in the character lattice \(M_B\).  Since \(T_B\subseteq T\)
	is a saturated subtorus, the saturation of this image is
	\[
	\bigl(\iota_N^{-1}\sigma\bigr)^\vee\cap M_B .
	\]
	Thus the normalization of the affine closure is
	\(\operatorname{Spec} K[(\iota_N^{-1}\sigma)^\vee\cap M_B]\).  These affine
	normalizations glue over the fan \(\Sigma_B\), giving \(X_{\Sigma_B}\).  This is
	the standard subtorus-closure construction in toric geometry; see
	\cite[Section 1.4]{WF}.  The induced morphism from the normal variety
	\(X_{\Sigma_B}\) to its image is the normalization map, hence finite.
	
	The restriction of an ample line bundle to a closed subvariety is ample, and
	the finite pullback of an ample line bundle is ample.  Therefore
	\(\varphi_B^*D\) is ample.  Finally, a non-zero \(T\)-eigensection of \(D\)
	restricts on the dense torus and pulls back to a non-zero \(T_B\)-eigensection,
	so \(T\)-effectivity descends to \(T_B\)-effectivity.
\end{proof}

The map
\[
T_B\longrightarrow T,\qquad t\longmapsto t_a\,\iota(t),
\]
extends, by composing \(\varphi_B\) with the action of \(t_a\) on \(X_\Sigma\),
to a morphism
\[
\varphi_{a,B}:X_{\Sigma_B}\longrightarrow X_\Sigma .
\]
Let
\[
\overline B:=B\times^{T_B}X_{\Sigma_B}.
\]
Translation by \(q\,t_a\) identifies \(\overline B\) with the normalization of
the closure of \(Y\) in \(\overline G\).

\begin{proposition}[Restriction of the metric]\label{prop:restriction-metric}
	With the above notation,
	\[
	\tau_a^*\bL
	=
	\overline B(\bD_{a,B})\otimes\pi_B^*\bN_B,
	\]
	where \(\bN_B\) is the canonical metrized line bundle obtained by restricting
	\(\bN\) to \(A_B\), and \(\bD_{a,B}\) is the toric metrized divisor on
	\(X_{\Sigma_B}\) whose local metric functions are
	\[
	\Psi_{\bD_{a,B},v}(z)
	=
	\Psi_{\bD,v}\bigl(\iota_N(z)+\alpha_v\bigr),
	\qquad z\in N_{B,\BR}.
	\]
	Moreover, the quasi-canonical metric obtained in
	Proposition~\ref{prop:monocritical-qcanonical-replacement} restricts to the
	quasi-canonical toric metric
	\[
	\Psi_{\bD'_{a,B},v}(z)
	=
	\Psi_{D_B}(z-w_v),
	\]
	where \(w_v\) is defined by Lemma \ref{lem:tropical-envelope}.
	The divisor \(D_B\) is ample and \(T_B\)-effective, and
	\[
	\sum_v n_vw_v=0.
	\]
	Hence \(\bD'_{a,B}\) is an admissible quasi-canonical toric metric for the
	restricted semiabelian variety \(B\).
\end{proposition}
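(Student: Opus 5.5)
The proof splits into four checks: the geometric identification of \(\tau_a^*\overline G\) with \(\overline B\), the computation of the restricted toric metric, the restriction of the quasi-canonical replacement, and the admissibility conditions on \(D_B\) and \((w_v)_v\).

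\textbf{Step 1: the line bundle and the abelian factor.} Write \(\tau_a\) for translation by \(a=q\,t_a\) composed with the normalization \(\overline B\to\overline Y\). The map \(B\times X_{\Sigma_B}\to G\times X_\Sigma\), \((b,z)\mapsto(qb,\,t_a\varphi_B(z))\), is \(T_B\)-equivariant, so it descends to \(\overline B\to\overline G\). Under this map \(\overline G(D)\) pulls back to \(\overline B(\varphi_{a,B}^*D)\), because the quotient construction \((G\times M)/T\) is functorial for equivariant pullback. Translating by \(t_a\) does not change the algebraic class of a toric divisor, so \(D_B:=\varphi_B^*D\). Since \(\pi\circ\tau_a=\tau_{\pi(a)}\circ\pi_B\) and \(\pi(a)\) is torsion, \(\tau_{\pi(a)}^*\bN\) is the restriction of \(\bN\) to \(A_B\). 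This holds up to a Picard-zero twist with zero canonical height on torsion translates, which we absorb. So the abelian factor is \(\pi_B^*\bN_B\).

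\textbf{Step 2: the toric metric.} Compute it through the local trivializations, using Lemma~\ref{lem:local-coordinates-compute-tropicalization} and Lemma~\ref{lem:translated-subtorus-valuations}. A point \(q\,t_a\,\iota(s)\) has toric valuation \(\alpha_v+\iota_N(\val_{B,v}(s))\), and the induced metric is \(\Psi_{\bD,v}\) evaluated at that valuation. This gives
\[
\Psi_{\bD_{a,B},v}(z)=\Psi_{\bD,v}(\iota_N(z)+\alpha_v).
\]
For chart-independence, take local trivializations of \(\overline B\) compatible with those of \(\overline G\). Such charts exist because the character-pushout Picard-zero factors restrict functorially, by Lemma~\ref{lem:character-pushout-picard-zero-functoriality}(b). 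Transition units then remain in the compact torus.

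\textbf{Step 3: the quasi-canonical replacement.} Apply Step 2 to \(\bD'\), whose weight is \(\Psi_D(z-u_v)\). Lemma~\ref{lem:tropical-envelope} gives \(u_v=\iota_N(w_v)+\alpha_v\), so
\[
\Psi_D(\iota_N z+\alpha_v-u_v)=\Psi_D(\iota_N(z-w_v))=\Psi_{D_B}(z-w_v).
\]
Here the support function of the pullback divisor is \(\Psi_D\circ\iota_N\), which is the functoriality in Lemma~\ref{lem:toric-qcanonical-operation-normalization}.

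\textbf{Step 4: ampleness, effectivity, and centering.} Lemma~\ref{lem:subtorus-toric-normalization} gives that \(D_B\) is ample and \(T_B\)-effective. For the centering, the product formula applied to \(t_a\in T(\overline K)\) gives \(\sum_v n_v\alpha_v=0\). Together with \(\sum_v n_vu_v=0\), this yields \(\iota_N\bigl(\sum_v n_vw_v\bigr)=0\). Since \(\iota_N\) is injective, \(\sum_v n_vw_v=0\). A subtlety is that \(\alpha_v\) is defined with conjugate-averaging over \(K(t_a)\), so the product formula should be applied with normalized weights, as in Lemma~\ref{lem:field-extension-normalization}. Finally, \(w_v=0\) whenever \(u_v=\alpha_v=0\), which happens for all but finitely many \(v\). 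This gives an admissible quasi-canonical metric.

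\textbf{Expected main obstacle.} The hardest point is Step 1's compatibility of local trivializations and theta factors under translation by \(q\,t_a\). One must check that \(\tau_a^*\overline G(\bD)\) is exactly the induced metric on \(\overline B\), with no residual Picard-zero metric twist. The plan is to use that \(q\) is torsion together with Corollary~\ref{cor:torsion-picard-global-cancellation} and the flatness in Lemma~\ref{lem:picard-zero-flat-operation-normalization}. With these, any leftover factor is a canonically metrized Picard-zero bundle rigidified at a torsion point, hence trivial as an adelic line bundle.
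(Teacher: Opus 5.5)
Your proposal is correct and is essentially the paper's argument: you pull back along $t\mapsto t_a\iota(t)$, use that the torsion lift $q$ has zero toric valuation while $t_a$ shifts it by $\alpha_v$, rewrite $\Psi_D(\iota_N(z)+\alpha_v-u_v)$ via Lemma~\ref{lem:tropical-envelope}, and get the centering from the product formula for $t_a$ and injectivity of $\iota_N$; you are also somewhat more careful than the paper about the torsion twist in the abelian factor and about matching the charts of $\overline B$ with those of $\overline G$. One correction to your ``main obstacle'' paragraph: a residual toric discrepancy is excluded by the place-by-place vanishing $\operatorname{trop}_{G,v}(q)=0$ of Lemma~\ref{lem:torsion-lift-zero-valuation}, which you already use in Step~2, and not by the global cancellation of Corollary~\ref{cor:torsion-picard-global-cancellation}, which only controls heights; likewise the abelian twist disappears not because of a rigidification at a torsion point but because $k\pi(a)=0$ gives $[k]\circ\tau_{\pi(a)}=[k]$, which together with $[k]^*\bN\simeq\bN^{\otimes k^2}$ makes $\tau_{\pi(a)}^*\bN\otimes\bN^{-1}$ torsion as an adelic line bundle.
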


\begin{proof}
	The algebraic statement follows from the construction of the normalized closure.
	On the toric fiber, the map is \(t\mapsto t_a\iota(t)\), so the underlying
	toric divisor is the pullback of \(D\) to \(X_{\Sigma_B}\).  Its support
	function is
	\[
	\Psi_{D_B}(z)=\Psi_D(\iota_N(z)).
	\]
	By Lemma~\ref{lem:subtorus-toric-normalization}, the induced map from
	\(X_{\Sigma_B}\) to the closure of \(T_B\) is finite.  Therefore the pullback
	of the ample toric divisor \(D\) is ample on \(X_{\Sigma_B}\).  The same lemma
	shows that a \(T\)-effective section of \(D\) pulls back to a non-zero
	\(T_B\)-eigensection, so \(D_B\) is \(T_B\)-effective.
	
	The translation by the torsion lift \(q\) has zero toric valuation at every
	place by Lemma~\ref{lem:torsion-lift-zero-valuation}, and hence does not change
	the metric.  The translation by \(t_a\) shifts the valuation by \(\alpha_v\).  This
	gives
	\[
	\Psi_{\bD_{a,B},v}(z)
	=
	\Psi_{\bD,v}\bigl(\iota_N(z)+\alpha_v\bigr).
	\]
	
	Since \(\pi(a)=\pi(q)\) is torsion, translation by \(\pi(a)\) preserves the
	canonical metric on \(A\).  Hence the abelian factor is simply
	\[
	\pi_B^*\bN_B.
	\]
	
	For the quasi-canonical metric, Proposition
	\ref{prop:monocritical-qcanonical-replacement} defines
	\[
	\Psi_{\bD',v}(y)=\Psi_D(y-u_v).
	\]
	Pulling this back gives
	\[
	\Psi_{\bD'_{a,B},v}(z)
	=
	\Psi_D(\iota_N(z)+\alpha_v-u_v).
	\]
	By Lemma \ref{lem:tropical-envelope},
	\[
	\alpha_v-u_v=-\iota_N(w_v),
	\]
	and therefore
	\[
	\Psi_{\bD'_{a,B},v}(z)
	=
	\Psi_D(\iota_N(z-w_v))
	=
	\Psi_{D_B}(z-w_v).
	\]
	This is precisely the quasi-canonical metric on \(X_{\Sigma_B}\) with critical
	vector \((w_v)_v\).  Finally,
	\[
	\sum_vn_v\iota_N(w_v)
	=
	\sum_vn_v(u_v-\alpha_v).
	\]
	The critical vector \((u_v)_v\) satisfies the product formula, and
	\((\alpha_v)_v\) is the valuation vector of the algebraic point
	\(t_a\in T(\overline K)\), so both sums on the right vanish.  Since
	\(\iota_N\) is injective, \(\sum_vn_vw_v=0\).
\end{proof}

\begin{corollary}[Smallness after restriction]\label{cor:restricted-smallness}
	Let \((x_i)\) be an \(X\)-generic \(\bL\)-small sequence and assume
	\(\muess_{\bL}(X)=\mu_G\).  Then the corresponding sequence in \(B(\overline{K})\) is
	small for
	\[
	\bL'_{a,B}:=\overline B(\bD'_{a,B})\otimes\pi_B^*\bN_B.
	\]
\end{corollary}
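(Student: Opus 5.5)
We transport the sequence to \(B\) by the translation \(\tau_a\) (by \(q\,t_a\)) and compare heights with Proposition~\ref{prop:restriction-metric}. Put \(b_i:=\tau_a^{-1}(x_i)\in B(\overline K)\); here \(x_i\in Y=q\,t_aB\). Proposition~\ref{prop:restriction-metric} gives \(h_{\tau_a^*\bL}(b_i)=h_{\bL}(x_i)\). It also shows that \(\bD'\) restricts to \(\bD'_{a,B}\), so that \(h_{\bL'_{a,B}}(b_i)=h_{\bL'}(x_i)\), where \(\bL'=\overline G(\bD')\otimes\pi^*\bN\) is the quasi-canonical replacement of Proposition~\ref{prop:monocritical-qcanonical-replacement}. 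The identity holds because both heights are given by the same local integrals of the pulled-back toric weights against the valuation measures. Concretely, by Lemma~\ref{lem:translated-subtorus-valuations} the ambient valuation measure of \(x_i\) at \(v\) is the pushforward of the intrinsic \(T_B\)-valuation measure of \(b_i\) under \(z\mapsto\iota_N(z)+\alpha_v\). Lemma~\ref{lem:point-measure-height} then gives the toric part of both heights as \(-\sum_vn_v\int\Psi\,d\nu\). The abelian part is \(\hat h_{\bN}(\pi(x_i))=\hat h_{\bN_B}(\pi_B(b_i))\) up to the torsion translate \(\pi(a)\), and this does not change canonical heights.

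Next, since \((x_i)\) is \(\bL\)-small in the ambient sense, Proposition~\ref{prop:monocritical-qcanonical-replacement} shows it is \(\bL'\)-small, so \(h_{\bL'}(x_i)\to\muess_{\bL'}(\overline G)=0\) after the \(M_K\)-normalization \(\mu_G=0\). Here we use the hypothesis \(\muess_{\bL}(X)=\mu_G\), which is what makes an \(X\)-generic \(\bL\)-small sequence converge to \(\mu_G\) itself. Combining this with the first step gives \(h_{\bL'_{a,B}}(b_i)\to0\).

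It remains to identify \(0\) with \(\muess_{\bL'_{a,B}}(\overline B)\). By Proposition~\ref{prop:restriction-metric}, \(\bD'_{a,B}\) has weights \(\Psi_{D_B}(z-w_v)\) with \(\sum_vn_vw_v=0\), and \(D_B\) is ample and \(T_B\)-effective. Lemma~\ref{lem:qcanonical-arith-teff-normalization} (with \(\gamma_v=0\)) shows that \(\bD'_{a,B}\) is arithmetically \(T_B\)-effective with \(\muabs_{\bD'_{a,B}}(T_B)=0\). Proposition~\ref{prop:section4-minima-formula}, or equivalently Lemma~\ref{lem:qbog-height-separation}, applied to \(B\) then gives \(\muess_{\bL'_{a,B}}(\overline B)=0\), and smallness follows.

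The main obstacle is the first step, the exact height identity under \(\tau_a\) for both \(\bL\) and \(\bL'\). It requires that the local trivializations of \(G\) restrict to trivializations of \(B\) compatibly with \(\iota_N\), up to compact-torus units. I would verify this on valuation measures via Lemmas~\ref{lem:torsion-lift-zero-valuation} and~\ref{lem:translated-subtorus-valuations}, rather than on metrics. The Picard-zero center terms are then handled by Proposition~\ref{prop:section4-valuation-height-package}, and the product formula for \(t_a\) removes the shift \(\alpha_v\) from the global sums.
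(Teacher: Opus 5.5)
Your argument is correct, but it is packaged differently from the paper's.

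\textbf{The paper's route.} The paper never forms the height identity $h_{\bL'_{a,B}}(b_i)=h_{\bL'}(x_i)$. It takes the KR convergence $\nu_{x_i,v}\to\delta_{u_v}$ from Proposition~\ref{prop:monocritical-kr} and subtracts the fixed shift $\alpha_v$ to get $\nu_{b_i,v}\to\delta_{w_v}$ on $N_{B,\BR}$. It then reruns the Lipschitz/KR estimate for $\Psi_{D_B}(\cdot-w_v)$ directly on $B$, and simply asserts $0=\muess_{\bL'_{a,B}}(\overline B)$.

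\textbf{Your route.} You apply Proposition~\ref{prop:monocritical-qcanonical-replacement} once, in the ambient $G$. You then transport the conclusion through the exact identity $\tau_a^*\bL'=\bL'_{a,B}$. You verify that identity by pushing $\nu_{b_i,v}$ forward under $z\mapsto\iota_N(z)+\alpha_v$ and using $\iota_N(w_v)=u_v-\alpha_v$.

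\textbf{Comparison.} Both routes rest on the same compatibility of valuation measures on $Y$ with those on $B$ (Lemma~\ref{lem:translated-subtorus-valuations}). So neither is more demanding on the local trivializations. Yours avoids redoing any KR argument on $B$, and it is more explicit at two points the paper leaves implicit:
\begin{enumerate}
\item You observe that the hypothesis $\muess_{\bL}(X)=\mu_G$ is what makes an $X$-generic $\bL$-small sequence small in the ambient sense. The paper uses this silently when it invokes Proposition~\ref{prop:monocritical-kr}.
\item You actually justify $\muess_{\bL'_{a,B}}(\overline B)=0$, via Lemma~\ref{lem:qcanonical-arith-teff-normalization} and Proposition~\ref{prop:section4-minima-formula} applied to $B$.
\end{enumerate}

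\textbf{One small correction.} The product formula $\sum_v n_v\alpha_v=0$ plays no role in the height identity itself, because the shift $\alpha_v$ is absorbed exactly into $w_v$. It is needed only for $\sum_v n_v w_v=0$, i.e.\ for identifying the value of the restricted minimum as $0$. Proposition~\ref{prop:restriction-metric} already supplies that.
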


\begin{proof}
	Write \(x_i=a b_i\) with \(b_i\in B(\overline{K})\).  The base part satisfies
	\[
	\hat h_{\bN_B}(\pi_B(b_i))=\hat h_{\bN}(\pi(x_i))\to0,
	\]
	because \(\pi(a)\) is torsion.  By
	Proposition~\ref{prop:monocritical-kr},
	\[
	(\val_v)_*\delta_{x_i,v}\to\delta_{u_v}.
	\]
	Equivalently, after subtracting the fixed translation \(\alpha_v\), the
	valuation measures of \(b_i\) in \(B\) converge to \(\delta_{w_v}\).
	The quasi-canonical metric \(\bD'_{a,B}\) has critical vector \(w_v\), so the
	same Lipschitz/KR argument as in
	Proposition~\ref{prop:monocritical-qcanonical-replacement} gives
	\[
	h_{\overline B(\bD'_{a,B})}(b_i)\to0.
	\]
	Thus
	\[
	h_{\bL'_{a,B}}(b_i)\to0
	=\muess_{\bL'_{a,B}}(\overline B),
	\]
	which proves smallness.
\end{proof}

\subsection{Strict Equidistribution on the Envelope}

\begin{proposition}[Restricted strict equidistribution]\label{prop:restricted-eq}
	Assume \(\muess_{\bL}(X)=\mu_G\).  If \(X\neq Y\), then an \(X\)-generic
	\(\bL\)-small sequence in \(X(\overline{K})\) is strict in \(Y\), and its
	archimedean Galois orbit measures converge to a probability measure whose
	support is Zariski dense in \(Y\).
\end{proposition}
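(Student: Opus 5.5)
The proof will transport the sequence to the restricted quasi-canonical model on \(B\) and then invoke the strict quasi-canonical equidistribution theorem there.

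\emph{Strictness.} Strictness in \(Y\) is immediate from Lemma~\ref{lem:minimal-envelope}. Since \(X\neq Y\) and \(Y=a+B\) is the minimal translate containing \(X\), every proper translate \(U\subsetneq Y\) of a connected subgroup meets \(X\) in a proper closed subset. Hence an \(X\)-generic sequence eventually avoids \(U\).

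\emph{Transport to \(B\).} First normalize by an \(M_K\)-constant so that \(\mu_G=0\). Choose \(a\) as in Lemma~\ref{lem:torsion-base}, so that \(\pi(a)\) is torsion and \(a=q\,t_a\) with \(q\) a torsion lift. Write \(x_i=a\,b_i\) with \(b_i\in B(\overline K)\). Translation by \(a^{-1}\) is an isomorphism \(Y\to B\), so strictness of \((x_i)\) in \(Y\) transfers to strictness of \((b_i)\) in \(B\). Corollary~\ref{cor:restricted-smallness} shows that \((b_i)\) is small for \(\bL'_{a,B}=\overline B(\bD'_{a,B})\otimes\pi_B^*\bN_B\). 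By Proposition~\ref{prop:restriction-metric}, \(\bD'_{a,B}\) is quasi-canonical with centered critical vector \((w_v)_v\) on the ample, \(T_B\)-effective divisor \(D_B\).

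\emph{Applying strict equidistribution.} Corollary~\ref{cor:kuhne-transport-paper-metric-class}, that is, Theorem~\ref{thm:qcanonical-strong-equidistribution} applied to \((B,\bL'_{a,B})\), gives convergence of the archimedean Galois orbit measures of \((b_i)\) to the canonical probability measure \(\mu_{B,v}\). The archimedean slice hypotheses are those already assumed in the standing setup. We then push forward by the translation \(\tau_a\). The orbit measures of \(x_i\) are the pushforwards of those of \(b_i\), so they converge to \((\tau_a)_*\mu_{B,v}\).

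\emph{Zariski density of the support.} The remaining point, which I expect to be the main obstacle, is that the support of the limit measure is Zariski dense in \(Y\). Here \(\mu_{B,v}\) is the normalized mixed measure \(c_1(\overline B(\bD'_{a,B})_v)^{t_B}\wedge c_1(\pi_B^*\bN_{B,v})^{g_B}\). Using the local trivializations and Lemma~\ref{lem:qcanonical-haar-shift}, this is locally the product of the smooth positive volume form \(c_1(\bN_{B,v})^{g_B}\) on \(A_B(\mathbb C)\) and the Haar measure on the translated compact torus \(\val_v^{-1}(w_v)\) in each fiber. Its support is therefore a real-analytic \(S_{w_v}\)-bundle over all of \(A_B(\mathbb C)\).

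To get Zariski density, suppose a polynomial vanishes on this support. Restricting to a fiber over a point of \(A_B\), it vanishes on a translate of the maximal compact torus in \(T_B(\mathbb C)\). That translate is Zariski dense in \(T_B\), since a Laurent polynomial vanishing on \(|z_j|=r_j\) vanishes identically by Fourier expansion. Fibers over a dense set of base points are then dense in \(B\), so the polynomial vanishes on \(B\). Translating by \(a\) gives density in \(Y\).

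The care needed is in justifying that the measure has full fiberwise support rather than only its stated formula. For this I would cite the Haar-slice description in Proposition~\ref{prop:kuhne-section4-transport}(b) applied with \(X=B\). In that case \(d_0=\dim A_B\) and \(r=\dim T_B\), so the rank-active frame is a lattice basis and the slice is all of \(\val_v^{-1}(w_v)\) over \(A_B\).
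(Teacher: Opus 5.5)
Your proposal is correct and follows the paper's proof step for step: strictness from Lemma~\ref{lem:minimal-envelope}, transport to \(B\) via Corollary~\ref{cor:restricted-smallness} and Proposition~\ref{prop:restriction-metric}, then Theorem~\ref{thm:qcanonical-strong-equidistribution} on \(B\), translation back by \(a\), and Zariski density because the limit lives on a translate of the maximal compact subgroup of \(B(\mathbb C_v)\), which surjects onto \(A_B(\mathbb C_v)\) with Zariski-dense compact-torus fibres. You simply spell out this last step in more detail than the paper does. One small remark: full fibrewise support already follows from the product structure in the local trivializations together with Lemma~\ref{lem:qcanonical-haar-shift}, so the appeal to the Haar-slice formula of Proposition~\ref{prop:kuhne-section4-transport}(b) with \(X=B\) is unnecessary, and for a general fan it would bring in the product-corner hypothesis at the apex of \(\Psi_{D_B}\) (that \(\Delta_{D_B}\) be a parallelotope), which is not among the standing assumptions.
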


\begin{proof}
	Strictness in \(Y\) is Lemma \ref{lem:minimal-envelope}.
	
	By Corollary \ref{cor:restricted-smallness}, after translating the sequence to
	\(B\), it is small for the quasi-canonical line bundle \(\bL'_{a,B}\).  Hence
	Theorem~\ref{thm:qcanonical-strong-equidistribution} gives equidistribution on
	\(B\); the hypotheses of that theorem are verified for \(\bL'_{a,B}\) by
	Proposition~\ref{prop:restriction-metric}.  Translating back by \(a\) gives
	the required convergence on \(Y\).
	
	At an archimedean place \(v\), the limiting support is a translate of the
	maximal compact subgroup of \(B(\mathbb C_v)\), with toric valuation \(w_v\).
	The maximal compact subgroup of a complex semiabelian variety is Zariski dense:
	its image in the abelian quotient is the whole complex torus
	\(A_B(\mathbb C_v)\), and its intersection with the torus part is
	\((S^1)^{\dim T_B}\), which is Zariski dense in \(T_B\).  Therefore its
	translate is Zariski dense in \(Y\).
\end{proof}

\subsection{Special Points on the Envelope}

\begin{proposition}[BGPS bridge]
	\label{input:bgps-qcanonical-bridge}
	Let \(\bE\) be a quasi-\allowbreak canonical semipositive toric metrized divisor on a
	proper toric variety with principal torus \(T_H\).  Then \(\bE\) is
	monocritical.  More precisely, by the BGPS characterization of
	quasi-canonical toric metrics, the metric functions may be written in the form
	\[
	\psi_{\bE,v}(z)=\Psi_E(z-w_v)-\gamma_v
	\]
	for a centered adelic vector \((w_v)_v\) and constants \((\gamma_v)_v\);
	this vector is the critical vector of \(\bE\).  The implication from this
	normal form to monocriticality is the BGPS monocritical criterion.  We use
	\cite[Propositions 4.16 and 5.3]{BGPS19} for these two facts.
	
	If \(F/K\) is a finite extension, the scalar extension \(\bE_F\) is again
	monocritical, and the critical vector at a place \(w\) of \(F\) over \(v\) is
	the same vector \(w_v\).  This is the scalar-extension compatibility in
	\cite[Proposition 4.17]{BGPS19}.  Consequently, a statement written below in
	terms of \(T_H(\overline K)\) and \(\val_v(T_H(\overline K))\) is interpreted
	by applying the cited BGPS result over a sufficiently large finite field of
	definition and then viewing the resulting point over \(\overline K\).
\end{proposition}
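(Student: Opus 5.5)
The plan is to reduce everything to the roof functions and use the BGPS height formula together with the successive-minima formula of \cite[Theorem~3.9 and Corollary~3.10]{BGPS15}. For a semipositive toric metrized divisor with $E$ big, the height of the toric variety is, up to the positive normalization used in the paper, the average of the global roof function $\upsilon_{\bE}=\sum_v n_v\upsilon_{\bE,v}$ over $\Delta_E$ with respect to Lebesgue measure. By the cited formula, $\muess_{\bE}=\max_{\Delta_E}\upsilon_{\bE}$. Quasi-canonicity says these agree. Since $\upsilon_{\bE}$ is concave and continuous and $\Delta_E$ is full dimensional, an average equal to the maximum forces $\upsilon_{\bE}$ to be constant on $\Delta_E$. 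In particular $\Delta_{E,\max}=\Delta_E$.

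Next I pass from the global roof to the local ones. A finite positive combination of concave functions that is affine (here constant) forces each summand to be affine. Only finitely many places differ from the canonical roof $0$, so this applies. Thus $\upsilon_{\bE,v}(x)=\langle w_v,x\rangle+\gamma_v$ on $\Delta_E$, with $w_v=0$ and $\gamma_v=0$ for almost all $v$. Legendre duality, exactly as in the computation at the end of Lemma~\ref{lem:qcanonical-arith-teff-normalization} read backwards, then gives
\[
\Psi_{\bE,v}(z)=\min_{x\in\Delta_E}\bigl(\langle x,z\rangle-\langle w_v,x\rangle-\gamma_v\bigr)=\Psi_E(z-w_v)-\gamma_v ,
\]
up to the sign convention fixed for $\val_v$. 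Constancy of $\sum_v n_v\upsilon_{\bE,v}$ on the full-dimensional $\Delta_E$ forces $\sum_v n_v w_v=0$, so the vector is centered.

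For monocriticality, fix $x\in\Delta_E=\Delta_{E,\max}$. The subdifferential of the affine-on-$\Delta_E$ function is $\partial\upsilon_{\bE,v}(x)=w_v+C_x$, where $C_x$ is the (sign-adjusted) normal cone of $\Delta_E$ at $x$. Because $\Delta_E$ is full dimensional, $C_x$ is a pointed cone. Hence $\sum_v n_v\partial\upsilon_{\bE,v}(x)=C_x$, using $\sum_v n_v w_v=0$, and $0$ is its vertex.

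The decomposition $0=\sum_v n_v u_v$ with $u_v=w_v+c_v$ and $c_v\in C_x$ gives $\sum_v n_v c_v=0$ inside a pointed cone. This forces every $c_v=0$, so the critical vector is exactly $(w_v)_v$, independently of $x$.

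For scalar extension to $F/K$, I would note that at a place $w\mid v$ the local metric, hence $\Psi$ and the local roof, coincide with those at $v$, while the normalized weights satisfy $\sum_{w\mid v}n_w=n_v$. The normal form is therefore preserved with the same $(w_v,\gamma_v)$, and the argument above applies verbatim over $F$.

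I expect the main obstacle to be bookkeeping rather than ideas. One point is matching the sign conventions between $\Psi=-g$, the concave dual, and the normal cone at boundary points of $\Delta_E$. The other is justifying that "average equals maximum" is the exact translation of $h=\muess$ under the paper's height normalization. For the latter I would first check it in the $\mathbb P^1$ example of Section~4 before invoking \cite[Propositions 4.16, 4.17 and 5.3]{BGPS19} for the general case.
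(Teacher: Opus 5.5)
Your argument is correct, but it takes a different route from the paper, which gives no argument for this proposition. The paper simply imports the normal form, the monocritical criterion and the scalar-extension compatibility from \cite[Propositions 4.16, 4.17 and 5.3]{BGPS19}.

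You instead derive everything from the definition of quasi-canonical actually used in this paper, namely the equality $h_{\bE}(X_\Sigma)=\muess_{\bE}(X_\Sigma)$. Your chain of steps is as follows.
\begin{enumerate}
\item The BPS integral formula \cite[Chapter~5]{BPS14}, together with $E^d=d!\,\mathrm{vol}(\Delta_E)$, makes the normalized height the average of the global roof. This settles the normalization you were worried about without any appeal to BGPS19.
\item \cite[Theorem 3.9]{BGPS15} makes $\muess$ the maximum of the roof, so the roof is constant.
\item The concave/convex sandwich makes each of the finitely many non-canonical local roofs affine.
\item Biduality gives the translated normal form, and full-dimensionality of $\Delta_E$ gives centering.
\item Pointedness of the normal cone of a full-dimensional polytope gives both the vertex condition and uniqueness of the decomposition, with $u_v=w_v$ for every $x$.
\end{enumerate}

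What your route buys is a genuine check that the paper's Zhang-equality definition of quasi-canonical agrees with the translated-canonical normal form. The citation takes this for granted, although BGPS's terminology need not coincide with the paper's. You also get the extra facts $\Delta_{E,\max}=\Delta_E$ and independence of the critical vector from $x$. The citation route is shorter and inherits BGPS's treatment of general monocritical metrics.

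Two small points to tighten. First, passing from ``constant almost everywhere'' to equality at boundary points of $\Delta_E$ uses upper semicontinuity of the roof, which holds because it is an infimum of affine functions. Second, with the paper's conventions the sup-differential is $\partial\upsilon_{\bE,v}(x)=w_v-N_{\Delta_E}(x)$, where $N_{\Delta_E}(x)$ is the outer normal cone. Since the global roof is constant on $\Delta_E$ and $-\infty$ off it, you can read off $\partial\upsilon_{\bE}(x)=-N_{\Delta_E}(x)$ directly, with no sum rule for sup-differentials.
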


\begin{theorem}[BGPS toric Bogomolov property]
	\label{input:bgps-toric-bogomolov}
	Let \(X_\Sigma\) be a proper toric variety with principal torus \(T\), and let
	\(\bE\) be a monocritical toric metrized \(\mathbb R\)-divisor with big
	underlying divisor.  If an irreducible subvariety
	\(V\subset T_{\overline K}\) satisfies
	\[
	\muess_{\bE}(V)=\muess_{\bE}(X_\Sigma),
	\]
	then \(V\) is a translate of a subtorus of \(T_{\overline K}\).  Moreover, for
	a translate \(U=T_0\cdot p\), with associated affine valuation spaces
	\[
	A_{U,w}=\val_w(p)+N_{T_0,\mathbb R},
	\]
	the translate \(U\) is \(\bE\)-special if and only if the critical vector
	satisfies \(u_{v(w)}\in A_{U,w}\) for every place \(w\) after a finite field of
	definition.  This is the Bogomolov property and special-translate criterion of
	\cite[Theorem 5.12 and Proposition 5.14]{BGPS19}.
\end{theorem}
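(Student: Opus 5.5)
The plan is to derive both assertions from the toric results of \cite{BGPS19}. The only real work is checking that the hypotheses and normalizations of the present paper match theirs.

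\textbf{Step 1: field of definition.} Given \(V\subset T_{\overline K}\) with \(\muess_{\bE}(V)=\muess_{\bE}(X_\Sigma)\), choose a finite extension \(F/K\) over which \(V\) is defined. By Proposition~\ref{input:bgps-qcanonical-bridge}, i.e.\ \cite[Proposition 4.17]{BGPS19}, the scalar extension \(\bE_F\) is still monocritical. Its critical vector at a place \(w\mid v\) is \(u_v\). Essential minima are invariant under scalar extension with our normalized weights; this is Lemma~\ref{lem:field-extension-normalization} in the toric setting. So it suffices to work over \(F\).

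\textbf{Step 2: why \(V\) is a subtorus translate.} Recall the mechanism behind \cite[Theorem 5.12]{BGPS19}, since the same mechanism is used in Section~7. Take a \(V\)-generic sequence \((x_i)\) with heights tending to \(\muess_{\bE}(V)=\muess_{\bE}(X_\Sigma)\); it is then \(\bE\)-small in the ambient sense.
\begin{itemize}
\item The defect inequality of Proposition~\ref{input:bgps-kr}, specialized to \(G=T\), where the center is automatically \(0\) by the product formula, gives \(\Phi_v(\nu_{x_i,v})\to0\).
\item Monocriticality gives \(F_v=\{u_v\}\), so the valuation measures converge to \(\delta_{u_v}\) at every place.
\item At an archimedean place, toric invariance and Lemma~\ref{lem:torus-invariant-atomic-lift} identify the limit of the Galois orbit measures of \(x_i\) with Haar measure on \(S_v e^{-u_v}\).
\end{itemize}
Zhang's difference-morphism argument, in its toric form, then forces the stabilizer of \(V\) to have the same dimension as \(V\); hence \(V\) is a translate of a subtorus. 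I will cite this conclusion as \cite[Theorem 5.12]{BGPS19} rather than redo it. The only check needed is that ``big underlying divisor'' and ``monocritical'' in Definition~\ref{def:monocritical} agree with the BGPS hypotheses, which they do by construction.

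\textbf{Step 3: the special-translate criterion.} Let \(U=T_0\cdot p\). Pulling \(\bE\) back along \(t\mapsto p\,t\) gives a toric metric on the normalized closure of \(T_0\), as in Lemma~\ref{lem:subtorus-toric-normalization} and Proposition~\ref{prop:restriction-metric}. Its local functions are \(\Psi_{\bE,w}(\iota(z)+\val_w(p))\). By the toric successive-minima formula \cite[Theorem 3.9, Corollary 3.10]{BGPS15}, \(\muess_{\bE}(U)\) is the maximum of the induced roof function.

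By concave duality, this maximum equals \(\muess_{\bE}(X_\Sigma)\) exactly when a zero-slope decomposition can be realized inside the affine spaces \(A_{U,w}\). Monocriticality makes that decomposition unique, namely \(\mathbf u\). So equality of minima should be equivalent to \(u_{v(w)}\in A_{U,w}\) for all \(w\). This is \cite[Proposition 5.14]{BGPS19}. The necessity direction is also visible from the support argument of Lemma~\ref{lem:tropical-envelope}: the valuations of points of \(U\) lie in \(A_{U,w}\), and the limits \(\delta_{u_v}\) must be supported there.

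\textbf{Main obstacle.} I expect the sufficiency direction of Step 3 to be the hardest part. Given \(u_v\in A_{U,w}\), one must show that the restricted roof attains the ambient maximum. I would try to build, over a finite extension, points of \(U\) whose valuations are close to \(u_v\) at the finitely many nontrivial places and \(0\) elsewhere. These exist by density of \(\val_w(T_0(\overline K))\) in \(N_{T_0,\mathbb R}\) together with a product-formula adjustment. The height of such points then tends to \(\sum_v n_v\,\vartheta\) at the critical point. If this construction becomes delicate, I will fall back on the cited statement in \cite{BGPS19}.
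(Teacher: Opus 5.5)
Your plan ends where the paper does. The paper gives no argument for this statement and simply imports \cite[Theorem~5.12 and Proposition~5.14]{BGPS19}, so as a citation the proposal is acceptable. The supporting sketch, however, contains one false step and one inverted assessment, and it should not be presented as the mechanism.

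\textbf{Step~2, third bullet.} This bullet is false for proper \(V\).
\begin{itemize}
\item After your base change, the Galois-orbit measures of a \(V\)-generic sequence are supported on \(\overline V^{an}\). But \(S_ve^{-u_v}\) is Zariski dense in \(T\), so \(\overline V^{an}\cap S_ve^{-u_v}\) is a proper real-analytic subset of Haar measure zero. The limit therefore cannot be Haar measure on \(S_ve^{-u_v}\).
\item If the bullet held, \(V=T\) would follow at once. That is false already for a proper subtorus with the canonical metric.
\item Torus invariance, the hypothesis of Lemma~\ref{lem:torus-invariant-atomic-lift}, is available only for \(T\)-generic or strict sequences. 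This is exactly the generic-versus-strict circularity of Remark~\ref{rem:qcanonical-generic-not-strict}.
\item Zhang's argument would instead need equidistribution on \(V\), \(V^m\) and \(\alpha_m(V^m)\) toward the restricted Monge--Amp\`ere measures. Nothing in your sketch provides this.
\end{itemize}
If you want an actual mechanism, your first two bullets already suffice, via difference images.
\begin{itemize}
\item For \(V^m\)-generic tuples of terms of the sequence, the joint valuation measures converge adelically to \(\delta_{(u_v,\dots,u_v)}\). Hence the images under \(\alpha_m\) have canonical toric height tending to \(0\).
\item The classical Bogomolov theorem for tori (Zhang, Bombieri--Zannier) then makes \(\overline{\alpha_m(V^m)}\) a subtorus.
\item Let \(pT_Y\) be the minimal coset containing \(V\). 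A nontrivial character vanishing on \(\overline{\alpha_m(V^m)}\) would split over the product \(V^m\) into characters constant on \(p^{-1}V\), contradicting minimality.
\item So \(\overline{\alpha_m(V^m)}=T_Y^{m-1}\), giving \(m\dim V\ge(m-1)\dim T_Y\) for all \(m\), and hence \(V=pT_Y\).
\end{itemize}
Note also that Proposition~\ref{input:bgps-kr} and Lemma~\ref{lem:tropical-envelope} are proved under the standing hypotheses of the monocritical section. These include arithmetic \(T\)-effectivity, which is what allows \(x=0\in\Delta_{D,\max}\). Here only bigness and monocriticality are assumed, so you must use the BGPS versions with a general maximizer.

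\textbf{Step~3.} Here you have the difficulty the wrong way round: sufficiency is the easy direction. Let \(\vartheta_{U,w}\) be the roof of the metric pulled back along \(t\mapsto pt\); its maximum computes \(\muess_{\bE}(U)\) by \cite[Theorem~3.9]{BGPS15}. Suppose \(u_{v(w)}=\iota(z_w)+\val_w(p)\) for all \(w\). Then for every \(y\),
\[
\sum_w n_w\vartheta_{U,w}(y)\le\Bigl\langle y,\sum_w n_wz_w\Bigr\rangle-\sum_w n_w\Psi_{\bE,w}(u_{v(w)})=\muess_{\bE}(X_\Sigma).
\]
This holds for two reasons:
\begin{itemize}
\item \(\iota\bigl(\sum_wn_wz_w\bigr)=\sum_wn_wu_{v(w)}-\sum_wn_w\val_w(p)=0\) by the product formula, and \(\iota\) is injective.
\item \(-\sum_vn_v\Psi_{\bE,v}(u_v)=\muess_{\bE}(X_\Sigma)\) is the Fenchel identity at the critical point.
\end{itemize}
Combined with the automatic bound \(\muess_{\bE}(U)\ge\muabs_{\bE}(T)=\muess_{\bE}(X_\Sigma)\), this gives equality, with no construction of points. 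Necessity is the direction that uses monocriticality. Your support argument handles it: valuations of points of \(U\) lie in the closed set \(A_{U,w}\), while the valuation measures converge to \(\delta_{u_v}\).
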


\begin{remark}[Why the toric theorem is not yet the semiabelian theorem]
	\label{rem:toric-bogomolov-bridge-gap}
	Theorem~\ref{input:bgps-toric-bogomolov} is the correct toric source for a
	possible proof of Theorem~\ref{thm:qcanonical-bogomolov}.  It does not by
	itself prove Theorem~\ref{thm:qcanonical-bogomolov}, because a subvariety of a semiabelian variety need
	not be contained in one torus fiber, and the quotient by the abelian part has
	to be controlled at the same time.  A non-circular proof of
	Theorem~\ref{thm:qcanonical-bogomolov} has to combine the abelian Bogomolov theorem for
	\(\rho(Z)\), the toric BGPS theorem on the fiber directions, and a gluing
	statement saying that the resulting local translated-subtorus conditions come
	from a global translate of a connected semiabelian subgroup.
\end{remark}

\begin{proposition}[Toric special-point criterion]
	\label{input:toric-special-point}
	Let \(\bE\) be a quasi-canonical toric metrized divisor on a torus \(T_H\), with
	centered critical vector \((w_v)_v\).  Then \(T_H(\overline K)\) contains a
	point whose height equals the toric absolute minimum if and only if
	\[
	w_v\in \val_v(T_H(\overline K))\otimes_{\BZ}\BQ
	\]
	for every place \(v\).  When this condition holds, such a point may be chosen
	inside the torus and with valuation vector \(w_v\) at every \(v\), after
	clearing denominators and, if necessary, replacing the ground field by a finite
	extension.  This is the special-point criterion of
	\cite[Proposition 5.13]{BGPS19}; the construction in its proof produces
	\(q^{1/\ell}\) from an \(S\)-unit \(q\) with
	\(\val_v(q)=\ell w_v\).  The passage from quasi-canonical metrics to the
	monocritical hypothesis of that proposition, and the harmless finite-extension
	interpretation, are recorded in Proposition~\ref{input:bgps-qcanonical-bridge}.
\end{proposition}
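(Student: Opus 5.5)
Since the paper attributes the criterion to \cite[Proposition~5.13]{BGPS19}, the plan is to reduce to the explicit height formula for toric points under a quasi-canonical metric, and then produce the point from an \(S\)-unit. First normalize \(\bE\) in the BGPS form of Proposition~\ref{input:bgps-qcanonical-bridge},
\[
\Psi_{\bE,v}(z)=\Psi_E(z-w_v)-\gamma_v,
\]
where \(\sum_vn_vw_v=0\). By Lemma~\ref{lem:qcanonical-arith-teff-normalization}, \(\muabs_{\bE}(T_H)=\sum_vn_v\gamma_v\). For \(p\in T_H(\overline K)\), let \(\nu_{p,v}\) be its valuation measure. The height identity (the toric case of Proposition~\ref{prop:section4-valuation-height-package}) gives
\[
h_{\bE}(p)-\muabs_{\bE}(T_H)=-\sum_vn_v\int\Psi_E(z-w_v)\,d\nu_{p,v}(z).
\]
Every summand here is nonnegative, since \(\Psi_E\le0\) by \(T_H\)-effectivity. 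Ampleness makes \(\Psi_E\) strictly concave on the complete fan, so \(\Psi_E(y)=0\) exactly when \(y=0\). Hence \(p\) attains the minimum if and only if every \(v\)-adic Galois conjugate of \(p\) has valuation exactly \(w_v\), at every place.

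For necessity, suppose \(p\) attains the minimum. Pick a finite extension \(F\) over which \(p\) is defined, and let \(w\) be a place of \(F\) over \(v\). The valuation \(\val_w(p)\) is then \(w_v\), up to the normalization factor relating \(\val_w\) to \(\val_v\). That factor is rational, so \(w_v\in\val_v(T_H(\overline K))\otimes\BQ\).

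For sufficiency, first observe that \(w_v=0\) for all but finitely many \(v\), and set \(S\) to be that finite set together with the archimedean places. By the rationality hypothesis, some integer \(\ell\ge1\) makes each \(\ell w_v\) a valuation vector. I would then invoke the \(S\)-unit construction in \cite[Proposition~5.13]{BGPS19}: working coordinatewise in a character basis, and enlarging the field if needed, it yields \(q\in T_H(F)\) with \(\val_v(q)=\ell w_v\) at every place. The product formula condition is supplied by \(\sum_vn_vw_v=0\). Next take an \(\ell\)-th root \(p=q^{1/\ell}\). The Galois conjugates of \(p\) differ from each other by roots of unity, which have valuation zero. Consequently every conjugate has valuation \(w_v\), so \(\nu_{p,v}=\delta_{w_v}\) and the first paragraph gives \(h_{\bE}(p)=\muabs_{\bE}(T_H)\).

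The main obstacle is the archimedean places together with the exact normalization. Both the \(S\)-unit existence and the weight bookkeeping (normalized \(n_v\) versus local valuations over \(F\)) are where errors could enter. My first attempt would be to apply Dirichlet's unit theorem after a finite extension, and then absorb the residual discrepancy into the choice of \(\ell\) and of the field extension. This is exactly the finite-extension interpretation recorded in Proposition~\ref{input:bgps-qcanonical-bridge}, so I would defer to the cited BGPS construction for that step.
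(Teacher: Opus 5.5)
You follow the paper's route: the paper gives nothing beyond the citation of \cite[Proposition~5.13]{BGPS19} and the \(q^{1/\ell}\) sketch. But the step you defer is exactly where the argument breaks, and Dirichlet's theorem cannot repair it.

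Finite extensions do not help. Suppose \(q\in T_H(F)\) has \(\val_v(\sigma(q))=\ell w_v\) for every \(K\)-embedding \(\sigma\colon F\to\mathbb C_v\) and every \(v\); this is what you need for all \(K\)-conjugates of \(q^{1/\ell}\) to have valuation \(w_v\). Then \(q_0=\operatorname{N}_{F/K}(q)\), taken coordinatewise in a character basis, is an \(S\)-unit point of \(T_H(K)\) with \(\val_v(q_0)=[F:K]\,\ell\,w_v\) for all \(v\). So \((w_v)_v\) must lie in the \(\BQ\)-span of the valuation vectors of \(S\)-unit points of \(T_H(K)\), which is a finite-dimensional \(\BQ\)-space. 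The per-place hypothesis, by contrast, allows arbitrary \(\BQ\)-combinations of logarithms of algebraic numbers at each archimedean place.

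A concrete counterexample: let \(K=\BQ(\sqrt2)\), \(T_H=\mathbb G_m\), \(\Delta_E=[-1,1]\) (so \(\Psi_E(u)=-|u|\)), and take \(w_{v_1}=-w_{v_2}=\log2\) at the two real places and \(w_v=0\) elsewhere. The hypothesis holds, since \(\log 2=\val_{v_1}(1/2)\). Yet a point \(p\) of height \(0=\muabs_{\bE}(T_H)\) must have \(\nu_{p,v}=\delta_{w_v}\) for all \(v\). Then \(\operatorname{N}_{K(p)/K}(p)\) would be a unit of \(\BZ[\sqrt2]\) with absolute value \(2^{-[K(p):K]}\) at \(v_1\), which is impossible because the units are \(\pm(1+\sqrt2)^k\).

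Thus the ``if'' direction with the per-place condition is false, and no choice of \(\ell\) or of field absorbs the discrepancy. What your construction actually proves is the criterion under a global hypothesis: some \(q\in T_H(K)\) and \(\ell\ge1\) satisfy \(\val_v(q)=\ell w_v\) for all \(v\) simultaneously. In that case \(p=q^{1/\ell}\) works, and its \(K\)-conjugates really are \(\zeta q^{1/\ell}\). The norm argument above shows this global condition is also necessary.

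Separately, your characterization of minimal points is misjustified: ampleness does not force \(\Psi_E\) to vanish only at \(0\). On \(\mathbb P^1\) with \(\Delta_E=[0,1]\), \(\Psi_E(u)=\min(0,u)\) vanishes on all of \([0,\infty)\). So minimality only gives \(\operatorname{supp}\nu_{p,v}\subseteq w_v+C\) with \(C=\{\Psi_E=0\}\).

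To conclude \(\nu_{p,v}=\delta_{w_v}\) you must add the product formula for \(p\), namely \(\sum_vn_vE[\nu_{p,v}]=0\). Combined with \(\sum_vn_vw_v=0\), the vectors \(E[\nu_{p,v}]-w_v\in C\) have vanishing weighted sum. The cone \(C\) is strongly convex, being dual to the cone over the full-dimensional \(\Delta_E\), so each of these vectors is zero. A linear form positive on \(C\setminus\{0\}\) then gives \(\nu_{p,v}=\delta_{w_v}\). With this repair the necessity direction is fine.
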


\begin{lemma}[Rational valuation exactness for subtori]
	\label{lem:rational-valuation-subtorus}
	Let \(T_B\subseteq T\) be a saturated subtorus.  For every place \(v\), the
	valuation groups of algebraic points are exact after tensoring with
	\(\mathbb Q\).  In particular, if
	\[
	\eta\in \val_v(T(\overline K))\otimes_{\mathbb Z}\mathbb Q
	\]
	and \(\eta\in \iota_N(N_{B,\mathbb R})\), then the unique
	\(\xi\in N_{B,\mathbb R}\) with \(\iota_N(\xi)=\eta\) satisfies
	\[
	\xi\in \val_v(T_B(\overline K))\otimes_{\mathbb Z}\mathbb Q.
	\]
	This is the valuation form of exactness for the short exact sequence of
	algebraic tori
	\[
	1\to T_B\to T\to T/T_B\to1,
	\]
	using that the character lattice quotient is torsion-free.
\end{lemma}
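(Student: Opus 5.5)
The plan is to reduce the statement to a lattice-level exactness and then tensor with \(\mathbb Q\).

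\textbf{Step 1: set up the exact sequence of groups.} Put \(T_Q:=T/T_B\). Since \(T_B\) is saturated, the character sequence
\[
0\to M_Q\to M\to M_B\to 0
\]
is exact with \(M_B\) torsion-free. The dual cocharacter sequence \(0\to N_B\to N\to N_Q\to0\) is therefore exact and split. Choosing a splitting gives \(T\simeq T_B\times T_Q\) as split tori over \(K\), and after this choice \(\iota_N\) is the inclusion of the first factor of \(N_{\mathbb R}=N_{B,\mathbb R}\oplus N_{Q,\mathbb R}\).

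\textbf{Step 2: compare the valuation groups.} The valuation map is additive and functorial for homomorphisms of split tori. Under the splitting we get
\[
\val_v(T(\overline K))=\val_v(T_B(\overline K))\oplus\val_v(T_Q(\overline K))
\]
as subgroups of \(N_{B,\mathbb R}\oplus N_{Q,\mathbb R}\). Indeed, a point \((s,r)\) has valuation \((\val_v(s),\val_v(r))\), and every pair is realized by independently chosen algebraic points. Tensoring with \(\mathbb Q\) preserves this direct sum. An element \(\eta\) of the \(\mathbb Q\)-span lying in \(\iota_N(N_{B,\mathbb R})\) has zero \(N_Q\)-component, so its \(N_B\)-component \(\xi\) lies in \(\val_v(T_B(\overline K))\otimes\mathbb Q\). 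Injectivity of \(\iota_N\) shows that this \(\xi\) is the unique preimage.

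\textbf{Potential obstacle and how to avoid it.} The point needing care is that \(\val_v(T(\overline K))\otimes\mathbb Q\) is to be read as the \(\mathbb Q\)-span inside \(N_{\mathbb R}\), and the \(\mathbb Q\)-span of a direct sum of subgroups might a priori fail to split. I would sidestep this by describing the group explicitly. In coordinates \(T\simeq\mathbb G_m^t\), the group \(\val_v(T(\overline K))\) is \(\Gamma_v^t\), where \(\Gamma_v=\val_v(\overline K^\times)\subseteq\mathbb R\) is a \(\mathbb Q\)-vector space: it is divisible since \(\overline K\) is algebraically closed. Thus the group is already a \(\mathbb Q\)-subspace, and the saturated basis adapted to \(N_B\subseteq N\) shows that \(\Gamma_v^t\cap\iota_N(N_{B,\mathbb R})=\iota_N(\Gamma_v^{\dim T_B})\), which equals \(\iota_N(\val_v(T_B(\overline K)))\).

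Saturation is used exactly to obtain a \(\mathbb Z\)-basis of \(N\) extending one of \(N_B\). Without saturation only finite index would hold, but the statement is unaffected after \(\otimes\mathbb Q\).
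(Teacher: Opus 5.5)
Your argument is correct and is essentially the paper's. The paper likewise writes \(\val_v(T(\overline K))=N\otimes_{\mathbb Z}\Gamma_v\) and uses torsion-freeness of \(N/N_B\) to keep \(0\to N_B\to N\to N/N_B\to0\) exact after tensoring with \(\Gamma_v\otimes_{\mathbb Z}\mathbb Q\). Your adapted basis, equivalently the splitting \(T\simeq T_B\times T_Q\), is the concrete form of that exactness, and it also makes explicit the injectivity into \(N_{\mathbb R}\) that the paper leaves implicit.

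The ``obstacle'' you flag is not real: the \(\mathbb Q\)-span of \(V_1\oplus V_2\) inside \(W_1\oplus W_2\) is always \(\mathbb Q V_1\oplus\mathbb Q V_2\), so your Step 2 already suffices. Your observation that \(\Gamma_v\) is divisible, which makes the \(\otimes\mathbb Q\) vacuous, is a harmless simplification.
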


\begin{proof}
	Let
	\[
	\Gamma_v=\val_v(\overline K^{\times})\subseteq\mathbb R .
	\]
	For a split torus with cocharacter lattice \(N\), the valuation group of its
	algebraic points is
	\[
	\val_v(T(\overline K))=N\otimes_{\mathbb Z}\Gamma_v
	\subseteq N_{\mathbb R},
	\]
	and similarly
	\[
	\val_v(T_B(\overline K))=N_B\otimes_{\mathbb Z}\Gamma_v
	\subseteq N_{B,\mathbb R}.
	\]
	Since \(T_B\subseteq T\) is saturated, the quotient \(N/N_B\) is torsion-free.
	Thus
	\[
	0\longrightarrow N_B\longrightarrow N\longrightarrow N/N_B\longrightarrow0
	\]
	remains exact after tensoring with the \(\mathbb Q\)-vector space
	\(\Gamma_v\otimes_{\mathbb Z}\mathbb Q\).  Hence
	\[
	\bigl(N_B\otimes_{\mathbb Z}\mathbb R\bigr)\cap
	\bigl(N\otimes_{\mathbb Z}\Gamma_v\otimes_{\mathbb Z}\mathbb Q\bigr)
	=
	N_B\otimes_{\mathbb Z}\Gamma_v\otimes_{\mathbb Z}\mathbb Q
	\]
	inside \(N_{\mathbb R}\).  This is exactly the asserted descent of rational
	valuation vectors from \(T\) to \(T_B\).
\end{proof}

\begin{proposition}[Special points restrict]\label{prop:special-restrict}
	Assume that \(G(\overline{K})\) contains \(\bL\)-special points and that
	\(\muess_{\bL}(X)=\mu_G\).  Then the minimal envelope \(Y=a+B\) contains an
	\(\bL\)-special point.
\end{proposition}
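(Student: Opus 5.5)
The plan is to build the special point explicitly as a product \(q\,t_a\,\iota(s)\), with \(q\) the torsion lift chosen above, \(a=q\,t_a\), and \(s\in T_B(\overline K)\) having valuation vector \(w_v\) at every place. Such a point lies in \(Y=a+B\). By Corollary~\ref{cor:torsion-lift-height-invariance} its \(\bL\)-height equals the toric height \(h_{\bD}(t_a\iota(s))\). So it suffices to find \(s\) with \(h_{\bD}(t_a\iota(s))=\mu_G\).

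\textbf{Step 1: reduce to the quasi-canonical replacement.} Normalize by an \(M_K\)-constant so that \(\mu_G=0\). By Lemma~\ref{lem:point-measure-height}, the height of a torus point \(t\) is \(-\sum_v n_v\Psi_{\bD,v}(\val_v(t))\), since the measure is a Dirac mass at \(\val_v(t)\) when the Galois orbit valuations agree. If \(\val_v(t)=u_v\) for all \(v\), the height is \(\eta_{\overline G(\bD)}(\boldsymbol\delta_{\mathbf u})\). The identity in the proof of Lemma~\ref{lem:bgps-kr-upgrade}, combined with \(\eta_{\overline G(\bD')}(\boldsymbol\delta_{\mathbf u})=0\), shows this equals \(\mu_G=0\). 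Hence any \(t\in T(\overline K)\) whose Galois conjugates all have valuation \(u_v\) at each \(v\) is \(\bL\)-special.

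\textbf{Step 2: rationality of the critical vector.} Since \(G(\overline K)\) contains an \(\bL\)-special point \(p\), Proposition~\ref{prop:monocritical-kr}, applied to the constant sequence \(p\), forces \(\nu_{p,v}=\delta_{u_v}\). If \(\pi(p)\) is not torsion, first argue that \(\hat h_{\bN}(\pi(p))=0\): by Lemma~\ref{lem:arith-teff-height-lower}, equality \(h_{\bL}(p)=\mu_G\) forces both summands to be extremal. Then \(p=q't\) with \(q'\) torsion, and Lemma~\ref{lem:torsion-lift-zero-valuation} gives \(u_v=\val_v(t)\in\val_v(T(\overline K))\). Also \(\alpha_v=\val_v(t_a)\) lies in the same group. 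Thus
\[
\eta_v:=u_v-\alpha_v\in\val_v(T(\overline K))\otimes_{\BZ}\BQ,
\]
and by Lemma~\ref{lem:tropical-envelope}, \(\eta_v=\iota_N(w_v)\in\iota_N(N_{B,\BR})\). Lemma~\ref{lem:rational-valuation-subtorus} then gives \(w_v\in\val_v(T_B(\overline K))\otimes\BQ\).

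\textbf{Step 3: construct the point on \(B\).} Apply Proposition~\ref{input:toric-special-point} to the restricted quasi-canonical metric \(\bD'_{a,B}\) of Proposition~\ref{prop:restriction-metric}, whose centered critical vector is \((w_v)_v\). This produces \(s\in T_B(\overline K)\) with \(\val_v(s)=w_v\) at every place, over a finite extension where Proposition~\ref{input:bgps-qcanonical-bridge} handles scalar extension. Then \(\val_v(t_a\iota(s))=\alpha_v+\iota_N(w_v)=u_v\), and Step 1 shows that \(q\,t_a\,\iota(s)\in Y\) is \(\bL\)-special.

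\textbf{Main obstacle.} The hardest point is Step 2 in the case where \(\pi(p)\) is not a priori torsion. It needs the extremality argument showing \(\hat h_{\bN}(\pi(p))=0\), hence \(\pi(p)\) torsion. It also needs care that \(\val_v\) of Galois conjugates is constant, so that the Dirac description holds uniformly over a common field of definition. I would handle the latter by working over a finite extension on which \(p\), \(t_a\) and the BGPS \(S\)-unit are defined, using the place-wise normalization of Lemma~\ref{lem:field-extension-normalization}.
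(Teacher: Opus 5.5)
Your proposal is correct and follows the paper's proof step for step. The steps are: rationality of \(u_v\), subtraction of \(\alpha_v=\val_v(t_a)\), descent to \(w_v\) via Lemma~\ref{lem:rational-valuation-subtorus}, Proposition~\ref{input:toric-special-point} applied to \(\bD'_{a,B}\), and the conclusion by torsion-lift height invariance that \(a\,\iota(s)\in Y\) has height \(\mu_G\). The paper derives rationality of \(u_v\) from the converse direction of the toric special-point criterion, whereas you obtain \(u_v\in\val_v(T(\overline K))\) directly from the torsion reduction of a special point together with Proposition~\ref{prop:monocritical-kr} applied to a constant sequence; you also make explicit the identity \(-\sum_v n_v\Psi_{\bD,v}(u_v)=\mu_G\), which the paper compresses into its ``local valuation argument''.
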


\begin{proof}
	By Proposition~\ref{input:toric-special-point}, applied to the quasi-canonical
	replacement from Proposition~\ref{prop:monocritical-qcanonical-replacement},
	the existence of \(\bL\)-special points on \(G\) is equivalent to
	\[
	u_v\in \val_v(T(\overline{K}))\otimes_{\BZ}\BQ
	\]
	for every \(v\).  Since \(t_a\in T(\overline{K})\), also
	\[
	\alpha_v=\val_v(t_a)\in \val_v(T(\overline{K})).
	\]
	By Lemma \ref{lem:tropical-envelope},
	\[
	u_v-\alpha_v=\iota_N(w_v).
	\]
	Since \(\alpha_v=\val_v(t_a)\), the vector \(u_v-\alpha_v\) also lies in
	\(\val_v(T(\overline K))\otimes_{\BZ}\BQ\).  Applying
	Lemma~\ref{lem:rational-valuation-subtorus} to the saturated subtorus
	\(T_B\subseteq T\), the rational valuation condition descends to
	\[
	w_v\in \val_v(T_B(\overline{K}))\otimes_{\BZ}\BQ.
	\]
	Proposition~\ref{input:toric-special-point} applied to
	\(\bD'_{a,B}\) gives a point \(t_B\in T_B(\overline{K})\) whose valuations are
	\(\val_v(t_B)=w_v\) and whose height realizes the toric minimum for the
	quasi-canonical restricted metric.  For the original restricted metric
	\(\bD_{a,B}\), the local valuation argument is
	\[
	\iota_N(w_v)+\alpha_v=u_v.
	\]
	Hence the toric height of \(t_B\) for the restricted original metric equals
	the ambient toric absolute minimum:
	\[
	h_{\overline B(\bD_{a,B})}(t_B)=\mu_G.
	\]
	Here the abelian canonical part contributes \(0\) at the minimum, so the toric
	absolute minimum is the same number as the ambient minimum \(\mu_G\).
	The base point \(a t_B\) has torsion image in \(A\), because \(\pi(a)\) is
	torsion and \(t_B\) lies in the torus.  Hence
	\[
	h_{\bL}(a t_B)=\mu_G.
	\]
	Thus \(a t_B\in Y(\overline{K})\) is \(\bL\)-special.
\end{proof}

\begin{lemma}[Special translates have minimal essential minimum]\label{lem:special-translate-min}
	Let \(p\in G(\overline{K})\) be an \(\bL\)-special point and let \(B\subseteq G_{\overline{K}}\)
	be a connected algebraic subgroup.  Then
	\[
	\muess_{\bL}(p+B)=\mu_G.
	\]
\end{lemma}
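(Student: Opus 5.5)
The equality \(\muess_{\bL}(p+B)=\mu_G\) splits into two inequalities. The lower bound \(\muess_{\bL}(p+B)\ge\mu_G\) comes from the global estimate already available: Lemma~\ref{lem:arith-teff-height-lower}, together with Proposition~\ref{prop:section4-minima-formula}, gives \(h_{\bL}(x)\ge\muabs_{\bD}(T)=\mu_G\) for every \(x\in G(\overline K)\). Hence every nonempty open subset of \(p+B\) has infimum of heights at least \(\mu_G\). All the work is in the upper bound: we must show that every nonempty open \(U\subseteq p+B\) contains points of height at most \(\mu_G+\epsilon\).

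For the upper bound we first normalize \(p\). Since \(h_{\bL}(p)=\mu_G\) and \(h_{\overline G(\bD)}(p)\ge\mu_G\), the decomposition \(h_{\bL}=h_{\overline G(\bD)}+\hat h_{\bN}\circ\pi\) forces \(\hat h_{\bN}(\pi(p))=0\). So \(\pi(p)\) is torsion, and we may write \(p=q\,t_p\) with \(q\) a torsion lift and \(t_p\in T(\overline K)\). By Corollary~\ref{cor:torsion-lift-height-invariance}, \(h_{\bD}(t_p)=\mu_G\). Using Proposition~\ref{prop:section4-valuation-height-package} and the minimality of \(\mu_G\), the valuation data of \(t_p\) must realize the toric minimum. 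The BGPS rigidity in Proposition~\ref{input:bgps-kr} (monocriticality, \(F_v=\{u_v\}\)) then forces \(\val_v(t_p)=u_v\) at every place, i.e.\ \(\alpha_v=u_v\) in the notation of the tropical envelope.

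Next we construct small points in \(U\). Apply the restriction construction of Proposition~\ref{prop:restriction-metric} with \(a=p\); this uses only that \(\pi(p)\) is torsion and that the centering condition \(u_v-\alpha_v=0\in\iota_N(N_{B,\BR})\) holds trivially. The restricted metric on \(\overline B\) then has \(\Psi_{\bD_{p,B},v}(z)=\Psi_{\bD,v}(\iota_N(z)+u_v)\), so heights on \(p+B\) are heights on \(B\) for this metric. We then repeat the proof of Lemma~\ref{lem:torsion-fiber-toric-min} inside \(B\):
\begin{itemize}
\item choose a torsion point of \(A_B\) in \(\pi_B(U-p)\) and a torsion lift \(q'\in B\);
\item inside the fiber \(q'T_B\) use the toric successive minima formula \cite[Theorem 3.9 and Corollary 3.10]{BGPS15} for \(\bD_{p,B}\), which gives points of height at most \(\muess_{\bD_{p,B}}+\epsilon\) in any open subset;
\item conclude with Corollary~\ref{cor:torsion-lift-height-invariance}, which shows \(h_{\bL}(pq't)=h_{\bD_{p,B}}(t)\).
\end{itemize}
The upper bound therefore reduces to \(\muess_{\bD_{p,B}}(X_{\Sigma_B})\le\mu_G\). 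This holds because the point \(1\in T_B\) has height \(h_{\bD_{p,B}}(1)=h_{\bD}(t_p)=\mu_G\), and because the restricted metric is again semipositive.

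The main obstacle is the last step. Knowing that the absolute minimum on \(T_B\) is attained at one point does not by itself bound the essential minimum. What is needed is the equality \(\muess=\muabs\) for the toric metric \(\bD_{p,B}\), which holds by the BGPS formula for semipositive toric metrics applied on \(X_{\Sigma_B}\). That formula gives \(\muess_{\bD_{p,B}}=\muabs_{\bD_{p,B}}(T_B)\le h_{\bD_{p,B}}(1)=\mu_G\). Here \(\muabs\) should be read over \(T_B\), and semipositivity of the restriction should be checked via Lemma~\ref{lem:subtorus-toric-normalization} and pullback of concave functions along the affine map \(z\mapsto\iota_N(z)+u_v\). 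If the valuation rigidity \(\val_v(t_p)=u_v\) turns out to be delicate, we would instead argue directly with \(h_{\bD_{p,B}}(1)=\mu_G\) and skip that identification, since only the restricted metric itself enters the final bound.
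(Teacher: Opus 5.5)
Your proposal is correct, but its upper bound takes a much longer route than the paper's; the lower bound is the same. The paper never restricts the metric to \(p+B\). After writing \(p=q\,t\) with \(q\) a torsion lift, it notes that for every torsion point \(\xi\in B(\overline K)\) the point \(q\xi\) is again torsion. Hence \(p\xi=(q\xi)\,t\) satisfies \(h_{\bL}(p\xi)=h_{\bD}(t)=\mu_G\) by Corollary~\ref{cor:torsion-lift-height-invariance}. Since torsion points are Zariski dense in the semiabelian variety \(B\), the translate \(p+B\) contains a Zariski-dense set of points of height exactly \(\mu_G\).

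This removes what you call the main obstacle. A single point of height \(\mu_G\) already generates a dense set of points of the same height by torsion translation. The same holds inside your framework: every torsion point of \(T_B\) has vanishing valuations, so it has the same restricted height as \(1\), and \(\muess_{\bD_{p,B}}\le\mu_G\) follows at once.

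Consequently the paper needs none of the following:
\begin{itemize}
\item monocritical rigidity;
\item the restricted metric \(\bD_{p,B}\) or the fan \(\Sigma_B\);
\item a semipositivity check;
\item a toric successive-minima formula on \(X_{\Sigma_B}\).
\end{itemize}
In particular it never uses that \(B\cap T\) is a torus. Your appeal to Proposition~\ref{prop:restriction-metric} implicitly assumes this, but \(B\cap T\) can be finite and nontrivial, for instance for an abelian subvariety of a non-split extension whose class is torsion. This is repairable by passing to \((B\cap T)^0\), but the paper's argument avoids the issue entirely.

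Your rigidity step is sound. With center \(0\) and \(\eta=\mu_G\), the defect inequality of Proposition~\ref{input:bgps-kr} forces \(\Phi_v(\nu_{t_p,v})=0\), hence \(\nu_{t_p,v}=\delta_{u_v}\). It is not even needed for your route, since you could work over \(K(p)\) with place-wise shifts. What your approach buys is extra information: every \(\bL\)-special point lies over the critical compact torus, and the metric on \(p+B\) is the explicit translate \(\Psi_{\bD,v}(\iota_N(z)+u_v)\). That is the kind of data used later in the envelope analysis, but for this lemma it is more than is required.
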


\begin{proof}
	The global lower bound gives
	\[
	\muess_{\bL}(p+B)\ge\mu_G.
	\]
	It remains to produce a Zariski-dense set of points of height \(\mu_G\) in
	\(p+B\).
	
	By the height decomposition
	\[
	h_{\bL}(p)=h_{\overline G(\bD)}(p)+\hat h_{\bN}(\pi(p)),
	\]
	together with \(h_{\overline G(\bD)}(p)\ge\mu_G\) and
	\(\hat h_{\bN}\ge0\), the equality \(h_{\bL}(p)=\mu_G\) forces
	\(\hat h_{\bN}(\pi(p))=0\).  Hence \(\pi(p)\) is torsion.
	Choose a torsion lift \(q\in G(\overline{K})\) of \(\pi(p)\) and write
	\[
	p=q\,t
	\]
	with \(t\in T(\overline{K})\).  Corollary~\ref{cor:torsion-lift-height-invariance}
	gives
	\[
	h_{\bD}(t)=h_{\bL}(p)=\mu_G.
	\]
	
	Torsion points are Zariski dense in the connected semiabelian variety \(B\).
	For every torsion point \(\xi\in B(\overline{K})\), the point \(q\xi\) is torsion in
	\(G\), and
	\[
	p\xi=(q\xi)t.
	\]
	Applying Corollary~\ref{cor:torsion-lift-height-invariance} again gives
	\[
	h_{\bL}(p\xi)=h_{\bD}(t)=\mu_G.
	\]
	Thus \(p+B\) contains a Zariski-dense set of points of height \(\mu_G\), proving
	the reverse inequality.
\end{proof}

\subsection{Restriction Package and Bogomolov}

\begin{theorem}[Restriction package sufficient for Bogomolov]\label{thm:restriction-package}
	Let \(Y=a+B\) be the minimal translate of a connected subgroup containing
	\(X\), and assume
	\[
	\muess_{\bL}(X)=\mu_G.
	\]
	Then:
	\begin{enumerate}[label=(\arabic*),leftmargin=2em]
		\item \(a\) may be chosen with \(\pi(a)\) torsion.
		\item The ambient critical vector satisfies
		\[
		u_v-\val_v(t_a)\in \iota_N(N_{B,\BR})
		\]
		for every \(v\).
		\item The quasi-canonical metric from
		Proposition~\ref{prop:monocritical-qcanonical-replacement} restricts to a
		quasi-canonical
		metric on \(B\), with critical vector \(w_v\) defined by
		\[
		\iota_N(w_v)+\val_v(t_a)=u_v.
		\]
		\item If \(X\neq Y\), any \(X\)-generic small sequence is strict in \(Y\) and
		equidistributes toward a measure with Zariski-dense support in \(Y\).
		\item If \(G\) has \(\bL\)-special points, then \(Y\) has \(\bL\)-special
		points.
	\end{enumerate}
\end{theorem}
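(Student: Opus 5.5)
The plan is to assemble the five items from the lemmas already proved in this section, in order, fixing an \(X\)-generic \(\bL\)-small sequence \((x_i)\) at the start. Such a sequence exists because \(\muess_{\bL}(X)=\mu_G\) and \(X\) is irreducible: take a cofinal chain of dense opens of \(X\) and points of height at most \(\mu_G+1/i\) in them. The minimal translate \(Y=a+B\) is supplied by Lemma~\ref{lem:minimal-envelope}.

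Item (1) is exactly Lemma~\ref{lem:torsion-base}. Its proof uses the height decomposition together with Lemma~\ref{lem:arith-teff-height-lower} to force \(\hat h_{\bN}(\pi(x_i))\to0\). It then applies the abelian Bogomolov theorem (Theorem~\ref{input:abelian-bogomolov}) to \(\overline{\pi(X)}\), and uses minimality of \(Y\) to rule out a smaller abelian part. With \(a\) fixed this way, I choose a torsion lift \(q\) and write \(a=q\,t_a\). Item (2) is then Lemma~\ref{lem:tropical-envelope}. By Lemma~\ref{lem:translated-subtorus-valuations}, the valuation measures of the \(x_i\) are supported on the closed affine subspace \(\alpha_v+\iota_N(N_{B,\BR})\). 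By Proposition~\ref{prop:monocritical-kr}, these measures converge to \(\delta_{u_v}\), so \(u_v\) lies in that subspace. Item (3) is Proposition~\ref{prop:restriction-metric}: I pull back the quasi-canonical replacement of Proposition~\ref{prop:monocritical-qcanonical-replacement} along \(t\mapsto t_a\iota(t)\) and rewrite it using (2). This gives \(\Psi_{D_B}(z-w_v)\) with \(\sum_v n_vw_v=0\), by the product formula for \(t_a\) and for \(\mathbf u\). Ampleness and \(T_B\)-effectivity of \(D_B\) come from Lemma~\ref{lem:subtorus-toric-normalization}.

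Item (4) follows from Proposition~\ref{prop:restricted-eq}. Strictness in \(Y\) is the second half of Lemma~\ref{lem:minimal-envelope}. For equidistribution, Corollary~\ref{cor:restricted-smallness} first transfers smallness to \(\bL'_{a,B}\). Then Theorem~\ref{thm:qcanonical-strong-equidistribution}, applied on \(B\) via Corollary~\ref{cor:kuhne-transport-paper-metric-class}, gives convergence. The limiting support is a translate of the maximal compact subgroup of \(B(\mathbb C_v)\), which is Zariski dense. Item (5) is Proposition~\ref{prop:special-restrict}. The special-point criterion (Proposition~\ref{input:toric-special-point}) gives rationality of \(u_v\). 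This descends to \(w_v\) by Lemma~\ref{lem:rational-valuation-subtorus}, which yields a point \(t_B\) with \(\val_v(t_B)=w_v\). The point \(a t_B\) then has height \(\mu_G\) by Corollary~\ref{cor:torsion-lift-height-invariance}.

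The main obstacle is item (4), because it needs the strict rather than the generic equidistribution theorem on \(B\). By Remark~\ref{rem:qcanonical-generic-not-strict}, this must come from the quasi-canonical Bogomolov theorem (Theorem~\ref{thm:qcanonical-bogomolov}) rather than from the ambient one, or the argument becomes circular. I would check carefully that the restricted data \((B,\bD'_{a,B},\bN_B)\) satisfy the hypotheses of that theorem: an ample \(T_B\)-effective divisor on the normalized subtorus closure, and the archimedean Haar-slice input for that fan. I would also check that applying it to \(B\), a strictly smaller variety, does not invoke the statement being proved.
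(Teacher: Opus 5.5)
Your proposal is correct and takes the same route as the paper. The paper's proof of items (1)--(5) consists of citing Lemma~\ref{lem:torsion-base}, Lemma~\ref{lem:tropical-envelope}, Proposition~\ref{prop:restriction-metric}, Proposition~\ref{prop:restricted-eq} and Proposition~\ref{prop:special-restrict}, in that order, just as you do. Your caution about (4) is justified: the strict upgrade on \(B\) comes from Theorem~\ref{thm:qcanonical-bogomolov} through the K\"uhne transport package, which does not use the present theorem, but the archimedean slice input for the restricted fan \(\Sigma_B\) is not checked here and is carried over from the hypotheses of that theorem.
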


\begin{proof}
	The assertions are Lemma \ref{lem:torsion-base}, Lemma
	\ref{lem:tropical-envelope}, Proposition \ref{prop:restriction-metric},
	Proposition \ref{prop:restricted-eq}, and Proposition
	\ref{prop:special-restrict}.
\end{proof}

\begin{theorem}[Bogomolov for the paper's metric class]\label{thm:bogomolov}
	Let \(X\subseteq G_{\overline{K}}\) be irreducible, and assume that the ambient
	toric metric \(\bD\) satisfies the standing hypotheses of the Bogomolov part:
	\(D\) is ample and \(T\)-effective, and \(\bD\) is semipositive, monocritical, and
	arithmetically \(T\)-effective.
	At every archimedean place, assume in addition either the standard
	\((\mathbb P^1)^t\)-compactification used by K\"uhne, or, for every subvariety
	occurring in the difference-morphism argument, the three general-fan inputs of
	Subsection~9.5: the product-corner normal form, a primitive frame in the
	saturated active lattice, and absence of additional boundary mass.  Equivalently,
	the conditional curvature--Haar slice formula
	\ref{input:general-fan-curvature-haar-slice}, with its normalization and
	boundary assertions, is required for all those subvarieties.
	\begin{enumerate}[label=(\arabic*),leftmargin=2em]
		\item If
		\[
		\muess_{\bL}(X)=\mu_G,
		\]
		then \(X\) is special.
		\item If \(G(\overline{K})\) contains \(\bL\)-special points, then
		\[
		\muess_{\bL}(X)=\mu_G
		\]
		if and only if \(X\) is \(\bL\)-special.
	\end{enumerate}
\end{theorem}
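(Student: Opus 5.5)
The plan is to deduce both parts from the restriction package, Theorem~\ref{thm:restriction-package}, combined with the quasi-canonical Bogomolov property, Theorem~\ref{thm:qcanonical-bogomolov}. For (1), let \(Y=a+B\) be the minimal translate of a connected subgroup containing \(X\), given by Lemma~\ref{lem:minimal-envelope}. We argue by contradiction and assume \(X\neq Y\). Choose an \(X\)-generic \(\bL\)-small sequence \((x_i)\); it exists because \(\muess_{\bL}(X)=\mu_G\). Using Lemma~\ref{lem:torsion-base}, we may assume \(\pi(a)\) is torsion. Write \(x_i=a\,b_i\) with \(b_i\in B(\overline K)\).

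Corollary~\ref{cor:restricted-smallness} shows that \((b_i)\) is small for the restricted quasi-canonical bundle \(\bL'_{a,B}\). Proposition~\ref{prop:restriction-metric} shows that \(\bL'_{a,B}\) lies in the class covered by Theorem~\ref{thm:qcanonical-bogomolov}: \(D_B\) is ample and \(T_B\)-effective, and the critical vector \((w_v)\) is centered. The archimedean hypothesis, either the standard compactification or the slice formula \ref{input:general-fan-curvature-haar-slice}, is assumed in the statement for every subvariety occurring in the argument. Now \(Z:=a^{-1}X\subsetneq B\) carries the \(Z\)-generic sequence \((b_i)\), whose heights tend to \(\muess_{\bL'_{a,B}}(B)\). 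Theorem~\ref{thm:qcanonical-bogomolov} therefore places \(Z\) inside a proper translate of a connected subgroup of \(B\). Translating back by \(a\) gives a proper translate of a connected subgroup of \(G\) that lies in \(Y\) and contains \(X\), contradicting minimality. Hence \(X=Y\), so \(X\) is special.

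An alternative route for (1) uses part (4) of Theorem~\ref{thm:restriction-package}: the limit measure has Zariski-dense support in \(Y\) while being carried by \(X\). But this only forces \(X=Y\) if one knows the support lies in \(X^{an}\). That is true, since all orbit measures are supported on the closed set \(X^{an}\), so I will mention it as a second argument and keep the Bogomolov route as the main one.

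For (2), the direction \(\Leftarrow\) is Lemma~\ref{lem:special-translate-min}. For \(\Rightarrow\), part (1) gives \(X=Y=a+B\). Proposition~\ref{prop:special-restrict} then provides an \(\bL\)-special point \(p\in Y\), so \(Y=p+B\) is \(\bL\)-special.

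The main obstacle is checking that the hypotheses of Theorem~\ref{thm:qcanonical-bogomolov} really hold for the restricted bundle on \(B\). Two points need care. First, the restricted compactification \(X_{\Sigma_B}\) is only a normalization of the closure of \(T_B\), and the archimedean Haar-slice input must hold there. I would handle this by stating that the general-fan hypothesis is imposed on these restricted fans, which the statement's phrase ``every subvariety occurring'' is meant to cover. In the \((\mathbb P^1)^t\) case I would need to check that the restricted fan is again of the product type or is covered by K\"uhne's argument. Second, smallness must be measured against \(\muess_{\bL'_{a,B}}(B)=0\), which requires the normalization \(\mu_G=0\) to be compatible with the \(M_K\)-constants. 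This follows from Lemma~\ref{lem:qcanonical-arith-teff-normalization} and Corollary~\ref{cor:restricted-smallness}.
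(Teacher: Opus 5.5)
Your proof is correct, but for part (1) it takes a shorter route than the paper. Part (2) is exactly the paper's argument: Lemma~\ref{lem:special-translate-min} for one direction, and \(X=Y\) plus Proposition~\ref{prop:special-restrict} for the other.

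The paper's proof of (1) is your ``alternative'' argument. It obtains the strict quasi-canonical equidistribution theorem for \(\bL'_{a,B}\) via Corollary~\ref{cor:kuhne-transport-paper-metric-class}. Then, by Theorem~\ref{thm:restriction-package}(4), the \(X\)-generic small sequence is strict in \(Y\) and equidistributes to a measure whose support is Zariski dense in \(Y\) yet lies in the analytification of \(\overline X\), which forces \(X=Y\).

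You instead apply Theorem~\ref{thm:qcanonical-bogomolov} directly to \(Z=a^{-1}X\subsetneq B\) and contradict the minimality of \(Y\). The paper derives strict equidistribution from Theorem~\ref{thm:qcanonical-bogomolov} through Proposition~\ref{prop:qbog-to-qsec}, so your argument simply stops at the first link of that chain. In fact the paper's argument already contains the contradiction at that point: Proposition~\ref{prop:qbog-to-qsec} makes the translated sequence \(B\)-generic, although it lies in \(Z\).

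Your route therefore avoids the generic equidistribution theorem~\ref{thm:qcanonical-compression} on \(B\) and the support and Zariski-density analysis of Proposition~\ref{prop:restricted-eq}. The paper's route gives, in addition, the limiting distribution of the small points on \(Y\), which is not needed for the Bogomolov conclusion.

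The obstacle you flag is not specific to your route. The paper's proof also runs Theorem~\ref{thm:qcanonical-bogomolov}, through Corollary~\ref{cor:kuhne-transport-paper-metric-class}, on \(\overline B=B\times^{T_B}X_{\Sigma_B}\). In the \((\mathbb P^1)^t\) alternative, nothing in the statement covers that compactification.

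The first of your two suggested checks fails in general: restricting the \((\mathbb P^1)^t\)-fan to a subtorus need not give a product fan. For the subtorus of \(\mathbb G_m^3\) with cocharacter plane \(u_1+u_2+u_3=0\), you get the hexagonal fan with six rays.

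A cleaner fix is available inside the paper's framework. The valuation measures of the \(b_i\) and the abelian heights do not depend on the chosen fan. So the KR argument in Corollary~\ref{cor:restricted-smallness} shows that \((b_i)\) is small for a quasi-canonical metric centered at \((w_v)_v\) on an ample \(T_B\)-effective divisor of any complete fan, for instance on \(B\times^{T_B}(\mathbb P^1)^{\dim T_B}\). Since the conclusion of Theorem~\ref{thm:qcanonical-bogomolov} is purely algebraic, you may apply it in that standard setting.
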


\begin{proof}
	By Proposition~\ref{prop:kuhne-section4-transport} and
	Corollaries~\ref{cor:kuhne-transport-qbog-qsec} and
	\ref{cor:kuhne-transport-paper-metric-class}, the restricted quasi-canonical
	strong equidistribution theorem applies to every restricted quasi-canonical
	semiabelian metric \(\bL'_{a,B}\) attached to a minimal translate occurring
	below.
	
	Let \(Y=a+B\) be the minimal special envelope of \(X\).
	
	Assume first that \(\muess_{\bL}(X)=\mu_G\).  If \(X\neq Y\), choose an
	\(X\)-generic \(\bL\)-small sequence.  By
	Theorem~\ref{thm:qcanonical-strong-equidistribution},
	Theorem~\ref{thm:restriction-package} applies, and this sequence is strict in
	\(Y\) and
	equidistributes toward a measure with Zariski-dense support in \(Y\).  Since all
	terms of the sequence lie in \(X\), every weak limit is supported on
	\(X_v^{an}\).  Hence \(X\) must be Zariski dense in \(Y\), contradicting
	\(X\neq Y\).  Therefore \(X=Y\), so \(X\) is special.  This proves (1).
	
	For (2), first suppose that \(X=p+B\) with \(p\) \(\bL\)-special.  Lemma
	\ref{lem:special-translate-min} gives
	\[
	\muess_{\bL}(X)=\mu_G.
	\]
	
	Conversely assume \(\muess_{\bL}(X)=\mu_G\) and that \(G(\overline{K})\) contains
	\(\bL\)-special points.  By (1), \(X\) equals its minimal special envelope
	\(Y=a+B\).  The same restricted strong equidistribution theorem and
	Theorem~\ref{thm:restriction-package} then say that \(Y\) contains an
	\(\bL\)-special point.  Hence \(X\) is \(\bL\)-special.
\end{proof}

\appendix
\section{Further Directions: A Relative Toric Route}
\label{app:relative-route-future-work}

The proof of Theorem~\ref{thm:bogomolov} proceeds through the direct
K\"uhne-operation transport package,
Proposition~\ref{prop:kuhne-section4-transport}, together with the
quasi-canonical Bogomolov and strong equidistribution results proved in the
main text.  A natural continuation is to develop a relative version over the
abelian quotient.  We record the main ingredients of such a route.

\begin{enumerate}[label=\textup{(\roman*)},leftmargin=2em]
	\item A relative toric fiber-specialness theorem.  Smallness on the
	semiabelian variety should force the generic toric fiber, after quotienting by
	its maximal toric stabilizer, to be special in the corresponding
	function-field torus.
	
	\item A global-to-generic-fiber height comparison.  The global height gap on
	the semiabelian variety should control a positive generic-fiber height gap
	after restricting to a suitable open subset of the abelian base.
	
	\item A function-field toric Bogomolov theorem with descent.  The generic-fiber
	height gap should descend through the relevant torus torsor and identify the
	quotient sections which are forced by small points.
\end{enumerate}

The first ingredient belongs to a relative equidistribution/Bogomolov problem
over the abelian base; the second is a height-specialization problem; the
third is a function-field toric Bogomolov/descent problem.  Together they
would give a fiberwise approach to specialness in semiabelian varieties, in
which the generic toric fiber detects the toric component of a small sequence.

Such a project could produce a relative version of the present theorem, where
small points are studied fiberwise over the abelian quotient and specialness is
detected on the generic toric fiber.

	

\begin{thebibliography}{99}
		
		\bibitem{Ku1}
		Lars K{\"u}hne.
		\newblock Points of small height on semiabelian varieties.
		\newblock {\em J. Eur. Math. Soc.}, 24(6):2077--2131, 2022.
		
		\bibitem{SankaranUma2003}
		Parameswaran Sankaran and V. Uma.
		\newblock Cohomology of toric bundles.
		\newblock {\em Comment. Math. Helv.}, 78(3):540--554, 2003.
		
		\bibitem{Ku4}
		Lars K{\"u}hne.
		\newblock Equidistribution in families of abelian varieties and uniformity, 2021.
		\newblock arXiv:2101.10272.
		
		\bibitem{Iko13}
		Hideaki Ikoma.
		\newblock Boundedness of the successive minima on arithmetic varieties.
		\newblock {\em J. Algebraic Geom.}, 22(2):249--302, 2013.
		
		\bibitem{CLHeight}
		Antoine Chambert-Loir.
		\newblock G\'eom\'etrie d'Arakelov et hauteurs canoniques sur des vari\'et\'es semi-ab\'eliennes.
		\newblock {\em Math. Ann.}, 314:381--401, 1999.
		
		\bibitem{Lang}
		Serge Lang.
		\newblock {\em Introduction to Algebraic and Abelian Functions}.
		\newblock Springer-Verlag, 1982.
		
		\bibitem{ZhangAdelicMetrics}
		Shou-Wu Zhang.
		\newblock Small points and adelic metrics.
		\newblock {\em J. Algebraic Geom.}, 4:281--300, 1995.
		
		\bibitem{BS24}
		Fran{\c c}ois Balla{\"y} and Mart\'in Sombra.
		\newblock Approximation of adelic divisors and equidistribution of small points, 2024.
		\newblock arXiv:2407.14978.
		
		\bibitem{Hul24}
		Nuno Hultberg.
		\newblock Arakelov geometry of toric bundles: Okounkov bodies and BKK, 2024.
		\newblock arXiv:2412.04169.
		
		\bibitem{Ku2}
		Lars K{\"u}hne.
		\newblock The bounded height conjecture for semiabelian varieties.
		\newblock {\em Compos. Math.}, 156(7):1405--1456, 2020.
		
		\bibitem{BA}
		Fran{\c c}ois Balla{\"y}.
		\newblock Successive minima and asymptotic slopes in Arakelov geometry.
		\newblock {\em Compos. Math.}, 157(8):1302--1339, 2020.
		
		\bibitem{Gu}
		Walter Gubler.
		\newblock Local and canonical heights of subvarieties.
		\newblock {\em Ann. Sc. Norm. Super. Pisa Cl. Sci. (5)}, 2(4):711--760, 2003.
		
		\bibitem{BPS14}
		Jos{\'e} Ignacio Burgos Gil, Patrice Philippon, and Mart\'in Sombra.
		\newblock {\em Arithmetic geometry of toric varieties. Metrics, measures and heights}.
		\newblock Ast\'erisque 360, Soci\'et\'e Math\'ematique de France, 2014.
		
		\bibitem{Ber}
		Vladimir G. Berkovich.
		\newblock {\em Spectral Theory and Analytic Geometry over Non-Archimedean Fields}.
		\newblock Math. Surveys Monogr. 33, Amer. Math. Soc., 2012.
		
		\bibitem{FRSS}
		Tyler Foster, Joseph Rabinoff, Farbod Shokrieh, and Alejandro Soto.
		\newblock Non-Archimedean and tropical theta functions.
		\newblock {\em Math. Ann.}, 372(3--4):891--914, 2017.
		
		\bibitem{BGPS19}
		Jos{\'e} Ignacio Burgos Gil, Patrice Philippon, Juan Rivera-Letelier, and Mart\'in Sombra.
		\newblock The distribution of Galois orbits of points of small height in toric varieties.
		\newblock {\em Amer. J. Math.}, 141(2):309--381, 2019.
		
		\bibitem{BMPS22}
		Jos{\'e} Ignacio Burgos Gil, Atsushi Moriwaki, Patrice Philippon, and Mart\'in Sombra.
		\newblock Arithmetic positivity on toric varieties.
		\newblock {\em J. Algebraic Geom.}, 25(4):201--272, 2016.
		
		\bibitem{BGPS15}
		Jos{\'e} Ignacio Burgos Gil, Patrice Philippon, and Mart\'in Sombra.
		\newblock Successive minima of toric height functions.
		\newblock {\em Ann. Inst. Fourier (Grenoble)}, 65(5):2145--2197, 2015.
		
		\bibitem{YZ}
		Xinyi Yuan and Shou-Wu Zhang.
		\newblock {\em Adelic Line Bundles on Quasi-Projective Varieties}.
		\newblock Annals of Mathematics Studies 221, Princeton University Press, 2026.
		
		\bibitem{WF}
		William Fulton.
		\newblock {\em Introduction to Toric Varieties}.
		\newblock Ann. of Math. Stud. 131, Princeton Univ. Press, 1993.
		
		\bibitem{Lan-FDG}
		Serge Lang.
		\newblock {\em Fundamentals of Diophantine Geometry}.
		\newblock Springer, 2013.
		
		\bibitem{CL1}
		Antoine Chambert-Loir.
		\newblock Points de petite hauteur sur les vari\'et\'es semi-ab\'eliennes.
		\newblock {\em Ann. Sci. \'Ecole Norm. Sup\'er. (4)}, 33(6):789--821, 1999.
		
		\bibitem{CL2}
		Antoine Chambert-Loir.
		\newblock Mesures et \'equidistribution sur les espaces de Berkovich.
		\newblock {\em J. Reine Angew. Math.}, 595:215--235, 2006.
		
		\bibitem{We}
		Andr{\'e} Weil.
		\newblock Arithmetic on algebraic varieties.
		\newblock {\em Ann. of Math. (2)}, 53(3):412--444, 1951.
		
		\bibitem{Ne}
		Andr{\'e} N{\'e}ron.
		\newblock Quasi-fonctions et hauteurs sur les vari\'et\'es ab\'eliennes.
		\newblock {\em Ann. of Math. (2)}, 82(2):249--331, 1965.
		
		\bibitem{Fal}
		Gerd Faltings.
		\newblock Endlichkeitss{\"a}tze f{\"u}r abelsche Variet{\"a}ten {\"u}ber Zahlk{\"o}rpern.
		\newblock {\em Invent. Math.}, 73(3):349--366, 1983.
		
		\bibitem{Ray}
		Michel Raynaud.
		\newblock Sous-vari\'et\'es d'une vari\'et\'e ab\'elienne et points de torsion.
		\newblock In {\em Arithmetic and Geometry: Papers Dedicated to I.R. Shafarevich Volume I: Arithmetic}, pages 327--352. Birkh{\"a}user, 1983.
		
		\bibitem{Ull}
		Emmanuel Ullmo.
		\newblock Positivit\'e et discr\'etion des points alg\'ebriques des courbes.
		\newblock {\em Ann. of Math. (2)}, 147(1):167--179, 1996.
		
		\bibitem{Zha}
		Shou-Wu Zhang.
		\newblock Equidistribution of small points on abelian varieties.
		\newblock {\em Ann. of Math. (2)}, 147(1):159--165, 1998.
		
		\bibitem{SUZ}
		Lucien Szpiro, Emmanuel Ullmo, and Shou-Wu Zhang.
		\newblock \'Equir\'epartition des petits points.
		\newblock {\em Invent. Math.}, 127(2):337--347, 1997.
		
		\bibitem{Yua1}
		Xinyi Yuan.
		\newblock Big line bundles over arithmetic varieties.
		\newblock {\em Invent. Math.}, 173(3):603--649, 2008.
		
		\bibitem{DGH}
		Vesselin Dimitrov, Ziyang Gao, and Philipp Habegger.
		\newblock Uniformity in Mordell--Lang for curves.
		\newblock {\em Ann. of Math. (2)}, 194(1):237--298, 2021.
		
		\bibitem{DGH2}
		Vesselin Dimitrov, Ziyang Gao, and Philipp Habegger.
		\newblock A consequence of the relative Bogomolov conjecture.
		\newblock {\em J. Number Theory}, 230:146--160, 2022.
		
		\bibitem{Gao}
		Ziyang Gao and Philipp Habegger.
		\newblock The relative Manin--Mumford conjecture, 2023.
		\newblock arXiv:2303.05045.
		
		\bibitem{TJFA}
		Siegfried Bosch and Werner L{\"u}tkebohmert.
		\newblock Degenerating abelian varieties.
		\newblock {\em Topology}, 30(4):653--698, 1991.
		
		\bibitem{Bal2}
		Fran{\c c}ois Balla{\"y} and Mart\'in Sombra.
		\newblock Approximation of adelic divisors and equidistribution of small points, 2025.
		\newblock arXiv:2407.14978.
		
		\bibitem{Hul}
		Nuno Hultberg.
		\newblock Arakelov geometry of toric bundles: Okounkov bodies and BKK, 2024.
		\newblock arXiv:2412.04169.
		
		\bibitem{Mil}
		James S. Milne.
		\newblock Abelian Varieties (v2.00), 2008.
		\newblock Available at www.jmilne.org/math/.
		
		\bibitem{CMDirichlet}
		Huayi Chen and Atsushi Moriwaki.
		\newblock Sufficient conditions for the Dirichlet property, 2017.
		\newblock arXiv:1704.01410.
		
		\bibitem{CMDynamics}
		Huayi Chen and Atsushi Moriwaki.
		\newblock Algebraic dynamical systems and Dirichlet's unit theorem on arithmetic varieties.
		\newblock {\em Int. Math. Res. Not. IMRN}, 2015(24):13669--13716, 2015.
		
		\bibitem{CMAdelicCurves}
		Huayi Chen and Atsushi Moriwaki.
		\newblock {\em Arakelov Geometry over Adelic Curves}.
		\newblock Lecture Notes in Mathematics 2258, Springer, 2020.
		
	\end{thebibliography}
\end{document}